\documentclass[reqno,english,11pt]{amsart}

\usepackage{amsmath,amsfonts,amssymb,graphicx,amsthm,url}
\usepackage[noadjust]{cite}
\usepackage{stmaryrd}
\usepackage{mathrsfs,booktabs,tabularx}
\usepackage{xifthen,xcolor,tikz,setspace}
\usetikzlibrary{decorations.pathmorphing,patterns,shapes,calc,decorations}
\usetikzlibrary{decorations.pathreplacing}
\usepackage{mathtools,upgreek,bbm}
\usepackage[final]{showkeys}

\definecolor{refkey}{gray}{.75}
\definecolor{labelkey}{gray}{.5}
\usepackage[algo2e,boxed,vlined,algoruled]{algorithm2e}
\usepackage{comment}
\usepackage[shortlabels]{enumitem}

\usepackage[letterpaper]{geometry}
\usepackage[colorinlistoftodos]{todonotes}
\presetkeys{todonotes}{inline, color=green}{}
\usepackage{accents}
\usepackage[algo2e,boxed,vlined,algoruled]{algorithm2e}

\usepackage[small]{caption}
\usepackage[colorlinks=true]{hyperref}
\colorlet{DarkGreen}{green!50!black}
\colorlet{DarkGray}{gray!60!black}

\numberwithin{equation}{section}

\renewcommand{\restriction}{\mathord{\upharpoonright}}
\renewcommand{\epsilon}{\varepsilon}

\newcommand{\one}{\mathbbm{1}}
\newcommand\norm[1]{\lVert#1\rVert}

\usepackage{color}
\definecolor{refkey}{gray}{.5}
\definecolor{labelkey}{gray}{.5}
\definecolor{light}{gray}{.9}

\usepackage{soul}

\usepackage[capitalise]{cleveref}

\newtheorem{theoremBG}{Theorem}

\newtheorem{theorem}{Theorem}[section]
\newtheorem*{theorem*}{Theorem}
\newtheorem{lemma}[theorem]{Lemma}
\newtheorem{claim}[theorem]{Claim}
\newtheorem{proposition}[theorem]{Proposition}
\newtheorem{observation}[theorem]{Observation}

\newtheorem{corollary}[theorem]{Corollary}

\theoremstyle{definition}{
	
	\newtheorem{definition}[theorem]{Definition}
	\newtheorem*{definition*}{Definition}
	
	\newtheorem{remark}[theorem]{Remark}
	\newtheorem*{remark*}{Remark}
}

\AddToHook{env/lemma/begin}{\crefalias{theorem}{lemma}}
\AddToHook{env/claim/begin}{\crefalias{theorem}{claim}}
\AddToHook{env/proposition/begin}{\crefalias{theorem}{proposition}}
\AddToHook{env/corollary/begin}{\crefalias{theorem}{corollary}}
\AddToHook{env/definition/begin}{\crefalias{theorem}{definition}}
\AddToHook{env/example/begin}{\crefalias{theorem}{example}}
\AddToHook{env/question/begin}{\crefalias{theorem}{question}}
\AddToHook{env/remark/begin}{\crefalias{theorem}{remark}}
\AddToHook{env/observation/begin}{\crefalias{theorem}{observation}}
\AddToHook{env/fact/begin}{\crefalias{theorem}{fact}}
\crefname{step}{Step}{Steps}
\crefname{step}{Step}{Steps}
\crefname{case}{Case}{Cases}
\crefname{claim}{Claim}{Claims}

\newcommand{\E}{\mathbb E}

\renewcommand{\P}{\mathbb P}

\newcommand{\R}{\mathbb R}
\newcommand{\Z}{\mathbb Z}

\newcommand{\sfh}{\mathsf h}
\newcommand{\sfv}{\mathsf v}
\newcommand{\sfw}{\mathsf w}

\newcommand{\sfy}{\mathsf y}

\newcommand{\cA}{\ensuremath{\mathcal A}}
\newcommand{\cB}{\ensuremath{\mathcal B}}
\newcommand{\cC}{\ensuremath{\mathcal C}}
\newcommand{\cD}{\ensuremath{\mathcal D}}
\newcommand{\cE}{\ensuremath{\mathcal E}}
\newcommand{\cF}{\ensuremath{\mathcal F}}
\newcommand{\cG}{\ensuremath{\mathcal G}}
\newcommand{\cH}{\ensuremath{\mathcal H}}

\newcommand{\cK}{\ensuremath{\mathcal K}}

\newcommand{\cP}{\ensuremath{\mathcal P}}

\newcommand{\cS}{\ensuremath{\mathcal S}}

\newcommand{\cW}{\ensuremath{\mathcal W}}

\newcommand{\llb }{\llbracket}
\newcommand{\rrb }{\rrbracket}

\newcommand{\fp}{\mathfrak{p}}
\newcommand{\fq}{\mathfrak{q}}
\newcommand{\fu}{\mathfrak{u}}

\newcommand{\fI}{\mathfrak{I}}
\newcommand{\fL}{\mathfrak{L}}

\newcommand{\fR}{\mathfrak{R}}

\newcommand{\fS}{\mathfrak{S}}

\newcommand{\sfL}{{\ensuremath{\mathsf L}}}

\newcommand{\sfW}{{\ensuremath{\mathsf W}}}
\newcommand{\sfu}{{\ensuremath{\mathsf u}}}

\newcommand{\sA}{{\ensuremath{\mathscr A}}}
\newcommand{\sB}{{\ensuremath{\mathscr B}}}
\newcommand{\sC}{{\ensuremath{\mathscr C}}}

\newcommand{\sE}{{\ensuremath{\mathscr E}}}
\newcommand{\sF}{{\ensuremath{\mathscr F}}}

\newcommand{\sL}{{\ensuremath{\mathscr L}}}

\newcommand{\sN}{{\ensuremath{\mathscr N}}}

\newcommand{\sS}{{\ensuremath{\mathscr S}}}

\newcommand{\fD}{\mathfrak{D}}

\newcommand{\bE}{{\ensuremath{\mathbf E}}}

\newcommand{\bd}{{\ensuremath{\mathbf d}}}

\newcommand{\bY}{{\ensuremath{\mathbf Y}}}

\newcommand{\bP}{{\ensuremath{\mathbf P}}}

\renewcommand{\epsilon}{\varepsilon}

\newcommand{\GFF}{\textsc{gff}\xspace}
\newcommand{\ZGFF}{\ensuremath{\Z\textsc{gff}}\xspace}
\newcommand{\SOS}{\textsc{sos}\xspace}
\newcommand{\Dim}{\textsc{d} }
\newcommand{\hatpi}{{\widehat\pi}}
\newcommand{\hatmu}{{\widehat\mu}}
\newcommand{\hatnu}{{\widehat\nu}}

\newcommand{\tildeq}{{\widetilde q}}
\newcommand{\hatZ}{{\widehat Z}}
\newcommand{\tildeZ}{{\widetilde Z}}
\newcommand{\partialvtx}{\partial_{\mathtt v}}
\newcommand{\Lh}[1][h]{L_{c}^{(#1)}}

\DeclareMathOperator{\var}{Var}
\DeclareMathOperator{\dist}{dist}

\DeclareMathOperator{\Int}{Int}

\renewcommand{\d}{\mathrm{d}}

\newcommand{\n}{{\vec{\mathsf{n}}}}
\renewcommand{\o}{{\mathsf{o}^*}}

\newcommand{\tv}{{\textsc{tv}}}

\makeatletter
\newcommand{\superimpose}[2]{%
	{\ooalign{$#1\@firstoftwo#2$\cr\hfil$#1\@secondoftwo#2$\hfil\cr}}}

\newcommand{\sbullet}{%
	\hbox{\fontfamily{lmr}\fontsize{.4\dimexpr(\f@size pt)}{0}\selectfont\textbullet}}

\makeatother

\title[Limit shape and emergence of Discrete Gaussian level lines]{The limit shape and emergence of the \\ Discrete Gaussian level lines}

\author{Joseph Chen}
\address{J.\ Chen\hfill\break
LPSM\\ Sorbonne Universit\'e\\ 
4 Place Jussieu 75252\\ Paris, France.}
\email{jchen@lpsm.paris}

\author{Eyal Lubetzky}
\address{E.\ Lubetzky\hfill\break
	Courant Institute\\ New York University\\
	251 Mercer Street\\ New York, NY 10012, USA.}
\email{eyal@courant.nyu.edu}

\begin{document}
	
	\begin{abstract}
		Consider the $(2+1)$\Dim Discrete Gaussian model (\ZGFF, integer-valued Gaussian free~field) on an $L\times L$ box with a hard floor at height zero and zero boundary conditions, at low temperature. 
        The second author, Martinelli and Sly (2016) showed that the surface has a plateau, filling nearly the full square, at height either $H$ or $H+1$ for an explicit function $H(L)$. In a companion paper, we studied the local laws of the top level lines near the four sides of the box, and showed that after rescaling each by $(L^{2/3-o(1)},L^{1/3-o(1)})$, they converge to a product of Ferrari--Spohn~diffusions. Two key features of the top level lines remained unaddressed: their global limit shape, and the critical window marking the transition from a top plateau at height $H$ to one at height $H+1$. 
         These features are intrinsically linked: deriving the global limit of the top level line is needed for determining whether it is preferable to be at height $H$ or $H+1$ near criticality.
     
       This work completes this picture as follows. First, we obtain the global limit of the top level lines: for every fixed $n$, the $n$-th from-the-top level line converges in Hausdorff distance to a deterministic shape $\sL_n$ that features the Wulff shape at scale $N_n=L^{1-o(1)}$ near the four corners of the box. Second, we identify, for every $h$, the point of emergence of a macroscopic $h$ level line: the probability of this event is monotone increasing in $L$ (up to a $o(1)$ error), and undergoes a sharp transition from near $0$ to near $1$ in a critical window of width $\leq L^{1/2+o(1)}$ around a side length $L=L_c^{(h)}$. 
       This transition is discontinuous in that, once a macroscopic level $h$ emerges, it immediately occupies nearly all the box, and the above global and local scaling limits (Wulff, Ferrari--Spohn) hold for it. 
       The new results extend to the $(2+1)$\Dim $|\nabla\phi|^p$-models (\ZGFF is the case $p=2$) for every fixed $p> 1$.
	\end{abstract}
	
	\maketitle
    \vspace{-0.1in}	
	\section{Introduction}

The $(2+1)$-dimensional Discrete Gaussian model (\ZGFF), also known as the integer-valued Gaussian free field, is a random surface model extensively studied in the context of the roughening transition in crystals (see the work of Chui and Weeks~\cite{ChuiWeeks76} in 1976, and the related models in~\cite{BCF51} dating back to the 1950's). It is dual to the Villain XY model~\cite{Villain75}, and as such undergoes a Berezinskii--Kosterlitz--Thouless (\textsc{bkt}) phase transition~\cite{Berezinskii71,KosterlitzThouless73}---as established, along with this result for the Solid-On-Solid (\SOS) model, by Fr\"ohlich and Spencer in their celebrated papers~\cite{FrohlichSpencer81a,FrohlichSpencer81b}.

At inverse temperature $\beta>0$, with zero boundary conditions and a hard floor at height zero, the model is the probability measure over height functions $\phi:\Lambda\to\Z_+$ on $\Lambda=\llb 1,L\rrb^2$ given by
\begin{equation}\label{eq:pi-def}
	\pi^0_\Lambda(\phi) \propto \exp\!\Big(-\beta\sum_{x\sim y}|\phi_x-\phi_y|^2\Big)\,,
\end{equation}
where $x\sim y$ denotes nearest neighbors in $\Z^2$, and the boundary conditions are $\phi_x=0$ for $x\notin\Lambda$. Let $\hatpi^0_\Lambda$ be the floor-free analogue ($\phi:\Lambda\to\Z$).
The \ZGFF phase transition for $\phi\sim\hatpi_\Lambda$ occurs at a critical $\beta_{\textsc{r}}>0$ (empirically, $\beta_{\textsc{r}}\approx 0.665$) as follows: 
\begin{itemize}
\item for $\beta\leq\beta_{\textsc{r}}$ (delocalized regime) the surface is rough, in that $\lim_{L\to\infty}\var(\phi_o)=\infty$; \item for $\beta>\beta_{\textsc{r}}$ (localized regime) the surface is rigid, with $\var(\phi_o)=O(1)$ (moreover, $|\phi_o|$ has a finite exponential moment). 
\end{itemize}
The localization result for $\beta\gg 1$ was established in~\cite{BrandenbergerWayne82} via an elementary Peierls argument. 
The delocalization for $\beta\ll 1$ was shown  in the aforementioned works of Fr\"ohlich and Spencer~\cite{FrohlichSpencer81a,FrohlichSpencer81b} (see also \cite{Lammers22,LammersOtt24,vanEngelenburgLis23,OttSchweiger25}).
The regimes extend to $(0,\beta_{\textsc{r}}),(\beta_{\textsc{r}},\infty)$ via the monotonicity of $\beta\mapsto \var(\phi_0)$ due to \cite{AHPS21} (this includes the exponential tail in the localized regime by log concavity of the marginals, see \cite[\S8.2]{Sheffield05}). Delocalization at the critical $\beta=\beta_{\textsc{r}}$ was thereafter proved by Lammers~\cite{Lammers22}.

Throughout this paper, we focus on a fixed large enough $\beta$, where the surface is localized, and we let $\hatpi_\infty$ denote the infinite-volume weak limit of $\hatpi_\Lambda$ as $L\to\infty$ (well-known to exist for such~$\beta$).
The~presence of a hard floor creates a nontrivial competition: the boundary conditions favor a flat surface at height zero in the localized regime, but the floor limits the entropy of surfaces near it, prohibiting some downward fluctuations.
This induces \emph{entropic repulsion}: the surface lifts itself to gain entropy from downward spikes, producing a macroscopic plateau far above the floor (\cref{fig:corner-sim}). In a recent companion paper~\cite{ChenLubetzky25}, we studied the \emph{local geometry} of this plateau---the mesoscopic law of the top level lines near the sides of the box. Our goal here is to study the \emph{global geometry} of the plateau---its macroscopic shape and the sharp transition at which each new layer appears.

\begin{figure}
    \vspace{-0.175in}
\centering
    \begin{tikzpicture}
 \node at (2,5.5) {\includegraphics[width=0.295\textwidth]{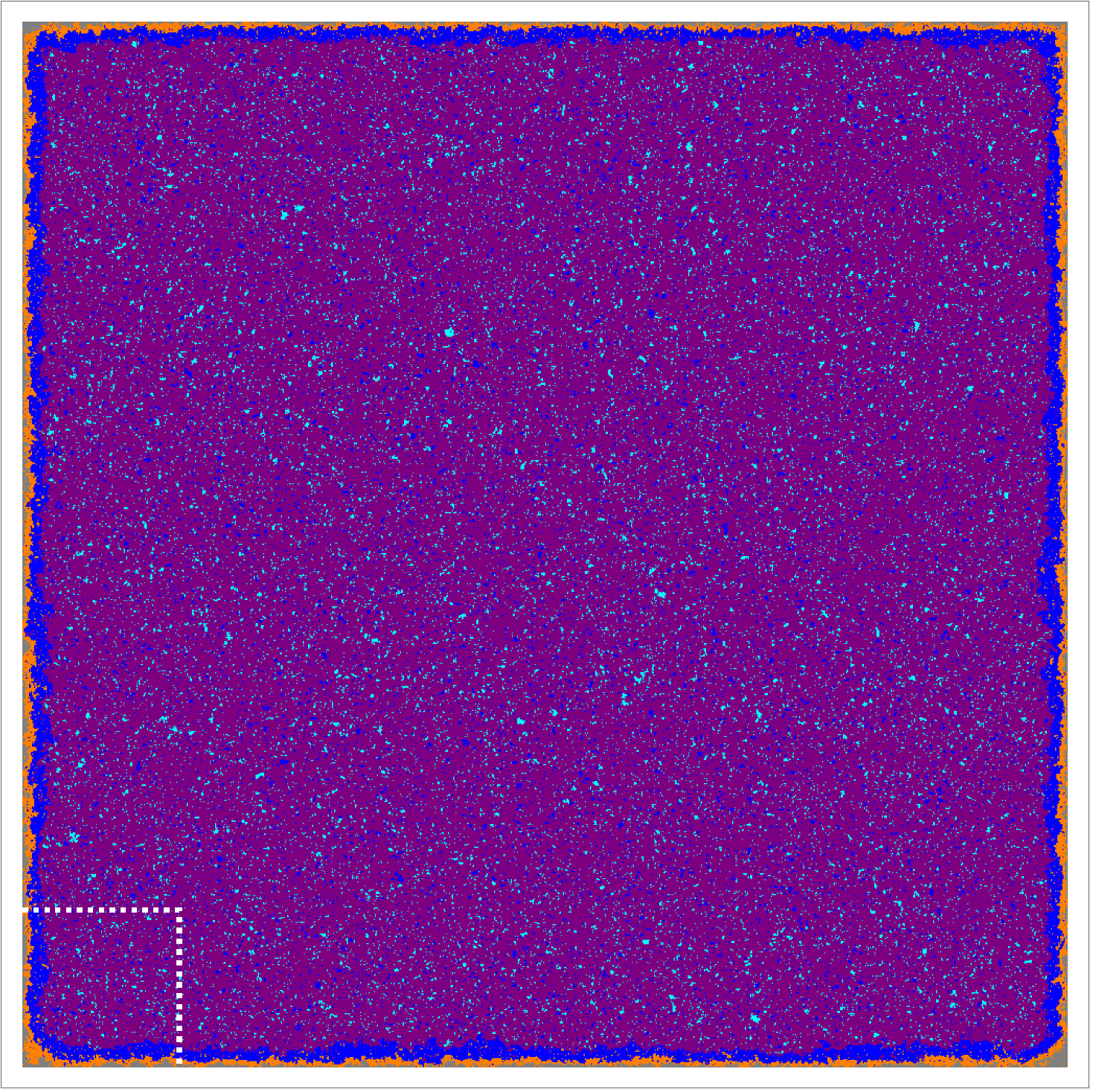}};

     \node at (8.5,5.5) {\includegraphics[width=0.45\textwidth]{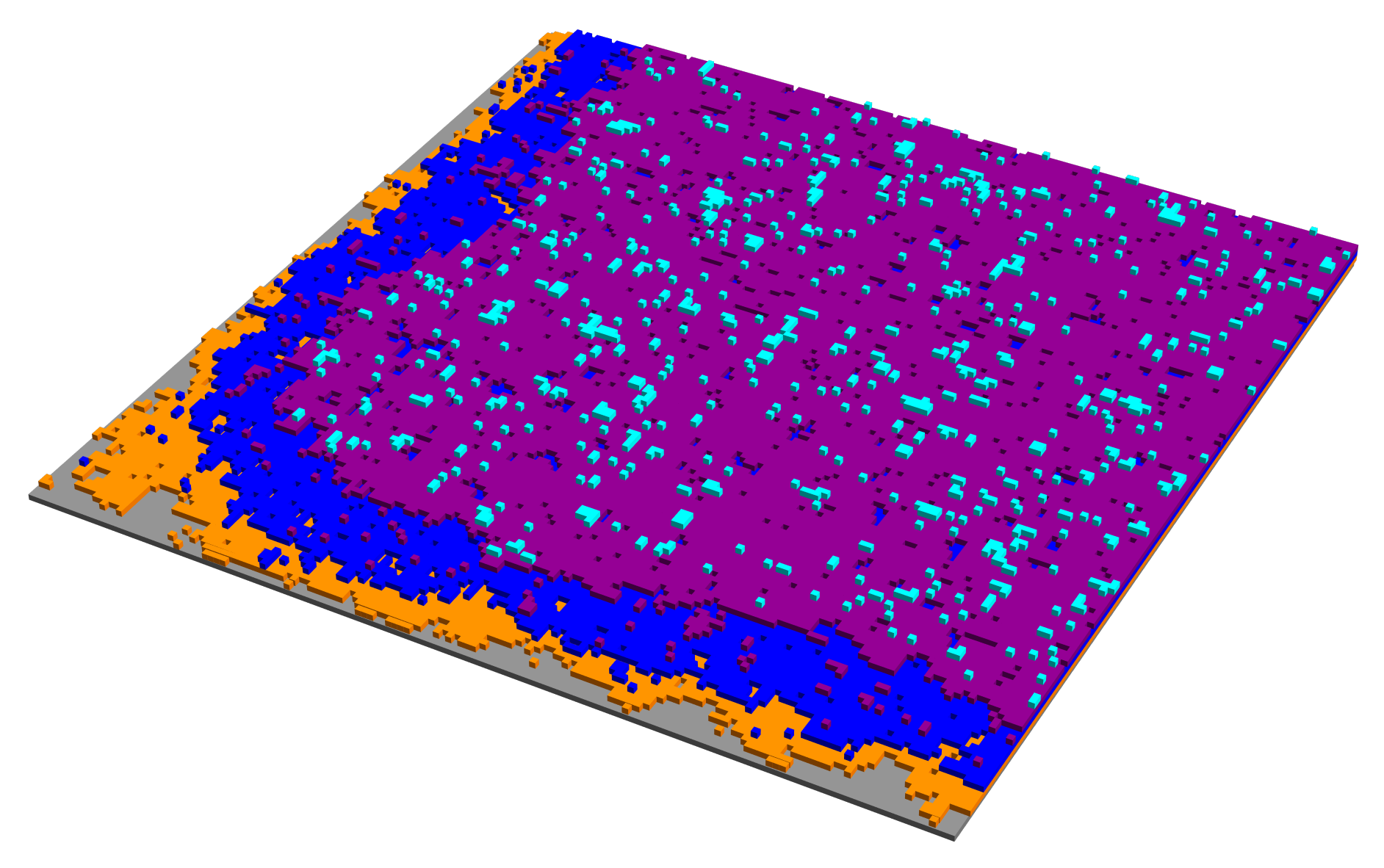}};
\end{tikzpicture}
\vspace{-0.2in}
\caption{Simulation of the low temperature $(2+1)$\Dim \ZGFF on a box $\Lambda$ of side length $L=1000$, zooming in on the corner of the box, where the macroscopic limit shape is visible.}
\label{fig:corner-sim}
\vspace{-0.15in}
\end{figure}

\subsection{Previous work on the level lines above a hard floor} 
Following is an account of the works most relevant to the current paper; see~\cite{ChenLubetzky25} for a more detailed review of the related literature.

\subsubsection{Entropic repulsion and the  surface plateau}\label{subsec:prev-work-shape}

 Bricmont, El-Mellouki, and Fr\"ohlich, in their breakthrough paper~\cite{BEF86}, were the first to rigorously analyze the effect of a hard floor on two closely related $\Z$-valued height functions above a hard floor: the \ZGFF from \cref{eq:pi-def}, and the (absolute-value) \SOS model, where the gradient cost $|\phi_x-\phi_y|^2$ in~\cref{eq:pi-def} is replaced by $|\phi_x-\phi_y|$. They established that, in both models at low enough temperature, the floor pushes the average surface height to $\operatorname{polylog}(L)$, with the heuristic being that a surface at height $h$ gains entropy from the extra downward fluctuations (of order $h$) that become accessible. Specifically, they showed that $\phi\sim\pi_\Lambda^0$ has $\frac1{L^2}\sum_x\E[\phi_x] \geq \frac{c}{\beta}\sqrt{\log L}$, and that the analogous \SOS model has $\frac1{L^2}\sum_x\E[\phi_x] \geq \frac{c}{\beta} \log L$. (This is the correct surface height for \SOS, while for \ZGFF it is off by a $\sqrt{\log\log L}$ factor; see \cref{eq:H-def}.)

For \SOS, Caputo et al.~\cite{CLMST12,CLMST14,CLMST16} made significant progress in understanding the geometry of the surface above a floor at low temperature. They showed that the surface typically forms a plateau: all but an $\epsilon$-fraction of the sites are all at the same height, which is either $\lfloor\frac{1}{4\beta}\log L\rfloor-1$ or $\lfloor\frac
{1}{4\beta}\log L\rfloor$.
They further obtained a shape theorem for the \SOS surface. The $h$ \emph{level lines} are the loops formed by dual-edges separating sites with $\phi<h$ from ones with $\phi\geq h$ (see \cref{def:level-lines} for more details). It was shown in \cite{CLMST16} that there exists $c_*(\beta)$ tending to $4$ as $\beta\to\infty$, so that for every fixed $\epsilon>0$, if \[ (1+\epsilon) c_* \beta e^{4\beta h} < L < (1-\epsilon)c_* \beta e^{4\beta (h+1)} \] 
then, with high probability (w.h.p.) as $L\to\infty$ (equivalently, as $h\to\infty$), one has that:
\begin{enumerate}[(i)]
\item\label{it:sos-h} At least $\frac{9}{10}$ of the sites have height exactly $h$ (\cite[Thm.~1]{CLMST16}).
\item\label{it:sos-shape} For each $j=0,\ldots,h$ there is a unique $j$ level-line loop of length $\geq(\log L)^2$ (none for $j>h$). These loops are nested, and after rescaling $\llb 1,L\rrb^2$ to $[0,1]^2$ they converge in Hausdorff distance to a deterministic limit obtained by translates of Wulff shapes (\cite[Thm.~2]{CLMST16}).
\end{enumerate}
This delineated for the $(2+1)$\Dim \SOS the sequence of critical side-lengths $L_c^{(h)}=c_* \beta e^{4\beta h}$ that mark the emergence of each new layer in the surface, where the plateau increases from height $h-1$ to $h$. 
(The \SOS behavior at the critical side-lengths $L=(1+o(1))L_c^{(h)}$ was not addressed by \cite{CLMST16}.)

\begin{figure}
\centering
    \vspace{-0.1in}
    \begin{tikzpicture}
    \begin{scope}[shift={(0,-.9)}]
    \filldraw[pattern=north east lines, preaction={fill=gray!15}, pattern color=red!75] (7.75,0.12) rectangle (8.75,0.02);

    \filldraw[pattern=north east lines, preaction={fill=gray!15}, pattern color=red!75] (0.25,0.12) rectangle (1.25,0.02);

    \filldraw[pattern=north east lines, preaction={fill=blue!15}, pattern color=blue!75] (1.27,0.22)--(1.75,0.22)--(1.75,0)--(2.5,0)--(2.5,-0.42)--(6.5,-0.42)--(6.5,0)--(7.25,0)--(7.25,0.22)--(7.73,0.22)--(7.73,-0.52)--(1.27,-0.52)--cycle;

    \draw[ultra thick, black] (0,0)--(10,0);
    
    \filldraw[fill=green!15] (2.5,-0.02) rectangle (6.5,-0.42);

    \filldraw[fill=gray!15] (1.75,0.02) rectangle (7.25,0.42);

    \draw[green!50!black,dotted,very thick] (2.5,-0.45)--node[below,yshift=-3pt,xshift=-2pt]{\tiny$1.1 L_c^{(h)}$}(2.5,-1);
   
    \draw[green!50!black,dotted,thick] (6.5,-0.45)--node[below,yshift=-3pt,xshift=2pt]{\tiny$0.9 L_c^{(h+1)}$}(6.5,-1);
   \node[circle,scale=0.4,draw=black,fill=gray,label={[label distance=3pt]below:{\small$L_c^{(h)}$}}] (Lc1) at (.75,0){};

    \draw[gray!75!black,dotted,thick] (1.75,0.45)--node[above,xshift=2pt,yshift=5pt]{\tiny$(1+o(1)) L_c^{(h)}$}(1.75,0.9);
   
    \draw[gray!75!black,dotted,thick] (7.25,0.45)--node[above,xshift=2pt,yshift=5pt]{\tiny$(1-o(1)) L_c^{(h+1)}$}(7.25,0.9);

    \node[font=\small,black] (sos) at (4.5,0.21) {\SOS: height $h$; limit shape known};

    \node[font=\small,black] (zgff) at (4.5,-0.23) {\ZGFF: height $h$; no shape};

    \node[circle,scale=0.4,draw=black,fill=gray,label={[label distance=3pt]below:{\small$L_c^{(h+1)}$}}] (Lc2) at (8.25,0){};
    \node[circle,scale=0.4,draw=black,fill=gray,label={[label distance=3pt]below:{\small$L_c^{(h)}$}}] (Lc1) at (.75,0){};

    \coordinate (p1) at (4.5,0.45);
    \coordinate (p2) at (4.5,-0.45);
    \end{scope}

    \begin{scope}[scale=.25,shift={(15,1)}]
    \draw [color=black,very thick] (0,0)--(5,0)--(5,5)--(-0,5)--cycle;
    \draw [thick, rounded corners=6,gray] (0,0) rectangle ++(5,5);
    cycle;
        \draw [thick, rounded corners=10,orange!90!white] (0,0) rectangle ++(5,5);
    cycle;
    \draw [thick, rounded corners=14,blue!90!black] (0,0) rectangle ++(5,5);

    \draw [thick, rounded corners=18,draw={rgb:red,128;green,0;blue,128},fill=purple!10] (0,0) rectangle ++(5,5);
    \node[text={rgb:red,128;green,0;blue,128},font=\tiny] at (2.5,2.5) {$h$};
    \node[text=black,font=\tiny] at (2.5,5.6) {\SOS};
    
    \draw[gray] (2.5,-0.25)--(p1);
    \end{scope}

     \begin{scope}[scale=.25,shift={(15,-13)}]
    \draw [color=black,very thick] (0,0)--(5,0)--(5,5)--(-0,5)--cycle;
         
    \pgfmathsetseed{314159}
\filldraw [thin, fill=purple!10,draw={rgb:red,128;green,0;blue,128}] (0.25,0.25) decorate[decoration={random steps,segment length=1.25mm,amplitude=0.5mm}]{--(0.25,4.75)} decorate[decoration={random steps,segment length=1.25mm,amplitude=0.5mm}]{-- (4.75,4.75)} decorate[decoration={random steps,segment length=1.25mm,amplitude=0.5mm}]{--(4.75,0.25)} decorate[decoration={random steps,segment length=1.25mm,amplitude=0.5mm}]{--(0.25,0.25)};

        \draw [dotted,thick,draw={rgb:red,128;green,0;blue,128},fill=purple!25] (0.6,0.6) rectangle ++(3.8,3.8);
    \node[text={rgb:red,128;green,0;blue,128},font=\tiny] at (2.5,2.5) {$h$};
    \node[text=black,font=\tiny] at (2.5,-0.6) {\ZGFF};
    
    \draw[gray] (2.5,5.25)--(p2);
    \end{scope}
    \end{tikzpicture}
    \vspace{-0.1in}
    \caption{Previous results for \SOS (gray region) in~\cite{CLMST16} and \ZGFF (green region) in \cite{LMS16}.
    In \SOS, the height and limit shape of the top level was identified outside a $1+o(1)$ window around $\lambda_*\beta/\hatpi_\infty(\phi_o=h)$, the natural candidate for $L_c^{(h)}$. The \ZGFF results excluded a larger $1+\delta/\beta$ window and missed the limit shape. \cref{thm:main-thm-crit-window,thm:grad-phi-p} extend the range where the top height is identified (to the blue region), excluding now a $1+(L_c^{(h)})^{-1/2+o(1)}$ window. \cref{thm:main-thm-limit-shape} gives the \ZGFF limit shape for all $L$, regardless of whether the top height can be identified. \cref{thm:grad-phi-p} provides this latter extension also for \SOS. 
    }
    \label{fig:known-vs-new-results}
    \vspace{-0.1in}
\end{figure}

For \ZGFF, the second author, Martinelli, and Sly~\cite{LMS16} proved an analogue of the \SOS \cref{it:sos-h}. Let
\begin{equation}\label{eq:H-def}
	H(L) := \max\bigg\{h : \hatpi_\infty(\phi_o = h) \geq \frac{5\beta}{L}\bigg\} \asymp \sqrt{(1/\beta) \log L \log\log L}\,.
\end{equation}
Call a loop $\gamma$ \texttt{large} if its length is at least $\log L$ (one could even use a threshold of $(C/\beta)\log L$). The following result of \cite{LMS16} shows that the surface typically forms a sequence of nested \texttt{large} loops, one per level $h=0,\ldots,H$ and none at height $H+2$ or above:
\begin{theoremBG}[{\cite[Thm.~2]{LMS16}}\footnote{The cutoff for \texttt{large} loops in \cite{LMS16} (called \emph{macroscopic} loops there) was $\log^2 L$, though the proofs hold also for a threshold of $\log L$ provided $\beta$ is large enough; see the footnote below the definition of macroscopic loops in \cite[\S1.2]{LMS16}.}]
	Fix $\beta$ large and consider the \ZGFF $\phi\sim\pi^0_\Lambda$ per \cref{eq:pi-def} and $H(L)$ per \cref{eq:H-def}. Then, w.h.p, 
		there is a unique \texttt{large} $h$ level-line loop for each $h=0,\ldots,H$, and there are no \texttt{large} $h$ level-line loops for any $h\geq H+2$. This sequence of loops is nested; the loops for $h\leq H-1$ have area $(1-o(1))L^2$, and the loop for $h=H$ has area at least $(1-\epsilon_\beta)L^2$.
        \label{thm:LMS}
\end{theoremBG}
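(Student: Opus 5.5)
The proof follows the \SOS strategy of \cite{CLMST12,CLMST16}, adapted to the \ZGFF. Its three ingredients are a Peierls-type contour analysis valid for large $\beta$, a cluster expansion that estimates the cost of lifting a region by one level in the presence of the floor, and monotonicity (FKG) in the boundary conditions.

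\textbf{Step 1 (no large loops at heights $\geq H+2$).} Suppose there is a \texttt{large} $(H+2)$ level-line loop $\gamma$. By the domain Markov property, the configuration inside $\gamma$ is a \ZGFF with boundary condition $H+1$ above the floor. We compare this with the configuration obtained by lowering everything inside $\gamma$ by one. This map costs a factor $e^{-\beta|\gamma|}$ in contour energy, up to cluster-expansion corrections. It gains entropy from the floor: the ratio of partition functions with and without the floor at height $h$ is roughly
\[
\exp\bigl(-\hatpi_\infty(\phi_o\le -h)\,|\mathrm{Int}(\gamma)|\bigr),
\]
so lifting by one level costs about $\hatpi_\infty(\phi_o=-(H+1))\,|\mathrm{Int}(\gamma)|$. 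By the definition of $H$ in \cref{eq:H-def}, this is at most $5\beta|\mathrm{Int}(\gamma)|/L$. Since $|\mathrm{Int}(\gamma)|\le L|\gamma|/4$, the lifting gain cannot compensate the perimeter cost $\beta|\gamma|$. We then sum over \texttt{large} $\gamma$. Here a Peierls bound with constant $c\beta$ beats the entropy $e^{O(|\gamma|)}$ of contours, and the threshold $|\gamma|\ge\log L$ makes the union bound over $L^2$ starting points summable.

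\textbf{Step 2 (existence, uniqueness and nesting for $h\le H$).} By FKG and monotonicity in $h$, it suffices to show that w.h.p.\ a \texttt{large} $H$ level loop exists with area at least $(1-\epsilon_\beta)L^2$. We argue by contradiction. If there is no such loop, the region at heights $<H$ has area at least $\epsilon_\beta L^2$. Raising that region, within the outermost \texttt{large} loop at level $H-1$, by one level gains entropy about $\hatpi_\infty(\phi_o=-H)\cdot\epsilon_\beta L^2\ge 5\beta\epsilon_\beta L$. We must show this exceeds the perimeter cost of the new contour, which is of order $\beta L$ when the new contour is a near-square. The isoperimetric bound gives this, and the constant $5$ in \cref{eq:H-def} is chosen precisely so that it works. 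For $h\le H-1$, the gain is $\hatpi_\infty(\phi_o=-h)L^2\gg L$, since the ratio between consecutive levels grows like $e^{c\beta h}$. This forces an area of $(1-o(1))L^2$. Uniqueness and nesting follow from Step 1's Peierls bound applied to any second \texttt{large} loop at the same level, and from the fact that level lines of a single height function are automatically nested.

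\textbf{Main obstacle.} The hard step is the comparison of partition functions with the floor at different heights. One needs cluster-expansion control of $\log Z$ in a domain bounded by a contour, with floor effects, uniformly over irregular domains, with error terms $O(e^{-c\beta}|\gamma|)$. The unbounded Gaussian gradients of the \ZGFF make the expansion harder than in \SOS, and one must also keep the $\sqrt{\log\log L}$ tail asymptotics of $\hatpi_\infty(\phi_o=h)$. I would handle this by first conditioning on the \texttt{large} contours. The remaining configuration then has only small contours, and a standard convergent polymer expansion applies to it. The floor effect is then read off as the probability of a downward spike of depth $h$, which is $\hatpi_\infty(\phi_o=-h)(1+o(1))$ by locality of the expansion.
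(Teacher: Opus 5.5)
\paragraph{Step 1.} Your Step 1 does not close as written. You bound the area gain by $\frac{5\beta}{L}|\Int(\gamma)|$ and then use $|\Int(\gamma)|\le L|\gamma|/4$. That gives $\frac54\beta|\gamma|>\beta|\gamma|$, so the gain \emph{can} beat the perimeter cost.

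This is not a harmless constant. You put height $H+1$ inside the loop, so what you computed is the balance for an $(H+1)$-loop. That balance must fail, because $(H+1)$-loops do appear for $L$ near the top of the range where $H(L)=H$; this is why the theorem only excludes $h\ge H+2$.

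Inside an $(H+2)$ level line the heights are $\ge H+2$, so the area coefficient is $\hatpi_\infty(\phi_o\ge H+2)$, as in \cref{lem:prob-of-contour}. What makes it negligible is not the definition of $H$ but the ratio bound \cref{eq:LD-ratio}:
$\hatpi_\infty(\phi_o=H+2)\le e^{-c'\beta H/\log H}\,\hatpi_\infty(\phi_o=H+1)=o(1/L)$.
With this, the isoperimetric/Peierls argument goes through (compare the proof of \cref{cor:CE-macro-uniformBC-smallL}).

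\paragraph{Step 2 (the genuine gap).} Step 2 carries the content of the theorem, and here the argument is missing. It is not a Peierls argument, and its numbers do not close. A gain of $5\beta\epsilon_\beta L$ is far smaller than a contour cost of order $4\beta L$. Also, isoperimetry gives a lower bound, not an upper bound, on the perimeter of the region you propose to raise.

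The favorable comparison is with a contour hugging the revealed $(H-1)$-loop, lifting its entire interior. That gains $\hatpi_\infty(\phi_o=H)L^2(1-o(1))\ge 5\beta L(1-o(1))$ against a cost $\le 4\beta L(1+\epsilon_\beta)$.

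Turning this into a probability statement needs a \emph{lower} bound on the probability of a specific macroscopic contour, relative to the event $\fS$ of no large contours. This requires two-sided floor estimates, with the upper one taken conditionally on $\fS$. That is the structure of \cref{prop:CE-macro-uniformBC}, which gives $\pi(\fS)\le e^{o(L)}\,\pi(\cC_{\gamma,h})/\fp(\gamma)$.

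The one-sided bound of \cref{lem:prob-of-contour} cannot do this. For a macroscopic loop whose area coefficient exceeds $4\beta/L$, that bound is larger than $1$. For the same reason it also cannot exclude an $H$-loop of area below $(1-\epsilon_\beta)L^2$.

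In \cite{LMS16}, existence and the area bounds come instead from a box-and-stitch argument, which this paper invokes in \cref{lem:prelim-ll-bounds}:
\begin{itemize}
\item Take a box of side $\ell_h\asymp\beta/\hatpi_\infty(\phi_o\ge h)$ (constant slightly above $4$) with boundary condition $h-1$. W.h.p.\ an $h$-circuit enclosing most of the box appears.
\item This is an increasing event, so by monotonicity it transfers to every such box inside the $(h-1)$-loop.
\item Circuits of overlapping boxes lie in a single $\ge h$ cluster.
\end{itemize}
For $h\le H-1$, \cref{eq:LD-ratio} gives $\ell_h=o(L)$. That is where the $(1-o(1))L^2$ area comes from; your gain-versus-cost heuristic does not give it. For $h=H$, $\ell_H\le(0.8+O(1/\beta))L$, so such boxes still fit inside the $(H-1)$-loop and give $(1-\epsilon_\beta)L^2$.

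The confinement of the leftover region produced by this step is also what rules out a second \texttt{large} $h$-loop. Your Step 1 bound does not apply there, since at levels $\le H$ the area gain is not negligible.
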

The proof argument in \cite{LMS16} (see \S4.4 there) further showed that if $\hatpi_\infty(\phi_o = H+1) > 4.1 \beta / L$ then there is an $(H+1)$ level-line loop with area at least $(1-\epsilon_\beta) L^2$, and that if $\hatpi_\infty(\phi_o = H+1) < 3.9 \beta / L$ then there are no \texttt{large} $(H+1)$ level-line loops. (The constants $3.9$ and $4.1$ can be taken to be $4-\delta$ and $4+\delta$ if $\beta$ is large enough as a function of $\delta$; namely, the aforementioned argument holds when replacing them by $4-5/\beta$ and $4+2/\beta$, respectively.) Together, these results of~\cite{LMS16} show that the critical side-length $L_c^{(h)}$ marking the onset of level $h$ behaves as $\approx (4\pm\epsilon_\beta)\beta / \hatpi_\infty(\phi_o=h)$. (See \cref{fig:known-vs-new-results} comparing these results with the \SOS picture summarized in \cref{it:sos-h,it:sos-shape} above.)

For all but an exceptional set $\sB$ of side lengths of zero logarithmic density (the values near which the plateau transitions from height $h$ to $h+1$; see \cref{eq:exceptional-set-def}), the top loop at height $H$ also has area $(1-o(1))L^2$ and is the unique macroscopic level line at that height.

\subsubsection{Local limit at the flat boundary}
\label{subsec:prev-work-FS}
If the low temperature \ZGFF model is to take after its \SOS counterpart, then the macroscopic limit shapes of its top level lines should press against all four sides of the box, producing \emph{flat portions} coinciding with each side. Along these flat portions, the level line can be viewed as a 1\Dim  interface pinned near the side wall, and the relevant question is the scale and law of its transversal fluctuations. Near the four corners, one wishes to both characterize the curved limit, and obtain the law of the random fluctuations around it.

In the \SOS model, the top level line was shown in~\cite{CLMST16} to have random fluctuations of at most $L^{1/3+o(1)}$ from the flat portions of its limit. A key ingredient was showing that the law of the top level line in the relevant region resembles that of a random walk, conditioned to be nonnegative, and penalized exponentially in the area below it. Such area-tilted walks are known~\cite{ISV15} to have $L^{1/3}$ fluctuations and their scaling limit is a Ferrari--Spohn diffusion~\cite{FerrariSpohn05}. It was recently established~\cite{CKL24} that the order of the \SOS top level line fluctuations is at least $L^{1/3}$, and one expects that to be the correct order. Notably, the conjectured \SOS scaling limit for an individual level line in the presence of other level lines is \emph{not} Ferrari--Spohn but a variant that accounts for the interaction between the different level lines. Rather, the conjectured \SOS limit should be that of an ensemble of non-crossing random walks with geometric area tilts (see, e.g.,~\cite{BCG25,CaputoGanguly23,CIW19a,CIW19b,DLZ23,HKS25,Serio23} for studies of this line ensemble in the discrete and continuous settings, which for a single curve reverts to a Ferrari--Spohn limit).

For the \ZGFF, the picture turns out to be different. It was shown in the companion paper~\cite{ChenLubetzky25} that if one excludes a set $\sB$ of near-critical side lengths (akin to the set excluded in \cite{LMS16}; see \cref{eq:exceptional-set-def}), then the following separation of scales occurs for the top level lines. The $n$-th from the top level line $\fL_n$ lies typically at distance $N_n^{1/3}$ from the flat portion of the boundary for 
\begin{equation}\label{eq:Nn-def}	N_n := \frac{1}{\hatpi_\infty(\phi_o = H+1-n)}\quad(=L^{1-o(1)})\qquad(n=0,1,\ldots)\,.\,,\end{equation}
which further satisfies
$ N_n = N_{n-1}e^{-\Theta(\sqrt{\beta\log L/\log\log L})}$ (so $\fL_{n}$ is supported in  ``a scale of its own''). 
This separation of scales allowed the authors to decouple the different level lines, and show that the top $m$ level lines, each after an $(N_n^{2/3},N_n^{1/3})$ space-time rescaling of its distance from the side boundary, converge to \emph{independent} Ferrari--Spohn diffusions. Specifically, the Ferrari--Spohn diffusion $\mathsf{FS}_\sigma$ on $(0,\infty)$ is the diffusion with Dirichlet boundary condition at $0$ and generator
\begin{equation}\label{eq:fs-generator-intro}
	\sfL_\sigma = \tfrac{\sigma^2}{2}\tfrac{\d^2}{\d x^2} + \sigma^2\tfrac{\varphi_\sigma'(x)}{\varphi_\sigma(x)}\tfrac{\d}{\d x}\,,
\end{equation}
where $\varphi_\sigma(x) = \mathsf{Ai}((2/\sigma^2)^{1/3}x - \omega_1)$ for  $\mathsf{Ai}$ the Airy function and $\omega_1=\min\{x>0:\mathsf{Ai}(-x)=0\}$. This process is ergodic and reversible against the probability density $\frac{(2/\sigma^2)^{1/3}}{\mathrm{Ai'(-\omega_1)^2}}  \varphi_\sigma(x)^2\one_{\{x>0\}}$.
Let \begin{equation}\label{eq:exceptional-set-def} \sB = \bigcup_{h\geq 1} \llb \tfrac34 \overline L^{(h)}, \overline L^{(h)}\rrb \qquad\mbox{where}\qquad
		\overline L^{(h)} = \lceil 5\beta /\hatpi_\infty(\phi_o=h)\rceil \qquad(h=1,2,\ldots)\end{equation} 
be the set of excluded side length. Recalling that the transition from a plateau at height $h-1$ to one at height $h$ was known to occur at $(4\pm\epsilon_\beta)\beta/\hatpi_\infty(\phi_o=h)$, by excluding $\sB$ one avoids this delicate window: for $L\notin\sB$, the plateau is w.h.p.\ at height $H$ from \cref{eq:H-def} (rather than $H+1$).
(The previous work \cite{LMS16} excluded $\llb 3.9\beta /\hatpi_\infty(\phi_o=h), 4.1\beta /\hatpi_\infty(\phi_o=h) \rrb$ with the exact same effect.)

\begin{theoremBG}[{\cite[Thm.~1.1]{ChenLubetzky25}}]\label{thm:CL25}
	Fix $\beta>0$ large enough and $m\geq 1$. Let $L\notin\sB$ for $\sB$ as in \cref{eq:exceptional-set-def}, and let $\fL_n$ ($n=1,\ldots,m$) be the \texttt{large} $(H+1-n)$ level line\footnote{Excluding the set $\sB$ meant that w.h.p.\ there is no \texttt{large} $H+1$ level line, so $\fL_n$ is the $n$-th highest \texttt{large} level line.} of $\phi\sim\pi^0_\Lambda$ on $\Lambda=\llb 1,L\rrb^2$. Set~$N_n$ as in \cref{eq:Nn-def}, and let $I_n$ be the interval of length $2N_n^{2/3}$ co-centered on the bottom side of $\Lambda$. Denote by $\psi_n(x) = \min\{y\geq 0:(\frac{L}{2}+x,y)\in\fL_n\}$ the vertical distance of $\fL_n$ from~$I_n$.
	Then the joint law of the rescaled distances 
    $ Y_n(t) := N_n^{-1/3}\psi_n(t N_n^{2/3})$ of the top $m$ level lines ($n=1,\ldots,m$) converges weakly to $m$ i.i.d.\ stationary Ferrari--Spohn diffusions $\mathsf{FS}_{\sigma}$ on~$[-1,1]$ for an explicit fixed $\sigma>0$.
\end{theoremBG}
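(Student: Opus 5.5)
Although \cref{thm:CL25} is a result of the companion paper~\cite{ChenLubetzky25}, here is the route I would take to prove it. The strategy is to reduce the joint law of the top $m$ level lines near the bottom side to $m$ independent area-tilted random walks. Each walk is then handled separately by the invariance principle of~\cite{ISV15}.

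\emph{Step 1: a priori control.} Starting from \cref{thm:LMS} with $L\notin\sB$, I would first show that w.h.p.\ there is no \texttt{large} $(H+1)$ level line. This gives the top plateau at height $H$, so that $\fL_n$ is the $n$-th highest \texttt{large} loop. Next I would prove a rough bound: near $I_n$, $\fL_n$ stays within distance $N_n^{1/3+o(1)}$ of the bottom side. This should follow from a Peierls/cluster-expansion comparison. A loop at height $h=H+1-n$ enclosing area $A$ gains an entropic factor of roughly $\exp(\hatpi_\infty(\phi_o=h)A)=\exp(A/N_n)$, coming from the downward spikes that the floor forbids outside it. Balancing this gain against the surface tension cost $\beta\cdot\mathrm{length}$ gives curvature at scale $N_n$. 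Since $N_n\le L^{1-o(1)}$, the loop is flat along $I_n$ up to a tilt cost, and by the standard KPZ-type scaling the fluctuations there are of order $N_n^{1/3}$.

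\emph{Step 2: decoupling the lines.} The key input is the separation of scales $N_n=N_{n-1}e^{-\Theta(\sqrt{\beta\log L/\log\log L})}$. On the window $I_n$, the line $\fL_{n-1}$ sits at height $\approx N_{n-1}^{1/3}\gg N_n^{1/3}$, so it does not interact with $\fL_n$. The lower lines $\fL_{n+1},\dots$ sit at distance $\le N_{n+1}^{1/3+o(1)}=o(N_n^{1/3})$ from the wall; on the $N_n^{1/3}$ scale they act merely as a slightly shifted floor. Conditioning on all other lines, I would show via the domain Markov property and FKG/monotonicity that the conditional law of $\fL_n$ restricted to a box of width $\asymp N_n^{2/3}$ around $I_n$ depends on the others only through boundary data. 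I would then sandwich this conditional law between two coupled versions of the same walk with slightly different boundary conditions. Iterating over $n$ would give asymptotic independence.

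\emph{Step 3: walk representation and scaling limit.} Given the decoupling, I would write the law of $\fL_n$ near the side as an Ornstein--Zernike effective random walk (cluster expansion of the contour weights, with the exponential decay of the interactions valid for large $\beta$). This walk is conditioned to stay above the lower lines (effectively at $0$ after rescaling) and is tilted by $\exp(-\lambda_n\cdot\text{area})$, with $\lambda_n\asymp 1/N_n$ coming from the spike entropy of height $H+1-n$ below it. The ISV invariance principle for area-tilted walks then gives convergence under the $(N_n^{2/3},N_n^{1/3})$ rescaling to $\mathsf{FS}_\sigma$. Here $\sigma$ is determined by the walk's diffusivity and the normalization of $\lambda_nN_n$, which is why a common explicit $\sigma$ arises. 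Stationarity would follow because the window $I_n$ is $o$ of the $L$-scale flat segment, so the process is observed in equilibrium.

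The main obstacle I expect is Step 3's effective-walk reduction. The tilt is not an exact area tilt: it comes from the probabilities of downward spikes, which are correlated and interact with the line through its microscopic roughness. Extracting an exact $\lambda_n$ with error $o(1/N_n)$ uniformly over an $N_n^{2/3}$-long stretch requires precise estimates such as $\pi(\phi_x\le 0\mid \text{plateau at } h)=(1+o(1))\hatpi_\infty(\phi_o=h)$. I would try to obtain these by cluster expansion, comparing the measure with and without the floor in a neighborhood of the line.
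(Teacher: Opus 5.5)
Your architecture (a priori localization of $\fL_n$ at distance $N_n^{1/3+o(1)}$ from the side, decoupling through the separation of scales, reduction to an area-tilted walk, then \cite{ISV15}) is the one used in \cite{ChenLubetzky25}. However, Step~3 fails as stated, at its foundation: for the Discrete Gaussian model, a level line has no cluster-expandable ``contour weight''.

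\begin{itemize}
\item An $h$ level line only forces $\phi\ge h$ on one side and $\phi\le h-1$ on the other. With quadratic cost, $|\phi_x-\phi_y|^2$ across a bond does not split as $|\phi_x-h|^2+1+|h-1-\phi_y|^2$, as it does for \textsc{sos} with linear cost.
\item Consequently the weight of $\fL_n$ does not factor into a surface term times independent bulk partition functions with fixed boundary values.
\item The missing idea is to expand instead the disagreement polymer $\gamma\supseteq\fL_n$ of \cref{def:gamma}, whose complementary components $D_i$ carry constant heights $h_i$. This gives the law of \cref{prop:CE1_old}, namely $\exp(-\sE_\beta(\gamma)+\fI_V(\gamma))\prod_i\hatpi^{h_i}_{D_i^\circ}(\phi\ge 0\text{ on }D_i^\circ\cap F)$.
\item Only after ruling out large finite components $D_i$, $i\ge 2$, and turning the $i=0,1$ factors into $e^{|D_0\cap F|/N_n}$ do you get an area-tilted polymer to which an Ornstein--Zernike walk coupling applies.
\end{itemize}

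The obstacle you flag is also sharper than a one-point estimate. The decorrelation/inclusion--exclusion error in $\hatpi^h_D(\phi\ge 0\text{ on }D)\approx e^{-\hatpi_\infty(\phi_o>h)|D|}$ is $o(1)$ only for $|D|\le L^{1+o(1)}$. A box of width $N_n^{2/3+o(1)}$ that is tall enough to avoid pinning has area $N_n^{4/3+o(1)}=L^{4/3-o(1)}$. \cite{ChenLubetzky25} handles this by FKG in two ways:
\begin{itemize}
\item For the bound pushing $\fL_n$ toward the wall (an increasing event), the floor may be dropped outside a strip $F$ of area $L^{1+o(1)}$.
\item The reverse bound is proved in a flat rectangle of area $N_n^{1+o(1)}$ resting on the wall, after raising boundary values (cf.\ \cref{sec:limit-law}).
\end{itemize}

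Step~1 is the second gap. Peierls-type energy--entropy comparisons localize the loops only macroscopically, which is the precision of \cref{thm:LMS}. The $N_n^{2/3}\times N_n^{1/3}$ scale is entropic: walk fluctuations $\sqrt w$ balanced against an area cost $w^{3/2}/N_n$. ``KPZ-type scaling'' is therefore the conclusion, not an argument.

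In \cite{ChenLubetzky25}, the $N_n^{1/3+o(1)}$ localization is the output of the growth gadget (\cref{thm:old-level-line-contains-Wulff}). This is an induction over Wulff shapes of scale $\asymp N_n$ contained in $\fL_n$. Each enlargement is justified by the area-tilted-walk estimate in a mesoscopic box of width $N_n^{2/3}e^{C\sqrt{\log L}}$ whose endpoints lie on the current Wulff boundary (compare \cref{clm:crit-window-growth}). This cannot be deferred to Step~3, because the mesoscopic walk representation there needs its endpoints already placed at height $N_n^{1/3+o(1)}$.

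Once these pieces are in place, your Step~2 essentially matches the actual argument:
\begin{itemize}
\item reveal $\phi$ outside a mesoscopic rectangle, together with the $\ge$/$\le$ clusters meeting its boundary;
\item lower or raise to Dobrushin boundary conditions using FKG of the floor measure on the unrevealed domain;
\item prove that every limit point is stochastically dominated by, and dominates, $\mathsf{FS}_\sigma$;
\item iterate over $n$ using the separation of scales, from $\fL_m$ upward for one bound and from $\fL_1$ downward for the other.
\end{itemize}
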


The above theorem showed, in particular, that the top level line fluctuations at the center of the side are of order $N_1^{1/3} = L^{1/3-o(1)}$, as previously conjectured in~\cite{LMS16}. As stressed in \cite[Rem.~1.2]{ChenLubetzky25}, the $L^{o(1)}$ correction in the estimate for the scale $N_n$ is necessary: even though $N_n\asymp L$ for infinitely many values of $L$, for infinitely many others one has $N_n \approx L \exp(-c\sqrt{\beta\log L\log\log L})$. 

While \cref{thm:CL25} addressed only the local law in the flat portion of the limit shape (where it coincides with the side boundaries), the $N_n^{1/3}$ fluctuations are also connected to the  global limit shape near the corners of the box. As in \SOS, the droplet delimited by the \ZGFF level line $\fL_n$ behaves in an $N_n^{2/3}\times N_n^{1/3}$ rectangle as an area-tilted random walk, induced to either advance or retreat as per the behavior of the corresponding Wulff shape in the rectangle. This was leveraged in \cite{ChenLubetzky25} (following \cite{CLMST12} for \SOS) via a ``growth gadget'' (\cite[Thm.~4.4]{ChenLubetzky25}; see \cref{thm:old-level-line-contains-Wulff} below) with two ramifications: (i)~$\fL_n$ is pressed against the flat boundary to (nearly) the correct scale; and (ii)~$\fL_n$ contains a Wulff shape near the corners of the box. In the first---the focus of \cref{thm:CL25}---there was no need for a ``retreat gadget'' as $\fL_n$ is trivially bounded by the boundary of the box in its flat portion. However, in the second, without a retreat gadget, \cite{ChenLubetzky25}  gave only an ``inner bound'' on the global limit shape. A matching ``outer bound'' seemed highly nontrivial: the growth gadget for \ZGFF already required significant new ideas compared to the \SOS setting, and several of these relied crucially on monotonicity arguments that go in the wrong direction for a retreat gadget. 

\subsection{Main results}\label{sec:main-results}
Two central open problems left in~\cite{LMS16} (see \S1.5 there) were to determine the \emph{global limit shape} of the \ZGFF level lines---in particular, whether the Wulff construction that governs the \SOS limit shape also governs the \ZGFF---and to prove that the top level line has fluctuations of order $L^{1/3+o(1)}$ along the flat portions of its limit. We resolved the latter in the companion work~\cite{ChenLubetzky25} (see \cref{thm:CL25} above), except at side lengths $L\in\sB$ that are near the  critical points~$\{L_c^{(h)}: h\geq 1\}$. In this paper we address the former, as well as extend the analysis of \cite{ChenLubetzky25} to cover every $L$.

The first theorem provides the global limit shape of the top finitely many level lines, namely, the $H+1-n$ level line for every fixed $n\geq 0$ (as per \cite{LMS16}, w.h.p.\ there are no  \texttt{large} $H+2$ level lines, but that there may or may not be such a loop at height $H+1$; the prequel \cite{ChenLubetzky25} looked only at the latter $L$'s, whence the top height was $H$).
To construct the limit shape for $\fL_n$, we begin with the Wulff shape $\cW_1$ which is the convex body with unit area whose boundary minimizes the surface tension integral of the model.
Let $\cW$ denote $\cW_1$ scaled by $\sfw_1/2$, 
where $\sfw_1$ is the value of said surface-tension integral along $\partial \cW_1$. (See \cref{sec:poly-st-wulff} for the definitions related to the Wulff shape. The proofs will use a slightly more accurate rescaling of $\cW$, relevant if one wishes to improve the quantitative convergence estimates; see \cref{eq:def-ell*}.) The Wulff shape $\cW$ will appear in the limit shape of each $\fL_n$ after a rescaling by $N_n$; see \cref{fig:limit-shape-schematic} for a depiction of this self-similar limit.
\begin{figure}
\vspace{-0.1in}
\centering
    \begin{tikzpicture}
  \begin{scope}[scale=.5]
    \fill [color=black,very thick,fill=gray] (0,0)--(5,0)--(5,5)--(-0,5)--cycle;
    \fill [thick, rounded corners=5,fill=orange!90!white] (0,0) rectangle ++(5,5);
    cycle;
    \fill [thick, rounded corners=7,fill=blue!90!black] (0,0) rectangle ++(5,5);
    \fill [thick, rounded corners=30,fill={rgb:red,128;green,0;blue,128}] (0,0) rectangle ++(5,5);
    \draw [color=black,very thick] (0,0)--(5,0)--(5,5)--(-0,5)--cycle;

    \draw [|<->|] (0,-1)--node[below] {$L$}(5,-1);
    
    \end{scope}

    \begin{scope}[scale=.5,shift={(6,0)}]
    \fill[color=gray] (0,0)--(5,0)--(5,5)--(-0,5)--cycle;
    \fill [thick, rounded corners=8,fill=orange!90!white] (0,0) rectangle ++(5,5);
    cycle;
    \fill [thick, rounded corners=12,fill=blue!90!black] (0,0) rectangle ++(5,5);
    \fill [fill={rgb:red,128;green,0;blue,128}] (0,5)--(0,3) to[bend right=25] (3,0)--(5,0)--(5,5)--cycle;
    \draw[color=black, very thick] (5,0)--(0,0)--(0,5);
    \draw[color=white, dotted, thick] (0,0)--(0,1)--(1,1)--(1,0)--cycle;

    \draw [|<->|] (0,-0.3)--node[below,yshift=3pt] {\tiny$N_1$}(1,-0.3);
    \draw [|<->|] (0,-1)--node[below] {$N_0$}(5,-1);
    
    \end{scope}

    \begin{scope}[scale=.5,shift={(12,0)}]
    \fill[color=gray] (0,0)--(5,0)--(5,5)--(-0,5)--cycle;
    \fill [thick, rounded corners=8,fill=orange!90!white] (0,0) rectangle ++(5,5);
    \fill [fill=blue!90!black] (0,5)--(0,3) to[bend right=25] (3,0)--(5,0)--(5,5)--cycle;
    \draw[color=black, very thick] (5,0)--(0,0)--(0,5);
    \draw[color=white, dotted, thick] (0,0)--(0,1)--(1,1)--(1,0)--cycle;
    \draw [|<->|] (0,-0.3)--node[below,yshift=3pt] {\tiny$N_2$}(1,-0.3);
    \draw [|<->|] (0,-1)--node[below] {$N_1$}(5,-1);
    
    \end{scope}

    \begin{scope}[scale=.5,shift={(18,0)}]
    \fill[color=gray] (0,0)--(5,0)--(5,5)--(-0,5)--cycle;
    \fill [fill=orange!90!white] (0,5)--(0,3) to[bend right=25] (3,0)--(5,0)--(5,5)--cycle;
    \draw[color=black, very thick] (5,0)--(0,0)--(0,5);
    \draw [|<->|] (0,-1)--node[below] {$N_2$}(5,-1);
    
    \end{scope}
\end{tikzpicture}
\vspace{-0.1in}
\caption{Schematic of the macroscopic limit shape of the top level lines $n=0,1,\ldots$ as established in \cref{thm:main-thm-limit-shape}, with the associated scales $N_n$ near the corner of the box $\Lambda$.}
\vspace{-0.1in}
\label{fig:limit-shape-schematic}
\end{figure}
\begin{theorem}[Limit Shape]\label{thm:main-thm-limit-shape}

Fix $\beta>0$ large enough and consider $\phi\sim \pi^0_\Lambda$, the $(2+1)$\Dim \ZGFF on $\Lambda=\llb 1,L\rrb^2$ with a floor and zero boundary conditions. Set $H(L)$ and $N_n$ per \cref{eq:H-def,eq:Nn-def}. Fix $m\geq 1$, and let $\sL_n$ ($n=0,\ldots,m$) be obtained by placing the 4 quadrants of the Wulff shape $\cW$ defined above, rescaled by $N_n$, in the 4 corners of $\Lambda$, and joining them via straight lines along  $\partial \Lambda$. Let $\fL_n$ be the \texttt{large} $(H+1-n)$ level line loop(s) of $\phi$.
Then, with probability $1-O(L^{-10})$:
\begin{enumerate}
    \item{} For each $1\leq n\leq m$, $\fL_n$ is a single loop that satisfies $d_\cH(\fL_n,\sL_n)< N_n e^{-\sqrt{\log L}}$.

\item{} For $n=0$, either $\fL_0$ is empty (no \texttt{large} $(H+1$) level-line loops), or it is a single loop that satisfies $d_\cH(\fL_0, \sL_0) < N_0 e^{-\sqrt{\log L}}$.
\end{enumerate}
\end{theorem}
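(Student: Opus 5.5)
Since Hausdorff distance convergence follows from matching inner and outer bounds, we will prove separately that, with probability $1-O(L^{-10})$, $\fL_n$ contains the region bounded by $\sL_n$ shrunk inward by $N_n e^{-\sqrt{\log L}}$, and is contained in $\sL_n$ enlarged by the same amount.

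\textbf{Inner bound.} This direction is essentially in \cite{ChenLubetzky25}. We will apply the growth gadget (\cite[Thm.~4.4]{ChenLubetzky25}, quoted as \cref{thm:old-level-line-contains-Wulff} below) in each corner and for each $n\le m$. It shows that $\fL_n$ contains a copy of $N_n\cW$ pushed into the corner up to an error $N_n^{2/3+o(1)}\ll N_n e^{-\sqrt{\log L}}$, and that it hugs the flat sides within $N_n^{1/3+o(1)}$. We will then check that the gadget does not need the exclusion $L\notin\sB$ for $n\ge1$. The only input is that level $H+1-n$ is macroscopic with area $(1-o(1))L^2$, and for $n\ge1$ this is guaranteed by \cref{thm:LMS}. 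For $n=0$ we condition on a \texttt{large} $(H+1)$ loop existing. The argument of \cite[\S4.4]{LMS16} then shows it has area at least $(1-\epsilon_\beta)L^2$, which is enough to seed the growth gadget. Uniqueness of the loop follows from nesting and the area bound.

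\textbf{Outer bound (the main obstacle).} We must show that near each corner, $\fL_n$ does not protrude beyond the boundary of the rescaled Wulff quadrant by more than $\epsilon N_n$, where $\epsilon=e^{-\sqrt{\log L}}$. The plan is a surface-tension/area tradeoff, i.e.\ a Wulff variational argument at scale $N_n$. The law of $\fL_n$, given the lower level lines and the interior configuration, has weight
\[
\exp\bigl(-\beta|\fL_n|\tau+(\text{area enclosed})\,\hatpi_\infty(\phi_o=H+1-n)(1+o(1))\bigr),
\]
where the area reward comes from the entropy of downward spikes gained by raising the plateau. Equivalently, the energy is $\tau$-length minus area$/N_n$. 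The deterministic minimizer of this functional in the corner is exactly the quadrant of $N_n\cW$ with the sides of $\Lambda$ as constraints; this is how the scaling $\sfw_1/2$ of $\cW$ enters.

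To bound a protrusion, we will compare $\fL_n$ with a modified contour $\fL_n'$ obtained by cutting off the part of $\fL_n$ outside the enlarged shape. The quantitative stability of the Wulff inequality (Bonnesen-type) shows that any contour protruding by $\epsilon N_n$ has excess free energy at least $c\epsilon^2 N_n$. This exceeds the relevant error terms, namely cluster-expansion corrections of size $N_n^{1-\delta}$ and the $\log L$ cost of combinatorics.

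The hard part is that the relevant monotonicity goes the wrong way. Removing area from the $(H+1-n)$ loop changes the lower level lines and interacts with the floor, so the reward cannot simply be lower bounded via FKG. To handle this, we will expose the configuration outside a corner box of side $CN_n$ and work inside with the cluster expansion. We will obtain a two-sided estimate for the area reward by controlling the ratio of partition functions at heights $h$ and $h-1$, namely $\exp(\pm\,\text{area}\cdot\hatpi_\infty(\phi_o=h)(1+o(1)))$, uniformly in the boundary conditions. This uses the separation of scales $N_{n+1}\ll N_n$: lower loops are within $o(N_n)$ of the wall, so they do not interfere. It also uses that higher loops, whose scale is $N_{n-1}\gg N_n$, confine $\fL_n$ only from the inside.

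Finally, we will take a union bound over polynomially many contours and the four corners. Convergence of the straight parts of the shape follows from the inner bound together with the trivial containment in $\Lambda$.
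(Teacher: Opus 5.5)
\paragraph{Outer bound.} Your outer bound rests on an area-reward estimate that you assert without supplying, and that estimate is the crux. To beat an excess of order $\epsilon^2N_n$ with $\epsilon=e^{-\sqrt{\log L}}$, you need the floor probability $\hatpi^h_D(\phi_x\ge 0\ \forall x\in D)$ on domains of area $\asymp N_n^2$, with a \emph{quantified} relative error in the rate. An unquantified $1+o(1)$ moves the Wulff scale by an unquantified $o(N_n)$, which does not give $N_ne^{-\sqrt{\log L}}$.

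The separation of scales $N_{n+1}\ll N_n$ plays no role in obtaining this estimate, and the estimate is not available off the shelf:
\begin{itemize}
\item Per-site inclusion--exclusion works only while $|D|\,\hatpi_\infty(\phi_o<-h)=o(1)$, i.e.\ for $|D|\le L^{1-o(1)}$. Beyond that you need a decoupling. The paper builds it by tiling with $\ell_0$-boxes, exposing polymers on a separating grid, and working on the event $\fS$ of no \texttt{large} polymers (\cref{prop:key-area-estimate,thm:key-area}, with the block rate $\upxi_n$ at scale $\ell_*\asymp\sqrt L$).
\item The upper bound is not uniform in the boundary conditions. It fails once a region at height $h$ is large enough, of area order $(\beta/\hatpi_\infty(\phi_o>h))^2$, to grow an $(h+1)$-plateau under the floor conditioning. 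This is why \cref{prop:CE-macro-DobrushinBC} requires $|D_i\cap F|\le(3\beta/\hatpi_\infty(\phi_o>H+1-n-i))^2$.
\item For the same reason, when $n\ge1$ the paper first reveals a circuit near $\fL_{n-1}$, using the already-proved outer bound for $\fL_{n-1}$ (\cref{lem:retreat-n}). So the higher loops do matter: they are needed for the monotone reduction to Dobrushin boundary conditions.
\item The union bound is not over polynomially many contours. It needs a coarse-graining as in \cref{sec:transition-window}: anchor skeletons, removal of button-holes and large $\cD$-components, control of interactions, surface-tension bounds between anchors, and a matching lower bound on the partition function.
\item You also assert, without proof, a quantitative stability statement for the wall-constrained Wulff problem.
\end{itemize}

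With all of these inputs a global comparison might go through, but the paper avoids global stability altogether. \cref{lem:base-case} uses the macroscopic law only crudely, via Jensen, to confine $\fL_n$ to $L\sL(sN_n/L)$ for small $s$. Then the local retreat gadget \cref{prop:retreat-gadget} drives an induction on the Wulff radius up to $\ell^*_n(1-\tfrac12e^{-\frac C3\sqrt{\log L}})$. That gadget needs the $(1+o(1))$ mesoscopic law of \cref{prop:CE-meso} on $N_n^{2/3}e^{C\sqrt{\log L}}$-boxes, which the FKG shortcuts of \cite{ChenLubetzky25} cannot give in the retreat direction.

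\paragraph{Inner bound for $n=0$.} Conditioning on $\{\fL_0\neq\emptyset\}$ destroys FKG, while the growth gadget is a monotonicity argument, so you cannot seed it under the conditional measure. The paper instead bounds, unconditionally, the non-monotone event that a \texttt{large} $(H+1)$ loop exists but misses $\fR_0$. It splits this event into the events $\cE_{x,\ell}$ and invokes monotonicity only after revealing an outermost circuit (\cref{clm:crit-window-growth,prop:shape-thm-crit-window}). The a priori input that such a loop contains a near-full square comes from the Peierls/isoperimetry argument of \cref{lem:prelim-ll-bounds}.

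One smaller correction: the growth gadget's precision is $N_ne^{-\frac C3\sqrt{\log L}}$ in the Wulff scale, not $N_n^{2/3+o(1)}$. This is precisely where the $N_ne^{-\sqrt{\log L}}$ in the theorem comes from.
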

An important aspect of the above theorem is that it holds for all $L$, rather than avoiding an exceptional set of $L$'s around the emergence of the $H+1$ level line. Consequentially, for any $L$, w.h.p.\ there cannot be a loop $\fL_0$ which only occupies, say, half of $\Lambda$. That is, there are no values of $L$ where, with constant probability, the surface features a plateau at height~$H+1$  coexisting with a plateau at height~$H$.
This byproduct of \cref{thm:main-thm-limit-shape} is reminiscent of a discontinuous phase transition, an interpretation that can be made precise as per the following remark.

\begin{remark*} By FKG, one may couple all \ZGFF models $\phi^{(L)}\sim \pi^0_{\Lambda_L}$ on $\Lambda_L=\llb 1,L\rrb^2$ for $L=1,2,\ldots$ such that if $L_1 < L_2$ then $\phi^{(L_1)} \leq \phi^{(L_2)}\restriction_{\Lambda_{L_1}}
$ pointwise. As we increase $L$ starting from 1 and sample within this coupling, for every height $h \in \Z_+$ there will be a random side length $L(h)$ which is the minimum $L$ such that $\phi^{(L)}$ contains a \texttt{large} $h$ level line. \Cref{thm:main-thm-limit-shape} shows that, eventually a.s.\ for $h$, every $L>L(h)$ will also feature a \texttt{large} $h$ level line (so the phase transition is well-defined), and, immediately upon its emergence in $\phi^{(L(h))}$, the $h$ level line will have the same ``super-critical'' features (asymptotically, after an appropriate rescaling) of the $h$ level line at $\phi^{(L(h+1)-1)}$. \end{remark*}

Although the $H+1$ and $H$ phases cannot coexist, there may still be a critical window of $L$ where both phases have a constant probability to exist; e.g., for every $h$ we can set $L_1(h)$ as the first $L$ where the probability of a \texttt{large} $h$ is at least $\frac1{10}$, and $L_2(h)$ as the last $L$ where it is at most~$\frac9{10}$. Our second result states that the width of this critical window is at most  $L^{1/2+o(1)}$, and the probability for $\phi$ to feature a \texttt{large} $h$ level line undergoes a sharp transition from $o(1)$ to $1-o(1)$ (see \cref{fig:sharp-transition}). Fixing $h$, let $\underline{L}^{(h)} := \frac34\overline{L}^{(h)}$ from \cref{eq:exceptional-set-def}, and define $\lambda_* := \tfrac{1}{\beta}(2\tau(0) + \tfrac{\sfw_1}{2}) \approx 4(1\pm\epsilon_\beta)$ where $\tau$ and $\sfw_1$ are the surface tension at angle 0 and surface-tension integral along the unit area Wulff shape.

\begin{theorem}[Critical window]\label{thm:main-thm-crit-window}
Fix any $h \in \Z_+$, and define with respect to $\underline{L}^{(h)}$ the probability
\[p_h(L) := \pi^0_{\Lambda}(\text{there exists an $h$ level-line loop with interior area $\geq .8\underline{L}^{(h)}$})\,.\]
Then the function $L\mapsto p_h(L)$ is increasing, and the probability that there exists a \texttt{large} $h$ level-line loop is $p_h(L)+o(1)$, where the $o(1)$-term goes to $0$ as $L\to\infty$.
Furthermore, if we define \[ \Lh = \min\{L : p_h(L) \geq 1/2 \}\,,\] 
then we have
\[ \Lh = (1+o(1))\frac{\lambda_* \beta}{\hatpi_\infty(\phi_o = h)}\]
and the following holds for every $h$:
\begin{enumerate}[(i)]
\item if $L < \Lh - (\Lh)^{1/2 + o(1)}$, then $p_h(L) = o(1)$;
\item if $L > \Lh + (\Lh)^{1/2 + o(1)}$, then $p_h(L) = 1-o(1)$.
\end{enumerate}
\end{theorem}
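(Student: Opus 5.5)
Monotonicity of $p_h$ comes first and is the easy step. The event that some $h$ level-line loop encloses area $\ge .8\underline{L}^{(h)}$ is increasing in $\phi$: raising heights can only enlarge the set $\{\phi\ge h\}$, so its components can only grow. Under the FKG coupling of the remark above, $\phi^{(L)}\le\phi^{(L+1)}\restriction_{\Lambda_L}$. Moreover, a loop in $\Lambda_L$ enclosing sites with $\phi^{(L)}\ge h$ yields a component of $\{\phi^{(L+1)}\ge h\}$ containing it, and the outer boundary of that component encloses at least as much area. Hence $p_h(L)\le p_h(L+1)$.

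The identification of ``exists a \texttt{large} $h$ loop'' with the area event up to $o(1)$ uses \cref{thm:main-thm-limit-shape}, applied with $h\in\{H+1-n\}$. Whenever a \texttt{large} $h$ loop exists, that theorem says it is close in Hausdorff distance to $\sL_n$. For $L\ge\underline{L}^{(h)}$ the scale $N_n$ is comparable to $L$, so this loop encloses area $(1-\epsilon_\beta)L^2\ge .8\underline{L}^{(h)}$. Conversely, a loop of area $\ge .8\underline{L}^{(h)}$ is certainly \texttt{large}. For $L$ far below $\underline{L}^{(h)}$, both probabilities are $o(1)$ by \cref{thm:LMS}, since then $h\ge H+2$.

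The substance is the location and width of the window, which I would obtain from a comparison of partition functions. Fix $L$ near $\lambda_*\beta/\hatpi_\infty(\phi_o=h)$ and set $q=\hatpi_\infty(\phi_o=h)$. I would write
\[
\frac{p_h(L)}{1-p_h(L)}=\frac{Z(\text{large }h\text{ loop})}{Z(\text{no large }h\text{ loop})}.
\]
On the event in the numerator, \cref{thm:main-thm-limit-shape} forces the loop $\gamma$ to lie within $N_0e^{-\sqrt{\log L}}$ of $\sL_0$. Conditioning on $\gamma$, I would use cluster expansion and the entropic-repulsion estimates of \cite{LMS16,ChenLubetzky25} to show that the log-ratio equals $qA(\gamma)-\beta|\gamma|_\tau$ up to lower-order errors. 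Here $|\gamma|_\tau$ is the surface-tension length and $qA(\gamma)$ is the gain from the extra downward spikes available inside the loop. Optimizing over $\gamma$ near $\sL_0$ gives
\[
qL^2-\beta\Bigl(4\tau(0)L-\tfrac{\mathsf w_1}{q}\cdot(\text{const})\Bigr)+\ldots .
\]
The leading linear terms cancel at $qL=\lambda_*\beta$: the $2\tau(0)$ per side and the $\mathsf w_1/2$ Wulff correction at the corners combine to give $\lambda_*$. Varying $L$ by $\Delta$ therefore changes the log-ratio by roughly $\Delta\cdot(2qL-4\beta\tau(0))\asymp\Delta$, which is linear in $\Delta$.

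The main obstacle is controlling the fluctuation error in this free-energy difference. The random loop has $L^{1/3}$ transversal fluctuations over length $L$, and the error terms coming from the lower level lines (which interact with the top loop) could be as large as $L^{1/2+o(1)}$. This is exactly the width of the window. I would bound these errors by conditioning on the $(h-1)$ level line, using the separation of scales $N_1\ll N_0$ so that the $h$ loop sees essentially a flat domain, and then applying the Ferrari--Spohn and area-tilted random-walk estimates of \cite{ChenLubetzky25} to compute the ratio up to an additive $L^{1/2+o(1)}$. Given that, for $|\Delta|\ge L^{1/2+o(1)}$ the log-ratio has a definite sign of magnitude $\gg1$, which yields (i) and (ii), and monotonicity then pins down $\Lh=(1+o(1))\lambda_*\beta/q$.
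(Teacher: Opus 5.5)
Your monotonicity argument and your identification of the \texttt{large}-loop probability with $p_h(L)$ via \cref{thm:main-thm-limit-shape} are fine. The skeleton of the main part is also the paper's: compare configurations with and without a macroscopic $h$ loop, reduce to $\cF_\lambda$, and use the linear behaviour of $\sup_\gamma\cF_\lambda$ near $\lambda_*$ from \cref{lem:linear-critical-window}.

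The gap is your claim that cluster expansion plus the entropic-repulsion estimates of \cite{LMS16,ChenLubetzky25} give the log-ratio as $qA(\gamma)-\beta|\gamma|_\tau$, with an error that can be brought down to $L^{1/2+o(1)}$.
\begin{itemize}
\item Those estimates (\cref{prop:CE1_old}) control $\hatpi^{h_i}_{D_i^\circ}(\phi_x\geq 0,\,\forall x\in D_i^\circ\cap F)$ only for floor regions of area $L^{1+o(1)}$. Here the regions inside and outside the loop have area $\asymp L^2$.
\item Beyond area $L^{1+o(1)}$ the single-site approximation fails, because deep spikes at adjacent sites are strongly correlated: $\hatpi_\infty(\phi_{o'}\geq h\mid\phi_o\geq h)\geq e^{-c\beta h^2/\log^2h}=L^{-o(1)}$ (\cref{clm:pi(h|h)-lower}). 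Over $\asymp L^2$ sites this shifts the exponent by $L^{1-o(1)}$.
\item Consequently your main term is wrong, not merely imprecise. The true area coefficient is $\uprho_0 q$ with $1-\uprho_0\geq e^{-c\log L/\log\log L}$ (\cref{prop:uprho-bound}), so $qA(\gamma)$ is off by $(1-\uprho_0)qL^2\geq L^{1-o(1)}$.
\item Carried out as written, your argument would place the transition within $L^{1/2+o(1)}$ of $\lambda_*\beta/q$, contradicting \cref{eq:lambda*-correction}.
\end{itemize}

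What is missing is the macroscopic floor estimate of \cref{sec:xi-bbq}. One tiles $F$ by $\ell_0$-blocks separated by corridors of width $10\lceil\log L\rceil$, and exposes circuits of height-$0$ sites in the corridors to decouple the blocks. One then relates the block rate $\xi_{\ell_0,h}$ to the fixed rate $\upxi_n=\xi_{\ell_*,H+1-n}$, $\ell_*\asymp\sqrt L$, using $(\ell_0+10\lceil\log L\rceil)\mid\ell_*$, so the same rate serves every domain. The corridors cost $\asymp|F|L^{o(1)}/(\ell_0 L)$. The blocks cut by $\partial F$, where the available rate differs from $\xi_{\ell_0,h}$ by a relative $L^{-o(1)}$, cost $\asymp|\partial F|\ell_0L^{-1-o(1)}$. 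For $|F|\asymp L^2$ and $|\partial F|\asymp L$ these balance at $\ell_0\approx\sqrt L$. This yields \cref{thm:key-area}, namely $\hatpi^0_V(\phi_x\geq -h,\,\forall x\in F)=\exp(-\upxi_n|F|+O(L^{1/2+o(1)}))$. From it one gets the tilt $\upxi_1-\upxi_0=\uprho_0/N_0$ and the two-sided comparison of \cref{cor:CE-macro-uniformBC-smallL}.

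That error, plus the $e^{O(L^{1/2}\log L)}$ entropy of coarse-graining the loop through anchor points spaced $L^{1/2}$ apart, is where the $L^{1/2+o(1)}$ window comes from. It does not come from the $L^{1/3}$ loop fluctuations or from interaction with the $(h-1)$ level line. The latter is removed by monotonicity:
\begin{itemize}
\item For the upper bound on $p_h$, enlarge to $\Lambda'$ with boundary condition $H$; this is legitimate since ``the loop contains a large square'' is increasing.
\item For the lower bound, reveal the $H$ level line containing a rescaled $\sL(\ell^*_0)$; this is legitimate since the absence of a large loop is decreasing.
\end{itemize}
Ferrari--Spohn and area-tilted walk estimates are local to $N^{2/3}\times N^{1/3}$ boxes and themselves rest on the mesoscopic floor estimate. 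They give no handle on a macroscopic free-energy difference at additive precision $L^{1/2+o(1)}$, so that part of your plan does not address the real difficulty.
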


\begin{figure}
\vspace{-0.1in}
\centering
    \begin{tikzpicture}

    \begin{scope}[shift={(0,-.9)}]
    
    \filldraw[pattern=north east lines, preaction={fill=gray!15}, pattern color=red!75] (7.5,0.12) rectangle (8.5,0.02);
    \draw[|<->|] (7.5,0.3)--node[above=2pt] {\tiny$L^{1/2+o(1)}$}(8.5,0.3);

    \filldraw[pattern=north east lines, preaction={fill=gray!15}, pattern color=red!75] (0.25,0.12) rectangle (1.25,0.02);

    \filldraw[pattern=north east lines, preaction={fill=blue!15}, pattern color=blue!75] (1.25,0.12) rectangle (2.5,-0.12);

    \filldraw[pattern=north east lines, preaction={fill=green!15}, pattern color=green!75] (2.5,-0.12) rectangle (6.5,-0.02);

    \filldraw[pattern=north east lines, preaction={fill=blue!15}, pattern color=blue!75] (6.5,0.12) rectangle (7.5,-0.12);

    \draw[ultra thick, black] (0,0)--(10,0);
    
    \draw[green!50!black] (2.5,-0.15)--node[below,xshift=-2pt]{\tiny$1.1 L_c^{(h)}$}(2.5,-0.35);
   
    \draw[green!50!black] (6.5,-0.15)--node[below,xshift=2pt]{\tiny$0.9 L_c^{(h+1)}$}(6.5,-0.35);
   \node[circle,scale=0.4,draw=black,fill=gray,label={[label distance=3pt]below:{\small$L_c^{(h)}$}}] (Lc1) at (.75,0){};

    \node[circle,scale=0.3,draw=black,fill=gray] (p1) at (1.05,0){};

    \node[circle,scale=0.3,draw=black,fill=gray] (p2) at (1.7,0){};

    \node[circle,scale=0.3,draw=black,fill=gray] (p3) at (2.1,0){};
    
    \node[circle,scale=0.3,draw=black,fill=gray] (p4) at (5,0){};
    
    \node[circle,scale=0.3,draw=black,fill=gray] (p5) at (7.25,0){};
    \node[circle,scale=0.4,draw=black,fill=gray,label={[label distance=3pt]below:{\small$L_c^{(h+1)}$}}] (Lc2) at (8,0){};

    \node[circle,scale=0.4,draw=black,fill=gray,label={[label distance=3pt]below:{\small$L_c^{(h)}$}}] (Lc1) at (.75,0){};
    
    \node[circle,scale=0.3,draw=black,fill=gray] (p6) at (8.75,0){};

    \end{scope}

    \begin{scope}[scale=.3,shift={(-6,0)}]
    \fill [color=black,very thick,fill=gray] (0,0)--(5,0)--(5,5)--(-0,5)--cycle;
    \fill [thick, rounded corners=7,fill=orange!90!white] (0,0) rectangle ++(5,5); cycle;
    \draw [color=black,very thick] (0,0)--(5,0)--(5,5)--(-0,5)--cycle;
    \coordinate (p11) at (2.5, -0.25);
    \end{scope}

    \begin{scope}[scale=.3,shift={(0,0)}]
    \fill [color=black,very thick,fill=gray] (0,0)--(5,0)--(5,5)--(-0,5)--cycle;
    \fill [thick, rounded corners=7,fill=orange!90!white] (0,0) rectangle ++(5,5);
    cycle;
    \fill [thick, rounded corners=20,fill=blue!90!black] (0,0) rectangle ++(5,5);
    \draw [color=black,very thick] (0,0)--(5,0)--(5,5)--(-0,5)--cycle;
    \coordinate (p12) at (2.5, -0.25);
    \end{scope}

    \draw[gray] (p11) [bend right=30]to node[midway] (p11p12){} (p12);
    \draw[gray] (p11p12)--(p1);

    \begin{scope}[scale=.3,shift={(6,0)}]
    \fill [color=black,very thick,fill=gray] (0,0)--(5,0)--(5,5)--(-0,5)--cycle;
    \fill [thick, rounded corners=5,fill=orange!90!white] (0,0) rectangle ++(5,5);
    cycle;
    \fill [thick, rounded corners=13,fill=blue!90!black] (0,0) rectangle ++(5,5);
    \draw [color=black,very thick] (0,0)--(5,0)--(5,5)--(-0,5)--cycle;
    \draw[gray] (2.5,-0.25)--(p2);
    \end{scope}

    \begin{scope}[scale=.3,shift={(13,0)}]
    \fill [color=black,very thick,fill=gray] (0,0)--(5,0)--(5,5)--(-0,5)--cycle;
    \fill [thick, rounded corners=5,fill=orange!90!white] (0,0) rectangle ++(5,5);
    cycle;
    \fill [thick, rounded corners=7,fill=blue!90!black] (0,0) rectangle ++(5,5);
    \draw [color=black,very thick] (0,0)--(5,0)--(5,5)--(-0,5)--cycle;
    \draw[gray] (2.5,-0.25)--(p3);
    \draw[gray] (2.5,-0.25)--(p4);
    \draw[gray] (2.5,-0.25)--(p5);
    \end{scope}

    \begin{scope}[scale=.3,shift={(28,0)}]
    \fill [color=black,very thick,fill=gray] (0,0)--(5,0)--(5,5)--(-0,5)--cycle;
    \fill [thick, rounded corners=5,fill=orange!90!white] (0,0) rectangle ++(5,5);
    cycle;
    \fill [thick, rounded corners=7,fill=blue!90!black] (0,0) rectangle ++(5,5);
    \fill [thick, rounded corners=20,fill={rgb:red,128;green,0;blue,128}] (0,0) rectangle ++(5,5);
    \draw [color=black,very thick] (0,0)--(5,0)--(5,5)--(-0,5)--cycle;
    \draw[gray] (2.5,-0.25)--(p6);
    \end{scope}
\end{tikzpicture}
\vspace{-0.14in}
\caption{Evolution of the surface, as a new layer (plateau at height $h$) emerges. In the red interval around $L_c^{(h)}$, the top level may be $h$ (dark blue) or $h-1$ (orange), possibly both with constant probability. In the blue and green intervals, the top level is always $h$, with a larger and larger limit shape as $L$ increases until it occupies a $1-o(1)$ fraction of the box. This was established in the green interval (without identifying the limit shape) in \cite{LMS16}.}
\vspace{-0.08in}
\end{figure}

\begin{figure}
\vspace{-0.1in}
    \begin{tikzpicture}
    \begin{scope}[scale=1.45]

    \pgfmathsetmacro{\xstart}{-0.5};
    \pgfmathsetmacro{\xend}{4};
        
    \fill[fill=red!15] (1,0) rectangle (2.5,2);

    \draw[-stealth] (\xstart,0)--(\xstart,2.5);
    \draw[-stealth] (\xstart,0)--(\xend,0);
    \draw[thick,blue] (\xstart,0)--(1,0) to[out=0, in=180] (2.5,2) -- (3.5,2);
    \draw[gray,dashed] (\xstart,1)--(1.75,1);
    \draw[gray,dashed] (1.75,0)--(1.75,1);
    \draw[gray,dashed] (\xstart,2)--(2.5,2)--(2.5,0);
    \node[font=\tiny] at (\xstart-0.15,2) {$1$};
    \node[font=\tiny] at (\xstart-0.15,1) {$\frac12$};
    \node[font=\tiny] at (\xstart-0.15,0) {$0$};
    
    \draw (1.75, 0.04) -- ++(0, -0.08) node[below, font=\tiny] {$L_c^{(h)}$};

    \draw[thick,blue!50] (1.2,0) to[out=0, in=-90] (1.75,1) to[out=90, in=180] (2.3,2);

    \node[font=\tiny] at (\xend-0.1,-0.1) {$L$};
    \node[font=\tiny] at (\xstart-0.3,2.4) {$p_h(L)$};

    \draw (\xstart+0.6, 0.04) -- ++(0, -0.08) node[below, font=\scriptsize] {$\frac{\lambda_*\beta}{\hatpi_\infty(\phi_o=h)}$};

    \draw[|<->|] (1,1.25)--node[above,text=black,xshift=2pt,font=\tiny] {$L^{1/2+o(1)}$}(2.5,1.25);

    \coordinate (p1) at (0.75,0.05);
    \coordinate (p2) at (3,1.95);

    \end{scope}

     \begin{scope}[scale=.3,shift={(-11,3)}]
    \fill [color=black,very thick,fill=gray] (0,0)--(5,0)--(5,5)--(-0,5)--cycle;
    \fill [thick, rounded corners=5,fill=orange!90!white] (0,0) rectangle ++(5,5);
    cycle;
    \fill [thick, rounded corners=7,fill=blue!90!black] (0,0) rectangle ++(5,5);
    \draw [color=black,very thick] (0,0)--(5,0)--(5,5)--(-0,5)--cycle;
    \draw[gray] (2.5,-0.25)--(p1);
    \end{scope}

    \begin{scope}[scale=.3,shift={(22,3)}]
    \fill [color=black,very thick,fill=gray] (0,0)--(5,0)--(5,5)--(-0,5)--cycle;
    \fill [thick, rounded corners=5,fill=orange!90!white] (0,0) rectangle ++(5,5);
    cycle;
    \fill [thick, rounded corners=7,fill=blue!90!black] (0,0) rectangle ++(5,5);
    \fill [thick, rounded corners=20,fill={rgb:red,128;green,0;blue,128}] (0,0) rectangle ++(5,5);
    \draw [color=black,very thick] (0,0)--(5,0)--(5,5)--(-0,5)--cycle;
    \draw[gray] (2.5,5.25)--(p2);
    \end{scope}
    \end{tikzpicture}
    \vspace{-0.15in}
    \caption{The sharp transition of $p_h(L)$, a proxy for the probability that there exists a \texttt{large} $h$ level line. $p_h(L)$ is monotone, and increases from near $o(1)$ to near $1-o(1)$ within a window of size at most $(L_c^{(h)})^{1/2 + o(1)}$. It is possible that the window is even smaller (depicted in light blue), and identifying the real window size is still open (see \cref{sec:open-problems}). The  prediction $\frac{\lambda_* \beta}{\hatpi_\infty(\phi_o = h)}$ for $L_c^{(h)}$ from previous work on \SOS lies outside the critical window.}
    \label{fig:sharp-transition}
    \vspace{-0.15in}
\end{figure}
\begin{remark*}
    We further prove that the $(1+o(1))$ correction is necessary in the expression for $L_c^{(h)}$:
\begin{equation}\label{eq:lambda*-correction}
    L_c^{(h)} > \frac{\lambda_* \beta}{\hatpi_\infty(\phi_o = h)} + L^{1-o(1)}\,;
    \end{equation}
    in particular, $L = \lfloor\frac{\lambda_* \beta}{\hatpi_\infty(\phi_o = h)}\rfloor$ lies in the ``sub-critical'' regime where 
    $p_h(L) = o(1)$, as \cref{fig:sharp-transition} depicts. 
\end{remark*}
    To give context to the last remark, let us sketch why $ \frac{\lambda_* \beta}{\hatpi_\infty(\phi_o = h)}$ was the natural prediction for~$L_c^{(h)}$, appearing also in the previous work \cite{CLMST16} on the \SOS model. As argued in the pioneering work of Bricmont, El-Mellouki, and Fr\"ohlich~\cite{BEF86}, an $h$ level line $\fL$, with length $|\fL|$ and interior $\Int(\fL)$, costs $\beta|\fL|$ in energy, and gains an entropy of $\log( 1+\mathsf{p}_h)\Int(\fL) \approx \mathsf{p}_h\Int(\fL)$ where $\mathsf{p}_h$ is the probability of a spike of depth $h$, thanks to an independent choice of placing such a spike in every $x\in\Int(x)$. If  $\fL$ were an $L\times L$ square, then the energetic cost would be $4\beta L$ and, for \SOS, the entropic gain would be $e^{-4\beta h} L^{2}$, leading to the prediction of $L_c^{(h)} = \frac{4\beta}{e^{-4\beta h}}$ as the value of $L$ balancing these. This heuristic was refined in \cite{CLMST16} via replacing (a) the ansatz on the shape of $\fL$ from a box to its Wulff-type limit shape, changing $4$ into $\lambda_*$; and (b) the probability of a downward spike $\mathsf{p}_h$ by the large deviation shape under $\hatpi_\infty$ (which becomes a spike after $O(1)$ levels), changing $e^{-4\beta h}$ into $\hatpi_\infty(\phi_o = h) \sim c_0 e^{-4\beta h}$.
    The gap between $L_c^{(h)}$ and $\frac{\lambda_* \beta}{\hatpi_\infty(\phi_o = h)}$ for the \ZGFF highlights a more subtle aspect of this phenomenon. The entropy term $\hatpi_\infty(\phi_o=h)|\Int(\fL)|$ treated every $x$ in the interior of $\fL$ as having an independent choice for a downward deviation, ignoring  local correlations... Since the large deviations of the \ZGFF are shown in \cite{LMS16} to resemble harmonic functions, a downward deviation drags with it nearby points, and this effect is significant enough to change the location of $L_c^{(h)}$.
    See \cref{rem:bad-prediction} for the technical details.

\medskip

The next theorem extends \cref{thm:CL25} to all $L$, removing the exceptional set. As we discussed, for $L$ within the critical window, both events $\{\fL_0 \text{ exists}\}$ and $\{\fL_0 \text{ does not exist}\}$ may have constant probabilities. Nevertheless, when we condition on them, the local limit law is still Ferrari--Spohn. That is, in the same flavor of the discussion following \cref{thm:main-thm-limit-shape}, there are no $L$ for which the $H+1$ level line exists yet exhibits a different local behavior along the flat parts of the limit shape.

\begin{theorem} \label{thm:FS-no-exceptional}
		Fix $\beta>0$ large enough and consider $\phi\sim \pi^0_\Lambda$, the $(2+1)$\Dim \ZGFF model on $\Lambda=\llb 1,L\rrb^2$ 
        with a floor and zero boundary conditions. Set $H(L)$ and $N_n$ per \cref{eq:H-def,eq:Nn-def}.
For fixed~$m$, let $\fL_n$ ($n=0,\ldots,m$) be the \texttt{large} $(H+1-n)$ level lines, where possibly $\fL_0$ does not exist.
Let $I_n = \llb \frac{L}2- N_n^{2/3},\frac{L}2+N_n^{2/3}\rrb$, let $\rho_n(x) := \min\{y\geq 0 \,:\; (\frac{L}2+x,y)\in\fL_n\}$ be the vertical distance of~$\fL_n$ from $I_n$, and 
$Y_n(t) := N_n^{-1/3}\rho_n(t N_n^{2/3})$.
Then there exists a constant $\sigma>0$  such that:
\begin{enumerate}[1.,leftmargin=2em]
\item The law of $(Y_n(t))_{n=1}^m$ under $\pi_\Lambda^0$ converges weakly to the law of 
$m$ i.i.d.\ copies of the stationary Ferrari--Spohn diffusion $\mathsf{FS}_{\sigma}$ on~$[-1,1]$.
\item For $L$ with $\pi_\Lambda^0(\fL_0\mbox{ exists})>L^{-10}$, the law of $(Y_n(t))_{n=0}^m$ under $\pi_\Lambda^0(\cdot\mid\fL_0\mbox{ exists})$ converges weakly to that of $m+1$ i.i.d.\ copies of 
$\mathsf{FS}_{\sigma}$.
\item 
     For $L$ with $\pi_\Lambda^0(\fL_0\mbox{ does not exist}) > L^{-10}$, the law of $(Y_n(t))_{n=1}^m$ under $\pi_\Lambda^0(\cdot\mid \fL_0\mbox{ does not exist})$ converges weakly to that of $m$ i.i.d.\ copies of $\mathsf{FS}_{\sigma}$.
\end{enumerate}
The same statements hold for $\bar{Y}_n(t)$ corresponding to $\bar{\rho}_n(x)=\max\{y\leq \frac{L}2:(\frac{L}2+x,y)\in\fL_n\}$.
\end{theorem}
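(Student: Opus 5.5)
The plan is to reduce \cref{thm:FS-no-exceptional} to the machinery behind \cref{thm:CL25}, using \cref{thm:main-thm-limit-shape} to remove the exceptional set $\sB$. In \cite{ChenLubetzky25}, excluding $\sB$ served one purpose: it guaranteed w.h.p.\ that the top \texttt{large} level line is at height $H$ and macroscopic, with area $(1-o(1))L^2$. That in turn supplied a good ``ceiling'' configuration. Inside it, the $n$-th level line near the middle of the bottom side, on an $N_n^{2/3}\times N_n^{1/3}$-scale strip, could be resampled as an area-tilted random walk. The tilt is $\hatpi_\infty(\phi_o=H+1-n)=N_n^{-1}$ per unit area, up to $1+o(1)$ corrections, and the walk converges to $\mathsf{FS}_\sigma$. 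The separation of scales $N_n/N_{n-1}=e^{-\Theta(\sqrt{\beta\log L/\log\log L})}$ then decouples the lines.

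\textbf{Step 1: conditioning on which macroscopic configuration holds.} I will use \cref{thm:main-thm-limit-shape} as the a priori input valid for all $L$. With probability $1-O(L^{-10})$, each $\fL_n$ for $1\le n\le m$ is a single loop within Hausdorff distance $N_ne^{-\sqrt{\log L}}$ of $\sL_n$. Moreover, $\fL_0$ is either absent or a single loop close to $\sL_0$. In particular, on the middle stretch of the bottom side, each existing $\fL_n$ lies within $N_ne^{-\sqrt{\log L}}\ll N_n$ of $\partial\Lambda$ along a window far larger than $I_n$. Since each conditioning event in parts 2 and 3 has probability $>L^{-10}$, conditioning on it costs at most a bounded factor against these $O(L^{-10})$ error events. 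One should take the error exponent slightly larger than $10$, say $L^{-20}$, which the proof of \cref{thm:main-thm-limit-shape} should give. Under each conditioning, the configuration is then w.h.p.\ ``good'' in the sense required by the local analysis.

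\textbf{Step 2: domain Markov resampling and the area-tilt estimate.} I will fix the outermost relevant loop (or the lowest of the top $m+1$), and explore the level lines from outside on the complement of a strip of width $\asymp N_n^{2/3}$ around $I_n$. By the domain Markov property, the conditional law inside the strip is a \ZGFF with boundary data given by the explored lines. The key point is that the events $\{\fL_0\text{ exists}\}$ and its complement are measurable with respect to the global shape, once we know the loop is close to $\sL_0$. So, given the exterior, the conditioning reduces to a local event whose conditional probability is $1-o(1)$. Hence it does not distort the local law. Within the strip I will invoke the cluster-expansion and area-tilt comparison from \cite{ChenLubetzky25}, which was stated for a configuration containing a macroscopic ceiling/floor at the appropriate heights. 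For the top line $\fL_0$ at height $H+1$, the relevant tilt is $N_0^{-1}$. The only new issue is checking that the comparison holds with $H+1$ as the top height, which requires only that the lines above be absent (no \texttt{large} $H+2$ line, by \cref{thm:LMS}). Existence of a macroscopic $\fL_0$ is supplied by Step 1.

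\textbf{Step 3: convergence and independence.} The strips for different $n$ have widths $N_n^{2/3}$ and heights $N_n^{1/3}$ on separated scales. So, as in \cite{ChenLubetzky25}, a sandwiching argument decouples them: line $n$ sees line $n-1$ as essentially the wall, and line $n+1$ as essentially invisible. Each rescaled $Y_n$ then converges to an independent stationary $\mathsf{FS}_\sigma$. The statement for $\bar\rho_n$ follows by symmetry of $\Lambda$.

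\textbf{Main obstacle.} I expect the main difficulty in Step 2: showing that conditioning on $\{\fL_0\text{ exists}\}$ in the critical window, where this event may have probability bounded away from $0$ and $1$, does not bias the local picture. The plan is to argue that the existence decision is made at the macroscopic level, comparing the areas of Wulff-type shapes. Given the exterior of all the strips, existence is then determined up to an $o(1)$ conditional probability by monotonicity (FKG), using the increasing nature of existence. Local modifications in an $N_0^{2/3}\times N_0^{1/3}$ strip change the area by $N_0\cdot o(1)$, and hence change the existence probability negligibly.
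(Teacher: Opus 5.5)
Your overall skeleton matches the paper's. You use the shape theorem as a priori input, restrict by domain Markov to a mesoscopic box in which the conditioning becomes a function of the exterior, and then rerun \cite[Thms.~5.1, 6.1]{ChenLubetzky25}. You are also right that all failure probabilities must be $L^{-c\beta}\ll L^{-10}$. However, two load-bearing steps fail as written.

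\textbf{Step 1 supplies the wrong scale.} The Hausdorff bound $N_ne^{-\sqrt{\log L}}=N_n^{1-o(1)}$ only confines $\fL_n$ to a region of height $N_n^{1-o(1)}$ above $I_n$.
\begin{itemize}
\item A box that tall has boundary length $L^{1-o(1)}$. This is outside the range of the $(1+o(1))$-accurate law of \cref{prop:CE-meso}.
\item The macroscopic law carries an $e^{O(L^{1/2+o(1)})}$ error, which is useless at the Ferrari--Spohn scale.
\end{itemize}
What the local argument actually needs is that, near the middle of the side, $\fL_n$ lies within $N_n^{1/3}L^{o(1)}$ of the wall (the paper uses $N_n^{1/3}(\log L)^{16}$). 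Then the pinning points $A,B$ on the sides of an $N_n^{2/3}(\log L)^{25}$-wide box lie inside $\fL_n$, and the box lies inside $\fL_{n+1}$. This is an inner (growth) bound, not a consequence of the Hausdorff statement:
\begin{itemize}
\item for $n\ge 1$ it is \cref{thm:old-level-line-contains-Wulff};
\item for $\fL_0$ under $\pi^0_\Lambda(\cdot\mid\fL_0\text{ exists})$ it is exactly \cref{prop:shape-thm-crit-window}. That result needed new work because ``$\fL_0$ exists but does not contain $\fR_0$'' is not a monotone event.
\end{itemize}

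\textbf{Your resolution of the main obstacle does not give what Step 2 needs.} Conditioning reweights interior configurations by $\mathbf 1_{\{\fL_0\text{ exists}\}}$. So you need this indicator to be, w.h.p., a function of the exterior.
\begin{itemize}
\item An argument that local changes alter an existence probability only slightly does not give this.
\item In any case, an $N_0^{2/3}\times N_0^{1/3}$ box carries area-tilt weight $\uprho_0 N_0/N_0=\Theta(1)$, not $o(1)$. That is precisely what defines the Ferrari--Spohn scale.
\item Invoking FKG is exactly what is unavailable. The conditional measure has no FKG, and this is the real obstacle, since \cite[Lem.~4.8]{ChenLubetzky25} and the proofs of Thms.~5.1, 6.1 there rest on monotone comparisons.
\end{itemize}

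\textbf{The paper's fix is topological.}
\begin{enumerate}
\item Redefine $\fL_0$ as the \texttt{large} $H+1$ loop surrounding the center of $\Lambda$ (w.h.p.\ the same loop). Then $\{\fL_0\neq\emptyset\}$ is determined by $\phi\restriction_{\Lambda\setminus R_n}$.
\item Reveal $\phi$ off $R_n$, together with the components along $\partial R_n$: sites $\le H-n$ above $A,B$ and sites $\le H-n-1$ below. These components have size $\le\log L$ by \cref{obs:no-other-level-lines}.
\item By domain Markov, the law on the unrevealed $Q$ is the unconditioned floor measure $\pi^{\tilde\xi}_Q$, which has FKG.
\item Lower $\tilde\xi$ to Dobrushin conditions as in \cite[Lem.~4.8]{ChenLubetzky25}.
\end{enumerate}
For the lower bound one instead uses a $3TN_n^{2/3}\times N_n^{1/3}\log L$ box on the wall and reveals the $\ge H+2-n$ components.

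\textbf{Your decoupling in Step 3 is reversed.} Since $N_{n+1}\ll N_n\ll N_{n-1}$, it is $\fL_{n+1}$ that plays the wall for $\fL_n$, and $\fL_{n-1}$ that is invisible. This fixes the induction order:
\begin{itemize}
\item the upper bound is proved for $n=m,m-1,\ldots,0$, with the lower lines revealed in the exterior of $R_n$;
\item the lower bound is proved for $n=0,1,\ldots$ (starting at $n=1$ in part~3), since one needs $\fL_{n-1}$ to avoid $R_n$.
\end{itemize}
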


Finally, our proofs hold more generally for the family of $|\nabla\phi|^p$ models for $p \geq 1$, of which the \ZGFF ($p = 2$) and \SOS ($p = 1$) are special cases, given by
\begin{equation}\label{eq:grad-phi-def}
	\pi^{(p),0}_\Lambda(\phi) \propto \exp\Big(-\beta \sum_{x\sim y}|\phi_x-\phi_y|^p\Big)\,.
	\end{equation}
As before, the infinite-volume weak limit $\pi^{(p)}_\infty$ is well-defined, and leads to analogous definitions of $H(L)$ and $N_n$ per \cref{eq:H-def,eq:Nn-def}. Similarly, the surface tension is well-defined, leading to analogous definitions of the Wulff shape $\cW$ from \cref{thm:main-thm-limit-shape} and $\lambda_*$ from \cref{thm:main-thm-crit-window}.
\begin{theorem}\label{thm:grad-phi-p}
There exist absolute constants $\delta_p, \delta, c >0$ such that the following hold. The statements of \cref{thm:main-thm-limit-shape,thm:main-thm-crit-window,thm:FS-no-exceptional} extend to the $|\nabla\phi|^p$ model for any fixed $p>1$: \cref{thm:main-thm-crit-window,thm:FS-no-exceptional} extend verbatim and \cref{thm:main-thm-limit-shape} extends after modifying the Hausdorff distance bound to 
\[ d_{\cH}(\fL_n,\sL_n) < \begin{cases} N_n^{1-\delta_p} & \mbox{if }1 < p < 2\,,\\ N_ne^{-c\sqrt{\beta\log L}} & \mbox{if }p > 2\,.\end{cases}\] Finally, the statements of \cref{thm:main-thm-limit-shape,thm:main-thm-crit-window} also extend to $p=1$ (\SOS): \cref{thm:main-thm-crit-window} extends verbatim and \cref{thm:main-thm-limit-shape} extends after modifying the bound on the Hausdorff distance to $N_n^{1-\delta}$.
\end{theorem}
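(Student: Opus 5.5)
The plan is to treat \cref{thm:grad-phi-p} as a meta-theorem. I will not reprove the three theorems. Instead I will audit the proofs of \cref{thm:main-thm-limit-shape,thm:main-thm-crit-window,thm:FS-no-exceptional}, and those of the companion paper \cite{ChenLubetzky25} and of \cite{LMS16} they rely on, and isolate the finitely many model-specific inputs. Every other step is a monotonicity, FKG, cluster-expansion or random-walk argument that uses only generic properties: nearest-neighbor gradient interactions, log-concavity and FKG for $p\ge 1$, and a rigid low-temperature phase for $\beta$ large. I expect the model-specific inputs to be:
\begin{enumerate}[(a)]
\item existence of $\hatpi_\infty$ and of the surface tension $\tau$, together with a convex Wulff shape $\cW$ and the quantities $\sfw_1,\lambda_*$;
\item cluster-expansion control of contours and level lines, including ratio-of-partition-function estimates and the random-walk/Ornstein--Zernike representation of level lines that yields the area-tilted walk and its Ferrari--Spohn limit;
\item large-deviation asymptotics of $\hatpi_\infty(\phi_o=h)$, including the ratios $N_n/N_{n-1}$ and the shape of the optimal downward deviation (the ``spike'' or harmonic-like profile);
\item the decorrelation estimate controlling how much a downward deviation drags neighbors, which feeds the entropy term $\hatpi_\infty(\phi_o=h)|\Int(\fL)|$ up to errors.
\end{enumerate}

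First, for $p>1$ I will verify (a) and (b) by the standard Peierls/cluster expansion for $\beta$ large. This step is uniform in $p\ge1$, since the cost of a height jump of size $k$ is $\beta k^p\ge\beta k$. It yields a positive, analytic-in-angle surface tension and the same renewal structure used for \cref{thm:CL25}.

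Second, for (c) I will redo the variational problem of \cite{LMS16} for the cost functional $\sum|\nabla\phi|^p$. The optimal shape of a depth-$h$ deviation is then a $p$-harmonic-type profile. The rate $-\log\hatpi_\infty(\phi_o=h)$ grows superlinearly in $h$ when $p>1$, roughly like $h^{p}/\log^{p-1}h$ up to constants. This still gives the separation of scales $N_n/N_{n-1}=L^{-o(1)}$ with $N_{n-1}/N_n\to\infty$, which is what decouples the level lines and makes the verbatim extension of \cref{thm:FS-no-exceptional} and of the critical-window argument go through. The only quantitative change is the rate at which the spike picture becomes accurate, i.e.\ the size of the correlated region around a deviation. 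That rate enters the error term in the Hausdorff bound:
\begin{itemize}
\item for $p>2$ the deviations localize faster, and the $e^{-\sqrt{\log L}}$ bound survives with a $\beta$-dependent constant;
\item for $1<p<2$ the deviations spread polynomially, so the error degrades to $N_n^{1-\delta_p}$.
\end{itemize}
I will trace these errors through the growth and retreat gadgets, where they appear as the mismatch between the true entropic area-gain rate and $\hatpi_\infty(\phi_o=h)$.

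Third, for $p=1$ (\SOS), (c) is simpler: spikes are essentially single columns, with $\hatpi_\infty(\phi_o=h)\sim c_0e^{-4\beta h}$. However, consecutive scales now differ only by the constant factor $e^{4\beta}$, so the level lines are no longer in separate scales. That is why \cref{thm:FS-no-exceptional} is not claimed for $p=1$. The limit-shape and critical-window arguments need only a single level line at a time, given the lower ones via monotonicity, so they extend with a polynomial error $N_n^{1-\delta}$ coming from the interaction between neighboring lines at distance $L^{1/3+o(1)}$.

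The main obstacle will be (c)–(d) for $1<p<2$ and $p=1$. The ZGFF-specific harmonic analysis of large deviations from \cite{LMS16} must be replaced by a $p$-Laplacian analogue with explicit error rates. I would first try to obtain only crude two-sided bounds on the optimal deviation profile, and then check that the retreat-gadget comparison tolerates polynomial errors.
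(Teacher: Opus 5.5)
Your audit strategy matches the paper's at the meta level. However, you have not located the input through which $p$ actually enters, so the error-tracing you propose would not produce the stated bounds.

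In the paper, everything downstream of \cref{sec:xi-bbq} goes through verbatim once one has $p$-analogues of \cref{prop:uprho-bound} and \cref{thm:key-area}. That covers the polymer laws, the critical window, the growth/retreat gadgets and Ferrari--Spohn. The needed analogues are the floor-event estimate $\hatpi^{(p),0}_V(\phi_x\geq -h,\,\forall x\in F)=\exp(-\upxi^{(p)}_n|F|+\mathrm{err})$, with $\mathrm{err}=o(1)$ on mesoscopic domains and $O(L^{1/2+o(1)})$ on macroscopic ones, together with $\uprho^{(p)}_n=1-o(1)$.

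These two error terms control the two quantitative conclusions. The admissible mesoscopic domain size fixes the width $d=N_n^{2/3}f$ of the gadget rectangles, and the Hausdorff error is a power of $f^{-1}$ (\cref{rem:hausdorff-error-quant}). The macroscopic error fixes the window width.

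Attributing the Hausdorff error to the mismatch between the true area-gain rate and $\hatpi_\infty(\phi_o=h)$, i.e.\ to $1-\uprho_n$, misses this. For $p=2$ that mismatch is $e^{-\Theta(\log L/\log\log L)}$, far below the stated $e^{-\sqrt{\log L}}$, which comes from $f=e^{C\sqrt{\log L}}$.

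You also never check that the macroscopic error stays $L^{1/2+o(1)}$, which is needed for the window to extend verbatim. For $1<p<2$ this forces a modification of \cref{lem:xi-lower-weak-upper}, trading accuracy for range so that block side $\ell_0=L^{1/2-o(1)}$ is admissible. The unmodified balancing caps $\ell_0$ at $L^{1/2-c}$ and inflates $\Xi_2\asymp L/\ell_0$ to $L^{1/2+c}$.

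Your quantitative heuristics are also wrong, in places reversed.

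\begin{itemize}
\item The one-point rate is not $h^p/\log^{p-1}h$ away from $p=2$. For $1<p<2$ it is $(\beta\operatorname{Cap}_p(\{o\})+o(1))h^p$, since $\Z^2$ is $p$-hyperbolic. For $p>2$ the one-step ratio $\hatpi^{(p)}_\infty(\phi_o=h)/\hatpi^{(p)}_\infty(\phi_o=h-1)$ is at least $e^{-c_0\beta h}$, so the rate is at most quadratic. Your separation-of-scales conclusion survives, since any superlinear rate gives $N_{n-1}/N_n\to\infty$.
\item What governs block accuracy is the near-pair conditional probability $\hatpi^{(p)}_\infty(\phi_y<-h\mid\phi_x<-h)$. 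For $1<p<2$ this is $e^{-c\beta h^p}=L^{-c'}$. The single-site rate is then polynomially accurate, mesoscopic domains with $|\partial F|\le L^{2/3+\delta_p/5}$ are admissible, and $f$ is polynomial. So $N_n^{1-\delta_p}$ is an \emph{improvement} over $N_ne^{-\sqrt{\log L}}$, not a degradation from ``polynomial spreading''.
\item For $p>2$ the binding constraint is not faster localization but the weak one-step ratio. It only gives $L\,\hatpi^{(p)}_\infty(\phi_o<-h)\le e^{c\sqrt{\beta\log L}}$, hence the $e^{c\sqrt{\beta\log L}}$ mesoscopic range.
\item For $p=1$ the polynomial bound comes from the \textsc{sos} floor estimates \cite[Prop.~A.1, Prop.~2.12]{CLMST16}, which are valid on polynomially larger domains. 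It does not come from interactions between neighboring level lines.
\end{itemize}

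Finally, the obstacle you single out is not the real one. The large-deviation inputs the theorem needs (one-step ratios, near-pair conditional bounds, boundary effects) are already available for all $p>1$ from \cite[Thm.~7.1, Lem.~7.10]{ChenLubetzky25}. The paper's $p$-capacity analysis is used for the sharper upper bounds on $\upxi^{(p)}_n,\uprho^{(p)}_n$ that push the naive prediction for $L_c^{(h)}$ out of the window. What must actually be redone is the block-rate analysis of \cref{sec:xi-bbq} with these inputs.
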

For \SOS, the previous work \cite{CLMST16} only considered $L\notin \bigcup_h (1\pm\epsilon) L_c^{(h)}$, avoiding the near-critical (``exceptional'') side lengths.
The extension of the new results to \SOS separates the question of establishing a limit shape from that of determining whether the top level is at $H$ or $H+1$. For the former, the extension of \cref{thm:main-thm-limit-shape} establishes the \SOS limit shape for all $L$ (no exceptional side lengths); for the latter, the extension of \cref{thm:main-thm-crit-window} places the transition point in a window of width $L^{1/2+o(1)}$ as opposed to just $o(L)$. 
Note that \Cref{thm:FS-no-exceptional} does not extend to $p = 1$, and as discussed above, it is not expected to (the conjectured local limit law for \SOS is \emph{not} Ferrari--Spohn).

\begin{figure}
    \centering
    \vspace{-0.1in}
    \begin{tikzpicture}[scale=0.75]
    
    \begin{scope}
        \draw[color=black, fill=gray!10] (7,0)--(0,0)--(0,7);

        \pgfmathsetmacro{\innerarc}{5.5};
        \pgfmathsetmacro{\outerarc}{2};
        \fill[orange!20, even odd rule]
            (\innerarc,0) arc[radius=\innerarc,start angle=270,end angle=180] -- (0,0) -- cycle
            (\outerarc,0) arc[radius=\outerarc,start angle=270,end angle=180] -- (0,0) -- cycle;

        \fill[blue!8] (7,0)--(\innerarc,0) arc[radius=\innerarc,start angle=270,end angle=180] -- (0,7) -- (7,7) -- cycle;
        \draw[thick,draw=blue]
            (\innerarc,0) arc[radius=\innerarc,start angle=270,end angle=180];

        \draw[thick,draw=blue]
            (\outerarc,0) arc[radius=\outerarc,start angle=270,end angle=180];

        \coordinate (A1) at ($(\innerarc,\innerarc)+(232:\innerarc)$);
        \coordinate (B1) at ($(\innerarc,\innerarc)+(218:\innerarc)$);
        \coordinate (M1) at ($(A1)!0.5!(B1)$);
        \node[circle,scale=0.22,draw=gray,fill=green] at (A1) {};
        \node[circle,scale=0.18,draw=gray,fill=green] at (B1) {};
        \pgfmathsetseed{271828}
        \draw[thick,draw=green!55!black] ($(A1)+(-0.05,-0.05)$)
            decorate[decoration={random steps,segment length=0.6mm,amplitude=0.3mm}]{to[bend left=15]($(B1)+(-0.05,-0.05)$)};

        \coordinate (A2) at ($(\outerarc,\outerarc)+(235:\outerarc)$);
        \coordinate (B2) at ($(\outerarc,\outerarc)+(215:\outerarc)$);
        \coordinate (M2) at ($(A2)!0.5!(B2)$);
        \node[circle,scale=0.22,draw=gray,fill=green] at (A2) {};
        \node[circle,scale=0.18,draw=gray,fill=green] at (B2) {};
        \pgfmathsetseed{16180}
        \draw[thick,draw=green!55!black] ($(A2)+(0.05,0.05)$)
            decorate[decoration={random steps,segment length=0.6mm,amplitude=0.3mm}]{to[bend left=15]($(B2)+(0.05,0.05)$)};
    \end{scope}

    \draw[gray] ($(A1)+(-0.2,-0.2)$) -- ($(B1)+(-0.2,-0.2)$) -- ($(B1)+(0.2,0.2)$) -- ($(A1)+(0.2,0.2)$) -- cycle;

    \draw[gray] ($(A2)+(-0.15,-0.15)$) -- ($(B2)+(-0.15,-0.15)$) -- ($(B2)+(0.15,0.15)$) -- ($(A2)+(0.15,0.15)$) -- cycle;

    \node[color=blue,font=\small] at ($(B1)+(2,-1.25)$) {$\partial\cW_{\mathsf{g}}$};

    \node[color=blue,font=\small] at ($(B2)+(1.4,-0.55)$) {$\partial \cW_{\mathsf{r}}$}; 

    \begin{scope}[shift={(2.8,5)},scale=2, rotate=-45]
        \coordinate (Az1) at (-1,0);
        \coordinate (Bz1) at ( 1,0);
        \filldraw [fill=gray!10, draw=gray] (-1,-0.5) rectangle (1,0.5);
        \coordinate (r1) at (0,-0.5);

        \fill[orange!20] (Az1) to[bend right=25] (Bz1) --(1,0.5)--(-1,0.5)--(Az1);
        
        \draw[thick,dotted] (Az1) -- (Bz1);
        \draw[thick, draw=blue] (Az1) to[bend right=25] (Bz1);
        \pgfmathsetseed{271828}
        \draw[thick, draw=green!55!black] (Az1)
            decorate[decoration={random steps,segment length=0.5mm,amplitude=0.25mm}]{to[bend right=15](Bz1)};
        \node[circle,scale=0.18,draw=gray,fill=green] at (Az1) {};
        \node[circle,scale=0.18,draw=gray,fill=green] at (Bz1) {};
        
    \end{scope}

    \begin{scope}[shift={(8,3.6)},scale=2, rotate=-45]
        \coordinate (Az2) at (-1,0);
        \coordinate (Bz2) at (1,0);
        \draw (Az2) -- (Bz2);
        \filldraw [fill=blue!8, draw=gray] (-1,-0.5) rectangle (1,0.5);
        \fill[orange!20] (Az2) to[bend right=25] (Bz2) --(1,-0.5)--(-1,-0.5)--(Az2);
        
        \coordinate (r2) at (0,-0.5);
        \draw[thick, dotted] (Az2) -- (Bz2);
        \draw[thick, draw=blue] (Az2) to[bend right=25] (Bz2);
        
        \pgfmathsetseed{16180}
        \draw[thick, draw=green!55!black] (Az2)
            decorate[decoration={random steps,segment length=0.5mm,amplitude=0.25mm}]{to[bend right=40](Bz2)};
        \node[circle,scale=0.18,draw=gray,fill=green] at (Az2) {};
        \node[circle,scale=0.18,draw=gray,fill=green] at (Bz2) {};
        
    \end{scope}

    \draw[dashed, gray] (M2) to[bend left=30] (r1);
    \draw[dashed, gray] (M1) to[bend right=10] (r2);
    \end{tikzpicture}
    \vspace{-0.1in}
    \caption{
Identifying the size $\ell$ of the Wulff shape featured in $\fL_n$ near the corners of box: A smaller $\ell$ leads to a bigger area of the limit shape. 
A retreat mechanism compares the Wulff boundary $\partial \cW_{\mathsf{r}}$ (blue) to the location of an area-tilted random walk (green). When the random walk does not drop past the Wulff boundary, neither will $\fL_n$. Similarly, a growth mechanism says when the random walk stays below the Wulff boundary $\partial \cW_{\mathsf{g}}$, so will $\fL_n$. As a result, we can sandwich the corner of $\fL_n$ between two arcs of Wulff shapes.
    }
    \label{fig:hausdorff-dist-1}
    \vspace{-0.1in}
\end{figure}

\subsection{Proof ideas}\label{sec:proof-ideas}
In what follows, we outline the approach used to prove the new results.

\subsubsection*{The limit shape}
    The level lines of the \ZGFF above a floor feature a tradeoff between an energy cost for the size of the level line and an entropic reward for its interior area. Variational problems of this type have been well-studied, and the optimal shape in a square domain $\Lambda$ is precisely the shape $\sL_n$ from \cref{thm:main-thm-limit-shape}, featuring flat sides and four quadrants of a Wulff shape (defined via the surface tension associated to the model) at the corners (see, e.g., \cite{schonmann1996constrained}). To identify the limit shape, one must determine the size $\ell$ of the Wulff shape featured at the four corners (see \cref{fig:hausdorff-dist-1}).

 \begin{figure}
 \vspace{-0.1in}
    \begin{tikzpicture}
    \begin{scope}[scale=0.75]

    \pgfmathsetmacro{\innerarc}{5.5};    
    \coordinate (a) at ($(\innerarc,\innerarc)+(200:\innerarc)$);
    \coordinate (b) at ($(\innerarc,\innerarc)+(250:\innerarc)$);
        
    \draw [color=black,fill=gray!10] (7,0)--(0,0)--(0,7);

    \coordinate (x) at (1.53,1.52);

    \fill[blue!8] (7,0)--(\innerarc,0) arc[radius=\innerarc,start angle=270,end angle=180] -- (0,7) -- (7,7) -- cycle;
        \draw[thick,draw=blue]
            (\innerarc,0) arc[radius=\innerarc,start angle=270,end angle=180];
        
    \draw [draw=gray, rotate=-45] ($(a)-(0,0.8)$) rectangle ($(b)+(0,0.8)$);

    \node[circle,scale=0.2,fill=blue!75!black] at (a) {};
    \node[circle,scale=0.2,fill=blue!75!black] (b) at (b) {};

    \draw (a)--(b) node[midway] (m) {};
         
    \draw [|<->|,gray,shorten <= 1.25pt, shorten >= 0.75pt] ($(x)+(0.15,-0.15)$)--coordinate[pos=0.45](dev1) ($(m)+(0.15,-0.15)$);

    \draw [|<->|,gray,shorten <= 7.5pt, shorten >= 0.5pt] ($(x)+(-0.24,0.24)$)--coordinate[pos=0.75] (dev2) ($(m)+(-0.24,0.24)$);

    \draw [|<->|,gray] ($(a)+(0.7,0.7)$)--($(b)+(0.7,0.7)$) node[pos=0.4,above,rotate=-35,text=black,font=\small] {$d$};

    \pgfmathsetseed{314159}
    \draw [thick, draw=green!55!black] (a) decorate[decoration={random steps,segment length=1.25mm,amplitude=0.5mm}]{--($(x)+(0.2,0.2)$)} decorate[decoration={random steps,segment length=1.25mm,amplitude=0.5mm}]{-- (b)};

    \node [text=blue,font=\small] (dev1label) at (5,2) {$d^2/\ell$};

    \node[text=green!55!black,font=\small] (dev2label) at (2,4.75) {$[\mu-f\sigma, \mu+f\sigma]$};

    \draw[very thin, gray] (dev1label)--(dev1);

    \draw[very thin, gray] (dev2label)--(dev2);
    
        \end{scope}
    \end{tikzpicture}
    \vspace{-0.15in}
    \caption{Take a chord of length $d = N_n^{2/3}f$ for some $f = f(L) \to \infty$ as $L \to \infty$. The Wulff shape of length $\ell$ dips $\asymp d^2/\ell$ below the midpoint of the chord, while the area-tilted random walk dips to mean $\mu \asymp N_n^{1/3} f^2$ with fluctuations $\sigma = N_n^{1/3}\sqrt{f} = o(\mu)$. For $\cW_{\mathsf{g}},\cW_{\mathsf{r}}$
    depicted in \cref{fig:hausdorff-dist-1}, the optimal choices of their sizes $\ell_{\mathsf{g}},\ell_{\mathsf{r}}$ satisfy $d^2/\ell_{\mathsf{g}} = \mu-f\sigma$ and $d^2/\ell_{\mathsf{r}} = \mu+f\sigma$.}
    \label{fig:hausdorff-dist-3}
    \vspace{-0.1in}
\end{figure}
    The following heuristic was at the heart of \cite{CLMST16}, both answering the question above on the scale of the Wulff shape, and yielding the $L^{1/3+o(1)}$ upper bound on the fluctuations at the flat portions of the \SOS level-line limit.
    Suppose we know that locally, between two points $A, B$ distance $d$ apart, the $(H-n)$ level line $\fL_n$ can be coupled to a random walk with area tilt of strength $1/N_n$. Without the area tilt, the normalized area above $\fL_n$ is $O(d^{3/2})$. Hence when $d^{3/2} \gg N_n$, the area tilt will push $\fL_n$ downwards, so that the mean distance $\mu$ of $\fL_n$ from $\overline{AB}$ is of a larger order than its fluctuations $\sigma$. Explicitly, if $d = N_n^{2/3}f$ for $f = f(L) \to \infty$ as $L \to \infty$, then $\mu = O(N_n^{1/3}f^2)$ and $\sigma = O(N_n^{1/3}f^{1/2})$. Hence, the larger $d$ is, the smaller $\sigma/\mu$ will be.

    Now place $A, B$ somewhere along the bottom left boundary of a Wulff shape $\cW$ of length $\ell$, as in \cref{fig:hausdorff-dist-3}. The distance between $\partial \cW$ and $\overline{AB}$ in the middle is $\asymp d^2/\ell$. Hence, if $d^2/\ell > \mu+f\sigma$, then even if at $A, B$ the level line $\fL_n$ reaches all the way up to $\partial \cW$, it still will stay above $\partial \cW$ in the middle. Hence, such a Wulff shape overestimates how close $\fL_n$ is to the corner of $\Lambda$, and we can retreat the Wulff shape away from the corners by taking a larger and larger $\ell$ until we reach $\cW_{\mathsf{r}}$ with radius $\ell_{\mathsf{r}} := d^2/(\mu+f\sigma)$. Similarly, we can grow any Wulff shape of length $\ell$ such that $d^2/\ell < \mu-f\sigma$ towards the corner, until we reach $\cW_{\mathsf{g}}$ with radius $\ell_{\mathsf{g}} := d^2/(\mu - f\sigma)$. 

    The growth procedure was carried out rigorously in \cite{ChenLubetzky25} for the \ZGFF, giving ``half the limit.'' However, a serious obstacle stood in the way of deriving a matching retreat gadget. In the next subsection we explain this obstacle in more detail, as well as our new approach to circumvent it.
    
    Provided a matching retreat gadget can be established, one would find, as depicted in \cref{fig:hausdorff-dist-1}, that~$\fL_n$ is sandwiched between two arcs of $\partial \cW_{\mathsf{g}}$ and $\partial \cW_{\mathsf{r}}$, which gives the limit shape if the Hausdorff distance $X$ between the two arcs is $o(L)$. To compute the distance $X$, as per \cref{fig:hausdorff-dist-2} we see that $X = O(\ell_{\mathsf{g}} - \ell_{\mathsf{r}})
    $. Since $\sigma = o(\mu)$, up to first order, we have $\ell_{\mathsf{g}} = O(d^2/\mu) = O(N_n)$, so that $X = O((1-\frac{\ell_{\mathsf{r}}}{\ell_{\mathsf{g}}})N_n)$. Then, the expressions for $d^2/\ell_i$ give $\frac{\ell_{\mathsf{r}}}{\ell_{\mathsf{g}}} = \frac{\mu-f\sigma}{\mu+f\sigma} = 1 - O(\frac{f\sigma}{\mu})$. Combining these, $X = O(\frac{f\sigma}{\mu}N_n) = o(N_n)$, and showing $\frac{\sigma}\mu < e^{-\sqrt{\log L}}$ leads to the bounds of \cref{thm:main-thm-limit-shape}.

\begin{figure}
    \centering
    \vspace{-0.1in}
    \begin{tikzpicture}[scale=0.75,font=\small]
        
        \pgfmathsetmacro{\innerarc}{5.5};
        \pgfmathsetmacro{\outerarc}{2};
    
    \begin{scope}
        \draw[color=black, fill=gray!10] (7,0)--(0,0)--(0,7);

        \fill[orange!20, even odd rule]
            (\innerarc,0) arc[radius=\innerarc,start angle=270,end angle=180] -- (0,0) -- cycle
            (\outerarc,0) arc[radius=\outerarc,start angle=270,end angle=180] -- (0,0) -- cycle;

        \fill[blue!8] (7,0)--(\innerarc,0) arc[radius=\innerarc,start angle=270,end angle=180] -- (0,7) -- (7,7) -- cycle;
        \draw[thick,draw=blue]
            (\innerarc,0) arc[radius=\innerarc,start angle=270,end angle=180];
        \draw[thick,draw=blue]
            (\outerarc,0) arc[radius=\outerarc,start angle=270,end angle=180];

        \coordinate (A1) at ($(\innerarc,\innerarc)+(232:\innerarc)$);
        \coordinate (B1) at ($(\innerarc,\innerarc)+(218:\innerarc)$);
        \coordinate (M1) at ($(A1)!0.5!(B1)$);
        \node[circle,scale=0.22,draw=gray,fill=green] at (A1) {};
        \node[circle,scale=0.18,draw=gray,fill=green] at (B1) {};
        \pgfmathsetseed{271828}
        \draw[thick, draw=green!55!black] (A1)
            decorate[decoration={random steps,segment length=0.5mm,amplitude=0.5mm}]{to[bend left=15](B1)};

        \coordinate (A2) at ($(\outerarc,\outerarc)+(235:\outerarc)$);
        \coordinate (B2) at ($(\outerarc,\outerarc)+(215:\outerarc)$);
        \coordinate (M2) at ($(A2)!0.5!(B2)$);
        \node[circle,scale=0.22,draw=gray,fill=green] at (A2) {};
        \node[circle,scale=0.18,draw=gray,fill=green] at (B2) {};
        \pgfmathsetseed{16180}
        \draw[thick, draw=green!55!black] (A2)
            decorate[decoration={random steps,segment length=0.5mm,amplitude=0.5mm}]{to[bend left=15](B2)};
    \end{scope}

    \node[circle,scale=0.3,fill=gray,label={[yshift=-7pt, xshift=10pt]$o_{\textsf{g}}$} ] (o1) at (\innerarc,\innerarc) {};

    \draw[dashed,black] (o1)--(\innerarc,0) node[midway,xshift=8pt] () {$\ell_{\mathsf{g}}$};

    \node[circle,scale=0.3,fill=gray,label={[yshift=-7pt, xshift=10pt]$o_{\textsf{r}}$} ] (o2) at (\outerarc,\outerarc) {};

    \draw[dashed,black] (o2)--(\outerarc,0) node[pos=0.7,xshift=8pt] () {$\ell_{\mathsf{r}}$};

    \draw[dashed] (o1)--(M2);

    \draw[thick,|<->,magenta!50!blue] ($(o1)+(-0.2,0.2)$)--($(M1)+(-0.2,0.2)$) node[midway,xshift=-4pt,yshift=4pt,rotate=45,font=\scriptsize] () {$c\, \ell_{\mathsf{g}}$};

    \draw[ultra thick,|<->|,magenta!50!blue] ($(M1)+(-0.2,0.16)$)--($(M2)+(-0.2,0.2)$) node[midway,xshift=-5pt,yshift=5pt] () {$X$};

    \draw[thick,|<->|,red!75!black] ($(o2)+(0.2,-0.2)$)--($(M2)+(0.23,-0.17)$) node[midway,xshift=3pt,yshift=-4pt,rotate=45,font=\scriptsize] () {$c\, \ell_{\mathsf{r}}$};

    \end{tikzpicture}
    \vspace{-0.1in}
    \caption{The Hausdorff distance $X$ between the Wulff shapes $\cW_{\mathsf{g}}$ and $\cW_{\mathsf{r}}$ driven by the growth and retreat mechanisms, respectively. If $\cW_{\mathsf{g}},\cW_{\mathsf{r}}$ have sizes $\ell_{\mathsf{g}}, \ell_{\mathsf{r}}$, then for some constant $c=c(\beta)$ with $1 < c < \sqrt{2}$ we have $X = (\ell_{\mathsf{g}} - \ell_{\mathsf{r}})(\sqrt{2}-c)$. For an optimal choice of $\ell_{\mathsf{g}},\ell_{\mathsf{r}}$ via the mechanism depicted in \cref{fig:hausdorff-dist-3}, one gets $X \asymp N_n\frac {f\sigma}\mu = N_nf^{-1/2} = o(N_n)$.}
    \label{fig:hausdorff-dist-2}
    \vspace{-0.1in}
\end{figure}

\subsubsection*{Area-tilted random walk representation on larger domains} As mentioned above, it is crucial to show that locally (i.e., in a mesoscopic box), $\fL_n$ behaves like a random walk with an area tilt of strength $1/N_n$. The main challenge of this paper is to extend this picture to larger scales, even up to the macroscopic scale. This directly depends on estimating the probability that $\phi_x \geq 0$ for all $x$ inside a domain $D$ with boundary conditions $H+1-n$. In the low temperature setting, decorrelation estimates imply that this floor event is realized nearly independently for each $x$, so \begin{align}\label{eq:pf-ideas-estimate}\hatpi_D^{H+1-n}(\phi_x \geq 0,\forall x \in D) \approx \prod_{x \in D}\hatpi_D^{H+1-n}(\phi_x \geq 0)\,.
\end{align}However, the error in this approximation grows with the size of the domain $|D|$, and the error produced by the standard inclusion-exclusion analysis becomes too large once $|D| \geq L^{1+o(1)}$.

From the above discussion on the limit shape, we need to exhibit this random walk coupling in a box $R$ of width $d\gg N_n^{2/3}$, and to avoid pinning issues we require the height of $R$ to be at least the same order (stemming from the fact that the pinning effect can be  $\exp(c d)$, which, if the height is $o(d)$, would overwhelm the large deviation probability of the random walk). 
This leads to a minimum area of $|D| \gg N_n^{4/3}$, far exceeding the threshold $L^{1+o(1)}$ allowed by prior arguments. For comparison:
\begin{enumerate}
    \item the same requirement of $|D|\gg N_n^{4/3}$ would have been needed in \cite{ChenLubetzky25} were it not for fortuitous monotonicity arguments: by FKG, there we could relax the floor constraints on some of the area (reducing $|D|$ down to $L^{1+o(1)}$) in the context of a growth gadget, but this would go in the wrong direction in the context of a retreat gadget.
\item In the \SOS model, the error in \cref{eq:pf-ideas-estimate} is much smaller, leading to a less restrictive domain requirement of $|D| \geq L^{3/2}$, easily allowing for $D$ with $|D|\approx L^{4/3}$ in~\cite{CLMST16,CKL24}.
\end{enumerate}
In fact, not only do we need to allow $|D| \approx L^{4/3}$, but to study the critical window about $L_c^{(h)}$ where the $h$ level line first appears, we will need to treat $|D|$ all the way up to $O(L^2)$. 

To this end, we need a finer analysis of the probability $\hatpi_D^{H-n}(\phi_x \geq 0,\forall x \in D)$. Rather than attempt to reduce the error in \cref{eq:pf-ideas-estimate}, we take a different approach and say that the floor event occurs nearly independently across (but not within) mesoscopic $\ell \times \ell$ boxes. (A choice of $\ell =1$ corresponds to the old approach described above.) That is, we express the above probability via
\[\xi_{\ell,h} := -\frac{1}{\ell^2}\log\hatpi_\infty\left( \phi_x \geq -h
    ,\,\forall x\in Q_{\ell} := \llb 1,\ell\rrb^2 
    \right)\,,\]
for an appropriately chosen $\ell = \ell_* \asymp \sqrt L$ (see \cref{thm:key-area} for the precise statement). Handling mesoscopic vs.\ macroscopic domains $D$ calls for different choices of $\ell$ to control the associated errors. We do so by first obtaining a generic bound applicable to a range of $\ell$'s (\cref{prop:key-area-estimate}), and thereafter showing that $\xi_{\ell,h}$ does not change much, so we can use the specific $\xi_{\ell_*,h}$ across these various domains $D$. 
From there, we conclude that the area tilt felt by $\fL_n$ on larger scales is given by $\uprho_n/N_n$ for a new parameter $\uprho_n = 1-o(1)$, vs.\ the old approach of $\ell=1$ that had a tilt of $1/N_n$. (See below for a further discussion on $\uprho_n$ and its effect on the location of $L_c^{(h)}$.)

\subsubsection*{Discontinuity of the transition} Once the key estimate of \cref{thm:key-area} is obtained, the proof of the limit shape of the level lines for $L$ away from $L_c^{(h)}$, as well as a preliminary estimate on $L_c^{(h)}$, follows by combining the methods used for the \SOS model in \cite{CLMST16} with the disagreement polymer machinery established in \cite{ChenLubetzky25} to treat the \ZGFF (and the $|\nabla\phi|^p$ for $p > 1$). We push these results further in two directions (new even for \SOS as far as the critical window and global limit are concerned):
\begin{enumerate}[(i)]
    \item We tighten the estimate on the location of $L_c^{(h)}$ to a near square root window. This involves a quantitative estimate on a functional $\cF$ on loops, which captures a tradeoff between the loop length and interior area (see \cref{lem:linear-critical-window}). \item \label{it:extend-global-local-no-window} We extend both the global limit shape and local scaling limit to all $L$, no longer excluding a window around $L_c^{(h)}$. For $L$ close to $L_c^{(h)}$, we do not know if the $h$ level line exists, so we must work on the measures $\pi_L(\cdot \mid \text{$h$ level line exists})$ and $\pi_L(\cdot \mid \text{$h$ level line does not exist})$. These conditional measures no longer have FKG, so the previous proofs need to be modified so as to be less reliant on monotonicity.

\end{enumerate}
Our analysis in \cref{it:extend-global-local-no-window} implies that the moment the $h$ level line appears, it attains the limit shape $\sL_n$ occupying nearly the whole box $\Lambda$, and features Ferrari--Spohn fluctuations along the flat sides of $\sL_n$. In particular, there is no critical phase around $L_c^{(h)}$ where the $h$ level line only occupies, say, a third of the space. Roughly speaking, this is because if the area inside an $h$ level line $\fL$ is smaller than a threshold $C_1L^2$, it cannot exist as the gain from the interior of $\fL$ is too small compared to the gradient cost. At the same time, if the interior area of $\fL$ is larger than another threshold $C_2L^2$, then a growth procedure shows that $\fL$ must contain the entire aforementioned limit shape. (More precisely, these thresholds relate not just to the interior area but also to the shape contained by $\fL$. See \cref{lem:prelim-ll-bounds} and the definition of $\cE_{x, \ell}$ in \cref{clm:crit-window-growth} for details.) The dichotomy on the $h$ level line is due to the fact that these thresholds satisfy $C_1 > C_2$. (In fact, $C_2 \leq \frac14+\epsilon_\beta$, pertaining to the level line $\fL_0$ encompassing a Wulff shape of diameter $\approx \frac12 \sfw_1 N_0/L $,
whereas $C_1 \geq 1-\epsilon_\beta$.) 

\subsubsection*{Analysis of the area tilt $\uprho_n$} 
As we mentioned above, the area tilt $\uprho_n/N_n$ as opposed to $1/N_n$ was a result of the new approach of controlling \cref{eq:pf-ideas-estimate} via the tail for the minimum in a mesoscopic square of side length $\sqrt{L}$, as opposed to the singleton rate $\hatpi_\infty(\phi_o \geq -h)$. Initially, the authors
viewed this strategy as merely a technical workaround for the large error associated with \cref{eq:pf-ideas-estimate}. 
However, as it turns out, $ (1-\uprho_n)L \geq L^{1-o(1)}$. In view of the upper bound $L^{1/2+o(1)}$ on the critical window, this positions the original prediction for $L_c^{(h)}$ (using an area tilt of $1/N_0$) in the subcritical regime, where the probability of a \texttt{large} $h$ level line is $o(1)$. We further extended this separation of the a-priori prediction for $L_c^{(h)}$ from the critical window to the $|\nabla\phi|^p$ model for all $1 < p \leq 2$.

Along the way, we prove two results which are essential for the estimates on $\uprho_n$, and may be of independent interest.
Firstly, for the \ZGFF ($p = 2$), we show that $\hatpi_\infty(\cdot \mid \phi_x = h, \forall x \in B_r(o))$ has a weak form of rigidity about the discrete harmonic function $\phi^*$ which is $h$ on $B_r(o)$ and 0 on $\partial B_R(o)$ for large~$R$. In the $\R$-valued setting---the discrete \GFF---one can look at $\tilde\phi =  \phi - \phi^*$, which is nothing but a discrete \GFF with $0$ boundary conditions. If the same were true in the \ZGFF, one would be able to infer rigidity of $\phi$ about $\phi^*$ for large $\beta$. Unfortunately, 
in the $\Z$-valued setting, said $\tilde\phi$ becomes a field taking values in $\{a_i + \Z\}_{i \in \Lambda}$ for some shifts $a_i \in [0, 1)$. Since the fractional parts $a_i$ are complicated (the complement of those of $\phi^*$), they could destroy the full rigidity of~$\tilde\phi$ (see, e.g.,~\cite{garban2023statistical}, where such fields are studied in detail). Nevertheless, we are able to show rigidity beyond height $h/\log h$ (see \cref{prop:pi-y-h-phi^*-rigid}) by controlling the corresponding integer rounding errors. Secondly, for $1 < p < 2$, we show (see \cref{lem:LD-capacity}) that for any finite set $A$, the large deviation rate function for the event that $\phi\restriction_A = h$ is given by the $p$-capacity of $A$.

\subsection{Organization/Reader's guide}
The paper is organized as follows.
In \cref{sec:prelim} we list the required preliminaries, mostly focusing on disagreement polymers and the Wulff shape. \cref{sec:xi-bbq} proves estimates on the probability of the floor event, leading to the area prefactor of $\uprho_n/N_n$. \cref{sec:refined-CE} uses these area estimates and cluster expansion to obtain the law of the disagreement polymers. \cref{sec:transition-window} proves \cref{thm:main-thm-crit-window}, showing the critical window has width $\leq L^{1/2+o(1)}$. \cref{sec:limit-shape} proves \cref{thm:main-thm-limit-shape}, showing the limit shape of the level lines is a translation of Wulff shapes. \cref{sec:limit-law} proves \cref{thm:FS-no-exceptional}, extending the Ferrari--Spohn limit law to all $L$. \cref{sec:extend-to-p} proves \cref{thm:grad-phi-p}, extending the main results to the $|\nabla\phi|^p$ model for $p \geq 1$.

Several of the main results are of fairly different flavors---targeting different phenomena in the \ZGFF and featuring proofs that build on different methods. We include the following guide for the benefit of a reader interested in one of these results in particular.
\begin{itemize}
    \item For the proof of \cref{thm:main-thm-limit-shape,thm:FS-no-exceptional} and the resulting discontinuous transition, one should read  \cref{sec:refined-CE} for the estimates on the polymer law, and then read \cref{sec:limit-shape,sec:limit-law}.

    \item  For the proof of \cref{thm:main-thm-crit-window} that the critical window has size at most $L^{1/2+o(1)}$, one should read particularly \cref{lem:prob-of-contour,lem:linear-critical-window} from the preliminaries, and then \cref{sec:refined-CE,sec:transition-window}.

    \item For the discussion on the new area tilt of $\uprho_n/N_n$ vs.\ $1/N_n$ and its effect on the value of $L_c^{(h)}$, one should read \cref{sec:xi-bbq} and then the start of \cref{sec:transition-window} (in particular \cref{rem:bad-prediction}). Thereafter, one should read \cref{sec:extend-to-p} to see how this varies for different $p$.  

    \item For the extension of the area-tilted random walk representation to larger domains, one should read \cref{sec:xi-bbq}, and may skip \cref{sec:sharp-UB-uprho} (the effect of a $\uprho_n/N_n$ vs.\ $1/N_n$ area term).

    \item For a proof that the \ZGFF conditioned on $\phi_o = h$---or more generally, conditioned on $\phi_x=h$ for all $x\in B_r(o)$, a ball  around the origin---is rigid beyond scale $h/\log h$ about the harmonic function solving the corresponding $\R$-valued Dirichlet problem, see \cref{prop:pi-y-h-phi^*-rigid}.
    
    \item For the extension to the $|\nabla\phi|^p$ models for $1 \leq p < \infty$, see \cref{sec:extend-to-p}.
\end{itemize}

\subsection{Open problems}\label{sec:open-problems}
The results in \cref{thm:main-thm-limit-shape,thm:FS-no-exceptional} yield, for all $L$, the global limit shape of the \ZGFF measure $\pi_\Lambda^0$ and the Ferrari--Spohn law of the local fluctuations around said limit in its flat portion. It thus remains to establish the local limit law of the level lines about the corner of the limit shape. The long-standing conjecture (for \ZGFF as well as other related models) is that the fluctuations $O(L^{1/2})$, and moreover the scaling limit should be Brownian motion. For versions of this conjecture see, e.g., \cite[Eq.~(3.5) and the paragraph below it]{SchnmannShlosman95} in the 2\Dim Ising model with an external field, \cite[\S1.2]{CLMST16} in \SOS and \cite[\S1.5]{LMS16} in \ZGFF.
 
In light of \cref{thm:main-thm-crit-window}, it would also be interesting to establish the correct order of the critical window for the \ZGFF, or more generally for the $|\nabla\phi|^p$ models at any $p\geq 1$. Improving our upper bound of $L^{1/2+o(1)}$ on the width to $O(\sqrt L)$ for \ZGFF or \SOS would seem to require a  new idea to overcome the $L^{1/2+o(1)}$ error in the law of the level lines in a macroscopic scale (see \cref{cor:CE-macro-uniformBC-smallL}). Any lower bound on the width of critical window would be interesting.

\section{Preliminaries}\label{sec:prelim}
In this section, we will define level lines and disagreement polymers for height functions, review prior results needed for this paper, and set up general notation. We will consider various subgraphs of $\Z^2$, as well as dual-edges (referred to as bonds) of $(\Z^2)^*$. If $\Lambda$ is a subgraph of $\Z^2$ and $\gamma$ is a collection of bonds, then $\Lambda \setminus \gamma$ refers to the result of removing from $\Lambda$ every edge for which its dual-edge is in $\gamma$. For any point $u\in \R^2$, we will denote its $x, y$ coordinates by $u_1, u_2$ respectively. For a set of vertices $U \subset \Z^2$, let $\partial U$ be the boundary bonds of $U$, i.e., the set of bonds dual to $uv$ with $u\in U$ and $v\notin U$, and let 
$\partialvtx U$ be the external vertex boundary of $U$, i.e., every vertex $v\notin U$ adjacent to some $u\in U$. Occasionally we will refer to two vertices as $*$-adjacent, which will mean that their $L^\infty(\R^2)$ distance is 1. Finally, any object, such as a level line, disagreement polymer or a connected component, will be called \texttt{large} if its size is at least $\log L$. 

\subsection{Level lines and disagreement polymers}
We begin by formally defining the level lines.
\begin{definition}[level lines] \label{def:level-lines}
    Given a height function $\phi$ on a domain $V \subset \Z^2$, its $h$ level lines are defined as the collection of loops obtained from bonds that are dual to nearest-neighbors $x\sim y$ such that $\phi_x < h$ and $\phi_y \geq h$. As standard in the literature, we separate these into a collection of self-avoiding paths and loops via a \textsc{northeast} splitting rule (in case $4$ dual-edges share an endpoint, one splits them apart along the \textsc{northeast} diagonal). 
  For an $h$ level-line loop $\fL$, denote its length (number of bonds) by $|\fL|$, its interior (the finite connected region in $\R^2 \setminus \fL$) by $\Int(\fL)$, and the area of its interior by $A(\fL)=|\Int(\fL)|$. We call $\fL$ an \emph{up-loop} if $\phi_x \geq h$ for every $x\in \Int(\fL)$ adjacent to an edge dual to some $b\in\fL$, and otherwise ($\phi_x <h$ for all such $x$) we call it a \emph{down-loop}.
    \end{definition}
The key object which we will study are not the level lines themselves but rather ``disagreement polymers'' as introduced in \cite{ChenLubetzky25}, which are connected components of dual edges corresponding to height differences. The primary issue with studying an $h$ level line $\fL$ directly is that it induces boundary conditions of $\geq h$ and $\leq h-1$ above and below $\fL$, as opposed to $=h$ and $=h-1$ boundaries. We would like to use a cluster expansion analysis which studies the law of $\fL$ by paying a cost along the length of $\fL$, and then separately analyzing the cost of height configurations on the domains above and below $\fL$. In the case of the \SOS model ($p = 1$), the linear penalty on gradients allows one to split up a gradient cost of $|x-y|$ for $x \geq h$ and $y \leq h-1$ into $|x-h| + |h - (h-1)|+ |(h-1) - y|$. This fits the described framework -- every edge of $\fL$ contributes $|h - (h-1)| = 1$, and then the terms $|x-h|$ and $|(h-1) - y|$ can be analyzed separately. However, this decomposition clearly fails when the cost of the gradient is nonlinear, highlighting why boundaries of $\geq h$ and $\leq h-1$ are difficult to work with for the $|\nabla\phi|^p$ models with $p > 1$.

We next summarize the definition and key properties of disagreement polymers as proven in \cite{ChenLubetzky25}.
	\begin{definition}[Disagreement polymer]\label{def:gamma}
		Let $\phi$ be any $\Z$-valued height function on a domain $V \subset \Z^2$. Associate to each bond $e\in (\Z^2)^*$, dual to some edge $(x,y)\in\Z^2$, the gradient $(\nabla \phi)_e := \phi_x - \phi_y$, where $x$ is taken to be the \textsc{north} vertex if $(x,y)$ is vertical and the \textsc{west} vertex if $(x,y)$ is horizontal. Call $e$ a \emph{disagreement bond} of $\phi$ if $(\nabla\phi)_e\neq 0$. A \emph{disagreement polymer}~$\gamma$ is a (maximal) connected component of the disagreement bonds of $\phi$. For such a polymer, let $D_i$ be the connected components of $V \setminus \gamma$. By construction, within each $D_i$, all the vertices that are $*$-adjacent to $V\setminus D_i$ must have the same height $h_i$ in~$\phi$. Hence, we can view each $\gamma$ as a triple $(\gamma, \{D_i\}, \{h_i\})$, and it makes sense to talk about the components $D_i$ of $\gamma$ and their corresponding heights $h_i$.
	\end{definition}

\begin{figure}
    \centering
    \includegraphics[width=0.48\linewidth]{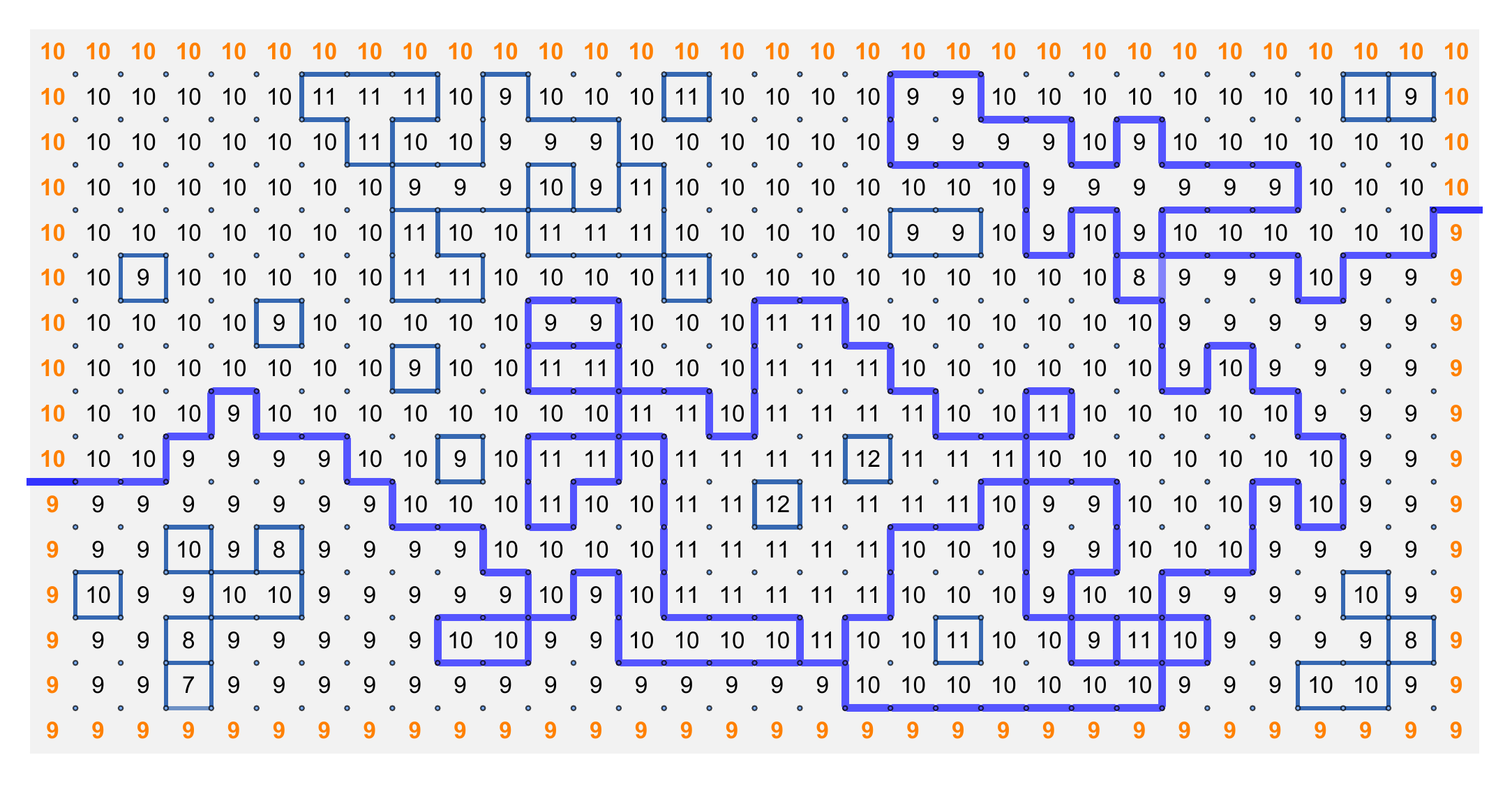}
    \includegraphics[width=0.48\linewidth]{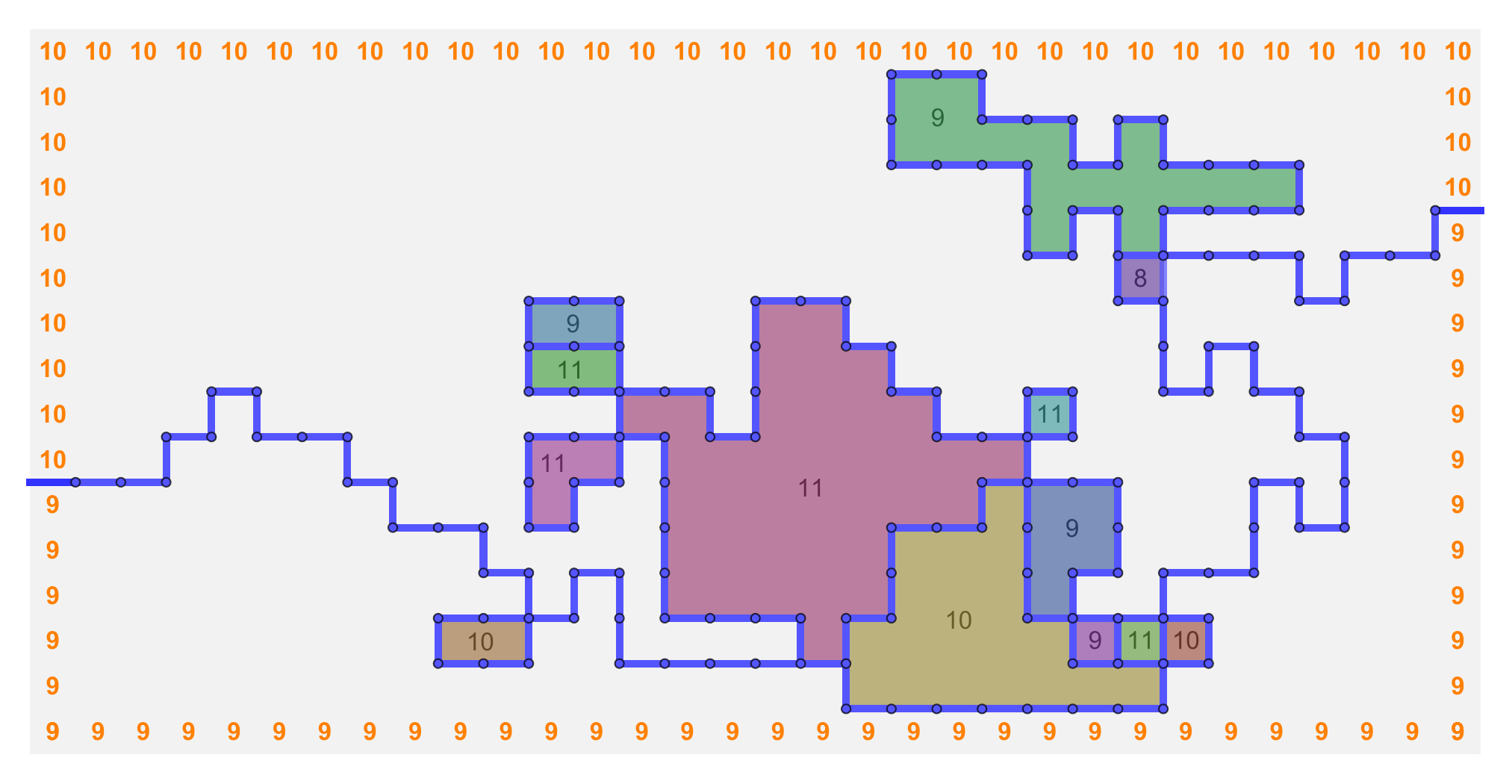}
    \includegraphics[width=0.48\linewidth]{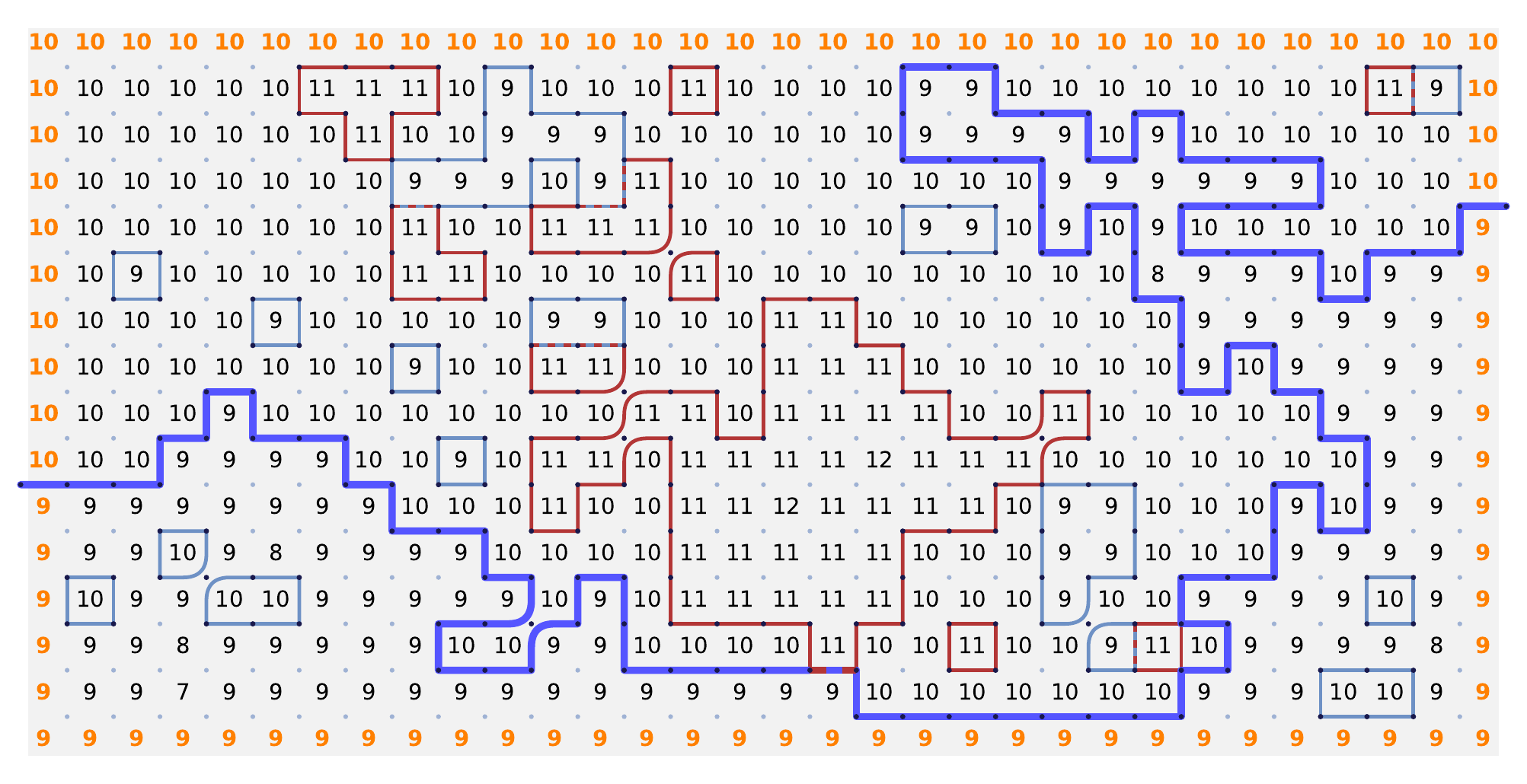}
    \vspace{-0.1in}
    \caption{Comparison of the disagreement polymer (top left: in thick blue, the bonds that $\gamma$ consists of; top right: the regions $D_i$ are marked along with the corresponding $h_i$) and the level lines (bottom: the $10$ level lines in blue, the $11$ level lines in red) in the same $\phi$.}
    \label{fig:disagree-polymer-level-line}
\end{figure}

Under Dobrushin-type boundary conditions $h+1$ and $h$, the $h$ level lines $\fL$ from \cref{def:level-lines} are a subset of the corresponding disagreement polymer $\gamma$ from \cref{def:gamma}; see \cref{fig:disagree-polymer-level-line}.

    It will be convenient to reference the sets
	$\Delta^*_\gamma := \{ u\in V\,:\; \dist(u,\gamma) \leq 1/\sqrt{2}\}$, and $D_i^\circ := D_i \setminus \Delta^*_\gamma$.
\begin{definition}[Energy, length and decorations of a disagreement polymer]\label{def:disagree-polymer-len-energy-decor}
		Let $\gamma$ be a disagreement polymer. Its \emph{length} $\sN(\gamma)$ and \emph{energy} 
		$\sE_\beta(\gamma)$ are defined as
		\[\sN(\gamma)=\sum_{e\in\gamma}|(\nabla \phi)_e|\quad,\quad 
		\sE_\beta(\gamma) := \beta\sum_{e \in \gamma} |(\nabla \phi)_e|^2\,.\] 
        
		To describe the law of the disagreement polymer, we will use a family of decoration functions $\Phi$ on connected subsets $\sfW\subset \Z^2$ satisfying the following properties.
		\begin{enumerate}[(i)]
			\item \label{it:phi(W)-symmetric}
			The function $\Phi$ is invariant under translations, rotations by $\pi/2$, and reflection across the $x$ and $y$ axes. 
			
			\item There exists a constant $C > 0$ such that for every $\sfW$, we have 
			$
			|\Phi(\sfW)| \leq \exp(-(\beta - C)\bd(\sfW))
			$, 
			where $\bd(\sfW)$ is the size of the smallest connected set of bonds in $(\Z^2)^*$ containing all the boundary bonds of $\sfW$.\label{it:phi(W)-decay-bound}
		\end{enumerate}
	\end{definition}
    When the boundary conditions induce a unique disagreement polymer $\gamma$, the law of $\gamma$ in the $\ZGFF$ model with no floor can be written in terms of $\sE_\beta$ and $\Phi$. The effect of $\Phi$ will be in the form of a sum over $\Phi(\mathsf W)$ for $\mathsf W \subset V$ that intersects $\Delta^*_\gamma$. That is, for a general set of dual bonds $\gamma$, let
\begin{equation}\label{eq:IU(gamma)-def}
    \fI_U(\gamma) := \sum_{\substack{\sfW \subset U\\  \sfW \cap \Delta^*_\gamma \neq \emptyset}}\Phi(\sfW)\,.
\end{equation}
We have the following representation for $\hatpi_V^\eta(\gamma)$ under $0/1$ Dobrushin-type boundary conditions $\eta$.  
    \begin{proposition}[{\cite[Prop.~2.12]{ChenLubetzky25}}]\label{prop:CE-law-without-area}
		Let $V\subset \Z^2$ be a connected domain, and consider the \ZGFF model $\hatpi_V^\xi$ with boundary conditions $\xi$ that are $1$ on a $*$-connected path in $\partialvtx V$ and $0$ elsewhere so that they induce a unique disagreement polymer $(\gamma,\{D_i\},\{h_i\})$ in $V\cup \partialvtx V$ that contains boundary disagreements. Then for $\beta\geq \beta_0$, the law of this unique disagreement polymer is given by 
		\begin{equation}\label{eq:CE-without-area}
			\hatpi^\eta_V(\gamma) =\frac1{\widehat Z^\eta_V} \exp\Big(-\sE_\beta(\gamma) + \fI_V(\gamma)\Big)
		\end{equation}
		for $\widehat Z^\eta_V = \widehat Z^\eta_V(\beta)$ and $\fI_V$ from \cref{eq:IU(gamma)-def} featuring a decoration function $\Phi$ as per \cref{def:disagree-polymer-len-energy-decor}.    
	\end{proposition}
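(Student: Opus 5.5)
The plan is to follow the standard cluster-expansion route: compute the law of the unique disagreement polymer $\gamma$ under $\hatpi^\eta_V$ by summing out the height configurations inside each component $D_i$, and express the result through a polymer representation whose activities satisfy the decay required of $\Phi$ in \cref{def:disagree-polymer-len-energy-decor}.

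\textbf{Step 1: factorization.} Fix $(\gamma,\{D_i\},\{h_i\})$. The event that this is the polymer containing the boundary disagreements is determined by three requirements:
\begin{itemize}
\item the gradients across the bonds of $\gamma$ are prescribed, contributing the weight $e^{-\sE_\beta(\gamma)}$;
\item in each $D_i$ the field equals $h_i$ on $D_i\cap\Delta^*_\gamma$;
\item no disagreement polymer in $D_i$ connects to $\gamma$, i.e.\ the field on $D_i$ is an arbitrary configuration with no disagreement bond adjacent to $\gamma$.
\end{itemize}
By translation invariance in height, shifting by $h_i$ reduces the weight of each $D_i$ to $Z^0_{D_i^\circ}$-type partition functions: the field is pinned to $0$ near $\gamma$ and carries a constraint forbidding contours touching $\Delta^*_\gamma$. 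Thus
\[\hatpi^\eta_V(\gamma)\propto e^{-\sE_\beta(\gamma)}\prod_i \widetilde Z_{D_i}.\]

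\textbf{Step 2: cluster expansion of each $\widetilde Z_{D_i}$.} Each $\widetilde Z_{D_i}$ is a gas of disagreement polymers (closed contours with their own inner heights, handled recursively) with weights at most $e^{-(\beta-C)\sN}$, since each disagreement bond costs at least $\beta$ and the number of height labelings is absorbed into $C$. For $\beta\ge\beta_0$ the Kotecký--Preiss criterion holds, giving
\[\log\widetilde Z_{D_i}=\sum_{\sfW\subset D_i}\Phi(\sfW),\]
with $\Phi$ the cluster weight of a connected support $\sfW$. Here $\Phi$ is translation/rotation/reflection invariant and satisfies $|\Phi(\sfW)|\le e^{-(\beta-C)\bd(\sfW)}$, because a cluster with support $\sfW$ must contain contours whose total length is at least $\bd(\sfW)$.

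\textbf{Step 3: comparison with the unconstrained volume.} Cluster expansion of the unconstrained partition function of $V$ gives $\sum_{\sfW\subset V}\Phi(\sfW)$. The difference between this and $\sum_i\log \widetilde Z_{D_i}$ consists exactly of the clusters that either meet $\Delta^*_\gamma$ or straddle several $D_i$; the latter also meet $\Delta^*_\gamma$. The forbidden clusters enter with a sign, which can be absorbed into the definition of $\Phi$ by taking the expansion relative to the pinned region. Dividing by the $\gamma$-independent quantity $\exp\sum_{\sfW\subset V}\Phi(\sfW)$, which is folded into $\widehat Z^\eta_V$, yields \eqref{eq:CE-without-area} with $\fI_V(\gamma)$ as in \eqref{eq:IU(gamma)-def}.

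\textbf{Main obstacle.} The hardest part is making Step 2 rigorous for $\Z$-valued gradients of arbitrary size. Nested polymers carry arbitrary height jumps, so one must show that summing over height labels still gives activity at most $e^{-(\beta-C)|\gamma'|}$. This follows because $|\nabla\phi|^2\ge|\nabla\phi|$ on integers, so $\sum_{k\ne0}e^{-\beta k^2}\le 2e^{-\beta}(1+o(1))$ per bond. A second point needing care is that the decoration function is the same $\Phi$ uniformly in the domain and in $\gamma$, which is what allows $\fI_V$ to be written as a single local sum.
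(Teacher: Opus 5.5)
Your proposal follows essentially the same route as the paper. The paper's version, visible where it is used in \cref{eq:ratio-Zhats}, has three steps: the exact factorization $\hatpi^\eta_V(\gamma)\propto e^{-\sE_\beta(\gamma)}\prod_i \widehat Z^{h_i}_{D_i^\circ}$; the cluster-expansion identity $\prod_i \widehat Z^{h_i}_{D_i^\circ}/\widehat Z^0_V=\exp(\fI_V(\gamma))$, with the sign of $\Phi$ fixed by convention; and absorbing the $\gamma$-independent $\widehat Z^0_V$ into the normalizer.

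One correction to your Step~1: no extra constraint on the field in $D_i$ is needed. Pinning $D_i\cap\Delta^*_\gamma$ to $h_i$ already gives zero gradient on every bond that shares a dual vertex with $\gamma$ but is not in $\gamma$, since both endpoints of such a bond are corners of a plaquette of $\gamma$ in the same $D_i$. Conversely, no bond of the field on $D_i^\circ$ can share a dual vertex with $\gamma$. So the factor is exactly the free partition function $\widehat Z^{h_i}_{D_i^\circ}$, and its expansion runs over $\sfW\subset D_i^\circ$, not over $\sfW\subset D_i$ as you wrote in Step~2. This is what makes your Step~3 produce precisely the clusters meeting $\Delta^*_\gamma$.
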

    The case of $\pi^\eta_V$, the measure with the floor, is harder and requires conditions on the domain and on $\gamma$ to obtain a similar form for the law. In fact, precisely studying how these conditions change as the domain size varies from mesoscopic to macroscopic is the primary technical contribution of this paper, and will be the goal of \cref{sec:xi-bbq,sec:refined-CE}. For reference, we first provide here the preliminary result used in \cite{ChenLubetzky25}.
\begin{proposition}[{\cite[Prop.~2.3]{ChenLubetzky25}}]\label{prop:CE1_old}
    Fix $n\geq 1$. Let $V\subset \Z^2$ be a connected domain, and consider the \ZGFF model $\pi_{V;F}^{\xi}$ with a floor at~0 imposed only on a subset $F \subset V$, and boundary conditions $\xi$ that are $H+1-n$ on a $*$-connected path in $\partialvtx V$ and $H-n$ elsewhere so that they induce a unique disagreement polymer $(\gamma,\{D_i\},\{h_i\})$ in $V\cup \partialvtx V$ that contains boundary disagreements. Then for $\beta\geq \beta_0$, the law of this unique disagreement polymer $\gamma$ is given by 
		\begin{align}\label{eq:CE-with-area}
			\pi^{\eta}_{V;F}(\gamma) =\frac1{Z_{V;F}^\eta} \exp\Big(-\sE_\beta(\gamma) + \fI_V(\gamma)
			\Big)\prod_{i\geq 0}\hatpi_{D_i^\circ}^{h_i}\big(\phi_x \geq 0,\, \forall x \in D_i^{\circ}\cap F\big)\,,
		\end{align}
		for $Z_{V;F}^\eta = Z_{V;F}^\eta(\beta,n)$ 
        and $\fI_V$ from \cref{eq:IU(gamma)-def} featuring a decoration function $\Phi$ as per \cref{def:disagree-polymer-len-energy-decor}.
		
		Moreover, if we further have $|F| \leq L e^{\kappa\sqrt{\log L}}$ and $|\partial F|\leq L^{1-\delta}$ for fixed $\kappa>0$ and $0<\delta<\frac13$, denote by $D_0$ and $D_1$ the regions of $\gamma$ containing the boundary vertices of $V$ at heights $H+1-n$ and $H-n$, respectively, and let
		\begin{align*} E:=\left\{|\gamma|\leq L^{1-\delta}\,,\;
			\;|F| - |D_0 \cap F| - |D_1 \cap F| \leq L^{1-\delta}\right\}\,,\end{align*}
		then the following holds
		for all $\beta\geq \beta_0$. The probability distribution given~by 
		\begin{equation}\label{eq:pVF-old-expression}
        \fp_{V;F}^{\eta}(\gamma) := 
		\frac1{\widetilde{Z}^\eta_{V;F}} \exp\bigg(-\sE_\beta(\gamma) + \frac{|D_0\cap F|}{N_n}+ \fI_V(\gamma)\bigg)\prod_{i \geq 2}\hatpi_{D_i^\circ}^{h_i}(\phi_x \geq 0,\, \forall x \in D_i^{\circ} \cap F)
		\end{equation}
        for $\gamma\in E$, 
		with $N_n$ from \cref{eq:Nn-def} and $\widetilde{Z}^\eta_{V;F}$ a normalizer, satisfies that for every $\gamma\in E$,
		\begin{align}\label{eq:CE-with-area-cond-old}
			\pi^\eta_{V;F}(\gamma \mid E) =(1+o(1))\fp_{V;F}^{\eta}(\gamma)\,.
		\end{align}
\end{proposition}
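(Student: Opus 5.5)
The plan is to prove the two parts separately. The first part is an exact decomposition. The second is a quantitative estimate of the two floor factors attached to the unbounded-type components $D_0$ and $D_1$.

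\emph{Exact representation \cref{eq:CE-with-area}.} Fix a candidate triple $(\gamma,\{D_i\},\{h_i\})$ compatible with $\xi$.
\begin{enumerate}[(i)]
\item A configuration $\phi$ realizes this triple if and only if three conditions hold: the gradients across the bonds of $\gamma$ are the prescribed ones; $\phi\equiv h_i$ on $D_i\cap\Delta^*_\gamma$; and inside each $D_i^\circ$ the field has no disagreement bond connected to $\gamma$.
\item Since $\gamma$ separates the $D_i$'s, the Boltzmann weight factorizes. It equals $e^{-\sE_\beta(\gamma)}$ times a product over $i$ of floor-free partition functions on $D_i^\circ$ with boundary condition $h_i$. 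Each of these is restricted to the event $\{\phi_x\geq 0\ \forall x\in D_i^\circ\cap F\}$ and to the event that no disagreement touches $\Delta^*_\gamma$.
\item Without the floor constraint, this product of restricted partition functions is exactly what \cref{prop:CE-law-without-area} resums. Normalizing by the $\gamma$-independent partition functions, it becomes $\exp(\fI_V(\gamma))$.
\item With the floor, each factor is multiplied by the conditional probability of the floor event in $D_i^\circ$. This probability is measurable with respect to the field in $D_i^\circ$ alone. The no-disagreement-touching-$\gamma$ restriction is already encoded by using $D_i^\circ$ with boundary value $h_i$, i.e.\ the field is pinned to $h_i$ on $\Delta^*_\gamma$.
\end{enumerate}
Hence each factor equals $\hatpi^{h_i}_{D_i^\circ}(\phi_x\ge0,\ \forall x\in D_i^\circ\cap F)$. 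Collecting the $\gamma$-independent terms into $Z^\eta_{V;F}$ gives \cref{eq:CE-with-area}.

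\emph{Approximation by $\fp^\eta_{V;F}$ on $E$.} It remains to show that, uniformly over $\gamma\in E$,
\[
\hatpi^{H+1-n}_{D_0^\circ}(\phi\ge 0 \text{ on } D_0^\circ\cap F)\,\hatpi^{H-n}_{D_1^\circ}(\phi\ge 0\text{ on } D_1^\circ\cap F)=(1+o(1))\,C_{F}\,e^{|D_0\cap F|/N_n}
\]
for a constant $C_F$ that does not depend on $\gamma$. The constant is then absorbed into $\widetilde Z$.

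Write $q_h:=\hatpi_\infty(\phi_o\le -h-1)$. By the exponential tail of $\hatpi_\infty$ and the definition \cref{eq:Nn-def}, we have $q_{H-n}=(1+o(1))/N_n$ (the tail is dominated by its first term) and $q_{H+1-n}\le L^{-1-c}$. I will aim to establish the following estimate for a domain $D$ and boundary value $h$:
\[
\log\hatpi^{h}_{D}(\phi\ge0 \text{ on } D\cap F) = -|D\cap F|\,q_h + O\big((|\partial D|+|\partial F|)\,q_h\big)+O\big(|F|\,q_h^{2}L^{o(1)}\big).
\]
The argument has four steps.
\begin{enumerate}[(a)]
\item Expand the floor event via inclusion--exclusion / cluster expansion in the "spike" events $\{\phi_x<0\}$.
\item Use the exponential decay of correlations of $\hatpi$ at large $\beta$, applied to deep downward spikes, to bound the connected pair and higher terms. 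This gives the $|F|q_h^2L^{o(1)}$ term.
\item Compare each finite-volume marginal $\hatpi^h_D(\phi_x<0)$ with its infinite-volume value $q_h$. The two agree up to an error $e^{-c\,\dist(x,\partial D)}q_h$, and summing this error over $x$ gives the boundary term.
\item Plug in the hypotheses. Using $|F|\le Le^{\kappa\sqrt{\log L}}$ and $q_h\le L^{-1+o(1)}$, the pair error is $L^{-1+o(1)}=o(1)$. Using $|\gamma|,|\partial F|\le L^{1-\delta}$, the boundary error is $L^{-\delta+o(1)}=o(1)$. For $D_0$ the whole contribution is at most $|F|q_{H+1-n}=o(1)$.
\end{enumerate}

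For $D_1$, we use the definition of $E$ together with $|\Delta^*_\gamma|=O(|\gamma|)$ to write $|D_1\cap F|=|F|-|D_0\cap F|-O(L^{1-\delta})$. This gives
\[
-|D_1\cap F|\,q_{H-n}=-|F|/N_n+|D_0\cap F|/N_n+o(1).
\]
Setting $C_F=e^{-|F|/N_n}$ then yields \cref{eq:CE-with-area-cond-old}. The passage from $D_i$ to $D_i^\circ$ changes the areas only by $O(|\gamma|)$, which is harmless at the same order.

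The main obstacle is step (b): controlling the correlation and higher-order cluster terms of the floor event uniformly over domains of the form $D_i^\circ$, whose boundaries are irregular. This requires a decorrelation estimate for deep downward deviations of the \ZGFF that does not lose more than $L^{o(1)}$ in the relevant regime. I would first try to obtain this by a Peierls/polymer representation of large downward deviations: large deviations of the \ZGFF resemble harmonic profiles of logarithmic size, and this representation controls the joint probability that two spikes are each of depth $\ge h$.
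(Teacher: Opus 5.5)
Your first half is the standard route and is fine. You factorize the Boltzmann weight over the components $D_i$, pin $\phi\equiv h_i$ on $D_i\cap\Delta^*_\gamma$, and resum the floor-free partition functions via the cluster-expansion identity $\prod_i\widehat Z^{h_i}_{D_i^\circ}/\widehat Z_V=e^{\fI_V(\gamma)}$; this is the same argument the paper uses in the proof of \cref{prop:CE-macro-uniformBC}. The second half, however, rests on two false estimates.

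\emph{The $D_0$ factor is not negligible.} In your notation the $D_0$ floor factor has rate $q_{H+1-n}=\hatpi_\infty(\phi_o\ge H+2-n)=(1+o(1))/N_{n-1}$, and this is not $\le L^{-1-c}$. For $n=1$ it is $\hatpi_\infty(\phi_o\ge H+1)$, which by the definition of $H$ can be of order $\beta/L$ (anything up to $5\beta/L$). For $n\ge2$ it is at least $\hatpi_\infty(\phi_o=H)\ge5\beta/L$. Since $|D_0\cap F|$ can be of order $|F|\le Le^{\kappa\sqrt{\log L}}$, the factor $e^{-|D_0\cap F|q_{H+1-n}}$ is far from $1+o(1)$.

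By the same token, replacing $|D_0\cap F|\,q_{H-n}$ by $|D_0\cap F|/N_n+o(1)$ is false: the discrepancy is exactly $|D_0\cap F|\,q_{H+1-n}$. The ratio statement $q_{H-n}=(1+o(1))/N_n$ is true, but it is useless inside an exponent of this size. The two errors cancel exactly, which is why you land on the right formula.

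The correct bookkeeping keeps both factors and uses the exact identity $q_{H-n}-q_{H+1-n}=\hatpi_\infty(\phi_o=H+1-n)=1/N_n$. Up to the errors of the floor estimates, the exponent is then $-|F|q_{H-n}+|D_0\cap F|/N_n+O(L^{1-\delta}q_{H-n})$. So the tilt is the \emph{difference} of the two floor rates, exactly as $\uprho_n/N_n=\upxi_{n+1}-\upxi_n$ arises in \cref{prop:CE-meso}.

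\emph{The pair error is not $O(|F|q_h^2L^{o(1)})$, and the decorrelation you propose to prove is false.} Deep spikes at adjacent sites are strongly correlated. By \cref{clm:pi(h|h)-lower}, for $h\in\{H-n,H+1-n\}$ one has $\hatpi_\infty(\phi_{o'}<-h\mid\phi_o<-h)\ge e^{-c\beta h^2/\log^2h}=e^{-\Theta(\log L/\log\log L)}=L^{-o(1)}$. So a nearest-neighbour pair contributes $q_hL^{-o(1)}$, not $q_h^2L^{o(1)}$. Indeed \cref{lem:xi-refined-upper} shows the effective per-site rate really is $q_h(1-e^{-\Theta(\log L/\log\log L)})$. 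Exponential decay of correlations only gives additive errors $e^{-c|x-y|}$, which say nothing for $|x-y|\ll\log L$.

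What you need, and what is already available, is the conditional bound $\hatpi_\infty(\phi_y<-h\mid\phi_x<-h)\le e^{-c\beta h^2/\log^2h}$ from \cref{eq:LD-conditional}, combined with \cref{eq:couple-pi-to-inf-vol} for $|x-y|>\log L$. This yields a total error of order $|F|q_h\,e^{-c\log L/\log\log L}$. That error is $o(1)$ only because $|F|q_h\le e^{(\kappa+o(1))\sqrt{\log L}}$, which is the actual role of the hypothesis $|F|\le Le^{\kappa\sqrt{\log L}}$.

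Moreover, $|F|q_h$ may diverge, so a global inclusion--exclusion cannot give the logarithm of the floor probability. You must localize, as in \cref{prop:key-area-estimate}:
\begin{itemize}
\item tile $F$ into boxes with $(\text{area})\cdot q_h\to0$ fast enough;
\item use FKG for the lower bound;
\item use exposed height-$0$ separating circuits for the upper bound.
\end{itemize}

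A minor point: the multiplicative comparison $e^{-c\dist(x,\partial D)}q_h$ in your step (c) is not available either. One only has the additive coupling \cref{eq:couple-pi-to-inf-vol}, and near $\partial D$ a multiplicative loss of $e^{c\beta h^2/\log^2h}$. This still suffices, since only $O((|\gamma|+|\partial F|)\log^2L)$ sites are affected.
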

It will be convenient, for brevity, to fold the product in \cref{eq:pVF-old-expression} into the energy of $\gamma$, defining\footnote{In \cite[Eq. 3.5]{ChenLubetzky25}, there is an extra term $3c(\beta)|\gamma|$ which was absorbed into the definition of the $\Phi$ functions to make the latter non-negative. This was to set up for a random walk coupling, which is not used here, hence we do not write it. Of course, the law of $\gamma$ is not changed -- we just add and subtract $3c(\beta)|\gamma|$.}
\begin{equation}
    \label{eq:E*-def}
\sE^*_\beta(\gamma) := \sE_\beta(\gamma) - \sum_{i\geq 2} \log \hatpi_{D_i^\circ}^{h_i}(\phi_x \geq 0,\, \forall x \in D_i^\circ)\,,
\end{equation}
so $\fp_{V;F}^\eta(\gamma) \propto \exp\big(-\sE^*_\beta(\gamma)+\frac{|D_0\cap F|}{N_n} + \fI_V(\gamma)\big)$, an area-tilted law of a low-temperature polymer. 

In \cref{sec:refined-CE}, we will also be interested in studying the probability of a disagreement polymer existing when the domain has uniform boundary conditions. We begin here with a preliminary bound on the probability of a level line. 

\begin{definition}\label{def:exist-level-line}
    Let $\cC_{\fL, h}$ denote the event that $\fL$ is an $h$ level line. Similarly, let $\cC_{\gamma, h}$ denote the event that $\gamma$ is a disagreement polymer such that an $h$ level line can be formed using a subset of the bonds of $\gamma$. (To differentiate between the two, $\fL$ is reserved for level lines and $\gamma$ is reserved for disagreement polymers.)
\end{definition}
    
\begin{lemma}[{\cite[Prop~4.1]{LMS16}}\footnote{The original result was for $F = V$, but the proof extends to $F \subseteq V$.}]\label{lem:prob-of-contour}
    For sufficiently large $\beta$, and $h$ such that $\hatpi_\infty(\phi_o\geq h) \leq (1/\log L)^2$, we have for any constant boundary conditions $j \geq 0$ that 
    \begin{equation}
        \pi^j_{V;F}(\cC_{\fL, h+1}) \leq \exp\big(-(\beta-o(1))|\fL| + \hatpi_\infty(\phi_o > h)|\Int(\fL) \cap F| \big)\,.
    \end{equation}
\end{lemma}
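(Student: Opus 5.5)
The plan is a Peierls-type map argument, adapted from the \SOS argument of \cite{BEF86} and from \cite[Prop.~4.1]{LMS16}, with the floor imposed only on $F$. Fix an $(h+1)$ level-line loop $\fL$. Given $\phi\in\cC_{\fL,h+1}$, consider the configuration $T\phi$ obtained by lowering every height in $\Int(\fL)$ by one: $(T\phi)_x=\phi_x-1$ for $x\in\Int(\fL)$, and unchanged elsewhere. The constraint $\phi_x\ge 0$ can fail after the shift only at sites $x\in\Int(\fL)\cap F$ with $\phi_x=0$. To handle this, I would reverse the direction and write
\[
\pi^j_{V;F}(\cC_{\fL,h+1})=\frac{\sum_{\phi\in\cC_{\fL,h+1}}w(\phi)}{\sum_{\psi}w(\psi)}\,,
\]
and bound the ratio by comparing with the image set. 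The image $T(\cC_{\fL,h+1})$ consists of configurations with $T\phi\ge -1$ on $\Int(\fL)\cap F$, and with $T\phi\ge 0$ at the sites where $\phi\ge 1$.

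\textbf{Energy.} Across each bond of $\fL$ we have $\phi_x\ge h+1$ inside and $\phi_y\le h$ outside. Hence the gradient $a=\phi_x-\phi_y\ge 1$ drops to $a-1$, and the Boltzmann weight gains a factor $e^{\beta(a^2-(a-1)^2)}\ge e^{\beta}$ per bond. Together with the entropy of reconstructing $\phi$ from $T\phi$ (which is injective given $\fL$), this yields a factor $e^{-\beta|\fL|}$. Up to an $o(1)$ correction per bond, this accounts for the fact that the splitting rule ties the level line to the bonds.

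\textbf{Entropy and floor.} This is the main step. For the weight ratio I would write
\[
\frac{\pi(\cC_{\fL,h+1})}{\pi(T\cC_{\fL,h+1})}\le e^{-\beta|\fL|}\cdot\frac{\hatpi\text{-prob. of } \{\psi\ge 0 \text{ on } \Int\cap F\}}{\hatpi\text{-prob. of } \{\psi\ge -1\ \text{on }\Int\cap F\}}
\]
conditionally on the outside. Here both floor events concern the field inside $\fL$ with boundary heights $\ge h$, resp.\ $\ge h+1$ before the shift. The denominator relaxation gains $\prod_{x}(1+\hatpi(\phi_x=-1\ldots))$. Concretely, by FKG and comparison with $\hatpi_\infty$ with boundary $h$, one expects
\[
\frac{\hatpi(\phi\ge -1\text{ on }A)}{\hatpi(\phi\ge 0\text{ on }A)}\le \exp\big((1+o(1))\,\hatpi_\infty(\phi_o>h)\,|A|\big)\,,\qquad A=\Int(\fL)\cap F\,,
\]
after shifting heights by $h$. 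I would prove this via the decorrelation and cluster-expansion estimates for low-temperature $\Z$-valued fields: the log of the floor probability is a sum of local terms up to boundary corrections, and the hypothesis $\hatpi_\infty(\phi_o\ge h)\le(\log L)^{-2}$ ensures that pairwise correlations of rare spikes contribute only $o(1)$ per boundary bond. Those boundary errors of order $O(|\fL|)\cdot o(1)$ are then absorbed into the $(\beta-o(1))|\fL|$ term.

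Using that $\pi(T\cC)\le 1$ then gives the claim. The anticipated obstacle is controlling the multiplicative error between the correlated floor probabilities and the product approximation uniformly in the size of $\Int(\fL)$. If this error is not uniformly small, the bound with the sharp constant $\hatpi_\infty(\phi_o>h)$ fails. To handle it, I would work with monotonicity in the boundary condition and the exponential tail of spikes, which is exactly where the hypothesis on $h$ enters.
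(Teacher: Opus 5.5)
Your architecture is the standard one behind \cite[Prop.~4.1]{LMS16}. You shift $\phi$ down by one on $\Int(\fL)$ and gain at least $e^{\beta}$ per bond of $\fL$; for fixed $\fL$ the map is injective, so there is no reconstruction entropy. The only price is relaxing the floor from $0$ to $-1$ on $A:=\Int(\fL)\cap F$. (Your first displayed ratio is inverted; it should be the probability of $\{\psi\ge -1\}$ over that of $\{\psi\ge 0\}$, as in your second display.)

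The gap is in the one step that carries the content. You only say the ratio bound is ``expected'', and propose to obtain it from a cluster-expansion statement that the log of each floor probability is a sum of local terms, uniformly in $|A|$. You then leave that uniformity as an unresolved obstacle. Estimating numerator and denominator separately in this way needs an \emph{upper} bound on $\hatpi(\psi\ge -1\text{ on }A)$ for an arbitrary, possibly macroscopic region. That is the delicate direction: in this paper, \cref{prop:key-area-estimate} gets it only for $h=H+1-n$, with additive errors, and for large regions only after conditioning on $\fS$. So, as written, the key inequality is not established.

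The missing idea is that the ratio never has to be split. It equals $1/\mu(\psi\ge 0\text{ on }A)$ for an FKG measure $\mu$ with floor $-1$ on $A$, and FKG bounds this from below by a product of one-site probabilities, with no correlation error at all. Concretely, condition on $\phi$ off $\Int(\fL)$. After the shift, the inside is a Discrete Gaussian measure with the increasing constraints $\psi\ge h$ on the inner boundary layer $\Delta^+$ of $\fL$ and $\psi\ge-1$ on $A$; sites of $A\cap\Delta^+$ already satisfy $\psi\ge h\ge 0$. Given $\psi\restriction_{\Delta^+}\ge h$, the law on $W:=\Int(\fL)\setminus\Delta^+$ is monotone in $\psi\restriction_{\Delta^+}$, so the worst case is boundary condition $h$ on $W$. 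Harris--FKG, applied first under $\hatpi^h_W(\cdot\mid\psi\ge-1\text{ on }A\cap W)$ and then dropping this increasing conditioning, yields
\[ \hatpi^h_W\big(\psi\ge 0\text{ on }A\cap W\;\big|\;\psi\ge -1\text{ on }A\cap W\big)\;\ge\;\prod_{x\in A\cap W}\big(1-\hatpi^0_W(\phi_x\ge h+1)\big)\,, \]
uniformly in $|A|$.

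What remains is a one-site comparison. Sites at distance at least $\log L$ from $\fL$ are handled by \cref{eq:couple-pi-to-inf-vol}, and each contributes $\hatpi_\infty(\phi_o>h)+L^{-c\beta}$. The $O(|\fL|\log L)$ sites closer to $\fL$ only need a one-site bound that is $o(1/\log L)$, e.g.\ from the finite-volume comparison quoted in the proof of \cref{prop:key-area-estimate}. This is where the hypothesis $\hatpi_\infty(\phi_o\ge h)\le(\log L)^{-2}$ is used. It is the actual source of the $o(1)|\fL|$ correction, not pair correlations of spikes or the splitting rule.

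Finally, your worry about the sharp constant is unfounded: a factor $1+o(1)$ on the area term is harmless. If $\hatpi_\infty(\phi_o>h)|A|\ge\beta|\fL|$, the claimed bound is at least $1$. Otherwise the excess $o(1)\,\hatpi_\infty(\phi_o>h)|A|\le o(1)\beta|\fL|$ is absorbed into $(\beta-o(1))|\fL|$.
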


\begin{corollary}\label{cor:no-macro-contours}
    For sufficiently large $\beta$, and $h$ such that $\hatpi_\infty(\phi_o\geq h) \leq (1/\log L)^2$, suppose $F \subset V \subset \Z^2$ and
    \begin{equation}\label{eq:no-macro-contours}
        |F| \leq \Big(\frac{3\beta}{\hatpi_\infty(\phi_o > h)}\Big)^2\,.
    \end{equation}
    Then, under the measure $\pi^{h}_{V;F}$, there are no \texttt{large} disagreement polymers in $V$ except with probability $O(L^{-10})$.
\end{corollary}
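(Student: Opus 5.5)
We will derive the corollary from \cref{lem:prob-of-contour} together with a union bound over level lines. A disagreement polymer under uniform boundary conditions $h$ is a connected set of bonds where $\nabla\phi\ne0$. Any \texttt{large} disagreement polymer $\gamma$ therefore has some region $D_i$ with $h_i\ne h$, and its outer boundary (after the northeast splitting) yields a level-line loop, at height $h+1$ or at height $h$, of length comparable to $|\gamma|$. The first step is to reduce to level lines. If $\gamma$ is \texttt{large}, then there is a level-line loop $\fL$ made of bonds of $\gamma$ of length at least $c\log L$ (in fact comparable to $|\gamma|/\sN$-scale pieces). To see this, decompose $\gamma$ into the boundaries of the regions $D_i$. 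Each such boundary is a union of level-line loops at heights strictly between the neighboring $h_i$'s, and the total length of these boundaries is at least $|\gamma|$ (up to a constant). If every loop were short, the connectivity of $\gamma$ still forces a long nested/adjacent chain, whose energy cost $\beta\sN(\gamma)$ already kills it. So it is cleaner to bound the probability of $\gamma$ directly via its energy. To avoid this subtlety, I would argue through the events $\cC_{\fL,j}$ for all $j$ and all loops $\fL$ of length $\ge (\log L)/C$, with $C$ absorbed for $\beta$ large.

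The second step splits into up-loops and down-loops. For an up-loop at height $j+1\ge h+1$, \cref{lem:prob-of-contour} with boundary value $h$ (and monotonicity in $j$, as $\hatpi_\infty(\phi_o>j)\le\hatpi_\infty(\phi_o>h)$) gives probability at most $\exp(-(\beta-o(1))|\fL|+\hatpi_\infty(\phi_o>h)|\Int(\fL)\cap F|)$. The key estimate is
\[ \hatpi_\infty(\phi_o>h)\,|\Int(\fL)\cap F|\le \hatpi_\infty(\phi_o>h)\min\{|\fL|^2/16,\,|F|\}\le \hatpi_\infty(\phi_o>h)\,\tfrac{|\fL|}{4}\sqrt{|F|}\le \tfrac{3\beta}{4}|\fL|\,, \]
using the isoperimetric bound $|\Int(\fL)|\le|\fL|^2/16$, $\min(a,b)\le\sqrt{ab}$, and \cref{eq:no-macro-contours}. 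Hence each such loop has probability at most $e^{-(\beta/4-o(1))|\fL|}$. Down-loops (regions dipping below $h$) receive no area reward, since the floor only penalizes them. Using the standard Peierls bound under the floor (FKG: the floor only raises $\phi$, and the floor-free down-contour cost is $e^{-(\beta-o(1))|\fL|}$), they are bounded by $e^{-(\beta-o(1))|\fL|}$. Heights $j$ far above $h$ are handled by the same bound, with the sum over $j$ converging because each extra level contributes an extra cost.

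The final step is the union bound. There are at most $|V|\,3^k$ loops of length $k$ rooted in $V$. We may take $|V|\le \mathrm{poly}(L)$, or else restrict to loops meeting $F$ or intersecting the support where disagreements can occur. Summing $|V|3^k e^{-(\beta/4-o(1))k}$ over $k\ge\log L$ gives $O(L^{-10})$ for $\beta$ large.

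The main obstacle is the first step: rigorously extracting from a \texttt{large} polymer a single long level-line loop, rather than many short ones. I would handle this by instead applying a Peierls map to the whole polymer, which shifts each $D_i$ back to height $h$ and costs $\sE_\beta(\gamma)\ge\beta|\gamma|$. The area gain is controlled by summing the above estimate over the constituent loops.
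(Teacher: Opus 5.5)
Your proposal is correct and follows the same route as the paper. You reduce to $h+1$ level-line loops, handling down-loops by a plain Peierls bound and higher levels via $\hatpi_\infty(\phi_o>j)\le\hatpi_\infty(\phi_o>h)$. You then bound the area term in \cref{lem:prob-of-contour} by $\hatpi_\infty(\phi_o>h)\sqrt{|F|}\,|\fL|/4\le\frac34\beta|\fL|$ and take a union bound over loops. That union bound implicitly requires $|V|\le \mathrm{poly}(L)$, as in the paper; restricting to loops meeting $F$ would not suffice.

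The paper simply asserts the reduction from \texttt{large} polymers to \texttt{large} level lines that worries you in your first step. Your fallback fixes this: a Peierls map on the whole polymer, with the area gain bounded through the disjoint interiors of its outermost up-loops. This is a sensible way to make that step rigorous, since a long polymer need not contain a single long level-line loop.
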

\begin{proof}
    It suffices to show that w.h.p.\ there are no \texttt{large} $h+1$ level lines. (By definition, this also rules out higher level lines, and level line contours which have lower heights on the interior as opposed to the exterior can always be ruled out by a standard Peierls argument.) This then follows from \cref{lem:prob-of-contour} because the area term cannot make up for the cost of $|\fL|$. Indeed, we know by the isoperimetric inequality that $|\Int(\fL) \cap F| \leq \sqrt{|F|}\tfrac{|\fL|}{4}$. For an $h+1$ level line, the area term is then bounded above by $\hatpi_\infty(\phi_o > h)|\Int(\fL) \cap F| \leq \frac34\beta|\fL|$, so that \cref{lem:prob-of-contour} gives $\pi^h_{V;F}(\cC_{\fL, h+1}) \leq \exp(-(\tfrac{\beta}{4}-o(1))|\fL|)$, whence the standard Peierls argument enumerating over $\fL$ concludes.
\end{proof}

As seen in the above results, the law of level lines and disagreement polymers is intimately related to the probability of large height deviations. The following theorem summarizes a few important results concerning such large deviations. These results were first proven in \cite[Thm.~3.1]{LMS16} for the case of $V = \Z^2$, and extended to more general $V$ in \cite{ChenLubetzky25}. (The latter work also sharpened the bound on the conditional probability in \cref{eq:LD-conditional}.)
\begin{theorem}[{\cite[Thm.~2.5]{ChenLubetzky25}}]\label{thm:LD-DG}
		There exist constants $\beta_0>0$ and $c>c'>0$ so that the following holds for every $\beta\geq \beta_0$ and integer $h\geq 2$. Let $V \subset \Z^2$ be a region containing $\cB_r(o)$, the ball of radius $r=\lceil 2c h/\log h\rceil $ centered at the origin $o$, as well as $\cB_{r+1}(z)$ for a site $z\in V$. Then
		\begin{align}
			\exp\Big(-c\beta \frac{h}{\log h}\Big) \leq &\;\frac{\hatpi_V^0(\phi_o = h)}{\hatpi_V^0(\phi_o = h-1)} \leq 
			\exp\Big(-c'\beta \frac{h}{\log h}\Big)
			\,,\label{eq:LD-ratio}\\
			\exp\Big(-2\pi\beta\frac{h^2}{\log h} - c \beta \frac{h^2}{\log^2h}\Big)
			\leq  &\quad\;\;\hatpi_V^0(\phi_o = h)\!\!\!\!\quad\;\leq \exp\Big(-2\pi\beta\frac{h^2}{\log h} + c \beta\frac{h^2}{\log^2h}\Big)\,,\label{eq:LD}\\
			&\!\!\!\!\!\!\!\!\!
			\hatpi_V^0(\phi_z = h \mid \phi_o = h) \leq \exp\Big(-c \beta \frac{h^2}{\log^2h}\Big)\,.\label{eq:LD-conditional}
		\end{align}
	\end{theorem}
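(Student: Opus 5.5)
The plan is to reduce all three estimates to a deterministic variational problem for integer-valued height profiles, which is then evaluated by a Peierls/cluster-expansion comparison. At large $\beta$, $\hatpi_V^0(\phi_o=h)$ should be, up to $e^{O(\text{length})}$ entropy factors, $\exp(-\beta\,\inf \sum_{x\sim y}|\phi_x-\phi_y|^2)$, where the infimum runs over $\Z$-valued $\phi$ with $\phi_o=h$ and $\phi$ vanishing far away. I will argue that this discrete variational problem has value
\[2\pi\tfrac{h^2}{\log h}+O\big(\tfrac{h^2}{\log^2h}\big).\]
The mechanism is the following. The real-valued harmonic profile $\phi^*(x)=h\,\frac{\log(R/|x|)}{\log(R/r)}$, equal to $h$ on $\cB_r(o)$ and $0$ outside $\cB_R(o)$, has Dirichlet energy $2\pi h^2/\log(R/r)$. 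Integrality forbids following $\phi^*$ where $|\nabla\phi^*|<1$. In that region every unit step is a full level line, costing roughly $\beta$ per bond, i.e.\ linearly in the radius. The optimal profile is therefore harmonic-like down to the radius $\rho$ where $|\nabla\phi^*|\asymp 1$, namely $\rho\asymp h/\log h$, and is flat (all remaining height spent) at scale $O(h/\log h)$ beyond it. Taking $\log(R/r)=\log h-O(\log\log h)$ gives exactly the main term of \eqref{eq:LD}, with the $\log\log h/\log h$ relative error absorbed into $c\,h^2/\log^2 h$. This dictates the choice $r=\lceil 2ch/\log h\rceil$ in the hypothesis: $V$ must contain the support of the optimal shape, so that the estimates are uniform in $V$.

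For the lower bound in \eqref{eq:LD}, I will fix such an explicit integer profile $\phi^{\rm opt}$ (the rounding of $\phi^*$ on a terraced annulus). I then lower bound $\hatpi_V^0(\phi_o=h)$ by the probability that $\phi$ agrees with $\phi^{\rm opt}$ up to rare small excitations. This amounts to a cluster expansion around $\phi^{\rm opt}$, in the spirit of \cref{prop:CE-law-without-area}, and costs $e^{-\beta\,\mathrm{energy}(\phi^{\rm opt})-O(\text{perimeter})}$. The perimeter is $O(h^2/\log^2 h)$.

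For the upper bound in \eqref{eq:LD}, I decompose by the nested level lines $\fL_k$ ($k=1,\ldots,h$) surrounding $o$. I use the discrete Dirichlet principle: $\sum|\nabla\phi|^2$ is bounded below by the capacity of the innermost region relative to the outermost. Where level lines are sparse, I instead use the trivial bound $\sum|\nabla\phi|^2\ge\sum_k|\fL_k|$. Optimizing over the radii $\rho_k$ (with $|\fL_k|\gtrsim\rho_k$) recovers $2\pi h^2/\log h-O(h^2/\log^2h)$. The entropy of the contours is $e^{C\sum|\fL_k|}$, which for $\beta\ge\beta_0$ is absorbed by the linear part of the energy.

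For the ratio \eqref{eq:LD-ratio}, I would use a map $T$ from $\{\phi_o=h\}$ to $\{\phi_o=h-1\}$ that lowers by one the interior of the innermost level line around $o$ (or the whole plateau at $o$). I then compare weights and multiplicities of $T$. By the upper-bound analysis, typical configurations have that innermost region of radius $\asymp h/\log h$, so the energy change is $\Theta(\beta h/\log h)$, which gives both directions. Equivalently, one can differentiate the variational value, since $\partial_h(2\pi h^2/\log h)\asymp h/\log h$; but \eqref{eq:LD} alone is too coarse for this, which is why the direct surgery is needed.

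For \eqref{eq:LD-conditional}, I observe that $\{\phi_o=\phi_z=h\}$ with $\cB_{r+1}(z)\subset V$ forces a profile whose top set contains two points. The capacity, hence the energy, of such a set exceeds that of a single point by at least $\asymp h^2/\log^2 h$; for distant $z$ it is nearly double, which is even better. Running the same upper-bound contour argument with this capacity gain gives the claim.

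The main obstacle is the upper bound in \eqref{eq:LD} with the sharp constant $2\pi$. It requires controlling the integer rounding errors in the comparison with the continuum Dirichlet energy, uniformly over all level-line geometries, and not only for circular ones. I would first try a coarse-graining into dyadic annuli: apply the capacity bound inside each annulus and the linear bound $\sum|\fL_k|$ outside radius $h/\log h$, and then glue the estimates.
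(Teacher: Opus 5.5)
The step where you take $\log(R/r)=\log h-O(\log\log h)$ and ``absorb the $\log\log h/\log h$ relative error into $c\,h^2/\log^2h$'' is wrong. Indeed $\frac{2\pi h^2}{\log h-O(\log\log h)}-\frac{2\pi h^2}{\log h}\asymp\frac{h^2\log\log h}{\log^2h}$, which is not $O(h^2/\log^2 h)$. A better profile cannot remove this slack, because your own upper-bound mechanism rules it out. Take an integer $\phi$ with $\phi_o=h$, and replace it by $\phi^+$ so that $\phi\ge0$. Put $R=\lceil 4\pi h/\log h\rceil$, and let $m$ be the number of $k\le h$ for which the component of $o$ in $\{\phi\ge k\}$ leaves $\{\|x\|_\infty\le R\}$. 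Split $\phi=(\phi\wedge m)+(\phi-m)^+$; the increments of the two parts have equal signs. Bound the first part by its $m$ level lines, each of length $\ge 2R$. Restrict the second part to the component of $o$ and bound it by the capacity of $o$ relative to the box. Together these give
\[\textstyle\sum_{x\sim y}|\phi_x-\phi_y|^2\;\ge\;2mR+\frac{2\pi(h-m)^2}{\log R+O(1)}\,.\]
The right-hand side is nondecreasing in $m\in[0,h]$. Hence every integer pinnacle has energy at least $\frac{2\pi h^2}{\log h-\log\log h+O(1)}$, which your construction only matches. Your entropy terms are $O(h^2/\log^2h)$, with no factor $\beta\log\log h$. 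So your approach yields the lower half of \eqref{eq:LD} only with error $c\beta h^2\log\log h/\log^2h$; equivalently, with main term $2\pi\beta h^2/\log(h/\log h)$ and error $O(\beta h^2/\log^2h)$. The lower bound as written is out of reach by this route. Granted the standard low-temperature entropy control, it even fails for large $h$, which suggests the error term should carry a $\log\log h$. The paper does not prove this theorem; it imports it from \cite{ChenLubetzky25}, which extends \cite{LMS16}. It also uses \eqref{eq:LD} only through $\log\hatpi(\phi_o=h)\asymp-\beta h^2/\log h$, so nothing downstream is affected. Still, your proposal asserts a bound that its own argument refutes.

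The other two estimates concern quantities at or below the precision of any global variational bound, and the proposal lacks the local input they need.

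For \eqref{eq:LD-ratio}, your rationale inverts the geometry. Under $\phi_o=h$ it is the support of the pinnacle that has radius $\asymp h/\log h$. The top plateau $P$ is typically $\{o\}$, and the tip gradients are $\asymp h/\log h$. Lowering $P$ saves $\sum_{e\in\partial P}(2|\nabla_e\phi|-1)$. The reverse surgery from $\{\phi_o=h-1\}$ costs $\sum_{y\sim o}\big(2(h-\phi_y)-1\big)$. So both directions need a priori control of the tip:
\begin{itemize}
\item for the lower bound, $h-\phi_y=O(h/\log h)$ for all $y\sim o$ with probability bounded below; this is \cite[Eq.~(3.6)]{LMS16}, the input to \cref{clm:pi(h|h)-lower};
\item for the upper bound, a saving $\gtrsim h/\log h$ outside an event of relative probability $\ll e^{-\beta h/\log h}$.
\end{itemize}
Neither follows from \eqref{eq:LD}, whose error exceeds $h/\log h$ by far.

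For \eqref{eq:LD-conditional} with $|z|=O(1)$, which is the case the paper uses, the capacity gain is $c_z h^2/\log^2h$ for a small constant $c_z$. That is the same order as the uncontrolled constants in your one-point lower bound and two-point upper bound: the rounding cost $O(R^2)$, the $O(1)$ in $\log R+O(1)$, constant-factor losses in the radii, and entropy. Dividing two global estimates therefore does not close. You need an argument in which the common part of the pinnacle cancels. For instance, condition on $\phi_o=h$ and prove rigidity of the tip, with the neighbours of $o$ at $h-\Theta(h/\log h)$ (cf.\ \cref{prop:pi-y-h-phi^*-rigid}). Then pay $\gtrsim(h/\log h)^2$ locally to raise $\phi_z$ to $h$.
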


\subsection{Polymer model, surface tension, Wulff shape}\label{sec:poly-st-wulff}
The form of the law of $\gamma$ in \cref{prop:CE-law-without-area,prop:CE1_old} motivate the general study of polymer models outside the context of the measures $\hatpi^\eta_V$ and $\pi^\eta_V$. We begin by defining the set of admissible polymers.

\begin{definition}[Polymers from $A$ to $B$]
    Let $V \subset \Z^2$ be finite and simply connected, with two marked points $A, B$ on $\partial V$. Assuming without loss of generality that $A$ is left of $B$, let $\xi$ be boundary conditions of $h$ along the upper\footnote{The choice of which arc is upper can be made precise via a conformal map sending $\partial V$ to the unit circle, $A$ to $(0, -1)$, and $B$ to $(0, 1)$, and selecting the arc which maps to the upper half plane.} arc of $\partial V$ from $A$ to $B$ and $h-1$ along the lower arc. For every $\Z$-height function on $V$ with boundary condition $\xi$, there is a unique disagreement polymer $\gamma$ which contains the boundary disagreements. Define $\cP_V(A, B)$ as the set of all such possible disagreement polymers in this setting.\footnote{Note that the choice of $h$ is irrelevant here, all that matters here is that the boundary heights differ by 1.} If $V$ is an infinite volume domain, then define $\cP_V(A, B) = \bigcup_{V' \subset V} \cP_{V'}(A, B)$, where the union is over all simply connected $V' \subset V$ which have finite volume. 
\end{definition}
Consider the polymer model with weights given by
\[\tildeq_U(\gamma) = \exp\big(-\sE_\beta(\gamma)+ \fI_U(\gamma)\big)\,,\]
and partition function 
\[\tildeZ_{V, U}(A, B) := \sum_{\gamma \in \cP_{V}(A, B)} \tildeq_U(\gamma)\,.\]
(Just as in the previous work \cite{ChenLubetzky25}, this will be different from the weights appearing in \cref{sec:refined-CE}, and the two will be compared at the level of the surface tensions via \cite[Prop.~4.14]{ChenLubetzky25}.)

Now fix a unit vector $\n$ with angle $\theta$. Let $N$ be such that the point $N\n$ lies on the lattice. In \cite[\S3.3]{ChenLubetzky25} it was shown that the surface tension for this polymer model is well-defined:
\begin{equation}\label{eq:surface-tension}
		\tau_\beta(\theta):=\tau_\beta(\n) := -\lim_{N \to \infty} \frac1{\norm{N\n}_1} \log \tildeZ_{\Z^2, \Z^2}(\o, N\n)\,.
\end{equation}
We can extend $\tau_\beta$ to a function over all of $\R^2$ by homogeneity, and in \cite[Prop.~3.11]{ChenLubetzky25} it was shown that $\tau_\beta$ is analytic and satisfies and strict convexity: for any $\sfu, \sfv$ not on the same line,
\[\tau_\beta(\sfu) + \tau_\beta(\sfv) > \tau_\beta(\sfu + \sfv)\,.\]
Moreover, $\tau_\beta$ is symmetric under rotations by $\pi/4$, reflections across the $x$ and $y$ axis and the diagonals $y = \pm x$.

We now move away from our specific $\tau_\beta$ and recall some general facts concerning the Wulff shape, see, e.g.,~\cite[\S2 and \S4]{schonmann1996constrained} for a more comprehensive overview. We can define the Wulff shape 
\begin{equation}\label{eq:def-Wulff-shape}
		\cW  = \cW(\tau) := \bigcap_{\sfy \in \R^2} \{\sfh \in \R^2: \sfh \cdot \sfy \leq \tau(\sfy)\}\,.
	\end{equation}
Define also the Wulff functional on closed rectifiable curves in $\R^2$ as $W(\gamma) :=\int_\gamma \tau(\theta_s)ds$. If $A(\gamma)$ denotes the area interior to $\gamma$, then $\partial\cW$ is the minimizer of the Wulff functional over all curves such that $A(\gamma) = A(\partial\cW)$. Let $\cW_1$ denote the Wulff shape with unit area, and $\sfw_1 = \sfw_1(\tau):= W(\partial\cW_1)$. Let $\ell_\tau$ denote the length of the smallest square which contains $\cW_1$. One can compute that $\ell_\tau = 4\tau(0)/\sfw_1$. Observe that $\ell_\tau \geq 1$, whence this immediately implies that $4\tau(0) \geq \sfw_1$.

Now suppose we wish to minimize the Wulff functional, but we require that $\gamma$ lies in the unit square and that the area $A(\gamma)$ is equal to some fixed constant $\alpha \in (0, 1)$. When $\alpha \leq \tfrac{1}{\ell_\tau^2}$, the above discussion immediately gives an answer of $\sqrt{\alpha}\cW_1$. When $\alpha > \tfrac{1}{\ell_\tau^2}$, the answer is given by a translation of Wulff shapes denoted $\sL(\lambda)$ defined below (for $\lambda$ such that $A(\sL(\lambda)) = \alpha$).
\begin{definition}\label{def:cL}
	Given $\tau$, let $\sL(\lambda)$ be the set obtained by first taking the union of all translates of $\tfrac{\sfw_1}{2\beta\lambda}\cW_1$ inside the unit square.\footnote{The factor of $\beta$ is for our application with $\tau = \tau_\beta$, as we chose the convention of not normalizing the surface tension by $\beta$. For applications where $\tau$ has already been normalized, take $\beta = 1$.}
\end{definition}

Next, fix $\beta, \lambda > 0$, and consider the functional
\[\cF_\lambda(\gamma) = \cF_\lambda(\gamma, \tau):= -\int_\gamma \tau(\theta_s)ds + \lambda\beta A(\gamma)\,.\]
This functional captures quantitatively a tradeoff between the length of a curve and its interior area, which are precisely the two main terms governing the law of a level line. Hence, in order to identify the point at which a new top level line forms, we need to know when $\cF_\lambda(\gamma) = 0$ and provide bounds on how sensitive $\cF_\lambda(\gamma)$ is to changes in $\lambda$. This is the content of the following lemma, which will be very useful in \cref{sec:transition-window} (a non-quantitative version was already known in \cite{schonmann1996constrained}).
\begin{lemma}\label{lem:linear-critical-window}
    Fix a constant $\alpha \in (0, 1)$. Set $\lambda_* = $ $\tfrac{1}{\beta}(2\tau(0) + \tfrac{\sfw_1}{2})$. For any loop $\gamma$ in the unit square with an interior area of $|A(\gamma)| \geq \alpha$, we have that
    \[\cF_\lambda(\gamma)\leq \begin{cases}
			\beta(\lambda - \lambda_*), & \text{if $\lambda \geq \lambda_*$}\\
            -\beta(\lambda_* - \lambda)\alpha, & \text{if $\lambda < \lambda_*$ and $1 - (4\tau(0)^2- \tfrac{\sfw_1^2}{4})\tfrac{1}{\beta^2\lambda^2} \geq \alpha$.}
		 \end{cases}\]
    Moreover, if $\lambda \geq \lambda_*$, then $\sup_\gamma\cF_\lambda(\gamma) = \cF_\lambda(\sL(\lambda))$ and $\cF_\lambda(\sL(\lambda)) \geq (\beta - (4\tau(0)^2- \tfrac{\sfw_1^2}{4})\tfrac{1}{\beta\lambda^2})(\lambda - \lambda_*)$. 
\end{lemma}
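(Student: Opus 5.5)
The plan is to reduce everything to an explicit one-variable computation. I would use the constrained Wulff variational theory of \cite{schonmann1996constrained} to get the shape of the optimizers, and evaluate $\cF_\lambda$ exactly on the family $\sL(\lambda')$. The exact formula produced in the first step disagrees with the last displayed bound of the lemma as worded (see the third paragraph).

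\textbf{Step 1: closed form on $\sL(\lambda)$.} Set $r=\tfrac{\sfw_1}{2\beta\lambda}$, so $\sL(\lambda)$ is the unit square with its four corners replaced by the quadrants of $r\cW_1$. The smallest square containing $r\cW_1$ has side $r\ell_\tau=\tfrac{2\tau(0)}{\beta\lambda}$. The boundary of $\sL(\lambda)$ consists of flat pieces of total length $4(1-r\ell_\tau)$, each carrying tension $\tau(0)$, together with the full boundary of $r\cW_1$. Hence
\[W(\partial\sL(\lambda))=4\tau(0)-\tfrac{8\tau(0)^2}{\beta\lambda}+\tfrac{\sfw_1^2}{2\beta\lambda}\,,\qquad A(\sL(\lambda))=1-r^2(\ell_\tau^2-1)=1-\tfrac{K}{\beta^2\lambda^2}\,,\]
where $K:=4\tau(0)^2-\tfrac{\sfw_1^2}{4}\ge0$ (nonnegative because $4\tau(0)\ge\sfw_1$). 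Therefore $\cF_\lambda(\sL(\lambda))=\beta\lambda-4\tau(0)+\tfrac{K}{\beta\lambda}$. Two identities then finish the computation: $K=(2\tau(0)-\tfrac{\sfw_1}{2})\beta\lambda_*$ and $\beta\lambda_*-4\tau(0)=-\tfrac{K}{\beta\lambda_*}$. Substituting them gives
\[\cF_\lambda(\sL(\lambda))=(\lambda-\lambda_*)\Big(\beta-\frac{K}{\beta\lambda\lambda_*}\Big)\,.\]

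\textbf{Step 2: reduction to a function of the area.} Let $W^*(a)$ be the minimum of the Wulff functional over loops in the unit square enclosing area $a$. Every loop with area $a$ then satisfies $\cF_\lambda(\gamma)\le g(a):=\lambda\beta a-W^*(a)$.
\begin{itemize}
\item For $a\le \ell_\tau^{-2}$ the minimizer is $\sqrt a\,\cW_1$, so $W^*(a)=\sfw_1\sqrt a$.
\item For $a\ge\ell_\tau^{-2}$ the minimizer is $\sL(\lambda')$ with $A(\sL(\lambda'))=a$ (the cited result). Its Lagrange multiplier is $\tfrac{d}{da}W^*=\beta\lambda'(a)$, and $\lambda'(a)$ is increasing in $a$.
\end{itemize}
So $W^*$ is convex on $[\ell_\tau^{-2},1]$ and concave-type below. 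It follows that $g$ is maximized at $a_\lambda=A(\sL(\lambda))$; this is where I use $\lambda\ge\lambda_*$ or the hypothesis on $\alpha$ to ensure $a_\lambda\ge\ell_\tau^{-2}$. This gives $\sup_\gamma\cF_\lambda(\gamma)=\cF_\lambda(\sL(\lambda))$.

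\textbf{Step 3: the two cases.}
\begin{itemize}
\item If $\lambda\ge\lambda_*$, Step 1 gives $\cF_\lambda(\gamma)\le(\lambda-\lambda_*)(\beta-\tfrac{K}{\beta\lambda\lambda_*})\le\beta(\lambda-\lambda_*)$.
\item If $\lambda<\lambda_*$, the hypothesis $1-\tfrac{K}{\beta^2\lambda^2}\ge\alpha$ says exactly that $a_\lambda\ge\alpha$, so the constraint $A(\gamma)\ge\alpha$ does not cut off the maximizer. Hence $\cF_\lambda(\gamma)\le-(\lambda_*-\lambda)(\beta-\tfrac{K}{\beta\lambda\lambda_*})$. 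Since $\lambda\lambda_*>\lambda^2$, we have $\beta-\tfrac{K}{\beta\lambda\lambda_*}\ge\beta(1-\tfrac{K}{\beta^2\lambda^2})\ge\beta\alpha$, which gives $-\beta(\lambda_*-\lambda)\alpha$.
\end{itemize}

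\textbf{Main obstacle: the final lower bound as worded.} Step 1 gives the exact value $(\beta-\tfrac{K}{\beta\lambda\lambda_*})(\lambda-\lambda_*)$. The lemma instead asserts $\cF_\lambda(\sL(\lambda))\ge(\beta-\tfrac{K}{\beta\lambda^2})(\lambda-\lambda_*)$. For $\lambda>\lambda_*$ and $K>0$ we have $\tfrac{K}{\lambda\lambda_*}>\tfrac{K}{\lambda^2}$, so the exact value is strictly smaller than the claimed lower bound. The bound as stated therefore cannot be derived this way. If Step 1's computation is correct, it is false whenever $K>0$, and it holds only when $K=0$, i.e.\ $\ell_\tau=1$, when both sides agree.

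The genuinely hard parts are Step 1's computation and the identification of optimizers in Step 2. The latter I would take from \cite{schonmann1996constrained}, checking strict convexity of $\tau$ via \cite[Prop.~3.11]{ChenLubetzky25}. The sharpest bound I can justify is the exact formula with $\lambda\lambda_*$ in the denominator. It matches the lemma's form up to a factor $\lambda/\lambda_*=1+O(\lambda-\lambda_*)$ in the correction term, which is enough for the near-critical uses in \cref{sec:transition-window}.
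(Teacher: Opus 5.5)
Your Step 1 is correct, and the factorization $\cF_\lambda(\sL(\lambda))=(\lambda-\lambda_*)\bigl(\beta-\tfrac{K}{\beta\lambda\lambda_*}\bigr)$ is cleaner than the paper's mean-value bounds. Your objection to the last clause is also correct. Write $F(\lambda)=\beta\lambda-4\tau(0)+\tfrac{K}{\beta\lambda}$. Then $F''(\lambda)=\tfrac{2K}{\beta\lambda^3}>0$, so for $\lambda>\lambda_*$ the derivative formula the paper invokes gives only $F'(\lambda_*)(\lambda-\lambda_*)\le F(\lambda)\le F'(\lambda)(\lambda-\lambda_*)$. That is, the lower bound holds with $\lambda_*^2$ in place of $\lambda^2$, and the clause as stated is false whenever $K>0$ (i.e.\ whenever $\cW_1$ is not a square). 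As you note, the corrected form serves the same purpose downstream.

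The genuine gap is in Step 2 and in the case $\lambda<\lambda_*$ of Step 3. On $[0,\ell_\tau^{-2}]$ your $g(a)=\lambda\beta a-\sfw_1\sqrt a$ is convex, so on $[\alpha,\ell_\tau^{-2}]$ it is maximized at an endpoint, and you never compare the left endpoint $a=\alpha$ with $g(a_\lambda)$.

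For $\lambda\ge\lambda_*$ this is harmless: convexity gives $g\le\max\{g(0),g(\ell_\tau^{-2})\}\le\max\{0,F(\lambda)\}=F(\lambda)$ there. You should state this, since it uses $F(\lambda)\ge0$.

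For $\lambda<\lambda_*$ the argument fails. The unconstrained supremum is $0$ (attained in the limit by tiny loops), not $F(\lambda)<0$. So the constraint $A(\gamma)\ge\alpha$ is exactly what must be exploited, and your intermediate claim that $\cF_\lambda(\gamma)\le F(\lambda)$ for all $\gamma$ with $A(\gamma)\ge\alpha$ is false. To see this, fix $\lambda\in(2\tau(0)/\beta,\lambda_*)$ and let $\alpha\downarrow0$. The hypothesis $1-K/(\beta^2\lambda^2)\ge\alpha$ holds and $F(\lambda)<0$ is fixed, yet $\cF_\lambda(\sqrt\alpha\,\cW_1)=\lambda\beta\alpha-\sfw_1\sqrt\alpha\to0$. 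Separately, the hypothesis on $\alpha$ does not give $a_\lambda\ge\ell_\tau^{-2}$; that requires $\beta\lambda\ge2\tau(0)$.

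The missing step is the separate estimate on the Wulff branch, which the paper uses:
\begin{itemize}
\item For $a\le\ell_\tau^{-2}$, $\cF_\lambda(\sqrt a\,\cW_1)=\beta a\bigl(\lambda-\lambda_0(a)\bigr)$ with $\lambda_0(a)=\tfrac{\sfw_1}{\beta\sqrt a}\ge\tfrac{4\tau(0)}{\beta}\ge\lambda_*$, using $\sfw_1\le4\tau(0)$. Hence $\cF_\lambda\le-\beta(\lambda_*-\lambda)a\le-\beta(\lambda_*-\lambda)\alpha$ whenever $a\ge\alpha$.
\item On the $\sL$ branch, bound by the maximum over all $\fu>0$ of the closed form $-4\tau(0)+\lambda\beta+K\tfrac{2\fu-\lambda}{\beta\fu^2}$. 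This maximum equals $F(\lambda)$, since $\tfrac{2\fu-\lambda}{\fu^2}\le\tfrac1\lambda$ is equivalent to $(\fu-\lambda)^2\ge0$. This removes any need for $a_\lambda\ge\ell_\tau^{-2}$, and your inequality $\beta-\tfrac{K}{\beta\lambda\lambda_*}\ge\beta\alpha$ then finishes that branch.
\end{itemize}
The lemma's bound for $\lambda<\lambda_*$ is thus the maximum of these two branch bounds, not $F(\lambda)$ alone.
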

\begin{proof}
    Consider all $\gamma$ with a fixed interior area $A(\gamma) = \alpha$. As discussed above, when $\alpha \leq (\tfrac{\sfw_1}{4\tau(0)})^2$, $\cF_\lambda$ is maximized over such $\gamma$ when $\cF_\lambda(\sqrt{\alpha}\cW_1)$, and when $\alpha > (\tfrac{\sfw_1}{4\tau(0)})^2$ it is maximized at $\cF_\lambda(\sL(\fu))$ where $\fu= \fu(\alpha)$ is such that $\sL(\fu)$ has area $\alpha$. Hence, to study $\sup_\gamma \cF_\lambda(\gamma)$, it suffices to study these two terms for various $\alpha$.
    
    For the case of $\sqrt{\alpha} > \tfrac{\sfw_1}{4\tau(0)}$, we have
    \[\int_{\partial \sL(\fu)}\tau(\theta_s)ds = \tfrac{\sfw_1^2}{2\beta\fu} + 4\tau(0)(1 - \tfrac{\sfw_1}{2\beta\fu}\tfrac{4\tau(0)}{\sfw_1}) = \tfrac{\sfw_1^2}{2\beta\fu} + 4\tau(0)(1 - \tfrac{2\tau(0)}{\beta\fu})\,,\]
    \[\alpha = 1 + \tfrac{\sfw_1^2}{4\beta^2\fu^2} - \tfrac{4\tau(0)^2}{\beta^2\fu^2}\,.\]
    Hence, we have
    \begin{equation}\label{eq:F-lambda-u}\cF_\lambda(\partial\sL(\fu)) = -4\tau(0) + \lambda\beta + (4\tau(0)^2- \tfrac{\sfw_1^2}{4})\tfrac{2\fu-\lambda}{\beta\fu^2}\,.
    \end{equation}
    Since $\sfw_1 \leq 4\tau(0)$ implies that $4\tau(0)^2 - \tfrac{\sfw_1^2}{4} \geq 0$, we can compute the first and second derivatives of $\tfrac{2\fu-\lambda}{\fu^2}$ to get that $\cF_\lambda(\partial\sL(\fu))$ is maximized at $\fu = \lambda$. Hence it suffices to upper bound the function 
    \[F(\lambda) := -4\tau(0) + \lambda\beta + (4\tau(0)^2- \tfrac{\sfw_1^2}{4})\tfrac{1}{\beta\lambda}\,.\]
    We can compute that $F(\lambda) = 0$ when $\lambda = \lambda_*$. Moreover, $F'(\lambda) = \beta - (4\tau(0)^2- \tfrac{\sfw_1^2}{4})\tfrac{1}{\beta\lambda^2} \leq \beta$. Hence, for $\lambda > \lambda_*$ we have $F(\lambda) \leq \beta(\lambda - \lambda_*)$, and for $\lambda < \lambda_*$ we have $F(\lambda) \leq(\beta - (4\tau(0)^2- \tfrac{\sfw_1^2}{4})\tfrac{1}{\beta\lambda^2})(\lambda - \lambda_*)$.

    For the case of $\sqrt{\alpha} \leq \tfrac{\sfw_1}{4\tau(0)}$,  we have
    \[\cF_\lambda(\sqrt{\alpha}\cW_1) = -\sqrt{\alpha}\sfw_1 + \beta\lambda\alpha\,.\]
    Let $\lambda_0 := \sfw_1/(\beta\sqrt{\alpha})$, the value where $\cF_\lambda(\sqrt{\alpha}\cW_1) = 0$. Then, we can write $\cF_\lambda(\sqrt{\alpha}\cW_1) = \beta(\lambda - \lambda_0)\alpha$. By a direct computation, we obtain that $\lambda_0 > \lambda_*$ when $\sqrt{\alpha} \leq \tfrac{\sfw_1}{4\tau(0)}$. Thus, $\cF_\lambda(\sqrt{\alpha}\cW_1) \leq \beta(\lambda - \lambda_*)\alpha$. To summarize, we have proven that when $\lambda \geq \lambda_*$, we have $\sup_\gamma \cF_\lambda(\gamma) \leq \beta(\lambda - \lambda_*)$, and when $\lambda< \lambda_*$, we have $\sup_\gamma \cF_\lambda(\gamma) \leq (\lambda - \lambda_*)\min(\beta - (4\tau(0)^2- \tfrac{\sfw_1^2}{4})\tfrac{1}{\beta\lambda^2}, \beta|A(\gamma)|)$.

    Finally we prove the last statement. We wish to show that $F(\lambda) \geq \cF_\lambda(\sqrt{\alpha}\cW_1)$ for all $\sqrt{\alpha} \leq \tfrac{\sfw_1}{4\tau(0)}$. Fix such an $\alpha$, which defines a $\lambda_0$ as above. When $\lambda \in [\lambda_*, \lambda_0]$, we have $F(\lambda) \geq 0$ while $\cF_\lambda(\sqrt{\alpha}\cW_1) \leq 0$.
    When $\lambda > \lambda_0$, we lower bound $F(\lambda) \geq \tilde F(\lambda) := -4\tau(0) + \lambda\beta$. The assumption on $\alpha$ implies that $\tilde F(\lambda_0)=  -4\tau(0) + \tfrac{\sfw_1}{\sqrt{\alpha}} \geq 0$. We now conclude that $\tilde F(\lambda) \geq \cF_\lambda(\sqrt{\alpha}\cW_1)$ in this regime since both are linear functions of $\lambda$, and $\tilde F(\lambda)$ has a larger slope and value at $\lambda_0$. The lower bound on $F(\lambda)$ follows from the formula for the derivative computed above.
\end{proof}

\begin{remark}\label{rem:cL-alt-notation}
    In our applications in \cref{sec:transition-window,sec:limit-shape}, it will often be simpler to let the argument of $\sL$ be the scaling factor of $\cW_1(\tau)$ directly, so that $\sL(\ell)$ is the union of translates of $\ell\cW_1(\tau)$. By abuse of notation, we will use both, and it will be clear from context whether the argument refers to the area tilt or the scaling factor of the Wulff shape.  
\end{remark}

When the functional $\cF_\lambda(\gamma)$ is positive, it is maximized at $\cF_\lambda(\sL(\lambda))$, suggesting that the limit shape of the level lines is given by $\sL(\lambda)$ for some $\lambda$. We recall here the partial result proven in \cite{ChenLubetzky25} that the level lines contain such a shape, in the notation of the above remark. 
\begin{theorem}[{\cite[Thm.~4.4 revised as per Rem.~4.7]{ChenLubetzky25}}]\label{thm:old-level-line-contains-Wulff} 
		Consider the \ZGFF on an $L \times L$ box, for any $L$. Fix $n \geq 1$. Then, for any constant $C > 0$, w.h.p., the $H+1-n$ level line contains
		\[(1-\tfrac{N_n^{1/3}e^{3C\sqrt{\log L}}}{L})L\sL(\ell_n(1+e^{-\frac{C}3\sqrt{\log L}}))\,,\]
        where $\ell_n = \frac{\sfw_1(\tau_\beta)N_n}{2L}$.
\end{theorem}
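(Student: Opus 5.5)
The plan is to run a \emph{growth gadget} in the spirit of \cite{CLMST12,CLMST16}, using the cluster-expansion representation of \cref{prop:CE1_old} for the local law of the $(H+1-n)$ level line, and FKG monotonicity to keep every domain on which we invoke it small.

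\textbf{Step 1: seed.} By \cref{thm:LMS} (together with the argument in \cite[\S4.4]{LMS16}), w.h.p.\ there is a \texttt{large} $(H+1-n)$ level-line loop $\fL_n$ whose interior has area $(1-o(1))L^2$ for $n\ge1$ (or at least $(1-\epsilon_\beta)L^2$). In particular $\fL_n$ contains $L\sL(\ell)$ for some initial $\ell$ much larger than $\ell_n$: a large Wulff-type shape that stays away from the four corners. The bulk of the argument then shrinks this scale step by step, pushing the Wulff quadrants toward the corners, until it reaches $\ell_n(1+e^{-\frac C3\sqrt{\log L}})$.

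\textbf{Step 2: one growth step.} Suppose w.h.p.\ $\fL_n$ contains $L\sL(\ell)$. Choose two points $A,B$ on the Wulff arc of $\partial L\sL(\ell)$ at distance $d=N_n^{2/3}f$, where $f=e^{C\sqrt{\log L}}$. Let $R$ be a rectangle built on the chord $\overline{AB}$ and extending toward the corner.

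\begin{itemize}
\item \emph{Reduction to $R$.} By FKG, raising boundary conditions and removing parts of the floor are both monotone. On the event that $\fL_n\supset L\sL(\ell)$, we may therefore impose on $\partial R$ the Dobrushin boundary conditions $H+1-n$ on the side of the chord inside the shape and $H-n$ elsewhere. This is justified because the level line dominates the conditioned picture in the direction favouring retreat.
\item \emph{Shrinking the floor.} We keep the floor only on a thin strip $F$ of width about $N_n^{1/3}f^{2}$ around the chord, and drop it elsewhere. This is again monotone in the direction favouring retreat, so it only weakens the conclusion we are trying to prove. With this choice $|F|\le Le^{\kappa\sqrt{\log L}}$ and $|\partial F|\le L^{1-\delta}$, so \cref{prop:CE1_old} applies.
\item \emph{Law in $R$.} \cref{prop:CE1_old} then gives that, on the event $E$, the polymer from $A$ to $B$ is an area-tilted polymer with tilt $1/N_n$ and energy $\sE^*_\beta$.
\end{itemize}

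For this area-tilted polymer, standard estimates apply: the surface tension (\cref{eq:surface-tension}) and Ornstein--Zernike / random-walk comparisons. They show that it dips below the chord by $\mu\asymp N_n^{1/3}f^2$, with fluctuations $\sigma\asymp N_n^{1/3}f^{1/2}$, except on an event of probability $e^{-f^{c}}$.

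Hence, if $d^2/\ell<\mu-f\sigma$, the level line w.h.p.\ passes beyond the Wulff arc between $A$ and $B$. Sliding the chord along the arc and taking a union bound over polynomially many chords shows that $\fL_n$ contains $L\sL(\ell')$ for a slightly smaller $\ell'$. This is the step that moves the Wulff shape toward the corner.

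\textbf{Step 3: iterate and collect errors.} We iterate Step 2 until $\ell$ reaches $\ell_g$, defined by $d^2/\ell_g=\mu-f\sigma$. With the constants tracked, this gives
\[
\ell_g=\ell_n\bigl(1+O(f\sigma/\mu)\bigr)\le \ell_n\bigl(1+e^{-\frac C3\sqrt{\log L}}\bigr),
\]
where $\ell_n=\sfw_1 N_n/(2L)$ arises from matching the area tilt $1/N_n$ against $\tau_\beta$ via \cref{lem:linear-critical-window}.

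Near the flat sides, the polymer cannot be pushed closer to $\partial\Lambda$ than its natural fluctuation scale $N_n^{1/3}f^{3}$. Because of this we must shrink the box by the factor $(1-N_n^{1/3}e^{3C\sqrt{\log L}}/L)$, which accounts for the prefactor in the statement.

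\textbf{Main obstacle.} The hardest step is the domain-size restriction in \cref{prop:CE1_old}. A rectangle of width $d\gg N_n^{2/3}$ and comparable height has area far exceeding $L^{1+o(1)}$. The plan circumvents this by the FKG relaxation of the floor to the thin strip $F$, which is legitimate only because growth is the monotone-favourable direction. I would also expect some care with pinning of the polymer to $\partial R$ to be needed. I would handle it by conditioning on the polymer avoiding a neighbourhood of the lateral sides of $R$, which costs only $e^{-cd}$-negligible terms relative to the Gaussian-scale deviations.
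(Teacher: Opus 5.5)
The core mechanism you describe is the growth gadget the paper attributes to \cite{ChenLubetzky25} and reuses in Claim~\ref{clm:crit-window-growth}: drop the floor by FKG outside a strip of area $N_n e^{O(\sqrt{\log L})}$ so that Proposition~\ref{prop:CE1_old} applies in a rectangle of width $d=N_n^{2/3}f$, then compare the dip $\mu-f\sigma$ of the area-tilted polymer with the sagitta $d^2/\ell$ of the Wulff arc. The iteration you build around it has a gap at the start, though. The seed $\fL_n\supset L\sL(\ell)$ is not what Theorem~\ref{thm:LMS} gives. $L\sL(\ell)$ reaches $\partial\Lambda$ along its flat portions, whereas area $(1-o(1))L^2$ only yields a concentric square of side $(1-o(1))L$ (via \cite[Lem.~2.6]{CLMST16}). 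Once the seed is corrected to $(1-\eta)L\sL(\ell)$ with $\eta=o(1)$, nothing in your scheme drives $\eta$ down to $N_n^{1/3}e^{3C\sqrt{\log L}}/L$, because decreasing $\ell$ only moves the corner arcs. Yet pressing $\fL_n$ to within $N_n^{1/3}e^{3C\sqrt{\log L}}$ of the sides is one of the two things the theorem asserts. Your last paragraph explains why one cannot get closer than that, not how one gets there. You need a second growth parameter, for example running the gadget on chords of the flat pieces (where the polymer gains $\mu-f\sigma$ per step) until the rectangles no longer fit. The paper's route avoids this. It fixes the supercritical size $\ell=\ell_n(1+e^{-\frac C3\sqrt{\log L}})$ and seeds $\cW(o,\ell)$ in the central square. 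It then shows that containing $\cW(x,\ell)$ forces containing $(1+L^{-3/4})\cW(x,\ell)$ whenever the rectangles around $\partial\cW(x,\ell)$ fit, and translates the center one lattice step at a time. The union of admissible translates is exactly the shrunken $\sL$, so corners and flats are handled together (Proposition~\ref{prop:shape-thm-crit-window}).

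Second, the reduction to Dobrushin conditions on $\partial R$ is not valid as written. To bound from below the probability of the increasing event that $\fL_n$ passes beyond the arc, you may only \emph{lower} boundary values. You also cannot simply condition on $\{\fL_n\supset L\sL(\ell)\}$, since that destroys FKG. The standard device is to explore from $\partial R$ the small clusters of low sites, find an outermost circuit carrying heights $\ge H+1-n$ on the inner arc and $\ge H-n$ on the corner-side arc, and then use Domain Markov before lowering. The containment event controls the inner arc but says nothing about the corner-side arc. Those sites lie outside the region $\fL_n$ is known to enclose. Along the flats your rectangles come within about $N_n^{1/3}e^{3C\sqrt{\log L}}$ of $\partial\Lambda$, and there heights below $H-n$ cannot be excluded unless the next level line $\fL_{n+1}$ is already known to be that close. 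This is a genuine extra input, essentially the statement one level down. In the paper's version for $n=0$, Claim~\ref{clm:crit-window-growth} is run inside an $H$ level line containing $\fR_1$ (boundary values $\ge H$), and $\ell_x$ restricts to Wulff shapes whose rectangles stay inside $\fR_1$. Finally, your pinning remark is off: avoiding the sides of $R$ does not cost only $e^{-cd}$. The boundary reward along a stretch of length $d$ can be as large as $e^{cd}$. That beats the random-walk cost of reaching the corner side of $R$, which is only $o(d)$ away. It is harmless here only because such pinning pushes $\fL_n$ beyond the arc, so it must be handled through that monotonicity rather than by a cost comparison.
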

In fact, in this work we will use a slightly different side length, defining
\begin{equation}\label{eq:def-ell*}
\ell^*_n = \frac{\sfw_1(\tau_\beta)N_n}{2L\uprho_n}
\end{equation}
for $\uprho_n$ defined immediately below in \cref{eq:rho-n-def}. Note that after we prove \cref{prop:uprho-bound}, the above theorem also holds with $\ell^*_n$ instead of $\ell_n$, because the difference between $\uprho_n$ and 1 can be absorbed into the $1+e^{-\frac C3\sqrt{\log L}}$ term. 

Finally, we conclude this section with a few definitions related to disagreement polymers which will be generally useful throughout this paper.

\begin{definition}\label{def:outer-envelope}
    For any disagreement polymer $\gamma$, define its outer envelope $\mathsf{OE}(\gamma)$ as the outermost loop that consists only of bonds in $\gamma$.
\end{definition}

\begin{definition}\label{def:gamma-components} For a bond $b \in \gamma$, call $\cD_b = \cD_b(\gamma)$ the connected component of regions in $\Lambda' \setminus (\gamma \cup D_0 \cup D_1)$ containing a region that has $b$ as part of its boundary. If there is no such region adjacent to $b$, then $\cD_b = \emptyset$. Denote by $\fD = \fD(\gamma)$ the collection of all nonempty sets $\cD_b$.
\end{definition}

Note that $\bigcup_{\cD \in \fD} \cD = \bigcup_{b \in \gamma}\cD_b = \bigcup_{i \geq 2} D_i$, and that each $\cD \in \fD$ can be written as a union of sets $\bigcup_{i \in I}D_i$ for some finite set $I \subset \{i \geq 2\}$.

\begin{definition}\label{def:cut-point}
    We call $v$ a cut-point of $\gamma$ if it is the end-vertex of a bond $b \in \gamma$ with $\cD_b = \emptyset$.
\end{definition}

\section{Minimum height in a region without a floor}\label{sec:xi-bbq}

In this section we will study the large deviation rates for the minimum height of the set of sites in a square of side length $\ell_* \in [\sqrt{L},2\sqrt{L})$ under the infinite-volume measure $\hatpi_\infty$:
\begin{align}
    \label{eq:xi-n-def}
    \upxi_n
    &:= -\frac{1}{\ell_*^2}\log\hatpi_\infty\left( \phi_x \geq -(H+1-n),\,\forall x\in Q_{\ell_*} := \llb 1,\ell_*\rrb^2 \right)\quad\mbox{for}\quad \ell_*:=2^{\lceil\frac12\log_2 L\rceil}\\
    \label{eq:rho-n-def}
\uprho_n
&:= \left(\upxi_{n+1} - \upxi_n\right) N_n\,. 
\end{align}
One of the main results of this section will  establish the following estimates for $\upxi_n$ and $\uprho_n$.
\begin{proposition}\label{prop:uprho-bound}
        Fix $n \geq 0$. There are absolute constants $c,C>0$ such that
        \begin{align}
        \label{eq:upxi-bounds}
         \frac{\upxi_n}{\hatpi_\infty(\phi_o > H+1-n)} &\in [1 - e^{-C\frac{\log L}{\log\log L}},  1 - e^{-c\frac{\log L}{\log\log L}}]\,,\\
         \label{eq:uprho-bounds}
        \uprho_n &\in [1 - e^{-C\frac{\log L}{\log\log L}},  1 - e^{-c \frac{\log L}{\log\log L}}]\,.
        \end{align}
\end{proposition}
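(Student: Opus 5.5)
Write $h=H+1-n$ and $p_h:=\hatpi_\infty(\phi_o>h)=\hatpi_\infty(\phi_o<-h)$. Then $\upxi_n=-\ell_*^{-2}\log\hatpi_\infty(\min_{Q_{\ell_*}}\phi\ge -h)$. The plan is a cluster (Mayer) expansion of the floor event in terms of the rare ``downward excursions'' below level $-h$, followed by the two-sided correlation estimates of \cref{thm:LD-DG}.

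\textbf{Step 1 (expansion).} At large $\beta$, write the complement event $\{\exists x\in Q_{\ell_*}:\phi_x<-h\}$ via the disagreement-polymer / contour representation of $\hatpi_\infty$. A deep excursion is a nested family of loops around a localized ``spike'' region. Because excursions separated by distance $\gg h/\log h$ decorrelate exponentially, one gets
\[
-\log\hatpi_\infty\big(\min_{Q_{\ell_*}}\phi\ge -h\big)=\ell_*^2\,\Xi_h+O(\ell_*\,p_h)+O(\ell_*^2 p_h^2),
\]
where $\Xi_h$ is the density of connected clusters of excursions reaching below $-h$. To leading order,
\[
\Xi_h = p_h-\sum_{z\neq o}\hatpi_\infty(\phi_o<-h,\ \phi_z<-h)\cdot w(z)+\dots,
\]
with $w(z)\in(0,1]$ an inclusion–exclusion weight. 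Intuitively, $\Xi_h=\lim_\ell -\ell^{-2}\log\hatpi_\infty(\min_{Q_\ell}\phi\ge-h)$ counts ``one excursion per cluster'' rather than one per site.

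Since $\ell_*^2\approx L$ and $p_h\approx 1/N_{n}\le L^{-1+o(1)}$, the boundary error is $O(\ell_* p_h/\ell_*^2)=O(p_h L^{-1/2})$ relative to $\Xi_h$. The $p_h^2$ term is negligible. So $\upxi_n/p_h=\Xi_h/p_h+O(L^{-1/2+o(1)})$, and it suffices to show
\[
1-\Xi_h/p_h\in\big[e^{-C\log L/\log\log L},\,e^{-c\log L/\log\log L}\big].
\]

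\textbf{Step 2 (size of the correction).} The deficit $1-\Xi_h/p_h$ is comparable to $\sum_{z\ne o}\hatpi_\infty(\phi_z<-h\mid\phi_o<-h)$, i.e.\ the expected number of other sites dragged below $-h$ by a spike at $o$.

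For the upper bound, I would use the rigidity of \cref{thm:LD-DG}. Conditionally on $\phi_o=-h'$ with $h'\ge h+1$, a neighbour also sits below $-h$ with probability at most the ratio bound \cref{eq:LD-ratio}, namely $\exp(-c'\beta h/\log h)$ (the spike tip is typically a single site). Sites further away are controlled by \cref{eq:LD-conditional}, which is summable.

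For the lower bound, force a neighbour $z$ to equal $-h'$ as well. The cost relative to $\hatpi(\phi_o=-h')$ is at most $\exp(-c\beta h/\log h)$, again by \cref{eq:LD-ratio} applied to the two-site large-deviation shape.

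Finally, since $h\asymp\sqrt{\beta^{-1}\log L\log\log L}$, we have $\beta h/\log h\asymp \log L/\log\log L$ (up to $\beta$-dependent constants absorbed in $c,C$). This gives \cref{eq:upxi-bounds}.

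\textbf{Step 3 ($\uprho_n$).} Compute $\upxi_{n+1}-\upxi_n=(\Xi_{h-1}-\Xi_h)+O(p_{h-1}L^{-1/2})$. Here $\Xi_{h-1}-\Xi_h$ is the cluster density with minimum exactly $-h$, namely
\[
\hatpi_\infty(\phi_o=-h)\,\big(1-\Theta(e^{-\Theta(\log L/\log\log L)})\big),
\]
by the same argument applied to the level-$h$ spike. Multiplying by $N_n=1/\hatpi_\infty(\phi_o=h)$ and using symmetry gives \cref{eq:uprho-bounds}. One also needs that $\Xi_h$ is negligible relative to $\hatpi_\infty(\phi_o=h)$, which follows from \cref{eq:LD-ratio}.

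\textbf{Main obstacle.} The main difficulty is Step 1: justifying the expansion with errors that are uniform and small compared with $p_h e^{-C\log L/\log\log L}$ on a box of area $\approx L$. In particular, one must show that multi-excursion clusters of diameter $\gg h$ contribute only at order $p_h^2\ell_*^2$. I would attempt this with a Peierls/cluster-expansion estimate using \cref{eq:LD-conditional} for pair correlations and its iterates for larger clusters. The lower bound in Step 2, which needs an explicit configuration that drags a neighbour along, appears more delicate than the upper bound and may need the harmonic-profile shape of the large deviation.
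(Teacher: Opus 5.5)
Your Step~2 uses the wrong order for the two-point large deviation. The final rate only comes out right because of a compensating slip in the scaling. With $h=H+1-n\asymp\sqrt{\beta^{-1}\log L\log\log L}$ one has $\beta h/\log h\asymp\sqrt{\beta\log L/\log\log L}$; it is $\beta h^2/\log^2 h$ that is $\asymp\log L/\log\log L$. Correspondingly, for a neighbour $o'$ the quantity $\hatpi_\infty(\phi_{o'}<-h\mid\phi_o<-h)$ is $e^{-\Theta(\beta h^2/\log^2h)}$, not $e^{-\Theta(\beta h/\log h)}$.

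For the upper bound on the deficit you must use \cref{eq:LD-conditional} for every $z\neq o$, neighbours included. A bound of order $e^{-c\beta h/\log h}$, even if justified, only gives a deficit of $e^{-c\sqrt{\beta\log L/\log\log L}}$. Note also that \cref{eq:LD-conditional} is uniform in $z$, hence not summable. Sites at distance $>\log L$ must be handled by decorrelation (\cref{eq:couple-pi-to-inf-vol}); their total contribution is of order $|Q_\ell|\hatpi_\infty(\phi_o<-h)$, which is why Bonferroni only works on boxes of area at most $Le^{-\log L/\log\log L}$.

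Your lower-bound step, that forcing a neighbour down costs a single ratio factor $e^{-c\beta h/\log h}$, is false: it contradicts \cref{eq:LD-conditional}. The missing idea (\cref{clm:pi(h|h)-lower}) is that under a spike at $o$ a neighbour typically sits only at depth $h-O(h/\log h)$ (\cite[Eq.~(3.6)]{LMS16}). After dropping the conditioning on $\phi_o$ by FKG, pushing it the remaining $O(h/\log h)$ levels costs one ratio factor per level, i.e.\ $e^{-C\beta h^2/\log^2h}$ in total.

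Step~1 is asserted rather than proved, and it is exactly what the paper avoids: it never expands the floor event on a box of area $\asymp L$. Instead:
\begin{itemize}
\item the upper bound on $\upxi_n$ comes from FKG over a domino tiling, $\hatpi_\infty(\phi_x\geq-h,\,\forall x\in Q_\ell)\geq\hatpi_\infty(\phi_o\geq-h,\,\phi_{o'}\geq-h)^{\lceil\ell^2/2\rceil}$, which needs only the pair estimate;
\item the lower bound is Bonferroni on boxes of side $\ell\leq\sqrt L e^{-\frac12\log L/\log\log L}$;
\item \cref{prop:key-area-estimate} transfers from $\ell_0=\lfloor L^{1/4}\rfloor$ to $\ell_*$ at a cost of $L^{-5/4+o(1)}$ in $\xi$.
\end{itemize}
In an expansion, by contrast, getting a \emph{lower} bound on the deficit from the pair term requires showing that three-site clusters do not cancel it. \Cref{eq:LD-conditional} bounds those terms only by $e^{-c\beta h^2/\log^2h}$, which is not smaller than the pair lower bound $e^{-C\beta h^2/\log^2h}$ (here $c\le C$). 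So you need a genuine three-point estimate.

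This is precisely the gap in Step~3: the upper bound $\uprho_n\leq 1-e^{-c\log L/\log\log L}$ does not follow ``by the same argument''. Running the domino argument under the floor-conditioned measure $\bP_{\ell,h}$ (equivalently, ruling out cancellation by three-site clusters at depth exactly $-h$) requires that a two-site spike $\phi_o=\phi_{o'}=-h$ does not typically drag a third site below $-h$. That is, $\hatpi_\infty(\phi_y\geq h\mid\phi_o\geq h,\,\phi_{o'}\geq h)\leq e^{-c\beta h/\log h}$ for all $y\neq o,o'$ (\cref{clm:pi(h|h_h)-upper}). This does not follow from \cref{thm:LD-DG}. The paper derives it from rigidity of the spike-conditioned field around the discrete harmonic profile beyond scale $h/\log h$ (\cref{prop:pi-y-h-phi^*-rigid}), and your proposal has no substitute for it.
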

We will see in \cref{sec:refined-CE} that $\uprho_n$ is the correct pre-factor of the area tilt in the cluster expansion for the disagreement polymers. 
Towards that, we will show in this section that $\upxi_n$ is the correct rate for the probability of a region to lie above height $-h$ for $h=H+1-n$, as per the next result.
In what follows, a connected subset $F$ of the vertices of $\Z^2$ is said to have $g\geq 0$ \emph{holes} if we can decompose its vertex boundary $\partialvtx F$ into $g+1$ disjoint $*$-connected closed paths.

\begin{theorem}\label{thm:key-area}
        Fix $g\geq 0$, $n \geq 0$, and set $h = H+1-n$. The following hold for every subsets $F\subseteq V \subset\Z^2$ where $F$ is connected with $g$ holes.
        \begin{enumerate}
            \item\label{it:xi-n-area-meso}[\emph{Mesoscopic range}] If  $|\partial F|\vee |\partial V| \leq O(L^{2/3}\exp(\sqrt{\log L}))$ then 
    \begin{align}\label{eq:xi-n-area-estimate-meso}\hatpi^0_V(\phi_x \geq -h,\, \forall x \in F) = (1+o(1))\exp\left(-\upxi_{n}|F|\right)\,.
    \end{align}
    \item\label{it:xi-n-area-macro}[\emph{Macroscopic range}] If $|\partial F| \vee |\partial V| \leq O( L\exp(\sqrt{\log L}))$ then 
    \begin{align}\label{eq:xi-n-area-estimate-macro-lower}\quad\;\hatpi^0_V(\phi_x \geq -h,\, \forall x \in F) \geq \exp\left(-\upxi_{n}|F| + O\big(\sqrt{L}e^{\frac{\log L}{\log\log L}}\big)\right)\,,
    \end{align}
    and if $\fS$ is the event that there are no disagreement polymers $\gamma$ in $\phi$ with $|\gamma| \geq \log L$, then        
\begin{align}\label{eq:xi-n-area-estimate-macro-upper}\hatpi^0_V(\phi_x \geq -h,\, \forall x \in F \mid \fS) \leq \exp\left(-\upxi_{n}|F| + O\big(\sqrt{L}e^{\frac{\log L}{\log\log L}}\big)\right)\,.
    \end{align}
If $|F| \leq  \big(\frac{3\beta}{\hatpi_\infty(\phi_o > h)}\big)^2 $, then the last upper bound also applies to $\hatpi_V^0(\phi_x\geq -h,\,\forall x\in F)$.
\end{enumerate}
The above statements also hold when replacing $\hatpi_V^0$ by $\hatpi_\infty$.

\end{theorem}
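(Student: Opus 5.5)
Our approach is to tile $F$ by translates of the mesoscopic square $Q_{\ell}$ and to show that the floor events on different tiles are nearly independent, while each tile contributes $\exp(-\xi_{\ell,h}\ell^2)$. The first step is a \emph{generic block-decomposition estimate}, valid for a range of side lengths $\ell$. Partition $\Z^2$ into disjoint $\ell\times\ell$ squares. Call the squares contained in $F$ interior blocks, and the at most $O(|\partial F|\,\ell)$ sites of $F$ in squares meeting $\partial F$ boundary sites.

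For the lower bound we use FKG, since $\{\phi_x\ge -h\}$ is increasing. This gives
\[\hatpi^0_V(\phi_x\geq -h,\ \forall x\in F)\ \geq\ \prod_{Q}\hatpi^0_V(\phi\restriction_{Q\cap F}\geq -h).\]
Each interior block is then compared with $\hatpi_\infty(\phi\restriction_{Q_\ell}\ge -h)$. This comparison uses exponential decay of correlations at low temperature via the disagreement-polymer/cluster expansion of \cref{prop:CE-law-without-area}; the two measures differ only through polymers reaching $\partial V$ or large distances, at cost $e^{-c\beta \cdot\mathrm{dist}}$. Boundary sites are handled by the singleton bound $\hatpi(\phi_x\ge -h)\ge 1-\hatpi_\infty(\phi_o>h)$.

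For the upper bound we condition on $\fS$, so all polymers have size $<\log L$. We then decouple blocks by fixing the field on the grid lines separating them; those lines are at height $0$ except on small polymers. The cost of this decoupling is $O(\ell^{-1}|F|\cdot e^{-c\beta})$-type corrections coming from the boundary strips of the blocks, together with the probability $\hatpi_\infty(\phi_o>h)$ per strip site. The net result should be
\[\log \hatpi^0_V(\cdots)=-\xi_{\ell,h}|F|+O\Big(\tfrac{|F|}{\ell}\,\hatpi_\infty(\phi_o>h)\,e^{\frac{\log L}{\log\log L}}+\ell\,|\partial F|\,\hatpi_\infty(\phi_o>h)\Big).\]
Here the factor $e^{\log L/\log\log L}$ accounts for the correlation across block boundaries of large deviations, which by \cref{thm:LD-DG} (especially \cref{eq:LD-ratio}) extend over radius $h/\log h$ and change rates by $e^{O(h/\log h)}$.

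The second step is to show that $\xi_{\ell,h}$ is stable in $\ell$. Applying the block estimate with $F=Q_{2\ell}$ split into four $Q_\ell$ blocks gives $|\xi_{2\ell,h}-\xi_{\ell,h}|\lesssim \hatpi_\infty(\phi_o>h)e^{\frac{\log L}{\log\log L}}/\ell$. Summing this dyadically, which is the reason $\ell_*$ is chosen a power of $2$, lets us replace $\xi_{\ell,h}$ by $\upxi_n=\xi_{\ell_*,h}$ at any scale $\ell\ge \ell_*$ with relative error $O(\ell_*^{-1})$.

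With this in hand, the two ranges follow from different choices of $\ell$.
\begin{itemize}
\item \emph{Mesoscopic range.} Take $\ell\asymp \ell_*\asymp\sqrt L$. The boundary error is $\ell|\partial F|\hatpi_\infty(\phi_o>h)\le L^{1/2+2/3+o(1)}\cdot L^{-1+o(1)}$. The area error is $|F|\hatpi_\infty/\ell$, and since $|F|\le|\partial F|^2\le L^{4/3+o(1)}$ this is $L^{4/3-1-1/2+o(1)}=o(1)$. Both are $o(1)$ provided the $o(1)$ exponents are controlled, which gives the factor $(1+o(1))$.
\item \emph{Macroscopic range.} With $|F|\le L^{2+o(1)}$ and $\hatpi_\infty(\phi_o>h)\asymp L^{-1+o(1)}$, both errors are $O(\sqrt L e^{\log L/\log\log L})$.
\end{itemize}
The unconditioned upper bound for $|F|\le(3\beta/\hatpi_\infty(\phi_o>h))^2$ follows because $\fS^c$ has probability $O(L^{-10})$ even given the floor event. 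This is the content of \cref{cor:no-macro-contours}: the rare event cannot be favoured by the area gain, by the isoperimetric inequality. The statements for $\hatpi_\infty$ follow identically.

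The main obstacle is the decoupling upper bound across blocks. The floor event on one block can be realised by a large-deviation "valley" that straddles a block boundary, and a naive product bound then loses a factor per boundary site of order $\hatpi_\infty(\phi_o>h)$ times the valley size. I would control this by a cluster expansion in which the valleys' supports (radius $\lesssim h/\log h$, by \cref{eq:LD-conditional}) are treated as polymers. Interactions among straddling valleys are summed using \cref{eq:LD-conditional}, which makes two nearby deep sites cost an extra $e^{-c\beta h^2/\log^2 h}$. This is what should keep the cross-block error at $\hatpi_\infty(\phi_o>h)e^{O(h/\log h)}$ per unit of block perimeter.
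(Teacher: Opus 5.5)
Your macroscopic plan is workable and close in spirit to \cref{prop:key-area-estimate}: tile with buffer strips, use FKG for the lower bound, use $0$-circuits in the strips on $\fS$ for conditional independence, and use \cref{cor:no-macro-contours} to drop the conditioning. The mesoscopic range, however, does not go through. With $\ell\asymp\sqrt L$ your boundary error is $\ell|\partial F|\hatpi_\infty(\phi_o>h)=L^{1/2+2/3-1+o(1)}=L^{1/6+o(1)}$, not $o(1)$ as you claim.

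No other choice of $\ell$ rescues it within your scheme. Your two errors multiply to at least $|F|\,|\partial F|\,\hatpi_\infty(\phi_o>h)^2$. For a square $F$ with $|\partial F|\asymp L^{2/3}e^{\sqrt{\log L}}$ this is $e^{(3-o(1))\sqrt{\log L}}\to\infty$, so the two errors cannot both vanish. The root cause is that you price the $O(|\partial F|\ell)$ sites of partial blocks with no saving at all.

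At $\ell\asymp\sqrt L$ no saving is available, because $\ell^2\hatpi_\infty(\phi_o<-h)$ is not small. For $n\ge3$ it is at least $e^{c\sqrt{\beta\log L/\log\log L}}\gg\log L$, so inclusion--exclusion inside a block is useless. For the same reason the block probability $e^{-\xi_{\ell,h}\ell^2}$ can be far below $L^{-10}$, so additive coupling errors of order $e^{-c\beta\log L}$ are not negligible at that scale either. The paper keeps $\ell_0^2\hatpi_\infty(\phi_o<-h)=o(1)$ precisely so that such errors become a harmless factor $1+O(L^{-10})$.

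The missing idea is to work at a block scale $\ell_0\le\sqrt L\,e^{-\frac12\log L/\log\log L}$. There, Bonferroni together with decorrelation for far pairs and \cref{eq:LD-conditional} for near pairs gives, for \emph{every} set $S$ with $|S|\le\ell_0^2$,
$-\frac1{|S|}\log\hatpi_\infty(\phi_x\ge-h,\ \forall x\in S)=\bigl(1+O(e^{-c_0\frac{\log L}{\log\log L}})\bigr)\hatpi_\infty(\phi_o<-h)$ (\cref{lem:xi-lower-weak-upper}). Partial blocks can then be priced at the block rate from both sides, at cost $\Xi_3=\frac{|\partial F|\ell_0}{L}e^{-c\frac{\log L}{\log\log L}}$. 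This is also where the $g$-holes hypothesis enters, bounding the partial-block sites by $O(|\partial F|\ell_0)$.

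Since $e^{-c\log L/\log\log L}$ beats every $e^{O(\sqrt{\log L})}$, the choice $\ell_0\asymp L^{1/3}e^{4\sqrt{\log L}}$ makes both the strip error $\frac{|F|}{\ell_0L}e^{\sqrt{\log L}}\le e^{-\sqrt{\log L}}$ and $\Xi_3$ vanish. You then need to pass from $\xi_{\ell_0,h}$ \emph{up} to $\upxi_n=\xi_{\ell_*,h}$, which your stability step (going upward from $\ell_*$) does not provide. The paper does this in one application of the block estimate to $F=Q_{\ell_*}$ under $\hatpi_\infty$, with $\ell_0+10\lceil\log L\rceil$ dividing $\ell_*$ so that there are no partial blocks; this is where the power-of-$2$ choice of $\ell_*$ is used. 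It gives $|\upxi_n-\xi_{\ell_0,h}|=O(e^{\sqrt{\log L}}/(\ell_0L))$, an $o(1)$ cost against $|F|\le L^{4/3+o(1)}$.

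Two smaller points. First, the factor $e^{\log L/\log\log L}$ in your strip error is unjustified and not harmless: strip sites are far from $\partial V$ and each costs only $(1+o(1))\hatpi_\infty(\phi_o<-h)$. The genuine $e^{C\log L/\log\log L}$ loss is confined to the $O(|\partial V|\log^2L)$ sites within $O(\log L)$ of $\partial V$. There your singleton bound $1-\hatpi_\infty(\phi_o>h)$ does not hold, and a separate estimate on $\hatpi^0_V(\phi_x<-h)$ is needed. Second, no cluster expansion of straddling valleys is required: dropping the floor constraint on $O(\log L)$-wide buffer annuli and using the $0$-circuits there already decouples the blocks.
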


\subsection{Rates for the minimum height in a box}\label{sec:xi-and-xi-bar-bounds}
This section is devoted to proving lower and upper bounds on the following rates: for every $\ell\geq 1$ and $h\geq 0$, define
\begin{align} \label{eq:xi-l-h-def}
    \xi_{\ell,h} &:= -\frac{1}{\ell^2}\log\hatpi_\infty\left( \phi_x \geq -h
    ,\,\forall x\in Q_{\ell} := \llb 1,\ell\rrb^2 
    \right)\,,\\
 \label{eq:xi-bar-l-h-def}
    \bar\xi_{\ell,h} &:= -\frac{1}{\ell^2}\log\hatpi_\infty\left( \phi_x > -h
    ,\,\forall x\in Q_{\ell} 
    \;\big|\; \phi_x \geq -h
    ,\,\forall x\in Q_{\ell} \right)\,,
\end{align}
noting that 
\begin{equation}\label{eq:bar-xi-diff-xi}\bar\xi_{\ell,h} = \xi_{\ell,h-1} - \xi_{\ell,h}\,,\end{equation}
and that $\upxi_n$ and $\uprho_n$ from \cref{eq:xi-n-def,eq:rho-n-def} are simply $\xi_{\ell_*,H+1-n}$ and $\bar\xi_{\ell_*,H+1-n} N_n$, respectively.

\subsubsection{Estimating the rate \texorpdfstring{$\xi_{\ell,h}$}{xi}}\label{sec:xi-bounds}
By FKG, if $\ell = k m$ then the probability in the right-hand of \cref{eq:xi-l-h-def} is bounded from below by $\hatpi_\infty(\phi_x\geq -h,\,\forall x\in Q_m)^k$, leading to the following observation.\begin{observation}\label{obs:xi-monotone}
For all $k,m\geq 1$, $\xi_{k m,h} \leq \xi_{m,h} $. In particular, 
$ \xi_{\ell,h} \leq \xi_{1,h}= -\log\hatpi_\infty(\phi_o\geq -h)$.
\end{observation}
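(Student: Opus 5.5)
The plan is to derive both claims from FKG together with the translation invariance of $\hatpi_\infty$. Write $Q_{km}=\llb 1,km\rrb^2$ as the disjoint union of the $k^2$ translates $Q_m+m(i,j)$, $0\le i,j<k$. Let $A_{i,j}$ be the event $\{\phi_x\ge -h\ \forall x\in Q_m+m(i,j)\}$.

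Each $A_{i,j}$ is increasing in $\phi$. Moreover, $\hatpi_\infty$ satisfies the FKG inequality: the finite-volume measures $\hatpi_V^0$ have Hamiltonian $\beta\sum|\phi_x-\phi_y|^2$ with a convex nearest-neighbour interaction, so they satisfy the Holley lattice condition. FKG then passes to the weak limit $\hatpi_\infty$, since increasing cylinder events are local.

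Applying FKG repeatedly gives
\[
\hatpi_\infty\Big(\bigcap_{i,j}A_{i,j}\Big)\ \ge\ \prod_{i,j}\hatpi_\infty(A_{i,j}) \;=\;\hatpi_\infty(A_{0,0})^{k^2}.
\]
The equality uses translation invariance of the infinite-volume limit: it is the unique weak limit, symmetric under lattice shifts for $\beta$ large.

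Taking $-\frac1{(km)^2}\log$ of both sides gives
\[
\xi_{km,h}\ \le\ \frac{k^2 m^2\,\xi_{m,h}}{k^2m^2}\;=\;\xi_{m,h}.
\]
Note that the exponent is $k^2$, counting the sub-boxes, and not $k$ as the text loosely states; the normalisation by $\ell^2$ absorbs it exactly.

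The ``in particular'' statement is the case $m=1$, $k=\ell$. There $Q_1=\{o\}$ up to translation, so $\xi_{1,h}=-\log\hatpi_\infty(\phi_o\ge -h)$.

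The only point needing care is justifying FKG and translation invariance for $\hatpi_\infty$. I would take both as the standard facts already used in the paper, for instance the FKG coupling in the Remark after \cref{thm:main-thm-limit-shape}. I expect no real obstacle.
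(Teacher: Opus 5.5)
Your argument is correct and is the same as the paper's. You tile $Q_{km}$ by $k^2$ translates of $Q_m$, apply FKG to these increasing events together with translation invariance of $\hatpi_\infty$, and then normalize by $(km)^2$. You are also right that the exponent in the paper's text should be $k^2$ rather than $k$: with $k$ one would get the false bound $\xi_{km,h}\le \xi_{m,h}/k$, whereas with $k^2$ the normalization gives exactly $\xi_{km,h}\le\xi_{m,h}$.
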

The next lemma readily infers an upper bound on $\xi_{\ell,h}$ from the above observation for the $\ell,h$ relevant to our application in \cref{sec:bbq}, and complements it with a sharper lower bound.

\begin{lemma}\label{lem:xi-lower-weak-upper}
There exist absolute constants $\beta_0,c_0>0$ such that the following holds for all $\beta>\beta_0$. Let $h = H+1-n$ for fixed $n\geq 0$. Then for all $1 \leq \ell \leq \sqrt{L} \exp(-\frac12\frac{\log L}{\log\log L})$ we have
\begin{equation}
\label{eq:xi-correct-lower-weak-upper}
 1 - e^{-c_0\frac{\log L}{\log \log L}} \leq \frac{\xi_{\ell,h}}{\hatpi_\infty(\phi_o < -h)} \leq 1 + L^{-1+o(1)}\,.
 \end{equation}
Moreover, this holds when replacing $\xi_{\ell,h}$ by $-\frac1{|S|}\log\hatpi_\infty(\phi_x\geq -h,\,\forall x\in S)$ for any set $S\subset \Z^2$, not necessarily connected, of size at most $\ell^2$. In addition, the upper bound holds for all $\ell \geq 1$.
\end{lemma}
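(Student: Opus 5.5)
The plan is to write $p:=\hatpi_\infty(\phi_o<-h)$ and $P_S:=\hatpi_\infty(\phi_x\ge -h,\,\forall x\in S)$, and to handle a general set $S$ with $|S|\le \ell^2$ directly; the case $S=Q_\ell$ then gives the statement for $\xi_{\ell,h}$.

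\textbf{Upper bound.} By FKG (the events $\{\phi_x\ge -h\}$ are increasing), $P_S\ge \prod_{x\in S}\hatpi_\infty(\phi_x\ge -h)=(1-p)^{|S|}$. This holds for every $S$ and every $\ell$, as in \cref{obs:xi-monotone}. It gives
\[
-\tfrac1{|S|}\log P_S\le -\log(1-p)=p(1+O(p)).
\]
Since $h=H+1-n$ with $n$ fixed, \cref{eq:H-def} together with \cref{eq:LD-ratio} yields $p=\hatpi_\infty(\phi_o>h)=L^{-1+o(1)}$ (the tail is dominated by its first term). Hence $1+O(p)=1+L^{-1+o(1)}$, which is the upper bound.

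\textbf{Lower bound.} We need $P_S\le \exp(-(1-\epsilon)p|S|)$ with $\epsilon=e^{-c_0\log L/\log\log L}$. I would use a second-moment bound on $X:=\sum_{x\in S}\one\{\phi_x<-h\}$, since $P_S=\P(X=0)$.
\begin{itemize}
\item The mean is $\E X=p|S|$. Because $|S|\le \ell^2\le L e^{-\log L/\log\log L}$ and $p=L^{-1+o(1)}$, we have $\E X\le e^{-(1-o(1))\log L/\log\log L}$, which is small.
\item In this small-mean regime, $-\log\P(X=0)\ge \P(X\ge1)\ge \E X-\E\binom{X}{2}$, by Bonferroni.
\item It therefore suffices to show $\sum_{x\ne z\in S}\P(\phi_x<-h,\phi_z<-h)\le \epsilon\, p|S|$.
\end{itemize}

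To bound the pair sum, I split pairs by distance.
\begin{itemize}
\item \emph{Nearby pairs} ($|x-z|\le r_0$ for a suitable $r_0$). By symmetry $\phi\mapsto-\phi$, the conditional bound \cref{eq:LD-conditional} gives $\P(\phi_z<-h\mid\phi_x<-h)\le e^{-c\beta h^2/\log^2 h}$. Summing $\P(\phi_x<-h)=p$ over $x$ gives a contribution of at most $p|S|\,r_0^2\,e^{-c\beta h^2/\log^2h}$.
\item \emph{Far pairs} ($|x-z|>r_0$). Exponential decay of correlations at large $\beta$ gives $\P(\phi_x<-h,\phi_z<-h)\le p^2(1+e^{-c|x-z|})$, plus lower-order terms. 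These contribute at most $p^2|S|^2(1+o(1))\le p|S|\cdot\E X$. By the mean estimate above, $\E X\le e^{-(1-o(1))\log L/\log\log L}$.
\end{itemize}
Since $h^2/\log^2 h\asymp \log L/\log\log L$ (from $\log(1/p)\asymp \beta h^2/\log h$ and $\log(1/p)\sim\log L$), both contributions are at most $e^{-c_0\log L/\log\log L}\,p|S|$, which gives the claimed lower bound on $\xi_{\ell,h}/p$.

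\textbf{Main obstacle.} The hard step is the near-pair correlation bound. \Cref{eq:LD-conditional} is stated for the events $\{\phi=h\}$, not for the tails $\{\phi\ge h\}$, and only for $z$ with enough room around it. I would handle this in three moves:
\begin{itemize}
\item Decompose the tail into levels $h'\ge h+1$ using \cref{eq:LD-ratio}; the geometric decay across levels lets the $h'=h+1$ term dominate.
\item For $z$ within distance $O(h/\log h)$ of $x$, use instead the single-site rigidity from the Peierls bound, i.e.\ an $e^{-c\beta}$ factor per height increment.
\item For far pairs, derive decorrelation from the cluster expansion of \cref{prop:CE-law-without-area}: conditional on $\phi_x<-h$, the disagreement polymers around $x$ have exponentially decaying size, so an event at distance $r$ is affected only with probability $e^{-c\beta r}$.
\end{itemize}
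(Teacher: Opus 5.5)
Your architecture matches the paper's. The upper bound comes from FKG, $P_S\ge(1-p)^{|S|}$. For the lower bound, Bonferroni reduces everything to the pair sum, split into near pairs (conditional large-deviation bound) and far pairs (decorrelation). The paper splits at distance $\log L$ and uses the additive estimate $\hatpi_\infty(\phi_z<-h\mid\phi_x<-h)\le p+L^{-10}$ for far pairs. It then applies $\hatpi_\infty(\phi_z<-h\mid\phi_x<-h)\le e^{-c\beta h^2/\log^2 h}$ to all $O(\log^2 L)$ remaining sites $z\neq x$.

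\textbf{The gap: your fix for sites near $x$.} The problem is the ``main obstacle'' fix for $z$ within distance $O(h/\log h)$ of $x$.
\begin{itemize}
\item It rests on a misreading. The hypotheses of \cref{eq:LD-conditional} ($V\supset\cB_r(o)\cup\cB_{r+1}(z)$) constrain the domain, not the position of $z$, and hold trivially for $V=\Z^2$. So in infinite volume the bound applies to every $z\neq x$, neighbors included, which is how the paper uses it.
\item More importantly, the replacement cannot deliver the lemma. The rate $e^{-c_0\log L/\log\log L}$ comes entirely from the quadratic exponent $\beta h^2/\log^2 h\asymp\log L/\log\log L$, and the closest pairs are exactly where that bound is tight (\cref{clm:pi(h|h)-lower}).
\item A single-site Peierls step is not even available at a neighbor $z$ of $x$. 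For $k\le h$, the component of $\{\phi\le -k-1\}$ containing $z$ also contains $x$, so the map would destroy the conditioning.
\item Even granting the spike profile, paying $e^{-c\beta}$ per unit over the $\Theta(h/\log h)$ units by which the profile at a neighbor exceeds $-h$ gives only $e^{-c\beta h/\log h}=e^{-c\sqrt{\beta\log L/\log\log L}}$. You would prove $1-e^{-c\sqrt{\log L/\log\log L}}$ instead of the stated bound.
\end{itemize}

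\textbf{Two related fixes.} Your first bullet has the same weakness: \cref{eq:LD-ratio} alone only buys $e^{-c\beta h/\log h}$ for off-diagonal levels $(\phi_x,\phi_z)=(a,b)$ with $a\neq b$. To pass from $\{\phi=h'\}$ to tails, organize the sum by $b=\min(\phi_x,\phi_z)$. Then use the Peierls map lowering the component of $\{\phi\ge b+1\}$ containing $x$, which avoids $z$ when $\phi_z=b$. This gives $\P(\phi_x\ge b,\phi_z=b)\le(1+\epsilon_\beta)\P(\phi_z=b)\,\P(\phi_x=b\mid\phi_z=b)$; now apply \cref{eq:LD-conditional} and sum over $b$. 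Finally, decorrelation is available only additively (an error $e^{-c\beta r}$, not a factor $1+e^{-cr}$). So your $r_0$ must be of order $\log L$, as in the paper, and the resulting $O(\log^2 L)$ factor on near pairs is harmless.
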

\begin{proof}
We begin with the upper bound. Recall from \cref{obs:xi-monotone} that $\xi_{\ell,h}\leq \xi_{1,h}$. Writing $\xi_{1,h}=-\log(1-t)$ for $t  = \hatpi_\infty(\phi_o<-h)$, we use
the fact  that $1-t\geq e^{-t-t^2}$ for $0<t<\frac12$ (here $t < \epsilon_\beta<\frac12$ for every $h\geq 0$ by the standard Peierls estimate; cf.~\cite{BrandenbergerWayne82}), and find that for every $\ell,h$,
\[ \frac{\xi_{\ell,h}}{\hatpi_\infty(\phi_o < -h)}
 \leq 1 + \hatpi_\infty(\phi_o < -h)\,.\]
When $h=H+1-n$ for fixed $n\geq 0$, as in our hypothesis, we have $\hatpi_\infty(\phi_o<-h) = L^{-1+o(1)}$ (see, e.g., \cite[Eq.~(2.3)]{ChenLubetzky25} and the definition of $H$), establishing the upper bound in \cref{eq:xi-correct-lower-weak-upper}.

We proceed to the lower bound. The Bonferroni inequalities imply that
\begin{align*}
    \hatpi_\infty(\phi_x \geq -h,\,\forall x\in Q_\ell) &\leq 1 - |Q_\ell| \hatpi_\infty(\phi_o< -h) + \sum_{x,y\in Q_\ell} \hatpi_\infty(\phi_x< -h\,,\, \phi_y< -h)\,.\end{align*} 
If $\dist(x,y)>\log L$ then $\hatpi_\infty(\phi_y<-h\mid\phi_x<-h) < \hatpi_\infty(\phi_y<-h) + L^{-10}$ by the well-known decorrelation estimates $\hatpi_\infty$ at low temperature (stemming from a routine Peierls argument; see also \cref{eq:couple-pi-to-inf-vol} below), and  for all other distinct $x,y$ we have that
$\hatpi_\infty(\phi_y<-h\mid \phi_x<-h) < e^{-c\beta h^2/\log^2 h}$ by \cite[Thm.~2.5, Eq.~(2.8)]{ChenLubetzky25}. Hence,
\begin{align}\nonumber
\frac{\sum_{x,y\in Q_\ell} \hatpi_\infty(\phi_x< -h\,,\, \phi_y< -h)}{|Q_\ell| \hatpi_\infty(\phi_o<-h)} &\leq |Q_\ell|\left(\hatpi_\infty(\phi_o<-h)+ L^{-10}\right) + O((\log L)^2) e^{-c\beta h^2/\log^2 h} \\
&\leq e^{-c' \frac{\log L}{\log\log L}}\,,\label{eq:sum-pi(h-h)-ratio}    
\end{align}
using $h^2/\log^2 h\asymp \frac1\beta \frac{\log L}{\log\log L}$, so the last summand in the first line is at most $\exp(-c\frac{\log L}{\log\log L})$, as is the first summand since $|Q_\ell|=\ell^2\leq L \exp(-\frac{\log L}{\log\log L})$ and $\hatpi_\infty(\phi_o<-h) \leq L^{-1} \exp(o(\sqrt{\log L}))$.
Overall,
\begin{align*}
    \hatpi_\infty(\phi_x \geq -h,\,\forall x\in Q_\ell)^{1/\ell^2} &\leq \left(1 - \big(1-e^{-c'\frac{\log L}{\log\log L}}\big) \ell^2 \hatpi_\infty(\phi_o<-h)\right)^{1/\ell^2} \nonumber \\
    &\leq \exp\left(-\big(1-e^{-c'\frac{\log L}{\log\log L}}\big) \hatpi_\infty(\phi_o<-h)\right)\,,
\end{align*} 
establishing the lower bound in \cref{eq:xi-correct-lower-weak-upper}.
\end{proof}
Note the asymmetry between the lower and upper bounds on $\xi_{\ell,h}/\hatpi_\infty(\phi_o<-h)$ in \cref{eq:xi-correct-lower-weak-upper}. The following lemma refines the upper bound to show that  $\xi_{\ell,h} /\hatpi_\infty(\phi_o<-h)$ is in fact bounded from above by $1-e^{c_1\frac{\log L}{\log\log L}}$.

\begin{lemma}\label{lem:xi-refined-upper}
There exist absolute constants $\beta_0,c_1>0$ such that the following holds for all $\beta>\beta_0$. Fix $n\geq 0$ and $\delta>0$, and let $h = H+1-n$. Then for all $\ell \geq L^{\delta}$ we have
\[ \frac{\xi_{\ell,h}}{\hatpi_\infty(\phi_o<-h)} \leq 1 - e^{-c_1\frac{\log L}{\log \log L}} \,.\]
\end{lemma}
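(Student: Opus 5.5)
The plan is to exploit the positive correlation of deep downward spikes. A spike below $-h$ at $x$ drags its neighbours down with it, so the event that a small block contains a site below $-h$ has probability noticeably smaller than the union bound $m^2\hatpi_\infty(\phi_o<-h)$. We then transfer this saving to $Q_\ell$ by FKG, as in \cref{obs:xi-monotone}.

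\emph{Step 1 (reduction to a fixed block).} Fix a small block, say $m=2$, so $Q_2=\{o,e_1,e_2,e_1+e_2\}$. Cover $Q_\ell$ by $\lceil \ell/2\rceil^2$ translates of $Q_2$. The events $\{\phi\ge -h\text{ on a block}\}$ are increasing, so FKG gives
\[
\xi_{\ell,h}\le \tfrac{\lceil \ell/2\rceil^2\cdot 4}{\ell^2}\,\xi_{2,h}\le (1+O(L^{-\delta}))\,\xi_{2,h}.
\]
This is where $\ell\ge L^\delta$ enters: the boundary rounding error $O(1/\ell)$ is negligible compared with $e^{-c_1\frac{\log L}{\log\log L}}$. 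It therefore suffices to show $\xi_{2,h}\le(1-2\eta)\hatpi_\infty(\phi_o<-h)$ with $\eta=e^{-c\frac{\log L}{\log\log L}}$.

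\emph{Step 2 (inclusion--exclusion with a gain).} Write $A_x=\{\phi_x<-h\}$, $p=\hatpi_\infty(A_o)$, and let $N=\sum_{x\in Q_2}\one_{A_x}$. Then
\[
\hatpi_\infty\Big(\bigcup_x A_x\Big)=\sum_x \E\big[\one_{A_x}/N\big]\le \sum_x \hatpi_\infty(A_x)\big(1-\tfrac12\hatpi_\infty(N\ge2\mid A_x)\big)\le 4p\big(1-\tfrac12\hatpi_\infty(A_{e_1}\mid A_o)\big),
\]
using translation and rotation invariance. Taking $-\frac14\log(1-\cdot)$ and recalling $p=L^{-1+o(1)}$ yields $\xi_{2,h}\le p\,(1-\tfrac12 q)+O(p^2)$, where $q=\hatpi_\infty(A_{e_1}\mid A_o)$. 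So everything reduces to a lower bound $q\ge e^{-c\frac{\log L}{\log\log L}}$.

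\emph{Step 3 (the main obstacle: two-point lower bound).} We need
\[
\hatpi_\infty\big(\phi_o<-h,\ \phi_{e_1}<-h\big)\ge e^{-c\beta h^2/\log^2 h}\,\hatpi_\infty(\phi_o<-h).
\]
Since $\beta h^2/\log^2h\asymp \frac{\log L}{\log\log L}$ for $h=H+1-n$, this gives the required bound on $q$. By symmetry $\phi\mapsto-\phi$, \cref{eq:LD-ratio} shows $\hatpi_\infty(\phi_o<-h)$ is comparable to $\hatpi_\infty(\phi_o=-(h+1))$ up to a factor $1+o(1)$. It therefore suffices to compare $\hatpi_\infty(\phi_o=\phi_{e_1}=h+1)$ with $\hatpi_\infty(\phi_o=h+1)$.

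I would try this first by a direct map argument. Take configurations with $\phi_o=h+1$ and, as in the large deviation analysis behind \cref{thm:LD-DG}, restrict to those close to the optimal harmonic-type profile. Among these, $\phi_{e_1}$ is typically $h+1-O(h/\log h)$. Raising $\phi$ by $1$ on nested level sets around $e_1$, lifting it to $h+1$, costs energy $O(\beta h^2/\log^2 h)$. This is the same order as the gain in \cref{eq:LD-ratio} and as the correction in \cref{eq:LD}. The map is at most $e^{O(h^2/\log^2h)}$-to-one, which gives the claim.

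Alternatively, one could use that the two-point LD rate differs from the one-point rate only through the capacity of $\{o,e_1\}$ versus $\{o\}$. That difference is $2\pi\beta h^2(1/\log h-1/\log(h\cdot O(1)))=O(\beta h^2/\log^2h)$, so it is absorbed into the error term of \cref{eq:LD}.

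Combining the three steps gives $\xi_{\ell,h}\le(1-\tfrac13 e^{-c\frac{\log L}{\log\log L}})\,p$, which proves the lemma with a suitable $c_1$.
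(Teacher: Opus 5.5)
Your Steps 1 and 2 are correct and amount to the paper's reduction. The paper tiles $Q_\ell$ by dominoes and uses $\hatpi_\infty(A_o\cup A_{o'})=2p-pq$; your $2\times 2$ blocks with $\one_{\cup A_x}=\sum_x\one_{A_x}/N$ give the same $p(1-\tfrac12 q)$, and in both versions $\ell\ge L^{\delta}$ only absorbs rounding.

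\textbf{The gap is Step 3.} It is the real content of the lemma (the paper isolates it as \cref{clm:pi(h|h)-lower}), and neither of your routes proves $q\ge e^{-c\beta h^2/\log^2 h}$.

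\emph{The map route.} As stated, the map is not well defined. Any superlevel component of $e_1$ at a level below $h+1$ contains $o$, since $o$ is adjacent and higher, so raising it destroys $\phi_o=h+1$. If you exclude $o$, or raise $\phi_{e_1}$ alone, each unit raise costs $\sum(2(\phi_x-\phi_y)+1)$ over the remaining boundary edges. That is $O(h/\log h)$ only if you already know two things: that the other neighbours of $e_1$ (indeed the whole component) lie within $O(h/\log h)$ of $h$, and that the component has bounded size. This is a conditional shape statement about the spike. \cref{thm:LD-DG} gives nothing of the sort, and the concentration you invoke for $\phi_{e_1}$ concerns a single site.

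\emph{The capacity route.} It fails similarly. \cref{eq:LD} is a one-point estimate, so there is no two-point lower bound whose error term could absorb the capacity difference. You would have to redo the large-deviation lower-bound construction for $\{o,e_1\}$.

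\textbf{The fix: let FKG do the raising.} This removes any need to control the rest of the configuration. Set $a=h-C_1h/\log h$. The neighbour concentration gives $\hatpi_\infty(\phi_{o'}\ge a\mid \phi_o\ge h)>\tfrac13$; this is \cite[Eq.~(3.6)]{LMS16}, combined with $\hatpi_\infty(\phi_o=h)=(1-o(1))\hatpi_\infty(\phi_o\ge h)$. Then
\[
\hatpi_\infty(\phi_{o'}\ge h\mid \phi_o\ge h)\;\ge\;\tfrac13\,\hatpi_\infty(\phi_{o'}\ge h\mid \phi_{o'}\ge a,\,\phi_o\ge h)\;\ge\;\tfrac13\,\hatpi_\infty(\phi_{o'}\ge h\mid \phi_{o'}\ge a)\,.
\]
The second inequality is FKG for $\hatpi_\infty(\cdot\mid\phi_{o'}\ge a)$, applied to the increasing events $\{\phi_{o'}\ge h\}$ and $\{\phi_o\ge h\}$. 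The right-hand side involves only the one-site marginal at $o'$. It is a product of $C_1h/\log h$ consecutive ratios, each at least $e^{-c\beta h/\log h}$ by the lower bound in \cref{eq:LD-ratio}. This gives $q\ge e^{-C\beta h^2/\log^2 h}$ and completes your Step 3.
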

We need the following result, showing that the upper bound $\hatpi_V^0(\phi_x = h \mid \phi_o = h) \leq \exp(-c\beta \frac{h^2}{\log^2 h})$ for any $x\neq o$ (\cite[Thm.~2.5, Eq.~(2.8)]{ChenLubetzky25}) is tight when $x$ is a neighbor of the origin.
\begin{claim}\label{clm:pi(h|h)-lower}
Let $h\geq 1$, and let $V\subseteq \Z^2$ be a connected region with $V\supset B_R(o)$, where $R = \lfloor h/\log h\rfloor$. There is an absolute constant $c>0$ such that, for large $\beta$, if $o'$ is a neighbor of the origin $o$ then
\[ \hatpi_V^0\left(\phi_{o'} \geq h\mid \phi_o \geq h\right) \geq e^{-c\beta \frac{h^2}{\log^2 h}}\,.\]
\end{claim}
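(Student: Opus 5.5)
The plan is to raise the origin slightly and then pay a single-site cost at $o'$. Set $m:=\lceil C_0 h/\log h\rceil$ for a large constant $C_0$. Since $\{\phi_o\geq h+m,\ \phi_{o'}\geq h\}\subseteq\{\phi_o\geq h,\ \phi_{o'}\geq h\}$, we have
\[ \hatpi_V^0(\phi_{o'}\geq h\mid\phi_o\geq h)\;\geq\;\frac{\hatpi_V^0(\phi_o\geq h+m)}{\hatpi_V^0(\phi_o\geq h)}\cdot\hatpi_V^0(\phi_{o'}\geq h\mid \phi_o\geq h+m)\,. \]
The first factor is handled by iterating the lower bound in \cref{eq:LD-ratio} of \cref{thm:LD-DG} $m$ times, which requires a ball of radius $\asymp (h+m)/\log h$ inside $V$; the hypothesis $V\supset B_R(o)$ provides this after adjusting constants (or after using the $V$-uniform version of \cref{thm:LD-DG} from \cite{ChenLubetzky25}). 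We then sum the geometric tail to pass between $\{\phi_o=h\}$ and $\{\phi_o\geq h\}$. This gives
\[ \frac{\hatpi_V^0(\phi_o\geq h+m)}{\hatpi_V^0(\phi_o\geq h)}\;\geq\; \exp\Big(-c\beta\, m\,\frac{h+m}{\log h}\Big)\;\geq\; \exp\Big(-c'\beta\frac{h^2}{\log^2 h}\Big)\,. \]

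For the second factor, the event $\{\phi_{o'}\geq h\}$ is increasing. The conditional law of $\hatpi^0_V$ given $\phi_o\geq h+m$ stochastically dominates the law given $\phi_o= h+m$: the single-site marginals are log-concave, so Holley/FKG applies. It therefore suffices to bound $\hatpi_V^0(\phi_{o'}\geq h\mid \phi_o=h+m)$ from below. Under this pinned measure, we resample $\phi_{o'}$ given its four neighbours $o,y_1,y_2,y_3$. The conditional law is the discrete Gaussian
\[ \propto\exp\Big(-4\beta\,(v-\bar a)^2\Big)\,,\qquad \bar a=\tfrac14\big(h+m+\textstyle\sum_i\phi_{y_i}\big)\,. \]
Hence on the event $G=\{\phi_{y_i}\geq h-m',\ i=1,2,3\}$ we have $\bar a\geq h+\tfrac14(m-3m')$. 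So $\hatpi(\phi_{o'}\geq h\mid \phi_{\partial o'})\geq e^{-4\beta (3m'/4)^2-O(1)}$, which is $e^{-O(\beta h^2/\log^2 h)}$ as long as $m'=O(h/\log h)$. It then remains to show $\hatpi_V^0(G\mid \phi_o=h+m)\geq e^{-O(\beta h^2/\log^2h)}$; a constant lower bound would be more than enough.

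This last step is the main obstacle: we must show that, given a spike of height $h+m$ at $o$, sites at distance $\sqrt2$ or $2$ from $o$ stay within $O(h/\log h)$ of $h$ with non-negligible probability. The first attempt is a direct energy comparison through a map. Take a configuration with $\phi_o=h+m$, and raise the values on the small set $\{y_1,y_2,y_3\}$ (together with $o'$ and the neighbours of $o$) to $\max(\phi,\psi)$. Here $\psi$ is the integer rounding of the harmonic profile $\psi(x)=(h+m)\big(1-\log|x|/\log R\big)^+$, which is pinned at $o$ and vanishes at $\partial B_R$. This changes only $O(1)$ sites. The energy cost is $\beta\sum|\nabla\psi|^2$ over $O(1)$ bonds near $o$, i.e.\ $O(\beta(h/\log R)^2)=O(\beta h^2/\log^2 h)$, and the map is at most $e^{O(\log h)}$-to-one. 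Should this local surgery fail to control the cross terms with the unmodified neighbours, the fallback is to use the harmonic-shape description of the large deviation from \cite[Thm.~3.1]{LMS16} / \cite{ChenLubetzky25}: conditionally on $\phi_o=h+m$, the profile on $B_R$ is within $O(h/\log h)$ of $\psi$ with probability bounded below. Combining the three factors then yields the claim with a suitable absolute constant $c$.
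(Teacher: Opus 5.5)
The first half of your argument is sound. The inclusion of events, the cost of lifting $o$ by $m$ through \cref{eq:LD-ratio}, and the domination of $\hatpi_V^0(\cdot\mid\phi_o\geq h+m)$ over $\hatpi_V^0(\cdot\mid\phi_o=h+m)$ all hold; the domination follows from FKG for the \ZGFF (convexity of the interaction), not from log-concavity of marginals. The resampling of $\phi_{o'}$ given its neighbours is also fine.

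The gap is the step you flag as the main obstacle, and that step carries all the content. Your local surgery fails. Raising $y_1,y_2,y_3$ (and a few sites next to $o$) to the values of $\psi$ creates bonds to \emph{unmodified} neighbours, and their gradients are $\psi_{y_i}-\phi_z$, not $\nabla\psi$. On $G^c$ nothing stops the next shell (distance $3$ or $\sqrt5$ from $o$) from also being far below $\psi$. The surgery then raises the energy by order $\beta h^2$, not $O(\beta h^2/\log^2 h)$, so the map argument gives nothing. Ruling this out requires knowing the outer shells follow the harmonic profile, which is what you set out to prove.

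Your fallback needs pointwise control at distances $\sqrt2$ and $2$ from the pinned site. What \cite{LMS16} supplies (its Eq.~(3.6)) covers only a \emph{nearest neighbour} of $o$: given $\phi_o=h$, it satisfies $\phi\geq h-C_1h/\log h$ with probability $>\frac12$. Pointwise rigidity about $\phi^*$ at all sites is, in this paper, \cref{prop:pi-y-h-phi^*-rigid}. That is a new and much harder result, stated only for $\hatpi_\infty$. As written, your proof rests either on an unsupported citation or on a far heavier tool than the claim needs.

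The fix is to raise $o'$ rather than $o$, which removes any need for second-shell information; this is the paper's proof. Put $a=C_1h/\log h$ and apply \cite[Eq.~(3.6)]{LMS16} to $o'$ itself. Together with $\hatpi_V^0(\phi_o=h)=(1-o(1))\hatpi_V^0(\phi_o\geq h)$, this gives $\hatpi_V^0(\phi_{o'}\geq h-a\mid\phi_o\geq h)>\frac13$. Hence
\[
\hatpi_V^0(\phi_{o'}\geq h\mid\phi_o\geq h)\;\geq\;\tfrac13\,\hatpi_V^0(\phi_{o'}\geq h\mid\phi_{o'}\geq h-a,\ \phi_o\geq h)\;\geq\;\tfrac13\,\hatpi_V^0(\phi_{o'}\geq h\mid\phi_{o'}\geq h-a)\,.
\]
The second inequality is FKG: conditioning further on the increasing event $\{\phi_o\geq h\}$ can only help. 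The last probability is at least $e^{-c'\beta a h/\log h}=e^{-c'C_1\beta h^2/\log^2h}$, by iterating the single-site ratio bound $a$ times. This uses only nearest-neighbour input and needs neither the lift of $o$ nor the resampling step.
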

\begin{proof}
It was shown in \cite[Eq.~(3.6)]{LMS16} that, for a large enough absolute constant $C_1>0$, \[\hatpi_V^0\Big(\phi_{o'} \geq h - C_1\frac{h}{\log h} \;\big|\; \phi_o = h\Big) > \frac12\,.\]
Consequently,
\[\hatpi_V^0\Big(\phi_{o'} \geq h - C_1\frac{h}{\log h} \;\Big|\; \phi_o \geq h\Big) \geq \hatpi_V^0\Big(\phi_{o'} \geq h - C_1\frac{h}{\log h} \;\Big|\; \phi_o = h\Big) \frac{\hatpi_V^0(\phi_o = h)}{\hatpi_V^0(\phi_o \geq h)}
> \frac13\,,\]
where the last inequality used that $\hatpi_V^0(\phi_o=h) = (1-o(1))\hatpi_V^0(\phi_o\geq h)$ with the $o(1)$-term going to $0$ as $h\to\infty$ (see for instance \cite[Thm.~2.5, Eq.~(2.6)]{ChenLubetzky25}). Therefore, 
\begin{align*} \hatpi_V^0(\phi_{o'} \geq h \mid \phi_o \geq h) &\geq \frac13\,\hatpi_V^0\Big(\phi_{o'} \geq h \;\Big|\; \phi_{o'} \geq h-C_1\frac{h}{\log h}\,,\, \phi_o \geq h\Big) \\ 
&\geq \frac13\,\hatpi_V^0\Big(\phi_{o'} \geq h \;\Big|\; \phi_{o'} \geq h-C_1\frac{h}{\log h}\Big)  
\geq e^{-C_2\beta\frac{h^2}{\log^2 h}}\,, \end{align*}
where the inequality between the lines follows from FKG, and the last inequality used the fact that $\hatpi_V^0(\phi_o=h')/\hatpi_V^0(\phi_o=h'-1) \geq e^{-c'\beta \frac{h'}{\log h'}} \geq e^{-c'\beta \frac{h}{\log h}} $ for an absolute constant $c'$ and all $h'\in\llb h-C_1 \frac h{\log h},h\rrb$ (see, e.g., \cite[Thm.~2.5, Eq.~(2.6)]{ChenLubetzky25}), yielding the final constant $C_2 = c' C_1$.
\end{proof}
\begin{proof}[Proof of \cref{lem:xi-refined-upper}]
 By dividing $Q_\ell$ into $1\times 2$ and $2\times 1$ boxes (``dominoes'') and applying FKG, we see that, for a neighbor of the origin $o'\sim o$,
\begin{align*} 
\hatpi_\infty(\phi_x\geq -h,\,\forall x\in Q_\ell) &\geq 
\hatpi_\infty(\phi_o\geq -h,\,\phi_{o'} \geq -h)^{\lceil \ell^2/2\rceil} \\
&=
\Big(1-2\hatpi_\infty( \phi_o<-h) + \hatpi_\infty(\phi_o< -h,\,\phi_{o'} < -h)\Big)^{\lceil \ell^2/2\rceil}\,. 
\end{align*}
Again using that $1-t\geq e^{-t-t^2}$ for $0<t<\frac12$, if we denote
\[ q = \hatpi_\infty(\phi_o<-h)\Big(1-\frac12\hatpi_\infty(\phi_{o'}<-h\mid \phi_o<-h)\Big) = L^{-1+o(1)}\]
then we see that 
\[
-\log \hatpi_\infty(\phi_x\geq -h,\,\forall x\in Q_\ell) \leq \lceil \ell^2/2\rceil 2q(1+2q) \leq (\ell^2+1)q(1+2q)\,.
\]
Thus,
\begin{align*}
     \frac{\xi_{\ell,h}}{\hatpi_\infty(\phi_o<-h)} &\leq \Big(1+\frac1{\ell^2}\Big) (1+2q)\frac{q}{{\hatpi_\infty(\phi_o<-h)}} \\
     &\leq  \left(1+L^{-2\delta}\right) \left(1+2 L^{-1+o(1)}\right)
     \Big(1-\frac12\hatpi_\infty(\phi_{o'}<-h\mid \phi_o<-h)\Big)\,,
\end{align*}
and the desired result now follows from \cref{clm:pi(h|h)-lower}.
\end{proof}
\subsubsection{Estimating the rate \texorpdfstring{$\bar\xi_{\ell,h}$}{xi-bar}}\label{sec:xi-bar-bounds}
Throughout this subsection, denote
\[ \bP_{\ell,h} := \hatpi_\infty(\cdot \mid \sF_{\ell,h})\qquad\mbox{where}\qquad \sF_{\ell,h}:= \left\{\phi_x \geq -h,\,\forall x\in Q_{\ell}\right\}\]
for brevity, so that $\bar \xi_{\ell,h} = -\frac{1}{\ell^2}\log\bP_{\ell,h}(\phi_x > -h,\,\forall x\in Q_\ell)$.
Note that $\bP_{\ell,h}$ is simply a \ZGFF model with a floor at $-h$ on $Q_\ell$, and thus enjoys FKG (see, e.g., \cite[Claim~A.1]{ChenLubetzky25}). In particular, the same reasoning that led to \cref{obs:xi-monotone} for $\xi_{\ell,h}$ gives its following analogue for $\bar\xi_{\ell,h}$.
\begin{observation}
\label{obs:xi-bar-monotone}
For all $k,m\geq 1$, $\bar\xi_{k m,h} \leq \bar\xi_{m,h} $. In particular, 
$ \bar\xi_{\ell,h} \leq \bar\xi_{1,h}$.
\end{observation}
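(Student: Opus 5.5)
The plan is to copy the argument for \cref{obs:xi-monotone}, with $\bP_{\ell,h}$ in place of $\hatpi_\infty$. Suppose $\ell=km$. Partition $Q_{km}$ into the $k^2$ disjoint translates $Q^{(1)},\dots,Q^{(k^2)}$ of $Q_m$, and set $A_j:=\{\phi_x>-h,\,\forall x\in Q^{(j)}\}$. Each $A_j$ is an increasing event, and $\bP_{km,h}$ (the \ZGFF with a floor at $-h$ on $Q_{km}$) satisfies FKG, as recalled above. Hence
\[\bP_{km,h}\Big(\bigcap_j A_j\Big)\geq \prod_{j}\bP_{km,h}(A_j)\,.\]

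Next we compare each factor $\bP_{km,h}(A_j)$ with $\bP_{m,h}(\phi_x>-h,\,\forall x\in Q_m)$. By translation invariance of $\hatpi_\infty$, the latter equals $\hatpi_\infty(A_j\mid \sF^{(j)})$, where $\sF^{(j)}:=\{\phi_x\geq -h,\,\forall x\in Q^{(j)}\}$. We have $\sF_{km,h}=\sF^{(j)}\cap G_j$, where $G_j:=\{\phi_x\geq -h,\,\forall x\in Q_{km}\setminus Q^{(j)}\}$ is increasing. Conditioning further on an increasing event only raises the probability of an increasing event; this holds because $\hatpi_\infty(\cdot\mid\sF^{(j)})$ is itself an FKG measure, a floor measure on $Q^{(j)}$. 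Therefore
\[\bP_{km,h}(A_j)=\hatpi_\infty(A_j\mid \sF^{(j)}\cap G_j)\geq \hatpi_\infty(A_j\mid\sF^{(j)})=e^{-m^2\bar\xi_{m,h}}\,.\]

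Multiplying the $k^2$ factors gives $\bP_{km,h}(\phi_x>-h,\,\forall x\in Q_{km})\geq e^{-k^2m^2\bar\xi_{m,h}}$. Taking $-\frac{1}{(km)^2}\log$ yields $\bar\xi_{km,h}\leq\bar\xi_{m,h}$, and the case $m=1$ gives the second claim.

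The only delicate point is justifying FKG for the conditioned infinite-volume measures, which are only weak limits. To handle this, run the argument in finite volume $\hatpi^0_{\Lambda_N}$ with a floor on $Q_{km}$, where FKG holds by \cite[Claim~A.1]{ChenLubetzky25}. Then pass to the limit $N\to\infty$: the events involved are local and have positive probability, so the inequalities survive the weak limit.
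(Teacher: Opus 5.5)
Your proof is correct and follows the paper's route: apply FKG for the floor measure $\bP_{km,h}$ across the $k^2$ sub-blocks, exactly as in \cref{obs:xi-monotone}. The paper leaves the extra comparison $\bP_{km,h}(A_j)\geq \hatpi_\infty(A_j\mid \sF^{(j)})$ implicit in ``the same reasoning''; you supply it, and your FKG justification of it, together with your finite-volume approximation, is sound.
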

As $\xi_{\ell,h} \sim \hatpi_\infty(\phi_o < -h)$, it would be natural guess when looking at \cref{eq:bar-xi-diff-xi} that one would have $\bar\xi_{\ell,h}\sim \hatpi_\infty(\phi_o = -h)$. This is indeed the case, as shown in the following analogue of \cref{lem:xi-lower-weak-upper}.
\begin{lemma}
    \label{lem:xi-bar-lower-weak-upper}
In the setting of \cref{lem:xi-lower-weak-upper}, we have
\begin{equation}\label{eq:xi-bar-lower-weak-upper}
    1 - e^{-c_0 \frac{\log L}{\log\log L}} \leq 
    \frac{\bar\xi_{\ell,h}}{\hatpi_\infty(\phi_o = -h)} 
    \leq 1 + L^{-1+o(1)}\,.\end{equation}
\end{lemma}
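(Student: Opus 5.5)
The plan is to mirror the proof of \cref{lem:xi-lower-weak-upper}, now working under the conditioned measure $\bP_{\ell,h}$ and using the identity \cref{eq:bar-xi-diff-xi}, $\bar\xi_{\ell,h}=\xi_{\ell,h-1}-\xi_{\ell,h}$, wherever it helps.

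\textbf{Upper bound.} By \cref{obs:xi-bar-monotone} it suffices to bound
\[
\bar\xi_{1,h}=-\log\bigl(1-\hatpi_\infty(\phi_o=-h\mid\phi_o\ge -h)\bigr).
\]
Set $t:=\hatpi_\infty(\phi_o=-h)/\hatpi_\infty(\phi_o\ge -h)$. Since $\hatpi_\infty(\phi_o\ge-h)=1-L^{-1+o(1)}$, we have $t=\hatpi_\infty(\phi_o=-h)(1+L^{-1+o(1)})$. Using $-\log(1-t)\le t+t^2$ and $\hatpi_\infty(\phi_o=-h)=L^{-1+o(1)}$ (by symmetry this is the upward deviation of \cref{thm:LD-DG}, together with the definition of $H$), we get the ratio bound $1+L^{-1+o(1)}$. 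This holds for every $\ell$.

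\textbf{Lower bound.} Apply Bonferroni under $\bP_{\ell,h}$:
\[
\bP_{\ell,h}(\phi_x>-h\ \forall x\in Q_\ell)\le 1-\sum_{x\in Q_\ell}\bP_{\ell,h}(\phi_x=-h)+\sum_{x\ne y}\bP_{\ell,h}(\phi_x=\phi_y=-h).
\]
Two estimates are needed.

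\emph{(a) First-order term.} We need $\bP_{\ell,h}(\phi_x=-h)\ge (1-e^{-c\frac{\log L}{\log\log L}})\,\hatpi_\infty(\phi_o=-h)$. Write
\[
\bP_{\ell,h}(\phi_x=-h)=\frac{\hatpi_\infty(\phi_x=-h,\ \sF_{\ell,h})}{\hatpi_\infty(\sF_{\ell,h})}\ge \hatpi_\infty(\phi_x=-h,\ \sF_{\ell,h}).
\]
Then bound
\[
\hatpi_\infty(\sF_{\ell,h}\mid \phi_x=-h)\ge 1-\sum_{y}\hatpi_\infty(\phi_y<-h\mid\phi_x=-h).
\]
For $\dist(x,y)>\log L$, use decorrelation: each term is at most $\hatpi_\infty(\phi_o<-h)+L^{-10}$, and summing over $|Q_\ell|\le L e^{-\frac{\log L}{\log\log L}}$ sites gives $e^{-\frac{\log L}{\log\log L}}$. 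For nearby $y$, use \cref{eq:LD-conditional} in the form $\hatpi_\infty(\phi_y\le -h\mid\phi_x=-h)\le e^{-c\beta h^2/\log^2h}$; this needs the version with $\phi_y\le-h$ rather than $=-h$, which follows by the same argument or by summing the ratio bound \cref{eq:LD-ratio}. There are $O(\log^2L)$ such $y$.

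\emph{(b) Second-order term.} We need
\[
\sum_{x\ne y}\bP_{\ell,h}(\phi_x=\phi_y=-h)\le e^{-c'\frac{\log L}{\log\log L}}\,|Q_\ell|\,\hatpi_\infty(\phi_o=-h).
\]
Bound each term by $\hatpi_\infty(\phi_x=\phi_y=-h)/\hatpi_\infty(\sF_{\ell,h})$ and note
\[
\hatpi_\infty(\sF_{\ell,h})\ge e^{-2\ell^2\hatpi_\infty(\phi_o<-h)}\ge 1-o(1)
\]
by the upper bound of \cref{lem:xi-lower-weak-upper}, since $\ell^2\hatpi_\infty(\phi_o<-h)\le e^{-c\frac{\log L}{\log\log L}}$. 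The joint probabilities are then handled exactly as in \cref{eq:sum-pi(h-h)-ratio}: split by $\dist(x,y)\gtrless\log L$, using decorrelation for far pairs and \cref{eq:LD-conditional} for near ones.

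Combining (a) and (b) gives
\[
\bP_{\ell,h}(\cdot)\le 1-(1-e^{-c\frac{\log L}{\log\log L}})\,\ell^2\,\hatpi_\infty(\phi_o=-h)\le\exp\bigl(-(1-e^{-c\frac{\log L}{\log\log L}})\,\ell^2\,\hatpi_\infty(\phi_o=-h)\bigr),
\]
and taking $-\ell^{-2}\log$ yields the lower bound.

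\textbf{Main obstacle.} The delicate step is (a): the first-order term must be bounded below under the conditioned measure, where FKG points the wrong way. Conditioning on $\sF_{\ell,h}$ raises the field, so it suppresses $\phi_x=-h$. The plan above sidesteps this by dropping the normalizer, which is at most $1$, and controlling $\hatpi_\infty(\sF_{\ell,h}^c\mid\phi_x=-h)$ through the conditional large deviation bounds. The remaining point to check is that conditioning on a deep downward spike at $x$ does not make $\phi_y<-h$ likely at nearby $y$ beyond the $e^{-c\beta h^2/\log^2h}$ rate. This follows from \cref{eq:LD-conditional} applied to the deviation $\{\phi_y\le -h-1\}$, which is even less likely than $\{\phi_y=-h\}$ by \cref{eq:LD-ratio}.
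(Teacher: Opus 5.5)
Your proposal is correct and follows the paper's proof essentially step for step. The upper bound comes from $\bar\xi_{\ell,h}\le\bar\xi_{1,h}$, and the lower bound from Bonferroni under $\bP_{\ell,h}$, with the first-order term bounded by dropping the normalizer and union-bounding $\hatpi_\infty(\sF_{\ell,h}^c\mid\phi_x=-h)$ over far and near sites. The only deviation is in the pair term, where the paper uses FKG to get $\bP_{\ell,h}(\phi_x=\phi_y=-h)\le\hatpi_\infty(\phi_x\le-h,\,\phi_y\le-h)$ whereas you divide by $\hatpi_\infty(\sF_{\ell,h})=1-o(1)$, which works equally well. One small correction: passing from $\{\phi_y=-h\}$ to $\{\phi_y\le-h-1\}$ in the conditional estimate does not follow from the unconditional ratio bound \cref{eq:LD-ratio}; it is a routine Peierls map that raises by one the component of $\{\phi<-h\}$ containing $y$, which leaves $\phi_x=-h$ intact.
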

\begin{proof}
The proof will be a simple modification of the argument used to prove \cref{lem:xi-lower-weak-upper}.

The upper bound follows directly from \cref{obs:xi-bar-monotone} and the fact that $\bar\xi_{1,h} = -\log(1-t)$ for $t = \hatpi_\infty(\phi_o = -h \mid \phi_o \geq -h)  = \hatpi_\infty(\phi_o=-h)/\hatpi_\infty(\phi_o\geq -h)$. As $t \leq (1+L^{-1+o(1)})\hatpi_\infty(\phi_o=-h)$ for our $h$, arguing as for the upper bound in \cref{lem:xi-lower-weak-upper} yields $\bar\xi_{1,h}\leq (1+L^{-1+o(1)})\hatpi_\infty(\phi_o=-h)$.

For the lower bound, we start, as in said lemma, with
\begin{align}\label{eq:P-in-ex}
    \bP_{\ell,h}(\phi_x > -h,\,\forall x\in Q_\ell) \leq 1 &- |Q_\ell| \bP_{\ell,h}(\phi_o = -h ) + \sum_{x,y\in Q_\ell} \bP_{\ell,h}(\phi_x = -h\,,\, \phi_y = -h)\,.\end{align} 
Recalling $\bP_{\ell,h}(\phi_x = -h\,,\,\phi_y = -h)=\hatpi_\infty(\phi_x \leq -h\,,\,\phi_y \leq -h \mid \sF_{\ell,h})$, we deduce from FKG, and the fact that $\hatpi_\infty(\phi_o=h)=(1-o(1))\hatpi_\infty(\phi_o\geq h)$ (see~\cite[Thm.~3.1]{LMS16}) that 
\begin{align}\label{eq:pairs-P-upper-bound}
\frac{\sum_{x,y\in Q_\ell} \bP_{\ell,h}(\phi_x = -h\,,\, \phi_y = -h)}{|Q_\ell| \hatpi_\infty(\phi_o = -h)} &\leq \frac{\sum_{x,y\in Q_\ell} \hatpi_\infty(\phi_x \leq -h\,,\, \phi_y \leq -h)}{(1-o(1))|Q_\ell| \hatpi_\infty(\phi_o \leq -h)} \leq e^{-c \frac{\log L}{\log\log L}}\,,
\end{align}
where the last transition is by \cref{eq:sum-pi(h-h)-ratio}.
Further note that 
\begin{align*}
\bP_{\ell,h}(\phi_o = -h) &
\geq  \hatpi_\infty(\phi_o=-h) - \hatpi_\infty(\phi_o=-h,\,\sF_{\ell,h}^c) \\ 
&= (1-\hatpi_\infty(\sF_{\ell,h}^c \mid \phi_o=-h))\hatpi_\infty(\phi_o=-h)\,,
\end{align*}
and as usual, we can control $\hatpi_\infty(\sF_{\ell,h}^c\mid \phi_o=-h)$ via a union bound, splitting the treatment of $x\in Q_\ell$ into $x$ at distance larger than $\log L$ from the origin and those within said distance. As argued above \cref{eq:sum-pi(h-h)-ratio}, in the former case, 
\[ |Q_\ell| \hatpi_\infty(\phi_x<-h\mid \phi_o=-h) < \ell^2 \left(\hatpi_\infty(\phi_o < -h) + L^{-10}\right) < e^{-(1-o(1))\frac{\log L}{\log\log L}}\,,\]
by the assumption on $\ell$ and the fact
$\hatpi_\infty(\phi_o<-h) \leq L^{-1} e^{c\beta \frac{h}{\log h}}\leq L^{-1} e^{o(\sqrt{\log L})}$; in the latter case,
\[O(\log^2 L) \hatpi_\infty(\phi_x<-h\mid \phi_o=-h) < O(\log ^2 L) \exp\Big(-c\beta \frac{h^2}{\log^2 h}\Big) \leq \exp\Big(-c'\frac{\log L}{\log\log L}\Big)\,.\] Combined, we find that
\begin{align}\label{eq:Fc-given-h-upper-bound}
\hatpi_\infty(\sF_{\ell,h}^c\mid \phi_o=-h) &\leq e^{-c' \frac{\log L}{\log\log L}}\,,
\end{align}
and infer that
\begin{equation}\label{eq:P-lower-bound}
\bP_{\ell,h}(\phi_o = -h) \geq \left(1-e^{-c\frac{\log L}{\log\log L}}\right)\hatpi_\infty(\phi_o=-h)\,.
\end{equation}
Plugging \cref{eq:pairs-P-upper-bound,eq:P-lower-bound}, into \cref{eq:P-in-ex} yields
\begin{align*}
\bP_{\ell,h}(\phi_x > -h,\,\forall x\in Q_\ell) &\leq 1 - \Big(1-e^{-c\frac{\log L}{\log\log L}}\Big)|Q_\ell|\hatpi_\infty(\phi_o=-h) \\
&\leq \exp\bigg(-\Big(1 -e^{-c\frac{\log L}{\log\log L}}\Big)|Q_\ell|\hatpi_\infty(\phi_o=-h)\bigg)\,,
\end{align*}
yielding the required lower bound on $\bar\xi_{\ell,h}$.
\end{proof}

\subsection{Minimum height under the no-floor measure}\label{sec:bbq}
The following proposition is our main tool for expressing the probability under $\hatpi_V^0$ that $\phi_x\geq -h$ on a subset $F$ to the rate $\xi_{\ell_0,h}$ for a smaller scale $\ell_0$. It will be used to derive \cref{thm:key-area} using appropriate choices of $\ell_0$ (namely, $\ell_0=L^{-1/3+o(1)}$ for the mesoscopic scale and $\ell_0 = L^{-1/2-o(1)}$ for the macroscopic scale), as well as to relate the corresponding $\xi_{\ell_0,h}$ to the rate $\xi_{\ell_*,h}$, for $\ell_*$ from \cref{eq:xi-n-def}, appearing in \cref{thm:key-area}.

\begin{proposition}\label{prop:key-area-estimate}
    Fix $g\geq 0$, $n \geq 0$, and set $h = H+1-n$. There are absolute constants $c,C>0$ such that the following holds for every $\ell_0 \in \llb \log^2 L, \sqrt{L}e^{-\frac12\frac{\log L}{\log\log L}}\rrb$ and subsets 
    $F \subseteq V\subset \Z^2$ where $F$ has $g$ holes and $|\partial F| \leq L^{3/2}$.
    Let $\Xi = \Xi_1+\Xi_2+\Xi_3$, where 
    \begin{equation}\label{eq:Xi123-def} \Xi_1 =  \frac{|\partial V|}L  e^{C\frac{\log L}{\log\log L}}\,,\qquad \Xi_2 = \frac{|F|}
     {\ell_0 L } e^{ \sqrt{\log L}} \,,\qquad \Xi_3 =\frac{|\partial F|\ell_0 }L e^{-c\frac{\log L}{\log\log L}}\,.\end{equation}
    \begin{enumerate}[(i)]
    \item If $|F|\leq \ell_0 L e^{-\sqrt{\log L}}$, then
    \begin{align}\label{eq:area-estimate-meso}\hatpi^0_V(\phi_x \geq -h,\, \forall x \in F) = \exp\left(-\xi_{\ell_0,h}|F| +O(\Xi) + o(L^{-5}) \right)\,.
    \end{align}
\item Otherwise, if $\fS$ is the event that there are no disagreement polymers $\gamma$ with $|\gamma| \geq \log L$, then
\begin{align}\label{eq:area-estimate-macro-lower}
    \hatpi_V^0(\phi_x \geq -h,\, \forall x \in F) &\geq \exp\left(-\xi_{\ell_0,h}|F| + O(\Xi)\right)\,,\\
    \label{eq:area-estimate-macro-upper}
 \hatpi_V^0(\phi_x \geq -h,\, \forall x \in F\mid\fS) &\leq \exp\left(-\xi_{\ell_0,h}|F| + O(\Xi)+o(L^{-5})\right)\,.
\end{align}
If $|F| \leq  \big(\frac{3\beta}{\hatpi_\infty(\phi_o > h)}\big)^2 $, then the last upper bound also applies to $\hatpi_V^0(\phi_x\geq -h,\,\forall x\in F)$.
\end{enumerate}
    The same holds under $\hatpi_\infty$, in which case the error term $\Xi_1$ can be omitted from $\Xi$. Separately, in the special case where $F = \llb1,\ell\rrb^2$ with $ \ell_0 + 10\lceil \log L\rceil \mid \ell$, the error term $\Xi_3$ can be omitted from $\Xi$.
\end{proposition}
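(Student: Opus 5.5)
The plan is a block decomposition with corridors. Set $r:=10\lceil\log L\rceil$. Tile $\Z^2$ by boxes $Q^{(i)}$ of side $\ell_0$, separated by corridors of width $r$, so the period is $\ell_0+r$. Split $F$ into three parts. The first is the \emph{bulk} $F_{\mathrm b}$, the union of the boxes $Q^{(i)}$ fully contained in $F$ and at distance at least $r$ from $\partial V$. The second is the corridor set $F_{\mathrm c}$, the part of $F$ inside the corridors. The third is the \emph{boundary} part $F_\partial$, which consists of the partial boxes meeting $\partial F$ and the boxes within distance $r$ of $\partial V$.

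The count is as follows. There are about $|F|/\ell_0^2$ full boxes. The corridor set has size $|F_{\mathrm c}|\asymp |F| r/\ell_0$. The boundary part has size $|F_\partial|\lesssim |\partial F|\ell_0+|\partial V|\ell_0$, using that $F$ has $g=O(1)$ holes to count the boxes met by $\partial F$.

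\textbf{Decoupling.} At low temperature a standard Peierls and cluster expansion argument couples $\hatpi^0_V$, and its restriction to the event $\fS$ of no \texttt{large} polymers, to a product over the boxes. Here $\fS$ means that all disagreement polymers have size at most $\log L$, so they cannot bridge a corridor. On $\fS$ the configurations inside distinct boxes at mutual distance at least $r$ are determined by disjoint collections of small polymers, and the polymer-gas cluster expansion shows that their joint law factorizes up to a multiplicative error of $e^{-c\beta r}$ per box. Under $\fS$, or under $\hatpi_\infty$, this gives
\[\hatpi(\phi\geq -h \text{ on } F_{\mathrm b})=\exp\!\big(-\xi_{\ell_0,h}|F_{\mathrm b}|+O(\#\text{boxes}\cdot L^{-10})\big).\]
For boxes adjacent to $\partial V$ the same comparison with $\hatpi_\infty$ fails, since the zero boundary shifts one-point probabilities. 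We therefore put those boxes in $F_\partial$. This is the source of $\Xi_1$: at most $|\partial V|\ell_0$ sites, each with probability $L^{-1}e^{C\log L/\log\log L}$ by \cref{thm:LD-DG}. Together with the $\ell_0$ bookkeeping this should give $\Xi_1$ exactly.

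\textbf{Corridors and boundary.} The remaining sets $F_{\mathrm c}\cup F_\partial$ are handled by FKG for the lower bound and by inclusion–exclusion for the upper bound.

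For the lower bound, FKG gives the product over the three parts. \cref{lem:xi-lower-weak-upper}, applied to arbitrary sets $S$, then costs $\xi_{1,h}|S|$ for each part. The difference $(\xi_{1,h}-\xi_{\ell_0,h})|S|$ is at most $\hatpi_\infty(\phi_o<-h)\,e^{-c\log L/\log\log L}|S|$. For the boundary part this yields $\Xi_3$. For the corridors I would instead simply bound $|F_{\mathrm c}|\,\hatpi_\infty(\phi_o<-h)\le |F|\log L/(\ell_0 L^{1-o(1)})$, which is $\Xi_2$. This explains the $e^{\sqrt{\log L}}$ slack.

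For the upper bound, FKG runs in the wrong direction, so I would simply drop the constraints on $F_{\mathrm c}\cup F_\partial$. The loss is $\xi_{\ell_0,h}|F_{\mathrm c}\cup F_\partial|$, which has the same order as before.

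Case (i) differs only in that for $|F|\le \ell_0 L e^{-\sqrt{\log L}}$ the event $\fS^c$ has probability $o(L^{-5})$ relative to the target. We can then remove the conditioning at additive cost $o(L^{-5})$, since $\hatpi(\fS^c)\le L^{-10}$ by Peierls. When $|F|\le (3\beta/\hatpi_\infty(\phi_o>h))^2$, \cref{cor:no-macro-contours} plays the same role.

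For the special case $F=\llb 1,\ell\rrb^2$ with $(\ell_0+r)\mid \ell$, the tiling fits exactly, so there are no partial boxes and $\Xi_3$ disappears. Under $\hatpi_\infty$ there is no $\partial V$, so $\Xi_1$ disappears as well.

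\textbf{Main obstacle.} The delicate step is the factorization across boxes under the floor-type event without conditioning, that is, the lower bound in (ii) for macroscopic $F$. There a \texttt{large} polymer (a raised plateau) could correlate boxes. FKG resolves the lower bound. For the upper bound, conditioning on $\fS$ is essential: I would verify that on $\fS$ the cluster expansion weights of polymers straddling a corridor are summable with total error $e^{-c\beta r}\le L^{-10}$ per box.
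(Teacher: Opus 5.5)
Your architecture matches the paper's: a tiling by $\ell_0$-boxes with corridors of width $10\lceil\log L\rceil$, FKG for the lower bound, $O(|\partial F|\ell_0)$ partial-box sites via the $g$ holes, and decoupling of boxes once no polymer can cross a corridor. The upper bound, however, has a genuine gap at the boundary.

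\textbf{The boundary of $F$.} Dropping the constraints on $F_\partial$ costs the full rate $\xi_{\ell_0,h}|F_\partial|$, which in general is of order $\hatpi_\infty(\phi_o<-h)|\partial F|\ell_0$. This is not of the same order as in your lower bound. There, each such site only cost the discrepancy $\xi_{1,h}-\xi_{\ell_0,h}\lesssim e^{-c\frac{\log L}{\log\log L}}\hatpi_\infty(\phi_o<-h)$. The loss exceeds $\Xi_3$ by a factor $L\hatpi_\infty(\phi_o<-h)e^{c\frac{\log L}{\log\log L}}\to\infty$. In the mesoscopic application of \cref{thm:key-area} ($|\partial F|\approx L^{2/3}e^{\sqrt{\log L}}$, $\ell_0\approx L^{1/3}e^{4\sqrt{\log L}}$) it is $e^{(5+o(1))\sqrt{\log L}}$ in the exponent. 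That destroys the $(1+o(1))$ asymptotics which \cref{prop:CE-meso} relies on.

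The inclusion--exclusion you mention in passing is what is needed: keep the partial-box constraints rather than dropping them. Conditionally on the polymers meeting the corridor grid, the good (partial) boxes are independent and, by \cref{eq:couple-pi-to-inf-vol}, effectively under $\hatpi_\infty$. The Bonferroni bound of \cref{lem:xi-lower-weak-upper} applies to arbitrary, possibly disconnected, $S=Q_i\cap F_3$ with $|S|\le\ell_0^2$. It gives per-site rate at least $(1-e^{-c_0\frac{\log L}{\log\log L}})\hatpi_\infty(\phi_o<-h)$, matching the FKG lower bound up to $o(\Xi_3)$.

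\textbf{The boundary of $V$.} The same overcounting occurs here. Putting whole boxes within distance $r$ of $\partial V$ into $F_\partial$ gives $\asymp|\partial V|\ell_0$ sites, hence $\ell_0\Xi_1$ rather than $\Xi_1$, in both bounds; in the mesoscopic range this is again not $o(1)$. Since $F\subseteq V$, any box cut by $\partial V$ is already cut by $\partial F$. So only the $5\log L$-neighbourhood of $\partial V$, of size $O(|\partial V|\log^2L)$, needs the crude finite-volume one-point bound.

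\textbf{Part (i).} Your argument here is also wrong as written. The target $e^{-\xi_{\ell_0,h}|F|}$ can be as small as $\exp(-\ell_0e^{-(1-o(1))\sqrt{\log L}})$, e.g.\ $\exp(-L^{1/3+o(1)})$ in the mesoscopic application. Meanwhile $\hatpi^0_V(\fS^c)$ is only polynomially small. So $\fS^c$ is not $o(L^{-5})$ relative to the target, and an additive $\hatpi^0_V(\fS^c)$ cannot be absorbed.

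What rescues your route is smallness of $\fS^c$ under the floor-conditioned measure:
$\hatpi^0_V(\phi\ge-h\text{ on }F)\le\hatpi^0_V(\phi\ge-h\text{ on }F\mid\fS)/\hatpi^0_V(\fS\mid\phi\ge-h\text{ on }F)$,
with the denominator equal to $\pi^h_{V;F}(\fS)=1-O(L^{-10})$ by \cref{cor:no-macro-contours}. That corollary does apply throughout case (i), since there $|F|\le L^{3/2}\ll(3\beta/\hatpi_\infty(\phi_o>h))^2$.

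The paper instead proves (i) directly:
\begin{itemize}
\item Only polymers through the corridor grid of size at least $\ell_0/10$ are removed additively, at cost $e^{-(\beta-C)\ell_0}$. This is negligible precisely because $\xi_{\ell_0,h}|F|=o(\ell_0)$, which is the real role of the hypothesis $|F|\le\ell_0Le^{-\sqrt{\log L}}$.
\item The $k$ polymers of intermediate size spoil at most $4k$ boxes. Dropping those boxes' constraints costs at most $4ke^{-\frac34\frac{\log L}{\log\log L}}$ in the exponent, and this is summable against $\hatpi^0_V(B_k)\le L^{-(\beta-C')k/10}$.
\end{itemize}
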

\begin{proof}
   Let us first establish \cref{eq:area-estimate-meso,eq:area-estimate-macro-lower,eq:area-estimate-macro-upper}, then argue why the proof extends to $\hatpi_\infty$ and how to omit $\Xi_1$ in that case, as well as $\Xi_3$ when $F$ is a square of side length $\ell$ divisible by $\ell_0 + 10\lceil \log L\rceil$.

Partition $\Z^2$ into squares $R_i$ of side length $\ell_0 + 10\lceil \log L\rceil$, and let $Q_i \subset R_i$ be the concentric squares of side length $\ell_0$. 
 With this tiling of $\Z^2$ in hand, partition $F$ into the sets
    \begin{align*}
        F_1 &= \{ x\in F\,:\; \dist(x,\partial V) < 5\log L\} &\mbox{(near boundary)}\,,\\
        F_2 &= \bigcup_{i:\;R_i\subset F} R_i \setminus (Q_i \cup F_1) &\mbox{(square annuli)}\,,\\
        F_3 &= \bigcup_{i:\,R_i\cap F^c\neq \emptyset} Q_i \cap (F\setminus F_1) &\mbox{(cut off squares)}\,,\\
        F_4 &= F \setminus (F_1 \cup F_2 \cup F_3)&\mbox{(full squares)}\,.
    \end{align*}
We will first argue that
\begin{align}
    \label{eq:bbq-asymp-to-F3-F4}
    \hatpi_V^0(\phi_x\geq -h,\,\forall x\in F) &= \hatpi_V^0\left(\phi_x\geq -h,\,\forall x\in F_3\cup F_4\right) e^{O(\Xi_1) + O(\Xi_2)}\,,\\
    \label{eq:bbq-asymp-to-F3-F4-add-xi}
    \xi_{\ell_0,h}(|F_1|+|F_2|) &= O(\Xi_1) + O(\Xi_2)\,,
\end{align}
for $\Xi_1,\Xi_2$ from \cref{eq:Xi123-def}, both assumed to be $o(1)$ or the bound is trivial.
To see this, note first that the right-hand of \cref{eq:bbq-asymp-to-F3-F4} is clearly an upper bound on the left-hand (without any error term) by event inclusion. For a lower bound, let us denote
\[ p_i := \max_{x\in F_i} \hatpi_V^0\left(\phi_x < -h\right)\qquad\mbox{for $i=1,2$}\,,\]
and apply FKG to see that
\begin{align*} \hatpi_V^0(\phi_x \geq -h\,,\forall x\in F) &\geq \hatpi_V^0\left(\phi_x \geq -h,\,\forall x\in F_3\cup F_4\right) \prod_{i=1,2}(1-p_i)^{|F_i|}\\
&\geq \hatpi_V^0\left(\phi_x \geq -h,\,\forall x\in F_3\cup F_4\right) \prod_{i=1,2}(1-p_i|F_i|)\,, 	
\end{align*}
and it remains to show that  $p_i |F_i| \leq \Xi_i$ and $\xi_{\ell_0,h} |F_i| \leq \Xi_i$ for $i=1,2$ and the $\Xi_i$'s as per \cref{eq:Xi123-def}. 
\begin{enumerate}[(1)]
\item 	Consider $i=1$. It was shown in \cite[\S2.1]{ChenLubetzky25} (specifically, combining Lemma 2.8 with Eq.~(2.7) there) that there exists some absolute constant $c>0$ such that, for every $V$, $h\geq 2$ and large enough $\beta$,
\[ \hatpi_V^0(\phi_x = h) \leq \hatpi_\infty(\phi_o = h) e^{ c \beta \frac{h^2}{\log^2 h}}\,.\]
As $\hatpi_\infty(\phi_o = h) / \hatpi_\infty(\phi_o = h-1) \leq \exp(-c\beta h/\log h)$ for all $h\geq 2$ (\cite[Eq.~(3.1)]{LMS16}), 
in our setting of $h=H+1-n$ 
we have $\hatpi_\infty(\phi_o<-h) = L^{-1} \exp(O(\sqrt{\frac{\log L}{\log\log L}}))$ and so, for some other absolute constant $c>0$, 
\[ p_1 \leq  L^{-1} e^{c \frac{\log L}{\log\log L}}\,.\]
The fact that $|F_1| = O(|\partial V|\log^2 L)$ now shows $p_1 F_1 \leq \Xi_1$ if the constant $C$ is  large enough. The bound $\xi_{\ell,h} \leq (1+o(1))\hatpi_\infty(\phi_o<-h)$ as per \cref{lem:xi-lower-weak-upper} shows that $\xi_{\ell_0,h} |F_1| \leq \Xi_1$ as well (with room to spare, as  $\hatpi_\infty(\phi_o=h)$ is better controlled than $\hatpi_V^0(\phi_x=h)$).

\item Consider $i=2$. The upper bound we gave on $p_1$ also holds for $p_2$, however it is imperative that in this case we improve the term $\exp(c \frac{\log L}{\log\log L})$ given there.  To this end, we use the standard decay of correlations in the low temperature \ZGFF model (e.g.,~\cite{BrandenbergerWayne82}), whereby  \begin{align}\label{eq:couple-pi-to-inf-vol}
\|\hatpi_{W}^0(\phi\restriction_{U}\in\cdot) - \hatpi_\infty(\phi\restriction_{U}\in\cdot)\|_{\tv} &\leq |\partial U|e^{-c \beta r}\quad\mbox{ if $U\subset W\subset \Z^2$ has $\dist(U,\partial W)> r$} 
\end{align}
(see, e.g., \cite[Eq.~(2.20)]{ChenLubetzky25} where this was stated for $U=\cB_r(o)$, a ball of radius $r$ about the origin; the exact same reasoning holds when $U$ is a general domain, as it hinges on a Peierls argument to encapsulate $U$ in a loop of height $0$ in $W\setminus U$). By definition, we  excluded $F_1$ from the rectangular annuli $R_i\setminus Q_i$ within $F_2$, and thus, for large enough $\beta$,
\[ p_2 \leq \hatpi_\infty(\phi_o<-h) + e^{-c \beta\log L} \leq (1+o(1))\hatpi_\infty(\phi_o<-h)\,. \]
Using $\hatpi_\infty(\phi_o = h) / \hatpi_\infty(\phi_o = h-1) \leq \exp(-c\beta h/\log h)$, we get
\[ p_2 \leq (1+o(1))\hatpi_\infty(\phi_o >h) \leq 
\frac1L e^{c (n+1)
\sqrt{\beta\frac{\log L}{\log \log L}}} = \frac1L e^{o(\sqrt{\log L})}\,. \]
Combining this with the fact that 
\[ |F_2| \leq 2 \frac{10\lceil\log L\rceil }{\ell_0 + 10\lceil\log L\rceil} |F| = O\Big(\frac{|F| \log L}{\ell_0}\Big)\]
shows $p_2 |F_2| = o(\Xi_2)$ if $L$ is large enough. Since $\xi_{\ell_0,h} \leq (1+o(1))\hatpi_\infty(\phi_o<-h)$ as per \cref{lem:xi-lower-weak-upper}, we obtain the exact same bound for $\xi_{\ell_0,h} |F_2|$.
\end{enumerate}
We have thus established \cref{eq:bbq-asymp-to-F3-F4,eq:bbq-asymp-to-F3-F4-add-xi}, reducing the proof of \cref{eq:area-estimate-macro-lower} to proving that
\begin{equation}
    \label{eq:bbq-reduce-to-F3-F4-LB}
    \hatpi^0_V(\phi_x \geq -h,\, \forall x \in F_3\cup F_4) \geq \exp\Big(-\xi_{\ell_0,h}\left(|F_3|+|F_4|\right) + o(\Xi_3)\Big)\,,
\end{equation}
with $\Xi_3$ as per \cref{eq:Xi123-def} with the following specific constant: letting $c_0>0$ denote the absolute constant from \cref{lem:xi-lower-weak-upper}, we set
\begin{equation}\label{eq:Xi3}
\Xi_3 := \frac{|\partial F| \ell_0}{L} e^{-(c_0/2) \frac{\log L}{\log\log L}}\,.
\end{equation}
Similarly, once \cref{eq:bbq-reduce-to-F3-F4-LB} is established, the proof of \cref{eq:area-estimate-meso} will be reduced to showing that, for $|F|\leq \ell_0 L e^{-\sqrt{\log L}}$,
\begin{equation}
    \label{eq:bbq-reduce-to-F3-F4-UB-meso}
    \hatpi^0_V(\phi_x \geq -h,\, \forall x \in F_3\cup F_4) \leq \exp\Big(-\xi_{\ell_0,h}\left(|F_3|+|F_4|\right) + o(\Xi_3)+o(L^{-5})\Big)\,,
\end{equation}
and the proof of \cref{eq:area-estimate-macro-upper} will be reduced to showing that, without this restriction on $|F|$,
\begin{equation}
    \label{eq:bbq-reduce-to-F3-F4-UB-macro}
    \hatpi^0_V(\phi_x \geq -h,\, \forall x \in F_3\cup F_4 \mid \fS) \leq \exp\Big(-\xi_{\ell_0,h}\left(|F_3|+|F_4|\right) + o(\Xi_3)+o(L^{-5})\Big)\,.
\end{equation}

We begin with the lower bound in \cref{eq:bbq-reduce-to-F3-F4-LB}, which will be a direct consequence of FKG:
\[ 
\hatpi_V^0(\phi_x\geq -h,\,\forall x\in F_3\cup F_4) \geq 
\Big(\min_{x\in F_3}\hatpi_V^0(\phi_x\geq -h\Big)^{|F_3|}
\prod_{Q_i \subset F_4}\hatpi_V^0(\phi_x\geq -h,\,\forall x\in Q_i)\,.
\]
Appealing once more to the coupling in \cref{eq:couple-pi-to-inf-vol} (bearing in mind that every $x\in F_3\cup F_4$ is at distance at least $\log L$ from $\partial F$ by construction), it follows that, for every $x\in F_3$, \[ \hatpi_V^0(\phi_x\geq -h) \geq 1-\hatpi_\infty(\phi_o<-h)-L^{-10} \geq \exp\left(-\hatpi_\infty(\phi_o<-h) - L^{-2}e^{o(\sqrt{\log L
})}\right)\,,\]
provided that $\beta$ is large enough (using $\hatpi_\infty(\phi_o<-h)\leq L^{-1}\exp(o(\sqrt{\log L}))$ in the last inequality).
Similarly, for every $Q_i$ accounted for in $F_4$,
\[
\hatpi_{V}^0(\phi_x \geq -h,\,\forall x\in Q_i) \geq \hatpi_\infty^0(\phi_x \geq -h,\,\forall x\in Q_i) - L^{-10} \geq e^{-\xi_{\ell_0,h}|Q_i| - L^{-9-o(1)})}\,. 
\]
Therefore, summing the $L^{-9-o(1)}$ error over a crude bound on $|F_4|\leq |F|\leq |\partial F|^2 \leq L^3$,
\[
\hatpi_V^0(\phi_x\geq -h,\,\forall x\in F_3\cup F_4) \geq \exp\left(-\left(\hatpi_\infty(\phi_o<-h) + L^{-2}e^{o(\sqrt{\log L})}\right)|F_3| -\xi_{\ell_0,h} |F_4| - o(L^{-5})\right)\,.
\]
To infer \cref{eq:bbq-reduce-to-F3-F4-LB} from the last display, it remains to show that \begin{equation}
\label{eq:F3-upper-Xi3}
\left| \hatpi_\infty(\phi_o<-h)+L^{-2}e^{o(\sqrt{\log L})} - \xi_{\ell_0,h}\right| |F_3| \leq o(\Xi_3)\,.
\end{equation}
By \cref{lem:xi-lower-weak-upper},  $\xi_{\ell_0,h} = (1+O(e^{-c_0\frac{\log L}{\log\log L}}))\hatpi_\infty(\phi_o<-h)$
for the same $c_0>0$ from above; thus, 
\begin{align*}
\left| \hatpi_\infty(\phi_o<-h)+ L^{-2}e^{o(\sqrt{\log L})}- \xi_{\ell_0,h}\right| |F_3| &\leq
e^{-(c_0-o(1))\frac{\log L}{\log\log L}} \frac{|F_3|}{L}\,,
\end{align*}
where we again plugged in the fact that $\hatpi_\infty(\phi_o<-h) = L^{-1} \exp(o(\sqrt{\log L}))$ for $h=H+1-n$ with fixed $n \geq 0$. Revisiting the definition of $\Xi_3$ in \cref{eq:Xi3}, it is thus left to show that
\begin{equation}
\label{eq:F3-bound}
|F_3| =  O(|\partial F| \ell_0)\,.
\end{equation}
(This is in lieu of the trivial bound 
$|F_3|\leq O(|\partial F|\ell_0^2)$, which would not suffice.)
We will argue that
\[ |F_3| \leq 4 (|\partial F|\ell_0 + (g+1)\ell_0^2) = O(|\partial F| \ell_0)\,. \]
To see this, view the squares $R_i$ partitioning the plane $\Z^2$ as the squares of a chessboard, and let $R_{i_j}$ be their subset corresponding to selecting only the light squares that lie in even files. By symmetry, the claim will follow once we show $|F_3 \cap \bigcup R_{i_j}| \leq |\partial F|\ell_0+(g+1)\ell_0^2$. Recall our hypothesis that $F$ has at most $g$ holes (this is the only point in the proof that requires that assumption); that is, $\partial_\sfv F$ can be decomposed into at most $g+1$ disjoint $*$-connected closed paths $P_0,\ldots,P_g$. Each $P_i$ visits at most $1+ |P_i|/\ell_0$ distinct squares $R_{i_j}$ (the shortest path from $R_{i_{j_1}}$ to the next $R_{i_{j_2}}$ has length at least $\ell_0+10\lceil\log L\rceil > \ell_0$), and each such square contributes at most $\ell_0^2$ sites to $F_3$ via its corresponding $Q_{i_j}$. Thus, $|F_3\cap \bigcup R_{i_j}| \leq (g+1 + |\partial F|/\ell_0) \ell_0^2 $, as claimed.
Having obtained \cref{eq:F3-bound}, we arrive at \cref{eq:F3-upper-Xi3} and thus conclude the proof of the lower bound as per \cref{eq:bbq-reduce-to-F3-F4-LB}. Consequently, \cref{eq:area-estimate-macro-lower} is established. 

We now turn to the proofs of the upper bounds as per \cref{eq:bbq-reduce-to-F3-F4-UB-meso,eq:bbq-reduce-to-F3-F4-UB-macro}.
To that end, we wish to expose the disagreement polymers along a grid of sites that will separate the $Q_i$'s in $F_3\cup F_4$, thereby inducing a product measure on $Q_i$'s that are each surrounded by a loop of sites at height~$0$. Recalling that every $Q_i$ that is part of $F_3\cup F_4$ has $\dist(Q_i,F_3\cup F_4 \setminus Q_i)\geq 5\log L$, and moreover, distinct such $Q_i,Q_j$ are at distance at least $10\log L$ from each other, we
define \[ U := \left\{ x \in F \,: 2 \log L \leq \dist(x,F_3\cup F_4) \leq 3 \log L\right\}\,,\]
and let 
\[ \Gamma_U := \left\{\mbox{$\gamma \in \phi$ incident to some $x\in U$}\right\}\,,\quad\Gamma_U^\dagger := \left\{\gamma\in\Gamma_U\,:\; |\gamma|\geq \log L\right\}\,.\]
If we further define the events 
\[ A = \left\{\mbox{$\exists \gamma\in\Gamma_U^\dagger$ with $|\gamma|\geq \frac1{10}\ell_0$}\right\}\,,\quad B_k=\left\{|\Gamma_U^\dagger| = k\right\}\] then the standard Peierls estimate yields that
\[ \hatpi_V^0(A) \leq e^{-(\beta-C)\ell_0}\,,\]
and, for every $k\geq 1$, 
\[\hatpi_V^0(B_k) \leq |F|^{k} e^{-(\beta-C) k\cdot \frac1{10} \log L} \leq L^{-(\beta-C') k/10 }\,.\]

For \cref{eq:bbq-reduce-to-F3-F4-UB-meso}, recall $\hatpi_\infty(\phi_o< -h) \leq L^{-1}e^{o(\sqrt{\log L})}$, and note that if $|F| \leq \ell_0 L e^{-\sqrt{\log L}}$ then
\begin{equation}\label{eq:area-estimate-small-F}|F|\hatpi_\infty(\phi_o<-h) \leq  e^{-(1-o(1))\sqrt{\log L}} \ell_0 =  o(\ell_0)\,,
\end{equation}
in which case we can absorb the additive $\hatpi_V^0(A)$ term as a multiplicative $1+O(e^{-(\beta-C-o(1))\ell_0})$ factor in front of our main term of 
$e^{-\xi_{\ell_0,h}|F|}$; that is, this adds a negligible term of $O(e^{-c \ell_0}) =o(L^{-5})$ to our exponent (using here that $\ell_0 \geq \log^2 L$). Therefore, it will suffice to prove \cref{eq:bbq-reduce-to-F3-F4-UB-meso} conditional on $A^c$. (NB.\ that the same holds for $\hatpi_V^0(\bigcup\{ B_k:\,k\geq \ell_0 / \log L\})$, so in principle we could have restricted also to $\bigcup_{k\leq \ell_0} B_k$, but the bound on $\hatpi_V^0(B_k)$ is good enough to allow to sum over all $k$'s without needing this threshold.)

Consider some $Q_i$ intersecting $F_3\cup F_4$. We say that $Q_i$ is \texttt{good} if one can find in $U$, for each connected component $S$ of $Q_i \cap (F_3\cup F_4)$, a $*$-adjacent loop of sites $\cC$ at distance at most $3\log L$ from $S$ such that $\phi\restriction_{\cC}=0$. (NB.\ If $Q_i$ is not \texttt{good} then there must be some $\gamma\in \Gamma_U^\dagger$ intersecting $R_i$.)

By \cref{eq:couple-pi-to-inf-vol} and the definition of $\xi_{\ell_0,h}$ (as well as $e^{-\xi_{\ell_0,h}|Q_i|} \geq 1 - \ell_0^2 L^{-1}e^{o(\sqrt{\log L})} \geq 1-o(1) $, so we can write the additive $O(L^{-10})$ coupling error  as a $1+O(L^{-10})$ factor), for every $Q_i\subset F_4$,
\[ \hatpi_V^0(\phi_x\geq -h,\,\forall x\in Q_i \mid \Gamma_U,\,\mbox{$Q_i$ is \texttt{good}}) = \exp\left(-\xi_{\ell_0,h}|Q_i| + O(L^{-10})\right)\,.\]
Similarly, if $Q_i\cap F_3\neq\emptyset$ then by \cref{lem:xi-lower-weak-upper} (noting that indeed $\ell_0 \leq \sqrt{L} \exp(-\frac{\log L}{\log\log L})$ and appealing to the remark in that lemma that allows its application to $Q_i \cap F_3$, a region that is not necessarily connected), for the constant $c_0>0$ from that lemma we have
\[ \hatpi_V^0(\phi_x\geq -h,\,\forall x\in Q_i \cap F_3 \mid \Gamma_U,\,\mbox{$Q_i$ is \texttt{good}}) \leq \exp\left(-\hatpi_\infty(\phi_o<-h)\Big(1+ O(e^{-c_0 \frac{\log L}{\log \log L}}\Big)|Q_i \cap F_3|\right)\,.\]
All the \texttt{good} $Q_i$'s are conditionally independent given $\Gamma_U$ (stemming from the fact that their pairwise distances are all at least $10\log L$).
Finally, for each $k=0,\ldots,\ell_0/\log L$ and every realization of $\Gamma_U$ consistent with $A^c \cap B_k$, all the $Q_i$'s accounted in $F_3\cup F_4$ are \texttt{good} except for at most $4k$.

Note that  the two terms that we have encountered in the last two displays (when conditioning on $Q_i$ being \texttt{good}), namely $\xi_{\ell_0,h}|Q_i|$ or $\hatpi_\infty(\phi_o<-h)|Q_i\cap F_3|$, are each at most 
\[ (1+o(1))\hatpi_\infty(\phi_o<-h)\ell_0^2 \leq e^{- (1-o(1))\frac{\log L}{\log\log L}}< e^{-\frac34\frac{\log L}{\log\log L}}\]
for any sufficiently large $L$, using here our upper bound on $\ell_0$. We can therefore account for the (at most $4k$) missing $Q_i$'s that are not \texttt{good} from each of the last two displays---increasing $\sum |Q_i|$ to $|F_4|$ in the first and $\sum |Q_i\cap F_3|$ to $|F_3|$ in the second---by adding a term $4k e^{-\frac34 \frac{\log L}{\log\log L}}$ in the exponent. That is, summing the $O(L^{-10})$ error over $|F_4|\leq|F|\leq L^3$ we can infer that
\begin{align*}
\hatpi_V^0(\phi_x\geq -h,\,\forall x\in F_3\cup F_4\mid \Gamma_U, A^c\cap B_k) \leq \exp\bigg(& -\xi_{\ell_0,h}|F_4| +O(L^{-7})\\
&-\hatpi_\infty(\phi_o<-h)\big(1
+O(e^{-c_0\frac{\log L}{\log\log L}})\big)|F_3|\\
&+ 4k e^{-\frac34\frac{\log L}{\log\log L}}\bigg)\,.
\end{align*}
Turning our attention to the term $4 k e^{-\frac34\frac{\log L}{\log\log L}}$, we have
\[ \sum_{k \geq 0}
\hatpi_V^0(B_k) e^{4k e^{-\frac34\frac{\log L}{\log\log L}}} \leq \sum_{k\geq 0}
\exp\bigg(-\Big[(\beta-C)\log L - 4 e^{-\frac34\frac{\log L}{\log\log L}}\Big]k\bigg) \leq 1+O(L^{-10})\,,\]
provided that $\beta$ is large enough. Combining the last two displays, we see that
\begin{align}
\hatpi_V^0\big(\phi_x\geq -h,\,&\forall x\in F_3\cup F_4\mid \Gamma_U, A^c\big) \nonumber\\ &\leq \exp\Big( -\hatpi_\infty(\phi_o<-h)\big(1+ O(e^{-c_0\frac{\log L}{\log\log L}})\big)|F_3| -\xi_{\ell_0,h} |F_4| + o(L^{-5})\Big)\,.
\label{eq:upper-bound-F3-F4}
\end{align}
We have already seen in \cref{eq:F3-upper-Xi3} that we may replace $\hatpi_\infty(\phi_o<-h)|F_3|$ by $\xi_{\ell_0,h}|F_3| + o(\Xi_3)$. The same reasoning allows us to neglect the term $\exp(-c_0 \frac{\log L}{\log\log L})\hatpi_\infty(\phi_o<-h)|F_3|$, as \cref{eq:F3-bound} gives
\[ e^{-c_0\frac{\log L}{\log\log L}}\hatpi_\infty(\phi_o<-h)|F_3| \leq e^{-(c_0-o(1))\frac{\log L}{\log\log L}} \frac{|\partial F|\ell_0}{L} = o(\Xi_3)\,.\]
again by the definition of $\Xi_3$ in \cref{eq:Xi3} and the choice of the constant in the exponent there. We have therefore established \cref{eq:bbq-reduce-to-F3-F4-UB-meso}, which, when combined with \cref{eq:bbq-reduce-to-F3-F4-LB}, completes \cref{eq:area-estimate-meso}.

To obtain \cref{eq:bbq-reduce-to-F3-F4-UB-macro}, the situation is simpler, since $A$ is precluded by the conditioning on $\fS$, which in addition implies that $B_0$ holds (that is, $\Gamma_U^\dagger = \emptyset$). Each $Q_i$ intersecting $F_3\cup F_4$ is thus guaranteed to be \texttt{good} conditionally on $\Gamma_U,\fS$ (and these $Q_i$'s are conditionally independent). Thus, the only modification we need to make to the preceding argument is to control the effect that the conditioning on $\fS$ has on the probability that $\phi_x\geq -h$ in $Q_i$. To this end, we may bound
\begin{align*} \hatpi_V^0(\phi_x \geq -h,\,\forall x\in Q_i\mid \Gamma_U,\,\fS) &\leq \frac{\hatpi_V^0(\phi_x \geq -h,\,\forall x\in Q_i\mid \Gamma_U,\,\mbox{$Q_i$ is \texttt{good}})}{\hatpi_V^0(\fS)} \\
    &\leq  \left(1+O(\hatpi_V^0(\fS^c)\right) \hatpi_V^0(\phi_x \geq -h,\,\forall x\in Q_i\mid \Gamma_U,\,\mbox{$Q_i$ is \texttt{good}})
\,,\end{align*}
and the same holds for $\hatpi_V^0(\phi_x\geq -h,\,\forall x\in Q_i\cap F_3\mid \Gamma_U,\,\fS)$. Since $\hatpi_V^0(\fS^c) < L^{-10}$, accumulating this error over all the $Q_i$'s results in an additive term of $O(L^{-8})$ in the exponent, hence the analog of \cref{eq:upper-bound-F3-F4} holds true for $\hatpi_V^0(\cdot\mid\Gamma_U,\,\fS)$. This establishes \cref{eq:bbq-reduce-to-F3-F4-UB-macro}, and hence also \cref{eq:area-estimate-macro-upper}. 

In the special case where $|F| \leq  \big(\frac{3\beta}{\hatpi_\infty(\phi_o > h)}\big)^2 $, we obtain the unconditional version of \cref{eq:area-estimate-macro-upper} as a consequence of \cref{cor:no-macro-contours}. Let us apply the elementary bound $\P(A)\leq \frac{\P(A)}{\P(B)} =\frac{\P(A\mid B)}{\P(B\mid A)}$, valid whenever $\P(A\cap B)>0$, to the events $A=\{\phi_x\geq -h,\,\forall x\in F\}$ and $B = \fS$ under the measure $\hatpi_V^0$. In that notation, we wish to extend our bound on $\P(A\mid B)$ to $\P(A)$; 
it thus suffices to show that
\[ \hatpi_V^0(\fS \mid \phi_x\geq -h,\,\forall x\in F) \geq 1 - o(L^{-5})\,.\]
Indeed, the quantity on the left is nothing but $\pi^h_{V;F}(\fS)$, which is $1-O(L^{-10})$ by \cref{cor:no-macro-contours}.

It remains to address the remark in the proposition pertaining to the improved error terms. 
When working under $\hatpi_\infty$, as opposed to $\hatpi_V^0$, we may apply the same proof only with $F_1=\emptyset$, as there is no longer a need to exclude the sites near $\partial V$ in order to support a coupling with $\hatpi_\infty$. Hence, we avoid the error term $\Xi_1$ formerly associated with $F_1$.
Similarly, when $F$ is a square of side length $\ell$ such that $\ell_0 + 10\lceil\log L \rceil \mid \ell$, we can partition $F$ into squares $R_i$'s without any residue, so $F_3=\emptyset$, forgoing the error term $\Xi_3$ that was formerly associated with $F_3$, as claimed.
\end{proof}

\subsection{Proofs of the bound on \texorpdfstring{$\upxi_n$}{xi} in \cref{prop:uprho-bound}, and \cref{thm:key-area}}
Recall that $\upxi_n$ is $\xi_{\ell_*,h}$ for $\ell_*$ as per \cref{eq:xi-n-def}. In \cref{sec:xi-and-xi-bar-bounds} we gave bounds on $\xi_{\ell,h}$ for $\ell < \sqrt{L}\exp(-C\frac{\log L}{\log\log L})$, and with the help of \cref{prop:key-area-estimate}, we can now extend those to $\ell_* \asymp \sqrt{L}$ to derive \cref{eq:upxi-bounds} from \cref{prop:uprho-bound}.
Thereafter, a more delicate application of that proposition will yield \cref{thm:key-area}.
\begin{proof}[Proof of \cref{prop:uprho-bound}, \cref{eq:upxi-bounds}]
We will relate $\xi_{\ell_*,h}$ to $\xi_{\ell_0,h}$ for 
\[ \ell_0 = \lfloor L^{1/4} \rfloor\,.\]
Applying \cref{prop:key-area-estimate} for $\hatpi_\infty$ and $F=\llb1,\ell_*\rrb^2$, we get from \cref{eq:area-estimate-meso} (noting that $|F| = \ell_*^2 \asymp L$, which is indeed smaller than  $\ell_0 L e^{-\sqrt{\log L}}=L^{5/4-o(1)}$, fulfilling the  hypothesis\footnote{This application of \cref{prop:key-area-estimate} would be valid for any  $\ell_* \leq L^{5/8-o(1)}$. More generally, if we set $\ell_0 = L^{1/2-o(1)}$, we could have extended the result up to $\ell_*\leq L^{3/4-o(1)}$.}) that
\[ \hatpi_\infty(\phi_x\geq -h,\,\forall x \in F) = \exp\bigg(-\xi_{\ell_0,h}\ell_*^2 + O\Big(\frac{\ell_*^2}{\ell_0} L^{-1+o(1)}+ \ell_* \ell_0 L^{-1-o(1)}\Big)+
o(L^{-5})\bigg)\,.\]
The dominant error term is $(\ell_*^2/\ell_0) L^{-1+o(1)}$ (originating from $\Xi_2$ in the above proposition), and so
\begin{equation}\label{eq:xi*-bound}\xi_{\ell_*,h} = -\frac1{\ell_*^2}\log\hatpi_\infty(\phi_x\geq -h,\,\forall x\in F) = \xi_{\ell_0,h} + L^{-5/4+o(1)}\,.\end{equation}
In particular, since $\xi_{\ell_0,h} = (1+o(1))\hatpi_\infty(\phi_o<-h)=L^{-1+o(1)}$, we infer that 
\[ \frac{\upxi_n}{\hatpi_\infty(\phi_o<-h)} = \frac{\xi_{\ell_*,h}}{\hatpi_\infty(\phi_o<-h)} = \left(1+L^{-1/4+o(1)}\right)\frac{\xi_{\ell_0,h}}{\hatpi_\infty(\phi_o<-h)}\,;\]
thus, \cref{eq:upxi-bounds} follows from \cref{lem:xi-lower-weak-upper,lem:xi-refined-upper} for $\xi_{\ell_0,h}$.

Having established \cref{eq:upxi-bounds}, we note that at this point we can also infer the following weaker version of the bound in \cref{eq:uprho-bounds} on $\uprho_n$:
\begin{align}         \label{eq:uprho-bounds-weak}
        \uprho_n &\in [1 - e^{-C\frac{\log L}{\log\log L}},  1 + L^{-1+o(1)}]\,.
        \end{align}
Indeed,
\cref{eq:xi*-bound} implies that $\bar\xi_{\ell_*,h} = \xi_{\ell_*,h-1} - \xi_{\ell_*,h}$ satisfies
\[ \bar\xi_{\ell_*,h} = \bar\xi_{\ell_0,h} + L^{-5/4+o(1)}\,,\]
and the fact that $\bar\xi_{\ell_0,h} =L^{-1+o(1)}$ shows that
\begin{equation}\label{eq:rho_n_via_xibar_ell0} \uprho_n = \frac{\bar\xi_{\ell_*,h}}{\hatpi_\infty(\phi_o=-h)} = \left(1+L^{-1/4+o(1)}\right)\frac{\bar\xi_{\ell_0,h}}{\hatpi_\infty(\phi_o=-h)}\,;\end{equation}
hence, \cref{eq:uprho-bounds-weak} follows from \cref{lem:xi-bar-lower-weak-upper} applied to $\bar\xi_{\ell_0,h}$.
\end{proof} 

\begin{proof}[Proof of \cref{thm:key-area}]
We aim to apply \cref{prop:key-area-estimate} for $\ell_0 = L^{1/3+o(1)}$ to obtain \cref{eq:xi-n-area-estimate-meso} (mesoscopic range), and for $\ell_0 = L^{1/2+o(1)}$ to establish \cref{eq:xi-n-area-estimate-macro-lower,eq:xi-n-area-estimate-macro-upper} (macroscopic range). In order to relate the corresponding $\xi_{\ell_0,h}$ rates to $\upxi_n$, unlike the proof of \cref{prop:uprho-bound}, it will be imperative to fulfill the divisibility condition of \cref{prop:key-area-estimate} and avoid the error term $\Xi_3$ there.

  Let $\ell_1 \in \llb L^{1/3}, \sqrt{L}e^{-\frac12\frac{\log L}{\log\log L}}\rrb$ be an integer satisfying $\ell_1 \mid \ell_*$, and let $\ell_0 = \ell_1 - 10\lceil \log L\rceil$. 
  Applying \cref{prop:key-area-estimate} for $\hatpi_\infty$ and $F=\llb1,\ell_*\rrb^2$ (noting that $\ell_0 L e^{-\sqrt{\log L}} \geq L^{4/3-o(1)}$, whereas $\ell_*^2\asymp L$, qualifying an application of \cref{eq:area-estimate-meso}), we find that
 \[ \hatpi_\infty(\phi_x\geq -h,\,\forall x \in F) = \exp\bigg(-\xi_{\ell_0,h}\ell_*^2 + O\Big(\frac{\ell_*^2}{\ell_0 L} e^{\sqrt{\log L}}\Big)+ o(L^{-5})\bigg)\,,\]
 and therefore
 \begin{equation}\label{eq:xi*-bound-refined}\left|\xi_{\ell_*,h} - \xi_{\ell_0,h}\right| = O\Big( \frac1{\ell_0 L} e^{\sqrt{\log L}}\Big) \,.\end{equation}

 For \cref{eq:xi-n-area-estimate-meso}, let $\ell_0 = \ell_1 - \lceil 10\log L\rceil$ for
 \[  \ell_1 = 2^{\lceil \frac13 \log L + (4 \log 2)\sqrt{\log L} \rceil} \asymp L^{1/3} e^{4\sqrt{\log L}}\,.\]
 Our assumption that $|\partial F|\vee|\partial V| \leq O(L^{2/3} e^{\sqrt{\log L}})$ implies  $ |\partial F| \leq 10 L$ and $|F|\leq O(L^{4/3} e^{2\sqrt{\log L}})$, whereas $\ell_0 L e^{-\sqrt{\log L}} \geq L^{4/3} e^{3\sqrt{\log L}}$. Hence, we may appeal to \cref{eq:area-estimate-meso} of \cref{prop:key-area-estimate}, so
 \begin{align*}
 \hatpi_V^0(\phi_x\geq -h,\,\forall x \in F) &= \exp\Big(-\xi_{\ell_0,h}|F| + O(\Xi) + o(L^{-5})\Big)\,,
 \end{align*}
 with $\Xi=\Xi_1+\Xi_2+\Xi_3$ for
 \begin{align*}
 \Xi_1 &= \frac{|\partial V|}{L} e^{C\frac{\log L}{\log\log L}} \leq L^{-1/3+o(1)}\,,\\
 \Xi_2 &\leq \frac{|\partial F|^2}{\ell_0 L} e^{\sqrt{\log L}} \leq e^{-\sqrt{\log L}}\,,\\
 \Xi_3 &= \frac{|\partial F|\ell_0}{L} e^{-c \frac{\log L}{\log\log L}} \leq  e^{-(c-o(1))\frac{\log L}{\log\log L}}\,,
 \end{align*}
 whence 
 \[ \Xi=o(1)\,.\] 
 Appealing to \cref{eq:xi*-bound-refined}, we may replace $\xi_{\ell_0,h}$ by $\upxi_n = \xi_{\ell_*,h}$ and incur an error of at most
 \[ \frac{|F|}{\ell_0 L} e^{\sqrt{\log L}} \leq e^{-\sqrt{\log L}}\,,\]
 thus establishing \cref{eq:xi-n-area-estimate-meso}.
 
 For \cref{eq:xi-n-area-estimate-macro-lower,eq:xi-n-area-estimate-macro-upper}, let $\ell_0 = \ell_1 - \lceil 10\log L\rceil$ for
 \[  \ell_1 = 2^{\lceil \frac12 \log L - (\frac34\log 2)\frac{\log L}{\log\log L} \rceil} \asymp \sqrt{L} e^{-\frac34\frac{\log L}{\log\log L}}\,.\]
 By assumption, $|\partial F| \vee|\partial V|\leq O(Le^{\sqrt{\log L}})$, and \cref{eq:area-estimate-macro-lower,eq:area-estimate-macro-upper} yield
 \begin{align*}
 \hatpi_V^0(\phi_x\geq -h,\,\forall x \in F) &\geq \exp\Big(-\xi_{\ell_0,h}|F| + O(\Xi) + o(L^{-5})\Big)\,,\\
\hatpi_V^0(\phi_x\geq -h,\,\forall x \in F\mid\fS) &\leq \exp\Big(-\xi_{\ell_0,h}|F| + O(\Xi) + o(L^{-5})\Big)\,,
 \end{align*}
 with $\Xi=\Xi_1+\Xi_2+\Xi_3$ for
 \begin{align*}
 \Xi_1 &= \frac{|\partial V|}{L} e^{C\frac{\log L}{\log\log L}} \leq e^{(C+o(1))\frac{\log L}{\log\log L}} \,,\\
 \Xi_2 &\leq \frac{|\partial F|^2}{\ell_0 L} e^{\sqrt{\log L}} \leq \sqrt L e^{(\frac34-o(1))\frac{\log L}{\log\log L}}\,,\\
 \Xi_3 &= \frac{|\partial F|\ell_0}{L} e^{-c \frac{\log L}{\log\log L}} \leq  \sqrt{L} e^{-(c+\frac34-o(1))\frac{\log L}{\log\log L}}\,,
 \end{align*}
so that, for any sufficiently large $L$,
\[ \Xi \leq \sqrt{L} e^{\frac{\log L}{\log\log L}}\,.\]
(The remark regarding the validity of the upper bound without conditioning on $\fS$ in the special case where 
$|F|\leq  \big(\frac{3\beta}{\hatpi_\infty(\phi_o > h)}\big)^2$ follows from the statement below \cref{eq:area-estimate-macro-upper} in \cref{prop:key-area-estimate}.)
Revisiting \cref{eq:xi*-bound-refined}, we replace $\xi_{\ell_0,h}$ by $\upxi_n = \ell_{\ell_*,h}$ at a cost of 
 \[ \frac{|F|}{\ell_0 L} e^{\sqrt{\log L}} \leq \sqrt{L} e^{(\frac34-o(1))\frac{\log L}{\log\log L}}\,,\]
 obtaining \cref{eq:xi-n-area-estimate-macro-lower,eq:xi-n-area-estimate-macro-upper} and completing the proof (NB.\ Had we chosen $\ell_1 \nmid \ell_*$, \cref{eq:xi*-bound-refined} would have instead resulted in an untenable cost of $(\ell_0/\ell_*)|F|L^{-1+o(1)}=L^{1-o(1)}$.)
\end{proof}

\subsection{Sharper upper bound on \texorpdfstring{$\uprho_n$}{rho}}\label{sec:sharp-UB-uprho}
We conclude this section with the proof of \cref{eq:uprho-bounds}, featuring a sharper upper bound compared to the one in \cref{eq:uprho-bounds-weak}, which already established the sought lower bound. 
Although this sharper bound is not needed for the proof of \cref{thm:main-thm-crit-window}, it is necessary for the remark following it, that the critical window from \cref{thm:main-thm-crit-window} excludes the natural prediction $\lambda_*\beta/\hatpi_\infty(\phi_o=h)$; see \cref{rem:bad-prediction} for more details. 

We will need the following result (which may be of independent interest) establishing that the \ZGFF, conditional on $\phi\restriction_{B_r(o)}=h$, is rigid, beyond scale $h/\log h$, about the corresponding real-valued harmonic solution $\phi^*$.

\begin{proposition}
    \label{prop:pi-y-h-phi^*-rigid}
Fix $r \geq 0$. Let $h \geq 1$, $R = \lceil 20 h/\log h\rceil$, and let $\phi^*$ be the $\R$-valued harmonic function in $A_{r,R} := B_R(o)\setminus B_r(o)$ with boundary conditions $h$ on $\partial B_r(o)$ and $0$ on $\partial B_R(o)$. There exist an absolute constant $C>0$ such that, for every sufficiently large $\beta$ and every $z\notin B_r(o)$,
\[ \hatpi_\infty\left(\phi_z  \notin \Big[\phi^*_z-C \frac{h}{\log h}, \phi^*_z+C\frac{h}{\log h}\Big]\;\Big|\; \phi_x = h,\, \forall x\in B_r(o)\right) \leq e^{-\frac15(\beta-C) \frac{h}{\log h}}\,.\]
\end{proposition}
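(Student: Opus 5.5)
We work with the harmonic decomposition $\phi = \phi^* + \tilde\phi$ on $A_{r,R}$. Because $\phi^*$ is harmonic with the prescribed boundary values, we have
\[\sum_{x\sim y}(\phi_x-\phi_y)^2 = \sum_{x\sim y}(\phi^*_x-\phi^*_y)^2 + \sum_{x\sim y}(\tilde\phi_x-\tilde\phi_y)^2\]
whenever $\tilde\phi$ vanishes on $\partial A_{r,R}$. The obstruction is that $\tilde\phi$ takes values in the shifted lattices $-\phi^*_x+\Z$. We therefore first reduce to the annulus. On the conditioning event, a standard Peierls argument (as in the proof of \cref{thm:LD-DG}, cf.\ \cite[Thm.~3.1]{LMS16}) shows that outside $B_R(o)$ the field coincides with a typical infinite-volume configuration up to an error $e^{-c\beta R}$. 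Concretely, with probability $1-e^{-(\beta-C)R}$ there is a $*$-loop at height $0$ surrounding $B_R(o)$ within distance $O(R)$. Since $R=\lceil 20h/\log h\rceil$, this error is below the target $e^{-\frac15(\beta-C)h/\log h}$. Conditioning on the outermost such loop, we may take the domain to be $A$, an annulus-like region between $\partial B_r(o)$ (height $h$) and a loop at height $0$, with $\phi^*$ redefined accordingly. Then $|\phi^*-\phi^*_{\text{old}}|=O(h/\log h)$ by the maximum principle and the logarithmic profile of $\phi^*$.

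Next we round. Let $\psi=\lfloor\phi^*\rceil$ be the nearest integer to $\phi^*$, and write $\phi=\psi+\eta$ with $\eta$ integer-valued and $\eta=0$ on $\partial A$. Expanding,
\[\sum_{x\sim y}(\phi_x-\phi_y)^2 = \sum_{x\sim y}(\psi_x-\psi_y)^2 + 2\sum_{x\sim y}(\psi_x-\psi_y)(\eta_x-\eta_y) + \sum_{x\sim y}(\eta_x-\eta_y)^2 .\]
Writing $\psi=\phi^*+\epsilon$ with $|\epsilon|\le\frac12$, the cross term equals $2\langle\nabla\epsilon,\nabla\eta\rangle$, because the $\phi^*$ part vanishes by harmonicity and $\eta|_{\partial A}=0$. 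The law of $\eta$ is thus a \ZGFF on $A$ with zero boundary tilted by $\exp(-2\beta\langle\nabla\epsilon,\nabla\eta\rangle)$. Each edge contributes a cross term of size at most $2|\nabla\eta_e|$, while its quadratic cost is $\beta|\nabla\eta_e|^2$. For a level-line contour $\Gamma$ of $\eta$ at level $k$, the cross term is linear along the contour and bounded by $2|\Gamma|$, whereas the energy is at least $\beta|\Gamma|$. A Peierls argument on the contours of $\eta$ with effective cost $(\beta-2-C)|\Gamma|$ per contour then gives
\[\P(|\eta_z|\ge m)\le e^{-(\beta-C)\cdot(\text{length needed})} .\]

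The difficulty is that $\eta_z\ge m$ requires $m$ nested contours around $z$, and each contour could be as short as $4$. A naive bound therefore gives only $e^{-c\beta m}$, which suffices for $m=C h/\log h$ only if we also rule out cheap nested contours. This is exactly where the accumulated linear tilt could in principle beat the energy. I would handle it through the isoperimetry of contours not enclosing $B_r(o)$. Such a contour lies in a region where $|\nabla\phi^*|\lesssim h/(|x|\log(R/r))$, so the pairing $\langle\nabla\epsilon,\nabla\eta\rangle$ is what must be controlled. I expect this step to be the main obstacle.

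The first thing I would try is a Peierls map that shifts $\eta$ by $\mp1$ inside a contour. The weight change is then $\beta|\Gamma|$ minus a boundary-flux term of $\epsilon$ across $\Gamma$, and that flux is at most $|\Gamma|$. The fractional parts therefore cost at most a constant per bond, which is absorbed into $\beta-C$. Summing over the $m$ nested contours around $z$, whose lengths total at least $\sum_j 4\ge 4m$, and union-bounding over the $e^{O(|\Gamma|)}$ choices of contour gives
\[\P(|\eta_z|\ge m)\le e^{-(\beta-C)m} .\]
Taking $m=Ch/\log h$, adding back the $O(h/\log h)$ shift between $\psi$ and $\phi^*$, and absorbing the rounding term $\frac12$, yields the stated bound with exponent at least $\frac15(\beta-C)h/\log h$. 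This is uniform in $z$, and trivially so for $z\notin B_R(o)$, where $\phi^*=0$ and the first-step Peierls estimate applies directly.
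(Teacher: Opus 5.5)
Your proposal has a genuine gap: the Peierls step where you absorb the fractional parts into $\beta-C$ fails. You correctly write the tilt as $\exp(-2\beta\langle\nabla\epsilon,\nabla\eta\rangle)$, but you then count its contribution along a contour as $2|\Gamma|$, when it actually carries the factor $\beta$. Shifting $\eta$ by $-1$ inside a contour $\Gamma$ changes the exponent by $\beta\sum_{e\in\Gamma}(2\nabla\eta_e-1+2\nabla\epsilon_e)$, with gradients taken inward. For a bond with $\nabla\eta_e=1$, and since $\nabla\epsilon_e\in[-1,1]$, this contribution can be anywhere in $[-\beta,3\beta]$, not $\ge\beta-C$. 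For instance, suppose $\psi$ drops by $1$ inward across a half-integer level curve of $\phi^*$ while $\eta$ rises by $1$. Then $\phi$ is flat on that bond, and erasing the contour creates a unit gradient, raising the energy by $\beta$. So a long thin $\eta$-contour hugging a jump curve of $\psi$ costs far less than $\beta|\Gamma|$, because its two long sides nearly cancel. This is exactly the obstruction the paper points out for rounding to nearest integers (fractional parts $0.4,0.6$).

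Moreover, the bound you are aiming for, $\P(|\eta_z|\ge m)\le e^{-(\beta-C)m}$ for all $m$, asserts rigidity about $\phi^*$ at scale $O(1$), and it is false, so this step cannot be repaired. The reason lies in the region where $|\nabla\phi^*|<1$, namely $|x|\gtrsim h/\log h$, which is most of $B_R(o)$. There an integer field pays $\beta$ per level-line bond, a cost linear rather than quadratic in the slope. Following $\phi^*$ in that region costs an extra $\asymp\beta(h/\log h)^2$, with a larger constant than truncating the harmonic profile at radius $\asymp h/\log h$. So at large $\beta$ the field typically lies order $h/\log h$ below $\phi^*$ there; for example, $\phi^*_x\approx h\log 20/\log h$ at $|x|\approx h/\log h$. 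This is why the statement only claims rigidity at scale $Ch/\log h$, and why any proof must give up something like $e^{O(\beta R^2)}$.

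The paper's proof does exactly that. It passes to $\sigma=\phi-\phi^*$ and works with level-line annuli of $\lfloor\sigma\rfloor$. For steep annuli, every bond with $|\nabla\lfloor\sigma\rfloor|\ge 2$ has $|\nabla\sigma|>1$ and gains at least $1$. The shift therefore gains $\ge\frac13|\sA|$, which beats the external field of at most $\frac18|\sA|$ on $\partialvtx B_R(o)$. For $k$ nested non-steep annuli, isoperimetry forces at least $\frac29k^2$ unit-gradient bonds. Comparing with the integer-valued $\hatnu_\infty$ via the Radon--Nikodym bound $e^{c_0\beta R^2}$ then rules these out once $k\gtrsim R$. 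Your reduction to an annulus bounded by a height-$0$ loop is plausible, but it does not touch this issue.
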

\begin{proof}
From the explicit solution to the Dirichlet problem stated in the proposition in terms of hitting times of simple random walk $S_t$ in $\Z^2$ (see, e.g., \cite[\S1.4]{Lawler_RW}), one has that
\begin{equation}\label{eq:phi*-expression-1} \phi^*_x=h \P_x(\tau_{\partial B_r(o)}<\tau_{\partial B_R(o)})\qquad\mbox{for all $x\in A_{r,R}$}\,,\end{equation}
where $\P_x$ denote the law of the random walk $S_t$ started at $x$ and $\tau_A$ is the first time it hits a set $A$. It is well-known that the potential kernel $a(x)$ satisfies that $a(S_t)$ is a martingale, and from known estimates on $a(x)$ (see, e.g.,~\cite[\S1.6]{Lawler_RW}, and in particular Prop.~1.6.7 and Example~1.6.8 there), 
\begin{equation}\label{eq:phi*-expression-2} \P_x(\tau_{\partial B_r(o)}>\tau_{\partial B_R(o)}) = \frac{\log |x| - \log r + O(1/r)}{\log R - \log r}\,,\end{equation}
where $|x|$ denotes the Euclidean distance of $x$ from the origin. We will use the following fact on $\phi^*$, which, just like the last display, follows from the Optional Stopping Theorem and known estimates for $a(x)$ (e.g., those in \cite[p.~125--127] {Spitzer_PrinciplesRW}, combined with a Taylor approximation to account for the lattice error $|S_{\tau_{B_r(o)}}|\in [r,r+1]$, whence $\log|S_{\tau_{B_r(o)}}|\in [\log r, \log r + \frac1r]$, and similarly for $B_R(o)$):
\begin{equation}\label{eq:p-near-boundary}
\phi_x^* \leq \frac1{64}\qquad\mbox{for all $x\in A_{r,R}$ adjacent to some $y\in \partial B_R(o)$}\,.
\end{equation}

Several steps will be needed to deal with boundary effects and different large deviation scenarios.

\begin{enumerate}[label=\textbf{Step~\arabic*.}, ref=\arabic*, wide=0pt, itemsep=1ex,
listparindent=\parindent, parsep=0pt
]
    \item \label[step]{st:1-subtract-phi*} (\emph{Subtracting the real-valued solution $\phi^*$}.)
Let \[ \sigma := \phi - \phi^*\,.\]
Denoting the discrete Laplacian by 
$(\Delta \phi)_x = \frac14 \sum_{y\sim x} (\nabla \phi)_{xy}$ for $(\nabla \phi)_{xy} = \phi_y-\phi_x$, we see that
\[ \sum_{x\sim y}|(\nabla\phi)_{xy}|^2 
=  \sum_{x\sim y}|(\nabla\sigma)_{xy}|^2
 + \sum_{x\sim y}|(\nabla\phi^*)_{xy}|^2
 - 8 \sum_{x} \sigma_x (\Delta \phi^*)_x\,.
\]
Since $\sum_{x\sim y}|(\nabla\phi^*)_{xy}|^2$ is but a constant, $(\Delta \phi^*)_x=0$ for all $x\notin \partialvtx A_{r,R}$ (the external vertex boundary of $A_{r,R}$), and $\sigma_x=0$ for $x\in B_r(o)$, we see that the law of $\sigma$ under $\hatpi_\infty$ can be written as
\begin{equation}\label{eq:mu-def} \hatmu_\infty(\sigma) = \frac1{Z_{\hatmu}} \exp\bigg(-\beta \sum_{x\sim y} |(\nabla\sigma)_{xy}|^2 + 8\beta \sum_{x\in\partialvtx B_R(o)} (\Delta\phi^*)_x \sigma_x \bigg)\,,\end{equation}
where $Z_\hatmu$ sums the exponent on the right-hand over  all configurations $\sigma:\Z^2\to \R$ with
\begin{enumerate}
    [(i)]
    \item $\sigma_x = 0$ for all $x\in B_r(o)$;
    \item $\sigma_x \in -\phi^*_x + \Z$ for all $x$.
\end{enumerate}
Using \cref{eq:p-near-boundary} (as well as that $\phi^* \geq 0$ and $\phi^*\restriction_{B_R(o)^c}=0$) we further have
\begin{equation}
    \label{eq:Delta-phi*-max}0 \leq (\Delta\phi^*)_x \leq \frac1{64}\quad\mbox{for all $x\in \partialvtx B_R(o)$}\,,
\end{equation} 
and see that the distribution $\hatmu_\infty$ is nothing but a \ZGFF distribution with a small positive external field of $\Delta\phi^*$ on $\partialvtx B_R(o)$ (and values that are the integers translated by the fractional parts of $-\phi^*$).

\item \label[step]{st:2-steep-phi*} (\emph{Ruling out many \texttt{steep} level lines in $\lfloor\sigma\rfloor$.}) 
The fact that $\sigma$ is $\R$-valued hinders the Peierls approach for establishing rigidity: for instance, if one were to look at $\lfloor\sigma\rfloor$ (point-wise floor of $\sigma$), erasing a level-line loop via an appropriate vertical shift of its interior may actually increase the energetic cost if (consider, for instance, the situation when the gradients along it have fractional parts $0.1,0.9$). A similar phenomenon occurs when rounding $\sigma$ to the nearest integers (e.g., consider fractional parts $0.4, 0.6$ in that setting). To deal with this, we introduce the following notion of a \texttt{steep} level line.

\begin{definition}\label{def:level-line-annuli} We say $\sA$ is a $j$ level-line \emph{annulus} in  $\lfloor\sigma\rfloor$, for some $j\in\Z$, if it consists of two $j$~level-line loops $\sA^{\textsf{in}},\sA^{\textsf{out}}$ in $\lfloor\sigma\rfloor$ as per \cref{def:level-lines} with reversed types---$\sA^{\textsf{out}}$ is an up-loop and $\sA^{\textsf{in}}$ is a down-loop (whence $\sA$ is an up-annulus) or vice versa (a down-annulus)---so that either
\begin{enumerate}[(a)]
\item{}(trivial annulus, one external loop) $\sA^{\textsf{in}}=\emptyset$ and $B_r(o)\cap \Int(\sA^{\textsf{out}}) = \emptyset$; or
    \item{}(nontrivial annulus) $B_r(o)\subset \Int( \sA^{\textsf{in}}) \subset \Int(\sA^{\textsf{out}})$.
\end{enumerate}
Write $|\sA| := |\sA^{\textsf{in}}|+|\sA^{\textsf{out}}|$
and
$\Int(\sA) := \Int(\sA^{\textsf{out}})\setminus\Int(\sA^{\textsf{in}})$.
\end{definition}
Note that for any level line $\fL$ of $\lfloor\sigma\rfloor$, either $B_r(o)\subset \Int(\fL)$ or $B_r(o)\subset \Int(\fL)^c$ since $\sigma\restriction_{B_r(o)}=0$. The above definition requires that if $B_r(o)\subset \Int(\sA^{\textsf{out}})$ then there must be an internal loop, of a reversed type, separating $B_r(o)$ from $\sA^{\textsf{out}}$. Further note that, since $\sA^{\textsf{in}},\sA^{\textsf{out}}$ are two $j$ level-line loops that are nested ($\Int(\sA^{\textsf{in}}) \subset \Int(\sA^{\textsf{out}})$) and of reversed types, then they must be disjoint. 

Observe that if $\lfloor \sigma_z \rfloor = k > 0$, then there must exist level line up-annuli $\sA_1,\ldots,\sA_k$ in $\lfloor\sigma\rfloor$ corresponding to heights $k,k-1,\ldots,1$ such that $z\in\Int(\sA_1)\subset\ldots\subset\Int(\sA_k)$. (This would be easy to see if in lieu of $\hatpi_\infty$ we considered $\hatpi^0_{B_M(o)}$ for $M\gg R$ (and the same for $\hatmu_\infty$), and one can couple the two measures to agree on $B_{M/2}(o)$ up to a probability $e^{-c\beta M}$, vanishing as $M\to\infty$.) Similarly, if $\lfloor \sigma_z \rfloor = -k$ for $k>0$ we can find such down-annuli $\sA_1,\ldots,\sA_k$ at heights $-k+1,-k+2,\ldots,0$.

\begin{definition}\label{def:steep-level-line}
We say a $j$ level-line loop $\fL$ in $\lfloor \sigma\rfloor$ is \texttt{steep} if at least $\frac23 |\fL|$ of its bonds $b$ satisfy $|(\nabla \lfloor\sigma\rfloor)_{xy}|\geq 2$, where $xy$ is the edge of $\Z^2$ dual to $b$. Similarly, we say that a $j$ level-line annulus~$\sA$ in $\lfloor\sigma\rfloor$ is \texttt{steep} if $\frac23|\sA|$ of the bonds $b$ in $\sA^{\textsf{in}}\cup \sA^{\textsf{out}}$ satisfy $|(\nabla \lfloor\sigma\rfloor)_{xy}|\geq 2$ for the $xy$ dual to $b$. 
\end{definition}
The following claim will control the probability of the occurrence of $k$ \texttt{steep} level lines annuli.
\begin{claim}\label{clm:k-steep-level-lines}
Let $\mathscr{S}_{z,k}^\uparrow$ be the event that there is a sequence $\{\sA_i\}_{i=1}^k$ of \texttt{steep} $j_i$ level line up-annuli in $\lfloor\sigma\rfloor$ for some $j_1> \ldots > j_k > 0$, such that $z \in \Int(\sA_1) \subset \Int(\sA_2)\subset\ldots \subset \Int(\sA_k)$. There exists an absolute constant $C>0$ such that, for every large enough $\beta>0$ and every $k\geq 1$,
\begin{equation}\label{eq:mu-rigid-upper-steep} 
\hatmu_\infty(\sS_{z,k}^\uparrow) \leq e^{-\frac15( \beta-C) k}\,.\end{equation}
The same holds for the event $\sS_{z,k}^\downarrow$ corresponding to $j_i$ level line down-annuli for $j_1<\ldots<j_k<0$.
\end{claim}
\begin{proof}
Fix a sequence $\{\sA_i\}_{i=1}^k$ of level-line annuli with $z\in\Int(\sA_1)\subset\ldots\subset\Int(\sA_k)$, and let $\sS^\uparrow_{z,\{\sA_i\}}$ denote the event that these occur as \texttt{steep} $j_i$ level line up-annuli in $\lfloor\sigma\rfloor$ for some $j_1>\ldots >j_k>0$. The sought \cref{eq:mu-rigid-upper-steep} will follow from showing that 
\begin{equation}\label{eq:steep-level-line-prob}
    \hatmu_\infty(\sS^\uparrow_{z,\{\sA_i\}}) \leq e^{-\frac15 \beta \sum_{i=1}^k|\sA_i|}\,.\end{equation}
This is a consequence of a routine enumeration over the $\sA_i$'s: for an upper bound on the number of such sequences, we may ignore the consistency constraints between the $2k$ loops $\{\sA_i^{\texttt{in}},\sA_i^{\texttt{out}}\}_{i=1}^k$; every external loop $\sA_i^{\texttt{out}}$ must cross the horizontal line through $y$ at distance at most $|\sA_i^{\texttt{out}}|/2$ from~$y$, and every internal loop $\sA_i^{\texttt{in}}$ must cross the horizontal line through $o$ at distance at most $|\sA_i^{\texttt{in}}|/2$ from $o$. We see that the enumeration cost is at most $\exp(C \sum_i |\sA_i|)$ for some absolute constant $C>0$, and hence \cref{eq:mu-rigid-upper-steep} will follow from \cref{eq:steep-level-line-prob} by a union bound.

To obtain \cref{eq:steep-level-line-prob}, consider the Peierls map $T$ that subtracts the height of $x$ by the number of level-line annuli that contain $x$ in their interior:
\begin{equation}\label{eq:def-T-annuli} (T \sigma)_x = \sigma_x - \sum_{i=1}^k\one_{\{x\in \Int(\sA_i)\}}\,.\end{equation}
Observe that, by definition, $T$ does not modify $\sigma_x$ for $x\in B_r(o)$, and elsewhere, it only modifies $\sigma$ via an integer shift (specifically, a shift by $1$), and therefore $T \sigma$ remains a legal configuration; clearly, as we have fixed $\{\sA_i\}_{i=1}^k$ ahead of time, it is an injection, and thus \cref{eq:steep-level-line-prob} will follow once we establish that, for every $\sigma\in\sS^\uparrow_{z,\{\sA_i\}}$,
\begin{equation}\label{eq:mu-sigma-mu-Tsigma} \hatmu_\infty(\sigma) \leq e^{-\frac14\beta\sum_{i=1}^k|\sA_i|}\hatmu_\infty(T\sigma)\,.\end{equation}
In view of the expression for $\hatmu$ in \cref{eq:mu-def}, 
\[\frac{\hatmu_\infty(\sigma)}{\hatmu_\infty(T\sigma)}=e^{-\beta(\Upsilon_1-\Upsilon_2)}\,,\]
where
\begin{equation}\label{eq:def-Upsilon12-annuli} \Upsilon_1 = \sum_{xy\text{ dual to }b\in \bigcup \sA_i} |(\nabla\sigma)_{xy}|^2 - |(\nabla (T\sigma))_{xy}|^2\qquad,\qquad
\Upsilon_2 = 8\sum_{x\in\partialvtx B_R(o)}(\Delta\phi^*)_x (\sigma_x- (T\sigma)_x)\,.\end{equation}
The handling of the gradient term $\Upsilon_1$ shows the rationale behind the definition of a \texttt{steep} level line. Within this argument, consider $xy$ dual to some $b\in\bigcup_i\sA_i$. W.l.o.g., $(\nabla\lfloor\sigma\rfloor)_{xy}\geq 1$ (otherwise reverse the roles of $x,y$; the gradient is nonzero as $xy$ is dual to a bond in one of the $\sA_i$'s). Since
\[ (\nabla\sigma)_{xy} > (\nabla\lfloor\sigma\rfloor)_{xy} - 1\,, \]
we see that every bond $b\in\bigcup_i\sA_i$ dual to $xy$ with $(\nabla \lfloor\sigma\rfloor)_{xy}\geq 2$ contributes at least $1$ to $\Upsilon_1$ for each $\sA_i$ it belongs to (and more than that if it belongs to multiple $\sA_i$'s, due to the quadratic effect). On the other hand, if $b\in\bigcup_i\sA_i$ is dual to $xy$ with $(\nabla\lfloor \sigma\rfloor)_{xy} = 1$ then it decreases $\Upsilon_1$ by at most $1$ (and by definition belongs to just a single $\sA_i$).  
Thus, the fact that every $\sA_i$ is \texttt{steep} implies that
\[ \Upsilon_1 \geq \frac13 \sum_i |\sA_i|\,.\]
To handle the external field term $\Upsilon_2$, we use \cref{eq:Delta-phi*-max} to write
\[ \Upsilon_2 \leq \frac18 \sum_i |\Int(\sA_i)\cap \partialvtx B_R(o)|\,, \]
and proceed to make the elementary yet important observation that, for every $i$,  \begin{equation*}
|\sA_i| \geq |\Int(\sA_i) \cap \partialvtx B_R(o)|\,.\end{equation*} 
(One way to see this would send disjoint paths $P_x$ from every $x\in\partial B_R(o)$ to $\infty$---e.g., assigning the $4$ sectors $(\frac\pi4,\frac{3\pi}4)$, $(\frac{3\pi}4,\frac{5\pi}4)$, $(\frac{5\pi}4,\frac{7\pi}4)$, $(\frac{7\pi}4,\frac{\pi}4)$ paths going north, west, south, east, respectively. This would map each $x\in \Int(\sA_i)\cap\partialvtx B_R(o)$ to a distinct bond of $ \sA_i$, dual to some $e\in P_x$.)    
    
Combining the last three displays shows that 
\[ \Upsilon_1 - \Upsilon_2 \geq \frac15\sum_i |\sA_i|\,,\]
establishing \cref{eq:mu-sigma-mu-Tsigma} and thus completing the proof of \cref{eq:mu-rigid-upper-steep}. The analogous bound for $\sS_{z,k}^\downarrow$ follows via the Peierls map $(T\sigma)_x=\sigma_x + \sum_{i=1}^k\one_{\{x\in\Int(\sA_i)\}}$, where the exact same argument holds for analyzing the gradient change $\Upsilon_1$ (and one can use the simple bound $\Upsilon_2 \leq 0$, as the external field makes it only more preferable to increase the heights of the configuration).
\end{proof}

\item \label[step]{st:3-non-steep-energy} (\emph{Bounding the energy of non-\texttt{steep} level lines.}) To treat a large deviation stemming from many non-\texttt{steep} level lines, we will show establish the following lower bound on the number of gradient-$1$ bonds in a collection of non-\texttt{steep} level lines surrounding a site $y$.

\begin{claim}\label{clm:non-steep-level-lines-energy}
Every sequence $\sA_1,\ldots,\sA_k$ of level-line annuli with $z\in\Int(\sA_1)\subset \ldots\subset\Int(\sA_k)$ that are non-\texttt{steep} 
in $\lfloor\sigma\rfloor$ contains at least $\frac29 k^2$  bonds $b$ whose dual $xy$ satisfies $|(\nabla \lfloor\sigma\rfloor)_{xy}|=1$.
\end{claim}
\begin{proof}
We will prove by induction that, $|\Int(\sA_j)| \geq j^2/9$ for all $j\geq 1$. The base case is trivial (one always has $|\Int(\sA_j)|\geq 1$ since the loops $\sA_j^{\texttt{in}},\sA_j^{\texttt{out}}$ are disjoint). For the inductive step, 
recall \cref{def:level-line-annuli}, and observe that the fact that $\Int(\sA_j)\subset \Int(\sA_{j+1})$ implies that 
\[ \Int(\sA_j^\texttt{out}) \subset \Int(\sA_{j+1}^\texttt{out})\qquad\mbox{and}\qquad 
\Int(\sA_{j+1}^\texttt{in}) \subset \Int(\sA_{j}^\texttt{in})\,.
\]
By the isoperimetric inequality in $\Z^2$ and the induction hypothesis, $|\sA_j| \geq 4\sqrt{|\Int(\sA_j)|} \geq 4j/3$,
where we recall that $|\sA_j| = |\sA_j^{\texttt{in}}|+|\sA_j^{\texttt{out}}|$. Recalling \cref{def:steep-level-line}, the fact that  $\sA_j$ is non-\texttt{steep} (and that it is a level line of $\lfloor\sigma\rfloor$, whence each of its bonds $b$ has a dual $xy$ with $(\nabla\lfloor\sigma\rfloor)_{xy}\neq 0$) implies that it has at least $|\sA_j|/3 \geq 4j/9$ bonds $b$ whose dual $xy$ has  $(\nabla\lfloor\sigma\rfloor)_{xy} = 1$. Each such bond $b\in\sA_j^{\texttt{out}}$ corresponds to a distinct unit square in 
$\Int(\sA_{j+1}^{\texttt{out}})\setminus\Int(\sA_j^{\texttt{out}})$, whereas each such $b\in \sA_j^{\texttt{in}}$ corresponds to a distinct unit square in $\Int(\sA_{j}^{\texttt{in}})\setminus\Int(\sA_{j+1}^{\texttt{in}})$. Hence,
\[ \left|\Int(\sA_{j+1})\right| = \left|\Int(\sA_{j+1}^{\texttt{out}})\setminus \Int(\sA_{j+1}^{\texttt{in}})\right| \geq \left|\Int(\sA_j)\right| + \frac{4j}9 \geq \frac{j (j + 4)}9 \geq \frac{(j+1)^2}9\,,\]
completing the proof of the induction. At the same time, since the gradient-$1$ bonds counted above in the collection $\{\sA_j\}_{j=1}^k$ are all distinct by definition, we find that this collection of loops contains at least $\frac49 \sum_{j=1}^k j \geq \frac29 k^2$ such bonds, as required. 
\end{proof}

\item \label[step]{st:4-non-steep-ld} (\emph{Ruling out many non-\texttt{steep} level lines in $\lfloor\sigma\rfloor$.}) Let $\hatnu_\infty$ be the \ZGFF on configurations $\bar\sigma:\Z^2\to\Z$ satisfying $\bar\sigma_x=0$ for all $x\in B_r(o)$, with the external field from \cref{eq:mu-def}, and a modified interaction of $\beta/2$ (as opposed to $\beta$) for edges within $B_R(o)$ or intersecting it:
\begin{equation}\label{eq:nu-def} \hatnu_\infty(\bar\sigma) = \frac1{Z_\hatnu} \exp\bigg(-\frac{\beta}2 \!\!\! \sum_{\substack{x\sim y \\ \{x,y\}\cap B_R(o)\neq \emptyset}} \!\!\! |(\nabla\bar\sigma)_{xy}|^2 
-\beta\!\!\!\sum_{\substack{x\sim y \\ \{x,y\}\cap B_R(o)= \emptyset}}\!\!\! |(\nabla\bar\sigma)_{xy}|^2
+ 8\beta \!\!\! \sum_{x\in\partialvtx B_R(o)} (\Delta\phi^*)_x \bar\sigma_x \bigg)\,.\end{equation}
The analysis of the Peierls map in $T$ from the proof of \cref{clm:k-steep-level-lines} will readily show that $\hatnu_\infty$ is rigid:
\begin{claim}
    \label{clm:nu-rigid} The following holds for $\beta$ large enough. Let $\{\sA_i\}_{i=1}^k$ be a sequence of level-line annuli with $z\in\Int(\sA_1)\subset\ldots\subset\Int(\sA_k)$, and let $\sS_{z,\{\sA_i\}}^\uparrow$ denote the event that these occur as $j_i$ level line up-annuli for some $j_1>\ldots>j_k>0$. There exists an absolute constant $C>0$ such that
    \[ \hatnu_\infty(\sS_{z,\{\sA_i\}}^\uparrow) \leq e^{-\frac14\beta\sum_{1}^k|\sA_i|}\,.\]
    The same holds for $\sS_{z,\{\sA_i\}}^\downarrow$ corresponding to down-annuli for some $j_1<\ldots<j_k<0$.
\end{claim}
\begin{proof}
Let $T$ be the map from \cref{eq:def-T-annuli}, decrementing the height of each of the regions $\Int(\sA_i)$, and define $\Upsilon_1,\Upsilon_2$ as in \cref{eq:def-Upsilon12-annuli}.
Unlike the situation in the proof of \cref{clm:k-steep-level-lines}, where the $\R$-valued nature of the configuration $\sigma\sim\hatmu_\infty$ called for the notion of \texttt{steep} level lines to handle the energy gain in the gradients of $T\sigma$ compared $\sigma$, here $\bar\sigma\sim\hatnu_\infty$ is $\Z$-valued; thus, there is no loss in the gradient term $\Upsilon_1$, and
\[ \Upsilon_1 \geq \sum_i |\sA_i|\,.\]
The analysis of the external field term $\Upsilon_2$ holds verbatim as in said proof, giving
\[ \Upsilon_2 \leq \frac18 \sum_i |\sA_i|\,.\]
Combining these gives the sought bound on $\sS_{z,\{\sA_i\}}^\uparrow$, as here we have 
\[ \frac{\hatnu_\infty(\bar\sigma)}{\hatnu_\infty(T\bar\sigma)} \leq e^{-\beta(\frac12\Upsilon_1-\Upsilon_2)}\]
due to the modified interaction of $\beta/2$ within $B_R(o)$. As in the proof of \cref{clm:k-steep-level-lines}, 
the same applies to $\sS_{z,\{\sA_i\}}^\downarrow$ via the counterpart of $T$ that increments the height of each region $\Int(\sA_i)$.
\end{proof}
Let $\overline\sS_{z,k}^\uparrow$ be the event that there is a sequence $\{\sA_i\}_{i=1}^k$ of non-\texttt{steep} level line up-annuli with $z\in\Int(\sA_1)\subset\ldots\subset\Int(\sA_k)$ (the analogue of $\sS_{z,k}^\uparrow$ from \cref{clm:k-steep-level-lines} for non-\texttt{steep} annuli). By combining \cref{clm:non-steep-level-lines-energy,clm:nu-rigid} with the usual enumeration over the annuli $\{\sA_i\}_{i=1}^k$, we find that
\begin{equation}
    \label{eq:non-steep-nu-bound}
    \hatnu_\infty(\overline\sS_{z,k}^\uparrow) \leq e^{-\tfrac1{18}(\beta - C) k^2}\,.
\end{equation}
The same applies for the analogously defined $\overline\sS_{z,k}^\downarrow$ pertaining down-annuli.

The next standard result (see, e.g., \cite[Claim~3.5]{LMS16}, an analogue when there is no external field on $\partialvtx B_R(o)$ as we encounter here) controls the Radon--Nikodym derivative between $\hatmu_\infty$ and $\hatnu_\infty$.
\begin{claim}
    \label{clm:mu-nu-RND}
There exists an absolute constant $c_0>0$ such that
\begin{equation}
    \label{eq:mu-nu-RND}
 \sup_\sigma \frac{\hatmu_\infty(\sigma)}{ \hatnu_\infty(\lfloor\sigma\rfloor)} \leq e^{c_0 \beta R^2}\,.\end{equation}
In particular,  $\hatmu_\infty(\overline\sS_{z,k}^\uparrow) \leq e^{-\frac1{25}(\beta-C)k^2}$ for $k \geq 10 \sqrt{c_0} R$, and similarly for the analogous $\overline\sS_{z,k}^\downarrow$.
\end{claim}
\begin{proof}
Let $\phi^*_\textsf{f} := \phi^* - \lfloor\phi^*\rfloor$ denote the fractional part of $\phi^*$, and recall that by definition $\sigma = \lfloor\sigma\rfloor - \phi^*_\textsf{f}$. For every $x\sim y$ we have
\[ |(\nabla\sigma)_{xy}|^2 = |(\nabla\lfloor\sigma\rfloor)_{xy}|^2 + |(\nabla\phi^*_\textsf{f})_{xy}|^2- 2 (\nabla\lfloor\sigma\rfloor)_{xy}(\nabla\phi^*_\textsf{f})_{xy}\,.\]
As $\phi^*_\textsf{f}$ is fixed, we may rewrite $\hatmu_\infty$ from \cref{eq:mu-def} via another partition function $\tilde Z_\hatmu$ 
so that
\[ \hatmu_\infty(\sigma) = \frac1{\tilde Z_\hatmu} \exp\bigg(-\beta \sum_{x\sim y} |(\nabla\lfloor\sigma\rfloor)_{xy}|^2 + 2\beta\!\!\!\sum_{\substack{x\sim y \\ 
\{x,y\}\cap B_R(o)\neq \emptyset}\!\!\!
} (\nabla\lfloor\sigma\rfloor)_{xy}(\nabla\phi_\textsf{f}^*)_{xy}
+ 8\beta \sum_{x\in\partialvtx B_R(o)} (\Delta\phi^*)_x \lfloor\sigma\rfloor_x \bigg)\,,
\]
where we were allowed to sum the cross-term $(\nabla\lfloor\sigma\rfloor)_{xy}(\nabla\phi^*_\textsf{f})_{xy}$ only on $x,y$ with at least one of the sites falling within $B_R(o)$ as otherwise $\phi^*_x=\phi^*_y=0$  (and so $(\nabla\phi_\textsf{f}^*)_{xy}=0$) by construction.
Comparing this with \cref{eq:nu-def} for $\bar\sigma := \lfloor\sigma\rfloor$, we get
\begin{align*} \frac{\hatmu_\infty(\sigma)}{\hatnu_\infty(\lfloor\sigma\rfloor)} &= \frac{Z_\hatnu}{\tilde Z_\hatmu} \exp\bigg( -\frac{\beta}2 \!\!\! \sum_{\substack{x\sim y \\ \{x,y\}\cap B_R(o)\neq \emptyset}} \!\!\! \Big(|(\nabla\lfloor\sigma\rfloor)_{xy}|^2 - 4 (\nabla\lfloor\sigma\rfloor)_{xy}(\nabla\phi^*_\textsf{f})_{xy} \Big) \bigg)\\
&\leq 
\frac{Z_\hatnu}{\tilde Z_\hatmu} \exp\bigg( 2\beta \!\!\! \sum_{\substack{x\sim y \\ \{x,y\}\cap B_R(o)\neq \emptyset}} \!\!\! |(\nabla \phi^*_\textsf{f})_{xy}|^2
\bigg) \\
&\leq \frac{Z_\hatnu}{\tilde Z_\hatmu} \exp\left(c \beta R^2\right)
\end{align*}
for an absolute constant $c>0$,
using that $4ab \leq a^2 + 4b^2$ for any real $a,b$ for the inequality in the second line, and that $|\nabla\phi^*_\textsf{f}|\in[0,1)$ for the inequality in the third line.

To bound $Z_\hatnu/\tilde Z_\hatmu$ from below, we infer from the identity in the first line above that, for $\bar\sigma\sim \hatnu_\infty$ and $\E_\hatnu$ denoting expectation w.r.t.\ $\hatnu_\infty$,
\begin{align*} \frac{\tilde Z_\hatmu}{Z_\hatnu} &= \E_\hatnu\bigg[
\exp\bigg( -\frac{\beta}2 \!\!\! \sum_{\substack{x\sim y \\ \{x,y\}\cap B_R(o)\neq \emptyset}} \!\!\! \Big(|(\nabla\bar\sigma)_{xy}|^2 - 4 (\nabla\bar\sigma)_{xy}(\nabla\phi^*_\textsf{f})_{xy} \Big) \bigg)\bigg] \\
& \geq 
\exp\bigg( -\frac{\beta}2 \!\!\! \sum_{\substack{x\sim y \\ \{x,y\}\cap B_R(o)\neq \emptyset}} \!\!\! \Big(\E_\hatnu\left[|(\nabla\bar\sigma)_{xy}|^2\right] - 4 (\nabla\phi^*_\textsf{f})_{xy}\E_\hatnu\left[(\nabla\bar\sigma)_{xy}\right] \Big) \bigg)\,,
 \end{align*}
 using here Jensen's inequality. The proof will be concluded once we show that for every $x\sim y$ we have $\E_\hatnu[|(\nabla\bar\sigma)_{xy}|^2] \leq C$ for $C>0$ (thus also $\E_\hatnu[|(\nabla\bar\sigma)_{xy}|] \leq \sqrt C$) of the form $C=\epsilon_\beta$.
This is a consequence of \cref{clm:nu-rigid}: if $\bar\sigma_{y} = \bar\sigma_x + k$ for some $k>0$ then we there must be $k$ level-line annuli going through the bond $b$ dual to $xy$---either up-annuli with $y$ in their interior, or down-annuli with $x$ in their interior (or a combination of these). Enumerating over their types and geometry is a factor of at most $\exp(C\sum_{i=1}^k|\sA_i|)$, and hence the probability of this event, in light of \cref{clm:nu-rigid}, is at most $\exp(-\frac14(\beta - C)k)$. This yields that $\E[\exp(\frac15(|\nabla\bar\sigma)_{xy}] < \epsilon_\beta$, thus establishing \cref{eq:mu-nu-RND}. The proof of the final assertion of the claim follows from combining \cref{eq:non-steep-nu-bound,eq:mu-nu-RND}.
\end{proof} 

\end{enumerate}
We  now conclude the proof of the proposition by combining the result on \texttt{steep} annuli from \cref{st:2-steep-phi*} with the one on non-\texttt{steep} annuli from \cref{st:4-non-steep-ld}. Recall that our aim is to rule out $\{|\sigma_z| > C \frac{h}{\log h}\}$ for $\sigma\sim\hatmu_\infty$.
Let $C = 250\sqrt{c_0}$ for  the absolute constant $c_0>0$ from \cref{clm:mu-nu-RND}, and
$k := \lceil (C/2) \frac{h}{\log h}\rceil$. Recalling $R=\lceil 20 h/\log h\rceil $, for this choice we have $k > 10\sqrt{c_0}R$. 

By definition, $\{\sigma_z > C \frac{h}{\log h}\}$ implies $\sS_{z,k}^\uparrow \cup \overline\sS_{z,k}^\uparrow$.
To treat $\sS_{z,k}^\uparrow$, we appeal to \cref{clm:k-steep-level-lines} (the dominant term in the final estimate) and to treat $\overline\sS_{z,k}^\uparrow$ we appeal to \cref{clm:mu-nu-RND}. The same holds for $\{\sigma_z < -C\frac{h}{\log h}\}$ via the analogous argument for $\sS_{z,k}^\downarrow \cup \overline\sS_{z,k}^\downarrow$. This completes the proof of the proposition.
\end{proof}

We will be interested in bounding the probability that, given that $\phi$ exceeds height $h$ at the origin and its neighbor, we have $\phi_y\geq h$ at some other site $y$. The proposition above will provide such a bound when $y$ is such that $\phi^*_y < h - C\frac{h}{\log h}$. The next claim will explain this is the case for all $y$ with $|y|> r_1$ for some absolute constant $r_1>0$, and then extend that bound also to $|y| \leq r_1$.

\begin{claim}\label{clm:pi(h|h_h)-upper}
Let $o$ be the origin and $o'$ be a neighbor of $o$. There exists an absolute constant $c>0$ such that, for large enough $\beta$, every $h\geq 1$ and every $y\neq o,o'$,
\[ \hatpi_\infty(\phi_y \geq h \mid \phi_o\geq h,\, \phi_{o'}\geq h) \leq e^{-c\beta \frac{h}{\log h}}\,.\]
\end{claim}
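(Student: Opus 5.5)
We reduce to the exact-height conditioning treated in \cref{prop:pi-y-h-phi^*-rigid}, and split according to whether $y$ is far from or close to the origin.

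\textbf{Reduction to exact heights.} First replace the event $\{\phi_o\geq h,\phi_{o'}\geq h\}$ by $\{\phi_o=\phi_{o'}=h\}$. Upward overshoots cost a factor $e^{-c\beta h/\log h}$ each by \cref{eq:LD-ratio}, so we may decompose over $\phi_o=h_1\geq h$ and $\phi_{o'}=h_2\geq h$. By FKG (monotonicity in the conditioning values) it suffices to bound
\[\hatpi_\infty\bigl(\phi_y\geq h\mid \phi_o=h_1,\phi_{o'}=h_2\bigr)\]
for $h_1,h_2$ within $O(1)$ of $h$. Larger overshoots are handled by weighting with the geometric tail of $\hatpi_\infty(\phi_o=h_1)/\hatpi_\infty(\phi_o=h)$ together with \cref{eq:LD-conditional}. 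Taking $B_r(o)$ with $r=1$ to contain $\{o,o'\}$ would require conditioning on $\phi\equiv h$ on a ball rather than on a domino. To avoid this, we instead apply the proof of \cref{prop:pi-y-h-phi^*-rigid} verbatim with $B_r(o)$ replaced by the domino $\{o,o'\}$, letting $\phi^*$ be harmonic on $B_R(o)\setminus\{o,o'\}$ with boundary value $h$ on the domino. The proof only used that the inner set is finite and that $\phi^*$ has boundary value $h$ there, so this substitution goes through.

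\textbf{Far case, $|y|>r_1$.} By \cref{eq:phi*-expression-1,eq:phi*-expression-2} (with $r=O(1)$), the hitting probability at $|y|\geq r_1$ is at most
\[1-\frac{\log r_1-O(1)}{\log R},\]
so that
\[\phi^*_y\leq h-\frac{h(\log r_1-O(1))}{\log R}\leq h-2C\frac{h}{\log h}\]
once the absolute constant $r_1$ is chosen so that $\log r_1-O(1)\geq 4C$, using $\log R\asymp\log h$. \Cref{prop:pi-y-h-phi^*-rigid} then yields $\hatpi(\phi_y\geq h\mid\cdot)\leq e^{-\frac15(\beta-C)h/\log h}$. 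For $|y|\geq R$ we have $\phi^*_y=0$ and the same bound applies.

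\textbf{Near case, $|y|\leq r_1$, $y\neq o,o'$.} Here harmonic rigidity is too weak, since $\phi^*_y$ may be $h-O(h/\log h)$. Instead, condition additionally on the configuration outside a finite set $K\ni y$, with $K$ the $r_1$-ball minus the domino. On the event (from the far case, via a union bound over the $O(r_1)$ sites of $\partial K$) that all sites at distance about $r_1$ have $\phi\leq h-2Ch/\log h$, raising $\phi_y$ to $h$ forces a gradient of order $h/\log h$ across some edge of a path of length $O(r_1)$ from $y$ to $\partial K$. Compare with a Peierls/convexity map that lowers $\phi_y$ (and the sites along its path) to the discrete harmonic interpolation. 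The quadratic energy then gains order $\beta(h/\log h)^2/r_1\geq \beta h/\log h$ for $h$ large, with $O(1)$ entropy loss. Alternatively, use \cref{eq:LD-ratio}-style single-site ratios: the conditional law of $\phi_y$ given its neighbors is a discrete Gaussian centered at their mean, and since at least one neighbor chain descends to $h-2Ch/\log h$ within distance $r_1$, a maximum-principle argument shows the mean of neighbors stays at most $h-ch/\log h$.

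\textbf{Main obstacle.} The near case is the hard step: we must quantify that a bounded-radius configuration pinned at $h$ only on two sites cannot keep a third site at $h$ cheaply. This should follow from strict convexity of the quadratic energy with $O(r_1^2)$ sites, but the integer-valued constraints need care. For small $h$ the bound is trivial after adjusting $c$.
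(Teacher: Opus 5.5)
Your far case is the paper's argument: rigidity from \cref{prop:pi-y-h-phi^*-rigid} plus $\phi^*_y\le h-2C\frac{h}{\log h}$ for $|y|\ge r_1$. Rerunning that proposition with the domino as inner set does go through, but it is not needed. Conditioning on $\{\phi_x\ge h,\ \forall x\in B_1(o)\}$ gives a stochastically larger measure than conditioning on $\{\phi_o,\phi_{o'}\ge h\}$. So the paper enlarges to $B_1(o)$ by FKG, then passes to exact height $h$ by a Peierls map, relaxing the event at $y$ to $\phi_y\ge h-\frac{h}{\log h}$.

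The near case is where you genuinely depart from the paper. The paper does no local analysis: by FKG and translation invariance it chains copies of $(o,o')$ outward. In the adjacent case this reads $\hatpi_\infty(\phi_{y_K}\ge h\mid\phi_o,\phi_{o'}\ge h)\ge\hatpi_\infty(\phi_y\ge h\mid\phi_o,\phi_{o'}\ge h)^K$ with $|y_K|\ge r_1$, so the far bound transfers to $y$ with the exponent divided by $K\asymp r_1$. A pair version with decreasing thresholds handles the remaining $y$. That route is soft and needs no control of the domino values. Yours conditions on $\phi$ off a bounded $K\ni y$ and uses convexity; it exploits $p=2$ and gives a much stronger local bound, but the final exponent is still set by the union bound over $\partial K$.

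As written, two steps of your route fail and need repair.

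\emph{Overshoot.} The single-site tail \cref{eq:LD-ratio} bounds the weight of $\phi_o=h+m$ only relative to $\hatpi_\infty(\phi_o\ge h)$. The normalizer $\hatpi_\infty(\phi_o\ge h,\phi_{o'}\ge h)$ is smaller by a factor that \cref{eq:LD-conditional} shows is at most $e^{-c\beta h^2/\log^2 h}$. So that estimate works against you; \cref{clm:pi(h|h)-lower} is the relevant lower bound, and the tail only controls $m\gtrsim h/\log h$. In the far case you can absorb $m\le Mh/\log h$ by enlarging $r_1$. In the near case, however, the harmonic interpolation at $y$ stays below $h$ only if the domino values are at most $h+\epsilon\frac{h}{\log h}$, with $\epsilon$ proportional to the escape probability from $y$. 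You therefore need a Peierls bound inside the conditioned measure. Lower the nested components of $\{\phi\ge h+j\}$, $j=1,\dots,m$, that meet the domino; this preserves $\{\phi_o,\phi_{o'}\ge h\}$ and shows an overshoot of $m$ costs $e^{-(\beta-C)m}$.

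\emph{Local comparison.} A gradient of order $h/\log h$ along some path is not atypical, since every configuration must descend from $h$ to $h-2C\frac{h}{\log h}$ within distance $r_1$. Also, a map changing only $\phi_y$ and one path does not control the energy with off-path neighbours. The clean version works on all of $K$:
\begin{itemize}
\item Let $\psi$ be the real harmonic extension on $K$ of the integer boundary data. The cross term vanishes exactly as in the first step of the proof of \cref{prop:pi-y-h-phi^*-rigid}, so the conditional law on $K$ is proportional to $\exp(-\beta\sum_e|\nabla(\phi-\psi)_e|^2)$.
\item Rounding $\psi$ costs only $O(|K|)$ in the denominator.
\item In the numerator, $\sum_e|\nabla(\phi-\psi)_e|^2\ge(\phi_y-\psi_y)^2/G_K(y,y)$.
\item $h-\psi_y\ge c(r_1)\frac{h}{\log h}$, because the walk from $y\ne o,o'$ leaves $K$ through the outer boundary before hitting the domino with probability bounded below.
\end{itemize}
This yields $e^{-c\beta(h/\log h)^2+O_{r_1}(\beta)}$. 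Your alternative maximum-principle argument on the neighbours' mean is circular, since those neighbours are random; drop it.
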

\begin{proof}
Put $o'=(-1,0)$.
Let $C>0$ be the absolute constant from \cref{prop:pi-y-h-phi^*-rigid} and $\phi^*$ be the harmonic function defined there. We first tweak the conditioning to the one in said proposition:
\begin{align*}
\hatpi_\infty(\phi_y\geq h\mid\phi_o\geq h,\, \phi_{o'}\geq h) 
 &\leq \hatpi_\infty(\phi_y\geq h\mid \phi_x \geq h,\forall x\in B_1(o)) \\
 &\leq (1+\epsilon_\beta)\hatpi_\infty(\phi_y\geq h - 1\mid \phi_x = h,\forall x\in B_1(o))\,,
\end{align*} 
where the first transition is by FKG, and the second one is by a Peierls argument via the map that decreases $\phi_z$ by $1$ for each $z$ in a connected component of $\{v : \phi_v > h\}$ that intersects some $z\in B_1(o)$ (this may also decrease $\phi_y$ by $1$, hence the weaker inequality $\phi_y \geq h-1$).
The asymptotic behavior of $\phi^*_{x}$ in terms of $|x|$ given by \cref{eq:phi*-expression-1,eq:phi*-expression-2} shows that there some absolute constant $r_1>0$ (behaving as $e^C$ for the above $C>0$) such that $\phi^*_y < h - (C+1)\frac{h}{\log h}$ for every $y$ with $|y|\geq r_1$. Thus, applying \cref{prop:pi-y-h-phi^*-rigid} to the last display shows that
\begin{equation}\label{eq:pi(h|hh)-beyond-r1}
\sup_{y:\,|y| \geq r_1}  \hatpi_\infty\Big(\phi_y \geq h - \frac{h}{\log h} \;\Big|\; \phi_o\geq h,\, \phi_{o'}\geq h\Big) \leq \exp\Big(-\frac15(\beta-C) \frac{h}{\log h}\Big)\,.   
\end{equation}
Next, fix $y\neq o,o'$ with $|y|< r_1$. We aim to show that 
\begin{equation}\label{eq:pi(h|hh)-O(r1)}\hatpi^0_\infty(\phi_y \geq h\mid \phi_o, \phi_{o'} \geq h) \leq \exp\Big(-\frac{1}{10\lceil r_1\rceil}(\beta-C)\frac{h}{\log h}\Big)\,.\end{equation} 
In the special case where $y\sim o$ (or $y\sim o'$, symmetrically) this is simple, and will demonstrate the basic principle of the argument. Recalling $o'=(-1,0)$, suppose $y=(1,0)$. Letting $y_k = (k,0)$ for $k\in \Z$ (so that $o',o,y$ are $y_{-1},y_0,y_1$, respectively), and setting $K = \lceil r_1\rceil$, one has
\begin{align} \hatpi_\infty(\phi_{y_K} \geq h \mid \phi_o \geq h,\,\phi_{o'}\geq h) &\geq 
\hatpi_\infty\bigg(\bigcap_{k=1}^{K}\phi_{y_k} \geq h \;\Big|\; \phi_o\geq h,\,\phi_{o'}\geq h\bigg) \nonumber\\
&\geq 
\prod_{k=1}^{K}\hatpi_\infty\left(\phi_{y_k} \geq h \mid \phi_{y_{k-1}}\geq h,\,\phi_{y_{k-2}}\geq h\right) \nonumber\\ & = \hatpi_\infty\left(\phi_y \geq h\mid\phi_o\geq h,\,\phi_{o'}\geq h\right)^{K}\,,\label{eq:pi(yk)-iteration}
\end{align}
where the second inequality is by FKG (replacing the conditioning on $\bigcap_{i=-1}^{k-1}\{\phi_{y_{i}}\geq h\}$ by the one on $\{\phi_{y_{k-1}}\geq h\}\cap\{\phi_{y_{k-2}}\geq h\}$). As $|y_K|\geq r_1$, it is covered by \cref{eq:pi(h|hh)-beyond-r1}, whose upper bound thus applies to the left-hand of \cref{eq:pi(yk)-iteration}, thereby yielding \cref{eq:pi(h|hh)-O(r1)} (moreover, with the slightly better constant $1/(5\lceil r_1\rceil)$ in the exponent).
The case $y=(0,1)$ is very similar: letting $y_k = (\lfloor k/2 \rfloor  , \lceil k/2 \rceil )$ (so again $o',o,y$ are $y_{-1},y_0,y_1$, respectively), and (say) $K = 2\lceil r_1 \rceil$ the above display holds unchanged, and as $|y_K|\geq r_1$ (with room to spare), we again get \cref{eq:pi(h|hh)-O(r1)}. 

It remains to show \cref{eq:pi(h|hh)-O(r1)} for $|y|<r_1$ that is not incident to $o,o'$. The strategy would be to first show that, for any (arbitrarily chosen) neighbor $y'\sim y$, 
\begin{equation}\label{eq:pi(yy'|hh)}\hatpi_\infty\Big(\phi_y \wedge \phi_{y'}\geq 
h - \frac{h}{\log h}\;\Big|\; \phi_o\wedge\phi_{o'} \geq h\Big) \leq \exp\Big(-\frac{1}{5\lceil r_1\rceil}(\beta-C)\frac{h}{\log h}\Big)\,,\end{equation} 
and thereafter use this for the sought bound on $y$. Towards establishing \cref{eq:pi(yy'|hh)}, we will set aside the fact that $y$ was chosen ahead of $y'$, and treat these as two interchangeable adjacent sites.

If $yy'$ is a horizontal edge---denoting $y=(a,b)$ and $y'=(a-1,b)$---then we let 
\[ y_k:=(k a,k b)\,,\qquad y'_k:=y_k-(1,0)\,.\] If it is a vertical edge---denoting $y=(a,b)$ and $y'=(a,b-1)$, and here we assume w.l.o.g.\ that if $a>0$ then $b\geq 0$ and if $a<0$ then $b\leq 0$ (otherwise, reflect $\phi$ about the $x$-axis)---then we let 
\[ y_k:= \begin{cases}(\frac{k}2(a+b),\frac{k}2(b-a))& \mbox{$k$ is even}\\
(\frac{k-1}2(a+b)+a,\frac{k-1}2(b-a)+b)& \mbox{$k$ is odd}\\
\end{cases}\,,\qquad
y'_k := \begin{cases}
    y_k - (1,0) & \mbox{$k$ is even}\\
    y_k - (0,1) & \mbox{$k$ is odd}
\end{cases}
\,.
\]
By $\Z^2$ symmetry (translation, $\frac\pi2$-rotation), in both cases the law of $\phi\restriction_{\{y_k,y'_k\}}$ given $\phi\restriction_{\{y_{k-1},y'_{k-1}\}}$ is equal to that of $\phi\restriction_{\{y,y'\}}$ given $\phi\restriction_{\{o,o'\}}$. Moreover, $|y_k| \geq k$ (in the second case, $|a+b|\geq 2$ by the choice of $\operatorname{sign}(b)$ and since $y\not\sim o,o'$). In particular, setting $K= \lceil r_1\rceil$ we guarantee that $|y_K|\geq r_1$. 

Revisiting \cref{eq:pi(yk)-iteration}, we replace the events $\{\phi_{y_k}\geq h\}$ there by $\{\phi_{y_k}\wedge\phi_{y'_k} \geq h-\frac{k h}{K\log h}\}$, to find that by the exact same reasoning,
\begin{align*}
\hatpi_\infty\Big( \phi_{y_K}\wedge\phi_{y'_K} \geq h-\frac{h}{\log h}& \;\Big|\; \phi_{o}\wedge\phi_{o'} \geq h \Big) \\ &\geq \prod_{k=1}^K \hatpi_\infty\Big(
\phi_{y_k}\wedge\phi_{y'_k} \geq h-\frac{k h}{K\log h}\;\Big|\; \phi_{y_{k-1}}\wedge\phi_{y'_{k-1}} \geq h-\frac{(k-1) h}{K\log h}\Big)\,.
\end{align*}
Notice that for every $k\geq 1$ and $M$, chosen sufficiently large such that $y_{k-1},y_{k-1}',y_k,y_k'\in B_M(o)$, we have by shift invariance and monotonicity that
\begin{align*}
\hatpi^0_{B_M(o)}\Big(\phi_{y_k} \wedge \phi_{y'_k}\geq h - \frac{k h}{K \log h} &\;\Big|\; \phi_{y_{k-1}}\wedge \phi_{y'_{k-1}} \geq h-\frac{(k-1)h}{K\log h}\Big) \\
&= \hatpi^{\frac{(k-1)h}{K \log h}}_{B_M(o)}\Big(\phi_{y_k}\wedge \phi_{y'_k}\geq h - \frac{h}{K\log h} \;\Big|\; \phi_{y_{k-1}}\wedge \phi_{y'_{k-1}} \geq h\Big) \\ 
&\geq \hatpi^{0}_{B_M(o)}\Big(\phi_{y_k}\wedge \phi_{y'_k}\geq h - \frac{h}{K\log h} \;\Big|\; \phi_{y_{k-1}}\wedge \phi_{y'_{k-1}} \geq h\Big)\,.
\end{align*}
Taking $M\to\infty$ yields this inequality under $\hatpi_\infty$, and combined with the previous display we get
\begin{align*}
\hatpi_\infty\Big( \phi_{y_K}\wedge\phi_{y'_K} \geq h-\frac{h}{\log h} \;\Big|\; \phi_{o}\wedge\phi_{o'} \geq h \Big) \geq \hatpi_\infty\Big(\phi_y \wedge \phi_{y'} \geq h-\frac{h}{\log h} \;\Big|\; \phi_o\wedge\phi_{o'}\geq h\Big)^K\,.
\end{align*}
The left-hand is at most $\hatpi_\infty\big(\phi_{y_K} \geq h-\frac{h}{\log h}\mid \phi_o \wedge \phi_{o'}\geq h\big)$, which we may bound from above via \cref{eq:pi(h|hh)-beyond-r1} since $|y_K|\geq r_1$, thus establishing \cref{eq:pi(yy'|hh)}.

To derive the required bound on $\phi_y$, first note that one can infer from \cref{eq:pi(yy'|hh)} that either $y$ satisfies \cref{eq:pi(h|hh)-O(r1)} or $y'$ does, after using FKG to bound the arithmetic mean of these probabilities.
In case $y$ satisfies \cref{eq:pi(h|hh)-O(r1)} for any choice of $y'$ as one of its neighbors, we are done. Otherwise, every $y'\sim y$ has $\hatpi_\infty(\phi_{y'}\geq h-\frac{h}{\log h}\mid \phi_o\wedge\phi_{o'}\geq h) \leq \exp(-\frac1{10\lceil r_1\rceil}(\beta-C)\frac{h}{\log h})$, whence
\begin{align*}
\hatpi_\infty(\phi_y\geq h &\mid \phi_o\wedge\phi_{o'}\geq h) \\ &\leq
\hatpi_\infty\Big(\max_{y'\sim y}\phi_{y'}\geq h-\frac{h}{\log h}\;\Big|\; \phi_o\wedge\phi_{o'}\geq h\Big) + 
\hatpi_\infty\Big(\phi_y \geq h \;\Big|\; \max_{y'\sim y}\phi_{y'}\leq h-\frac{h}{\log h}\Big) \\
&\leq 4 e^{-\frac1{10\lceil r_1\rceil}(\beta-C)\frac{h}{\log h}} + e^{-4\beta (\frac{h}{\log h})^2}
\end{align*}
(using for the last line a Peierls map that decreases $\phi_y$ by $\lceil\frac{h}{\log h}\rceil$), giving the required \cref{eq:pi(h|hh)-O(r1)}.
\end{proof}

As a consequence of \cref{clm:pi(h|h_h)-upper}, we can now infer an analogue of \cref{clm:pi(h|h)-lower} for $\bP_{\ell,h}$, which will be the key to improving the upper bound in \cref{lem:xi-bar-lower-weak-upper}.

\begin{corollary}
\label{cor:pi(Fc|_h_h)-lower}
Let $h = H+1-n$ for $n$ fixed, and $\ell \leq \sqrt{L} \exp(-\frac12\frac{\log L}{\log\log L})$. If $o'$ is a neighbor of the origin $o$, then there exists an absolute constant $c>0$ such that
\[ \bP_{\ell,h}\left(\phi_{o'} = -h\mid \phi_o = -h\right) \geq e^{-c \frac{\log L}{{\log\log L}}}\,.\]
\end{corollary}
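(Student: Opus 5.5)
Write $\sF=\sF_{\ell,h}$. We reduce the conditional probability under $\bP_{\ell,h}$ to unconditioned quantities under $\hatpi_\infty$, then invoke \cref{clm:pi(h|h)-lower}. Since $\{\phi_o=-h\}\subset\{\phi_o\geq -h\}$ is compatible with $\sF$,
\[
\bP_{\ell,h}(\phi_{o'}=-h\mid\phi_o=-h)=\frac{\hatpi_\infty(\phi_o=\phi_{o'}=-h,\ \sF)}{\hatpi_\infty(\phi_o=-h,\ \sF)}.
\]
We bound the denominator above by $\hatpi_\infty(\phi_o=-h)$. For the numerator we aim for the lower bound
\[
\hatpi_\infty(\phi_o=\phi_{o'}=-h)\,\bigl(1-\hatpi_\infty(\sF^c\mid \phi_o=\phi_{o'}=-h)\bigr).
\]
Together these reduce the claim to two estimates:
\[
\text{(a)}\quad \hatpi_\infty(\phi_{o'}=-h\mid\phi_o=-h)\geq e^{-c\frac{\log L}{\log\log L}},
\qquad
\text{(b)}\quad \hatpi_\infty(\sF^c\mid\phi_o=\phi_{o'}=-h)\leq \tfrac12 .
\]

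For (a), use the symmetry $\phi\mapsto-\phi$ to turn it into a statement about $\phi_o=\phi_{o'}=h$. \cref{clm:pi(h|h)-lower} gives $\hatpi_\infty(\phi_{o'}\geq h\mid \phi_o\geq h)\geq e^{-c\beta h^2/\log^2h}$. We pass from ``$\geq h$'' to ``$=h$'' using $\hatpi_\infty(\phi_o=h)=(1-o(1))\hatpi_\infty(\phi_o\geq h)$. We also need that $\hatpi_\infty(\phi_{o'}>h\mid \phi_o\geq h,\phi_{o'}\geq h)$ is bounded away from $1$. This follows from a Peierls map lowering by one the connected component of $\{\phi>h\}$ through $o'$, exactly as in the proof of \cref{clm:pi(h|h_h)-upper}, at the cost of a $(1+\epsilon_\beta)$ factor. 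Finally, with $h=H+1-n$ we have $\beta h^2/\log^2 h\asymp \log L/\log\log L$.

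For (b), take a union bound over $x\in Q_\ell$, splitting into near and far sites. For $x$ with $\dist(x,o)>\log L$, the decorrelation estimate \cref{eq:couple-pi-to-inf-vol} gives $\hatpi_\infty(\phi_x<-h\mid\cdot)\leq \hatpi_\infty(\phi_o<-h)+L^{-10}$. Summed over $\ell^2\leq L e^{-\frac{\log L}{\log\log L}}$ sites, this is $o(1)$, just as in the proof of \cref{eq:Fc-given-h-upper-bound}. For the $O(\log^2L)$ sites $x\neq o,o'$ within distance $\log L$, apply \cref{clm:pi(h|h_h)-upper} (after negation) to get $\hatpi_\infty(\phi_x\leq -h\mid \phi_o\leq -h,\phi_{o'}\leq -h)\leq e^{-c\beta h/\log h}$. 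First convert the conditioning from ``$\leq -h$'' to ``$=-h$'' via FKG: conditioning on $\phi_o=\phi_{o'}=-h$ is stochastically higher than conditioning on $\phi_o,\phi_{o'}\leq -h$, so this only decreases the probability. Since $e^{-c\beta h/\log h}=e^{-c'\sqrt{\beta\log L/\log\log L}}$, which beats $\log^2 L$, the near sites also contribute $o(1)$. The sites $o,o'$ themselves satisfy $\phi\geq -h$ on this event.

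The main obstacle is (b) near the origin. \cref{clm:pi(h|h)-lower}'s tight bound $e^{-c\beta h^2/\log^2h}$ for a single neighbor is far too weak to survive the $\log^2L$ union bound. The improved rate $e^{-c\beta h/\log h}$ from \cref{clm:pi(h|h_h)-upper}, which requires conditioning on two adjacent sites, is exactly what makes the near-site sum negligible. We must also check that the monotone direction of FKG used to change the conditioning is correct.
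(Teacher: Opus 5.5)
Your proposal is correct and follows the paper's proof essentially step for step. It bounds the denominator by $\hatpi_\infty(\phi_o=-h)$, factors the numerator into $\hatpi_\infty(\phi_{o'}=-h\mid\phi_o=-h)$ (controlled by \cref{clm:pi(h|h)-lower}) times $\hatpi_\infty(\sF_{\ell,h}\mid\phi_o=\phi_{o'}=-h)$, and handles the latter by the same near/far union bound with \cref{clm:pi(h|h_h)-upper} for the $O(\log^2 L)$ nearby sites. The differences are minor: in (b) you change the conditioning by FKG monotonicity (in the correct direction), while the paper uses the Peierls comparison $\hatpi_\infty(\phi_o=\phi_{o'}=h)\geq(1-o(1))\hatpi_\infty(\phi_o\geq h,\phi_{o'}\geq h)$, which lowers the component of $\{\phi>h\}$ meeting $\{o,o'\}$. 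That same comparison is what closes your step (a), because lowering only the component through $o'$ does not also force $\phi_o=h$ in the numerator.
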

\begin{proof}
    Recalling the definition $\bP_{\ell,h} = \hatpi_\infty(\cdot\mid \sF_{\ell,h})$, we have
 \begin{align*}
\bP_{\ell,h}(\phi_{o'} = -h \mid \phi_o = -h)
&\geq 	\hatpi_\infty\left(\phi_{o'}=-h,\, \sF_{\ell,h} \mid \phi_o=-h\right)
  \\ &=	\hatpi_\infty\left(\sF_{\ell,h} \mid \phi_{o'} = \phi_o = -h\right) \hatpi_\infty\left(\phi_{o'}=-h \mid \phi_o = -h\right)\,.
 \end{align*}
 Since $\hatpi_\infty(\phi_{o'} = -h \mid \phi_o = -h)\geq 
 e^{-c\beta \frac{h^2}{\log^2 h}}$ by \cref{clm:pi(h|h)-lower}, which for the value of $h$ here is $e^{-c \frac{\log L}{\log\log L}}$,  it will suffice to show that
\begin{equation}\label{eq:F-whp} \hatpi_\infty\left(\sF_{\ell,h} \mid \phi_o=\phi_{o'}=-h\right) = 1-o(1)\,.\end{equation}
To this end, recall that in the proof of \cref{lem:xi-bar-lower-weak-upper}, we bounded $\hatpi_\infty(\sF_{\ell,h}^c\mid\phi_o=-h)$ by splitting the treatment of $x\in Q_\ell$ into those at distance larger than $\log L$ from the origin and those within said distance, leading to \cref{eq:Fc-given-h-upper-bound}. The same reasoning applies here, showing 
\[ \hatpi_\infty(\sF_{\ell,h}^c\mid \phi_o=\phi_{o'}=-h) \leq e^{-(1-o(1))\frac{\log L}{\log\log L}} + O(\log^2 L) \max_{y\in B_{\log L}(o)} \hatpi_\infty(\phi_y < -h \mid \phi_o=\phi_{o'}=-h) \,.\]
(Note that here we used the assumption on $\ell$ to control  $y\in Q_\ell$ at distance larger than $\log L$ from the origin via a union bound over all $|Q_\ell|\leq L\exp(-\frac{\log L}{\log\log L})$ such sites). 

    It remains to bound $\hatpi_\infty(\phi_y>h\mid\phi_o=\phi_{o'}=h)$. To this end, 
can use a Peierls argument via the map that decreases by $1$ the heights of all sites in the connected component of $\{x:\phi_x>h\}$ intersecting $\{o,o'\}$, to find that $\hatpi_\infty(\phi_o = \phi_{o'} = h) \geq (1-o(1)) \hatpi_\infty(\phi_o \geq h,\, \phi_{o'} \geq h)$. Hence,
\begin{align*}
 \hatpi_\infty(\phi_y>h\mid\phi_o=\phi_{o'}=h) &\leq (1+o(1))
 \hatpi_\infty(\phi_y > h\mid\phi_o\geq h,\, \phi_{o'}\geq h) \\
&\leq  e^{-c \beta \frac{h}{\log h}}\,, 
\end{align*}
 where the last inequality is by \cref{clm:pi(h|hh)-upper-p}.
For $h$ as in our hypothesis, this at most $\exp(-c\sqrt{\beta \frac{\log L}{\log\log L}})$, which outweighs the $O(\log^2 L)$ factor, yielding \cref{eq:F-whp} and thus completing the proof.
\end{proof}

Equipped with the above result, we can now establish the desired upper bound on $\bar\xi_{\ell,h}$ via the following analog of \cref{lem:xi-refined-upper}.

\begin{lemma}\label{lem:xi-bar-upper}
There exist absolute constants $\beta_0,c_1>0$ such that the following holds for all $\beta>\beta_0$. For $h=H+1-n$ with fixed $n\geq 0$ and $\ell$ with $\ell \geq L^{\delta} $ for fixed $\delta>0$, 
\begin{equation*} \frac{\bar\xi_{\ell,h}}{\hatpi_\infty(\phi_o = -h)} \leq 1 - e^{-c_1 \frac{\log L}{\log\log L}}\,.\end{equation*}
\end{lemma}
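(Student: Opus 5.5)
The plan mirrors the proof of \cref{lem:xi-refined-upper}, with $\bP_{\ell,h}$ in place of $\hatpi_\infty$ and \cref{cor:pi(Fc|_h_h)-lower} in place of \cref{clm:pi(h|h)-lower}. Recall that $\bP_{\ell,h}$ is a \ZGFF with a floor at $-h$ on $Q_\ell$, so it enjoys FKG. The event $\{\phi_x>-h,\,\forall x\in Q_\ell\}$ is increasing. Tile $Q_\ell$ into at most $\lceil \ell^2/2\rceil$ dominoes $\{x,x'\}$, i.e.\ $1\times2$ or $2\times1$ boxes, with possibly one leftover singleton. Applying FKG then gives
\[
\bP_{\ell,h}(\phi_x>-h,\,\forall x\in Q_\ell)\geq \prod_{\{x,x'\}}\bP_{\ell,h}(\phi_x>-h,\,\phi_{x'}>-h)\,.
\]
Under $\bP_{\ell,h}$ the events $\{\phi_x>-h\}^c$ and $\{\phi_x=-h\}$ coincide. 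Hence each factor equals
\[
1-\bP_{\ell,h}(\phi_x=-h)-\bP_{\ell,h}(\phi_{x'}=-h)+\bP_{\ell,h}(\phi_x=\phi_{x'}=-h)\,.
\]

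For the one-point probabilities we need an upper bound. Every $x\in Q_\ell$ satisfies
\[
\bP_{\ell,h}(\phi_x=-h)\leq \frac{\hatpi_\infty(\phi_x=-h)}{\hatpi_\infty(\sF_{\ell,h})}\leq \big(1+O(\ell^2L^{-1+o(1)})\big)\,\hatpi_\infty(\phi_o=-h)\,.
\]
This holds because $\hatpi_\infty(\sF_{\ell,h})\geq e^{-\xi_{1,h}\ell^2}$ by \cref{obs:xi-monotone}, and $\ell^2 L^{-1+o(1)}=o(1)$ for $\ell\leq\sqrt L e^{-\frac12\frac{\log L}{\log\log L}}$.

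For the pair term we need a lower bound, written as
\[
\bP_{\ell,h}(\phi_x=\phi_{x'}=-h)=\bP_{\ell,h}(\phi_x=-h)\,\bP_{\ell,h}(\phi_{x'}=-h\mid\phi_x=-h)\,.
\]
\cref{eq:P-lower-bound} bounds the first factor from below by $(1-o(1))\hatpi_\infty(\phi_o=-h)$. \cref{cor:pi(Fc|_h_h)-lower} bounds the second factor from below by $e^{-c\frac{\log L}{\log\log L}}$.

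One point needs care. \cref{eq:P-lower-bound} and \cref{cor:pi(Fc|_h_h)-lower} are stated at the origin of $Q_\ell$, whereas here the domino sits at a general location in $Q_\ell$. Their proofs only use union bounds over $Q_\ell$ together with translation invariance of $\hatpi_\infty$. They therefore apply verbatim at every $x\in Q_\ell$ with a neighbor $x'\in Q_\ell$, which covers every domino.

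Write $t=\hatpi_\infty(\phi_o=-h)=L^{-1+o(1)}$. Combining the bounds above, each domino factor is at least $1-2t\,(1-\tfrac12 e^{-c\frac{\log L}{\log\log L}}+o(\text{that}))$. Using $1-s\geq e^{-s-s^2}$, we get
\[
\bar\xi_{\ell,h}\leq (1+\ell^{-2})\,t\,\big(1-\tfrac13e^{-c\frac{\log L}{\log\log L}}\big)\,.
\]
Since $\ell\geq L^\delta$, this gives the claimed bound whenever $\ell\leq\sqrt L e^{-\frac12\frac{\log L}{\log\log L}}$.

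For larger $\ell$, \cref{obs:xi-bar-monotone} is not enough, since it only relates $\bar\xi_{km,h}$ to $\bar\xi_{m,h}$. Instead, I would transfer the bound via \cref{prop:key-area-estimate}, exactly as in the proof of \cref{eq:upxi-bounds}. That proposition compares $\xi_{\ell,h-1}$ and $\xi_{\ell,h}$ with their values at a scale $\ell_0\asymp L^{1/4}$ (or any $\ell_0\mid\ell$ with the divisibility trick). The errors are $O(e^{\sqrt{\log L}}/(\ell_0 L))$ in \cref{eq:xi*-bound-refined}, which is $L^{-1-\Omega(1)}$ and hence negligible against $t\,e^{-c\frac{\log L}{\log\log L}}$. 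Taking the difference $\bar\xi_{\ell,h}=\xi_{\ell,h-1}-\xi_{\ell,h}$ then passes the bound from $\ell_0$ to $\ell$.

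The main obstacle is the $o(1)$ bookkeeping. The relative error in the upper bound on the one-point probability must be smaller than the gain $e^{-c\frac{\log L}{\log\log L}}$ from the pair term. The bound $\ell^2L^{-1+o(1)}\leq e^{-(1-o(1))\frac{\log L}{\log\log L}}$ works only if $c$ is small. I would therefore first fix $\ell_0=L^{1/4}$ in the domino step, where that error is polynomially small. I would then transfer to the target $\ell$ by the comparison above, with $c_1$ chosen as the domino constant $c$ plus a margin.
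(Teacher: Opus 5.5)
Your proof is correct and is essentially the paper's: FKG over a domino tiling of $Q_\ell$, with \cref{cor:pi(Fc|_h_h)-lower} supplying the gain from the pair term. Your observation that this corollary and \cref{eq:P-lower-bound} hold at every domino, not just at the origin, is a point the paper glosses over.

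The one real difference is the one-point bound. The paper uses FKG to get $\bP_{\ell,h}(\phi_x=-h)\le\hatpi_\infty(\phi_o\le -h\mid\phi_o\ge -h)\le(1+L^{-1+o(1)})\hatpi_\infty(\phi_o=-h)$ uniformly in $\ell$. It also writes the pair term as $\bP_{\ell,h}(\phi_x=-h)\,\bP_{\ell,h}(\phi_{x'}=-h\mid\phi_x=-h)$, so no lower bound on the one-point probability is needed. Together these make your detour through $\ell_0=L^{1/4}$ and \cref{prop:key-area-estimate} unnecessary in the range $\ell\le\sqrt L e^{-\frac12\frac{\log L}{\log\log L}}$ where the corollary applies. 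Neither argument reaches arbitrarily large $\ell$, but only $\ell=\lfloor L^{1/4}\rfloor$ is used later.
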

\begin{proof}
The proof of \cref{lem:xi-refined-upper} extends verbatim to our setting of $\bP_{\ell,h}$: by FKG,
\begin{align*}
\bP_{\ell,h}(\phi_x> -h,\,\forall x\in Q_\ell) &\geq  \Big(1-2\bP_{\ell,h}(\phi_o=-h) + \bP_{\ell,h}(\phi_o=\phi_{o'}=-h) \Big)^{\lceil\ell/2\rceil}\,,
\end{align*} 
and following the same argument in that proof, after replacing in it $\hatpi_\infty(\phi_o<-h)$ by $\bP_{\ell,h}(\phi_o=-h)$ and the application of \cref{clm:pi(h|h)-lower} by that of \cref{cor:pi(Fc|_h_h)-lower}, we find that there exists some absolute constant $c>0$ (given by said corollary) such that
\begin{align*}
\frac{\bar\xi_{\ell,h}}{\bP_{\ell,h}(\phi_o=-h)} &\leq  1-e^{-c \frac{\log L}{\log\log L}}\,.
\end{align*}
Now, $\bP_{\ell,h}(\phi_o=-h) = \bP_{\ell,h}(\phi_o\leq -h)$, which, by FKG and our assumption on $h$, satisfies
\[\bP_{\ell, h}(\phi_o \leq -h) \leq \hatpi_\infty(\phi_o \leq -h \mid \phi_o \geq -h) \leq \frac{\hatpi_\infty(\phi_o = -h)}{1 - \hatpi_\infty(\phi_o < -h)} \leq (1+L^{-1+o(1)})\hatpi_\infty(\phi_o = -h)\,.\] 
Combining the last two displays concludes the proof. 
\end{proof}

\begin{proof}[Proof of \cref{prop:uprho-bound}, \cref{eq:uprho-bounds}]
As mentioned above, we already established the lower bound as part of \cref{eq:uprho-bounds-weak}. For the upper bound, set $\ell_0 = \lfloor L^{1/4} \rfloor$ as in the proof of \cref{prop:uprho-bound}. The sought bound now follows from substituting the bound on $\bar\xi_{\ell_0,h}$ from \cref{lem:xi-bar-upper} in \cref{eq:rho_n_via_xibar_ell0}.
\end{proof}

\section{Refined estimates on the law of the disagreement polymer}\label{sec:refined-CE}
This section is devoted to establishing several refinements of \cref{prop:CE1_old}. As mentioned in the beginning of \cref{sec:xi-bbq}, these will involve the rate $\uprho_n$ from \cref{eq:rho-n-def}.

The following result establishes the asymptotic law of the disagreement polymer in a box of area $L^{4/3}e^{O(\sqrt{\log L})}$ as opposed to $L (\log L)^{O(1)}$. Throughout the rest of this paper, the letter $\cG$ will be used to denote the conditions on $\gamma$ needed to apply the relevant polymer law proposition, which will change depending on the context.
\begin{proposition}[Law of a mesoscopic disagreement polymer]\label{prop:CE-meso}
    Fix $n\geq 0$ and $g\geq 0$. There exists $\beta_0$ such that the following holds for all $\beta\geq \beta_0$. Let $V\subset \Z^2$ be a connected domain with $g$ holes, and consider the \ZGFF model $\pi_{V;F}^{\eta}$ with a floor at $0$ imposed only on a subset $F\subset V$, and boundary conditions $\eta$ that are $H+1-n$ on a $*$-connected path in $\partialvtx V$ and $H-n$ elsewhere so that they induce a unique disagreement polymer $(\gamma,\{D_i\},\{h_i\})$ in $V\cup \partialvtx V$ that contains boundary disagreements. Further suppose $|\partialvtx F|\vee |\partialvtx V| \leq  O(L^{2/3}e^{\sqrt{\log L}})$, denote by $D_0$ and $D_1$ the regions of $\gamma$ containing the boundary vertices of $V$ at heights $H+1-n$ and $H-n$, respectively, and let
		\begin{align*} \cG&:=\left\{|\gamma|\leq L^{2/3}e^{\sqrt{\log L}}\right\}\,,\\
        \cG^\dagger &:= \cG \cap \left\{\max_{i\geq 2} |\partial D_i| < \log L \right\}\,.\end{align*}
        Then 
        \begin{align}\label{eq:CE-with-area-cond-1}
        \pi_{V;F}^\eta( \cG^\dagger \mid \cG) &= 1-o(1)\,,\\
        \label{eq:CE-with-area-cond-2}
			\pi^\eta_{V;F}(\gamma \mid \cG^\dagger) &=(1+o(1))\fq_{V;F}^{\eta}(\gamma)\quad\mbox{uniformly over all} \quad \gamma\in \cG^\dagger\,,
		\end{align}
        where $\fq_{V;F}^\eta$ is the probability distribution on $\gamma\in\cG^\dagger$ given~by 
		\begin{equation*}
        \fq_{V;F}^{\eta}(\gamma) := 
		\frac1{Z^\eta_{V;F}} \exp\bigg(-\sE^*_\beta(\gamma) + \frac{\uprho_{n}}{N_n}|D_0\cap F|+ \fI_V(\gamma) \bigg)
		\end{equation*}
        with $N_n$ from \cref{eq:Nn-def}, $\fI_V$ from \cref{eq:IU(gamma)-def}, $\uprho_{n}$ from \cref{eq:rho-n-def}, and a normalizer $Z^\eta_{V;F}$.
\end{proposition}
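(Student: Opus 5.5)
Starting from the exact representation \cref{eq:CE-with-area} of \cref{prop:CE1_old}, the law of $\gamma$ is $\propto \exp(-\sE_\beta(\gamma)+\fI_V(\gamma))\prod_{i\geq 0}\hatpi^{h_i}_{D_i^\circ}(\phi_x\geq 0,\,\forall x\in D_i^\circ\cap F)$. The plan is to evaluate the two dominant factors, $i=0,1$, via \cref{thm:key-area}, and to fold the factors $i\geq 2$ into $\sE^*_\beta$ as in \cref{eq:E*-def}. For the former, shifting heights gives $\hatpi^{H+1-n}_{D_0^\circ}(\phi_x\geq 0\ \forall x\in D_0^\circ\cap F)=\hatpi^0_{D_0^\circ}(\phi_x\geq -(H+1-n)\ \forall x\in D_0^\circ\cap F)$, and similarly for $D_1$ with $H-n$. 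On $\cG$ we have $|\partial D_0|,|\partial D_1|\leq |\partial V|+|\gamma|+O(1)=O(L^{2/3}e^{\sqrt{\log L}})$, and likewise for $F\cap D_i^\circ$. Since $V$ has $g$ holes and $\gamma$ is connected, $F\cap D_i^\circ$ has $O(g)$ holes (after discarding small components, which are handled by FKG/union bounds exactly as for $F_3$ in \cref{prop:key-area-estimate}). The mesoscopic estimate \cref{eq:xi-n-area-estimate-meso} then gives the factors $(1+o(1))\exp(-\upxi_n|D_0^\circ\cap F|)$ and $(1+o(1))\exp(-\upxi_{n+1}|D_1^\circ\cap F|)$.

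Next I write $|D_1^\circ\cap F|=|F|-|D_0\cap F|-|\Delta^*_\gamma\cap F|-\sum_{i\geq 2}|D_i\cap F|+O(|\gamma|)$, and similarly for $D_0^\circ$. The product of the two factors is then, up to a $\gamma$-independent constant $e^{-\upxi_{n+1}|F|}$,
\[
\exp\big((\upxi_{n+1}-\upxi_n)|D_0\cap F| + O(\upxi_{n+1})(|\gamma|+\textstyle\sum_{i\geq2}|D_i|)\big)\,.
\]
By \cref{eq:rho-n-def}, $\upxi_{n+1}-\upxi_n=\uprho_n/N_n$. Moreover, $\upxi_{n+1}=L^{-1+o(1)}$, so on $\cG^\dagger$ the correction is at most $L^{-1+o(1)}\cdot L^{2/3}e^{\sqrt{\log L}}\log^2 L=o(1)$. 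The boundary-layer terms $\Delta^*_\gamma$ are likewise $o(1)$. This yields \cref{eq:CE-with-area-cond-2}, with the $i\geq 2$ factors folded into $\sE^*_\beta$. Note that these $D_i$ factors (with $|\partial D_i|<\log L$) are exactly those appearing in \cref{eq:E*-def}, up to replacing $F\cap D_i^\circ$ by $D_i^\circ$; that discrepancy costs at most $e^{O(|\gamma|\log^2L\cdot L^{-1+o(1)})}=1+o(1)$. The uniformity claim follows because every error term above is bounded uniformly over $\gamma\in\cG^\dagger$.

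For \cref{eq:CE-with-area-cond-1}, I would run a Peierls/cluster-expansion comparison within $\cG$. A polymer with some $|\partial D_i|\geq\log L$ carries an internal region whose boundary costs $e^{-(\beta-C)|\partial D_i|}$ in $\sE_\beta$. Removing it, i.e. flattening $D_i$ to the surrounding height, can gain at most the area reward of $D_i$, which is $L^{-1+o(1)}|D_i|\leq L^{-1/3+o(1)}|\partial D_i|$ by isoperimetry within $\cG$. Summing over the choices of $D_i$ with entropy $e^{C|\partial D_i|}$ then gives $\pi(\cG\setminus\cG^\dagger\mid\cG)\leq L^{-c\beta}$. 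The ratios of floor probabilities altered by the flattening are again controlled by \cref{thm:key-area} and by \cref{lem:prob-of-contour}.

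The main obstacle is the hole/connectivity bookkeeping needed to apply \cref{thm:key-area} to $F\cap D_0^\circ$ and $F\cap D_1^\circ$. These sets may be disconnected, with a number of components and holes that depends on $\gamma$. I would handle the many tiny components (all within distance $O(\log L)$ of $\gamma$) by crude FKG bounds. Their total size is $O(|\gamma|\log^2 L)$, so they contribute $o(1)$ to the exponent. Only the $O(g)$ macroscopic components then require \cref{eq:xi-n-area-estimate-meso}.
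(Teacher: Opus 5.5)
Your derivation of \cref{eq:CE-with-area-cond-2} matches the paper's. You apply the mesoscopic case of \cref{thm:key-area} to the $i=0,1$ floor factors. You pull out the $\gamma$-independent factor $e^{-\upxi_{n+1}|F|}$, so that $(\upxi_{n+1}-\upxi_n)|D_0\cap F|=\frac{\uprho_n}{N_n}|D_0\cap F|$ appears. You then note that the leftover $\upxi_{n+1}\sum_{i\ge2}|D_i\cap F|$ and the $\Delta^*_\gamma$-layer terms are $L^{-1/3+o(1)}$ on $\cG^\dagger$. The paper does none of your hole/component bookkeeping; it only checks the boundary-length hypotheses on $\cG$. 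Your $O(g)$ hole count also ignores holes of $F$ itself, which is arbitrary here.

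The gap is in \cref{eq:CE-with-area-cond-1}. The map ``flatten $D_i$ to the surrounding height'' is not a valid Peierls map in general, and the gain $(\beta-C)|\partial D_i|$ is not available. A finite component $D_i$ has no single surrounding height.
\begin{itemize}
\item If $D_i$ borders only $D_0$ and $D_1$, then $h_i\notin\{h_0,h_1\}$. Shifting it to the nearer of the two does save at least $\beta$ per boundary bond.
\item But $D_i$ may also border other finite components. If its neighbours sit at heights $h_i+1$ and $h_i-1$ in comparable proportions, then shifting $D_i$ to any one height deletes the bonds on one side. It raises the gradients on the other side from $1$ to $2$, so $\sE_\beta$ increases.
\end{itemize}
You must therefore remove a whole cluster of adjacent finite components at once. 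Such a cluster separates $D_0$ from $D_1$, so its boundary cannot simply be deleted: you must pay for a new separating path.

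This is what the paper's map does, using \cref{def:gamma-components,def:cut-point}. For a bond $b$, the cluster $\cD_b$ is delimited by two cut-points $u,v$ of $\gamma$. The map $T_b$ replaces all of $\cD_b$ by a geodesic from $u$ to $v$ inside $\cD_b$. Its properties are as follows.
\begin{itemize}
\item Since $\partial\cD_b$ contains two $u$--$v$ paths, $|\partial\cD_b|\ge 2\norm{u-v}_1$. Hence $\sE_\beta$ drops by at least $\frac\beta2|\partial\cD_b|$.
\item $\sE^*_\beta$ drops by at least as much, because the deleted terms $-\log\hatpi^{h_i}_{D_i^\circ}(\cdot)$ are nonnegative.
\item $D_0$ can only grow, and $T_b(\gamma)\in\cG$ since it is shorter.
\item The only loss is $\upxi_{n+1}|\cD_b|\le L^{-1/3+o(1)}|\partial\cD_b|$, by isoperimetry on $\cG$, as you said.
\end{itemize}
There are at most $L^2k^2C^k$ preimages with $|\partial\cD_b|=k$, and summing over $k\ge\log L$ gives the factor $e^{-c\beta\log L}$. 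The comparison is made in the intermediate weight, which is $(1+o(1))$-accurate uniformly on all of $\cG$ and keeps $\upxi_{n+1}\sum_{i\ge2}|D_i\cap F|$ explicitly. So no separate control of floor-probability ratios, and no \cref{lem:prob-of-contour}, is needed.
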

\begin{proof}
Let $(\gamma,\{D_i\},\{h_i\})$ be a disagreement polymer. 
We start with the expression for $\pi_{V;F}^\eta(\gamma)$ given in \cref{eq:CE-with-area} from \cref{prop:CE1_old}, which holds without any restrictions on the $D_i$'s or $V,F$, and recall the notation $\sE^*_\beta(\gamma)$ from \cref{eq:E*-def} that absorbed the $\hatpi_{D_i^\circ}^{h_i}$ terms for $i>1$, so that \[\pi^\eta_{V;F}(\gamma ) \propto
		\exp\Big(-\sE^*_\beta(\gamma) + \fI_V(\gamma)\Big) \prod_{i=0,1} \hatpi_{D_i^\circ}^{H+1-n-i}\big(\phi_x \geq 0,\, \forall x \in D_i^{\circ}\cap F\big)
        \,.\]
In view of this representation for $\pi_{V;F}^\eta$, we now further impose $\gamma \in \cG$, and turn our attention to 
\[ \hatpi_{D_i^\circ}^{h_i}\big(\phi_x \geq 0,\, \forall x \in D_i^{\circ}\cap F\big)\qquad\mbox{for $i=0,1$}\,.\]
The event $\cG$ will enable us to appeal to the mesoscopic range of \cref{thm:key-area} for $\hatpi_{D_i^\circ}^0$ ($i=0,1$). 
Let us verify the required hypotheses:
\begin{enumerate}
    \item The condition on the domain: $|\partial D_i^\circ| \leq O(|\gamma| + |\partial V|) =O(L^{2/3}e^{\sqrt{\log L}})$ as per $\cG$.
    \item The condition on the subset where we wish to control the minimum height: again we have that
    $ |\partial (D_i^\circ \cap F)| \leq O(|\gamma|+|\partial V|+|\partial F|) = O(L^{2/3} e^{\sqrt{\log L}})$ as per  $\cG$.
\end{enumerate}
Having qualified for an application of \cref{eq:xi-n-area-estimate-meso} from \cref{thm:key-area}, we deduce that, for $i=0,1$,
		\begin{align*}
			\hatpi_{D_i^\circ}^{H+1-n-i}\big(\phi_x \geq 0,\, \forall x \in D_i^\circ\cap F\big) &=
			\hatpi_{D_i^\circ}^{0}\big(\phi_x \geq (H+1-n-i),\, \forall x \in D_i^\circ\cap F\big) 
			\\ &=(1+ o(1))\exp\big(-\upxi_{n+i}|D_i \cap F| \big)\,,
		\end{align*}
using here that $\upxi_{n+i}=(1+o(1))\hatpi_\infty(\phi_o > H+1-n-i) \leq L^{-1+o(1)}$, and so 
\[ \upxi_{n+i}|(D_i \setminus D_i^\circ)\cap F| \leq L^{-1+o(1)} O(|\gamma|) = L^{-1/3+o(1)}=o(1)\,.\]  
We now move to multiply $\prod_{i\geq 0}\hatpi_{D_i^\circ}^{h_i}(\phi_x\geq0,\,\forall x\in D_i^\circ\cap F)$ by $\exp(\upxi_{n+1}|F|)$, which can be absorbed into the partition function for $\hatpi_{V;F}^\eta$, being independent of $\gamma$. This cancels the term $-\upxi_{n+1} |D_1 \cap F|$ in the exponent, whereas $(\upxi_{n+1}-\upxi_n)|D_0\cap F| = (\uprho_n/N_n) |D_0\cap F|$ by the definition of $\uprho_n$ in \cref{eq:rho-n-def}; thus,
    \[\pi^\eta_{V;F}(\gamma \mid \cG) = (1+o(1))\fp_{V;F}^\eta(\gamma)\]
    uniformly over $\gamma\in \cG$, where $\fp_{V;F}^\eta$ is the probability distribution on $\gamma\in\cG$ given by 
    \begin{equation}\label{eq:p-law-meso}\fp_{V;F}^\eta(\gamma) := \frac{1}{\tilde Z_{V;F}^\eta} 
		\exp\bigg(-\sE^*_\beta(\gamma) + \frac{\uprho_{n}}{N_n}|D_0\cap F|+ \fI_V(\gamma) +\upxi_{n+1}\sum_{i\geq 2}|D_i \cap F|\bigg)\,,\end{equation}
        in which $\tilde Z_{V;F}^\eta$ is a normalizer.
We now move to show \cref{eq:CE-with-area-cond-1} using this intermediary form of the law of $\gamma$, which throughout this proof we denote by $\fp(\gamma)$ in lieu of $\fp_{V;F}^\eta(\gamma)$ for brevity.
Recall \cref{def:gamma-components}, according to which
\[ \cG \setminus \cG^\dagger = \left\{\gamma\in \cG\,:\; \max_{\cD \in \fD(\gamma)}|\partial \cD| > \log L\right\}\,. \]
We will use a Peierls argument to show that, for some constant $c > 0$,
\begin{equation}\label{eq:control-finite-comp}\sum_{\gamma \in \cG \setminus \cG^\dagger} \fp(\gamma) \leq L^{-c \beta} \sum_{\gamma \in \cG} \fp(\gamma)\,.
\end{equation}
There are only $L^2$ locations for a bond $b$, so fix $b$ and consider the map $T_b$ on $\gamma$ defined as follows. If $b \notin\gamma$ or if $\cD_b = \emptyset$, then $T_b$ is the identity map. Otherwise, let $u, v$ be the two cut-points delineating $\cD_b$. Let $T_b(\gamma)$ be the disagreement polymer which replaces $\cD_b$ by a (arbitrarily chosen, say the outermost such) minimal length path from $u$ to $v$ that remains within $\cD_b$. Then, \[\sE_\beta(T_b(\gamma)) \leq \sE_\beta(\gamma) -\tfrac\beta2|\partial \cD_b|\,.\] 
Moreover, we only removed components $D_i$ for $i \geq 2$, and each such component contributes a nonnegative term (namely $-\log \hatpi_{D_i^\circ}^{h_i}(\phi_x\geq 0,\,\forall x\in D_i^\circ\cap F)$) to $\sE^*_\beta$, hence we also have
\[\sE^*_\beta(T_b(\gamma)) \leq \sE^*_\beta(\gamma) -\tfrac\beta2|\partial \cD_b|\,.\] 
We also could only have increased $|D_0|$ in $T_b(\gamma)$. However, $T_b(\gamma)$ did in fact lose the contribution of $\cD_b$ to $\upxi_{n+1} \sum_{i\geq 2} |D_i \cap F|$. By the decay properties of $\Phi$, we therefore have for some $C$ that
\[\fp(\gamma) \leq \fp(T_b(\gamma))e^{-\frac{\beta - C}2|\partial \cD_b|+\upxi_{n+1}|\cD_b|}\,.\]
Note that we necessarily have the crude bound that
$\max_{\cD \in \fD(\gamma)}|\partial \cD| \leq L^{2/3}e^{\sqrt{\log L}}$. By isoperimetry and this crude bound, we have $|\cD_b| \leq |\partial \cD_b|^2/16 \leq |\partial \cD_b|L^{2/3}e^{\sqrt{\log L}}/16$ which after multiplying by $\upxi_{n+1}$ is $o(1)$. Hence, the above display implies that for some $c > 0$, 
\[\fp(\gamma) \leq \fp(T_b(\gamma))e^{-c\beta|\partial \cD_b|}\,.\]

Moreover, the number of preimages $\gamma$ under $T_b$ for a given disagreement polymer $\gamma'$ such that $|\partial \cD_b(\gamma)| = k$ is bounded above by the number of connected components of bonds of size $k$ times the number of possible points $u$. There are $C^k$ choices for the connected component, and $k^2$ choices for the location of the cut-point $u$ at which to attach the component.

Putting the above all together proves \cref{eq:control-finite-comp}:
\begin{align}
    \sum_{\gamma \in \cG \setminus \cG^\dagger} \fp(\gamma) &\leq \sum_{\gamma' \in \cG}\sum_{\substack{k \geq \log L\\b \in \Lambda}}\sum_{\substack{\gamma \in \cG \cap T_b^{-1}(\gamma')\\|\partial \cD_b(\gamma)| = k}}\fp(\gamma)\nonumber\\
& \leq \sum_{\gamma' \in \cG}\sum_{k \geq \log L}L^2k^2C^k\fp(\gamma')e^{-c\beta k}\nonumber\\
&\leq
e^{-c\beta\log L}\sum_{\gamma' \in \cG} \fp(\gamma')\,.\label{eq:map-argument-Di}
\end{align}
This proves \cref{eq:CE-with-area-cond-1}. 

Finally, for all $\gamma \in \cG^\dagger$, the total area of $F \cap (\bigcup_{i \geq 2}D_i)$ is at most $L^{2/3}e^{\sqrt{\log L}}\log L$, hence we find that $\upxi_{n+1}\sum_{i\geq 2}|D_i \cap F| = L^{-1/3+o(1)} = o(1)$.
\end{proof}
\begin{proposition}[Law of a macroscopic disagreement polymer]\label{prop:CE-macro-DobrushinBC}
    Fix $n\geq 0$ and $g\geq 0$. In the setting of \cref{prop:CE-meso}, under the relaxed condition $|\partialvtx F|\vee |\partialvtx V| \leq  O(L e^{\sqrt{\log L}})$ and the modified definition of the event $\cG$ as
    \begin{align*} \cG := \left\{|\gamma| \leq Le^{\sqrt{\log L}}\right\}\cap \bigcap_{i=0,1}\left\{|D_i \cap F| \leq \Big(\frac{3\beta}{\hatpi_\infty(\phi_o > H+1-n-i)}\Big)^2\right\}\,,\end{align*}
the following holds. The probability distribution $\fp_{V;F}^\eta$ on $\gamma\in\cG$ given by 
		\begin{equation*}
        \fp_{V;F}^{\eta}(\gamma) := 
		\frac1{{Z}^\eta_{V;F}} \exp\bigg(-\sE^*_\beta(\gamma) + \frac{\uprho_{n}}{N_n}|D_0 \cap F|+ \fI_V(\gamma) + \upxi_{n+1}\sum_{i\geq 2}|D_i \cap F|\bigg)
		\end{equation*}
		with $N_n$ from \cref{eq:Nn-def}, $\uprho_{n}$ from \cref{eq:rho-n-def}, and a normalizer ${Z}^\eta_{V;F}$, satisfies that
		\begin{align}\label{eq:CE-with-area-cond-new}
			\pi^\eta_{V;F}(\gamma \mid \cG) = \exp\left(O\Big(\sqrt L e^{\frac{\log L}{\log\log L}}\Big)\right)\fp_{V;F}^{\eta}(\gamma)
		\end{align}
        uniformly over all $\gamma\in \cG$.
\end{proposition}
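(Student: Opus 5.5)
We follow the proof of \cref{prop:CE-meso} with the mesoscopic estimate of \cref{thm:key-area} replaced by its macroscopic range, and without discarding the finite components $D_i$, $i\geq 2$. By \cref{prop:CE1_old}, for every $\gamma$ we have
\[
\pi^\eta_{V;F}(\gamma)\propto \exp\big(-\sE^*_\beta(\gamma)+\fI_V(\gamma)\big)\prod_{i=0,1}\hatpi^{H+1-n-i}_{D_i^\circ}\big(\phi_x\geq 0,\,\forall x\in D_i^\circ\cap F\big),
\]
where the factors for $i\geq 2$ are already absorbed into $\sE^*_\beta$. Shifting heights, each of the two remaining factors equals $\hatpi^0_{D_i^\circ}(\phi_x\geq -(H+1-n-i),\,\forall x\in D_i^\circ\cap F)$. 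The whole task is to approximate this factor by $\exp(-\upxi_{n+i}|D_i\cap F|)$, up to a multiplicative error $\exp(O(\sqrt L e^{\log L/\log\log L}))$ that is uniform over $\gamma\in\cG$.

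\textbf{Hypotheses of the macroscopic estimate.} On $\cG$ we have $|\partial D_i^\circ|\le O(|\gamma|+|\partial V|)=O(Le^{\sqrt{\log L}})$, and likewise $|\partial(D_i^\circ\cap F)|=O(Le^{\sqrt{\log L}})$. The set $D_i^\circ\cap F$ need not be connected and may have many holes. To handle this, either apply \cref{prop:key-area-estimate} to each connected piece and multiply (using FKG for the lower bound, and the product structure over the separating grid for the upper bound), or observe that the only place the hole count entered was the bound \cref{eq:F3-bound}. For that bound, the number of boundary components is at most $O(|\partial(D_i^\circ\cap F)|)$, so $(g+1)\ell_0^2$ is replaced by $|\partial F|\ell_0$, which is harmless. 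This step needs checking.

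\textbf{Lower bound.} Apply \cref{eq:xi-n-area-estimate-macro-lower} directly.

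\textbf{Upper bound.} Use the unconditional form of \cref{eq:xi-n-area-estimate-macro-upper}, which requires $|D_i\cap F|\le (3\beta/\hatpi_\infty(\phi_o>H+1-n-i))^2$. This is exactly the second constraint built into $\cG$, and that is the reason it appears there.

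Together these give the factor as $\exp(-\upxi_{n+i}|D_i^\circ\cap F|+O(\sqrt L e^{\log L/\log\log L}))$. Passing from $D_i^\circ$ to $D_i$ costs at most $\upxi_{n+i}\,O(|\gamma|)=L^{-1+o(1)}Le^{\sqrt{\log L}}=e^{O(\sqrt{\log L})}$, which is absorbed into the error.

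\textbf{Assembling the law.} Multiply through by the $\gamma$-independent factor $\exp(\upxi_{n+1}|F|)$, which is absorbed into the normalizer. Since $F$ is the disjoint union of $D_0\cap F$, $D_1\cap F$, $\bigcup_{i\ge2}D_i\cap F$, and the sites of $F$ on $\gamma$, the exponent becomes $(\upxi_{n+1}-\upxi_n)|D_0\cap F|+\upxi_{n+1}\sum_{i\ge 2}|D_i\cap F|$, up to an $O(\upxi|\gamma|)$ correction, which is again negligible. By \cref{eq:rho-n-def}, $(\upxi_{n+1}-\upxi_n)=\uprho_n/N_n$. Conditioning on $\cG$ then gives a ratio of normalizers. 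Since the pointwise ratio between the unnormalized weights is $e^{O(\Xi)}$ uniformly over $\gamma\in\cG$, the ratio of the sums over $\gamma\in\cG$ is also $e^{O(\Xi)}$, and \cref{eq:CE-with-area-cond-new} follows with a doubled error. This completes the plan.

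\textbf{Main obstacle.} The main obstacle is ensuring the upper bound from \cref{thm:key-area} holds unconditionally, that is, without conditioning on $\fS$, and uniformly in the irregular geometry of $D_i^\circ\cap F$. The area cap in $\cG$ resolves the former via \cref{cor:no-macro-contours}. The latter needs the hole-count issue in \cref{eq:F3-bound} to be handled as described above.
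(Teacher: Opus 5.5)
Your proposal is essentially the paper's proof. It uses the same starting formula from \cref{prop:CE1_old} and the macroscopic range of \cref{thm:key-area}, taking the lower bound directly and the unconditional upper bound via the area cap built into $\cG$. It then passes from $D_i^\circ$ to $D_i$ at cost $L^{o(1)}$ and absorbs the factor $e^{\upxi_{n+1}|F|}$ into the normalizer, which produces the tilt $\uprho_n/N_n$ and the $i\geq 2$ term.

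On the connectivity/hole issue you flag (which the paper does not address), your count is off. Bounding the number of boundary paths by $O(|\partial F|)$ turns $(g+1)\ell_0^2$ into $O(|\partial F|\ell_0^2)$, not $O(|\partial F|\ell_0)$, and that would give an $L^{1-o(1)}$ error. The correct observation is that a component of $D_i^\circ\cap F$ with outer boundary $P$ contributes at most $\min(|P|^2,(1+|P|/\ell_0)\ell_0^2)=O(|P|\ell_0)$ to $F_3$, so only genuine holes cost $\ell_0^2$ each. Moreover, $D_i^\circ$ has no holes beyond those of $V$, because $\Delta^*_\gamma$ is connected and reaches $\partial V$.
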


\begin{proof}
 The proof will follow via exactly the same argument that produced \cref{eq:p-law-meso} in the proof of \cref{prop:CE-meso}.
 As argued there, we have
  \[\pi^\eta_{V;F}(\gamma ) \propto
		\exp\Big(-\sE^*_\beta(\gamma) + \fI_V(\gamma)\Big) \prod_{i=0,1} \hatpi_{D_i^\circ}^{H+1-n-i}\big(\phi_x \geq 0,\, \forall x \in D_i^{\circ}\cap F\big)
        \,,\]
and next wish to qualify for an application of \cref{thm:key-area}, albeit this time in the macroscopic range.
Let us indeed verify the required hypotheses for applying said theorem for $\hatpi_{D_i^\circ}^0$ ($i=0,1$).
\begin{enumerate}
    \item The condition on the domain boundary: $|\partial D_i^\circ| \leq O(|\gamma| + |\partial V|) =O(L e^{\sqrt{\log L}})$ as per $\cG$.
    \item The stronger requirement needed to forgo the conditioning on $\fS$: we wish to satisfy
    \begin{equation}\label{eq:Di-F-bound} |D_i^\circ \cap F| \leq \Big(\frac{3\beta}{\hatpi_\infty(\phi_o > H+1-n-i)}\Big)^2\,,\end{equation}
    which is guaranteed (moreover for $D_i\cap F$) as per $\cG$.
\end{enumerate}
Therefore, \cref{eq:xi-n-area-estimate-macro-lower,eq:xi-n-area-estimate-macro-upper} are applicable, where in the latter we have the unconditional version secured by that theorem via the additional requirement \cref{eq:Di-F-bound}, and we infer that for $i=0,1$,
		\begin{align*}
			\hatpi_{D_i^\circ}^{H+1-n-i}\big(\phi_x \geq 0,\, \forall x \in D_i^\circ\cap F\big) &=
			\exp\left(-\upxi_{n+i}|D_i \cap F| + O\big(\sqrt L e^{\frac{\log L}{\log\log L}}\big) \right)\,.
		\end{align*}
Note that, in the above display, we further replaced $\upxi_{n+i}|D_i^\circ \cap F|$ by $\upxi_{n+i}|D_i \cap F|$, absorbing the difference between the two into the $O(L^{1/2+o(1)})$ error term since
\[ \upxi_{n+i}|D_i\setminus D_i^\circ| \leq L^{-1+o(1)} O(|\gamma|) \leq L^{o(1)}\,.\]

The $1+o(1)$ factor that was associated with \eqref{eq:xi-n-area-estimate-meso} from that theorem in the mesoscopic range, in the proof of \cref{prop:CE-meso}, will thereby be replaced by the $\exp(L^{1/2+o(1))}$ from \cref{eq:xi-n-area-estimate-macro-lower,eq:xi-n-area-estimate-macro-upper} in the macroscopic range. Proceeding as in the proof of 
\cref{eq:p-law-meso} by introducing a factor of $\exp(\upxi_{n+1}|F|)$, that is shifted into the partition function, we arrive at \cref{eq:CE-with-area-cond-new}, as required.
\end{proof}

\begin{proposition}[probability of a given disagreement polymer with uniform boundary conditions] \label{prop:CE-macro-uniformBC}
Fix $g\geq 0$, $n \geq 0$ and set $h = H+1-n$. Let $F\subseteq V \subset \Z^2$ where $V$ is connected, contains the origin and has $g$ holes. Suppose  $|\partial F|\vee|\partial V| \leq O(Le^{\sqrt{\log L}})$. Let $\gamma$ be a disagreement polymer where $D_0$ is the region containing the origin with $h_0=h$, and $D_1$ is the region containing the boundary vertices $\partial V$ with $h_1=h-1$. Moreover, assume that $|D_1 \cap F| \leq \big( \frac{3\beta}{\hatpi_\infty(\phi_o > h-1)}\big)^2$  and $|\gamma| \leq Le^{\sqrt{\log L}}$. Then, letting 
\[ \fp(\gamma) = \exp\bigg(-\sE^*_\beta(\gamma) + \frac{\uprho_n}{N_n}|D_0 \cap F| + \fI_{V}(\gamma) + \upxi_{n+1}\sum_{i\geq 2}|D_i \cap F|\bigg)\,, \]
we have
    \begin{align}
        \label{eq:p-gamma-lower-upper}
    \pi_{V;F}^{h-1}(\fS) e^{O(L^{1/2+o(1)})} 
   \leq 
 \frac{\pi^{h-1}_{V; F}(\cC_{\gamma, h})} {\fp(\gamma)} \leq 
 \frac{1}{\pi_{D_0^\circ\cap F}^h(\fS)} e^{O(L^{1/2+o(1)})}\,.
\end{align}
\end{proposition}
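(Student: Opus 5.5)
We argue exactly as in the proofs of \cref{prop:CE-meso,prop:CE-macro-DobrushinBC}, now with uniform boundary conditions $h-1$ on $\partial V$ and with the event $\cC_{\gamma,h}$ in place of the Dobrushin-induced polymer. The plan is to write $\pi^{h-1}_{V;F}(\cC_{\gamma,h})$ as a ratio of partition functions. The numerator sums over configurations compatible with $\gamma$ being a disagreement polymer with the prescribed $(\{D_i\},\{h_i\})$. The denominator $Z^{h-1}_{V;F}$ is the full partition function.

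\textbf{Numerator.} The cluster expansion behind \cref{prop:CE-law-without-area,prop:CE1_old} (which uses only the uniform-height structure on each $D_i$, not the Dobrushin boundary) gives
\[
\pi^{h-1}_{V;F}(\cC_{\gamma,h}) = \frac{\hat Z^{h-1}_V}{Z^{h-1}_{V;F}}\, e^{-\sE_\beta(\gamma)+\fI_V(\gamma)}\prod_{i\ge0}\hatpi^{h_i}_{D_i^\circ}\big(\phi_x\ge0,\,\forall x\in D_i^\circ\cap F\big).
\]
The factors with $i\ge2$ are absorbed into $\sE^*_\beta$. For $i=0,1$ we apply \cref{thm:key-area} in the macroscopic range. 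Its boundary hypotheses hold because $|\partial D_i^\circ|\le O(|\gamma|+|\partial V|)$.

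For $i=1$, the hypothesis $|D_1\cap F|\le(3\beta/\hatpi_\infty(\phi_o>h-1))^2$ lets us use the unconditional upper bound together with the lower bound. This gives $\exp(-\upxi_{n+1}|D_1\cap F|+O(L^{1/2+o(1)}))$.

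For $i=0$ there is no area assumption. The lower bound of \cref{eq:xi-n-area-estimate-macro-lower} still holds. For the upper bound we only have the version conditioned on $\fS$, and we convert it using
\[
\hatpi(A)\le \frac{\hatpi(A\mid\fS)}{\hatpi(\fS\mid A)}, \qquad \hatpi(\fS\mid A)=\pi^h_{D_0^\circ\cap F}(\fS),
\]
where the latter is interpreted as the floor measure on $D_0^\circ$ with floor on $D_0^\circ\cap F$. This step is exactly the source of the factor $1/\pi^h_{D_0^\circ\cap F}(\fS)$ in the upper bound. We replace $D_i^\circ$ by $D_i$ at cost $L^{o(1)}$, as before.

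\textbf{Normalizing.} We multiply and divide by $e^{\upxi_{n+1}|F|}$. Using $\upxi_{n+1}-\upxi_n=\uprho_n/N_n$ and $|F|=|D_0\cap F|+|D_1\cap F|+\sum_{i\ge2}|D_i\cap F|$, the numerator becomes $\fp(\gamma)\,e^{O(L^{1/2+o(1)})}$ times the $\gamma$-independent factor $\hat Z^{h-1}_V e^{-\upxi_{n+1}|F|}$. It remains to show
\[
\pi^{h-1}_{V;F}(\fS)\,e^{O(L^{1/2+o(1)})}\;\le\;\frac{\hat Z^{h-1}_V e^{-\upxi_{n+1}|F|}}{Z^{h-1}_{V;F}}\;\le\; e^{O(L^{1/2+o(1)})}.
\]
Note that $Z^{h-1}_{V;F}/\hat Z^{h-1}_V=\hatpi^{h-1}_V(\phi\ge0 \text{ on } F)$.

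The upper bound on the ratio is the lower bound \cref{eq:xi-n-area-estimate-macro-lower} applied to $F$ with parameter $n+1$. For the lower bound, we write
\[
\hatpi^{h-1}_V(\phi\ge0 \text{ on } F)=\frac{\hatpi(\,\cdot\,\mid\fS)\,\hatpi(\fS)}{\hatpi(\fS\mid\phi\ge0 \text{ on } F)}.
\]
We bound $\hatpi(\,\cdot\,\mid\fS)$ by \cref{eq:xi-n-area-estimate-macro-upper}, bound $\hatpi(\fS)\le1$, and identify $\hatpi(\fS\mid\phi\ge0 \text{ on } F)=\pi^{h-1}_{V;F}(\fS)$. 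This yields the factor $\pi^{h-1}_{V;F}(\fS)$ on the left of \cref{eq:p-gamma-lower-upper}.

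\textbf{Main obstacle.} The main obstacle is this bookkeeping of which floor event can be decoupled from $\fS$ without an area hypothesis. Unlike in \cref{prop:CE-macro-DobrushinBC}, neither $|F|$ nor $|D_0\cap F|$ is assumed small, so \cref{cor:no-macro-contours} is unavailable there. One must therefore keep these two conditional probabilities of $\fS$ explicit, in the correct directions. A secondary check is that the cluster expansion for the uniform-boundary event $\cC_{\gamma,h}$ yields the same decoration term $\fI_V(\gamma)$ as in the Dobrushin case. This should follow from the same derivation, since it is local to $\Delta^*_\gamma$.
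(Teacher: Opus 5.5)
Your proposal is correct and follows essentially the same route as the paper. Both write $\pi^{h-1}_{V;F}(\cC_{\gamma,h})$, via the cluster-expansion ratio of partition functions, as $e^{-\sE^*_\beta(\gamma)+\fI_V(\gamma)}$ times the floor probabilities on $D_0^\circ$ and $D_1^\circ$, divided by $\hatpi^{h-1}_V(\phi\ge0 \text{ on } F)$, and then apply the macroscopic range of \cref{thm:key-area}, using $\P(A)\le\P(A\mid\fS)/\P(\fS\mid A)$ on $D_0^\circ$ for the upper bound and $\hatpi(A\cap\fS)\le\hatpi(A\mid\fS)$ on the denominator for the lower bound, exactly as the paper does.
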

\begin{proof}
By the cluster expansion for disagreement polymers (see~\cite[Eq.~(2.28)]{ChenLubetzky25}), we know that if $w(\phi) := \prod_{\gamma'\in \phi}\exp(-\sE_\beta(\gamma'))$ and $\widehat Z_{V}^h = \sum_{\phi} w(\phi)$, then 
\begin{equation}\label{eq:ratio-Zhats} \frac{\prod_i \widehat Z_{D_i^\circ}^{h_i}}{\widehat Z_V^0} =\exp\bigg( \sum_{\substack{\sfW\subset V\\ \sfW\cap\Delta^*_\gamma\neq\emptyset}} \Phi(\sfW)\bigg) = \exp\left(\fI_V(\gamma)\right)\,.\end{equation}
Letting $Z_{V;F}^{h-1} = \sum_\phi w(\phi) \one_{\{\phi_x\geq 0,\,\forall x\in F\}}$ for the $\ZGFF$ with boundary conditions $h-1$ and a zero floor in $F$ (as well as defining $Z_{D_i^\circ;F}^{h_i}$ analogously),  we have
    \begin{align*}
        \pi^{h-1}_{V;F}(\cC_{\gamma, h}) &= \frac{1}{Z_{V;F}^{h-1}}\exp\left(-\sE_\beta(\gamma)\right)\prod_{i\geq 0}Z_{D_i^\circ;F}^{h_i}\,,
        \end{align*}
  and since $Z_{V;F}^{h-1} = \widehat Z_{V;F}^h \hatpi_V^{h-1}(\phi_x\geq 0,\,\forall x\in F)$ (and analogously for $Z_{D_i^\circ;F}^{h_i}$), we infer that
        \begin{align}
      \pi^{h-1}_{V;F}(\cC_{\gamma, h})  &= \frac{1}{\hatZ_{V}^{h-1}\hatpi_V^{h-1}(\phi_x \geq 0,\, \forall x \in F)}\exp\left(-\sE_\beta(\gamma)\right)\prod_{i\geq 0}\hatZ_{D_i^\circ}^{h_i}\hatpi_{D_i^\circ}^{h_i}(\phi_x \geq 0,\, \forall x \in D_i^\circ\cap F)\nonumber\\
        &=\exp\Big(-\sE^*_\beta(\gamma) + \fI_V(\gamma)\Big)\frac{\prod_{i =0,1}\hatpi_{D_i^\circ}^{h-i}(\phi_x \geq 0,\, \forall x \in F \cap D_i^\circ)}{\hatpi_V^{h-1}(\phi_x \geq 0,\, \forall x \in F)}\,,\label{eq:pi(gamma)-uniform}
    \end{align}
    where the last transition used \cref{eq:ratio-Zhats} and the definition of $\sE^*_\beta$ from \cref{eq:E*-def}. 

    For the upper bound on \cref{eq:p-gamma-lower-upper}, we 
 can lower bound the denominator in the right of \cref{eq:pi(gamma)-uniform} using \cref{eq:xi-n-area-estimate-macro-lower} of \cref{thm:key-area}, showing that
\begin{align}\pi^{h-1}_{V;F}(\cC_{\gamma, h})  \leq & \exp\bigg(-\sE^*_\beta(\gamma)+ \fI_V(\gamma) + \upxi_{n+1}\sum_{i\geq 0}|D_i\cap F| + O(L^{1/2+o(1)})\bigg) \nonumber\\ & \cdot \prod_{i =0,1} \hatpi_{D_i^\circ}^{h-i}\left(\phi_x \geq 0,\, \forall x \in  D_i^\circ\cap F\right) \,,\label{eq:piC-gamma-h-upper-bound}
\end{align}
and we move our attention to the two terms in the product over $i=0,1$. 

For $i=1$, the assumption on $|D_1\cap F|$ allows us to infer from \cref{eq:xi-n-area-estimate-macro-upper} the unconditional bound
\[
\hatpi_{D_1^\circ}^{h-1}\left(\phi_x \geq 0,\, \forall x \in  D_1^\circ\cap F\right) \leq \exp\Big(-\upxi_{n+1}|D_1^\circ \cap F| + O(L^{1/2+o(1)})\Big) \,.
\]
Since $\upxi_{n+1}|D_1\setminus D_1^\circ| \leq L^{-1+o(1)}O(|\gamma|) = L^{o(1)}$, we can replace $\exp(-\upxi_{n+1}|D_1^\circ \cap F|)$ in the above by $\exp(-\upxi_{n+1}|D_1 \cap F|)$, which cancels the corresponding term in the right hand of \cref{eq:piC-gamma-h-upper-bound}.

For $i=0$, by \cref{eq:xi-n-area-estimate-macro-upper} (in the conditional version, as we did not assume that $|D_0^\circ\cap F|$ is small):
\[ \hatpi_{D_0^\circ}^{h}\left(\phi_x \geq 0,\, \forall x \in  D_0^\circ\cap F\mid \fS \right) \leq \exp\Big(-\upxi_{n}|D_0^\circ \cap F| + O(L^{1/2+o(1)})\Big) \,.
\]
As in the proof of \cref{prop:key-area-estimate}, the routine bound $\P(A) \leq \frac{\P(A\mid B)}{\P(B\mid A)}$ for  $A = \{\phi_x\geq -h,\,\forall x\in F\}$ and $B = \fS$ shows that 
\[ \hatpi_{D_0^\circ}^{h}\left(\phi_x \geq 0,\, \forall x \in  D_0^\circ\cap F \right) \leq \frac{\exp\Big(-\upxi_{n}|D_0 \cap F| + O(L^{1/2+o(1)})\Big)}{ \hatpi_{D_0^\circ;F}^h(\fS)} \,,
\]
where again we absorbed the move from $\upxi_n|D_0^\circ\cap F|$ to $\upxi_n|D_0\cap F|$ into the error term.
Together with the $\upxi_{n+1}|D_0 \cap F|$ from \cref{eq:piC-gamma-h-upper-bound}, we see that the prefactor of $|D_0\cap F|$ is $\upxi_{n+1}-\upxi_n = \uprho_n/N_n$ as per \cref{eq:rho-n-def}, thus arriving at the upper bound of \cref{eq:p-gamma-lower-upper}.

For the lower bound in \cref{eq:p-gamma-lower-upper}, since $\pi_{V;F}^{h-1}(\fS)$ is nothing but $\hatpi_V^{h-1}(\fS\mid\phi_x\geq 0,\,\forall x\in F)$, we see from \cref{eq:pi(gamma)-uniform} that 
\begin{align}
      \frac{\pi^{h-1}_{V;F}(\cC_{\gamma, h})}{\pi_{V;F}^{h-1}(\fS)}  &=\exp\Big(-\sE^*_\beta(\gamma) + \fI_V(\gamma)\Big)\frac{\prod_{i =0,1}\hatpi_{D_i^\circ}^{h-i}(\phi_x \geq 0,\, \forall x \in F \cap D_i^\circ)}{\hatpi_V^{h-1}(\{\phi_x \geq 0,\, \forall x \in F\}\cap \fS)} \nonumber\\
      &\geq \exp\Big(-\sE^*_\beta(\gamma) + \fI_V(\gamma)\Big)\frac{\prod_{i =0,1}\hatpi_{D_i^\circ}^{h-i}(\phi_x \geq 0,\, \forall x \in F \cap D_i^\circ)}{\hatpi_V^{h-1}(\phi_x \geq 0,\, \forall x \in F\mid \fS)}\,. 
      \label{eq:piC-gamma-h-lower-bound}
    \end{align}
   By \cref{eq:xi-n-area-estimate-macro-lower,eq:xi-n-area-estimate-macro-upper} of \cref{thm:key-area},
   \begin{align*}
   \hatpi_V^{h-1}(\phi_x\geq0,\,\forall x\in F\mid \fS) &\leq \exp\Big(-\upxi_{n+1}|F|+O(L^{1/2+o(1))})\Big)\,,\\
   \hatpi_V^{h-1}(\phi_x\geq0,\,\forall x\in D_i^\circ \cap F) &\geq \exp\Big(-\upxi_{n+1}|D_i \cap F|+O(L^{1/2+o(1))})\Big)\quad \mbox{for $i=0,1$}\,,
   \end{align*} 
where, as before, we moved from $|D_i\cap F|$ from $|D_i^\circ \cap F|$ via an additive (negligible) error of $O(L^{o(1)})$.
Substituting this in \cref{eq:piC-gamma-h-lower-bound} establishes the lower bound in \cref{eq:p-gamma-lower-upper}, completing the proof.
\end{proof}

\begin{corollary}\label{cor:CE-macro-uniformBC-smallL}
Consider the setting of \cref{prop:CE-macro-uniformBC} for $n=0$, where the assumption on $|D_1\cap F|$ is implied by $|D_1\cap F|\leq L^2/3$. If we further assume that
$|F|\leq L^2$ then we have
    \begin{align}
        \label{eq:p-gamma-lower-upper-n=0}
    \pi_{V;F}^{H}(\fS)\exp\Big(O(L^{1/2+o(1)})\Big) 
   \leq 
 \frac{\pi^{H}_{V; F}(\cC_{\gamma, H+1})}{\fp(\gamma)} \leq  \exp\Big( O(L^{1/2+o(1)})\Big) \,.
\end{align}
\end{corollary}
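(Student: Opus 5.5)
The plan is to specialize \cref{prop:CE-macro-uniformBC} to $n=0$, $h=H+1$, and check two things. First, that its hypothesis on $|D_1\cap F|$ holds. Second, that the factor $1/\pi^{H+1}_{D_0^\circ\cap F}(\fS)$ on its right-hand side can be absorbed into the error term $e^{O(L^{1/2+o(1)})}$. The lower bound in \cref{eq:p-gamma-lower-upper-n=0} is then exactly the lower bound of \cref{eq:p-gamma-lower-upper} with $h-1=H$, so nothing further is needed there.

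For the hypothesis, with $n=0$ the relevant threshold is $\big(3\beta/\hatpi_\infty(\phi_o>H)\big)^2$. By the definition of $H$ in \cref{eq:H-def}, $H+1$ fails the defining inequality, so $\hatpi_\infty(\phi_o=H+1)<5\beta/L$. By \cref{eq:LD-ratio}, $\hatpi_\infty(\phi_o\geq H+1)=(1+o(1))\hatpi_\infty(\phi_o=H+1)$. Hence $3\beta/\hatpi_\infty(\phi_o>H)\geq (3/5-o(1))L$, and its square exceeds $L^2/3$ since $9/25>1/3$. So $|D_1\cap F|\leq L^2/3$ indeed implies the assumption of \cref{prop:CE-macro-uniformBC}, as the corollary states.

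For the upper bound, it suffices to show $\pi^{H+1}_{D_0^\circ;F}(\fS)\geq e^{-O(L^{1/2+o(1)})}$; in fact I would aim for $1-o(1)$. This is the measure with boundary conditions $H+1$ and floor on $D_0^\circ\cap F$, whose area is at most $|F|\leq L^2$. I would invoke \cref{cor:no-macro-contours} with $h=H+1$. This requires $|F|\leq\big(3\beta/\hatpi_\infty(\phi_o>H+1)\big)^2$, and that holds with plenty of room: \cref{eq:LD-ratio} gives $\hatpi_\infty(\phi_o>H+1)\leq e^{-c'\beta H/\log H}\hatpi_\infty(\phi_o\geq H+1)=o(1/L)$, so the threshold is $\gg L^2$. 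The other hypothesis, $\hatpi_\infty(\phi_o\geq H+1)\leq(\log L)^{-2}$, is immediate. \cref{cor:no-macro-contours} then says there are no \texttt{large} disagreement polymers except with probability $O(L^{-10})$, i.e.\ $\pi^{H+1}(\fS)\geq 1-O(L^{-10})$.

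The only delicate step is checking that the measure in \cref{prop:CE-macro-uniformBC} is exactly of the form covered by \cref{cor:no-macro-contours}: the domain is $D_0^\circ$ with constant boundary value $H+1$ and floor set $D_0^\circ\cap F\subseteq F$. Both results allow a general $F\subseteq V$, so this should go through directly. The factor $1/(1-O(L^{-10}))$ is then absorbed into $e^{O(L^{1/2+o(1)})}$, which completes the upper bound.
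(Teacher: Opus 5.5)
Your proposal is correct and follows the paper's proof. The $D_1$ hypothesis comes from the definition of $H$, the lower bound is taken verbatim from \cref{prop:CE-macro-uniformBC}, and $1/\pi^{H+1}_{D_0^\circ\cap F}(\fS)$ is absorbed by ruling out \texttt{large} $(H+2)$ level lines, since $\hatpi_\infty(\phi_o>H+1)=o(1/L)$ while $|F|\le L^2$. The only differences are that you cite \cref{cor:no-macro-contours} directly where the paper reruns its argument (\cref{lem:prob-of-contour} plus isoperimetry) inline, and that you track the $(1+o(1))$ factor in the $3/5$ threshold more carefully than the paper does.
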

\begin{proof}
By the definition of $H$, we have $\hatpi_\infty(\phi_o = H) \geq \frac{5\beta}L > \hatpi_\infty(\phi_o = H+1)$, and we immediately observe that the condition $|D_1\cap F|^{1/2}\leq 3\beta / \hatpi_\infty(\phi_o 
> H)$ would be satisfied if $|D_1\cap F|^{1/2} \leq \frac35 L$. The lower bound in \cref{eq:p-gamma-lower-upper-n=0} is verbatim the one given in \cref{eq:p-gamma-lower-upper}, and it remains to show that $\pi_{D_0^\circ\cap F}^{H+1}(\fS) = O(L^{-1})$, say, so that the factor $1/\pi_{D_0^\circ\cap F}^{H+1}(\fS)$ in the upper bound in \cref{eq:p-gamma-lower-upper} would be overtaken by the $O(L^{1/2+o(1)})$ in its exponent.
To see this, recall as usual that ruling out \texttt{large} $h-$level lines for $h<H+1$ is immediate from the standard Peierls argument, which is unperturbed by the presence of a floor (see, e.g., \cite[Eq.~(4.2) of Prop.~4.1]{LMS16}).
Consider an $(H+2)$ level line~$\fL$.
Recalling that $\hatpi_\infty(\phi_o>h) \leq e^{-c \beta \frac{h}{\log h}} \hatpi_\infty(\phi_o = h) = o(\hatpi_\infty(\phi_o=h))$ by \cref{eq:LD-ratio}, we see that $\hatpi_\infty(\phi_o = H+2) = o(1/L)$.
Therefore, by \cref{lem:prob-of-contour}, combined  with the isoperimetric inequality $|\Int(\fL)\cap F|\leq |\fL| \sqrt{|F|}/4 \leq O(|\fL| L)$ (using here our assumption on $|F|$), it follows that 
\begin{align*}
   \pi_{D_0^\circ\cap F}^{H+1}(\cC_{\fL,H+2}) &\leq \exp\left(-\Big(\beta  -  \hatpi_\infty(\phi_o > H+1)O(L)  - o(1)\Big) |\fL| \right) 
    \leq \exp\left(-\left(\beta   - o(1)\right) |\fL| \right)\,.
\end{align*}
At this point, a union bound over all \texttt{large} level-line loops $\fL$ shows that, say, $\pi_{D_0^\circ\cap F}^{H+1}(\fS) \geq 1 - L^{-10}$, and the proof is complete.
\end{proof}

\section{Asymptotically small critical window}\label{sec:transition-window}
In this section we prove \cref{thm:main-thm-crit-window}, showing that except for an asymptotically small critical window about $L_c^{(h)}$, we can determine w.h.p.\ whether the top level line is at height $H$ or height $H+1$. We broadly follow the proof strategy of \cite[Sections 4.2, 4.3]{CLMST16}. However, we will require some extra quantitative estimates utilizing \cref{lem:linear-critical-window}, pushing towards a critical window of width at most $(L_*^{(h)})^{1/2+o(1)}$. We will also need the machinery on disagreement polymers developed in \cite{ChenLubetzky25}, and most importantly the law of disagreement polymers in macroscopic domains obtained in \cref{sec:refined-CE}. 

Define $L_*^{(h)} = \lceil\tfrac{\lambda_*\beta}{\uprho_0\hatpi_\infty(\phi_o = h)}\rceil$, recalling that $\lambda_* \approx 4(1\pm \epsilon_\beta)$. Throughout this section, we will fix $h \geq h_0$ for some $h_0$ sufficiently large depending on $\beta$, and define $\lambda = \lambda(L)$ to be such that
$L = \frac{\lambda \beta}{\hatpi_\infty(\phi_o = h)}$, or equivalently $\hatpi_\infty(\phi_o = h) = \tfrac{\lambda\beta}{L}$. We split \cref{thm:main-thm-crit-window} into two propositions, showing separately that below the window $[L_*^{(h)} - (L_*^{(h)})^{1/2+o(1)}, L_*^{(h)} + (L_*^{(h)})^{1/2+o(1)}]$ the probability of having a \texttt{large} $h$ level line is $o(1)$, and above the window this probability is $1-o(1)$.
\begin{proposition}\label{prop:below-window}
    There exists $h_0 > 0$ such that for all $h \geq h_0$, for all $L \leq L_*^{(h)} - (L_*^{(h)})^{1/2+o(1)}$, the \ZGFF model on $ $ w.h.p.\ has no \texttt{large} level line at height $h$. 
\end{proposition}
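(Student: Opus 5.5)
We rule out \texttt{large} $h$ level lines by a Peierls/union bound, in the spirit of \cite[\S4.2]{CLMST16}. The input is the uniform-boundary-condition polymer law of \cref{cor:CE-macro-uniformBC-smallL}, with $h=H+1$ (i.e.\ $n=0$), combined with the quantitative variational bound of \cref{lem:linear-critical-window}.

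\textbf{Reduction.} First we reduce to a single outermost $h$ level line. If $h\le H$ the statement is vacuous, since for $L$ below the window we have $\hatpi_\infty(\phi_o=h)<5\beta/L$. The case $h\ge H+2$ is already covered by \cref{thm:LMS}, so assume $h=H+1$. By FKG/monotonicity in the boundary conditions it suffices to work under $\pi^{H}_{\Lambda}$ (or directly under $\pi^0_\Lambda$ with the lower levels exposed). Fix a \texttt{large} $h$ level-line loop $\fL$ and consider the disagreement polymer $\gamma\supseteq\fL$ whose region $D_0$ lies inside $\fL$. By \cref{cor:CE-macro-uniformBC-smallL} (its hypotheses hold trivially since $|F|\le L^2$ and $|\gamma|\le L e^{\sqrt{\log L}}$ w.h.p.; longer polymers are killed by a plain Peierls bound),
\[
\pi(\cC_{\gamma,h})\le \exp\Big(-\sE^*_\beta(\gamma)+\fI_\Lambda(\gamma)+\tfrac{\uprho_0}{N_0}|D_0|+o(1)\Big)\,e^{O(L^{1/2+o(1)})}.
\]
Summing over $\gamma$ compatible with a fixed outer envelope, as in \cite{ChenLubetzky25}, replaces $\sE^*_\beta-\fI$ by the surface-tension functional: up to $e^{o(|\fL|)}$ errors coming from the decorations, the cost is $\exp\big(-\int_{\fL}\tau_\beta(\theta_s)\,ds\big)$.

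\textbf{Variational bound.} Rescale by $L$, so that $\tfrac{\uprho_0}{N_0}|D_0|=\tfrac{\uprho_0\hatpi_\infty(\phi_o=h)}{L^{-1}}\,A(\tilde\fL)$. With $\lambda'=\uprho_0\lambda$ the exponent becomes $L\,\cF_{\lambda'}(\tilde\fL)$ plus errors. The hypothesis $L\le L_*^{(h)}-(L_*^{(h)})^{1/2+o(1)}$ means exactly $\lambda_*-\lambda'\ge L^{-1/2+o(1)}$.

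We then split by area. If $A(\tilde\fL)\ge\alpha$ for a small fixed $\alpha$ (so the side condition in \cref{lem:linear-critical-window} holds for $\lambda'$ near $\lambda_*$), the lemma gives $L\,\cF_{\lambda'}\le-\beta\alpha(\lambda_*-\lambda')L\le -L^{1/2+o(1)}$. By choosing the $o(1)$ in the window appropriately, this beats both the $e^{O(L^{1/2+o(1)})}$ error and the entropy of the coarse-grained shape. If instead $A(\tilde\fL)<\alpha$, isoperimetry gives $\tfrac{\uprho_0}{N_0}|D_0|\le c\sqrt\alpha\,|\fL|\cdot\beta\lambda'$, which is at most $\tfrac12\beta|\fL|$ for $\alpha$ small. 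A direct Peierls bound then handles all \texttt{large} loops of small area, including mesoscopic ones, where we may use \cref{prop:CE-meso} to avoid the $L^{1/2}$ loss.

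\textbf{Main obstacle.} The delicate step is the large-area union bound. The errors $e^{O(L^{1/2+o(1)})}$, together with the entropy from summing over all loops, must be dominated by the gain $\beta\alpha(\lambda_*-\lambda')L$. Loop entropy cannot be absorbed into $\cF$ at lattice scale, since $\tau_\beta$ already encodes the entropy. We therefore plan to coarse-grain $\fL$ to a polygon on a grid of mesh $L^{1/2}$, so there are only $e^{O(L^{1/2}\log L)}$ classes. Within each class the sum of weights is bounded by $\exp(-W(\text{polygon})+o(L^{1/2+o(1)}))$, using the surface-tension asymptotics and the convexity of $\tau_\beta$ from \cite{ChenLubetzky25}. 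The area changes only by $O(L^{3/2})$ under coarse-graining, which costs $O(L^{1/2+o(1)})$ in the tilt. All of these losses are $L^{1/2+o(1)}$, matching the width of the window.
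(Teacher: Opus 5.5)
Your skeleton matches the paper's final step: Cor.~\ref{cor:CE-macro-uniformBC-smallL} for the polymer law, anchors spaced $L^{1/2}$ apart giving $e^{O(L^{1/2}\log L)}$ classes, the per-segment bound $e^{-\tau_\beta(v_{i+1}-v_i)}$, passage to the convex hull, and Lemma~\ref{lem:linear-critical-window}. The gap is the step you yourself call the crux, namely that each class has total weight $\exp(-W+O(L^{1/2+o(1)}))$. It rests on preparatory reductions that your plan either omits or gets wrong.

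\emph{The corollary's hypotheses.} Cor.~\ref{cor:CE-macro-uniformBC-smallL} does not apply ``trivially''. It requires $|D_1\cap F|\le L^2/3$, and this is what licenses the \emph{unconditional} floor upper bound on the exterior region. Without it you pick up a factor $1/\pi^{H}_{D_1^\circ;F}(\fS)$, which is controlled only by the very statement you are proving, on a slightly smaller domain. Your split at a small fixed $\alpha$ applies the corollary to loops of area in $[\alpha L^2,\tfrac23L^2]$, where it is unavailable. The paper first uses Lemma~\ref{lem:prob-of-contour} with isoperimetry (Lemma~\ref{lem:prelim-ll-bounds}) to discard every loop not containing a square of side $L(1-\epsilon^\square_\beta)$. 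Your small-area computation does reach this if you push the threshold to area $(1-O(1/\beta))L^2$, using $\lambda\le 4(1+\frac3{4\beta})$.

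\emph{Boundary effects and self-interaction.} Your reduction to $\pi^H_\Lambda$ (or to the interior of an exposed $H$ level line) puts the polymer against a wall, and the relevant near-full loops run along $\partial\Lambda$. Near the wall, $\fI_\Lambda(\gamma)$ differs from $\fI_{\Z^2}(\gamma)$ by up to $\epsilon_\beta$ per bond, i.e.\ possibly by $\Theta(\epsilon_\beta L)\gg L^{1/2+o(1)}$. So comparing with the infinite-volume $\tau_\beta$ would need a separate pinning-type estimate that nothing in your plan supplies. Your stated accuracy $e^{o(|\fL|)}$ is too weak regardless.

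The paper's device is domain enlargement. It restricts to the increasing event $\cE_\square$ and dominates by $\pi^H_{\Lambda';\Lambda}$ on a concentric box of side $5L$ with the floor only on $\Lambda$. Polymers of length $\le 4.1L$ containing the square then stay far from $\partial\Lambda'$, so $\fI_{\Lambda'}=\fI_{\Z^2}+o(1)$. The tilt acting only on $\Lambda$ is handled by $\int_{\partial(K\cap\Lambda)}\tau_\beta\le\int_{\partial K}\tau_\beta$.

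Separately, $\fI_{\Z^2}$ is not additive over the segments between anchors. Lemma~\ref{lem:bound-interactions} bounds the discrepancy by $O(\log L)$ per anchor only after Peierls maps, chosen to preserve $\cE_\square$, rule out finite components with $|\partial\cD|>\log L$ and button holes. Those maps in turn need two a-priori bounds, both resting on square containment:
\begin{enumerate}
\item $\sE_\beta(\gamma)\le 4.1L$, from Claim~\ref{clm:CE-check-LW}, which also gives $h_0=H+1$ and $h_1=H$;
\item $\max_{\cD}|\partial\cD|\le L/5$.
\end{enumerate}
The same finite-component argument is what makes the term $\upxi_1\sum_{i\ge2}|D_i\cap F|$ in $\fp(\gamma)$ negligible. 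You set that term to $o(1)$ without justification.
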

\begin{proposition}\label{prop:above-window}
    There exists $h_0 > 0$ such that for all $h \geq h_0$, for all $L \geq L_*^{(h)} + (L_*^{(h)})^{1/2+o(1)}$, the \ZGFF model on $\Lambda$ w.h.p.\ has a \texttt{large} level line at height $h$.
\end{proposition}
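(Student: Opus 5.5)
We prove \cref{prop:above-window} by a Peierls-type comparison that produces a \texttt{large} $h$ level line with probability $1-o(1)$ whenever $\lambda$ exceeds $\lambda_*/\uprho_0$ by at least $L^{-1/2+o(1)}$. Set $h=H+1$ (so $n=0$ and $N_0=1/\hatpi_\infty(\phi_o=h)=L/(\lambda\beta)$). The area tilt felt by a candidate $h$ level line is then $\uprho_0/N_0=\uprho_0\lambda\beta/L$. After rescaling $\Lambda$ to $[0,1]^2$, the exponent in the law of $\gamma$ from \cref{cor:CE-macro-uniformBC-smallL} becomes $L\,\cF_{\uprho_0\lambda}(\gamma/L)$, up to surface-tension corrections of order $o(L)$ that we will control separately. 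The hypothesis $L\ge L_*^{(h)}+(L_*^{(h)})^{1/2+o(1)}$ translates exactly to $\uprho_0\lambda-\lambda_*\ge L^{-1/2+o(1)}$.

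First we compare the events. Write $\cA$ for the event that there is no \texttt{large} $h$ level line, and take $\gamma^\star$ to be a fixed lattice approximation of $L\,\partial\sL(\uprho_0\lambda)$ (\cref{def:cL}). By the lower bound in \cref{cor:CE-macro-uniformBC-smallL} with $V=F=\Lambda$, we have $\pi^{H}_\Lambda(\cC_{\gamma^\star,h})\ge \pi^H_\Lambda(\fS)\,\fp(\gamma^\star)\,e^{-O(L^{1/2+o(1)})}$. On $\cA$ the configuration is governed by $\fS$ at level $H$, and $\pi^H_\Lambda(\fS)$ is comparable to $\pi^H_\Lambda(\cA)$. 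It therefore suffices to show that $\fp(\gamma^\star)\ge e^{L^{1/2+o(1)}}$, with an exponent $o(1)$ beating the $O(L^{1/2+o(1)})$ error; this is what yields $\pi(\cA)\le e^{-L^{1/2+o(1)}}$.

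To evaluate $\log\fp(\gamma^\star)$, we use that the energy $\sE^*_\beta+\fI$ of a path approximating a smooth curve equals $-\int\tau_\beta+O(\text{corrections})$. This follows by comparison with the surface tension \eqref{eq:surface-tension} through the polymer-weight comparison \cite[Prop.~4.14]{ChenLubetzky25}, exactly as in the growth gadget. Then $\log\fp(\gamma^\star)=L\cF_{\uprho_0\lambda}(\sL(\uprho_0\lambda))+O(L^{1/2+o(1)})$. By the last assertion of \cref{lem:linear-critical-window}, $\cF_{\uprho_0\lambda}(\sL(\uprho_0\lambda))\ge(\beta-O(1/\beta))(\uprho_0\lambda-\lambda_*)\ge L^{-1/2+o(1)}$. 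The $o(1)$ in the window width is chosen large enough to dominate every $L^{1/2+o(1)}$ error.

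We expect the main obstacle to be making the surface-tension approximation for a macroscopic curve accurate to $O(L^{1/2+o(1)})$ rather than $o(L)$. Polymer surface tension along a straight chord of length $d$ has corrections of order $\log d$ (Ornstein--Zernike), so we will partition $\partial\sL$ into $L^{1/2}$ chords of length $L^{1/2}$. Summing the partition functions over polymers confined to slabs around each chord (restriction costs only $e^{-c\sqrt L}$ by fluctuation estimates) loses $O(\log L)$ per chord, hence $O(L^{1/2}\log L)$ in total. The curvature error is $\sum (L^{1/2})^2/L=O(L^{1/2})$, since chord versus arc discrepancies are quadratic.

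A second issue is that to remove the \texttt{large} requirement one must also rule out small $h$ loops. Here we argue that any $h$ level line exists only with interior area at least $.8\underline L^{(h)}$, by \cref{lem:prob-of-contour} and isoperimetry. Once an $h$ loop of area at least $cL^2$ exists, the growth gadget (\cref{thm:old-level-line-contains-Wulff}) shows it is \texttt{large} w.h.p., completing the proof.
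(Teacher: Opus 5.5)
\textbf{There are two genuine gaps.} Once both are fixed, your outline becomes the paper's proof.

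\textbf{Gap 1: the single-polymer reduction.} The surface tension in \cref{eq:surface-tension} is a free energy: it is defined through a partition function summing over \emph{all} polymers between two points, so it includes their entropy. A single fixed lattice path $\gamma^\star$ following $L\partial\sL(\uprho_0\lambda)$ has weight smaller than the corresponding partition function by a factor $e^{-c_\beta L}$. The reason is that local modifications of $\gamma^\star$ can be made at order $L$ disjoint places, each costing at most a factor $e^{-O(\beta)}$. Examples are unit bumps along the flat sides, and swapping horizontal and vertical steps along the curved Wulff corners, whose length is of order $L$ when $n=0$. Hence
\[
\log\fp(\gamma^\star)\le L\cF_{\uprho_0\lambda}(\sL(\uprho_0\lambda))-c_\beta L+O(L^{1/2+o(1)}) ,
\]
which is very negative near the window, where $L\cF_{\uprho_0\lambda}$ is only $L^{1/2+o(1)}$. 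So the identity $\log\fp(\gamma^\star)=L\cF+O(L^{1/2+o(1)})$ is false, and the sufficient condition $\fp(\gamma^\star)\ge e^{L^{1/2+o(1)}}$ can never hold there.

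What you need instead is a sum over a tube of polymers:
\begin{itemize}
\item Take $s=L^{1/2}$ anchor points $v_i$ on the target curve. Let $\cG$ be the set of polymers contained in $\bigcup_i\sC(v_i,v_{i+1})$, of length $O(L)$, with $\max_{\cD\in\fD(\gamma)}|\partial\cD|\le\log L$.
\item All such polymers pass through every $v_i$, so at most one of them is a polymer of a given configuration. Thus the events $\cC_{\gamma,h}$, $\gamma\in\cG$, are disjoint.
\item Summing the lower bound of \cref{cor:CE-macro-uniformBC-smallL} over $\cG$ then gives $\pi(\fS)\le e^{O(L^{1/2+o(1)})}\big/\sum_{\gamma\in\cG}\fp(\gamma)$.
\item It is this sum that factorizes into chord partition functions, up to $e^{O(s\log^2 L+L/s)}$ from $\fI$-interactions and area corrections. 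Each chord partition function is at least $e^{-\tau_\beta(v_{i+1}-v_i)+O(1)}$ by \cite[Prop.~3.14, Prop.~4.14, Lem.~3.15]{ChenLubetzky25}.
\end{itemize}
Your ``main obstacle'' paragraph essentially describes this slab summation. However, it has to replace the single-polymer step, not serve as an error estimate for it.

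\textbf{Gap 2: the passage from $\pi^0_\Lambda$ to $\pi^H_\Lambda$.} The statement concerns $\pi^0_\Lambda$, but you apply the corollary to $\pi^H_\Lambda$ with $V=F=\Lambda$. The event that no \texttt{large} $h$ level line encloses macroscopic area is decreasing, and $\pi^0_\Lambda\preceq\pi^H_\Lambda$. So a small $\pi^H_\Lambda$-probability tells you nothing about $\pi^0_\Lambda$; the comparison goes the wrong way.

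The paper handles this as follows:
\begin{itemize}
\item It uses \cref{thm:old-level-line-contains-Wulff}, together with the bounds on $\uprho_n$, to obtain w.h.p.\ an $H$ level line $\fL_1$ enclosing $L(1-L^{-2/3-o(1)})\sL(\ell^*_0)$.
\item It reveals $\fL_1$, and only then lowers the boundary values of $\Int(\fL_1)$, which are $\ge H$, to exactly $H$. This is now the correct direction for a decreasing event.
\end{itemize}
This forces the target curve to be $\cK=\partial\big(L(1-L^{-1/2+o(1)})\sL(\ell^*_0)\big)$ rather than $L\partial\sL(\uprho_0\lambda)$. The latter runs along $\partial\Lambda$ on the flat sides and does not fit in $\Int(\fL_1)$. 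Keeping the tube at distance $L^{1/2+o(1)}$ from $\fL_1$ also lets you replace $\fI_{\Int(\fL_1)}$ by $\fI_{\Z^2}$. The shrinking costs another $L^{1/2+o(1)}$ in $L\cF$, which is still absorbed by the window.

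\textbf{A minor point.} Your final paragraph on small loops is unnecessary, and \cref{thm:old-level-line-contains-Wulff} only covers $n\ge 1$ anyway. The proposition only asks for a \texttt{large} $h$ level line. Passing from $\fS^c$ to that is the routine Peierls step ruling out large down-loops and other large polymers.
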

The exponents of $1/2+o(1)$ come from the error term in \cref{prop:CE-macro-uniformBC}, which ultimately comes from  \cref{prop:key-area-estimate}.

\begin{remark}\label{rem:bad-prediction}
The term $L_*^{(h)}$ predicts where $L_c^{(h)}$ is by solving an optimization problem using this prefactor of $\uprho_0$ on the area tilt. One could define analogously a $\widetilde L_*^{(h)} := \lfloor \frac{\lambda_*\beta}{\hatpi_\infty(\phi_o = h)}\rfloor$, which predicts the location of $L_c^{(h)}$ using the same analysis but using an area tilt of $\frac{1}{N_0}$, as done in previous works (e.g. \cite{CLMST16,ChenLubetzky25}). Indeed, \cite{CLMST16} proves for \SOS that $L_c^{(h)} \in \widetilde L_*^{(h)} \pm (\widetilde L_*^{(h)})^{o(1)}$, and at first glance it appears that perhaps $L_c^{(h)} = \widetilde L_*^{(h)}$ since the errors leading to this estimate seem purely technical. 

However, at least for the \ZGFF, this is not the case. The difference between $L_*^{(h)}$ and $\widetilde L_*^{(h)}$ is of order $L_*^{(h)}(1 - \uprho_n)$. At the same time, in this section we prove that the window around $L_*^{(h)}$ where we can guarantee $L_c^{(h)}$ is has size $(L_*^{(h)})^{1/2+o(1)}$ (primarily coming from the error term in the macroscopic range results of \cref{thm:key-area}). Hence, if $1 - \uprho_n \leq L^{-1/2 + o(1)}$, then the gap between $L_*^{(h)}$ and $\widetilde L_*^{(h)}$ is larger than the window around $L_*^{(h)}$. This implies not only that $L_c^{(h)} \neq \widetilde L_*^{(h)}$, but moreover that the gap between the two is wide enough such that at $\widetilde L_*^{(h)}$ there is w.h.p.\ no \texttt{large} $h$ level line. In other words, using an area tilt of $\frac{1}{N_0}$ gives an erroneous prediction for the real transition point for the $h$ level line.
\end{remark}

To connect to the notation of $H(L)$, note that for every $L$ in the interval $[\tfrac{5\beta}{\hatpi_\infty(\phi_o= h-1)}, \tfrac{5\beta}{\hatpi_\infty(\phi_o= h)})$ we have $H(L) = h-1$. Hence, the above propositions are equivalent to saying that for any $h \geq h_0$,
    \begin{align*}&L \in \big[\tfrac{5\beta}{\hatpi_\infty(\phi_o = h-1)}, L_*^{(h)} - (L_*^{(h)})^{1/2+o(1)}\big]\leftrightarrow \mbox{ top level line at height } H(L)\,,\\
    &L \in \big(L_*^{(h)} - (L_*^{(h)})^{1/2+o(1)}, L_*^{(h)} + (L_*^{(h)})^{1/2+o(1)}\big)\leftrightarrow \mbox{ top l.l. at height } H(L) \mbox{ or } H(L) + 1\,,\\
    &L \in \big[L_*^{(h)} + (L_*^{(h)})^{1/2+o(1)}, \tfrac{5\beta}{\hatpi_\infty(\phi_o = h)}\big)\leftrightarrow \mbox{ top l.l. at height } H(L)+1\,.
\end{align*}
We will henceforth write $H+1$ instead of $h$ to be consistent with the rest of the paper.

\subsection{Preliminary lemmas}
We begin with preliminary bounds on the level lines. When $L$ is too small, we can rule out the existence of $\fL_0$ using the crude estimate of \cref{lem:prob-of-contour}. When $L$ too big, we can guarantee the existence of $\fL_0$ by the stitching procedure of \cite{LMS16}, and ensure $\fL_0$ contains a large square. For $L$ in between, we cannot a-priori say anything about the existence of $\fL_0$ (indeed refining this is the whole goal of this section), but we can still ensure that if $\fL_0$ exists it must still contain a large square.
\begin{lemma}\label{lem:prelim-ll-bounds}
    Recall that $\lambda$ satisfies $L = \frac{\lambda\beta}{\hatpi_\infty(\phi_o = H+1)}$. For $\beta$ large, there exists $\epsilon_\beta^\square \downarrow 0$ such that the following holds w.h.p.:
    \begin{enumerate}
        \item\label{it:small-lambda} If $\lambda \leq 4(1-\frac{3}{\beta})$, there are no \texttt{large} $H+1$ level lines.
        \item\label{it:big-lambda} If $\lambda \geq 4(1+\frac{3}{4\beta})$, there is a \texttt{large} $H+1$ level line containing a square of length $L(1-\epsilon_\beta^\square)$.
        \item\label{it:middle-lambda} For $\lambda \in [4(1-\frac{3}{\beta}), 4(1+\frac{3}{4\beta})]$, if a \texttt{large} $H+1$ level line exists, then it must contain a square of length $L(1-\epsilon_\beta^\square)$.
    \end{enumerate}
\end{lemma}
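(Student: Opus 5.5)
Part~\eqref{it:small-lambda} follows from the crude contour bound. By \cref{lem:prob-of-contour} with $F=V=\Lambda$, $j=H$ and $h=H$ (its hypothesis holds since $\hatpi_\infty(\phi_o\ge H+1)=L^{-1+o(1)}$), any $(H+1)$ level line $\fL$ satisfies
\[
\pi^0_\Lambda(\cC_{\fL,H+1})\le \exp\big(-(\beta-o(1))|\fL|+\hatpi_\infty(\phi_o>H)|\Int(\fL)|\big).
\]
We have $\hatpi_\infty(\phi_o>H)=(1+o(1))\hatpi_\infty(\phi_o=H+1)=(1+o(1))\lambda\beta/L$ by \cref{eq:LD-ratio}. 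For a loop inside $\Lambda$, the isoperimetric inequality within the box gives $|\Int(\fL)|\le |\fL|L/4$, since $|\Int(\fL)|\le (|\fL|/4)^2$ and also the side length of its bounding box is at most $L$. So the exponent is at most $-(\beta-\lambda\beta/4-o(1))|\fL|$. For $\lambda\le 4(1-3/\beta)$ this is at most $-(3-o(1))|\fL|$, which beats the $e^{C|\fL|}$ enumeration once $\beta$ (and hence the constant $3$ versus the lattice-animal constant) is handled. Here I would take the slack $3/\beta$ to absorb the $(\beta-o(1))$ prefactor and the Peierls entropy $\log 3$ for self-avoiding loops. A union bound over \texttt{large} $\fL$ then gives $O(L^{-10})$.

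Part~\eqref{it:big-lambda} is the stitching argument of \cite[\S4.4]{LMS16} that the paper quotes: once $\hatpi_\infty(\phi_o=H+1)>4.1\beta/L$ (or $(4+2/\beta)\beta/L$), an $(H+1)$ level line of area $\ge(1-\epsilon_\beta)L^2$ exists w.h.p. I would rerun that argument with the threshold $4(1+\frac{3}{4\beta})$. It compares the measure with and without the $(H+1)$ plateau on a square of side $L(1-\epsilon)$: the plateau costs about $4\beta L$ in boundary and gains about $\hatpi_\infty(\phi_o=H+1)L^2(1-\epsilon)^2=\lambda\beta L(1-\epsilon)^2$. The monotonicity/stitching step then upgrades "a plateau of area $(1-\epsilon_\beta)L^2$" to "the level line contains a square of side $L(1-\epsilon^\square_\beta)$". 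This upgrade goes by a Peierls argument forbidding long inward deviations of the loop: any deviation cutting off a corner region of area $A$ with extra length $\ell$ loses at most $\lambda\beta A/L$ in area gain while saving at least roughly $\beta\ell$ minus $O(1)$ per bond.

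Part~\eqref{it:middle-lambda} is the main point and the main obstacle. Suppose a \texttt{large} $H+1$ loop $\fL$ exists but contains no square of side $L(1-\epsilon^\square_\beta)$. I would split according to the area. If $|\Int(\fL)|\le (1-\delta)L^2$, compare the energy cost $\beta|\fL|$ with the area gain $\lambda\beta|\Int(\fL)|/L$ via the functional $\cF_\lambda$ from \cref{lem:linear-critical-window} with $\tau\approx\beta$ (the box case, $\tau(0)\approx \beta$, $\sfw_1\approx 4\beta$). For $\lambda\approx 4$ the best shape with area $\alpha L^2$ has $\cF_\lambda<-c\alpha(\ldots)L$ unless $\alpha\ge 1-O(1/\beta)$, so $\pi(\cC_{\fL,H+1})\le e^{-cL}$. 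Concretely, a loop of area $\alpha L^2$ inside the box has length at least about $4\sqrt{\alpha}L$ when $\sqrt\alpha\le1$, so the exponent is at most $\beta L(-4\sqrt\alpha+\lambda\alpha)+o(L)$. This is negative of order $L$ whenever $\alpha\le(4/\lambda)^2(1-\eta)$, which covers $\alpha\le 1-C/\beta$. If instead the area is at least $(1-C/\beta)L^2$ but the loop misses a square of side $L(1-\epsilon^\square_\beta)$, use the same local Peierls deviation argument as in part~\eqref{it:big-lambda}: the portion of $\fL$ intruding into that square has excess length at least comparable to the intruded area divided by $L$ times a constant bigger than $\lambda/4$, so straightening it is favorable. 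The difficulty is doing this uniformly and making $\epsilon^\square_\beta\to0$; I would choose $\epsilon^\square_\beta\gg C/\beta$ and absorb the entropy of loops into $e^{C|\fL|}$ with $C\ll\beta$.
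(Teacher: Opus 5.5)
Part (1) is correct and is the paper's argument in compressed form: your bound $|\Int(\fL)|\le\min(L^2,|\fL|^2/16)\le |\fL|L/4$ merges the paper's two length ranges. One small fix there: $e^{-(3-o(1))k}$ against roughly $L^2 3^k$ loops of length $k=\log L$ is not small. You must use that loops with $|\fL|\le \epsilon L$ have exponent at most $-(\beta(1-\lambda\epsilon/16)-o(1))|\fL|$.

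Your first case of part (3), excluding area below $(1-C/\beta)L^2$ by isoperimetry, also matches the paper. The gap is the second case of part (3), which is where the content of the lemma lies, and the mechanism you propose there does not work.

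\begin{itemize}
\item Your geometric claim is false near the corners. You assert that intruding into the inner square forces excess length at least a constant times (intruded area)$/L$. But a lattice staircase from $(0,a)$ to $(a,0)$ uses exactly as many bonds as the $L$-shaped path along $\partial\Lambda$. So cutting a corner costs no length while removing area $a^2/2$.
\item No local comparison is available. \cref{lem:prob-of-contour} is only a one-sided upper bound; it gives no weight comparison between $\fL$ and a straightened loop. At $\lambda=4+3/\beta$ that bound, evaluated on a loop hugging $\partial\Lambda$, is about $e^{3L}$. So a deviation can only be excluded by a deficit of order $L$ in the crude exponent, not by a local energy--entropy balance.
\item Consequently corner cuts are controlled only by area. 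For a corner cut the deficit is $\lambda\beta a^2/(2L)$, so only cuts with $a\gtrsim L/\sqrt\beta$ can be excluded. The side loss $\epsilon^\square_\beta$ is therefore of order $\sqrt{\epsilon_\beta}$, dictated by the area deficit rather than by excess length.
\end{itemize}

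The missing ingredient is a length upper bound combined with a deterministic geometric lemma. Plugging the trivial bound $|\Int(\fL)|\le L^2$ into \cref{lem:prob-of-contour} excludes w.h.p.\ every $\fL$ with $|\fL|\ge\lambda L(1-\frac3\beta)^{-1}$. For $\lambda\le 4(1+\frac3{4\beta})$ this gives $|\fL|\le (4+O(1/\beta))L$. Your merged inequality in part (1) discards exactly this information, and without it a loop of area $(1-C/\beta)L^2$ with a long thin slit into the box is not excluded. Given both $|\fL|\le(4+\epsilon_\beta)L$ and $|\Int(\fL)|\ge(1-\epsilon_\beta)L^2$, containment of a square of side $L(1-\epsilon_\beta^\square)$ is purely deterministic. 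This is how the paper concludes, citing \cite[Lem.~2.6]{CLMST16}.

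The same objection applies to the ``upgrade'' step you sketch in part (2). There the paper instead uses the stitching argument of \cite[\S4.4]{LMS16}, which directly produces a loop surrounding $\Lambda_{L(1-\epsilon_\beta^\square)}$. The input you leave out is that w.h.p.\ an $H$ level line lies within $L^{1-o(1)}$ of $\partial\Lambda$ (e.g.\ by \cref{thm:old-level-line-contains-Wulff}). This ensures a box of side $\frac{4\beta+2}{\hatpi_\infty(\phi_o\ge H+1)}+O(\log^2 L)$ fits inside it, which is where $\lambda>4+\frac2\beta$ is used.
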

\begin{proof}
For any loop $\fL$, clearly $|\Int(\fL)| \leq L^2$. So, we have that if $|\fL| \geq \lambda L(1 - \frac{3}{\beta})^{-1}$, then \cref{lem:prob-of-contour} gives
\begin{equation}\label{eq:isoperimetry-bound-1}
    \pi_V^0(\cC_{\fL, H+1}) \leq \exp(-(\beta+o(1))|\fL| + \lambda\beta L) \leq \exp(-(3+o(1))|\fL|)\,.
\end{equation}
Additionally, since $|\Int(\fL)| \leq |\fL|^2/16$ by the isoperimetric inequality on $\Z^2$, we have that if $|\fL| \leq \frac{16L}{\lambda}(1 - \frac{3}{\beta})$, then
\[\pi_V^0(\cC_{\fL, H+1}) \leq \exp(-(\beta+o(1))|\fL| + \beta(1 - \tfrac{3}{\beta})|\fL|) \leq \exp(-(3+o(1))|\fL|)\,.\]
Hence we can rule out the existence of any $\fL$ such that $|\fL| \geq \lambda L(1 - \frac{3}{\beta})^{-1}$ or $\log L \leq |\fL| \leq \frac{16L}{\lambda}(1 - \frac{3}{\beta})$ by the standard Peierls argument. Hence, if $\lambda \leq 4(1 - \frac{3}{\beta})$, there is not gap between the two ranges on $|\fL|$, so there are w.h.p.\ no \texttt{large} $H+1$ level lines. 

Next, \cref{it:big-lambda} was essentially already proved via the stitching argument of \cite[\S4.4]{LMS16} (this was already summarized in \cite[Clm.~4.19]{ChenLubetzky25}, so we will not provide the details again here). Indeed, for such $L$, we have $\frac{4\beta + 4}{\hatpi_\infty(\phi_o \geq H)} \leq L^{1-o(1)}$, so either using the stitching argument of \cite{LMS16} or just \cref{thm:old-level-line-contains-Wulff} gives that in $\pi^0_{\Lambda}$ there is w.h.p.\ an $H$ level line distance at most $L^{1-o(1)}$ from $\partial \Lambda$. Then, the stitching argument says that as long as there exists $\frac{4\beta+2}{\hatpi_\infty(\phi_o \geq H+1)} \leq \ell \leq \frac{4\beta+4}{\hatpi_\infty(\phi_o \geq H+1)}$ such that a box of length $\ell + (\log \ell)^2$ fits inside the $H$ level line, then w.h.p.\ there exists a \texttt{large} $H+1$ level line surrounding $\Lambda_{L(1-\epsilon_\beta^\square)}$. So we are done as long as $L - L^{1-o(1)} > \frac{4\beta+2}{\hatpi_\infty(\phi_o \geq H+1)} + (\log \frac{4\beta+2}{\hatpi_\infty(\phi_o \geq H+1)})^2$. Plugging in the relation between $\lambda$ and $L$, this is equivalent to $1 - L^{-o(1)} > \frac{4\beta + 2}{\lambda \beta}$, which is satisfied by $\lambda \geq 4(1+\frac{3}{4\beta})$.

Finally to show \cref{it:middle-lambda}, recall from above that we only need to consider $\fL$ such that $|\fL| \geq \frac{16L}{\lambda}(1 - \frac{3}{\beta})$. Plugging this and the bound $\lambda \leq 4(1+\frac{3}{4\beta})$ into \cref{lem:prob-of-contour} gives that for some choice of $\epsilon_\beta$, we can assume that $|\Int(\fL)| \geq L^2(1 - \epsilon_\beta)$. The fact that $\fL$ now contains an appropriately sized square follows deterministically by \cite[Lem.~2.6]{CLMST16}.
\end{proof}

\subsection{No large \texorpdfstring{$H+1$}{H+1} level lines below the window}
In this subsection we prove \cref{prop:below-window}. By \cref{lem:prelim-ll-bounds}, we can assume additionally that $\lambda \geq 4(1-\epsilon_\beta)$, and that we can already rule out level lines $\fL$ which do not contain a macroscopic square. This simplified geometry will allows us to study the law of disagreement polymers containing $\fL$ using cluster expansion. Also important is that a level line containing a square is an increasing property. Hence, leveraging monotonicity, we can use a domain enlargement trick to handle interactions between $\fL$ and the domain boundary.

Let $\cE_\square$ be the event that there is an $H+1$ level line $\fL$ containing a square $\square$ of side length $L(1 - \epsilon_\beta^\square)$. Let $\Lambda'$ be the square of side length $5L$, concentric with $\Lambda$. By FKG (and since 0 is the minimum value of the \ZGFF model with a floor at 0), we have $\pi^0_\Lambda \preceq \pi^{H}_{\Lambda'; \Lambda}$ where the latter is the \ZGFF model on $\Lambda'$ with boundary conditions $H$ with a floor at 0 only on sites in $\Lambda$. In particular since $\cE_\square$ is increasing, we have $\pi^0_\Lambda(\cE_\square) \leq \pi^{H}_{\Lambda'; \Lambda}(\cE_\square)$. Abusing notation, we will say that $\fL \in \cE_\square$ if $\fL$ is an $H+1$ level line containing a square of side length $L(1-\epsilon_\beta^\square)$. We will also write $\gamma \in \cE_\square$ if $\gamma$ is a disagreement polymer such that there exists $\fL \in \cE_\square$ and $\fL$ is a subset of the bonds of $\gamma$. Now fix a $\gamma \in \cE_\square$. In order to show that $\pi^{H}_{\Lambda'; \Lambda}(\cE_\square) = o(1)$, we need a precise control over the law of $\gamma$. We begin with a claim which rules out a bad set of $\gamma$ not conducive to an application of \cref{cor:CE-macro-uniformBC-smallL}.
\begin{claim}\label{clm:CE-check-LW}
    Let $\cG$ denote the set of $\gamma$ such that $\sE_\beta(\gamma) \leq 4.1L$. Then, under $\pi^{H}_{\Lambda'; \Lambda}$, the event $\gamma \in \cG \cap \cE_\square$ implies that  $|\gamma| \leq 4.1L$, $h_0 = H+1$, and $h_1 = H$. Moreover, $\pi^{H}_{\Lambda'; \Lambda}(\gamma \in \cE_\square \setminus \cG) = o(1)$.
\end{claim}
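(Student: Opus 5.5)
The claim has two parts: a deterministic one about what $\gamma\in\cG\cap\cE_\square$ looks like, and a probabilistic one saying $\cE_\square\setminus\cG$ is unlikely. I will prove them separately.

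\emph{Deterministic part.} Take $\gamma\in\cE_\square$.
\begin{itemize}
\item The bonds of $\gamma$ contain a level line $\fL$ that surrounds a square of side $L(1-\epsilon^\square_\beta)$. Hence $\gamma$ separates that square from $\partial\Lambda'$.
\item Every bond of $\gamma$ has $|(\nabla\phi)_e|\ge 1$, so $|\gamma|\le \sN(\gamma)\le \sE_\beta(\gamma)/\beta$. On $\cG$ this gives $|\gamma|\le 4.1L/\beta\le 4.1L$ (reading the energy bound as $4.1\beta L$ if needed, which still gives $|\gamma|\le 4.1L$).
\item The boundary condition on $\partial\Lambda'$ is $H$. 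So the component containing $\partial\Lambda'$ has height $H$; call it $D_1$, so $h_1=H$.
\item Inside $\fL$ the heights are $\ge H+1$. For the component $D_0$ containing the origin, I argue that the energy budget forces $h_0=H+1$. Going from height $H$ outside to $h_0$ inside, every crossing of $\gamma$ from outside to the origin must accumulate a total gradient of at least $h_0-H$. Summing over the roughly $L$ disjoint horizontal and vertical rays from $\partial\Lambda'$ through the square, one gets $\sE_\beta(\gamma)\ge \beta(h_0-H)\cdot 4L(1-\epsilon^\square_\beta)$ (each ray contributes at least $(h_0-H)^2\ge h_0-H$ by convexity). If $h_0\ge H+2$, this is at least $8L(1-\epsilon_\beta)\beta$, which exceeds $4.1L$. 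Hence $h_0=H+1$.
\end{itemize}

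\emph{Probabilistic part.} I want to show $\pi^{H}_{\Lambda';\Lambda}(\gamma\in\cE_\square,\ \sE_\beta(\gamma)>4.1L)=o(1)$ by a Peierls bound. Summing over $\gamma$ with the floor, the probability of each $\gamma$ is at most $\exp(-\sE_\beta(\gamma)+\text{area gain})$.
\begin{itemize}
\item For the area gain I would use a multi-level analogue of \cref{lem:prob-of-contour}. Only the level-$(H+1)$ region gets the entropic reward $\hatpi_\infty(\phi_o>H)|\Int\cap\Lambda|\le \lambda\beta L$, with $\lambda\le 4(1+\epsilon_\beta)$. Here $\lambda\le 4$ comes from the standing assumption that $L$ is below $L^{(h)}_*$, and the floor only acts on $\Lambda$.
\item Higher levels $\ge H+2$ carry rewards $\hatpi_\infty(\phi_o>H+1)L^2=o(L)$, which are negligible.
\item The decorations contribute at most $\epsilon_\beta|\gamma|$ through the decay bound on $\Phi$.
\item The number of choices of $\gamma$ of energy $E$ is at most $e^{C E/\beta}$.
\end{itemize}
Putting these together, the total is at most
\[
\sum_{E>4.1L} e^{-E(1-C/\beta-\epsilon_\beta)+4(1+\epsilon_\beta)\beta L}.
\]
For this to be $o(1)$ I need the energy to be measured on the scale $\beta$, that is, the threshold should read $\sE_\beta(\gamma)\le 4.1\beta L$. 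The margin $4.1$ versus $4(1+\epsilon_\beta)$ then absorbs the entropy and $\epsilon_\beta$ corrections once $\beta$ is large.

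\emph{Main obstacle.} The hard step is the rigorous area reward. Under $\pi^H_{\Lambda';\Lambda}$ the boundary is at $H$, not at $0$, so the reward per site is the probability of a downward spike of depth $H$ (respectively $H+1$). I would take it from \cref{thm:key-area} or \cref{lem:prob-of-contour} applied to the nested level lines inside $\gamma$, bounding the $(H+1)$-region reward by $\upxi_{1}$-type rates times $L^2$. The delicate point is that $\gamma$ may not satisfy the hypotheses of those results, since its length is unbounded a priori. To handle this, I would first truncate $|\gamma|\le L^{1+o(1)}$ by the crude bound $\exp(-(\beta-o(1))|\gamma|+O(\beta L))$, and only then apply the refined estimate.
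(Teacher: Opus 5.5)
\emph{Probabilistic part.} Your bookkeeping matches the paper's. Read the threshold as $\sE_\beta(\gamma)\le 4.1\beta L$ (you are right that this is what is meant). Then an energy surplus of order $0.1\beta L$ has to beat the entropic gain $|\Lambda|\,\hatpi_\infty(\phi_o>H)=(1+o(1))\lambda\beta L\le 4(1+\epsilon_\beta)\beta L$.

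However, the step you single out as the main obstacle is never resolved, and the route you sketch does not work as stated:
\begin{itemize}
\item the ``multi-level analogue'' of \cref{lem:prob-of-contour} is not available;
\item \cref{thm:key-area} and \cref{prop:CE-macro-uniformBC} need hypotheses that an arbitrary $\gamma\in\cE_\square\setminus\cG$ need not satisfy (bounded length, and $h_0=H+1$, $h_1=H$);
\item the ``crude bound'' $\exp(-(\beta-o(1))|\gamma|+O(\beta L))$ you use to truncate is itself the estimate that needs proving. You give no source for it, and with the sharp constant $4(1+\epsilon_\beta)$ it already finishes the claim on its own.
\end{itemize}

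The missing idea is that no area analysis of $\gamma$ is needed at all. Drop the floor in the numerator:
\[
\pi^H_{\Lambda';\Lambda}(\gamma\in\cE_\square\setminus\cG)\le \frac{\hatpi^H_{\Lambda'}(\gamma\in\cE_\square\setminus\cG)}{\hatpi^H_{\Lambda'}(\phi_x\geq 0,\ \forall x\in\Lambda)}\,.
\]
By FKG and \cref{eq:couple-pi-to-inf-vol}, the denominator is at least $\tfrac12\exp(-|\Lambda|\hatpi_\infty(\phi_o>H))\ge\tfrac12e^{-4(1+\epsilon_\beta)\beta L}$. In the floorless measure, \cref{prop:CE-law-without-area} and a standard Peierls argument bound the numerator by $e^{-(4.1\beta-C)L}$. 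This gives exactly the per-$\gamma$ input your displayed sum needs, for every $\gamma$ at once, with no truncation. It is the paper's argument.

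\emph{Deterministic part.} Here there is also a flaw: $h_1=H$ does not follow from the boundary condition. $h_1$ is the height on the outer side of $\gamma$. Other disagreement polymers between $\gamma$ and $\partial\Lambda'$ can change it, for example an enclosing up-loop to $H+1$ or a step down to $H-1$.

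You need your crossing argument for both heights, as the paper does. Along each of the $4L(1-\epsilon^\square_\beta)$ disjoint axis-parallel crossings, $\gamma$ contains a bond of $\fL$ going from $\le H$ to $\ge H+1$. So if $(h_1,h_0)\neq(H,H+1)$, the gradients of $\gamma$ along that crossing have total absolute value at least $2$. This forces $\sE_\beta(\gamma)\ge 8(1-\epsilon_\beta)\beta L>4.1\beta L$.

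Also, the per-crossing lower bound should be $\sum_j g_j^2\ge\sum_j|g_j|$, which holds by integrality. The bound $(h_0-H)^2$ ``by convexity'' is false when the jump is spread over several bonds, although your conclusion survives.
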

\begin{proof}
    By FKG and the decorrelation estimate of \cref{eq:couple-pi-to-inf-vol}, a standard computation gives a lower bound on the floor event:
    \[\hatpi^H_{\Lambda'; \Lambda}(\phi_x \geq 0,\, \forall x \in \Lambda) \geq \prod_{x \in \Lambda} \hatpi^H_{\Lambda'; \Lambda}(\phi_x \geq 0) \geq \tfrac12\exp\big(-|\Lambda|\hatpi_\infty^H(\phi_x < 0)\big) \geq \tfrac12e^{-4(1+\epsilon_\beta)\beta L}\,,\]
    where the last line uses the fact that $\hatpi_\infty^H(\phi_x < 0) = \hatpi_\infty^0(\phi_x \geq h) = (1+o(1))\tfrac{\lambda\beta}{L}$ and the bound on $\lambda$. Hence, it suffices to show that in the no-floor measure, we have $\hatpi^H_{\Lambda'}(\gamma \in \cE_\square \setminus \cG) = o(e^{-4(1+\epsilon_\beta)\beta L})$. But here we can use \cref{prop:CE-law-without-area}. In particular, since $\fI_{\Lambda'}(\gamma) \leq \epsilon_\beta|\gamma|$, we can conclude by a standard Peierls argument mapping $\gamma$ to the disagreement polymer with $4L$ bonds forming the boundary of a square, that $\hatpi^H_{\Lambda'}(\exists \gamma,\, \sE_\beta(\gamma) \geq 4.1L) \leq e^{-4.1\beta L}$. 
    
    Finally, it is immediate that $|\gamma| > 4.1L$ implies $\sE_\beta(\gamma) > 4.1L$. Moreover, if $h_0 \neq H+1$ or if $h_1 \neq H$, the condition that $\gamma \in \cE_\square$ implies that either there is a height gradient of $\geq 2$ along $4(1-\epsilon_\beta)L$ of the bonds in $\gamma$, or there are at least $8(1-\epsilon_\beta)L$ bonds in $\gamma$, both of which imply that $\sE_\beta(\gamma) > 4.1L$.
\end{proof}
For $\gamma \in \cE_\square$, using the bound on $\lambda$ and the large deviation ratios in \cref{thm:LD-DG}, we also have $|D_1 \cap \Lambda| \leq 4\epsilon_\beta^\square L^2$. Hence, for all $\gamma \in \cG \cap \cE_\square$, the conditions of \cref{cor:CE-macro-uniformBC-smallL} are satisfied, and we have 
\begin{align}\label{eq:CE-middle-step}
    \nonumber\pi^H_{\Lambda'; \Lambda}(\cC_{\gamma, H+1}) &\leq \exp\bigg(-\sE^*_\beta(\gamma) + \fI_{\Lambda'}(\gamma) + \tfrac{\uprho_0\lambda\beta}{L}|\Lambda \cap D_0| + \upxi_{1}\sum_{i\geq 2}|D_i \cap F|+ O(L^{1/2+o(1)})\bigg)\\
    &=: \fp_{\Lambda';\Lambda}(\gamma)e^{O(L^{1/2+o(1)})}\,.
\end{align}
With \cref{lem:prelim-ll-bounds,clm:CE-check-LW}, our goal now is to show that \[\sum_{\gamma \in \cG \cap \cE_\square} \fp_{\Lambda';\Lambda}(\gamma)e^{CL^{1/2+o(1)}} = o(1), \quad\mbox{ or equivalently }\quad \sum_{\gamma \in \cG \cap \cE_\square} \fp_{\Lambda';\Lambda}(\gamma) = o(e^{-CL^{1/2+o(1)}})\,.\] 

We begin by showing that the term $\upxi_1\sum_{i\geq 2}|D_i \cap F|$ is negligible. In fact, the proof is essentially the same as in \cref{prop:CE-meso}, only this time we are in a macroscopic setting and need to ensure that we do not violate the additional condition $\cE_\square$. For completeness we provide the details this time, though when similar estimates are needed in future lemmas we will simply refer back to the proof here or in \cref{prop:CE-meso} for details. 

To start, it is easy to show a crude bound on the maximum size of a component $\cD \in \fD(\gamma)$, say by $|\partial \cD| \leq L/5$. Let $u, v$ be the two cut-points delineating $\cD$. Then, $|\partial \cD| \geq 2\norm{u- v}_1$ since $\norm{u- v}_1$ is the length of the shortest path from $u$ to $v$, and $\partial \cD$ consists of two such paths. Observe that $u, v$ must also lie on the level line $\fL$ which $\gamma$ contains. Since $\fL \in \cE_\square$, the minimal length path between all of its cut-points is necessarily $\geq 4L(1-\epsilon_\beta^\square)$. Hence, we have $|\gamma| \geq 4L(1-\epsilon_\beta^\square) + \sum_{\cD \in \fD(\gamma)} |\partial \cD|$, whence the assumption $|\gamma| \leq 4.1L$ immediately implies the bound \begin{equation}\label{eq:rough-max-D}
    \max_{\cD \in \fD(\gamma)}|\partial \cD| \leq L/5\,.
\end{equation}

We next use a Peierls argument to show that for some constant $c > 0$,
\begin{equation}\label{eq:control-finite-comp-square}\sum_{\substack{\gamma \in \cG \cap \cE_\square\\
\max_{\cD \in \fD(\gamma)}|\partial \cD| > \log L}} \fp_{\Lambda';\Lambda}(\gamma) \leq e^{-c\beta \log L}\sum_{\gamma \in \cG \cap \cE_\square} \fp_{\Lambda';\Lambda}\,.
\end{equation}
Again we can fix $b$ and consider the map $T_b$ on $\gamma$ which replaces $\cD_b$ by a (arbitrarily chosen, say the outermost such) minimal length path from $u$ to $v$ that remains within $\cD_b$. As before, we have $\sE^*_\beta(T_b(\gamma)) \leq \sE^*_\beta(\gamma) -\tfrac\beta2|\partial \cD_b|$. Hence, by the decay properties of $\Phi$ and the bound on $\lambda$, we therefore have after accounting for the area terms that 
\[\fp_{\Lambda';\Lambda}(\gamma) \leq \fp_{\Lambda';\Lambda}(T_b(\gamma))e^{\frac{\beta - C}2|\partial \cD_b|+\tfrac{4(1+\epsilon_\beta)\beta}{L}|\cD_b|}\,.\]
By isoperimetry and the crude bound in \cref{eq:rough-max-D}, we have $|\cD_b| \leq |\partial \cD_b|^2/16 \leq |\partial \cD_b|L/80$, so that the above display implies that for some $c > 0$, 
\[\fp_{\Lambda';\Lambda}(\gamma) \leq \fp_{\Lambda';\Lambda}(T_b(\gamma))e^{-c\beta|\partial \cD_b|}\,.\]

Moreover, we claim that the resulting disagreement polymer satisfies $T_b(\gamma) \in \gamma \in \cG \cap \cE_\square$. The bound on $\sE_\beta(T_b(\gamma))$ is easily satisfied since the map $T_b$ only reduces the length of $\gamma$. Showing that $T_b(\gamma) \in \cE_\square$ amounts to showing that we can select a minimal length path from $u$ to $v$ staying within $\cD_b$ that stays outside of the square, $\square$, which $\fL$ contains. Suppose for contradiction that there is no such path. Then, take a shortest path $P$, and let $w, z$ be two points of $P$ on $\partial \square$ which mark an excursion into $\square$ in the sense that in between $w, z$, $P$ lies inside $\square$. Recalling the outer envelope in \cref{def:outer-envelope}, observe that the region sandwiched between $P$ and the arc of $\mathsf{OE}(\gamma)$ between $u, v$ is contained in $\cD_b$. Since $\fL$ lies outside of $\square$, so does $\mathsf{OE}(\gamma)$. Since the arc of $P$ from $w$ to $z$ is inside $\square$, the line segment $\overline{wz}$ (or the $L$ shaped path from $w$ to $z$, if $w, z$ are on two different sides of the square $\square$) is contained in this sandwiched region, and is therefore in $\cD_b$. Hence, the path $P'$ which replaces the arc of $P$ between $w$ and $z$ by $\overline{wz}$ is in $\cD_b$. Since $|P'| < |P|$, this is a contradiction. 

Finally, as before, the number of preimages $\gamma$ under $T_b$ for a given disagreement polymer $\gamma'$ is at most $k^2C^k$. The exact same computation in \cref{eq:map-argument-Di} now concludes the proof of \cref{eq:control-finite-comp-square}. 

With \cref{eq:control-finite-comp-square} proven, we can now restrict our attention to $\gamma$ which additionally satisfies $\max_{\cD \in \fD(\gamma)}|\partial \cD| \leq \log L$. Together with the isoperimetric inequality and the restriction that $|\gamma| \leq 4.1L$, this in particular implies that $\upxi_1\sum_{i\geq 2}|D_i \cap F| \leq O(\log L)$. Hence, our updated goal is now to show that 
\[\sum_{\substack{\gamma \in \cG \cap \cE_\square\\
\max_{\cD \in \fD(\gamma)}|\partial \cD| \leq \log L}} e^{-\sE^*_\beta(\gamma) + \fI_{\Lambda'}(\gamma) + \tfrac{\uprho_0\lambda\beta}{L}|\Lambda \cap D_0|} =: \sum_{\substack{\gamma \in \cG \cap \cE_\square\\
\max_{\cD \in \fD(\gamma)}|\partial \cD| \leq \log L}} \fq_{\Lambda';\Lambda}(\gamma)\leq o(e^{-CL^{1/2+o(1)}})\,.\]
We next continue to rule out a set of unlikely $\gamma$ by Peierls type arguments.

\begin{definition}
    For any two cut-points $v, v' \in \gamma$, let $d_{\gamma}(v, v')$ denote the number of bonds in between $v$ and $v'$. More precisely, removing $v$ and $v'$ splits $\gamma$ into two connected components, and $d_{\gamma}(v, v')$ is the number of bonds in the smaller of the two components. 
\end{definition}

\begin{definition}
    We say that $\gamma$ has a button hole if there are two cut-points $v, v' \in \gamma$, not on opposite sides of the square in the event $\cE_\square$, such that $d_{\gamma}(v, v') \geq \log L$ while $\norm{v - v'}_1 \leq \tfrac12d_{\gamma}(v, v')$.
\end{definition}  
\begin{lemma}
    Write $\gamma \in \cB$ to mean that $\gamma$ has a button hole. Then, 
    \begin{equation}\label{eq:control-button-hole}\sum_{\substack{\gamma \in \cG \cap \cE_\square\\
\max_{\cD \in \fD(\gamma)}|\partial \cD| \leq \log L\\\gamma \in \cB}} \fq_{\Lambda';\Lambda}(\gamma) \leq e^{-c\beta\log L}\sum_{\substack{\gamma \in \cG \cap \cE_\square\\
\max_{\cD \in \fD(\gamma)}|\partial \cD| \leq \log L}} \fq_{\Lambda';\Lambda}(\gamma)\end{equation}
\end{lemma}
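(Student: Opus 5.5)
The argument is a Peierls map, in the same pattern as the proof of \cref{eq:control-finite-comp-square}. Fix $\gamma\in\cB$ in the restricted class, and pick a witnessing pair of cut-points $v,v'$ with $d_\gamma(v,v')=k\ge\log L$ and $\norm{v-v'}_1\le k/2$. Let $S$ denote the smaller of the two components of $\gamma\setminus\{v,v'\}$. Define $T_{v,v'}(\gamma)$ by replacing $S$ with a minimal-length lattice path $P$ from $v$ to $v'$. Choose $P$ to avoid the square $\square$ from $\cE_\square$: since $v,v'$ are not on opposite sides of $\square$, there is an $\ell^1$-geodesic (a straight or L-shaped path) that stays outside $\square$. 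We may need to push it slightly away from the rest of $\gamma$, at cost $O(1)$ extra bonds, so that the new polymer is still a legal disagreement polymer with the same $h_0=H+1$, $h_1=H$.

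Next, compare weights. The energy drops by at least $\beta(k-|P|)\ge \beta k/2-O(\beta)$. The decoration term changes by at most $\epsilon_\beta k$, by the decay property of $\Phi$ in \cref{def:disagree-polymer-len-energy-decor}. The finite components $D_i$, $i\ge2$, attached to $S$ are erased, which only lowers $\sE^*_\beta$. The area term changes by at most $\frac{\uprho_0\lambda\beta}{L}|A|$, where $A$ is the region enclosed between $S$ and $P$; this is positive only if the move shrinks $D_0$.

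To bound $|A|$, note that $\partial A\subset S\cup P$ has length at most $\frac32k$. Isoperimetry then gives $|A|\le \frac{9}{64}k^2$. We also have $k\le |\gamma|\le 4.1L$ from \cref{clm:CE-check-LW}, so the area cost is at most $\frac{\lambda\beta}{L}\cdot\frac{9\cdot 4.1}{64}Lk\approx 2.3\beta k$. This exceeds the gain $\beta k/2$, so the crude bound fails for $k$ of order $L$. This is the step I expect to be the main obstacle.

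My fix is to choose the pair $(v,v')$ minimally: among all button-hole pairs, take one with the smallest $d_\gamma$. Alternatively, restrict to $k\le\epsilon L$, where $|A|\le k^2/10\le \epsilon Lk/10$ makes the area cost $O(\epsilon\beta k)\ll\beta k/2$. For $k>\epsilon L$ I would argue that the pair cannot exist. The set $S$ has at least $\epsilon L$ bonds squeezed between points at $\ell^1$-distance at most $k/2$. Together with $\gamma$ surrounding $\square$, whose side is $L(1-\epsilon^\square_\beta)$, this forces $|\gamma|\ge 4L(1-\epsilon^\square_\beta)+k/2>4.1L$, contradicting $\gamma\in\cG$. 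The excess length is at least $k-\norm{v-v'}_1\ge k/2$ over the geodesic route around $\square$, and the hypothesis that $v,v'$ are not on opposite sides guarantees the geodesic route is compatible with going around $\square$. So only $k\le\epsilon L$ with $\epsilon=\epsilon_\beta$ small remains, and there $\fq(\gamma)\le \fq(T(\gamma))e^{-c\beta k}$.

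Finally, count preimages: given $\gamma'$, the number of preimages with a given $k$ is at most $L^2\cdot k^{O(1)}\cdot C^{k}$. The factor $L^2$ accounts for the location of $v$, the factor $k^{O(1)}$ for $v'$ and the position of $P$, and $C^k$ for the connected piece $S$ together with its attached components, whose sizes are at most $O(k)$ in total. We must also check that $T(\gamma)$ remains in $\cG\cap\cE_\square$ with $\max|\partial\cD|\le\log L$. Membership in $\cG$ holds because the length decreases. Membership in $\cE_\square$ holds because $P$ avoids $\square$, arguing exactly as in the excursion-shortening step of \cref{eq:control-finite-comp-square}. The component bound holds because components are only removed. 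Summing as in \cref{eq:map-argument-Di}, $\sum_{k\ge\log L}L^2k^{O(1)}C^ke^{-c\beta k}\le e^{-c'\beta\log L}$, which gives \cref{eq:control-button-hole}.
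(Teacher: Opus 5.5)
Your map (replace the smaller arc $S$ by an $\ell^1$-geodesic $P$ avoiding $\square$), your energy and decoration bookkeeping, and your preimage count $L^2C^{k}$ all match the paper. The only divergence is how you control the area term, and there your case split does not close as written.

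Your excess-length inequality $|\gamma|\ge 4L(1-\epsilon^\square_\beta)+k/2$ is correct. However, combined with $|\gamma|\le 4.1L$, it only rules out $k>(0.2+8\epsilon^\square_\beta)L$, not $k>\epsilon_\beta L$. So with your threshold $\epsilon=\epsilon_\beta$, the range $\epsilon_\beta L<k\le 0.2L$ is left uncovered. (Also, isoperimetry gives $|A|\le 9k^2/64$, not $k^2/10$.) The repair is to set the threshold at an absolute constant, e.g.\ $k\le L/4$, which the length budget covers once $\epsilon^\square_\beta<1/160$. For such $k$ the area cost is at most $\frac{4.1\beta}{L}\cdot\frac{9}{64}k^2\le 0.15\,\beta k$. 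This is still beaten by the gain $\beta k/2-\epsilon_\beta k-O(\beta)$. The minimal-$d_\gamma$ selection you float is not needed.

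The paper avoids any restriction on $k$. Both $\gamma$ and $T(\gamma)$ lie in $\cE_\square$, which you must verify anyway. Hence for both, $|\Lambda\cap D_0|$ lies between the area of a square of side $L(1-\epsilon'_\beta)$ and $L^2$. The change in $|\Lambda\cap D_0|$ is therefore at most $\min(k^2,2\epsilon'_\beta L^2)$. The resulting area cost is $\frac{4.1\beta}{L}\min(k^2,2\epsilon'_\beta L^2)=O(\sqrt{\epsilon'_\beta})\,\beta k$, uniformly in $k$.

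Your corrected version instead uses the length budget to show explicitly why $|\gamma|\le 4.1L$ and the ``not on opposite sides'' clause matter. The paper only asserts this parenthetically. In exchange, your route requires checking that $S^c\cup P$ still surrounds $\square$.
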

\begin{proof}
    Let $\gamma$ be a disagreement polymer in the left sum, and let $v, v'$ be two cut-points resulting in a button hole (if there are multiple candidates for $v, v'$, choose arbitrarily, say the minimal pair under lexicographic ordering). Let $T(\gamma)$ be the result of replacing the portion between $v, v'$ by a shortest length path from $v$ to $v'$ with an edge disagreement of one along the path. This shortest length path can always be chosen such that $T(\gamma)$ still contains a square of side length $L(1-\epsilon'_\beta)$  (we can assume that $v, v'$ are not on opposite sides of the square since this would violate the condition $|\gamma| \leq 4.1L$). By the button hole criterion, we have $\sE_\beta(T(\gamma)) \leq \sE_\beta(\gamma) - \tfrac\beta2d_{\gamma}(v, v')$. Since we are only possibly removing some finite clusters $D_i$, the same inequality holds with $\sE^*_\beta$ instead of $\sE_\beta$. The change in the interaction term is also at most $\epsilon_\beta d_{\gamma}(v, v')$ by the decay properties of $\Phi$. Moreover, $\big||\Lambda \cap D_0(\gamma)| - |\Lambda \cap D_0(T(\gamma))|\big| \leq \min(d_{\gamma}(v, v')^2, \epsilon'_\beta L^2)$, which is always negligible compared to $\tfrac\beta2d_{\gamma}(v, v')$. At the same time, the number of preimages $\gamma$ under the map $T$ is at most $L^2C^{d_{\gamma}(v, v')}$ for a constant $C$, given by the $L^2$ choices for the two cut-points $v, v'$ and the $C^{d_{\gamma}(v, v')}$ choices of how to reconstruct the portion of $\gamma$ between $v$ and $v'$. Hence, we can conclude by a Peierls map argument completely analogous to the computation in \cref{eq:map-argument-Di}. That is, enumerate over the possible preimages $\gamma$ to a given image $\gamma'$ such that $d_{\gamma}(v, v')$ is equal to a fixed value $k \geq \log L$, and sum over $k$.
\end{proof}
Given \cref{eq:control-button-hole}, it now suffices to show that 
\begin{equation}\label{eq:final-reduction-lower-window}\sum_{\substack{\gamma \in \cG \cap \cE_\square\\
\max_{\cD \in \fD(\gamma)}|\partial \cD| \leq \log L\\\gamma\notin \cB}} \fq_{\Lambda';\Lambda}(\gamma)\leq o(e^{-CL^{1/2+o(1)}})\,.
\end{equation}

We finally note that the bound $|\gamma| \leq 4.1L$ and the domain enlargement trick to move $\partial \Lambda'$ far away implies that we can replace $\fI_{\Lambda'}(\gamma)$ by $\fI_{\Z^2}(\gamma)$ at the cost of $o(1)$ in the exponent. We denote the weights with this modification by $\fq_{\Z^2;\Lambda}(\gamma)$ (This step could have been done earlier, but is not needed until now.) 

The benefit of ruling out button-holes is that we can now control the interaction between different parts of $\gamma$ in the following sense. Suppose $\{v_1, \ldots v_s\}$ is a set of ordered cut-points on $\gamma$. Then we can naturally decompose $\gamma = \gamma_1 \circ \gamma_2 \ldots \circ \gamma_s$ such that $\gamma_i$ is the portion of $\gamma$ between $v_i$ and $v_{i+1}$, with the convention that $v_{s+1} = v_1$.
\begin{lemma}\label{lem:bound-interactions}
    There exists a constant $C > 0$ such that for any $\gamma$ with no button holes satisfying $\max_{\cD \in \fD(\gamma)}|\partial \cD| \leq \log L$, and any choice of ordered cut-points $\{v_1, \ldots v_s\}$, we have
    \[|\fI_{\Z^2}(\gamma) - \sum_{i = 1}^s\fI_{\Z^2}(\gamma_i)| \leq sC\log L\,.\]
\end{lemma}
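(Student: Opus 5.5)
The plan is to expand the difference directly from the definition \cref{eq:IU(gamma)-def}. Since every $\Delta^*_{\gamma_i}\subset\Delta^*_\gamma$ and $\Delta^*_\gamma=\bigcup_i\Delta^*_{\gamma_i}$, we have
\[
\fI_{\Z^2}(\gamma)-\sum_{i=1}^s\fI_{\Z^2}(\gamma_i) = -\sum_{\sfW}\Phi(\sfW)\big(N(\sfW)-1\big)\,,
\]
where $N(\sfW)$ is the number of indices $i$ with $\sfW\cap\Delta^*_{\gamma_i}\ne\emptyset$, and the sum runs over $\sfW$ with $N(\sfW)\ge2$. It therefore suffices to bound $\sum_{\sfW:\,N(\sfW)\ge2}|\Phi(\sfW)|\,N(\sfW)$ by $sC\log L$.

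The first step is to charge each such $\sfW$ to the pairs $(i,j)$ it touches. For any $\sfW$ meeting both $\Delta^*_{\gamma_i}$ and $\Delta^*_{\gamma_j}$ with $i\ne j$, connectivity gives $\bd(\sfW)\ge c\,\dist(\Delta^*_{\gamma_i},\Delta^*_{\gamma_j})$ as well as $\bd(\sfW)\ge c\,\mathrm{diam}(\sfW)$. Combined with the decay bound (ii) in \cref{def:disagree-polymer-len-energy-decor}, the weight $|\Phi(\sfW)|\le e^{-(\beta-C)\bd(\sfW)}$ absorbs both the polynomial factor $N(\sfW)\le|\sfW|$ and the entropy of such $\sfW$ rooted at a given point. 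The contribution of $\sfW$ containing a fixed site $x$ and reaching distance at least $r$ is then $e^{-(\beta-C')r}$.

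The second step splits by geometry.
\begin{itemize}
\item \emph{Adjacent pieces.} For consecutive pieces $\gamma_i,\gamma_{i+1}$ meeting at the cut-point $v_{i+1}$, I sum over $x\in\Delta^*_{\gamma_i}$ at distance $r$ from $\Delta^*_{\gamma_{i+1}}$. I want the number of such $x$ to be $O(r+\log L)$. That holds if near $v_{i+1}$ the polymer looks locally like a simple path. The hypothesis $\max_{\cD\in\fD}|\partial\cD|\le\log L$ gives this: the finite components $D_i$ attached near the cut-point have size at most $\log L$, which contributes the $\log L$ term. Summing $(r+\log L)e^{-(\beta-C)r}$ over $r$ gives $O(\log L)$ per cut-point, hence $O(s\log L)$ in total.
\item \emph{Non-adjacent returns.} The remaining contributions come from points of $\gamma_i$ that come close (distance $r$) to a non-neighbouring piece $\gamma_j$, or to a distant stretch of the same neighbour. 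In that case there are cut-points $v,v'$ on $\gamma$ with $\|v-v'\|_1\le r+O(\log L)$ while $d_\gamma(v,v')$ is larger. The no-button-hole condition forces $d_\gamma(v,v')<\max(\log L,\,2\|v-v'\|_1)$, except when $v,v'$ lie on opposite sides of the square in $\cE_\square$; those pairs are at distance at least $L(1-\epsilon_\beta^\square)$ and contribute $e^{-cL}$. Otherwise such close returns occur only within $\gamma$-distance $O(r+\log L)$ of a cut-point $v_k$ separating the two stretches. I charge each to the nearest cut-point and recover the same bound $\sum_r (r+\log L)e^{-(\beta-C)r}=O(\log L)$ per cut-point.
\end{itemize}

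The main obstacle is making the counting in the second bullet rigorous. One has to show that for each cut-point the number of pairs of points (one on each side) at mutual distance $r$ is $O((r+\log L)^{2})$, rather than proportional to the full length of the pieces, which would be catastrophic. My first attempt will be to parametrize $\gamma$ by arclength between cut-points and use the no-button-hole inequality $d_\gamma\le 2\|v-v'\|_1$ for $d_\gamma\ge\log L$. This shows that two points at $\ell^1$-distance $r$ are within arclength $\max(2r,\log L)$ plus the $O(\log L)$ sizes of the $\cD$-components. That forces each close pair to straddle a cut-point within that arclength window, and the resulting $O((r+\log L)^2)$ count is still absorbed by $e^{-(\beta-C)r}$ for large $\beta$.
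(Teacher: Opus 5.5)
Your proposal is correct in substance but takes a different, multi-scale route from the paper.

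\textbf{The paper's route.} The paper uses a single threshold at scale $\log L$. It lets $W\subset\gamma_i$ be the points within $\ell^1$-distance $\log L$ of $\gamma\setminus\gamma_i$. The $\cD$-bound then produces nearby cut-points at mutual distance at most $3\log L$, and the no-button-hole condition, applied only at that scale, shows these points lie within arclength $6\log L$ of $v_i$ or $v_{i+1}$, so $|W|\le 12\log L$. Each point of $W$ contributes $O(1)$. Every other point of $\gamma_i$ contributes at most $e^{-c\log L}$, and these are disposed of by the crude bound $|\gamma_i|e^{-c\log L}$, which implicitly uses the polynomial length bound $|\gamma|\le 4.1L$ from the context and $\beta$ large.

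\textbf{Your route.} You instead show, for every scale $r$, that points of $\gamma_i$ at distance $r$ from the rest of $\gamma$ lie within arclength $O(r+\log L)$ of an endpoint of $\gamma_i$. This invokes the no-button-hole condition at all scales. It also forces you to treat the opposite-sides exception explicitly, which the paper's proof leaves implicit; that exception cannot arise at distance $3\log L$. Your version is more careful but more work than necessary: once $r\ge C\log L/\beta$, the trivial bound $|\gamma|^2e^{-(\beta-C)r}$ is already negligible, and truncating there recovers the paper's two-regime argument. Note also that in your first bullet the $\cD$-bound alone does not give $O(r+\log L)$ points. A distant stretch of $\gamma_i$ running alongside $\gamma_{i+1}$ is excluded only by the no-button-hole condition, which your second bullet supplies, so the two bullets are needed together.

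\textbf{One correction.} In your last paragraph you bound the number of \emph{pairs} at mutual distance $r$ by $O((r+\log L)^2)$ and claim this is absorbed by $e^{-(\beta-C)r}$. It is not: $\sum_{r\ge 1}(r+\log L)^2e^{-(\beta-C)r}\asymp e^{-(\beta-C)}\log^2 L$, which would give $sC\log^2L$ rather than the stated $sC\log L$. There are two easy fixes.
\begin{itemize}
\item Use the count from your first bullet: at most $O(r+\log L)$ points $x$, each carrying total weight $e^{-(\beta-C')r}$ after summing over all $\sfW\ni x$.
\item Alternatively, observe that a fixed $x$ has only $O(r)$ lattice partners at $\ell^1$-distance $r$, so the pairs number $O(r(r+\log L))$, and $\sum_r r(r+\log L)e^{-(\beta-C)r}=O(\log L)$.
\end{itemize}
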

\begin{proof}
    By definition of the $\fI_{\Z^2}$ and the decay properties of $\Phi$, we have
    \[|\fI_{\Z^2}(\gamma) - \sum_{i = 1}^s\fI_{\Z^2}(\gamma_i)| \leq \sum_{i \neq j} \sum_{\substack{\sfW \cap \Delta_{\gamma_i \neq \emptyset}\\{\sfW \cap \Delta_{\gamma_j \neq \emptyset}}}}\Phi(\sfW) \leq \sum_{i=1}^s\sum_{\substack{x \in \gamma_i\\y \in \gamma \setminus \gamma_i}}e^{-c\norm{x-y}_1}\,.\]
    Now fix $i$. Let $W$ be the set of points $x \in \gamma_i$ such that there exists $z \in \gamma \setminus \gamma_i$ with $\norm{z - x}_1 \leq \log L$. By the condition that $\max_{\cD \in \fD(\gamma)}|\partial \cD| \leq \log L$, we can find cut-points $z^\mathsf{cut}, x^\mathsf{cut}$ such that $\norm{z^\mathsf{cut}-x^\mathsf{cut}}_1 \leq 3\log L$. By the button-hole criterion, this implies that $d_{\gamma}(z^\mathsf{cut}, x^\mathsf{cut}) \leq 6\log L$. Now assume that the $v_i$ are ordered clockwise. Starting from $v_i$ and going clockwise, let $x_i^\mathsf{cut}$ be the first cut-point such that $d_{\gamma}(v_i, x_i^\mathsf{cut}) \geq 6\log L$. Let $x_{i+1}^\mathsf{cut}$ be defined analogously by starting from $v_{i+1}$ and going counter-clockwise. By the above argument, every $x \in W$ must either be in between $v_i$ and $x_i^\mathsf{cut}$, or be in between $x_{i+1}^\mathsf{cut}$ and $v_{i+1}$. In particular, $|W| \leq 12\log L$. We can bound
    \begin{equation*}
        \sum_{\substack{x \in W\\y \in \gamma \setminus \gamma_i}}e^{-
    c\norm{x-y}_1} \leq |W|\sum_{y \in \Z^2}e^{-c\norm{x-y}_1} \leq C\log L\,,
    \end{equation*}
    where the point $x$ in the second sum is arbitrary. At the same time, we have
    \begin{equation*}
        \sum_{\substack{x \in \gamma_i \setminus W\\y \in \gamma \setminus \gamma_i}}e^{-
    c\norm{x-y}_1} \leq |\gamma_i|\sum_{\substack{y \in \Z^2\\ d(x, y) \geq \log L}}e^{-c\norm{x-y}_1} \leq e^{-c\log L}\,.
    \end{equation*}
    Combining the above two displays and summing over $i$ completes the proof.
\end{proof}
For brevity, to prove \cref{eq:final-reduction-lower-window}, let $\widetilde\cE_\square$ be the set of all $\gamma \in \cG \cap \cE_\square$ such that $\max_{\cD \in \fD(\gamma)} |\partial \cD| \leq \log L$, and $\gamma$ has no button-holes. We will enumerate over $\gamma \in \widetilde\cE_\square$ by marking a set of special anchor points $\underline{v}$. For a sequence of points $\underline v = \{v_1, \ldots v_s\}$ in the dual graph, define the set $\cP(\underline v)$ as disagreement polymers $\gamma$ such that $\gamma \in \widetilde\cE_\square$, $\mathsf{OE}(\gamma)$ contains all the points $v_i$ in order from 1 to $s$, and each $v_{i+1}$ is the next cut-point in $\mathsf{OE}(\gamma)$ after $v_i$ which has $\norm{\cdot}_1$ distance at least $L^{1/2}$ from $v_i$. In particular, we assume from the condition that the maximum size of any $|\partial \cD| \leq \log L$ that $L^{1/2} \leq \norm{v_{i+1} - v_i}_1 \leq L^{1/2} + \log L$. We may further assume that $\norm{v_s - v_1}_1 \leq L^{1/2}$. Since $|\gamma|\leq 4.1L$, this implies that $s \leq 4.1L^{1/2}$ in order for $\cP(\underline v)$ not to be empty.

Now fix $\underline v$. We will bound the sum \cref{eq:final-reduction-lower-window} first over all $\gamma \in \cP(\underline{v})$. For convenience of notation, let $v_{s+1} := v_1$. Define also $L_{\underline v}$ as the linear interpolation of the points $\underline v$. Recall the definitions of $\tau_\beta$ and $\cP_{\Z^2}(v_i, v_{i+1})$ from \cref{sec:poly-st-wulff}. Using \cref{lem:bound-interactions} and the relationship between $\tau_\beta$ and partition function given by \cite[Prop.~3.14~(ii) and Prop.~4.14]{ChenLubetzky25}, we have a bound on the sum of the weights without the area term:
\begin{align*}
    \sum_{\gamma \in \cP(\underline v)}\exp(-\sE^*_\beta(\gamma) + \fI_{\Z^2}(\gamma)) &\leq e^{sC\log L}\prod_{i = 1}^s\sum_{\gamma_i \in \cP_{\Z^2}(v_i, v_{i+1})}\exp(-\sE^*_\beta(\gamma_i) + \fI_{\Z^2}(\gamma_i))\\
    &\leq e^{sC\log L}\prod_{i = 1}^s\exp(-\tau_\beta(v_{i+1} - v_i))\\
    &= \exp(-\int_{L_{\underline v}}\tau_\beta(\theta_s)ds + sC\log L)\,.
\end{align*}

Define $K_{\underline v}$ as the convex hull of $\underline v$. By convexity of the surface tension, we have
\[\int_{L_{\underline v}}\tau_\beta(\theta_s)ds \geq \int_{\partial K_{\underline v}}\tau_\beta(\theta_s)ds \geq \int_{\partial (K_{\underline v} \cap \Lambda)}\tau_\beta(\theta_s)ds\,.\]
To handle the area term, we observe that $|\Lambda \cap D_0| \leq |K_{\underline v} \cap \Lambda| + sL \leq |K_{\underline v} \cap \Lambda| + 4.1 L^{3/2}$. Hence, we have
\begin{align*}\sum_{\gamma \in \cP(\underline v)}\exp(-\sE^*_\beta(\gamma) &+ \fI_{\Z^2}(\gamma) + \tfrac{\uprho_0\lambda\beta}{L}|\Lambda \cap D_0|) \\
&\leq \exp(-\int_{\partial (K_{\underline v} \cap \Lambda)}\tau_\beta(\theta_s)ds + \tfrac{\uprho_0\lambda\beta}{L}|K_{\underline v} \cap \Lambda| +4.1\uprho_0\lambda\beta L^{1/2} + 4.1CL^{1/2}\log L)\\
&\leq \exp(-\int_{\partial (K_{\underline v} \cap \Lambda)}\tau_\beta(\theta_s)ds + \tfrac{\uprho_0\lambda\beta}{L}|K_{\underline v} \cap \Lambda| +O(L^{1/2}\log L))\,.
\end{align*}
By rescaling to the unit square, we can use \cref{lem:linear-critical-window} to obtain that
\begin{align*}-\int_{\partial (K_{\underline v} \cap \Lambda)}\tau_\beta(\theta_s)ds + \tfrac{\uprho_0\lambda\beta}{L}|K_{\underline v} \cap \Lambda| &\leq -.9\beta(\lambda_* - \uprho_0\lambda)L\,,
\end{align*}
whence we have a combined upper bound of $\exp(-.9\beta(\lambda_* - \uprho_0\lambda)L + O(L^{1/2}\log L))$ for the total weight of all the $\gamma \in \cP(\underline{v})$ contributing to the sum in \cref{eq:final-reduction-lower-window}, for a fixed $\underline v$. Since the number of possible $\underline v$ is bounded by $|\Lambda'|^{s} \leq O(L^{8.2L^{1/2}}) = O(\exp(8.2L^{1/2}\log L))$, we conclude by summing over all choices of $\underline{v}$ that \cref{eq:final-reduction-lower-window} will hold as long as $(\lambda_* - \uprho_0\lambda)L \gg L^{1/2+o(1)}$. Finally, we conclude by checking that for $L \leq L_*^{(H+1)} - (L_*^{(H+1)})^{1/2+o(1)}$, we have 
\[(\tfrac{\lambda_*}{\uprho_0} - \lambda)L \geq \frac{\hatpi_\infty(\phi_o = H+1)}{\beta}(L_*^{(H+1)}-1 - (L_*^{(H+1)} - (L_*^{(H+1)})^{1/2+o(1)}))L = O(L^{1/2+o(1)})\,,\]
concluding the proof of \cref{prop:below-window}.
\subsection{Existence of a large \texorpdfstring{$H+1$}{H+1} level line above the window}
The goal of this subsection is to prove \cref{prop:above-window}. By \cref{lem:prelim-ll-bounds} we can additionally assume that $\lambda <4(1+\tfrac{3}{4\beta})$, and we know that w.h.p.\ only \texttt{large} $H+1$ level lines containing a square $\Lambda_{L(1-\epsilon_\beta^\square)}$ can exist. We follow the strategy of \cite[Section 4.3]{CLMST16}. 

We now proceed via a proof by contradiction. Let $\fS_0$ be the event that there are no \texttt{large} $H+1$ level lines with area larger than $(1-\epsilon_\beta^\square)^2L^2$. We wish to show that $\pi^0_\Lambda(\fS_0) = o(1)$. Recall the shapes defined in \cref{def:cL} and the notational comment of \cref{rem:cL-alt-notation}. Recall also that $\ell^*_n = \ell^*_n(L) := \frac{\sfw_1(\tau_\beta)N_n}{2L\uprho_n}$. 
By \cref{thm:old-level-line-contains-Wulff} and the bounds on $\uprho_n$ in \cref{prop:uprho-bound},
w.h.p.\ there exists an $H$ level line $\fL_1$ containing $L(1-L^{-2/3 - o(1)})\sL(\ell^*_0)$. We can reveal it and by monotonicity it suffices to bound $\pi^H_{\Int(\fL_1)}(\fS_0)$, as $\fS_0$ is decreasing.

Now define $\fS$ as having no \texttt{large} disagreement polymers. The goal is to use \cref{cor:CE-macro-uniformBC-smallL} to show that $\pi^H_{\Int(\fL_1)}(\fS) = o(1)$ by summing over a set of nice $\gamma$. Since \texttt{large} level lines where the interior is lower than the exterior can always be rule out via the standard Peierls argument, this will imply that $\pi^H_{\Int(\fL_1)}(\fS_0) = o(1)$. Define the shape $\cK = \partial((L(1-L^{-1/2+o(1)}))\sL(\ell^*_0))$. Define also ``cigar shapes'' as follows: for any two points $u = (u_1, u_2), v = (v_1, v_2)$, let $\theta$ be the angle of $\overline{uv}$ with the $x$-axis, and $M_{u, v} = v_1 - u_1$. Assume for simplicity that $\theta \in [0, \pi/4]$. Define $\cC^\pm_{u, v}$ by
		\begin{equation}\label{eq:cigar-curves}
			\cC^\pm_{u, v}(t) = \tan(\theta_{u, v})(t-u_1) + \left(\frac{(t - u_1)(M_{u, v} - t + u_1)}{M_{u, v}}\right)^{1/2}(\log L)^2\,.
		\end{equation}
Define the cigar shape $\sC = \sC(u, v)$ as the region in between the curves $\cC^+_{u, v}$ and $\cC^-_{u, v}$. For $u, v$ with an angle not in $[0, \pi/4]$, we can define $\sC(u, v)$ via reflection across $x$ and $y$ axes.

Let $\{v_i\}_{i = 1}^s$ be a (arbitrarily chosen) set of points on $\cK$ such that the distance between any two points is between $3L/s$ and $5L/s$. For simplicity of notation, define $v_{s+1} = v_1$. Our nice set of $\gamma$, denoted $\cG$, will be all $\gamma$ contained in $\bigcup_{i = 1}^s \sC(v_i, v_{i+1})$ with length $|\gamma| \leq 10L$ such that if $D_0$ is the region containing the origin, then $h_0 = H$, if $D_1$ as the region containing the boundary vertices $\partial \Int(\fL_1)$, we have $h_1 = H-1$, and $\max_{\cD \in \fD(\gamma)}|\partial \cD| \leq \log L$. By construction, this implies that $|D_1^\circ \cap F| \leq \epsilon_\beta L^2$. All $\gamma \in \cG$ thus satisfy the conditions of \cref{cor:CE-macro-uniformBC-smallL}, and together with the bound on $|\partial \cD|$ we have
\[\pi^H_{\Int(\fL_1)}(\cC_{\gamma, H+1})/\pi^H_{\Int(\fL_1)}(\fS) \geq \exp(-\sE^*_\beta(\gamma) + \fI_{\Int(\fL_1)}(\gamma) + \tfrac{\uprho_0\lambda\beta}{L}|D_0| + L^{1/2+o(1)})\,.\]
Since $|D_0| \geq |\Int(\cK)| - s(5L/s)^2 = |\Int(\cK)| - 5L^2/s$,
so we can replace $|D_0|$ with $|\Int(\cK)|$ at a cost of $L/s$ as error. Moreover, $\gamma$ stays distance $L^{1/2+o(1)}$ away from $\fL_1$, so we can replace $\fI_{\Int(\fL_1)}(\gamma)$ with $\fI_{\Z^2}(\gamma)$. Finally, letting $\gamma_i$ denote the portion of $\gamma$ between $v_i$ and $v_{i+1}$, we can write $\sE^*_\beta(\gamma) = \sum_i \sE^*_\beta(\gamma_i)$, and by construction of the cigar shapes we have $|\fI_{\Z^2}(\gamma) - \sum_{i=1}^s \fI_{\Z^2}(\gamma_i)| = O(s(\log L)^2)$. Hence, we have
\[\pi^H_{\Int(\fL_1)}(\cC_{\gamma, H+1})/\pi^H_{\Int(\fL_1)}(\fS) \geq \exp(\tfrac{\uprho_0\lambda\beta}{L}|\Int(\cK)| + L^{1/2+o(1)} + O(L/s) + O(s(\log L)^2))\prod_i e^{-\sE^*_\beta(\gamma_i)+ \fI_{\Z^2}(\gamma_i)}\,.\]

We study the product first, which captures the weight of the polymer $\gamma_i$ with no area tilt terms. We claim that 
\[\sum_{\substack{\gamma_i: v_i \mapsto v_{i+1}\\\gamma_i \in \sC(v_i, v_{i+1})\\|\gamma_i| \leq 10L/s\\\max_{\cD \in \fD(\gamma_i)|\partial \cD| \leq \log L}}}e^{-\sE^*_\beta(\gamma_i)+ \fI_{\Z^2}(\gamma_i)} \geq e^{-\tau_\beta(v_{i+1} - v_i) + O(1)}\,.\]
Indeed, if the sum did not have any restrictions besides $\gamma_i : v_i \mapsto v_{i+1}$, this holds by \cite[Prop.~3.14~(ii) and Prop.~4.14]{ChenLubetzky25}. We then note that each of the additional restrictions only affect the sum by a multiplicative factor of $1-o(1)$. Indeed, we can control the effect of restricting to the cigar shape by \cite[Lem.~3.15]{ChenLubetzky25}, control the length by a simple Peierls argument mapping $\gamma_i$ to a minimal length path from $v_i$ to $v_{i+1}$, and control the size of $\cD \in \fD(\gamma_i)$ by the same argument used in the proof of \cref{prop:CE-meso} (only simpler since there are no area terms here). 

Now we can take $s = L^{1/2}$ to absorb all the error terms together, define $\overline\cK$ as the linear interpolation of the points $\{v_i\}$, and combine the above two displays to obtain 
\begin{align}\label{eq:bound-S-prob}
    \sum_{\gamma \in \cG}\frac{\pi^H_{\Int(\fL_1)}(\cC_{\gamma, H+1})}{\pi^H_{\Int(\fL_1)}(\fS)} \geq &\exp\bigg(-\int_{\overline\cK}\tau_\beta(\theta_s)ds+\tfrac{\uprho_0\lambda\beta}{L}|\Int(\cK)| + L^{1/2+o(1)}\bigg)\nonumber\\\nonumber
    &\geq \exp\bigg(-\int_{\cK}\tau_\beta(\theta_s)ds+\tfrac{\uprho_0\lambda\beta}{L}|\Int(\cK)| + L^{1/2+o(1)}\bigg)\\
    &=\exp\big(L\cF^0_{\uprho_0\lambda}((1 - L^{-1/2+o(1)})\sL(\ell^*_0)) + L^{1/2+o(1)}\big)\,,
\end{align}
using the convexity of the surface tension for the second inequality.

Since $\sum_{\gamma \in \cG} \pi^H_{\Int(\fL_1)} \leq 1$, then if we show that the right side of \cref{eq:bound-S-prob} goes to $\infty$ as $L \to \infty$, this implies that $\pi^H_{\Int(\fL_1)} = o(1)$. To reference \cref{lem:linear-critical-window}, it will be easier to use the notational exchange $\sL(\ell^*_0) \equiv \sL(\uprho_0\lambda)$ via the identification in \cref{rem:cL-alt-notation}. 
For any $C \in [0, 1]$ and any $\lambda$, we have $\cF_{\lambda}(C\partial \sL(\lambda')) = C\cF_{\lambda}(\partial\sL(\lambda)) - C(1-C)\lambda\beta|A(\sL(\lambda))|$ by the definition of $\cF_\lambda$, so that when $C = 1 - L^{-1/2+o(1)}$, we have 
\[\cF_{\uprho_0\lambda}((1-L^{-1/2+o(1)})\partial\sL(\uprho_0\lambda)) \geq (1-L^{-1/2+o(1)})\cF_{\uprho_0\lambda}(\partial\sL(\uprho_0\lambda)) - \beta\uprho_0\lambda L^{-1/2+o(1)}\,.\] From \cref{lem:linear-critical-window}, we have that $\cF_{\uprho_0\lambda}(\partial\sL(\uprho_0\lambda)) \geq (\uprho_0\lambda - \lambda_*)(\beta-1)$. Put altogether, we have
\[L\cF_{\uprho_0\lambda}((1-L^{-1/2+o(1)})\partial\sL(\uprho_0\lambda)) \geq L(\uprho_0\lambda - \lambda_*)(\beta - 1)(1 - L^{-1/2+o(1)}) - \beta\uprho_0\lambda L^{1/2+o(1)}\,,\]
and we want to ensure that $L\cF_{\uprho_0\lambda}((1-L^{-1/2+o(1)})\partial\sL(\uprho_0\lambda)) \gg L^{1/2+o(1)}$. For this, it suffices to take $\lambda$ such that $\uprho_0\lambda - \lambda_* \gg L^{-1/2+o(1)}$. We can check that for $L \geq L_*^{(H+1)} + (L_*^{(H+1)})^{1/2+o(1)}$, we indeed have
\[\uprho_0\lambda - \lambda_* \geq \uprho_0\frac{\hatpi_\infty(\phi_o = H+1)}{\beta}(L - L_*^{(H+1)}) \geq O((L_*^{(H+1)})^{1/2+o(1)}/L) = O(L^{-1/2+o(1)})\,.\]
Hence, for such $L$, we can conclude from \cref{eq:bound-S-prob} that $\pi^H_{\Int(\fL_1)}(\fS) = o(1)$, and this concludes the proof of \cref{prop:above-window}.

\section{Limit shape of the top level lines}\label{sec:limit-shape}
In this section we prove \cref{thm:main-thm-limit-shape}, establishing the global limit shape of the top level lines (even in the critical window). In \cref{sec:growth}, we extend \cref{thm:old-level-line-contains-Wulff} to the case where the $H+1$ level line exists, showing one side of the shape theorem, that the level line contains a shape. In \cref{sec:retreat}, we prove the other side, that the level line is contained in a shape. 

Throughout this section, there will be several occasions where we state that a bad event only occurs with probability $o(1)$, and then take a union bound over polynomially many such bad events. This is not an issue, as in fact every $o(1)$ is at most $e^{-c\beta\log L}$ for some constant $c$, the dominant error arising from Peierls maps which delete level lines of size $\log L$.

\subsection{Top level line contains limit shape in the critical window}\label{sec:growth}
The results of \cref{thm:old-level-line-contains-Wulff} show that for any $L$, w.h.p.\ in $\pi^0_{\Lambda}$ the $H, H-1, \ldots H-m$ level lines contain their respective limit shapes, up to a distance of $o(N_n)$ for the $H+1-n$ level line for $n \geq 1$. As we have shown, for certain $L$, there is also a \texttt{large} $H+1$ level line $\fL_0$, possibly with probability bounded away from both 0 and 1. An immediate corollary of \cref{thm:old-level-line-contains-Wulff} is that for $L$ such that $\pi^0_{\Lambda}(\fL_0 = \emptyset) > \delta_L$, the theorem still holds under the measure $\pi^0_{\Lambda}(\cdot \mid \fL_0 = \emptyset)$. 

Here we prove that if there is an $H+1$ level line, then it must also contain its respective limit shape. Note that this easily implies the uniqueness of the \texttt{large} $H+1$ level line when it exists, as we can reveal the innermost such level line and rule out any others via \cref{lem:prob-of-contour}.

Recall again that $\ell^*_n = \ell^*_n(L) = \frac{\sfw_1(\tau_\beta)N_n}{2L\uprho_n}$. Fix any constant $C > 0$, and define the following shapes
\begin{align*}
    &\fR_1 := (1-\tfrac{N_1^{1/3}e^{3C\sqrt{\log L}}}{L})L\sL(\ell^*_1(1+e^{-\frac{C}3\sqrt{\log L}}))\\
    &\fR_0 := (1-2\tfrac{N_0^{1/3}e^{3C\sqrt{\log L}}}{L})L\sL(\ell_{0}^*(1+e^{-\frac{C}3\sqrt{\log L}}))\,.
\end{align*} 
We will show the following proposition.
\begin{proposition}\label{prop:shape-thm-crit-window}
    Let $\cE$ be the event that there exists a \texttt{large} $H+1$ level line which does not contain $\fR_0$ in its interior. Then, $\pi^0_{\Lambda}(\cE) = o(1)$. Consequently, for every $L$ such that $\pi^0_{\Lambda}(\fL_0 \neq \emptyset) > \delta_L$, we have $\pi^0_{\Lambda}(\cE \mid \fL_0 \neq \emptyset) = o(1)$.
\end{proposition}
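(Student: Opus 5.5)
The proof reruns the growth gadget of \cref{thm:old-level-line-contains-Wulff} (i.e.\ \cite[Thm.~4.4]{ChenLubetzky25}) one level higher, inside the $H$ level line $\fL_1$. The difficulty is that the event $\{\fL_0\neq\emptyset\}$ is not monotone-friendly in the conditional measure. We therefore work with the unconditional measure throughout and only at the end divide by $\pi^0_\Lambda(\fL_0\neq\emptyset)>\delta_L$. Choosing $\delta_L$ larger than the $o(1)$ bound (which is in fact $e^{-c\beta\log L}$) gives the consequence.

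\textbf{Reduction to growing the square.} By \cref{lem:prelim-ll-bounds}, w.h.p.\ any \texttt{large} $H+1$ level line contains a square $\square$ of side $L(1-\epsilon^\square_\beta)$. Also w.h.p.\ $\fL_1$ contains $\fR_1$ by \cref{thm:old-level-line-contains-Wulff}, with $\ell_1^*$ in place of $\ell_1$ as remarked after \cref{eq:def-ell*}. It therefore suffices to bound the probability of
\[\cE'=\{\exists\text{ an $H+1$ level line } \fL\supset\square \text{ with } \fR_0\not\subset\Int(\fL)\}.\]
The event ``some $H+1$ level line contains a given shape'' is increasing. The natural route is then to expose the innermost such loop, or equivalently to reveal $\fL_1$ and use the domain Markov property, and to show the loop can be pushed outward.

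\textbf{Growth step.} We follow the iterative scheme of \cite[\S4]{ChenLubetzky25} (originally \cite{CLMST12}). We grow the shape $L\sL(\ell)$ from the square $\square$, through a continuous family of shapes, towards $\fR_0$. At each stage we consider a boundary segment of the current shape, a chord of length $d=N_0^{2/3}e^{C\sqrt{\log L}}$, and a rectangle of width $d$ and height $\asymp d$ placed outside it, as in \cref{fig:hausdorff-dist-3}. Conditioning on the configuration outside, with the $H+1$ level line pinned at the chord endpoints, FKG lets us impose boundary conditions $H+1$ on one side and $H$ on the other. We then apply \cref{prop:CE-meso} with $n=0$, whose domain boundary is $O(L^{2/3}e^{\sqrt{\log L}})$ in size. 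This gives the area-tilted law with tilt $\uprho_0/N_0$. The random-walk/cigar estimates of \cite[\S4]{ChenLubetzky25} then show that the level line bulges out by at least $\mu-f\sigma>d^2/\ell$ with probability $1-e^{-c\beta\log L}$. The Wulff shape can therefore be grown until $\ell$ reaches $\ell_0^*(1+e^{-\frac C3\sqrt{\log L}})$. A union bound over the polynomially many steps and segments finishes this part.

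\textbf{Floor restriction and main obstacle.} In the enlarged domain we impose the floor only on the relevant set $F$, as in \cite{ChenLubetzky25}. This is legitimate since relaxing the floor is monotone in the correct direction for growth. The factor $2$ in $\fR_0$ absorbs the extra distance to $\fL_1$, whose flat parts lie within $N_1^{1/3}e^{O(\sqrt{\log L})}\ll N_0^{1/3}$ of $\partial\Lambda$.

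The main obstacle is the near-boundary, near-corner regions. There the rectangle meets $\fL_1$ rather than $\partial\Lambda$, so its boundary conditions are $H$ on a non-flat curve. We must verify that the hypotheses of \cref{prop:CE-meso} hold and that the interaction $\fI$ with $\fL_1$ is negligible, because $\fL_1$ sits a distance $\gg N_0^{1/3}$ away only after accounting for the shrinkage factor. I would handle this first by choosing the shrinkage $1-2N_0^{1/3}e^{3C\sqrt{\log L}}/L$ so that every growth rectangle stays at distance $\ge \log^2 L$ from $\fR_1\subset\Int(\fL_1)$.
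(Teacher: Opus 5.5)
Your overall architecture matches the paper's:
\begin{itemize}
\item reveal the outermost $H$ level line containing $\fR_1$, keeping general boundary values $\geq H$;
\item use \cref{lem:prelim-ll-bounds} to place $\cW(o,\ell^*_0(1+e^{-\frac C3\sqrt{\log L}}))$ inside any \texttt{large} $H+1$ loop;
\item push outward with a local growth gadget in rectangles kept $N_0^{1/3}e^{3C\sqrt{\log L}}$ inside $\fR_1$;
\item take union bounds and divide by $\delta_L$.
\end{itemize}
The paper keeps $\ell$ fixed at this target size and slides the center one lattice step at a time, which avoids your shrinking-$\ell$ family. Note also that $L\sL(\ell)$ touches $\partial\Lambda$, so that family cannot start from $\square$ itself.

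The gap is in the monotonicity step, which is exactly the new point of this proposition. You place the non-monotonicity in the conditioning on $\{\fL_0\neq\emptyset\}$ and assert that working unconditionally removes it. It does not. Unconditionally, $\cE$ and every step event ``an $H+1$ line contains the current shape $S$ but not the next shape'' are intersections $E^{\uparrow}\cap E^{\downarrow}$ of an increasing and a decreasing event. In the critical window $E^{\uparrow}$ is not w.h.p., so the old route (prove the increasing event ``exists and contains $\fR_0$'' w.h.p.) is unavailable. Your step ``conditioning on the configuration outside, with the $H+1$ line pinned at the chord endpoints, FKG lets us impose boundary conditions $H+1$ and $H$'' is therefore unjustified, for three reasons:
\begin{itemize}
\item the containment information also lives inside $R$, so conditioning on the outside does not capture it;
\item the law conditioned on $E^{\uparrow}$ does not inherit the monotonicity needed to lower boundary values;
\item Harris' inequality gives only $\pi(E^{\uparrow}\cap E^{\downarrow})\le\pi(E^{\downarrow})$, which discards the $\geq H+1$ boundary you need, and $\pi(E^{\downarrow})$ is not small near criticality.
\end{itemize}
Exposing the innermost loop goes in the wrong direction: it reveals from inside and leaves values $\leq H$ just outside the loop.

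The missing idea (\cref{clm:crit-window-growth}) is to turn $E^{\uparrow}$ into a boundary circuit revealed from outside, so that only a decreasing event remains for the unexplored interior.
\begin{itemize}
\item Take $R$ mostly inside $\cW(x,\ell)$, not outside it, protruding only $N_0^{1/3}e^{3C\sqrt{\log L}}$ below the chord endpoints.
\item From the lower arc of $\partial R$, reveal the clusters of sites at height $\leq H-1$; these are small by Peierls inside $\fL_1$.
\item From the upper arc, reveal the clusters at height $\leq H$. On the event that an $H+1$ line contains $\cW(x,\ell)$, these are small except with probability $o(1)$.
\item Their outer boundaries form a circuit $\cC_*$ within $\log L$ of $\partial R$, with values $\geq H+1$ on the upper arc and $\geq H$ on the lower one.
\end{itemize}
On $\cE_{x,\ell}$ there is an angle and a point $z\in\partial\cW'(x,\ell)$ such that, for every such circuit, the $\geq H+1$ cluster of its upper arc misses $z$. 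Reveal the outermost circuit; this exposes nothing inside it, and the remaining event is decreasing in the interior. Only now may you lower the boundary to exactly $H+1/H$, after the regularization of step~5 in \cite[Lem.~4.8]{ChenLubetzky25}. Then apply the gadget, either \cite[Thm.~4.9]{ChenLubetzky25} or your \cref{prop:CE-meso}-based version, and finish with a union bound over angles and centers.
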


The proof of \cref{thm:old-level-line-contains-Wulff} does not immediately extend to studying the $H+1$ level line since it assumes the level line exists w.h.p., and needs to be modified to remove this assumption. The difference is the following: if $L$ is such that w.h.p.\ there is a unique \texttt{large} $H+1$ level line, then we can try to show that w.h.p., this level line exists and contains $\fR_0$, and this is a monotone increasing event. If we don't know whether the $H+1$ level line exists, we can only aim to show that the event $\cE$ occurs with probability $o(1)$. However, $\cE$ is no longer an monotone event! Hence, we need to be more careful when we make monotonicity arguments. A reader familiar with \cite[\S4.2]{ChenLubetzky25} will see that this is a technical modification, and the overall proof strategy remains the same.

Recall that $\cW_1(\tau_\beta)$ is the Wulff shape of area 1. For $x \in \Lambda$, define $\cW(x, \ell)$ as the rescaled Wulff shape $L\ell\cW_1(\tau_\beta)$ centered at $x$. Let $\ell_x$ be the largest value of $\ell$ before $\cW(x, \ell)$ reaches within distance $N_0^{1/3}e^{3C\sqrt{\log L}}$ from $\fR_1$. For any $x, \ell$, we define the event $\cE_{x, \ell}$ to be that there 
is an $H+1$ level line which contains $\cW(x, \ell)$ in its interior, but not $\cW'(x, \ell) := (1+L^{-3/4})\cW(x, \ell)$. The main claim is the following:
\begin{claim}\label{clm:crit-window-growth}
    For any domain $\Lambda \supset \fR_1$ and boundary conditions $\eta \geq H$, for all $x, \ell$ such that $\ell^*_0(1+e^{-\frac{C}3\sqrt{\log L}}) \leq \ell \leq \ell_x$, we have $\pi^\eta_\Lambda(\cE_{x, \ell}) = o(1)$.
\end{claim}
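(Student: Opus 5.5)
Fix $x$ and $\ell$ in the given range. On $\cE_{x,\ell}$ there is an $H+1$ level line $\fL$ with $\cW(x,\ell)\subset\Int(\fL)$ but $\cW'(x,\ell)\not\subset\Int(\fL)$. So some point $y\in\partial\cW'(x,\ell)$ lies outside $\fL$, and $\fL$ passes within distance $L^{1/4}\ell$ of $\partial\cW(x,\ell)$ near $y$. The plan follows the growth gadget of \cite[\S4.2]{ChenLubetzky25}, but works directly with the event $\cE_{x,\ell}$ rather than a monotone event.

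\emph{Localization.} Around $y$, take a chord of $\partial\cW(x,\ell)$ with endpoints $A,B$ at distance $d=N_0^{2/3}e^{C\sqrt{\log L}}$ apart. Let $R$ be the rectangle of width $\asymp d$ and height $\asymp d$ built on this chord and oriented along the outward normal. The condition $\ell\le\ell_x$ keeps $R$ at distance $\ge N_0^{1/3}e^{3C\sqrt{\log L}}$ inside $\fR_1$. Since $\fR_1$ lies inside the $H$ level line w.h.p., the boundary conditions on $\partial R$ are $\geq H$ away from $\fL$. Because $\cW(x,\ell)\subset\Int(\fL)$, the line $\fL$ crosses $R$ from the side containing $A$ to the side containing $B$ and stays outside the Wulff arc. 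Since $\cW'\not\subset \Int(\fL)$, it dips at least $\asymp L^{1/4}\ell$ toward the arc near $y$.

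\emph{Domain Markov and polymer law.} Reveal everything outside $R$, i.e.\ the configuration and the portion of $\fL$ outside $R$. Condition on the two crossing points $u,v$ of $\fL$ with $\partial R$ (or on the nearby cut-points). Then the event that $\fL$ inside $R$ comes close to the Wulff arc depends only on the disagreement polymer inside $R$, and no global monotonicity is used. The induced boundary condition is Dobrushin $H+1/H$ with a floor, up to the usual replacement via \cref{prop:CE-law-without-area} and the $\ge$ versus $=$ issue, which is handled as in \cite{ChenLubetzky25}. Now apply \cref{prop:CE-meso} to $V=R$: its boundary is $O(L^{2/3}e^{\sqrt{\log L}})$, and the events $\cG$ and $\cG^\dagger$ hold up to $o(1)$ by Peierls arguments. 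This gives the polymer law in $R$ with area tilt $\uprho_0/N_0$, which is exactly the area-tilted random walk representation on a scale $d\gg N_0^{2/3}$.

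\emph{Random-walk comparison.} Following \cref{fig:hausdorff-dist-3}, the tilted polymer from $u$ to $v$ has mean depth $\mu\asymp N_0^{1/3}f^2$ below the chord, with fluctuations $\sigma=o(\mu)$ controlled as in \cite[\S3--4]{ChenLubetzky25}. Here $f=d/N_0^{2/3}$. The Wulff arc, on the other hand, lies at depth $d^2/\ell$. Since $\ell\ge\ell^*_0(1+e^{-\frac C3\sqrt{\log L}})$, we have $d^2/\ell<\mu-f\sigma$. Hence the polymer stays below, i.e.\ toward the corner, of the arc pushed out by $L^{1/4}\ell$, except with probability $e^{-c\,e^{c'\sqrt{\log L}}}$. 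This is uniform in $u,v$: the endpoint dependence is handled by the cigar-shape estimate \cite[Lem.~3.15]{ChenLubetzky25} and by the surface-tension comparison \cite[Prop.~4.14]{ChenLubetzky25}. Finally, union bound over $y$ and over the choices of $R$ (polynomially many) and over $u,v$.

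\emph{Main obstacle.} The hardest step is the conditioning. Because $\cE_{x,\ell}$ is not monotone, we cannot push boundary conditions to $H$ by FKG as was done before. Instead we must condition on the exact outside configuration, including possible interactions of $\fL$ with itself (re-entries into $R$). I would handle re-entries with a button-hole-type Peierls bound as in \cref{eq:control-button-hole}, and absorb the floor term on the outer region via \cref{thm:key-area} (mesoscopic range) applied conditionally.
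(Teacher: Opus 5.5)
There is a genuine gap, and it sits in your conditioning step, which is exactly where the non-monotonicity of $\cE_{x,\ell}$ has to be handled.

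\textbf{Why the conditioning step fails.} Revealing the whole configuration outside $R$ does not give Dobrushin $H+1/H$ boundary data on $R$. The exterior heights next to $\partial R$ are arbitrary. Even along the part of $\partial R$ lying inside $\cW(x,\ell)$, a small but positive density of them (of order $\epsilon_\beta$) are at height $\le H$. Both \cref{prop:CE-meso} and the random-walk analysis behind \cite[Thm.~4.9]{ChenLubetzky25} require exact Dobrushin data, with regularity at the switching points.
\begin{itemize}
\item The passage from `$\ge$' to `$=$' that you defer to \cite{ChenLubetzky25} is done there by FKG applied to a monotone event, which you have declared unavailable.
\item Even FKG for the conditional measure on $R$ cannot turn an exterior site at height $\le H$ into one at height $H+1$.
\item Conditioning on the crossing points $u,v$ of $\fL$ with $\partial R$ is not a Domain-Markov conditioning. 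It depends on the configuration on both sides of $\partial R$, and on which loop is $\fL$, which is a global property.
\end{itemize}
The button-hole bound and an application of \cref{thm:key-area} to the outer region do not repair this. Once the exterior is revealed, the outer floor is irrelevant anyway.

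\textbf{The missing idea.} Write $\cE_{x,\ell}$, up to $o(1)$, as an exterior event intersected with a \emph{decreasing} event inside a stopping set.
\begin{enumerate}
\item Explore inward from $\partial R$. From the arc below $A,B$, reveal the clusters of sites at height $\le H-1$. From the arc above, reveal the clusters at height $\le H$.
\item W.h.p.\ all of these clusters have size at most $\log L$. For the lower arc this uses $\eta\ge H$; for the upper arc it uses that $\fL$ encloses $\cW(x,\ell)$.
\item Their external boundaries form a circuit $\cC_*$ within $\log L$ of $\partial R$, carrying heights $\ge H+1$ above points $A',B'$ and $\ge H$ below.
\item On $\cE_{x,\ell}$ there is an angle for which the $\ge H+1$ cluster of the upper arc of $\cC_*$ misses some $z\in\partial\cW'(x,\ell)$. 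For a fixed circuit, this is a decreasing event.
\item Revealing the outermost such circuit exposes nothing inside it. So the interior law is a Discrete Gaussian with floor and fixed boundary data, which does have FKG.
\item You may then lower the data to exactly $H+1/H$, and lower a few $H+1$'s to $H$ for regularity as in \cite[Lem.~4.8]{ChenLubetzky25}. Then invoke the growth gadget \cite[Thm.~4.9]{ChenLubetzky25}, and finish with a union bound over angles.
\end{enumerate}

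With this reduction in hand, you also do not need to redo the area-tilted comparison from scratch with a full floor on $R$. In the growth direction the floor may be relaxed by monotonicity, which is what that gadget exploits.
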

\begin{proof}
Fix $x, \ell$ such that $(1+e^{-\frac{C}3\sqrt{\log L}})\ell^*_0 \leq \ell < \ell_x$. Fix an angle $\theta \in [0, \pi/4]$ and let $A, B$ on the bottom right quarter of $\cW(x, \ell)$ be such that $B_1 - A_1 = N_0^{2/3}e^{C\sqrt{\log L}}$ and the angle of $\overline{AB}$ is $\theta$. (By symmetry, this is equivalent to looking at points on the top half of $\cW(x, \ell)$, or fixing instead the vertical distance $B_2 - A_2$ and looking at the right/left halves.) Let $R$ be the rectangle of width $N_0^{2/3}e^{C\sqrt{\log L}}$ and height $2N_0^{2/3}e^{C\sqrt{\log L}}$ with $A, B$ on its sides, such that the distance from $A$ to the bottom of $R$ is $N_0^{1/3}e^{3C\sqrt{\log L}}$. 

Let $\partial^1 R, \partial^2 R$ be the arcs of $\partial R$ above and below $A, B$, respectively. For every vertex $v \in \partial^2 R$, reveal the connected component of sites with height $\leq H-1$ containing it. This reveals an external boundary of sites with height $\geq H$. Note that the standard Peierls argument implies that with probability $1-o(1)$, each connected component revealed has size $\leq \log L$. Similarly, for every vertex $v \in \partial^1 R$ that has not already been revealed, reveal the connected component of sites with height $\leq H$ containing it. Except with probability $o(1)$, the event that there is an $H+1$ level line containing $\cW(x, \ell)$ implies that each of these connected components have size $\leq \log L$. This reveals an external boundary of sites with height $\geq H+1$. Piecing these external boundaries together obtains a circuit of sites $\cC_*$ distanced at most $\log L$ from $\partial R$ with two marked points $A', B'$ with $d(A, A'), d(B, B') \leq \log L$, such that the heights on $\cC$ are $\geq H+1$ on the arc above $A', B'$ and $\geq H$ on the arc below. 

Hence, the event $\cE_{x, \ell}$ implies that at every angle $\theta$, except with probability $o(1)$, we can find such a circuit $\cC_*$. However, it also implies that there is some point $z \in \partial \cW'(x, \ell)$ which is not contained by the $H+1$ level line containing $\cW(x, \ell)$. That is, on $\cE_{x,\ell}$, there is some angle $\theta$ such that for any such circuit $\cC_*$ as above, the connected component of sites with height $\geq H+1$ containing the arc of $\cC_*$ above $A', B'$ does not contain $z$. Note that for any fixed circuit, this latter event is decreasing. We will next argue that it occurs with probability $o(1)$.

Indeed, given $\theta$, we can reveal the outermost such circuit $\cC_*$, which in particular does not reveal anything about its interior. Since we want to upper bound the probability of a decreasing event, we can now use monotonicity to lower the heights along the top and bottom arcs of $\cC_*$ to be exactly $H+1, H$ respectively. By Domain Markov, the law of the heights interior to $\cC_*$ is now a Discrete Gaussian with $H+1, H$ boundary conditions. To satisfy the inputs of \cite[Thm.~4.9]{ChenLubetzky25}, we can then further lower some of the $H+1$ heights to $H$, as done in step 5 of the proof of \cite[Lem.~4.8]{ChenLubetzky25}, to obtain the regularity needed in $\cC_*$ around points where boundary conditions change. The desired bound on the size of the $\geq H+1$ component containing the $H+1$ boundary conditions now follows from \cite[Thm.~4.9]{ChenLubetzky25}\footnote{In particular, a version of \cite[Thm.~4.9]{ChenLubetzky25} where the rectangle has dimensions as described above. As discussed in \cite[Rem. 4.7]{ChenLubetzky25}, this poses no issues. One may also compare the equations leading to \cref{eq:key-eq} with those leading to \cite[Eq. 4.34]{ChenLubetzky25} for details.} together with the computation preceding \cite[Eq. 4.34]{ChenLubetzky25}. Finally, taking a union bound over all possible angles $\theta$ concludes the proof of the claim. 
\end{proof}

\begin{proof}[Proof of \cref{prop:shape-thm-crit-window}]
    By \cref{thm:old-level-line-contains-Wulff,lem:prelim-ll-bounds}, the following hold w.h.p.\ under $\pi^0_{\Lambda}$:
    \begin{enumerate}
        \item There is an $H$ level line containing $\fR_1$ in its interior, and\label{it:H+1-level-cond-1}

        \item If a \texttt{large} $H+1$ level line exists, it also contains $\cW(o, \ell^*_0(1+e^{-\frac{C}3\sqrt{\log L}}))$, where $o$ is the center of $\Lambda$.\label{it:H+1-level-cond-2}
    \end{enumerate}
We want to bound the probability that a \texttt{large} $H+1$ level line exists, contains $\cW(o, \ell^*_0(1+e^{-\frac{C}3\sqrt{\log L}}))$, yet does not contain $\fR_0$. Reveal the outermost $H$ level line containing $\fR_1$. This brings us into the setting of \cref{clm:crit-window-growth}. Now define $\widetilde \fR_0$ as 
    \[\widetilde \fR_0 := \bigcup_{x:\, \ell^*_0(1+e^{-\frac{C}3\sqrt{\log L}}) \leq \ell_x}\cW(x, \ell^*_0(1+e^{-\frac{C}3\sqrt{\log L}}))\,.\]
    With \cref{clm:crit-window-growth} in mind, we first argue that for any domain $V \subset \Lambda$ which contains $\cW(o, \ell^*_0(1+e^{-\frac{C}3\sqrt{\log L}}))$ yet does not contain $\widetilde\fR_0$, there must exist $x$ such that $\cW(x, \ell^*_0(1+e^{-\frac{C}3\sqrt{\log L}})) \subset V$ and $\ell^*_0(1+e^{-\frac{C}3\sqrt{\log L}}) \leq \ell_x$, but $V \not\subset \cW'(x, \ell^*_0(1+e^{-\frac{C}3\sqrt{\log L}}))$. Indeed, one can simply start with $\cW(o, \ell^*_0(1+e^{-\frac{C}3\sqrt{\log L}}))$ and keep shifting the center by one up, down, left, or to the right within $\widetilde\fR_0$ until it intersects a point in $\widetilde\fR_0 \setminus V$. The last shift right before will then be in $V \cap \widetilde\fR_0$, which produces the desired shift $\cW(x, \ell^*_0(1+e^{-\frac{C}3\sqrt{\log L}}))$. 
    
    Thus, by \cref{clm:crit-window-growth} and a union bound over all $x$ such that $\ell^*_0(1+e^{-\frac{C}3\sqrt{\log L}}) \leq \ell_x$, we have that the probability that a \texttt{large} $H+1$ level line exists and does not contain $\widetilde\fR_0$ is $o(1)$. Since $\fR_0 \subset \widetilde\fR_0$, this concludes the proof.
\end{proof}
\subsection{Top level lines are contained within limit shapes}\label{sec:retreat}
In this section, we prove that the top level line is contained in a translation of Wulff shapes. Together with \cite[Thm.~4.4]{ChenLubetzky25} (restated here as \cref{thm:old-level-line-contains-Wulff}) and \cref{prop:shape-thm-crit-window}, this proves \cref{thm:main-thm-limit-shape}.
\begin{theorem}\label{thm:wulff-shape-contains-level-line}
    Fix $\beta$ sufficiently large and $n \geq 0$. Then, the event that $\fL_n$ exists but is not contained in the shape $L\sL(\ell^*_n(1-e^{-\frac{C}3\sqrt{\log L}}))$ occurs with probability $o(1)$ in $\pi^0_{\Lambda}$. Consequently, if $L$ is such that $\pi^0_{\Lambda}(\fL_0 \neq \emptyset) > \delta_L$, then the theorem also holds under $\pi^0_{\Lambda}(\cdot \mid \fL_0 \neq \emptyset)$, and if $\pi^0_{\Lambda}(\fL_0 = \emptyset) > \delta_L$, then the theorem also holds under $\pi^0_{\Lambda}(\cdot \mid \fL_0 = \emptyset)$.
\end{theorem}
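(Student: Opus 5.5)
The plan is to build a \emph{retreat gadget} that mirrors the growth gadget of \cref{clm:crit-window-growth}, sandwiching $\fL_n$ from the outside between arcs of Wulff shapes as in \cref{fig:hausdorff-dist-1}. Set $\ell_{\mathsf r}:=\ell^*_n(1-e^{-\frac C3\sqrt{\log L}})$. Since $L\sL(\ell_{\mathsf r})$ coincides with $\Lambda$ away from the four corners, it suffices to show that near each corner $\fL_n$ does not cross below (towards the corner) the corresponding quarter of $\partial(L\sL(\ell_{\mathsf r}))$. Points of that quarter arc will be reached by a family of translates $\cW(x,\ell)$ of size $\ell\le\ell_{\mathsf r}$ pressed into the corner, and we will rule out, for each such translate, the event $\cE^{\mathsf r}_{x,\ell}$ that $\fL_n$ avoids the exterior region cut off by $\cW'(x,\ell)$ yet enters the region cut off by $\cW(x,\ell)$. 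A union bound over polynomially many $(x,\ell)$ and angles, together with a ``sliding'' argument as in the proof of \cref{prop:shape-thm-crit-window} (now moving $x$ outward from the corner, starting from a trivially valid position flush with $\partial\Lambda$), then gives the containment.

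\textbf{Step 1: local geometry.} For fixed $(x,\ell)$ and angle $\theta$, take $A,B$ on the corner arc of $\partial\cW(x,\ell)$ with horizontal separation $d=N_n^{2/3}e^{C\sqrt{\log L}}$, and a rectangle $R$ of width $d$ and height $\asymp d$ around the chord $\overline{AB}$. As in \cref{clm:crit-window-growth}, we reveal from $\partial R$ connected components of heights $\le H-n$ and $\ge H+1-n$, which w.h.p.\ have size $\le\log L$. This exposes a circuit $\cC_*$ with boundary data $\ge H+1-n$ on the interior side and $\le H-n$ on the corner side. Crucially, the event to exclude, that $\fL_n$ dips past $\partial\cW$ in the middle of the chord, is now \emph{increasing} in the corner-side heights. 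So by monotonicity we may lift the boundary data on $\cC_*$ to exactly $H+1-n$ and $H-n$; this direction is the correct one for an upper bound. Because we do not know whether $\fL_0$ exists, the statement conditional on $\{\fL_0\neq\emptyset\}$ or $\{\fL_0=\emptyset\}$ follows simply by dividing by probability $>\delta_L$, since every error is $e^{-c\beta\log L}$.

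\textbf{Step 2: polymer law in $R$ with the floor everywhere.} This is exactly where the earlier growth argument broke down. Inside $\Int(\cC_*)$, Dobrushin conditions induce a unique disagreement polymer, and we apply \cref{prop:CE-meso} with $F=V=\Int(\cC_*)$. The conditions hold since $|\partial V|=O(d)=O(L^{2/3}e^{\sqrt{\log L}})$, which is precisely why \cref{thm:key-area} was needed in the range $|D|\approx N_n^{4/3}$ without relaxing the floor. This gives a $(1+o(1))$-accurate area-tilted polymer law with tilt $\uprho_n/N_n$. We then invoke the random-walk/polymer large deviation estimates underlying \cite[Thm.~4.9]{ChenLubetzky25}, in their reversed orientation, to show that the polymer from $A'$ to $B'$ lies w.h.p.\ within $N_n^{1/3}e^{O(\sqrt{\log L})}$ of its mean profile. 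That mean profile dips $\mu\asymp d^2\uprho_n/(N_n\,\cdot)$ below the chord. Comparing with the Wulff sag $d^2/(L\ell)$, the choice $\ell\le\ell_{\mathsf r}$ with the factor $1-e^{-\frac C3\sqrt{\log L}}$ ensures $d^2/(L\ell)>\mu+f\sigma$. Hence the polymer stays on the interior side of $\partial\cW'(x,\ell)$, in analogy with the computation leading to \cite[Eq.~4.34]{ChenLubetzky25}.

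\textbf{Main obstacle.} The hard part is Step 2's pinning and interaction issue. The polymer with a floor over all of $R$ must be compared to the unrestricted polymer of \cref{sec:poly-st-wulff} via surface tension, controlling finite components $D_i$ (handled by \cref{eq:CE-with-area-cond-1}) and the $\upxi_{n+1}\sum_{i\ge2}|D_i|$ term. We also need that lifting the boundary data does not change the area rate, which requires the exact tilt $\uprho_n$ rather than $1/N_n$. I would first try to reuse the cigar-shape and button-hole Peierls arguments of \cref{sec:transition-window} to reduce to a product of short polymer segments, and then apply the tilted large deviation bound segment-wise.
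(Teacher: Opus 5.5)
\textbf{Gap: the monotonicity step runs the wrong way on the interior arc.} The event you want to exclude is that a \texttt{large} chain of sites of height $\geq H+1-n$ reaches past $\partial\cW$. This event is increasing in \emph{all} heights, not only the corner-side ones. So for an upper bound you may only raise boundary data. You need data $\leq H+1-n$ on the interior arc and $\leq H-n$ on the corner arc, and then raise both to exactly $H+1-n$ and $H-n$.

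Your Step 1 reveals the $\leq H-n$ clusters from the interior arc, as in \cref{clm:crit-window-growth}. That produces data $\geq H+1-n$ there. Replacing it by $H+1-n$ \emph{lowers} the boundary, which can only decrease the probability of the bad event, so it is useless for an upper bound. The correct procedure, as in the proof of \cref{lem:induction-step}, is to reveal the clusters of height $\geq H+2-n$ meeting the interior arc and use their outer boundaries.

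For that you must know these clusters have size at most $\log L$, i.e.\ that no \texttt{large} $\geq H+2-n$ cluster enters $R$. For $n=0$ this is the absence of \texttt{large} $H+2$ level lines (\cref{thm:LMS}). For $n\geq 1$, however, the $(H+2-n)$ level line is $\fL_{n-1}$, which is itself a \texttt{large} loop. Ruling out that it reaches the $N_n$-scale corner region is exactly the statement of the theorem for $n-1$, which keeps $\fL_{n-1}$ at distance $\asymp N_{n-1}\gg N_n$ from the corner. So the argument has to be organized as an induction on $n$; your proposal has no such induction, and the paper supplies it in \cref{lem:retreat-n}.

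\textbf{Initialization: a different, probably workable route.} The paper does not slide a fixed-size shape. It proves a nontrivial base case $\cE_n(sN_n/L)$ in \cref{lem:base-case}, using the macroscopic law \cref{prop:CE-macro-DobrushinBC} and a Jensen bound on the area tilt, with \cref{lem:retreat-n} supplying the bound on $|D_0|$ for $n\geq1$. It then increases $\ell$ by $L^{-3/4}$ per step in \cref{lem:induction-step}. The base case is needed there because the gadget requires the Wulff radius to be $\gg d$, so the $\ell$-induction cannot start at a trivial scale.

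Sliding a shape of fixed size $\asymp N_n$, starting from a position where it covers the corner, avoids both the base case and the macroscopic law. It can plausibly be made rigorous, but you must justify the early positions, where $A$, $B$ and most of $R$ lie outside $\Lambda$. The true measure there is the measure on $\Lambda\cup R$ conditioned on the decreasing event that sites outside $\Lambda$ equal $0$. By FKG it is therefore dominated by the enlarged Dobrushin measure, after raising the interior-arc sites outside $\Lambda$ from $0$ to $H+1-n$. In these positions the corner arc lies entirely outside $\Lambda$, so no inductive input is needed.

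You should also place $A,B$ on $\partial\cW'(x,\ell)$ rather than on $\partial\cW(x,\ell)$. Near $A$ and $B$, the strip between the two boundaries has width $O(L^{1/4})$ and is not controlled by your hypothesis. With $A,B$ on $\partial\cW(x,\ell)$, this would prevent the switch points $A',B'$ from lying within $2(\log L)^5$ of $A,B$, as \cref{prop:retreat-gadget} requires.
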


We begin with \cref{prop:retreat-gadget} below, a counterpart to \cite[Thm.~4.9]{ChenLubetzky25} which shows that the level line in a smaller domain will not deviate too far from its initial height. To prove this, it is crucial that we have an expression for the law of $\gamma$ with an error of at most a multiplicative $(1+o(1))$ in the larger domain of size $\approx L^{2/3} \times L^{2/3}$, as in \cref{prop:CE-meso}. The previous work \cite{ChenLubetzky25} also looked at the restriction of the level line on an $L^{2/3} \times L^{2/3}$ rectangle, yet the nonnegative conditioning was only enforced on a much smaller $\approx L^{2/3} \times L^{1/3}$ rectangle, reducing the effective size of the domain. This monotonicity trick no longer works here, being in the wrong direction here. Hence, the main obstacle lies in obtaining \cref{prop:CE-meso}.

Fix $n \geq 0$. We define a slight modification of the ``near rectangular'' domains used in \cite[Thm 4.9]{ChenLubetzky25}. Let $R$ be a $N_n^{2/3}e^{C\sqrt{\log L}} \times 3N_n^{2/3}e^{C\sqrt{\log L}}$ rectangle, with two marked points $A, B$ on the left and right sides of $\partial R$, respectively. Assume that if $\theta$ is the angle of $\overline{AB}$, then $|\theta| \leq \pi/4$, and that both $A, B$ are distance at least $N_n^{2/3}e^{C\sqrt{\log L}}$ from the top and bottom sides of $R$. We will consider domains $Q$ satisfying the following:
\begin{enumerate}
    \item \label{it:Q-simp-conn}$Q$ is simply connected,

    \item\label{it:dist-Q-R-bdy} $\dist(\partial Q, \partial R) \leq \log L$,

    \item\label{it:good-boundary-points} There exists $A', B' \in \partial Q$ such that $\sC(\overline{A'B'}) \subset Q$ (see the definition of $\sC$ below \cref{eq:cigar-curves}), and $\max\{d(A, A'), \dist(B, B')\} \leq 2(\log L)^5$.\footnote{In \cite{ChenLubetzky25}, there was an additional requirement that linear cones emanating from $A$ and $B$ did not intersect their respective sides of $\partial Q$. This follows also from our construction of $Q$ in the proof of \cref{clm:retreat-claim}. However, it is not used in the proof of \cite[Thm.~4.9]{ChenLubetzky25}, and hence not needed for \cref{prop:retreat-gadget}, so we do not include it.}

    \item \label{it:Q-bc-arc} The boundary conditions $\xi$ assigns height $H+1-n$ on the top arc from $A'$ to $B'$, and $H-n$ on the bottom arc.
\end{enumerate}

\begin{proposition}\label{prop:retreat-gadget}
    The following holds uniformly over all possible $Q, \xi$ as above. Fix $n \geq 0$. Let $\fL_n$ be the $(H+1-n)$ level line induced by $\xi$. Let $\theta$ denote the angle of $\overline{AB}$ with the $x$-axis. Then, with $\pi^\xi_Q$-probability $1 - o(1)$, $\fL_n$ lies above the point $X= (0, Y - \sigma e^{C\sqrt{\log L}} - \log L)$, where
		\begin{align}\label{eq:def-Y-sigma}
			&Y = -\frac{\uprho_n N_n^{1/3}e^{2C\sqrt{\log L}}}{8(\tau_\beta(\theta) + \tau_\beta''(\theta))\cos(\theta)^3}\,,\\
			&\sigma^2 = \frac{N_n^{2/3}e^{C\sqrt{\log L}}}{4(\tau_\beta(\theta) + \tau_\beta''(\theta))\cos(\theta)^3}\,.\nonumber
		\end{align}
	\end{proposition}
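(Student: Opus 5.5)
The plan is to compare the law of the disagreement polymer in $Q$ with an area-tilted random walk/polymer. We apply \cref{prop:CE-meso} with $V=Q$ and $F=Q$, where the floor is imposed on all of $Q$ with no monotonicity reduction. Its hypotheses hold: $|\partialvtx Q|=O(N_n^{2/3}e^{C\sqrt{\log L}})\le O(L^{2/3}e^{\sqrt{\log L}})$ after adjusting constants, and $Q$ is simply connected.

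The restriction to $\cG^\dagger$ costs $o(1)$ probability. The event $|\gamma|>L^{2/3}e^{\sqrt{\log L}}$ is removed by a Peierls comparison with a straight path from $A'$ to $B'$. This works because the area gain is at most $\uprho_n|Q|/N_n = O(N_n^{1/3}e^{2C\sqrt{\log L}})$, which is negligible against $\beta|\gamma|$ once $|\gamma|\gg N_n^{1/3}e^{2C\sqrt{\log L}}$. By \cref{eq:CE-with-area-cond-1}, we may also assume that all finite components $D_i$ have boundary below $\log L$. Then, up to a factor $1+o(1)$, $\pi_Q^\xi(\gamma\mid\cG^\dagger)$ is the polymer law $\fq^\xi_{Q;Q}(\gamma)\propto \exp(-\sE^*_\beta(\gamma)+\fI_Q(\gamma)+\frac{\uprho_n}{N_n}|D_0|)$. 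This is the same law analysed in \cite[Thm.~4.9]{ChenLubetzky25}, except that the tilt is $\uprho_n/N_n$ instead of $1/N_n$ and the area now counts over the full $Q$. The change of tilt is exactly what produces the factor $\uprho_n$ in $Y$.

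Next we reuse the machinery of \cite[Thm.~4.9]{ChenLubetzky25} (the polymer-to-random-walk coupling via the surface tension $\tau_\beta$, \cite[Prop.~3.14, Prop.~4.14]{ChenLubetzky25}, and the cigar-shape confinement \cite[Lem.~3.15]{ChenLubetzky25}). With the area tilt, this reduces the position of $\fL_n$ above the midpoint of $\overline{A'B'}$ to an area-tilted walk of horizontal length $d=N_n^{2/3}e^{C\sqrt{\log L}}$ with diffusivity $\propto 1/((\tau_\beta+\tau_\beta'')\cos^3\theta)$. For such a walk, the tilt pushes the midpoint toward $D_0$, i.e.\ downward in the orientation where $D_0$ lies above. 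Its mean deviation is $Y$, obtained by a quadratic-optimization computation ($\asymp \uprho_n d^2/N_n$ times the inverse stiffness), with Gaussian fluctuations of variance $\sigma^2$. A Gaussian tail bound then shows that the midpoint drops below $Y-\sigma e^{C\sqrt{\log L}}$ with probability $\exp(-ce^{2C\sqrt{\log L}})=o(1)$. The $\log L$ slack absorbs the discrepancies $A\leftrightarrow A'$, $B\leftrightarrow B'$ and the lattice effects.

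The main obstacle is the lower deviation estimate. In \cite{ChenLubetzky25} the analogous bound went in the favourable direction by FKG, whereas here we must bound the probability that the line dips too far, and this requires the law of $\gamma$ with only a $1+o(1)$ error on a domain of area $\approx N_n^{4/3}$. This is precisely what \cref{prop:CE-meso} supplies through \cref{thm:key-area}. The remaining work is to check that the tilted large-deviation estimate in \cite[Thm.~4.9]{ChenLubetzky25} is two-sided, i.e.\ that its Laplace-transform/change-of-measure argument bounds deviations below the mean as well as above.
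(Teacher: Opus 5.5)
Your outline follows the paper's proof. You apply \cref{prop:CE-meso} with the floor on all of $Q$ (no FKG reduction), obtain the tilt $\uprho_n/N_n$, and run the analysis of \cite[Thm.~4.9]{ChenLubetzky25} in the other direction via its Gaussian comparison for the height of $\gamma$ at $x=0$.

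However, you omit one modification the paper flags as necessary, and you never use the hypothesis that $A,B$ are at distance at least $N_n^{2/3}e^{C\sqrt{\log L}}$ from the top and bottom sides of $R$. That hypothesis is there to keep $\gamma$ from being pinned to $\partial Q$. A boundary interaction along a stretch of length $d$ can be worth $e^{cd}$, which would swamp the lower-deviation estimate for the area-tilted walk. In \cite[Thm.~4.9]{ChenLubetzky25} this was handled by a conditioning argument specific to that setting, so citing that machinery does not cover it here. Your cutoff $|\gamma|\le L^{2/3}e^{\sqrt{\log L}}$ does not prevent $\gamma$ from reaching the bottom of $Q$. Nor does the cigar confinement of \cite[Lem.~3.15]{ChenLubetzky25}, which concerns the untilted polymer. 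Under the tilt the line sits about $|Y|\asymp N_n^{1/3}e^{2C\sqrt{\log L}}$ below the chord, far outside a cigar of width $\asymp N_n^{1/3}e^{\frac C2\sqrt{\log L}}(\log L)^2$.

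The repair is already contained in your Peierls estimate, provided you compare the \emph{excess} length $|\gamma|-|A'-B'|_1$ with the gain, rather than $|\gamma|$ itself. The total floor/area gain over $Q$ is only $O(N_n^{1/3}e^{2C\sqrt{\log L}})$, which is much smaller than $|A-B|_1$. Hence, with probability $1-o(1)$, $|\gamma|\le 1.1|A-B|_1$. The paper phrases this through $\sE_\beta$, using the no-floor law of \cref{prop:CE-law-without-area} together with the FKG lower bound $e^{-cL^{1/3-o(1)}}$ on the floor event.

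Since $|\theta|\le\pi/4$, we have $1.1|A-B|_1\le 2.2\,N_n^{2/3}e^{C\sqrt{\log L}}$. A path from $A'$ to $B'$ touching the top or bottom side needs length at least $(3-o(1))N_n^{2/3}e^{C\sqrt{\log L}}$. So $\gamma$ stays at distance of order $N_n^{2/3}e^{C\sqrt{\log L}}$ from those sides, and with this in place the rest of your argument goes through as in the paper.
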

\begin{proof}
    The proof is essentially the same as that of \cite[Thm.~4.9]{ChenLubetzky25}, which we recall showed instead that $\fL_n$ lies below $(0, Y + \sigma e^{C\sqrt{\log L}} + \log L)$. We only comment on the few necessary modifications. First, note that we are imposing the additional assumption that $A, B$ are distance at least $N_n^{2/3}e^{C\sqrt{\log L}}$ from both the top and bottom sides of $R$. We will shortly bound the length of $\gamma$, the level line containing $\fL_n$, such that w.h.p.\ it will not reach the top or bottom of $Q$, preventing any potential pinning issues. (The setting of \cite[Thm.~4.9]{ChenLubetzky25} did not allow for this and relied on a conditioning argument instead, which we no longer need here.)
    
    To set up for an application of \cref{prop:CE-meso}, let $\cG$ denote the set of $\gamma$ with $\sE_\beta(\gamma) \leq 1.1|A-B|_1$. We can show that $\hatpi^\eta_Q(\cG^c) \leq e^{-1.1(\beta - C)|A-B|_1}$ by using the form of the law given by \cref{prop:CE-law-without-area} and applying a Peierls map to a minimal length disagreement polymer. At the same time, we can lower bound $\hatpi^\eta_Q(\phi_x \geq 0,\, \forall x \in Q) \geq e^{-cL^{1/3-o(1)}}$ by the standard FKG computation explained in \cref{clm:CE-check-LW}. Together this implies that $\pi^\eta_Q(\cG^c) = e^{-1.1(\beta - C')|A-B|_1}$, and we can assume we are only considering $\gamma \in \cG$.

    The law of such $\gamma$ is given by \cref{prop:CE-meso}. Compared to \cref{prop:CE1_old}, the area tilt now has an additional prefactor of $\uprho_n$, but the preliminary lemmas needed for \cite[Thm.~4.9]{ChenLubetzky25} (i.e., bounding the size of components $D_i$ in \cite[Lem.~4.12,4.15]{ChenLubetzky25} and bounding the partition function in \cite[Lem.~4.17]{ChenLubetzky25}) allowed for any $\mu > 0$ prefactor, and therefore still hold. We can thus proceed exactly as in the proof of \cite[Thm.~4.9]{ChenLubetzky25}, observing that the proof actually showed the distribution of the height of $\gamma$ at the line $x = 0$ is comparable to a Gaussian centered at $Y$ with variance $\sigma^2$, so controlling the probability that the height exceeds $Y + \sigma e^{C\sqrt{\log L}} + \log L$ is no different than controlling the probability that it is at most $Y - \sigma e^{C\sqrt{\log L}} - \log L$.\footnote{In \cite{ChenLubetzky25}, we arbitrarily decided to bound the size of components using Peierls maps by $(\log L)^2$ instead of $\log L$. This makes no difference.}
\end{proof}

We will prove \cref{thm:wulff-shape-contains-level-line} first for $n = 0$, using an induction over the radius $\ell$ of the Wulff shape. The base case will be proven in \cref{lem:base-case} and the induction step in \cref{lem:induction-step}. Then we will use an induction over $n$ to prove \cref{thm:wulff-shape-contains-level-line} for lower level lines. This involves combining the framework of the disagreement polymer developed in \cite{ChenLubetzky25} with the ``retreat of droplets'' proof method found in \cite[Section 6.2]{CLMST16}. Throughout the rest of the section, we will refer to the error quantity $\epsilon_n := N_n^{-1/3}$.

\begin{definition}
    Define $\cE_n(\ell)$ as the (decreasing) event that there is no \texttt{large} chain of sites with height $\geq H+1-n$ intersecting the exterior of the shape $L(1+\epsilon_n)\sL(\ell)$.
\end{definition}
Observe that to prove \cref{thm:wulff-shape-contains-level-line}, it suffices to show that w.h.p.\ we have $\cE_n(\ell^*_n(1-\tfrac12e^{-\frac{C}3\sqrt{\log L}}))$, as the factor of $1/2$ makes up for the scaling factor of $(1+\epsilon_n)$. More precisely, the additional scaling only comes into effect at the corners of the shape since we are still restricted to $\Lambda$ regardless. It is then easy to check by starting at the corners of the shapes and looking at the intersection points with $\partial \Lambda$ that $(1+\epsilon_n)L\sL(\ell^*_n(1-\tfrac12e^{-\frac{C}3\sqrt{\log L}}))\cap \Lambda\subset L\sL(\ell^*_n(1 - e^{-\frac{C}3\sqrt{\log L}}))$.
\begin{lemma}[Base case for $n = 0$]\label{lem:base-case}
    There exists a constant $s > 0$ such that for $\ell_0 := sN_0/L$, we have that $\cE_0(\ell_0)$ holds w.h.p.
\end{lemma}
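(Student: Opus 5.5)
The base case asks for a crude outer bound: for some small constant $s>0$, w.h.p.\ no \texttt{large} chain of sites at height $\geq H+1$ reaches outside $L(1+\epsilon_0)\sL(sN_0/L)$. Since $\sL(\ell)$ is the union of translates of $L\ell\cW_1$ inside the square, a small $\ell$ makes $L\sL(\ell)$ cover everything except four corner regions of size $O(sN_0)$. The task is therefore to rule out an $H+1$ level line, or a \texttt{large} chain at height $\geq H+1$, penetrating within distance $\asymp sN_0$ of a corner of $\Lambda$ (up to the $(1+\epsilon_0)$ dilation). The plan is a Peierls/energy comparison that uses only the crude \cref{lem:prob-of-contour} together with the geometry of the corner.

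\textbf{Step 1 (reduction).} First I would discard the case where there is no \texttt{large} $H+1$ level line, since then $\cE_0$ trivially holds. A \texttt{large} chain of height $\geq H+1$ exiting the shape must then lie inside some \texttt{large} $H+1$ level-line loop $\fL$; small loops are handled by the standard Peierls bound. By \cref{lem:prelim-ll-bounds}, any such $\fL$ already contains a square of side $L(1-\epsilon_\beta^\square)$ and has length $\leq (4+\epsilon_\beta)L$. So it suffices to show that, w.h.p., no $H+1$ level line passes through a point $z$ in a corner region $\{z:\ \|z-\text{corner}\|_\infty\leq c\,sN_0\}$.

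\textbf{Step 2 (Peierls map cutting the corner).} Suppose $\fL$ passes through such a $z$ near, say, the bottom-left corner. Pick the points $u,v$ where $\fL$ crosses the lines $x=r$ and $y=r$, with $r=s'N_0$ for a suitable $s'\gg s$. Replace the arc of $\fL$ between $u$ and $v$ that visits $z$ by the straight or diagonal segment $\overline{uv}$. This shortens $\fL$ by at least $c\,r$: the detour through $z$ versus the chord costs an amount linear in $r$ when $z$ is deep in the corner, by strict convexity of $\tau_\beta$ (or simply by lattice $\ell^1$ geometry for large $\beta$). The area lost is at most $r^2$. Applying \cref{lem:prob-of-contour} with the cutting map (equivalently, bounding the probability of $\cC_{\fL,H+1}$ directly), the ratio of weights is at most
\[
\exp\!\big(-(\beta-o(1))\,c\,r+\hatpi_\infty(\phi_o>H)\,r^2\big).
\]
Since $\hatpi_\infty(\phi_o>H)\approx 1/N_0$ up to $1+o(1)$, this is $\exp(-\beta c\, s'N_0+s'^2N_0)$, which is $\exp(-c'\beta N_0)$ once $s'$ is a small constant. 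Enumerating over $u$, $v$, and the removed arc costs $\exp(O(|\text{arc}|))$, which is absorbed for $\beta$ large, as in the usual contour counting. More simply, I would enumerate over whole loops $\fL$ with a corner excursion and directly bound $\pi(\cC_{\fL,H+1})\leq \exp(-(\beta-o(1))|\fL|+\hatpi_\infty(\phi_o>H)|\Int\fL|)$. The isoperimetric-type observation is that any loop in $\Lambda$ reaching depth $sN_0$ into a corner has $\beta|\fL|-|\Int\fL|/N_0$ at least $c\beta s' N_0$ larger than the corresponding optimum without the excursion. Summing against the total weight via the ratio then finishes the step.

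\textbf{Step 3 (main obstacle).} The difficulty is that \cref{lem:prob-of-contour} gives only an upper bound on $\pi(\cC_{\fL,H+1})$, not a ratio, so a pure cut-the-corner map requires comparing to the probability of the modified loop. I would avoid this by working with the disagreement-polymer law of \cref{cor:CE-macro-uniformBC-smallL}, which gives ratios up to $e^{O(L^{1/2+o(1)})}$. This error is negligible against the gain $c\beta s'N_0=L^{1-o(1)}$. The image polymer is obtained by replacing the corner portion of $\gamma$ with a minimal path, exactly as in the button-hole argument. If that route is problematic near criticality, the alternative is to bound the total corner-excursion weight by $e^{-c\beta N_0}$ times the probability that some \texttt{large} $H+1$ level line exists (via the same comparison), and then take $s$ small so the dilation by $(1+\epsilon_0)$ and the Wulff corner radius $sN_0$ both lie inside the excluded region.
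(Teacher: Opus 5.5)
\textbf{Gap in Step~2.} Your Peierls map replaces the corner arc of $\fL$ from $v$ to $u$ by a lattice path along $\overline{uv}$, and claims an energy saving of order $\beta r$. But the bad event contains configurations in which $\fL$ hugs the two sides of $\Lambda$. Typically this happens at distance $N_0^{1/3}\ll r$. In such a configuration $v\approx(0,r)$ and $u\approx(r,0)$, and the arc through $z$ is a monotone staircase. It then has exactly $\|u-v\|_1$ bonds, so it is already an $\ell^1$-geodesic.

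\begin{itemize}
\item Replacing such an arc by a shortest path saves no bonds, so $\sE_\beta$ does not decrease.
\item Meanwhile the map collapses $\binom{2r}{r}$-many staircases onto a single image. This enumeration cost $e^{\Theta(r)}$ is not absorbed by anything.
\item So ``lattice $\ell^1$ geometry'' gives no gain. ``Strict convexity of $\tau_\beta$'' is a statement about partition functions, not about individual paths, and cannot be used inside a configuration-wise map.
\end{itemize}

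The advantage of cutting the corner is purely entropic. The convexity gap between the corner route and the chord is of order $r$ with an $O(1)$ constant (counting diagonal staircases), not $\beta r$. It can only be harvested by comparing total weights of polymer families.

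This is what the paper does. It raises the zero boundary conditions, which is the correct direction for the increasing event, to Dobrushin $H+1$/$H$ conditions on $\widetilde\Lambda=\Lambda\cup R$, with the $H$ arc only between the lift-off points $A,B$ near the corner. It then applies \cref{prop:CE-macro-DobrushinBC} and bounds
\[
\bP(\cB)\leq \frac{e^{cs^2N_0}\,\hat\bE[\one_\cB]}{\exp\big(\hat\bE[\uprho_0\cA(D_0)/N_0]\big)}\,.
\]
Here the numerator uses the untilted large-deviation estimate $\hat\bE[\one_\cB]\leq e^{-c'sN_0}$ from \cite[Prop.~3.14, Lem.~3.16]{ChenLubetzky25}, and the denominator is handled by Jensen. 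The true competition is therefore $c'sN_0$ (entropy) against $cs^2N_0$ (area tilt over a corner region of area $O(s^2N_0^2)$). That is why $s$ must be a small constant, and why your exponent $-\beta c s'N_0+s'^2N_0$ is not justified.

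\textbf{Problems in Step~3.}
\begin{itemize}
\item As you note, \cref{lem:prob-of-contour} gives no ratio. Summed in absolute terms it is useless above criticality: already for $\fL=\partial\Lambda$ the bound is about $e^{\beta(\lambda-4)L}>1$.
\item \cref{cor:CE-macro-uniformBC-smallL} does not, as stated, give ratios up to $e^{O(L^{1/2+o(1)})}$. Its lower bound carries the factor $\pi^H_{V;F}(\fS)$, which is exponentially small in $L$ precisely when $\fL_0$ exists w.h.p. One can recover ratios from \cref{eq:pi(gamma)-uniform}, since that factor comes from a $\gamma$-independent denominator, but this has to be argued.
\item You never explain how to pass from $\pi^0_\Lambda$ to a uniform-$H$ setting in the right monotone direction. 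Revealing the $H$ level line leaves boundary values $\geq H$, and lowering them to $H$ goes the wrong way for upper-bounding an increasing event. The paper's Dobrushin enlargement avoids both issues: it raises boundary conditions and forces a single interface from $A$ to $B$.
\end{itemize}
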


\begin{proof}
    We will write the proof in terms of $n$ so it is clear how to generalize it later on, but here $n = 0$. It suffices by symmetry to just look at one corner of the shape $L\sL(\ell_n)$, say the south-west corner, and show that there is no \texttt{large} chain of height $\geq H+1-n$ sites which exits $L\sL(\ell_n)$ in this corner except with probability $o(1)$. As we want to upper bound the probability of an increasing event, we can first make monotone increasing adjustments to the measure. Call $A, B$ the two points where $L\sL(2\ell_n)$ lifts off the west, south boundaries of $\partial \Lambda$, respectively (they are at distance $\leq (1+\epsilon_\beta)sN_n$ from the bottom left corner of $\Lambda$). Let $R$ be the rectangle with width $B_1 - A_1$ and height $sN_n$, such that its top right corner is at $B$. Let $\widetilde \Lambda := \Lambda \cup R$. Finally, let $\xi$ denote boundary conditions on $\partial \widetilde\Lambda$ taking value $H-n$ on the smaller arc of $\partial \widetilde\Lambda$ between $A$ and $B$, and $H+1-n$ otherwise. By monotonicity, we can move to the measure $\pi^\eta_{\widetilde\Lambda}$. Let $\fL$ be the $H+1-n$ level line including the boundary disagreements of $\xi$, and let $\gamma$ be the disagreement polymer containing $\fL$. Let $D_1$ denote the region below $\gamma$ (which has $H-n$ b.c.). It suffices to show that $\gamma$ does not exit $L\sL(\ell_n)$ except with probability $o(1)$, as then a \texttt{large} chain of $\geq H+1-n$ sites can be excluded by a standard Peierls argument on $D_1$. 

Once again, we begin by setting up for \cref{prop:CE-macro-DobrushinBC} with the following claim:
    \begin{claim}\label{clm:CE-check-base-case}
    Let $\cG$ denote the set of $\gamma$ such that $\sE_\beta(\gamma) \leq Le^{\sqrt{\log L}}$, $|D_1| \leq (\tfrac{3\beta}{\hatpi_\infty(\phi_o > H-n)})^2$. Then, $\pi^\eta_{\widetilde\Lambda}(\cG) = 1-o(1)$.
\end{claim}
\begin{proof}
    We first bound the probability that $\cE_\beta(\gamma) > Le^{\sqrt{\log L}}$. Similarly to \cref{clm:CE-check-LW}, by FKG, the decorrelation estimate of \cref{eq:couple-pi-to-inf-vol}, and the bound on the large deviation ratios in \cref{thm:LD-DG}, we have the lower bound
    \[\hatpi^\eta_{\widetilde\Lambda}(\phi_x \geq 0,\, \forall x \in \widetilde\Lambda) \geq \prod_{x \in \Lambda} \hatpi^{H-n}_{\widetilde\Lambda}(\phi_x \geq 0) \geq \tfrac12\exp\big(-|\widetilde\Lambda|\hatpi_\infty^{H-n}(\phi_x < 0)\big) \geq \tfrac12e^{-Le^{c\sqrt{\beta\log L/\log\log L}}}\,.\]
    (The $\sqrt{\log L/\log\log L}$ is not needed for $n = 0$, but is there to clarify how to generalize for larger $n$.) Hence, it suffices to show in the no floor measure that $\hatpi^\eta_{\widetilde\Lambda}(\cE_\beta(\gamma) > Le^{\sqrt{\log L}}) = o(e^{-Le^{c\sqrt{\beta\log L/\log\log L}}})$. Using the form of the law in \cref{prop:CE-law-without-area}, we can conclude by a Peierls map to a minimal length path from $A$ to $B$ that $\hatpi^\eta_{\widetilde\Lambda}(\sE_\beta(\gamma) \geq L{e^{\sqrt{\log L}}}) \leq e^{-c\beta Le^{\sqrt{\log L}}}$. 

    We turn now to bound $|D_1|$. We will in fact show that for some $c, c'$, we have \begin{align*}
    \pi^\eta_{\widetilde\Lambda}(|D_1| > cs^2N_n^2) = e^{-c'sN_n}\,.
\end{align*}
Let $D_1'$ be the connected component of sites with height $\leq H-n$ containing the $H-n$ boundary condition of $\xi$. Then $D_1 \subset D_1'$, and the event $|D_1'| > cs^2N_n^2$ is decreasing. Hence by monotonicity it suffices to drop the floor and bound $\hatpi^\eta_{\widetilde\Lambda}(|D_1'| > cs^2N_n^2)$. Without the floor, we can again appeal to \cref{prop:CE-law-without-area} and apply a Peierls map argument. More specifically, by the isoperimetric inequality and the fact that $|\partial D_1' \setminus \partial \widetilde \Lambda| \leq |\gamma|$, we have that $|D_1'| > cs^2N_n^2$ implies that $|\gamma| \geq (\tfrac{\sqrt{c}}{4}-4(1+\epsilon_\beta))sN_n$. Taking $c$ large enough, we can rule this out by considering the Peierls map which sends $\gamma$ to a minimal length path from $A$ to $B$. 
\end{proof}
    
With the claim proven, we can restrict our attention to $\gamma \in \cG$. Moreover, specific for $n = 0$, we have $|D_0| \leq |\Lambda| \leq (\tfrac{3\beta}{\hatpi_\infty(\phi_o > H+1-n)})^2$. Hence by \cref{prop:CE-macro-DobrushinBC} we have \begin{align}\label{eq:base-case-CE-ref}\pi^\eta_\Lambda(\gamma) &\propto \exp\bigg(-\sE^*_\beta(\gamma) + \fI_{\Lambda}(\gamma) + \tfrac{\uprho_n}{N_n}|D_0| + \uprho_n\hatpi_\infty(\phi_x \geq H+1-n)|(\bigcup_{i \geq 2}D_i)|+ O(L^{1/2+o(1)})\bigg)\\
&=: e^{O(L^{1/2+o(1)})}\fp_{V}^{\eta}(\gamma)
    \end{align}
    Suppose that we did not have the extra $e^{O(L^{1/2+o(1)})}$ error term and the weight was just $\fp^\eta_V(\gamma)$. Then, using the same argument as leading to \cref{eq:control-finite-comp-square}, we can further restrict to $\gamma$ such that $\max_{\cD \in \cD} |\partial \cD| \leq \log L$, so that the term $\uprho_n\hatpi_\infty(\phi_x \geq H)|(\bigcup_{i \geq 2}D_i)| = o(1)$. In more detail, with \cref{eq:base-case-CE-ref}, we can rule out the case that $|\gamma| > 5sN_n$ by taking the Peierls map sending $\gamma$ to the disagreement contour which coincides with the small arc of the boundary from $A$ to $B$ on $\partial \widetilde\Lambda$. Since the minimal length path from $A$ to $B$ has length $2\ell(1+\epsilon_\beta) \leq 2sN_n(1+\epsilon_\beta)$, this implies that we can assume $\max_{\cD \in \fD(\gamma)}|\partial \cD| \leq 5sN_n$ as $\tfrac12|\partial \cD|$ is excess length. With this input, the steps leading up to \cref{eq:control-finite-comp-square} can be followed exactly, considering the map which replaces a component $\cD$ with a minimal length path.
    
    Hence, we will focus on the polymer model of disagreement polymers from $A$ to $B$, restricted to lie in $\widetilde\Lambda$, with weight given by $\exp(-\sE^*_\beta(\gamma) + \fI_{\Lambda}(\gamma) + \tfrac{\uprho_n}{N_n}|D_0|)$, and show that under this model, $\gamma$ exits $L\sL(\ell)$ with probability $o(e^{-L^{1/2+o(1)}})$. Call $\cB$ the set of bad $\gamma$ which exits $L\sL(\ell)$. Denote the probability and expectation with respect to this model by $\bP$ and $\bE$. We can also consider the same model without the area term $\tfrac{\uprho_n}{N_n}|D_0|$, denoted by $\hat\bP$ and $\hat\bE$. Finally, let $\cA(D_0)$ denote the area of $|D_0|$ normalized to be a signed area with respect to the line segment $\overline{AB}$ (where the area above $\overline{AB}$ is signed positive). This normalization doesn't change the measure, so we can replace $|D_0|$ with $\cA(D_0)$ in the definitions of $\bP$ and $\bE$. 

    Automatically, we have that for some absolute constant $c$, $\cA(D_0) \leq cs^2N_n^2$.
    Then, by Jensen's inequality we can write
    \[\bP(\cB) = \frac{\hat\bE[e^{\uprho_n\cA(D_0)/N_n}\one_\cB]}{\hat\bE[e^{\uprho_n\cA(D_0)/N_n}]} \leq \frac{e^{cs^2N_n}\hat\bE[\one_\cB]}{\hat\bE[e^{\uprho_n\cA(D_0)/N_n}]} \leq \frac{e^{cs^2N_n}\hat\bE[\one_\cB]}{e^{\hat\bE[\uprho_n\cA(D_0)/N_n]}}\,.\]
    From \cite[Prop.~3.14, Lem.~3.16]{ChenLubetzky25}, for some constants $c', c'' > 0$, the expectation in the numerator can be upper bounded by $e^{-c'sN_n}$ and the expectation in the denominator can be lower bounded by $-c''(sN_n)^{1/2}$. For $s$ sufficiently small depending only on the constants $c, c'$, the dominant term is $e^{-c'sN_n}$, which proves the desired upper bound on $\bP(\cB)$.
\end{proof}

Given the local control over the level lines in \cref{prop:retreat-gadget}, the induction step below follows as in \cite{CLMST16}, only with certain calculations involving $N_n$ instead of $L$. For completeness, the main ideas of the proof are presented below, and we refer to \cite{CLMST16} for more details. Note that unlike the base case, the proof for the induction step works for any fixed $n \geq 0$.
\begin{lemma}[{\cite[Lem.~6.12]{CLMST16}}, Induction step for $n \geq 0$]\label{lem:induction-step}
    Fix $n \geq 0$. Let $\ell_n = sN_n/L$ for $s$ from \cref{lem:base-case}. If $\ell_n \leq \ell \leq (1-\tfrac12e^{-\frac{C}3\sqrt{\log L}})\ell^*_n$ and $\cE_n(\ell_n)$ holds w.h.p., then so does $\cE_n(\ell + L^{-3/4})$.
\end{lemma}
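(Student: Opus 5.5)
The plan is to follow the retreat-of-droplets scheme of \cite[Lem.~6.12]{CLMST16}, using \cref{prop:retreat-gadget} as the local input in place of the \SOS random-walk estimates. Since $\cE_n(\ell)$ is decreasing, we may raise boundary conditions freely when upper bounding the probability of $\cE_n(\ell+L^{-3/4})^c$. By hypothesis we work on $\cE_n(\ell)$: every \texttt{large} chain of sites at height $\geq H+1-n$ stays inside $L(1+\epsilon_n)\sL(\ell)$. The aim is to show that, w.h.p., no such chain reaches beyond $L(1+\epsilon_n)\sL(\ell+L^{-3/4})$. Going from scale $\ell$ to $\ell+L^{-3/4}$ pulls the corner Wulff quadrants away from the corner by $O(L^{1/4})$. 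The induction covers the range $[\ell_n,(1-\frac12 e^{-\frac C3\sqrt{\log L}})\ell^*_n]$ in polynomially many steps, and each step fails with probability at most $e^{-c\beta\log L}$, so the errors survive a union bound.

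\textbf{Step 1: localization.} Fix a corner, say the south-west one, and a point $P$ on the arc of $\partial(L(1+\epsilon_n)\sL(\ell+L^{-3/4}))$ that lies outside the flat portions; there are polynomially many such $P$. Choose $A,B$ on $\partial(L(1+\epsilon_n)\sL(\ell))$ on either side of $P$, with horizontal (or vertical, by symmetry) separation $d=N_n^{2/3}e^{C\sqrt{\log L}}$ and chord angle $\theta$ in $[0,\pi/4]$ up to symmetry. Let $R$ be the rectangle of dimensions $d\times 3d$ from \cref{prop:retreat-gadget}, oriented so that the chord $\overline{AB}$ sits in the middle third.

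Because the Wulff quadrant has radius $\asymp L\ell\asymp N_n$, the chord lies inside the shape, and its middle point sits at depth $\asymp d^2/(L\ell)$ below $\partial(L\sL(\ell))$. We then reveal the outermost circuit near $\partial R$, exactly as in the proof of \cref{clm:crit-window-growth} but with the inequalities reversed. On the arc toward the corner, $\cE_n(\ell)$ together with a Peierls bound on chains of length below $\log L$ gives heights $\leq H-n$. On the opposite arc, heights are at most $H+1-n$ after monotonically raising. Raising the boundary to exactly $H+1-n$ and $H-n$, and regularizing near $A',B'$ as in \cite[Lem.~4.8]{ChenLubetzky25}, produces a domain $Q$ satisfying \cref{it:Q-simp-conn,it:dist-Q-R-bdy,it:good-boundary-points,it:Q-bc-arc}.

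\textbf{Step 2: apply the gadget.} By \cref{prop:retreat-gadget}, w.h.p.\ the level line in $Q$ stays on the interior side of the point at depth $Y-\sigma e^{C\sqrt{\log L}}-\log L$ relative to the chord. Here
\[
|Y|\asymp \frac{\uprho_n d^2}{N_n(\tau_\beta+\tau_\beta'')\cos^3\theta}\,,\qquad \sigma e^{C\sqrt{\log L}}\asymp N_n^{1/3}e^{\frac32 C\sqrt{\log L}}\,.
\]
The Wulff quadrant of the scaled shape $L\ell\cW_1$ has curvature given by the Wulff/Herring relation, $1/\kappa = L\ell\,\frac{2\beta\lambda}{\sfw_1}\cdot(\tau_\beta+\tau_\beta'')$ up to the normalization of \cref{eq:def-ell*}. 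Substituting this shows that the sagitta $d^2\kappa/8$ of the Wulff arc equals $|Y|\cdot \ell^*_n/\ell$ to first order. Since $\ell\leq(1-\frac12e^{-\frac C3\sqrt{\log L}})\ell^*_n$, the Wulff arc dips at least $\frac12 e^{-\frac C3\sqrt{\log L}}|Y|\asymp N_n^{1/3}e^{(2-\frac13)C\sqrt{\log L}}$ further than $|Y|$. This exceeds $\sigma e^{C\sqrt{\log L}}+\log L$ plus the $O(L^{1/4})$ shift and the $\epsilon_n L\ell$ slack. Consequently, the level line $\fL_n$, and by a Peierls argument on $D_1$ any \texttt{large} chain at height $\geq H+1-n$, stays inside $L(1+\epsilon_n)\sL(\ell+L^{-3/4})$ near $P$.

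\textbf{Step 3: conclude.} Take a union bound over all $P$, the four corners, and the two orientations. The flat portions of the shape need no argument because they coincide with $\partial\Lambda$.

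The main obstacle is Step 1. Revealing the circuit must yield boundary conditions that are genuinely dominated, in the decreasing direction, by the measure $\pi^0_\Lambda$ restricted to the event $\cE_n(\ell)$. The difficulty is that conditioning on $\cE_n(\ell)$ breaks simple FKG comparisons. I would handle this by exposing only the outermost circuit from outside, which leaves the interior of $Q$ unrevealed, and using the fact that both $\cE_n(\ell)$ and the target event are decreasing, so that conditioning on one only helps in bounding the other. The second, more delicate point is checking that the constants in $Y$ match the Wulff curvature at scale $\ell^*_n$, including the factor $\uprho_n$.
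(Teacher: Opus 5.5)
Your proposal is correct and follows the paper's proof essentially step for step. The shared steps are: the $d\times 3d$ rectangle around a chord of $\partial(L(1+\epsilon_n)\sL(\ell))$; the outermost-circuit exploration, which uses $\cE_n(\ell)$ only as a w.h.p.\ event on the corner-side arc, so no conditioning or Harris-type argument is needed; raising to Dobrushin boundary conditions; \cref{prop:retreat-gadget}; and the comparison of the sagitta with $|Y|+\sigma e^{C\sqrt{\log L}}+O(L^{1/4})$, which is exactly \cref{eq:key-eq}.

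One correction concerns the factor $1+\epsilon_n$. There is no additive ``$\epsilon_n L\ell$ slack'' to beat: it would be $\asymp N_n^{2/3}$, which your margin $N_n^{1/3}e^{\frac53 C\sqrt{\log L}}$ does not exceed. Since both shapes carry the same factor $1+\epsilon_n$, it only rescales the sagitta by $1+O(\epsilon_n)$. Its actual role is to make the old arc continue $\gtrsim N_n^{5/6}\gg d$ past $\partial\Lambda$, so that for every relevant $x\le x_{\mathsf L}(\ell+L^{-3/4})$ the chord stays on the curved part. The rectangle $R$ may then protrude from $\Lambda$; this is handled by enlarging the domain to $\Lambda\cup R$ with zero boundary conditions.
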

\begin{proof}
    By symmetry it suffices to consider one of the four corners of $\Lambda$, say the bottom left one. Take the convention that the origin is at the center of $\Lambda$, so in particular the bottom side of $\partial \Lambda$ is on the line $y = -\tfrac{L}{2}$. Let $f_\ell(x)$ be the function on $(-\frac{1+\epsilon_n}{2}L, 0]$ whose graph is the bottom left section of $\partial L(1+\epsilon_n)\sL(\ell)$. Let $\hat{x}(\ell)$ be the solution to $f_\ell(x) = x$, and $x_{\mathsf{L}}(\ell)$, $x_{\mathsf{R}}(\ell)$ the first points where $f_\ell(x)$ is smaller than $-\tfrac L2$, $-\tfrac L2(1+\epsilon_n)$, respectively.

    Now let $\ell' = \ell + L^{-3/4}$, and fix $x \in [\hat{x}(\ell), x_{\mathsf{L}}(\ell')]$. Let $x^\pm := x + \tfrac12N_n^{2/3}e^{C\sqrt{\log L}}$. Let $\theta \in [0, \pi/4]$ satisfy $\tan(\theta) = |f_\ell'(x)|$. By construction, $x_{\mathsf{R}}(\ell) - x_{\mathsf{L}}(\ell') \geq c\sqrt{\epsilon_n}N_n = cN_n^{5/6}$ for some constant $c > 0$, so that in particular $x^+ < x_{\mathsf{R}}(\ell)$.
    Thus, we can use the deterministic bound on Wulff shapes in \cite[Lem.~3.9]{CLMST16} with $d = N_n^{2/3}e^{C\sqrt{\log L}}/L$ and then scale by $L(1+\epsilon_n)$ to obtain that
    \[f_\ell(x) = \tfrac12(f_\ell(x^-) + f_\ell(x^+)) - \frac{\sfw_1(\tau_\beta)N_n^{4/3}e^{2C\sqrt{\log L}}}{L\ell(1+\epsilon_n)16(\tau_\beta(\theta) + \tau_\beta''(\theta))\cos^3(\theta)} + o(1)\,.\]
    Since $\ell' = \ell + L^{-3/4}$, the scaling of the Wulff shape implies that $f_{\ell'}(x) \leq f_\ell(x) + CL^{1/4}$ for some constant $C$. We aim to show that w.h.p., there is no \texttt{large} chain of sites with height $\geq H+1-n$ which drops below the point $(x, f_\ell(x'))$. Motivated by \cref{prop:retreat-gadget}, define
    \[Z_\ell(x):= \tfrac12(f_\ell(x_-) + f_\ell(x_+)) - \frac{\uprho_n N_n^{1/3}e^{2C\sqrt{\log L}}}{8(\tau_\beta(\theta) + \tau_\beta''(\theta))\cos^3(\theta)} - \sigma e^{C\sqrt{\log L}}\,.\]
    Suppose we can show the following claim:
    \begin{claim}\label{clm:retreat-claim}
        For any choice of $x \in [\hat{x}(\ell), x_{\mathsf{L}}(\ell')]$, with probability $1-o(1)$, there is no \texttt{large} chain of sites with height $\geq H+1-n$ which drops below the point $(x, Z_\ell(x))$.
    \end{claim}
    This is sufficient as long as $Z_\ell(x) \geq f_\ell(x) + CL^{1/4}$, and it is a straightforward computation to check that this is satisfied as long as $\ell$ satisfies
    \begin{equation}\label{eq:key-eq}
        \frac{\sfw_1(\tau_\beta)N_n}{2L\ell(1+\epsilon_n)\uprho_n } - 1 \geq e^{-\frac{C}{2}\sqrt{\log L}}c(\beta, \theta)
    \end{equation}
    for some constant $c(\beta, \theta)$ (similar to \cite[Eq. 4.34]{ChenLubetzky25}), or equivalently,
    \[\ell \leq \frac{w_1(\tau_\beta)N_n}{2L(1+\epsilon_n)(1+e^{-\frac{C}{2}\sqrt{\log L}}c(\beta, \theta))\uprho_n }\,.\]
    This in turn is satisfied by $\ell \leq (1-\tfrac12e^{-\frac{C}{3}\sqrt{\log L}})\ell^*_n$. We can then conclude the proof by iterating over all $x \in [\hat{x}(\ell), x_{\mathsf{L}}(\ell')]$ and then using symmetry across the line $y = x$. 

    We now prove \cref{clm:retreat-claim}. This will be a monotonicity argument to justify the application of \cref{prop:retreat-gadget}. Observe that the event in the claim is decreasing, so we are allowed to increase heights en-route to showing it occurs w.h.p. Let $R$ be the rectangle of width $N_n^{2/3}e^{C\sqrt{\log L}}$ and height $3N_n^{2/3}e^{C\sqrt{\log L}}$, centered at $x$. Call $A, B$ the intersection of $\partial L(1+\epsilon_n)\sL(\ell)$ with the left, right sides of $\partial R$ respectively. (If $R$ does not fit inside $\Lambda$, we can again enlarge the domain, taking $\widetilde\Lambda = \Lambda \cup R$ with $0$ boundary conditions.) Call the portion of the boundary $\partial R$ above $A, B$ by $\partial^1 R$, and let $\partial^2 R := \partial R \setminus \partial^1 R$. If we reveal the components of sites with height $\geq H+2-n$ along $\partial^1 R$, the external boundaries of such revealed components stitch together to form a chain of sites with height $\leq H+1-n$. By a Peierls argument, w.h.p.\ these components will have size at most $\log L$. Similarly, if we reveal the components of sites with height $\geq H+1-n$ along $\partial^2 R$, then the external boundaries can be stitched together to form a chain of sites with height $\leq H-n$, and by definition on the event $\cE_n(\ell)$ these components will have size at most $\log L$. Hence, w.h.p, there exists a circuit of sites passing through $A, B$ which has height $\leq H-n$ in the arc below $A, B$ and height $\leq H+1-n$ in the arc above $A, B$. Revealing the outermost such circuit and raising the heights to be equal to $H-n-1$ and $H-n$ respectively, the result is a domain $Q$ with boundary conditions $\xi$ satisfying \cref{it:Q-simp-conn,it:dist-Q-R-bdy,it:Q-bc-arc} with respect to $R$ (and notably, no information about $\phi$ in the interior of $Q$ was revealed). Finally, the technical condition \cref{it:good-boundary-points} can be satisfied by a procedure completely analogous to steps 5, 6 in the proof of \cite[Lem.~4.8]{ChenLubetzky25}. Hence, we are in the position to apply \cref{prop:retreat-gadget}, which immediately implies \cref{clm:retreat-claim}.
\end{proof}

This proves \cref{thm:wulff-shape-contains-level-line} for $n = 0$, and now it remains to extend this to any fixed $n \geq 0$.
\begin{lemma}\label{lem:retreat-n}
     For $n \geq 1$, if $\cE_{n-1}(\ell^*_{n-1}(1-\tfrac12e^{-\frac{C}3\sqrt{\log L}}))$ holds w.h.p., then $\cE_n(\ell^*_n(1-\tfrac12e^{-\frac{C}3\sqrt{\log L}}))$ holds w.h.p.\ as well.
\end{lemma}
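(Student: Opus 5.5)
The plan is to rerun the $n=0$ argument (the base case \cref{lem:base-case} followed by the induction \cref{lem:induction-step}) for the $(H+1-n)$ level line. The hypothesis on $\fL_{n-1}$ is used in exactly one place: to control $|D_0|$ when the macroscopic cluster expansion is applied. The difficulty is that for $n\geq 1$ the region $D_0$ above the $(H+1-n)$ polymer is no longer automatically small: $|\Lambda|$ may exceed $(3\beta/\hatpi_\infty(\phi_o>H+1-n))^2$, since $N_n\ll L$. So \cref{prop:CE-macro-DobrushinBC} cannot be invoked directly in the base case.

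First I would reveal the $(H+2-n)$ level line. By hypothesis, w.h.p.\ every \texttt{large} chain of sites at height $\geq H+2-n$ lies inside $L(1+\epsilon_{n-1})\sL(\ell^*_{n-1}(1-\tfrac12e^{-\frac C3\sqrt{\log L}}))$. The event $\cE_n(\ell)$ is decreasing, so by monotonicity we may raise heights in the region outside that shape. Around each corner we also take the enlarged domain $\widetilde\Lambda=\Lambda\cup R$, with Dobrushin boundary conditions $H+1-n$ / $H-n$ as in \cref{lem:base-case}.

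The key point is that near the corner at scale $sN_n$ we have $N_n\ll N_{n-1}$. Hence the corner box $R$ of side $O(sN_n)$, together with a neighborhood of size $O(N_{n-1}^{1-o(1)})$, lies in the complement of the $(n-1)$ shape, because that shape's corner Wulff arcs have radius $\asymp N_{n-1}/L\cdot L=N_{n-1}\gg N_n$. I would therefore work in a localized domain: a box $V$ of side $N_n e^{\sqrt{\log L}}$ at the corner. On its inner boundary I impose boundary conditions $H+1-n$ (an increasing modification, which is allowed for a decreasing event), and on the box sides $0$ on the $\partial\Lambda$ portion, raised to $H-n$ on the small arc. Then $|D_0\cap F|\leq |V|\leq N_n^2e^{2\sqrt{\log L}}$. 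This is still larger than $(3\beta N_n)^2$, so I would shrink the box to side $cN_n$ with $c<3\beta$ while keeping $s$ small relative to $c$. That verifies the hypotheses of \cref{prop:CE-macro-DobrushinBC}, with error $\exp(O(L^{1/2+o(1)}))$.

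The $L^{1/2+o(1)}$ error against the gain $e^{-c'sN_n}$ is the main obstacle: $N_n=L^{1-o(1)}$, so it is fine, but I would double-check that the Jensen step of \cref{lem:base-case} still beats the error. In that step, $\hat\bE[\one_\cB]\leq e^{-c'sN_n}$ and the area gain is at most $e^{cs^2N_n}$, giving $\bP(\cB)\leq e^{-c''sN_n+O(L^{1/2+o(1)})}=o(1)$. I would treat each of the four corners by symmetry and exclude any \texttt{large} $\geq H+1-n$ chain by Peierls inside $D_1$. This gives $\cE_n(sN_n/L)$ w.h.p. The induction step \cref{lem:induction-step} is already stated for general $n$ and uses only \cref{prop:retreat-gadget} on mesoscopic rectangles of side $N_n^{2/3}e^{C\sqrt{\log L}}$, which lie far from the $(H+2-n)$ line because $\ell^*_n\ll\ell^*_{n-1}$. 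Its only extra input is that on $\partial^1R$ the components at height $\geq H+2-n$ are of size at most $\log L$, which the revealed $(n-1)$ level line guarantees. Iterating $\ell\mapsto\ell+L^{-3/4}$ from $sN_n/L$ up to $(1-\tfrac12e^{-\frac C3\sqrt{\log L}})\ell^*_n$ takes polynomially many steps, each failing with probability $e^{-c\beta\log L}$. A union bound then yields $\cE_n(\ell^*_n(1-\tfrac12e^{-\frac C3\sqrt{\log L}}))$ w.h.p.
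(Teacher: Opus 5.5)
Your proposal is correct and follows essentially the paper's route. The hypothesis $\cE_{n-1}$ is used to reveal and stitch a boundary of $\leq H+1-n$ sites near the corner, which monotonicity lets you raise to $H+1-n$ so that $D_0$ has controlled area; you then rerun \cref{lem:base-case} there and finish with \cref{lem:induction-step}. The only variation is where you cut off. You use a corner box of side $cN_n$, which is legitimate: it lies outside the $(n-1)$ shape and stays $\gg sN_n$ from $A,B$. Your intermediate box of side $N_ne^{\sqrt{\log L}}$ would not work, since $N_{n-1}/N_n=e^{o(\sqrt{\log L})}$. The paper instead stitches $\Gamma$ along the corner arc of the $(n-1)$ shape and gets $|D_0|=O(N_{n-1}^2)$.

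Two small corrections. First, the $D_0$ threshold in \cref{prop:CE-macro-DobrushinBC} is $(3\beta/\hatpi_\infty(\phi_o>H+1-n))^2\asymp(3\beta N_{n-1})^2$, not $(3\beta N_n)^2$. Your choice $c<3\beta$ is still useful, since it is what makes the $|D_1|$ bound and the final Peierls step inside $D_1$ automatic. Second, imposing $H+1-n$ on the box's inner sides is a valid increasing modification only after you reveal the small $\geq H+2-n$ clusters touching those sides and stitch their outer boundaries, exactly as the paper does for $\Gamma$.
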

\begin{proof}
    Since we already have \cref{lem:induction-step}, the only thing missing is an analog of \cref{lem:base-case} for general $n$. That is, with $s$ as in \cref{lem:base-case} and $\ell_n := sN_n/L$, we want to show that $\cE_n(\ell_n)$ holds with probability $1- o(1)$ for all $n \geq 1$. The difference from the $n = 0$ case is that we now need to work to bound $|D_0| \leq (\tfrac{3\beta}{\hatpi_\infty(\phi_o > H+1-n)})^2$ for the application of \cref{prop:CE-macro-DobrushinBC}, while for $n = 0$ this was satisfied from the trivial bound $|D_0| \leq |\Lambda|$.
 
    For each vertex $v$ in the southwest corner of $L(1+\delta_{n-1})\sL(\ell^*_{n-1}(1-\tfrac12e^{-\frac{C}3\sqrt{\log L}}))$, reveal the connected component of sites with height $\geq H+2-n$. This reveals a boundary of $\leq H+1-n$ sites. Since we are assuming that $\cE_{n-1}(\ell^*_{n-1}(1-\tfrac12e^{-\frac{C}3\sqrt{\log L}})$ holds w.h.p., then w.h.p.\ all of these components have size at most $\log L$. We can stitch these together to form a boundary $\Gamma$ of $\leq H+1-n$ sites, and then by monotonicity we can raise the boundary to height $H+1-n$. Now, we can follow the proof of \cref{lem:base-case} starting with the region below $\Gamma$, as opposed to all of $\Lambda$. We now have $|D_0| \leq CN_{n-1}^2$ for a constant $C$ depending only on $s$ (and hence independent of $\beta$), the only missing input needed for the application of \cref{prop:CE-macro-DobrushinBC}. Moreover, $\Gamma$ is sufficiently far away from the end-points of $\gamma$ so that it does not interfere with any of the random walk estimates used. (More precisely, one would need $\Gamma$ to stay distance $CN_n$ away for a constant $C$ depending on $s$. We have $\Gamma$ is distance $O(N_{n-1})$ away, which is much further.) Hence, the proof of \cref{lem:base-case} shows that with probability $1-o(1)$ we have $\cE_n(\ell_n)$, and then \cref{lem:induction-step} concludes.
\end{proof}

\begin{remark}\label{rem:hausdorff-error-quant}
    The proof of \cref{lem:induction-step} gives details behind the heuristic described in the proof ideas in \cref{sec:proof-ideas}. Here we have an area-tilted random walk at scale $d = N_n^{2/3}f(L)$ for $f(L) = e^{C\sqrt{\log L}}$. The quantity $f_\ell(x)$ is where the boundary of the Wulff shape of size $\ell L$ is, and $Z_\ell(x)$ is where the level line $\fL_n$ is w.h.p.\ above. Let us ignore the centering by $\tfrac12(f_\ell(x^-) + f_\ell(x^+))$ present in both terms. Let us also ignore the factor of $1+\epsilon_n$, which was necessary only for technical reasons, since $\epsilon_n$ is negligibly small. We are left with
    \begin{align*}\bar f_\ell(x) &:= \frac{\sfw_1(\tau_\beta)N_n^{4/3}e^{2C\sqrt{\log L}}}{L\ell16(\tau_\beta(\theta) + \tau_\beta''(\theta))\cos^3(\theta)}\,,\\
    \bar Z(x) &:= \frac{\uprho_n N_n^{1/3}e^{2C\sqrt{\log L}}}{8(\tau_\beta(\theta) + \tau_\beta''(\theta))\cos^3(\theta)} + \sigma e^{C\sqrt{\log L}} =: \mu+f\sigma\,.
    \end{align*}
    $\bar Z(x)$ has a mean term $\mu$, representing the mean of the level line's location above $x$, and a fluctuation term $f\sigma$ representing the most the level line will deviate from $\mu$ except with probability $o(1)$. Then, first observe that $\ell^*_n$ is indeed the value such that $\bar f_{\ell^*_n}(x)=\mu$, giving a first order estimate for the location of $\fL_n$. Secondly, observe that \cref{eq:key-eq} is equivalent to the condition $f_\ell(x) \geq \mu+f\sigma$. Since $f_\ell(x) = C(\beta, \theta)d^2/\ell$ depends inversely on $\ell$, we have altogether that the induction works up to $\ell/\ell_* \leq 1 - O(f\sigma/\mu)$. On the other side, we have a matching bound in \cref{prop:shape-thm-crit-window} which is based off an induction holding up to $\ell/\ell_* \geq 1+ O(f\sigma/\mu)$ (see \cite[Eq. 4.34]{ChenLubetzky25}). 
    
    Moreover, $f\sigma/\mu$ is a power of $f^{-1}$, and this is the form of the Hausdorff distance bound in \cref{thm:main-thm-limit-shape}. Hence, we obtain a better estimate when $f$ is larger. The size we can take for $f$ comes from the bound on $|\partial F| \vee |\partial V|$ in \cref{prop:CE-meso}, which in turn depends on the respective bound in \cref{thm:key-area}, measuring how well we can estimate the probability of staying above a floor. In particular, this area bound will change for different $p$, resulting in the different Hausdorff bounds in \cref{thm:grad-phi-p}.
\end{remark}

\section{Ferrari--Spohn limit law for \texorpdfstring{$\fL_0$}{L0}}\label{sec:limit-law}
In this section we prove \cref{thm:FS-no-exceptional}, which extends the limit law theorem of \cite[Thm.~1.1]{ChenLubetzky25} to handle the $\fL_0$ level line, and to cover all $L$ (whereas the previous result avoided an exceptional set of $L$ close to $L_c^{(h)}$). As in \cref{sec:growth}, this is a matter of modifying the original proof to circumvent an initial lack of monotonicity. The main point is that even though a measure such as $\pi^0_{\Lambda}(\cdot \mid \fL_0 \neq \emptyset)$ does not have FKG, by zooming in to a mesoscopic box, we can still compare level lines in this measure to level lines in a $\ZGFF$ measure which does have FKG. We split the proof into four statements in the proposition below, being the analog of \cite[Thms.~5.1 and~6.1]{ChenLubetzky25} in the settings where we condition on $\fL_0 = \emptyset$ and $\fL_0 \neq \emptyset$. 

Fix $n\geq 0$ and $K>0$. Let $\rho_n(x)$ be the maximum vertical distance of~$\fL_n$ above $x+(\frac{L}2,0)$ for $-N_n^{2/3} \leq x\leq N_n^{2/3}$, and let $\sigma_n$ be the same constant as in \cref{thm:CL25}. Define the process $Y_n(t):=N_n^{-1/3}\rho_n(t N_n^{2/3})$ ($t\in[-K,K]$). 
\begin{proposition}
     Fix $m \geq 0$. For every sequence of $L \to \infty$ such that each $L$ satisfies $\pi^0_{\Lambda}(\fL_0 \neq \emptyset) > \delta_L$, then sampling $\fL_n$ under the measures $\pi^0_{\Lambda}(\cdot \mid \fL_0 \neq \emptyset)$, every weak limit point $(\bY_n(t))_{0 \leq n\leq m}$ of the processes $(Y_n(t))_{0 \leq n\leq m}$ satisfies 
     \begin{align}
         &(\bY_n)_{0 \leq n\leq m} \preceq \bigotimes_{0 \leq n\leq m}\mathsf{FS}_{\sigma_n}\,,\label{eq:FS-UB-top-exists}\\
         &(\bY_n)_{0 \leq n\leq m} \succeq \bigotimes_{0 \leq n\leq m}\mathsf{FS}_{\sigma_n}\,.\label{eq:FS-LB-top-exists}
     \end{align}
    If the sequence $L \to \infty$ is such that each $L$ satisfies $\pi^0_{\Lambda}(\fL_0 = \emptyset)> \delta$, then sampling $\fL_n$ under the measures $\pi^0_{\Lambda}(\cdot \mid \fL_0 = \emptyset)$ we have
    \begin{align}
         &(\bY_n)_{1\leq n\leq m} \preceq \bigotimes_{1 \leq n\leq m}\mathsf{FS}_{\sigma_n}\,,\label{eq:FS-UB-top-not-exists}\\
         &(\bY_n)_{1 \leq n\leq m} \succeq \bigotimes_{1 \leq n\leq m}\mathsf{FS}_{\sigma_n}\,.\label{eq:FS-LB-top-not-exists}
     \end{align}
\end{proposition}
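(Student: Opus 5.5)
The plan is to reduce each of the four stochastic-domination statements to the corresponding statements of \cite[Thms.~5.1 and~6.1]{ChenLubetzky25}. Those arguments run on FKG-monotone measures, and the conditional measures $\pi^0_\Lambda(\cdot\mid\fL_0\neq\emptyset)$ and $\pi^0_\Lambda(\cdot\mid\fL_0=\emptyset)$ lack FKG. So in each case I would first reveal enough information outside a mesoscopic box around the bottom side to make the conditioning event measurable, and then apply domain Markov. Inside the box this leaves a genuine \ZGFF measure with floor and explicit boundary conditions, which does have FKG.

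Concretely, I would fix a box $R_n$ of width $N_n^{2/3}e^{C\sqrt{\log L}}$ and height $N_n^{2/3}e^{C\sqrt{\log L}}$ centered at $(\tfrac L2,0)$. Since $N_n^{2/3}\ll N_0$, \cref{thm:main-thm-limit-shape}, together with \cref{prop:shape-thm-crit-window} and \cref{thm:wulff-shape-contains-level-line} in their conditional forms, gives the following on the conditioning event, w.h.p. If $\fL_0$ exists, it is a single macroscopic loop passing at distance $o(N_0)$ from $\partial\Lambda$ along the flat side. In particular it crosses $R_n$ and is determined by its portion outside $R_n$. If $\fL_0$ does not exist, there is no \texttt{large} $(H+1)$ chain, and this is a decreasing event.

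For \cref{eq:FS-UB-top-exists,eq:FS-LB-top-exists}, I would reveal, from outside $R_n$, the outermost circuit of sites with heights $\geq H+1-n$ above and $\leq H-n$ below, exactly as in the circuit-revealing step of \cref{clm:crit-window-growth}. On $\{\fL_0\neq\emptyset\}$, the existence of $\fL_0$ is then witnessed by the revealed exterior up to a $o(1)$ event, namely that the loop closes through $R_n$. Conditioning on the revealed data therefore gives a Markov domain with boundary values of the form in \cref{prop:retreat-gadget}. This lets me sandwich between raising and lowering boundary values to $H+1-n$ and $H-n$ respectively. The upper and lower Ferrari--Spohn comparisons of \cite[Thms.~5.1,~6.1]{ChenLubetzky25} then apply verbatim, with the area tilt $\uprho_n/N_n$ coming from \cref{prop:CE-meso} in place of $1/N_n$. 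Because $\uprho_n=1-o(1)$ by \cref{prop:uprho-bound}, the limiting $\sigma_n$ is unchanged. Joint independence across $n$ follows as before from the separation of scales $N_n\ll N_{n-1}$: I would nest the boxes $R_n$ and reveal level by level from the top.

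For \cref{eq:FS-UB-top-not-exists,eq:FS-LB-top-not-exists}, I would reveal the connected components of $\{\phi\geq H+1\}$ meeting a macroscopic annulus. The conditioning event $\{\fL_0=\emptyset\}$ restricted to $R_1$ then amounts to the absence of $(H+1)$ chains there. This is decreasing, so for the upper bound \cref{eq:FS-UB-top-not-exists} I can use FKG to drop it. For the lower bound, I would note that within $R_1$ this event has conditional probability $1-o(1)$ by a Peierls argument using \cref{lem:prob-of-contour}, since the area is only $N_1^{4/3+o(1)}\ll N_0^2$. Hence it changes the law only by a total-variation error of $o(1)$.

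I expect the main obstacle to be the first case. There I must show that conditioning on the global, non-monotone event $\{\fL_0\neq\emptyset\}$ induces, inside $R_n$, only a monotone boundary condition plus an $o(1)$ total-variation error. This requires ruling out, with conditional probability $1-o(1)$, configurations in which $\fL_0$ dips into $R_n$ in a way coupled to the interior. I would try to get this from the conditional shape theorem, applied at scale $N_0^{2/3}e^{C\sqrt{\log L}}$ for $n=0$ with the $(1+o(1))$ polymer law of \cref{prop:CE-meso}, and then proceed by the same revealing procedure.
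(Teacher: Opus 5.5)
Your overall route is the paper's: make the conditioning event a function of $\phi$ outside a mesoscopic box at the bottom side, so that domain Markov leaves inside a genuine measure $\pi^{\tilde\xi}_Q$ with FKG, and then rerun \cite[Thms.~5.1 and~6.1]{ChenLubetzky25}. However, one step fails as written. For \cref{eq:FS-UB-top-not-exists} you drop the decreasing event $\{\fL_0=\emptyset\}$ by FKG. Raising $\phi$ enlarges $\{\phi\ge H+1-n\}$ and pushes $\fL_n$ toward the bottom side, so $Y_n$ is a \emph{decreasing} function of $\phi$. Since $\pi^0_\Lambda(\cdot\mid\fL_0=\emptyset)\preceq\pi^0_\Lambda$, the conditioning makes $Y_n$ stochastically \emph{larger}. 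That helps the lower bound \cref{eq:FS-LB-top-not-exists}, but it is exactly the harmful direction for \cref{eq:FS-UB-top-not-exists}, so FKG gives nothing there. Your total-variation argument is direction-free and would serve for both, as long as all errors are $o(\delta_L)$.

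Relatedly, a single circuit that is $\ge H+1-n$ above and $\le H-n$ below cannot be moved to Dobrushin data monotonically. For \cref{eq:FS-UB-top-exists} one reveals arcs that are $\ge H+1-n$ above and $\ge H-n$ below, and lowers them. For \cref{eq:FS-LB-top-exists} the paper uses a thin $3TN_n^{2/3}\times N_n^{1/3}\log L$ box sitting on $\partial\Lambda$, reveals the $\ge H+2-n$ clusters along its top and sides, and raises. This needs, inductively from $n=0$, that $\fL_{n-1}$ misses the box. That fails for your square box, whose height $N_n^{2/3}e^{C\sqrt{\log L}}$ is much larger than $N_{n-1}^{1/3}$.

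The obstacle you single out for $\{\fL_0\neq\emptyset\}$ dissolves without any polymer law at scale $N_0^{2/3}$. The paper redefines $\fL_0$ as the \texttt{large} $(H+1)$ level line surrounding the center of $\Lambda$; by \cref{thm:main-thm-limit-shape} this equals the original w.h.p. On the conditioning event, \cref{prop:shape-thm-crit-window} (with the absence of \texttt{large} down-loops) gives a circuit of sites with height $\ge H+1$ around the center that avoids the box. So existence is determined by $\phi\restriction_{\Lambda\setminus R_n}$, and the revealed interior law is exactly $\pi^{\tilde\xi}_Q$, with no residual conditioning.

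Nothing about $\fL_0$ dipping into $R_n$ needs ruling out: for $n=0$, its crossing of the box is precisely what is being studied. The bad event is not that ``the loop closes through $R_n$'', but the absence of such an exterior circuit. The same observation applied to $\{\fL_0=\emptyset\}$ yields \cref{eq:FS-UB-top-not-exists,eq:FS-LB-top-not-exists} by the identical revealing argument started at $n=1$.
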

\begin{observation}\label{obs:no-other-level-lines}
    For any $L$, under $\pi^0_{\Lambda}$, by a Peierls argument, w.h.p.\ there are no \texttt{large} level-line down-loops (as per \cref{def:level-lines}). Moreover by \cref{thm:main-thm-limit-shape}, w.h.p.\ there is at most one \texttt{large} level line for heights $1, \ldots, H+1$, and no \texttt{large} $H+2$ level line\footnote{This characterization was already shown for all heights except $H, H+1$ in \cite[Thm.~2]{LMS16}}. Hence, for an appropriate choice of $\delta_L$, these events occur w.h.p.\ also under the measure $\pi^0_{\Lambda}(\cdot \mid \fL_0 \neq \emptyset)$ when $\pi^0_{\Lambda}(\fL_0 \neq \emptyset) > \delta_L$, and under $\pi^0_{\Lambda}(\cdot \mid \fL_0 = \emptyset)$ when $\pi^0_{\Lambda}(\fL_0 = \emptyset) > \delta_L$.
\end{observation}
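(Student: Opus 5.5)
The plan is to establish the three assertions of \cref{obs:no-other-level-lines} separately under the unconditional measure $\pi^0_\Lambda$, and then transfer them to the conditional measures via the elementary bound $\P(\cA^c\mid B)\leq \P(\cA^c)/\P(B)$. Since all bad events will be shown to have $\pi^0_\Lambda$-probability $O(L^{-10})$ (or more generally $e^{-c\beta\log L}$), it suffices to take $\delta_L$ to be, say, the square root of this bound. Then for conditioning events $B\in\{\fL_0\neq\emptyset,\ \fL_0=\emptyset\}$ with $\pi^0_\Lambda(B)>\delta_L$ we obtain conditional failure probability at most $\sqrt{O(L^{-10})}=o(1)$.

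\textbf{Ruling out \texttt{large} down-loops.} First I will rule out \texttt{large} down-loops under $\pi^0_\Lambda$. For a fixed $h$ level-line down-loop $\fL$, the map raising by one every site in the connected component (of sites with $\phi<h$) adjacent to $\fL$ from inside preserves the floor constraint and saves energy $\beta$ per bond of $\fL$ (up to the decoration/gradient bookkeeping, handled as in the standard Peierls argument of \cite[Prop.~4.1, Eq.~(4.2)]{LMS16}, which the paper already invoked in the proof of \cref{cor:CE-macro-uniformBC-smallL}). This gives a bound of $e^{-(\beta-C)|\fL|}$. Summing over loops of length at least $\log L$ rooted at one of $L^2$ sites, and over the $O(\log L)$ relevant heights (heights above $H+2$ being handled below), gives a total of $L^{-c\beta}$.

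\textbf{Uniqueness, and absence at height $H+2$.} Next I will show that there is at most one \texttt{large} $h$ level line for $h\leq H+1$, and none for $h=H+2$. For heights $h\leq H-m$ with $m$ large this is \cite[Thm.~2]{LMS16}. For the top finitely many heights, it follows from \cref{thm:main-thm-limit-shape}, which already asserts (with probability $1-O(L^{-10})$) that each $\fL_n$ is a single loop, including $\fL_0$ when it exists. The non-existence of a \texttt{large} $H+2$ level line comes from \cref{lem:prob-of-contour}: we have $\hatpi_\infty(\phi_o>H+1)=o(1/L)$ by \cref{eq:LD-ratio}, and combined with isoperimetry $|\Int(\fL)|\leq |\fL|L/4$, the area gain is $o(|\fL|)$, so a union bound over \texttt{large} loops finishes the argument.

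\textbf{Transfer to the conditional measures, and the main obstacle.} The only mild subtlety is the intermediate heights between $H-m$ and $H$, where uniqueness requires \cref{thm:main-thm-limit-shape} for every fixed $n$. Since $m$ is fixed in the proposition, this is covered. The main point to verify carefully is that every failure probability used is polynomially small in $L$ (not merely $o(1)$), so that the division by $\pi^0_\Lambda(B)>\delta_L$ is harmless. This is the step I expect to need attention: I will check that the $o(1)$ bounds coming from \cref{thm:main-thm-limit-shape} are indeed $O(L^{-10})$ as stated there, and set $\delta_L:=L^{-5}$.
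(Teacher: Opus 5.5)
Your proposal is correct and follows the paper's own brief justification. It uses a Peierls argument for down-loops, \cref{thm:main-thm-limit-shape} together with \cite[Thm.~2]{LMS16} for uniqueness at heights $1,\ldots,H+1$, and excludes \texttt{large} $H+2$ level lines (your direct route via \cref{lem:prob-of-contour} and isoperimetry is the same argument the paper runs in \cref{cor:CE-macro-uniformBC-smallL}); it then transfers everything to the conditional measures by taking $\delta_L$ much larger than the polynomially small failure probabilities. One small fix: for the down-loop Peierls map, raise all of $\Int(\fL)$ by one rather than only the $\{\phi<h\}$ components touching $\fL$. The whole-interior shift is trivially injective given $\fL$, preserves the floor, and saves at least $\beta$ per bond of $\fL$, whereas raising only those components need not be injective.
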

\begin{proof} As a technical step, let us redefine $\fL_0$ to be the \texttt{large} $H+1$ level line surrounding the origin; by \cref{thm:main-thm-limit-shape} this w.h.p.\ does not change $\fL_0$. We start with \cref{eq:FS-UB-top-exists}. Let $L$ be such that $\pi^0_{\Lambda}(\fL_0 = \emptyset) > \delta_L$. Fixing $n \geq 0$, \cref{prop:shape-thm-crit-window} and the discussion at the beginning of \cref{sec:limit-shape} show that w.h.p.\ under $\pi^0_{\Lambda}(\cdot \mid \fL_0 \neq \emptyset)$, if we look above the bulk of the bottom of $\partial \Lambda$ (say, at least distance $9L/10$ from the corners), then $\fL_n$ is at most distance $N_n^{1/3}(\log L)^{16}$ from the bottom of $\Lambda$ and $\fL_{n+1}$ is at most distance $N_n^{1/3}\exp(-c\sqrt{\beta\frac{\log L}{\log\log L}})$ for some constant $c > 0$. (The control on $\fL_{n+1}$ follows from the bound on \cref{eq:LD-ratio}.)

Now let $R_n$ be the rectangle of size $N_n^{2/3}(\log L)^{25} \times 2N_n^{2/3}(\log L)^{25}$ positioned so that its bottom side distance $2N_n^{1/3} \exp(-c\sqrt{\beta\log L/\log\log L})$ above the bottom side of $\Lambda$, and its center is at $x = 0$. Let $A, B$ be the two points on the left and right sides of $\partial R_n$ such that the distance from $A, B$ to the bottom of $R_n$ is $N_n^{1/3}(\log L)^{16}$. As discussed, w.h.p.\ under $\pi^0_{\Lambda}(\cdot \mid \fL_0 \neq \emptyset)$ the points $A, B$ lie in the interior of $\fL_n$ and $R_n$ lies in the interior of $\fL_{n+1}$.

The main claim to argue is that the stochastic domination lemma of \cite[Lem.~4.8]{ChenLubetzky25} still holds in the conditional measure $\pi^0_{\Lambda}(\cdot \mid \fL_0 \neq \emptyset)$. That is, we will show there is a $\pi^0_{\Lambda}(\cdot \mid \fL_0 \neq \emptyset)$ measurable distribution on simply connected regions $Q \subset R$ and $H+1-n, H-n$ Dobrushin boundary conditions $\xi$ such that w.h.p.\ (in the sampling of $Q$ and $\xi$), the restriction to $Q$ of $\fL_n$ under $\pi^0_{\Lambda}(\cdot \mid \fL_0 \neq \emptyset)$ is stochastically dominated by (i.e., lies below) the $H+1-n$ level line in $\pi^\xi_Q$. Moreover, the distribution on $Q, \xi$ is supported on pairs such that $d(\partial Q, \partial R_n) < \log L$, and the boundary change in $\xi$ occurs in a sufficiently regular part of $\partial Q$ (see \cite[Lem.~4.8~(3)]{ChenLubetzky25} for details). Consequently, to prove \cref{eq:FS-UB-top-exists}, it suffices to prove the bound in $\pi^\xi_Q$ for every such $Q, \xi$ satisfying the aforementioned properties. Crucially, for every fixed $Q, \xi$, the measure $\pi^\xi_Q$ does have FKG.

To prove this claim, call $\partial^1 R_n, \partial^2 R_n$ the top and bottom arcs of $\partial R_n$ going from $A$ to $B$. Consider the following revealing procedure: first reveal all of $\phi\restriction_{\Lambda\setminus R_n}$. Then, for every $x \in \partial^1 R_n$, reveal its (possibly empty) connected component of sites with height $\leq H-n$, which in turn reveals a boundary of $\geq H+1-n$ sites. Similarly, for every $x \in \partial^2 R_n$, reveal its (possibly empty) connected component of sites with height $\leq H-n-1$, which in turn reveals a boundary of $\geq H-n$ sites. Observe that the set of unrevealed sites, call it $Q$, has induced boundary conditions of part $\geq H+1-n$ and part $\geq H-n$. A consequence of \cref{obs:no-other-level-lines} (specifically concerning level-line down-loops), $A, B$ being in the interior of $\fL_n$, and $R_n$ being in the interior of $\fL_{n+1}$, is that throughout this revealing process, w.h.p.\ every revealed component has size at most $\log L$. Hence, $Q$ is a simply connected component with $d(\partial Q, \partial R_n) < \log L$, and the above procedure gives a $\pi^0_{\Lambda}(\cdot \mid \fL_0 \neq \emptyset)$ measurable distribution on sets $Q \subset R$ and (not yet Dobrushin) b.c.\ $\tilde\xi$ such that the restriction to $Q$ of $\fL_n$ in $\pi^0_{\Lambda}(\cdot \mid \fL_0 \neq \emptyset)$ is equal to the  $H+1-n$ level line in $\pi^{\tilde\xi}_Q(\cdot \mid \fL_0 \neq \emptyset)$.\footnote{At this point the $H+1-n$ level line may not be unique in $\pi^{\tilde\xi}_Q$. Hence, it is more accurate to say that if $\cA_1$ is the intersection of the interior of $\fL_n$ with $Q$ under $\pi^0_{\Lambda}(\cdot \mid \fL_0 \neq \emptyset)$ and $\cA_2$ is the connected component of the $\geq H+1-n$ sites containing the top side of $\partial Q$, then $\cA_1 = \cA_2$. This is resolved immediately after when the b.c.\ $\tilde\xi$ are changed to Dobrushin b.c.\ $\xi$.} Moreover, the event $\{\fL_0 \neq \emptyset\}$ is measurable w.r.t.\ $\phi\restriction_{\Lambda\setminus R_n}$, so that in fact $\pi^{\tilde\xi}_Q(\cdot \mid \fL_0 \neq \emptyset) = \pi^{\tilde\xi}_Q$. (Here, we use that $\fL_0$ has been redefined as the \texttt{large} $H+1$ level line containing the origin, so its existence is determined by $\phi\restriction_{\Lambda \setminus R_n}$.) The latter is simply a $\ZGFF$ measure above a floor, and has FKG. Hence, we can now lower the $\geq H+1-n$ heights to be $H+1-n$, and the $\geq H-n$ heights to be $H-n$, in the manner outlined in steps (5), (6) of the proof of \cite[Lem.~4.8]{ChenLubetzky25} to ensure that the boundary conditions change from $H-n$ to $H+1-n$ at a sufficiently regular part of $\partial Q$. This concludes the proof of the claim.

Having reduced to studying $\pi^\xi_Q$, the proof of \cref{eq:FS-UB-top-exists} now follows exactly as in \cite[Thm.~5.1]{ChenLubetzky25}. First the Ferrari--Spohn domination is established for a single level line $\fL_m$. Then we observe that this domination holds for $\fL_{m-1}$ conditionally on $\fL_m$, since w.h.p.\ $\fL_m$ (along with all the lower level lines) was anyways revealed during the exposing of $\phi\restriction_{\Lambda\setminus R_{m-1}}$ in the revealing process above. Continuing in this way until $\fL_0$, this suffices to show the stochastic domination of the joint law as written in \cref{eq:FS-UB-top-exists}. 

The proof of \cref{eq:FS-UB-top-not-exists} is exactly the same, only we consider $n \geq 1$ since there is no longer a $\fL_0$. The proof of \cref{eq:FS-LB-top-exists} is very similar so we will be more brief. This time, we must begin with $n = 0$, but we will leave things in terms of $n$ for clarity on how to handle $n \geq 1$ later. Again we zoom in to a mesoscopic rectangle $R_n$, this time having dimensions $3TN_n^{2/3} \times N_n^{1/3}(\log L)$ for some $T > K$, and placed so the bottom of $\partial R_n$ coincides with the bottom of $\partial \Lambda$ and $R_n$ is centered at $x = 0$. Let $A = (-TN_n^{2/3}, 0)$ and $B = (TN_n^{2/3}, 0)$. The main claim is that there is a $\pi^0_{\Lambda}(\cdot \mid \fL_0 \neq \emptyset)$-measurable distribution on simply connected $Q \subset R$ and $H+1-n, H-n$ Dobrushin b.c.\ $\xi$ such that w.h.p., the restriction to $Q$ of $\fL_n$ under $\pi^0_{\Lambda}(\cdot \mid \fL_0 \neq \emptyset)$ stochastically dominates (i.e., lies above) the $H+1-n$ level line in $\pi^\xi_Q$. This distribution is supported on $Q$ such that $d(\partial Q, \partial R_n) < \log L$, and the boundary conditions change exactly at $A, B$.

To prove the claim, under $\pi^0_{\Lambda}(\cdot \mid \fL_0 \neq \emptyset)$, reveal all of $\phi\restriction_{\Lambda\setminus R_n}$. Then along the top and sides of $\partial R_n$, reveal each connected component of sites with height $\geq H+2-n$, which in turn reveals a boundary of $\leq H+1-n$ sites. W.h.p.\ every component revealed has size at most $\log L$ because we are starting with $n = 0$ and by \cref{obs:no-other-level-lines} there is no \texttt{large} $H+2-n$ level line. Hence we can reduce to the measure $\pi^{\tilde\xi}_Q$ for some simply connected $Q$ and b.c.\ $\tilde\xi$ which is $\leq H+1-n$ on the sides and top of $\partial Q$, and 0 on the bottom flat side of $\partial Q$. Then by monotonicity we can raise heights to obtain Dobrushin boundary conditions $\xi$ satisfying the desired properties.

Having reduced to the measure $\pi^\xi_Q$, the proof of \cref{eq:FS-LB-top-exists} in the case of a single level line ($m = 0$) now follows exactly as in \cite[Thm.~6.1]{ChenLubetzky25}. In particular, the top level line $\fL_0$ lives at scale $N_0^{1/3}$ and w.h.p.\ does not intersect the rectangle $R_1$. Thus, we can now run the proof again for $n = 1$, and this time the revealed components of sites with height $\geq H+2-1$ along the top and sides of $\partial R_1$ are at most size $\log L$ because by \cref{obs:no-other-level-lines} there is at most one \texttt{large $H+2-1$} level line and it does not intersect $R_1$. This establishes the domination of Ferrari--Spohn for $\fL_1$, which holds even conditionally on $\fL_0$ due to the revealing procedure as before. This then shows that w.h.p.\ $\fL_1$ does not intersect $R_2$, and the process can be repeated inductively to obtain \cref{eq:FS-LB-top-exists}.

Finally, the proof of \cref{eq:FS-LB-top-not-exists} is the same, only we start with $n = 1$ instead and the needed input that there is no \texttt{large} $H+2-1$ level line intersecting $R_1$ now follows by the conditioning on $\{\fL_0 = \emptyset\}$ and \cref{prop:shape-thm-crit-window}.
\end{proof}

\section{Extension to \texorpdfstring{$|\nabla\phi|^p$}{grad-phi} models for \texorpdfstring{$1 \leq p < \infty$}{p>1}}\label{sec:extend-main-thm-p}
Here we will prove \cref{thm:grad-phi-p}, extending results to the $|\nabla\phi|^p$ model from \cref{eq:grad-phi-def}. To aid this, the following is a summary of the previous sections and how they relate to each other: 
\begin{itemize}
    \item \cref{sec:prelim} provides \cref{lem:linear-critical-window}, which is not model dependent. Otherwise, the preliminary definitions and cluster expansion results there have already been established for general $p > 1$, see in particular \cite[Rem.~3.4, Prop.~7.12]{ChenLubetzky25}. 

    \item \cref{sec:xi-bbq} provides estimates on the probability of staying above a floor in a given region. The results of this section will require some new arguments to extend to $p >1$.

    \item \cref{sec:refined-CE}  uses the area estimates of \cref{sec:xi-bbq} to study the law of the disagreement polymer in larger domains than covered in \cref{sec:prelim}. The proof of these results given \cref{prop:uprho-bound,thm:key-area} generalize verbatim for $p > 1$. 

    \item \cref{sec:transition-window,sec:limit-shape,sec:limit-law} then prove \cref{thm:main-thm-limit-shape,thm:main-thm-crit-window,thm:FS-no-exceptional} by moving to a polymer model using the results of \cref{sec:refined-CE}, or otherwise using routine FKG and Peierls arguments which hold for all $p \geq 1$.
\end{itemize}

Hence, it remains to prove analogs of \cref{prop:uprho-bound,thm:key-area}, modifying only the $\log$ factors which arise from large deviation probabilities under $\hatpi^{(p)}_\infty$, and then check that modifying these log factors do not cause any problems. We will do this in \cref{sec:extend-to-p} modulo some missing large deviation estimates, and then fill in these missing estimates in \cref{sec:missing-LD-p}. Finally in \cref{sec:SOS-extension} we discuss extensions to the \SOS model ($p = 1$).

\subsection{Extending \cref{prop:uprho-bound,thm:key-area} to \texorpdfstring{$p>1$}{p>1}}\label{sec:extend-to-p}
Here we will show how the results of \cref{sec:xi-bbq} extend for general $p > 1$. We define the natural generalizations
\begin{align}
    \label{eq:xi-n-def-p}
    \upxi^{(p)}_n
    &:= -\frac{1}{\ell_*^2}\log\hatpi^{(p)}_\infty\left( \phi_x \geq -(H+1-n),\,\forall x\in Q_{\ell_*} := \llb 1,\ell_*\rrb^2 \right)\quad\mbox{for}\quad \ell_*:=2^{\lceil\frac12\log_2 L\rceil}\\
    \label{eq:rho-n-def-p}
\uprho^{(p)}_n
&:= \left(\upxi^{(p)}_{n+1} - \upxi^{(p)}_n\right) N_n\,. 
\end{align}
Then we have the following analogs of \cref{prop:uprho-bound,thm:key-area}. Amidst the many $\log L$ and $L^{\delta_p}$ factors, the key properties to note are 
\begin{enumerate}
    \item The error term in the macroscopic range of \cref{thm:key-area-p} is always of the form $L^{1/2 + o(1)}$,

    \item The upper bounds on $\frac{\upxi^{(p)}_n}{\hatpi^{(p)}_\infty(\phi_o > H+1-n)}$ give a margin of at least $L^{-1/2+\delta_p}$ from 1,

    \item This upper bound is transferred to $\uprho^{(p)}_n$ for $1 < p < 2$, but not for $p > 2$.
\end{enumerate}
Hence, \cref{rem:bad-prediction} applies for $1 < p < 2$ as well. The remark should also hold for $p > 2$, as suggested by the upper bound on $\frac{\upxi^{(p)}_n}{\hatpi^{(p)}_\infty(\phi_o > H+1-n)}$, but we are unable to prove a matching upper bound for $\uprho^{(p)}_n$ in this setting. To make progress on this, the first step would be to obtain the rate constant for the large deviation event $\hatpi^{(p)}_\infty(\phi_o \geq h)$, which was proven for $p \leq 2$ in \cite{LMS16} but still missing for $p > 2$.
\begin{proposition}[Cf.\ \cref{prop:uprho-bound}]\label{prop:uprho-bound-p}
        Fix $1 < p < 2$, and $n \geq 0$. There exists a constant $\delta_p > 0$ such that
        \begin{align}
        \label{eq:upxi-bounds-p}
         \frac{\upxi^{(p)}_n}{\hatpi^{(p)}_\infty(\phi_o > H+1-n)} &\in [1 - L^{-\delta_p+o(1)},  1 -L^{-1/2+\delta_p+o(1)}]\,,\\
         \label{eq:uprho-bounds-p}
        \uprho^{(p)}_n &\in [1 - L^{-\delta_p+o(1)},  1 -L^{-1/2+\delta_p+o(1)}]\,.
        \end{align}
        For $p > 2$, there exists $c_0(\beta), c_1(\beta) > 0$ such that
        \begin{align}
        \label{eq:upxi-bounds-p-big}
         \frac{\upxi^{(p)}_n}{\hatpi^{(p)}_\infty(\phi_o > H+1-n)} &\in [1 - e^{-c_0(\beta) \sqrt{\log L}^{\frac{p}{p-1}}},  1 - e^{-c_1(\beta) \sqrt{\log L}^{\frac{p}{p-1}}}]\,,\\
         \label{eq:uprho-bounds-p-big}
        \uprho^{(p)}_n &\in [1-e^{-c_0(\beta) \sqrt{\log L}^{\frac{p}{p-1}}},  1 + L^{-1+o(1)}]\,.
        \end{align}
\end{proposition}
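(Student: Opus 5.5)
We will follow the structure of the $p=2$ proof in \cref{sec:xi-bbq} step by step, replacing each large deviation input by its $|\nabla\phi|^p$ counterpart and tracking how the logarithmic factors change. The $p=2$ argument used three inputs about $\hatpi_\infty$ at height $h=H+1-n$. The first is that $\hatpi_\infty(\phi_o<-h)=L^{-1+o(1)}$ together with the decay ratio \cref{eq:LD-ratio}. The second is the pair-correlation upper bound $\hatpi_\infty(\phi_y<-h\mid\phi_o<-h)\le e^{-c\beta h^2/\log^2h}$. The third is the matching lower bound for neighbours (\cref{clm:pi(h|h)-lower}) and its conditional analogue (\cref{clm:pi(h|h_h)-upper}, \cref{cor:pi(Fc|_h_h)-lower}).

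For $p\neq2$ we first record the analogues, deferring their proofs to \cref{sec:missing-LD-p}. For $1<p<2$, \cref{lem:LD-capacity} gives that the rate of $\{\phi\restriction_A=h\}$ is $\beta h^p\,\mathrm{cap}_p(A)$. Since $\mathrm{cap}_p$ of a point is finite and that of a domino is strictly larger by a constant factor, we get
\[
\hatpi^{(p)}_\infty(\phi_{o'}\ge h\mid\phi_o\ge h)=e^{-\Theta(\beta h^p)}.
\]
Combined with $\hatpi^{(p)}_\infty(\phi_o\ge h)=e^{-\Theta(\beta h^p)}\approx L^{-1}$, this is $L^{-\delta_p+o(1)}$ for some $\delta_p\in(0,1/2)$. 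For $p>2$ the large deviation shape is a localized spike with $h\asymp(\log L)^{1/p}$, and the conditional neighbour probability is $e^{-\Theta(\beta h^{p})}$ up to lower-order corrections. This translates into the $e^{-c\sqrt{\log L}^{p/(p-1)}}$ form after plugging in the relation between $h$ and $L$; that bookkeeping is the one point where we would double-check exponents.

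With these in hand, the lower bounds on $\upxi_n^{(p)}$ follow exactly as in \cref{lem:xi-lower-weak-upper}, via Bonferroni plus the decorrelation estimate \cref{eq:couple-pi-to-inf-vol}. The error term becomes $|Q_\ell|L^{-1+o(1)}+O(\log^2L)\cdot(\text{pair ratio})$. For the upper bounds we use the domino FKG argument of \cref{lem:xi-refined-upper}. This gives
\[
\upxi^{(p)}_n\le\big(1-\tfrac12\hatpi(\phi_{o'}<-h\mid\phi_o<-h)\big)\,\hatpi(\phi_o<-h).
\]
Here the margin is $L^{-\delta_p}$, so $1-\upxi/\hatpi\ge L^{-\delta_p}$. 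The stated weaker margin $L^{-1/2+\delta_p}$ comes from transferring from $\ell_0$ to $\ell_*$ via \cref{prop:key-area-estimate}, whose $\Xi_2$ error now dominates. We therefore take $\ell_0$ as large as the proposition allows, $\approx\sqrt L\,L^{-o(1)}$, and relate $\xi_{\ell_*}$ to $\xi_{\ell_0}$ as in \cref{eq:xi*-bound}. The proof of \cref{prop:key-area-estimate} itself is model-independent apart from replacing $e^{C\log L/\log\log L}$ by $L^{o(1)}$ factors, so it transfers directly.

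For $\uprho^{(p)}_n$ the lower bound comes from the analogue of \cref{lem:xi-bar-lower-weak-upper} together with \cref{eq:rho_n_via_xibar_ell0}. The upper bound for $1<p<2$ needs the analogue of \cref{cor:pi(Fc|_h_h)-lower}. This in turn requires that, given $\phi_o=\phi_{o'}=h$, other sites rarely exceed $h$, which is the analogue of \cref{clm:pi(h|h_h)-upper}. For $p=2$ this went through the rigidity result \cref{prop:pi-y-h-phi^*-rigid}. For $p<2$ we will instead argue directly from the capacity formula: for any $y\notin\{o,o'\}$, $\mathrm{cap}_p(\{o,o',y\})-\mathrm{cap}_p(\{o,o'\})\ge c>0$, which gives an $e^{-c\beta h^p}$ bound.

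We expect this strict capacity increment, uniform over all $y$ including far-away ones, to be the main obstacle. We would handle far $y$ by subadditivity-type estimates on capacity together with decorrelation. For $p>2$ no such conditional estimate is available, which is why only the weak upper bound $1+L^{-1+o(1)}$ is claimed for $\uprho^{(p)}_n$.
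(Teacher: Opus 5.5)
Your skeleton matches the paper's: Bonferroni for the lower bound, domino FKG for the upper bound, transfer from $\ell_0$ to $\ell_*$, and the $\bar\xi$ analogue for $\uprho^{(p)}_n$. However, for $1<p<2$ the step that actually fixes the exponents is missing.

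The upper bounds in \cref{eq:upxi-bounds-p,eq:uprho-bounds-p} rest on $\hatpi^{(p)}_\infty(\phi_{o'}\ge h\mid\phi_o\ge h)\ge L^{-1/2+\delta_p+o(1)}$. Equivalently, they need $\kappa:=\operatorname{Cap}_p(\{o,o'\})/\operatorname{Cap}_p(\{o\})-1$ to be strictly below $\frac12$. You only say the domino capacity is ``strictly larger by a constant factor''. Together with \cref{eq:cap-subadditivity} this gives only $\kappa\in(0,1]$, and you then assert $\delta_p\in(0,\frac12)$ without argument. This matters because for $\uprho^{(p)}_n$ the domino argument is only available for $\ell\le\sqrt L e^{-\sqrt{\log L}}$ (\cref{cor:pi(Fc|_h_h)-lower-p}). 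The transfer to $\ell_*$ via \cref{prop:key-area-estimate-p} then controls things only up to a relative error of order $L^{o(1)}/\ell_0\ge L^{-1/2+o(1)}$. So a margin $L^{-\kappa}$ survives only if $\kappa<\frac12$; for $\upxi^{(p)}_n$, $\kappa\ge\frac12$ would not give the stated form either.

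The missing idea is the paper's block-copy test function. Assume $\sE^{(p)}_{\mathsf v}(\phi^*_o)\le\sE^{(p)}_{\mathsf h}(\phi^*_o)$ and set $\varphi_{2k,l}=\varphi_{2k+1,l}=(\phi^*_o)_{k,l}$. This function equals $1$ on $\{o,o'\}$ and has energy $\sE^{(p)}_{\mathsf h}(\phi^*_o)+2\sE^{(p)}_{\mathsf v}(\phi^*_o)\le\frac32\sE^{(p)}(\phi^*_o)$. Since $\varphi$ is not $p$-harmonic, $\operatorname{Cap}_p(\{o,o'\})$ is strictly smaller than this, and \cref{lem:LD-capacity} converts the gap into exactly $L^{-1/2+\delta_p}$.

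Your account of the exponents is also inconsistent. A margin $L^{-\delta_p}$ with $\delta_p<\frac12$ is not reduced to $L^{-1/2+\delta_p}$ by an $L^{-1/2+o(1)}$ transfer error, and if that error dominated there would be no margin at all. In the paper, $L^{-1/2+\delta_p}$ \emph{is} the conditional neighbour probability.

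Moreover, taking $\ell_0\approx\sqrt L\,L^{-o(1)}$ ruins the polynomial lower bound. The Bonferroni error then contains $|Q_{\ell_0}|\hatpi^{(p)}_\infty(\phi_o<-h)=L^{-o(1)}$, giving only $1-L^{-o(1)}$. The paper instead takes $\ell_0\approx L^{1/2-\delta_p/2}$ with $\ell_0+10\lceil\log L\rceil\mid\ell_*$. Then \cref{lem:xi-lower-weak-upper-p} gives $1-L^{-\delta_p+o(1)}$, and the transfer cost $L^{-1/2+\delta_p/2+o(1)}$ is beaten by the margin $L^{-1/2+\delta_p}$ only because $\delta_p/2<\delta_p$. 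Using two different $\ell_0$'s would also work, but some such balancing is needed. Finally, for the uniform increment in the analogue of \cref{clm:pi(h|h_h)-upper}, subadditivity points the wrong way; uniformity in $y$ comes from the asymptotic additivity \cref{eq:cap-asymp-additivity}.

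For $p>2$ your scalings are those of $1<p<2$, and they are wrong. The one-step ratio is $e^{-\Theta(\beta h)}$ (cf.\ \cref{eq:CL25-pi(h)-ratio-LB}), so $h=H+1-n$ is of order $\sqrt{\log L}$ rather than $(\log L)^{1/p}$. The neighbour conditional probability is $e^{-\Theta(\beta h^{p/(p-1)})}$, not $e^{-\Theta(\beta h^p)}$. With your inputs that probability would be $L^{-\Theta(1)}$, which cannot produce $e^{-c\sqrt{\log L}^{p/(p-1)}}$, so this is not a matter of rechecking exponents.

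The lower bound in \cref{eq:upxi-bounds-p-big} uses the pair bound \cref{eq:CL25-pi(h|h)-UB} in the Bonferroni step. The upper bound needs the reverse estimate $\hatpi^{(p)}_\infty(\phi_{o'}\ge h\mid\phi_o\ge h)\ge e^{-c\beta h^{p/(p-1)}}$, for which you give no argument. The paper's argument sets $M=(c_0/2)^{1/(p-1)}$ and supposes $\phi_{o'}\ge h-Mh^{1/(p-1)}$ had conditional probability below $e^{-2c_0\beta h}$. Then, up to that error, all neighbours of $o$ sit below $h-Mh^{1/(p-1)}$, where lowering $\phi_o$ by one saves energy at least $4pM^{p-1}h\ge 2c_0h$. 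This forces $\hatpi^{(p)}_\infty(\phi_o\ge h+1\mid\phi_o\ge h)\le 6e^{-2c_0\beta h}$, contradicting \cref{eq:CL25-pi(h)-ratio-LB}. One then climbs the remaining $Mh^{1/(p-1)}$ levels via FKG at a cost of $e^{-c_0\beta h}$ each.
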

\begin{theorem}[Cf.\ \cref{thm:key-area}]\label{thm:key-area-p}
        Fix $g\geq 0$, $n \geq 0$, and set $h = H+1-n$. The following hold for every subsets $F\subseteq V \subset\Z^2$ where $F$ is connected with $g$ holes. There exists constants $\delta_p, c > 0$ such that
        \begin{enumerate}
            \item\label{it:xi-n-area-meso-p}[\emph{Mesoscopic range}] If $1 < p < 2$ and $|\partial F|\vee |\partial V| \leq O(L^{\frac23+\frac{\delta_p}5})$, or if $p > 2$ and $|\partial F|\vee |\partial V| \leq O(L^{2/3}e^{c\sqrt{\beta\log L}})$, then 
    \begin{align}\label{eq:xi-n-area-estimate-meso-p}\hatpi^{(p), 0}_V(\phi_x \geq -h,\, \forall x \in F) = (1+o(1))\exp\left(-\upxi^{(p)}_n|F|\right)\,.
    \end{align}
    \item\label{it:xi-n-area-macro-p}[\emph{Macroscopic range}] If $1 < p < 2$ and $|\partial F| \vee |\partial V| \leq O( Le^{\sqrt{\log L}})$, then 
    \begin{align}\label{eq:xi-n-area-estimate-macro-lower-p}\quad\;\hatpi^{(p), 0}_V(\phi_x \geq -h,\, \forall x \in F) \geq \exp\left(-\upxi^{(p)}_n|F| + O\big(\sqrt{L}e^{3\sqrt{\log L}}\big)\right)\,,
    \end{align}
    and if $\fS$ is the event that there are no disagreement polymers $\gamma$ in $\phi$ with $|\gamma| \geq \log L$, then        
\begin{align}\label{eq:xi-n-area-estimate-macro-upper-p}\hatpi^{(p), 0}_V(\phi_x \geq -h,\, \forall x \in F \mid \fS) \leq \exp\left(-\upxi^{(p)}_n|F| + O\big(\sqrt{L}e^{3\sqrt{\log L}}\big)\right)\,.
    \end{align}
If $|F| \leq  \big(\frac{3\beta}{\hatpi^{(p)}_\infty(\phi_o > h)}\big)^2 $, then the last upper bound also applies to $\hatpi_V^0(\phi_x\geq -h,\,\forall x\in F)$. The same holds if $p > 2$ and $|\partial F| \vee |\partial V| \leq O(Le^{c\sqrt{\beta\log L}}))$, replacing the error of $O\big(\sqrt{L}e^{3\sqrt{\log L}}\big)$ by $O\big(\sqrt{L}e^{5c\sqrt{\beta\log L}}\big)$.
\end{enumerate}
The above statements also hold when replacing $\hatpi^{(p),0}_V$ by $\hatpi^{(p)}_\infty$.
\end{theorem}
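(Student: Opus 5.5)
The plan is to rerun the proof of \cref{thm:key-area} with the $p$-model inputs, i.e.\ to first establish a $p$-analog of \cref{prop:key-area-estimate} and then choose $\ell_0$ to avoid the $\Xi_3$ term. The structure of \cref{prop:key-area-estimate} is model-independent. The tiling into $R_i\supset Q_i$, the FKG lower bound, the decorrelation coupling \cref{eq:couple-pi-to-inf-vol}, the revealing of $\Gamma_U$ with the Peierls bounds on $A,B_k$, and the bound $|F_3|=O(|\partial F|\ell_0)$ use only FKG, low-temperature Peierls arguments and decay of correlations. All of these hold for $\hatpi^{(p)}$, $p>1$. The model enters only through three inputs. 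The first is the single-site rate $\hatpi^{(p)}_\infty(\phi_o<-h)=L^{-1+o(1)}$, with a near-boundary bound $p_1\le L^{-1}e^{o(\cdot)}$. The second is the ratio bound $\hatpi^{(p)}_\infty(\phi_o=h)/\hatpi^{(p)}_\infty(\phi_o=h-1)\to 0$. The third is the pair bound $\hatpi^{(p)}_\infty(\phi_y<-h\mid\phi_x<-h)\le \epsilon(L)$ for distinct nearby $x,y$, which drives the lower bound in \cref{lem:xi-lower-weak-upper}.

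Step 1 is to obtain these large-deviation inputs for $\hatpi^{(p)}$; this is deferred to \cref{sec:missing-LD-p}. For $1<p<2$ I would use the $p$-capacity description \cref{lem:LD-capacity}. In two dimensions the $p$-capacity of a point is finite for $p<2$, so $\hatpi^{(p)}_\infty(\phi_o=h)=\exp(-\beta c_p h^p(1+o(1)))$. Moreover, conditioning on two adjacent sites costs an extra factor $\exp(-c h^p)$, since $\mathrm{cap}_p$ of a domino strictly exceeds that of a point. Translating through $h^p\asymp\log L$, this gives $\epsilon(L)=L^{-\delta_p}$, and the ratio and boundary corrections become $L^{o(1)}$ or $L^{\delta_p/10}$-type factors. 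For $p>2$ the optimal spike is essentially a cone of radius $O(1)$, and I expect $h\asymp(\log L)^{1/p}$-type scaling. Here the pair penalty is $\exp(-c\sqrt{\log L}^{p/(p-1)})$ and the corrections are $e^{c\sqrt{\beta\log L}}$, matching the stated thresholds.

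Step 2 plugs these into the analog of \cref{prop:key-area-estimate}. The resulting errors are $\Xi_1=\frac{|\partial V|}{L}L^{o(1)}$ (or $e^{c\sqrt{\beta\log L}}$), $\Xi_2=\frac{|F|}{\ell_0L}e^{\sqrt{\log L}}$, and $\Xi_3=\frac{|\partial F|\ell_0}{L}\epsilon(L)$, valid for $\ell_0\le\sqrt L\,\epsilon(L)^{1/2}$ so that the Bonferroni term $\ell_0^2\hatpi(\phi_o<-h)$ stays small. Step 3 repeats the proof of \cref{thm:key-area}. I take $\ell_1\mid\ell_*$ a power of $2$ with $\ell_0=\ell_1-10\lceil\log L\rceil$, so $\Xi_3$ vanishes when comparing $\xi_{\ell_0,h}$ to $\upxi^{(p)}_n$. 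In the mesoscopic range I choose $\ell_1\asymp L^{1/3+\delta_p/2}$, for which $\Xi_2\le |\partial F|^2/(\ell_0L)\cdot e^{\sqrt{\log L}}=o(1)$ when $|\partial F|\le L^{2/3+\delta_p/5}$. In that regime $\Xi_3\le L^{2/3+\delta_p/5+1/3+\delta_p/2-1-\delta_p}=o(1)$. In the macroscopic range I choose $\ell_1\asymp\sqrt L\,e^{-2\sqrt{\log L}}$ (respectively $e^{-c'\sqrt{\beta\log L}}$), which gives $\Xi_2\le\sqrt L e^{3\sqrt{\log L}}$. The comparison $|\xi_{\ell_*,h}-\xi_{\ell_0,h}|=O(e^{\sqrt{\log L}}/(\ell_0L))$ then contributes the same order. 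The unconditional upper bound under $|F|\le(3\beta/\hatpi(\phi_o>h))^2$ comes from \cref{cor:no-macro-contours}, whose Peierls proof is model-free.

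The main obstacle is Step 1, specifically the pair bound $\epsilon(L)$ for $1<p<2$. I need a quantitative gap between the $p$-capacities of a point and of two adjacent points, together with sharp enough upper and lower large-deviation bounds that their ratio is a true power $L^{-\delta_p}$ rather than $L^{o(1)}$. I would first prove upper and lower bounds on $\hatpi^{(p)}(\phi|_A=h)$ of the form $\exp(-\beta\,\mathrm{cap}_p(A)h^p(1+O(h^{-\eta})))$. I would do this via a Peierls/energy comparison with the real-valued minimizer, handling rounding errors as in \cref{prop:pi-y-h-phi^*-rigid}. The error is harmless because $h^{p}$ dominates any lattice rounding cost when $p<2$.
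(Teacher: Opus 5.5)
Your route is the paper's. \cref{prop:key-area-estimate-p} is \cref{prop:key-area-estimate} rerun with the $|\nabla\phi|^p$ large-deviation inputs, and the theorem follows by choosing $\ell_0$ with $\ell_0+10\lceil\log L\rceil\mid\ell_*$. Your mesoscopic choice $\ell_0\asymp L^{1/3+\delta_p/2}$ for $1<p<2$, giving $\Xi_2\le L^{-\delta_p/10+o(1)}$ and $\Xi_3\le L^{-3\delta_p/10+o(1)}$, matches the paper. It needs $\delta_p\le\frac16$ so that $\ell_0\le L^{1/2-\delta_p/2}$, which is why the paper caps $\delta_p$ there. There is no structural gap, but three points need correcting.

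\emph{The case $p>2$.} Your picture of the spike is backwards. For $p\ge2$ one has $\operatorname{Cap}_p(\{o\})=0$ in $\Z^2$: a unit-height cone of radius $r$ costs $\asymp r^{2-p}$, so a real-valued spike wants to spread out. What stops it is that gradients are integers. This forces slope $\ge1$, i.e.\ a pyramid of radius $\asymp h$ and energy $\asymp h^2$, which is the content of $\hatpi^{(p)}_\infty(\phi_o=h)/\hatpi^{(p)}_\infty(\phi_o=h-1)=e^{-\Theta(\beta h)}$ in \cite[Thm.~7.1]{ChenLubetzky25}. Hence $h\asymp\sqrt{\log L/\beta}$, not $(\log L)^{1/p}$. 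That scaling is exactly what produces the $e^{c\sqrt{\beta\log L}}$ corrections and the $e^{-c\sqrt{\log L}^{p/(p-1)}}$ pair penalty you quote; with your scaling neither exponent would come out as stated. The paper does not rederive these; it imports \cite[Thm.~7.1, Eqs.~(7.4),(7.6)]{ChenLubetzky25}. For the near-boundary term it uses \cite[Lem.~7.10]{ChenLubetzky25}, giving $\Xi_1=\frac{|\partial V|}{L}L^{\epsilon_\beta}$ for $p>2$, which is harmless in both ranges. For $1<p<2$, the relevant fact is that $\operatorname{Cap}_p(\{o\})$ is \emph{positive}, not that it is finite.

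\emph{The ``main obstacle''.} For $1<p<2$ the paper simply imports $\hatpi^{(p)}_\infty(\phi_y<-h\mid\phi_x<-h)\le e^{-c\beta h^p}=L^{-c'}$ from \cite[Thm.~7.1]{ChenLubetzky25}. If you want it from capacities, \cref{lem:LD-capacity} with a \emph{fixed} $\epsilon$ already suffices. You need $g:=\inf_{y\neq x}\bigl(\operatorname{Cap}_p(\{x,y\})-\operatorname{Cap}_p(\{x\})\bigr)>0$. This holds because each difference is positive and tends to $\operatorname{Cap}_p(\{o\})>0$ as $|y-x|\to\infty$ by \cref{eq:cap-asymp-additivity}. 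Taking $\epsilon<g/4$ then gives $L^{-\delta}$ uniformly over all $y$ with $|y-x|\le\log L$, which is the range the Bonferroni step needs (not just the domino). No $1+O(h^{-\eta})$ rate and no rounding analysis is required.

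\emph{The macroscopic $\ell_0$.} For $1<p<2$, take $|F|\lesssim L^2e^{2\sqrt{\log L}}$ with your $\ell_1\asymp\sqrt L e^{-2\sqrt{\log L}}$. Then your bound only gives $\Xi_2\le\sqrt L e^{(4+o(1))\sqrt{\log L}}$, or $\sqrt Le^{5\sqrt{\log L}}$ with your crude factor $e^{\sqrt{\log L}}$, not $\sqrt L e^{3\sqrt{\log L}}$. To match the statement, use the true factor $e^{c\beta(\log L)^{(p-1)/p}}=e^{o(\sqrt{\log L})}$ in $\Xi_2$ and take $\ell_1\asymp\sqrt L e^{-a\sqrt{\log L}}$ with $a\in(0,1)$. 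The paper takes $a=1$, which is off only by $e^{o(\sqrt{\log L})}$.

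Such an $\ell_0$ exceeds your own range $\ell_0\le\sqrt L\,\epsilon(L)^{1/2}=L^{1/2-\delta_p/2}$, and so does the paper's. This is harmless if you bound $\Xi_3$ by $\frac{|\partial F|\ell_0}{L}\bigl(\epsilon(L)+\ell_0^2\hatpi^{(p)}_\infty(\phi_o<-h)\bigr)$, which stays within the stated error.

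For $p>2$, your choice $\ell_0\asymp\sqrt L e^{-c'\sqrt{\beta\log L}}$ with $c'$ large is actually safer than the paper's $\sqrt L e^{-\sqrt{\log L}}$. For $n\ge2$ one only knows $\hatpi^{(p)}_\infty(\phi_o<-h)\le L^{-1}e^{c\sqrt{\beta\log L}}$, and the Bonferroni term $\ell_0^2\hatpi^{(p)}_\infty(\phi_o<-h)$ must stay small.
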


Before we begin the proofs, we explain why the modified $\log$ factors do not cause any problems in the application of \cref{thm:key-area-p}. There are two changes, firstly the change in the requirement on $|\partial F|\vee|\partial V|$, and secondly the error terms in \cref{eq:xi-n-area-estimate-macro-lower-p,eq:xi-n-area-estimate-macro-upper-p}. These changes directly transfer over to generalizations of \cref{sec:refined-CE} for general $p > 1$. 

The second change is never an issue, since these error terms anyways appear in the form $O(L^{1/2+o(1)})$ in all applications in \cref{sec:transition-window,sec:limit-law}.

The first change is relevant in \cref{sec:transition-window,sec:limit-shape,sec:limit-law} when we initially check that we can rule out a-priori any $|\gamma|$ of abnormally large length, so that the regions above and below $\gamma$ (or inside and outside $\gamma$) have boundary lengths satisfying the above requirement. In particular, this is done in \cref{clm:CE-check-LW} and \cref{clm:CE-check-base-case}. In \cref{clm:CE-check-LW} the $\log$ factors do not appear so there is no difference, while the proof of \cref{clm:CE-check-base-case} requires only that the ratio of large deviation rates $\frac{\hatpi^{(p)}_\infty(\phi_o = h-1)}{\hatpi^{(p)}_\infty(\phi_o = h)}$ (given by \cite[Thm.~7.1, Eq.~(7.4)]{ChenLubetzky25}) is smaller than the $\log$ term in the upper bound on $|\partial F| \vee |\partial V|$ in the macroscopic regime. For $1 < p < 2$ this is true as $e^{c\beta(\log L)^{\frac{p-1}{p}}} \ll e^{\sqrt{\log L}}$. For $p > 2$, both are of the form $e^{c\sqrt{\beta\log L}}$, but the constant in \cref{thm:key-area-p} can be taken to be arbitrarily large (it comes from \cref{prop:key-area-estimate-p}, where a larger $c$ gives a weaker result), and in particular larger than $c_0$ from \cite[Thm.~7.1]{ChenLubetzky25}.

Moreover, the new upper bound on $|\partial F|\vee |\partial V|$ in the mesoscopic range determines the size of the rectangles $R$ we can choose in \cref{sec:limit-shape} (in particular, in the ``retreat'' mechanism of \cref{prop:retreat-gadget} and its ``growth'' analog \cite[Thm.~4.9]{ChenLubetzky25}). As mentioned in \cref{rem:hausdorff-error-quant}, this results in the change in the Hausdorff distance bounds in \cref{thm:grad-phi-p}.\footnote{At this point, a keen reader will notice that in fact the requirement on $|\partial F| \vee |\partial V|$ allows for larger domains in $p > 2$ than for $p = 2$, contrary to what should be expected. This is because the bound for $p = 2$ was not sharp; e.g. in the mesoscopic range we could have allowed for all the way up to $O(L^{2/3}e^{c\log L/\log\log L})$ for some constant $c$ depending on previous constants, but the choice of $O(L^{2/3}e^{\sqrt{\log L}})$ suffices and is more reader-friendly.}

We now begin towards the proof of \cref{prop:uprho-bound-p}. Recalling the proof of \cref{prop:uprho-bound}, we will first need bounds on $\xi^{(p)}_{\ell, h}/\hatpi^{(p)}_\infty(\phi_o < -h)$. 
\begin{lemma}[Cf.\ \cref{lem:xi-lower-weak-upper}]\label{lem:xi-lower-weak-upper-p}
Fix $1 < p < 2$. The following holds for all $\beta$ sufficiently large. Let $h = H+1-n$ for fixed $n\geq 0$.  Then for any $\delta > 0$ and all $1 \leq \ell \leq L^{\frac12-\delta}$, we have
\begin{equation}
\label{eq:xi-correct-lower-weak-upper-p}
 1 - L^{-2\delta + o(1)} \leq \frac{\xi^{(p)}_{\ell,h}}{\hatpi^{(p)}_\infty(\phi_o < -h)} \leq 1 + L^{-1+o(1)}\,.
\end{equation}
If $p > 2$ instead, there exists $c(\beta), c_0(\beta)$ such that for $1 \leq \ell \leq \sqrt{L}e^{-c(\beta) \sqrt{\log L}^{\frac{p}{p-1}}}$, we have
\begin{equation}
\label{eq:xi-correct-lower-weak-upper-p-big}
 1 - e^{-c_0(\beta) \sqrt{\log L}^{\frac{p}{p-1}}} \leq \frac{\xi^{(p)}_{\ell,h}}{\hatpi^{(p)}_\infty(\phi_o < -h)} \leq 1 + L^{-1+o(1)}\,.
\end{equation}
Moreover, this holds when replacing $\xi_{\ell,h}$ by $-\frac1{|S|}\log\hatpi^{(p)}_\infty(\phi_x\geq -h,\,\forall x\in S)$ for any set $S\subset \Z^2$, not necessarily connected, of size at most $\ell^2$. In addition, the upper bound holds for all $\ell \geq 1$.
\end{lemma}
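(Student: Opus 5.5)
The plan is to follow the proof of \cref{lem:xi-lower-weak-upper} essentially verbatim, replacing the $p=2$ large-deviation inputs by their $|\nabla\phi|^p$ analogues. The upper bound needs no model-specific input. By FKG, \cref{obs:xi-monotone} holds for $\hatpi^{(p)}_\infty$, so $\xi^{(p)}_{\ell,h}\le\xi^{(p)}_{1,h}=-\log(1-t)$ with $t=\hatpi^{(p)}_\infty(\phi_o<-h)$. The elementary inequality $1-t\ge e^{-t-t^2}$ then gives the ratio bound $1+t$. It remains to note that $t=L^{-1+o(1)}$ for $h=H+1-n$, which follows from the definition of $H$ together with the ratio bound $\hatpi^{(p)}_\infty(\phi_o=h)/\hatpi^{(p)}_\infty(\phi_o=h-1)=L^{o(1)}$ (\cite[Thm.~7.1]{ChenLubetzky25}). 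This argument holds for all $\ell\ge1$ and for arbitrary sets $S$.

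For the lower bound I use the Bonferroni inequality on $Q_\ell$ (or on $S$), so the task reduces to bounding the pair sum
\[\frac{\sum_{x\ne y}\hatpi^{(p)}_\infty(\phi_x<-h,\phi_y<-h)}{|S|\,\hatpi^{(p)}_\infty(\phi_o<-h)}.\]
I split the pairs into two groups.
\begin{enumerate}
\item Pairs with $\dist(x,y)>\log L$. Exponential decorrelation at low temperature, via the Peierls coupling \cref{eq:couple-pi-to-inf-vol}, which holds for all $p\ge1$, gives a contribution of at most $|S|(t+L^{-10})\le \ell^2L^{-1+o(1)}$. For $1<p<2$ this is $L^{-2\delta+o(1)}$ when $\ell\le L^{1/2-\delta}$. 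For $p>2$ it is $e^{-(2c(\beta)-o(1))\sqrt{\log L}^{p/(p-1)}}$ when $\ell\le \sqrt L e^{-c(\beta)\sqrt{\log L}^{p/(p-1)}}$, provided the $L^{o(1)}$ factor in $t$ is dominated by this. That domination is what fixes the choice of $c(\beta)$.
\item Pairs with $\dist(x,y)\le\log L$. These contribute $O(\log^2L)\max_{y\ne o}\hatpi^{(p)}_\infty(\phi_y<-h\mid\phi_o<-h)$.
\end{enumerate}

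The crux, and the step I expect to be the main obstacle, is the analogue of \cref{eq:LD-conditional}: a bound on the conditional probability that a second site also makes the depth-$h$ excursion, given that the origin does. The target is $L^{-2\delta}$-small for $1<p<2$ and $e^{-c_0\sqrt{\log L}^{p/(p-1)}}$-small for $p>2$. I would first look for this bound in \cite[\S7]{ChenLubetzky25}. If it is not there, I would prove it from the large-deviation rate of $\{\phi_o=h\}$, which for $1<p<2$ is governed by $p$-capacity (cf.\ \cref{lem:LD-capacity} announced in the introduction). The route is to compare the rate for $\{\phi_o=\phi_y=h\}$ with that for $\{\phi_o=h\}$: the capacity of a two-point set exceeds that of a single point by a fixed fraction, which yields a polynomial gain $h^{p}$-type $=L^{-\Omega(1)}$. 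One then picks $\delta_p$, and hence $\delta$, smaller than this gain. For $p>2$ the optimal shape is a cone of radius $O(1)$, and the extra cost of the second site is of order $\beta h^{p/(p-1)}$-ish, matching $\sqrt{\log L}^{p/(p-1)}$ since $h\asymp \sqrt{\log L}$ up to $\beta$-dependent factors. With this in hand, the same exponentiation as in \cref{lem:xi-lower-weak-upper} gives the stated lower bound. Nothing in that argument used connectivity of $S$, so the statement for general $S$ follows as well.
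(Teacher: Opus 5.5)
Your proposal follows the paper's proof. The upper bound is unchanged, and the lower bound is the same Bonferroni pair-splitting: far pairs give $\ell^2L^{-1+o(1)}$, hence $L^{-2\delta+o(1)}$ when $1<p<2$, and near pairs are controlled by the conditional estimate of \cite[Thm.~7.1, Eq.~(7.6)]{ChenLubetzky25}, which is $e^{-c\beta h^p}$ for $1<p<2$ and $e^{-c_1\beta h^{p/(p-1)}}$ for $p>2$. So your fallback heuristics are not needed, and the ``cone of radius $O(1)$'' picture for $p>2$ is not right anyway: it would give $h\asymp(\log L)^{1/p}$ rather than your own $h\asymp\sqrt{\log L}$.

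Your caveat that $\delta$ must be taken below the polynomial exponent of the near-pair term is correct and genuinely needed. For $1<p<2$ that term is a fixed power $L^{-c'}$ independent of $\ell$, so the bound $1-L^{-2\delta+o(1)}$ only follows when $2\delta\le c'$, and the lemma's ``for any $\delta>0$'' should be read as ``for $\delta$ small enough''.
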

\begin{proof}
    The proof of the upper bound is unchanged. The proof of the lower bound for $p > 2$ remains unchanged after plugging in the upper bound on $\hatpi^{(p)}_\infty(\phi_x < -h \mid \phi_y < -h)$ from \cite[Thm.~7.1, Eq.~(7.6)]{ChenLubetzky25}. It is noteworthy that $1<\frac{p}{p-1}<2$ in this range, so that $e^{-c(\beta) \sqrt{\log L}^{\frac{p}{p-1}}} = L^{-o(1)}$. In contrast, for $1 < p < 2$, the bound on $\hatpi^{(p)}_\infty(\phi_x < -h \mid \phi_y < -h)$  is $e^{-c\beta h^p}$ which is a polynomial in $L$. Hence, repeating the proof for $1 < p < 2$ without modification would restrict the allowed $\ell$ to be $\leq L^{1/2 - c}$ for some $c$, and this will turn out to be too restrictive for later. 
    
    Thus, we will modify the proof to allow for a larger range of $\ell$, at the cost of a suboptimal lower bound in \cref{eq:xi-correct-lower-weak-upper-p}. Consider the key equation, \cref{eq:sum-pi(h-h)-ratio}. Plugging in the bound $e^{-c\beta h^p}$ (which replaces the $e^{-c\beta h^2/\log^2h}$ factor), we obtain \[\ell^2(\hatpi^{(p)}_\infty(\phi_o < -h) + L^{-10}) + O((\log L)^2)e^{-c\beta h^p}\,.\]
    Rather than choosing $\ell$ which causes the above two terms to be of the same order, we choose $\ell$ that simply ensures the above is $o(1)$. In particular, since $\hatpi^{(p)}_\infty(\phi_o < -h) \leq \frac1Le^{c\beta(\log L)^{\frac{p-1}p}}$, the conditions on $\ell$ in the lemma ensure that the display is bounded above by $L^{-2\delta + o(1)}$, which as before directly translates to the desired lower bound.
\end{proof}

For a refined upper bound on $\xi^{(p)}_{\ell, h}/\hatpi^{(p)}_\infty(\phi_o < -h)$, we will first need the following conditional large deviation lower bound. As the proof is very different from the $p = 2$ case\footnote{There the claim was stronger and allowed for finite volume settings, but only $V = \Z^2$ was used in application.}, we relegate this to \cref{sec:missing-LD-p}. 
\begin{claim}[Cf.\ \cref{clm:pi(h|h)-lower}]\label{clm:pi(h|h)-lower-p}
Fix $1 < p < 2$. There exists $\delta_p, \bar\delta_p>0$ such that for large $\beta$, if $o'$ is a neighbor of the origin $o$ then
\[ \hatpi^{(p)}_\infty\left(\phi_{o'} \geq h\mid \phi_o \geq h\right) \geq e^{-\beta(\frac12\sE^{(p)}(\phi^*_o) - \bar\delta_p+o(1))h^p} = L^{-1/2+\delta_p+o(1)}\,.\]
If $p > 2$ instead, then 
\[\hatpi^{(p)}_\infty(\phi_{o'} \geq h \mid \phi_o\geq h) \geq e^{-c\beta h^{p/(p-1)}} = e^{-c(\beta) \sqrt{\log L}^{\frac{p}{p-1}}}\,.\]    
\end{claim}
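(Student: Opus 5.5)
We prove both bounds via the variational (capacity) description of the large deviation rate: write $\hatpi^{(p)}_\infty(\phi_{o'}\geq h\mid\phi_o\geq h)=\hatpi^{(p)}_\infty(\phi_o,\phi_{o'}\geq h)/\hatpi^{(p)}_\infty(\phi_o\geq h)$. The plan is to compare the exponential rate of the numerator, governed by the $p$-capacity of the two-point set $\{o,o'\}$, with that of the denominator, governed by the $p$-capacity of $\{o\}$. For $1<p<2$ we will use the forthcoming \cref{lem:LD-capacity}, which gives
\[
-\log\hatpi^{(p)}_\infty(\phi\restriction_A=h)=(1+o(1))\,\beta\,\mathrm{cap}_p(A)\,h^p
\]
for finite $A$. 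Here $\sE^{(p)}(\phi^*_o)$ denotes the minimal $p$-Dirichlet energy of a function equal to $1$ at $o$ and decaying at infinity, so that the denominator has rate $\beta\,\sE^{(p)}(\phi^*_o)h^p$. Since $\hatpi(\phi\restriction_A=h)$ and $\hatpi(\phi\restriction_A\geq h)$ agree up to $e^{o(h^p)}$ (by a Peierls shift of the excess, as in the proof of \cref{clm:pi(h|h)-lower}), it suffices to show
\[
\mathrm{cap}_p(\{o,o'\})\leq \Big(\tfrac32-\bar\delta_p\Big)\sE^{(p)}(\phi^*_o).
\]

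The key step is this capacity inequality. The competitor is $\psi=\max(\phi^*_o,\phi^*_o(\cdot-o'))$ (or its symmetrization), which equals $1$ on both points. Its energy is at most $2\sE^{(p)}(\phi^*_o)$ minus the savings on the region where the two translates overlap. To improve the constant down to $\tfrac32-\bar\delta_p$, we expect to use explicit bounds on the discrete $p$-harmonic function near the origin: for $p<2$, the energy of $\phi^*_o$ is concentrated on the four edges at $o$, and the edge $oo'$ contributes at least a fixed fraction. In the competitor that edge's gradient becomes $0$, and the neighbouring gradients are essentially shared. A cruder but robust route is to take $\psi=\phi^*_o$ modified so that $\psi_{o'}=1$; the resulting energy change is controlled by the local gradients, and we then argue via the strict inequality $\min_{t}\big(|1-t|^p\text{-type local terms}\big)<\tfrac12$ of the original, valid for $p<2$ after a small perturbation. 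This explicit local computation is where the constant $\bar\delta_p>0$ comes from.

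Given the inequality, the conditional probability is at least $e^{-\beta(\frac12\sE^{(p)}(\phi^*_o)-\bar\delta_p+o(1))h^p}$ (after rescaling $\bar\delta_p$). Substituting the definition of $H$ via $\hatpi(\phi_o=h)=L^{-1+o(1)}$, i.e.\ $\beta\sE^{(p)}(\phi^*_o)h^p=(1+o(1))\log L$, yields $L^{-1/2+\delta_p+o(1)}$.

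For $p>2$ we instead mimic \cref{clm:pi(h|h)-lower}. First use \cite[Thm.~7.1]{ChenLubetzky25}, the analogue of \cite[Eq.~(3.6)]{LMS16}, to get $\phi_{o'}\geq h-Ch^{1/(p-1)}$ with probability at least $\tfrac13$ given $\phi_o\geq h$. Then apply FKG and raise $\phi_{o'}$ by $Ch^{1/(p-1)}$ levels. Each level costs a ratio $e^{-c\beta h^{\cdots}}$ from \cite[Eq.~(7.4)]{ChenLubetzky25}, and the total cost is $e^{-c\beta h^{p/(p-1)}}$. Plugging in $h\asymp(\log L)^{1/p}$ up to the appropriate $\beta$ and $\log$ factors gives the stated form.

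The main obstacle is the strict capacity gap $\bar\delta_p>0$ for $1<p<2$. Nonlinearity prevents an exact formula, so the plan is an explicit local test function estimate combined with monotonicity of $p$-capacity.
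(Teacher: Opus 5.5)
Your reduction for $1<p<2$ is the paper's: by \cref{lem:LD-capacity}, everything reduces to $\operatorname{Cap}_p(\{o,o'\})\le(\tfrac32-\bar\delta_p)\,\sE^{(p)}(\phi^*_o)$. That inequality is the entire content of the claim, and neither of your routes proves it.

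Put $o'=(1,0)$ and $a=\phi^*_o(o')$. Summation by parts gives $\sE^{(p)}(\phi^*_o)=4(1-a)^{p-1}$, while the four edges at $o$ carry $4(1-a)^p$. So they hold only a fraction $1-a$ of the energy, and $a\to1$ as $p\uparrow2$ because the $p$-capacity of a point vanishes. The energy is therefore \emph{not} concentrated near $o$.

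In your max-of-translates competitor, the edge $oo'$ saves only $2(1-a)^p=\frac{1-a}{2}\sE^{(p)}(\phi^*_o)$. For $p$ near $2$, no modification localized near $o$ yields the needed saving of $\frac12\sE^{(p)}(\phi^*_o)$.

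Your second route resets $\psi_{o'}=1$. This changes the energy by $\bigl((1-b)^p-(a-b)^p-(1-a)^p\bigr)+2\bigl((1-c)^p-(a-c)^p\bigr)$, where $b=\phi^*_o(2,0)$ and $c=\phi^*_o(1,1)$. This must be compared with $\frac12\sE^{(p)}(\phi^*_o)=2(1-a)^{p-1}$, and the outcome depends on values of $\phi^*_o$ you do not control. Knowing only $0\le b,c\le a$ is not enough: $p=\frac32$, $a=\frac12$, $b=c=0$ gives the wrong sign.

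The missing idea is a \emph{global} competitor built from the lattice symmetries of $\phi^*_o$. The paper block-copies along the $oo'$ axis, setting $\varphi_{2k,l}=\varphi_{2k+1,l}=(\phi^*_o)_{k,l}$. This keeps the horizontal energy and doubles the vertical one, so $\sE^{(p)}(\varphi)=\sE^{(p)}_{\mathsf h}+2\sE^{(p)}_{\mathsf v}=\frac32\sE^{(p)}(\phi^*_o)$ by rotation invariance. Strictness comes from $\varphi$ not being $p$-harmonic, so it is not the unique minimizer.

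Your translate idea can also be rescued globally. Glue $\phi^*_o$ on $\{x_1\le 0\}$ to $\phi^*_o(\cdot-o')$ on $\{x_1\ge1\}$. By reflection symmetry the gradient across the bisector vanishes. The energy is then exactly $\sE^{(p)}(\phi^*_o)$ plus the vertical energy of the column $\{x_1=0\}$, and the latter is strictly less than $\sE^{(p)}_{\mathsf v}=\frac12\sE^{(p)}(\phi^*_o)$.

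For $p>2$ your two-stage scheme is the paper's, and the second stage (FKG plus \cref{eq:CL25-pi(h)-ratio-LB} over $Mh^{1/(p-1)}$ levels) is identical. Your first stage, however, invokes a constant-probability analogue of \cite[Eq.~(3.6)]{LMS16}. For this claim the paper draws only the ratio bound \cref{eq:CL25-pi(h)-ratio-LB} from \cite[Thm.~7.1]{ChenLubetzky25}, not such a statement.

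Instead, the paper proves the weaker bound $\hatpi^{(p)}_\infty(\phi_{o'}\ge h-Mh^{1/(p-1)}\mid\phi_o\ge h)>e^{-2c_0\beta h}$, with $M=(c_0/2)^{1/(p-1)}$. This suffices because it is negligible next to $e^{-c\beta h^{p/(p-1)}}$. Suppose it failed. Then, given $\phi_o\ge h$, all neighbours of $o$ lie below $h-Mh^{1/(p-1)}$ except with small probability. On that event, lowering $\phi_o$ by one gains at least $2c_0h$ in energy. This forces $\hatpi^{(p)}_\infty(\phi_o\ge h+1\mid\phi_o\ge h)\le 6e^{-2c_0\beta h}$, contradicting \cref{eq:CL25-pi(h)-ratio-LB}.

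Finally, for $p>2$ one has $h\asymp\sqrt{\log L}$, not $(\log L)^{1/p}$. The latter would put $(\log L)^{1/(p-1)}$ rather than $\sqrt{\log L}^{\,p/(p-1)}$ in the exponent.
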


\begin{lemma}[Cf.\  \cref{lem:xi-refined-upper}]\label{lem:xi-refined-upper-p}
Fix $1 < p < 2$, and let $\delta_p$ be from \cref{clm:pi(h|h)-lower-p}. For $h=H+1-n$ with fixed $n\geq 0$, $\beta$ sufficiently large, and $\ell \geq L^{\frac14}$, we have
\[ \frac{\xi^{(p)}_{\ell,h}}{\hatpi^{(p)}_\infty(\phi_o<-h)} \leq 1 - L^{-1/2+\delta_p+o(1)}\,.\]
If instead $p > 2$, then for any constant $\delta > 0$ we can allow $\ell \geq L^\delta$, and we have
\[ \frac{\xi^{(p)}_{\ell,h}}{\hatpi^{(p)}_\infty(\phi_o<-h)} \leq 1 - e^{-c(\beta) \sqrt{\log L}^{\frac{p}{p-1}}}\,.\]
\end{lemma}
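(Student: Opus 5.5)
The plan is to follow the proof of \cref{lem:xi-refined-upper} verbatim, replacing \cref{clm:pi(h|h)-lower} by \cref{clm:pi(h|h)-lower-p}.

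First, tile $Q_\ell$ by dominoes ($1\times2$ and $2\times1$ boxes) and apply FKG to the increasing events $\{\phi_x\ge -h\}$. This gives
\[
\hatpi^{(p)}_\infty(\phi_x\geq -h,\,\forall x\in Q_\ell)\geq \Big(1-2\hatpi^{(p)}_\infty(\phi_o<-h)+\hatpi^{(p)}_\infty(\phi_o<-h,\,\phi_{o'}<-h)\Big)^{\lceil \ell^2/2\rceil}.
\]
Next, set $q=\hatpi^{(p)}_\infty(\phi_o<-h)\big(1-\tfrac12\hatpi^{(p)}_\infty(\phi_{o'}<-h\mid\phi_o<-h)\big)$. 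Since $h=H+1-n$ with $n$ fixed, the definition of $H$ (transferred to the $|\nabla\phi|^p$ model) gives $q=L^{-1+o(1)}$. Using $1-t\ge e^{-t-t^2}$, we obtain
\[
\frac{\xi^{(p)}_{\ell,h}}{\hatpi^{(p)}_\infty(\phi_o<-h)}\le \Big(1+\tfrac1{\ell^2}\Big)(1+2q)\Big(1-\tfrac12\hatpi^{(p)}_\infty(\phi_{o'}<-h\mid\phi_o<-h)\Big).
\]

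The key input is a lower bound on $\hatpi^{(p)}_\infty(\phi_{o'}<-h\mid\phi_o<-h)$. By the $\phi\mapsto-\phi$ symmetry this equals $\hatpi^{(p)}_\infty(\phi_{o'}\ge h+1\mid\phi_o\ge h+1)$. \cref{clm:pi(h|h)-lower-p} applied at height $h+1$ bounds it below by $L^{-1/2+\delta_p+o(1)}$ when $1<p<2$, and by $e^{-c(\beta)\sqrt{\log L}^{\frac p{p-1}}}$ when $p>2$. Replacing $h$ by $h+1$ only changes the exponent by a factor $1+o(1)$, since $h\to\infty$. One should check that this does not spoil the $L^{-1/2+\delta_p+o(1)}$ form; the multiplicative $e^{O(\beta h^{p-1})}$ change is $L^{o(1)}$ because $h^p\asymp\log L$.

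It remains to check that the correction factors are smaller than this gain. For $1<p<2$ we have $\ell\ge L^{1/4}$, so $1/\ell^2\le L^{-1/2}$, and $2q=L^{-1+o(1)}$. Both are $o(L^{-1/2+\delta_p})$ because $\delta_p>0$, so the product is at most $1-L^{-1/2+\delta_p+o(1)}$. This comparison is the main point requiring care, and it is why the hypothesis $\ell\ge L^{1/4}$ rather than $L^\delta$ is needed. For $p>2$ the gain $e^{-c\sqrt{\log L}^{p/(p-1)}}=L^{-o(1)}$ dominates both $L^{-2\delta}$ and $L^{-1+o(1)}$, so any $\ell\ge L^{\delta}$ suffices, and absorbing constants into $c(\beta)$ gives the claimed bound.
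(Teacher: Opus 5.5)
Your proposal is correct and matches the paper's argument: the paper proves this lemma simply by rerunning the domino/FKG proof of \cref{lem:xi-refined-upper}, with \cref{clm:pi(h|h)-lower-p} in place of \cref{clm:pi(h|h)-lower}. Your explicit checks fill in details the paper leaves implicit. These are that passing from $h$ to $h+1$ (via $\phi\mapsto-\phi$) costs only an $L^{o(1)}$ factor, and that $1/\ell^2\le L^{-1/2}$ and $2q=L^{-1+o(1)}$ are negligible against the gain $L^{-1/2+\delta_p+o(1)}$, which is exactly why $\ell\ge L^{1/4}$ is required when $1<p<2$.
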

\begin{proof}
    The proof remains unchanged from the $p = 2$ case given the new bounds on $\hatpi^{(p),0}_\infty(\phi_{o'} \geq h\mid \phi_o \geq h)$ and $\ell$.
\end{proof}
We also insert here the bound on $\bar\xi^{(p)}_{\ell, h}/\hatpi^{(p)}_\infty(\phi_o = -h)$.
\begin{lemma}[Cf.\ \cref{lem:xi-bar-lower-weak-upper}]
    \label{lem:xi-bar-lower-weak-upper-p}
In the setting of \cref{lem:xi-lower-weak-upper-p}, we have for $1 < p < 2$ that
\begin{equation}
    1 - L^{-2\delta + o(1)}\leq 
    \frac{\bar\xi^{(p)}_{\ell,h}}{\hatpi_\infty^{(p)}(\phi_o = -h)} 
    \leq 1 + L^{-1+o(1)}\,,\end{equation}
and for $p > 2$ that
\begin{equation}
 1 - e^{-c_0(\beta) \sqrt{\log L}^{\frac{p}{p-1}}} \leq \frac{\xi^{(p)}_{\ell,h}}{\hatpi^{(p)}_\infty(\phi_o < -h)} \leq 1 + L^{-1+o(1)}\,.
\end{equation}
\end{lemma}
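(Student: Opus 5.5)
The plan is to follow the proof of \cref{lem:xi-bar-lower-weak-upper} essentially line by line, making only the substitutions already made in the proof of \cref{lem:xi-lower-weak-upper-p}. (Note that for $p>2$ the displayed statement reads $\xi^{(p)}_{\ell,h}$ and $\hatpi^{(p)}_\infty(\phi_o<-h)$; the intended quantities are surely $\bar\xi^{(p)}_{\ell,h}$ and $\hatpi^{(p)}_\infty(\phi_o=-h)$, and I would prove that version. The literal version is just \cref{lem:xi-lower-weak-upper-p}.)

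\textbf{Upper bound.} Under $\hatpi^{(p)}_\infty$ and $\bP^{(p)}_{\ell,h}:=\hatpi^{(p)}_\infty(\cdot\mid\sF_{\ell,h})$ we still have FKG for every $p\ge1$, since the measure is a gradient model with convex interaction and a floor. Hence the analogue of \cref{obs:xi-bar-monotone} holds and gives $\bar\xi^{(p)}_{\ell,h}\le\bar\xi^{(p)}_{1,h}=-\log(1-t)$, with
\[
t=\hatpi^{(p)}_\infty(\phi_o=-h)/\hatpi^{(p)}_\infty(\phi_o\ge-h)\le(1+L^{-1+o(1)})\,\hatpi^{(p)}_\infty(\phi_o=-h).
\]
Here we use $\hatpi^{(p)}_\infty(\phi_o<-h)=L^{-1+o(1)}$ for $h=H+1-n$. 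Applying $1-t\ge e^{-t-t^2}$ then gives the upper bound.

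\textbf{Lower bound.} Start from the Bonferroni inequality \cref{eq:P-in-ex} under $\bP^{(p)}_{\ell,h}$. Two ingredients are needed.

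The first is the pair sum. By FKG and $\hatpi^{(p)}_\infty(\phi_o=h)=(1-o(1))\hatpi^{(p)}_\infty(\phi_o\ge h)$, which follows from the ratio bound \cite[Thm.~7.1, Eq.~(7.4)]{ChenLubetzky25}, this reduces exactly as in \cref{eq:pairs-P-upper-bound} to the quantity already bounded in the proof of \cref{lem:xi-lower-weak-upper-p}, namely
\[
\ell^2\bigl(\hatpi^{(p)}_\infty(\phi_o<-h)+L^{-10}\bigr)+O(\log^2L)\max_{0<|x|\le\log L}\hatpi^{(p)}_\infty(\phi_x<-h\mid\phi_o<-h).
\]
For $1<p<2$ this is $L^{-2\delta+o(1)}$, using $\ell\le L^{1/2-\delta}$, $\hatpi^{(p)}_\infty(\phi_o<-h)\le L^{-1}e^{c\beta(\log L)^{(p-1)/p}}$ and the conditional bound $e^{-c\beta h^p}$. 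For $p>2$ it is $e^{-c_0(\beta)\sqrt{\log L}^{p/(p-1)}}$, using \cite[Thm.~7.1, Eq.~(7.6)]{ChenLubetzky25}.

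The second is the analogue of \cref{eq:P-lower-bound}, namely $\bP^{(p)}_{\ell,h}(\phi_o=-h)\ge(1-\epsilon)\hatpi^{(p)}_\infty(\phi_o=-h)$. This needs $\hatpi^{(p)}_\infty(\sF_{\ell,h}^c\mid\phi_o=-h)\le\epsilon$, obtained by the same split into far and near sites. Far sites are handled by decorrelation: \cref{eq:couple-pi-to-inf-vol} holds for all $p\ge1$ by the Peierls argument. Near sites are handled by the conditional large deviation bound above. This yields the same $\epsilon$ as in the pair sum, so plugging both into \cref{eq:P-in-ex} and exponentiating gives the lower bound.

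\textbf{Main obstacle.} The only step needing care is checking, for $1<p<2$, that the conditional bound $e^{-c\beta h^p}$ times the $O(\log^2L)$ factor is indeed $\le L^{-2\delta}$ in our range of $h$. This requires $h^p\asymp\log L$ with a constant large in $\beta$. If that constant turns out to be only comparable, I would shrink $\delta$ accordingly, which is permitted since $\delta$ is arbitrary with the loss absorbed in $L^{o(1)}$. The remaining steps are verbatim.
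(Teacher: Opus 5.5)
Your proposal matches the paper's proof: the paper just says the argument of \cref{lem:xi-bar-lower-weak-upper} goes through with the substitutions made in \cref{lem:xi-lower-weak-upper-p}. You are also right that the $p>2$ display is a typo for $\bar\xi^{(p)}_{\ell,h}/\hatpi^{(p)}_\infty(\phi_o=-h)$.

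One correction to your ``main obstacle'' paragraph. Since $\beta h^p\asymp\log L$ with a $\beta$-independent constant, the near-site term $O(\log^2 L)\,e^{-c\beta h^p}$ is $L^{-\kappa_p+o(1)}$ for a fixed exponent $\kappa_p$ that does not improve with $\beta$. Indeed, by \cref{clm:pi(h|h)-lower-p} the true conditional probability is at least $L^{-1/2+\delta_p+o(1)}$. So the bound $1-L^{-2\delta+o(1)}$ holds only for $\delta$ below a fixed threshold: once $2\delta>\tfrac12-\delta_p$ it would contradict \cref{lem:xi-bar-upper-p} at $\ell=L^{1/4}$. Shrinking $\delta$ therefore restricts the statement; the loss is polynomial and cannot be absorbed into $L^{o(1)}$. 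The restriction is harmless, since the lemma is only used with $\delta=\delta_p/2$.
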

\begin{proof}
    The proof remains unchanged from the $p = 2$ case, besides the modifications described for \cref{lem:xi-lower-weak-upper-p}.
\end{proof}
This leads to the main proposition controlling the probability of the floor event. (Note that, just as for $p = 2$, we do not need \cref{clm:pi(h|h)-lower-p,lem:xi-refined-upper-p,lem:xi-bar-lower-weak-upper-p} for this. In particular, we can take $\ell_0$ as small as $\log^2 L$, rather than the restriction $\ell \geq L^{1/4}$.)
\begin{proposition}[Cf.\ \cref{prop:key-area-estimate}]\label{prop:key-area-estimate-p}
    Fix $g\geq 0$, $n \geq 0$, and set $h = H+1-n$. Assume $\beta$ is sufficiently large. Let $\delta_p$ be the minimum of the constant from \cref{clm:pi(h|h)-lower-p} and $\frac16$. There exists $c>0$ such that the following holds for every $\log^2L \leq \ell_0 \leq L^{\frac12 - \frac{\delta_p}2}$, and subsets 
    $F \subseteq V\subset \Z^2$ where $F$ has $g$ holes and $|\partial F| \leq L^{3/2}$.
    Let $\Xi = \Xi_1+\Xi_2+\Xi_3$, where for $1 < p < 2$ we define 
    \begin{equation}\label{eq:Xi123-def-p} \Xi_1 =  \frac{|\partial V|}Le^{c\beta(\log L)^{\frac{2p-2}p}}\,,\qquad \Xi_2 = \frac{|F|}
     {\ell_0 L} e^{c\beta(\log L)^{\frac{p-1}{p}}} \,,\qquad \Xi_3 =\frac{|\partial F|\ell_0 }LL^{-\delta_p+o(1)}\,,\end{equation}
     and for $p > 2$ we define
     \begin{equation}\label{eq:Xi123-def-p-big} \Xi_1 =  \frac{|\partial V|}LL^{\epsilon_\beta}\,,\qquad \Xi_2 = \frac{|F|}
     {\ell_0 L} e^{c\sqrt{\beta\log L}}\,,\qquad \Xi_3 =\frac{|\partial F|\ell_0 }Le^{-c(\beta)\sqrt{\log L}^{\frac{p}{p-1}}}\,.\end{equation}
    \begin{enumerate}[(i)]
    \item If $1 < p < 2$ and $|F|\leq \ell_0 L e^{-c\beta(\log L)^{\frac{p-1}{p}}}$, or $p > 2$ and $|F|\leq \ell_0 L e^{-c\sqrt{\beta\log L}}$, then
    \begin{align}\label{eq:area-estimate-meso-p}\hatpi^{(p),0}_V(\phi_x \geq -h,\, \forall x \in F) = \exp\left(-\xi^{(p)}_{\ell_0,h}|F| +O(\Xi) + o(L^{-5}) \right)\,.
    \end{align}
\item Otherwise, if $\fS$ is the event that there are no disagreement polymers $\gamma$ with $|\gamma| \geq \log L$, then
\begin{align}\label{eq:area-estimate-macro-lower-p}
    \hatpi^{(p),0}_V(\phi_x \geq -h,\, \forall x \in F) &\geq \exp\left(-\xi^{(p)}_{\ell_0,h}|F| + O(\Xi)\right)\,,\\
    \label{eq:area-estimate-macro-upper-p}
 \hatpi^{(p),0}_V(\phi_x \geq -h,\, \forall x \in F\mid\fS) &\leq \exp\left(-\xi^{(p)}_{\ell_0,h}|F| + O(\Xi)+o(L^{-5})\right)\,.
\end{align}
If $|F| \leq  \big(\frac{3\beta}{\hatpi^{(p)}_\infty(\phi_o > h)}\big)^2 $, then the last upper bound also applies to $\hatpi^{(p),0}_V(\phi_x\geq -h,\,\forall x\in F)$.
\end{enumerate}
    The same holds under $\hatpi^{(p)}_\infty$, in which case the error term $\Xi_1$ can be omitted from $\Xi$. Separately, in the special case where $F = \llb1,\ell\rrb^2$ with $ \ell_0 + 10\lceil \log L\rceil \mid \ell$, the error term $\Xi_3$ can be omitted from $\Xi$.
\end{proposition}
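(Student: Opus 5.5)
The plan is to repeat the proof of \cref{prop:key-area-estimate} step by step, changing only the places where the $p=2$ large deviation inputs enter. We tile $\Z^2$ by squares $R_i$ of side $\ell_0+10\lceil\log L\rceil$ with concentric $Q_i$ of side $\ell_0$, and split $F=F_1\cup F_2\cup F_3\cup F_4$ exactly as before: near $\partial V$, the square annuli, the cut-off squares, and the full squares. The geometric ingredients do not depend on $p$. These are the bound $|F_3|=O(|\partial F|\ell_0)$ (the chessboard argument that uses the $g$ holes), the bound $|F_2|=O(|F|\log L/\ell_0)$, and the conditional independence of the \texttt{good} $Q_i$'s given $\Gamma_U$. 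The only changes are in the sizes of the one-point probabilities $p_1,p_2$ and in the comparison between $\hatpi^{(p)}_\infty(\phi_o<-h)$ and $\xi^{(p)}_{\ell_0,h}$.

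\textbf{Bounding $p_1$ and $p_2$.} For $p_1$ we use the finite-volume large deviation bound $\hatpi^{(p),0}_V(\phi_x=h)\le\hatpi^{(p)}_\infty(\phi_o=h)e^{\mathrm{err}}$ and the ratio bounds from \cite[Thm.~7.1]{ChenLubetzky25}. This should give $p_1\le L^{-1}e^{c\beta(\log L)^{(2p-2)/p}}$ for $1<p<2$ and $p_1\le L^{-1+\epsilon_\beta}$ for $p>2$; these are the factors in $\Xi_1$. For $p_2$ the decorrelation coupling \cref{eq:couple-pi-to-inf-vol} is a Peierls argument, so it holds for all $p\ge1$. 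It gives $p_2\le(1+o(1))\hatpi^{(p)}_\infty(\phi_o>h)$, and the ratio bound $\hatpi^{(p)}_\infty(\phi_o=h-1)/\hatpi^{(p)}_\infty(\phi_o=h)\le e^{c\beta h^{p-1}}$ then yields $p_2\le L^{-1}e^{c\beta(\log L)^{(p-1)/p}}$, respectively $L^{-1}e^{c\sqrt{\beta\log L}}$ for $p>2$. This is the factor in $\Xi_2$. The FKG lower bound for $F_3\cup F_4$ goes through verbatim.

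\textbf{The $F_3$ error and the upper bound.} The $F_3$ error requires $|\hatpi^{(p)}_\infty(\phi_o<-h)-\xi^{(p)}_{\ell_0,h}|\le L^{-1-\delta_p+o(1)}$, and for this we invoke \cref{lem:xi-lower-weak-upper-p} with $\ell_0\le L^{1/2-\delta_p/2}$, which gives relative error $L^{-\delta_p+o(1)}$. This is the factor in $\Xi_3$, and it also explains why $\ell_0$ is capped there. The same lemma, in its version for arbitrary sets $S$, handles $Q_i\cap F_3$ in the upper bound. In the Peierls step, the events $A$ and $B_k$ are controlled exactly as before. The only new check is the size of the term each non-\texttt{good} $Q_i$ contributes, namely $(1+o(1))\hatpi^{(p)}_\infty(\phi_o<-h)\ell_0^2\le L^{-\delta_p+o(1)}$. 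This is still $o(1)$, so $\sum_k\hatpi(B_k)e^{4k\cdot o(1)}=1+O(L^{-10})$.

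\textbf{Remaining cases.} The mesoscopic restriction on $|F|$ is what lets $\hatpi(A)$ be absorbed as a multiplicative factor, since it forces $|F|\hatpi^{(p)}_\infty(\phi_o<-h)=o(\ell_0)$. The conditional macroscopic bound uses $\fS$ as before. The unconditional statement for $|F|\le(3\beta/\hatpi^{(p)}_\infty(\phi_o>h))^2$ follows from the $p$-version of \cref{cor:no-macro-contours}, since \cref{lem:prob-of-contour} is a Peierls and area argument that extends to $p>1$. The main obstacle is making sure that the $1<p<2$ correlation bound $e^{-c\beta h^p}$, which is only polynomial in $L$, never forces $\ell_0$ below $L^{1/2-\delta_p/2}$. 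This is exactly why we use the weakened \cref{lem:xi-lower-weak-upper-p} and accept the polynomially small $\Xi_3$. The $\hatpi^{(p)}_\infty$ case drops $F_1$, and the divisible-square case drops $F_3$, exactly as before.
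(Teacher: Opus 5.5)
Your proposal is correct and essentially matches the paper's proof, which likewise reruns the $p=2$ argument with only the large deviation inputs changed. As you do, the paper takes $\Xi_1$ from the finite-volume one-point bound (\cite[Lem.~7.10]{ChenLubetzky25}, whose proof gives the factor $e^{\epsilon_\beta C h^{2p-2}}$ for $1<p<2$), $\Xi_2$ from the ratio bounds of \cite[Thm.~7.1]{ChenLubetzky25} together with the definition of $H$, and $\Xi_3$ from the lower bound in \cref{lem:xi-lower-weak-upper-p} at $\delta=\delta_p/2$, and it uses the mesoscopic restriction on $|F|$ precisely to ensure $|F|\hatpi^{(p)}_\infty(\phi_o<-h)=o(\ell_0)$.
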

\begin{proof}
    The proof essentially remains the same after plugging in the new large deviation estimates, so we just comment on the differences. The first error term $\Xi_1$ accounts for points near $\partial V$, which depends on the control of $\hatpi^{(p), 0}_V(\phi_x < -h)$ for $x$ near $\partial V$. For this we plug in the bound from \cite[Lem.~7.10]{ChenLubetzky25} (for $1 < p < 2$, the proof of this lemma actually gives the sharper bound $\hatpi_V^{(p)}(\phi_x \geq h) \leq \hatpi_\infty^{(p)}(\phi_x \geq h)e^{\epsilon_\beta C h^{2p-2}}$, whence the fact that $h = O((\log L)^p)$ gives the term $\frac{1}{L}e^{c\beta(\log L)^{\frac{2p-2}p}}$). 
    
    The second $\Xi_2$ accounts for points in the annuli of squares (recall the proof involves decomposing $V$ into squares), and only involves points far away from $\partial V$ where we can couple to $\hatpi^{(p)}_\infty$. The changed terms arise from the upper bound for $\hatpi^{(p)}_\infty(\phi_o < -h)$, which is obtained by the definition of $H$ and the bound on $\hatpi^{(p)}_\infty(\phi_o = h)/\hatpi^{(p)}_\infty(\phi_o = h-1)$ in \cite[Thm.~7.1]{ChenLubetzky25}. Finally, the third term $\Xi_3$ accounts for squares partially cut off by the boundary. This error depends on the comparison between $\hatpi^{(p)}_\infty(\phi_o < -h)$ and $\xi_{\ell_0, h}$, in particular, the lower bound of \cref{lem:xi-lower-weak-upper-p} applied at $\delta = \delta_p/2$.

    The bound on $|F|$ needed for \cref{eq:area-estimate-meso-p} is to satisfy $|F|\hatpi^{(p)}_\infty(\phi_o < -h) = o(\ell_0)$ (see \cref{eq:area-estimate-small-F}).
\end{proof}
\begin{proof}[Proof of \cref{prop:uprho-bound-p}, except the upper bound of \cref{eq:uprho-bounds-p}]
    Recall that $\ell_*:=2^{\lceil\frac12\log_2 L\rceil} \asymp \sqrt{L}$. As before, the proof is to relate $\xi^{(p)}_{\ell_*,h},\bar\xi^{(p)}_{\ell_*,h}$ to $\xi^{(p)}_{\ell_0,h},\bar\xi^{(p)}_{\ell_0,h}$ for a specific choice of $\ell_0$, and then use \cref{lem:xi-lower-weak-upper-p,lem:xi-refined-upper-p} to conclude for $\upxi^{(p)}$ and \cref{lem:xi-bar-lower-weak-upper-p} for $\uprho^{(p)}$. The proof for $p > 2$ remains unchanged. 
    
    For $1 < p < 2$ however, we must be more careful with our choice of $\ell_0$. (If one were to follow the proof of $p = 2$ as is, the factor of $1+L^{-1/4}$ would overwhelm the upper bound in \cref{lem:xi-refined-upper-p}.) We will take $\ell_0$ to be the largest integer less than $L^{\frac12 - \frac{\delta_p}{2}}$ such that $\ell_0 + 10\lceil\log L\rceil$ divides $\ell_*$. 
    
    In what follows, the constant $c$ may change from line to line. Applying \cref{prop:key-area-estimate-p} in the special case for $\hatpi^{(p)}_\infty$ and $F=\llb1,\ell_*\rrb^2$, we get from \cref{eq:area-estimate-meso-p} that
\[ \hatpi^{(p)}_\infty(\phi_x\geq -h,\,\forall x \in F) = \exp\bigg(-\xi^{(p)}_{\ell_0,h}\ell_*^2 + O\Big(\frac{\ell_*^2}{\ell_0L}e^{c\beta(\log L)^{\frac{p-1}{p}}}\Big)+
o(L^{-5})\bigg)\,.\]
(In the $p = 2$ proof, we chose not to use the divisibility condition as it was unnecessary. It is needed here.) Plugging in $\ell_*$ and $\ell_0$, we get
\begin{equation}\label{eq:xi*-bound-p}\xi^{(p)}_{\ell_*,h} = -\frac1{\ell_*^2}\log\hatpi^{(p)}_\infty(\phi_x\geq -h,\,\forall x\in F) = \xi^{(p)}_{\ell_0,h} + O\Big(\frac{1}{\ell_0L}e^{c\beta(\log L)^{\frac{p-1}{p}}}\Big)\,.\end{equation}
Since $\xi^{(p)}_{\ell_0,h} = (1+o(1))\hatpi^{(p)}_\infty(\phi_o < -h)$ from \cref{lem:xi-lower-weak-upper-p}, we get
\[ \frac{\upxi_n}{\hatpi^{(p)}_\infty(\phi_o<-h)} = \frac{\xi^{(p)}_{\ell_*,h}}{\hatpi^{(p)}_\infty(\phi_o<-h)} = (1+L^{-\frac12+\frac{\delta_p+o(1)}{2}})\frac{\xi^{(p)}_{\ell_0,h}}{\hatpi^{(p)}_\infty(\phi_o<-h)}\,;\]
whence \cref{eq:upxi-bounds-p} follows from the bounds on $\xi^{(p)}_{\ell_0,h}$ in \cref{lem:xi-lower-weak-upper-p,lem:xi-refined-upper-p}. 

Similarly, \cref{eq:xi*-bound-p} implies that $\bar\xi^{(p)}_{\ell_*,h}$ also satisfies
$\bar\xi^{(p)}_{\ell_*,h} = \bar\xi^{(p)}_{\ell_0,h} + O(\frac{1}{\ell_0L}e^{c\beta(\log L)^{\frac{p-1}{p}}})$,
so that the bound $\bar\xi^{(p)}_{\ell_0,h} = (1+o(1))\hatpi^{(p)}_\infty(\phi_o = -h)$ from \cref{lem:xi-bar-lower-weak-upper-p} implies that
\begin{equation}\label{eq:rho_n_via_xibar_ell0-p} \uprho_n = \frac{\bar\xi^{(p)}_{\ell_*,h}}{\hatpi^{(p)}_\infty(\phi_o=-h)} = (1+L^{-\frac12+\frac{\delta_p+o(1)}{2}})\frac{\bar\xi^{(p)}_{\ell_0,h}}{\hatpi^{(p)}_\infty(\phi_o=-h)}\,;\end{equation}
hence, \cref{lem:xi-bar-lower-weak-upper-p} gives the following weak version of \cref{eq:uprho-bounds-p}: 
\begin{equation*}
    \uprho^{(p)}_n \in [1 - e^{-\sqrt{\log L}},  1 + L^{-1+o(1)}]\,.\qedhere
\end{equation*}
\end{proof}
We emphasize that the above calculations show the $1 < p < 2$ case is more delicate than the $p \geq 2$ case. Here the upper bound estimates on $\upxi^{(p)}_n$ (and eventually $\uprho^{(p)}_n$) feature an error term of $1+L^{-\frac12+\frac{\delta_p+o(1)}{2}}$ multiplying a bound of $1 - L^{-1/2+\delta_p+o(1)}$. The only reason the final upper bound stays strictly $< 1$ is because $L^{\delta_p/2} \ll L^{\delta_p}$. Moreover, recall we already had to sacrifice with a suboptimal lower bound in \cref{lem:xi-lower-weak-upper-p} to even have an error term this small to begin with (as otherwise the choice of $\ell_0$ in the above proof would not be allowed). In comparison, for $p \geq 2$ we have an error term of $1 + L^{-1/4}$ multiplying a bound of $1 - L^{-o(1)}$, so the choice of $\ell_0 = L^{1/4}$ there is fairly arbitrary.

\begin{proof}[Proof of \cref{thm:key-area-p}]
The heavy lifting is done in obtaining \cref{prop:key-area-estimate-p}; proving the theorem amounts to verifying that particular choices of $\ell_0$ yield the desired statements. As in \cref{prop:key-area-estimate-p}, we choose $\delta_p$ to be the minimum of the constant from \cref{clm:pi(h|h)-lower-p} and $\frac16$.  

Begin with $1 < p < 2$ and \cref{eq:xi-n-area-estimate-meso-p}. Let $\ell_0 \approx L^{\frac13+\frac{\delta_p}2}$ be the largest integer less than $L^{\frac13+\frac{\delta_p}2}$ such that $\ell_0 + 10\lceil \log L \rceil$ divides $\ell_*$. (The condition $\delta_p \leq \frac16$ now ensures that $L^{\frac13+\frac{\delta_p}2} \leq L^{\frac12-\frac{\delta_p}2}$, so we stay in the regime of \cref{prop:key-area-estimate-p}.) The assumption that $|\partial F|\vee|\partial V| \leq O(L^{\frac23+\frac{\delta_p}5})$ ensures that $|F|\leq \ell_0 L e^{-c\beta(\log L)^{\frac{p-1}{p}}}$. Hence, we may appeal to \cref{eq:area-estimate-meso-p} of \cref{prop:key-area-estimate-p} and check that $\ell_0$ was carefully chosen so that $\Xi = o(1)$. The cost of moving from $\xi^{(p)}_{\ell_0, h}$ to $\xi^{(p)}_{\ell_*, h}$ with a $|F|$ prefactor in the exponent of \cref{eq:xi-n-area-estimate-meso-p} is also $\frac{|F|}{\ell_0L}e^{c\beta(\log L)^{\frac{p-1}p}} = o(1)$.

For \cref{eq:xi-n-area-estimate-macro-lower-p,eq:xi-n-area-estimate-macro-upper-p} (still for $1 < p < 2$), take $\ell_0 \approx L^{1/2}e^{-\sqrt{\log L}}$ to be the largest integer less than $\ell_0 \approx L^{1/2}e^{-\sqrt{\log L}}$ such that $\ell_0 + 10\lceil \log L \rceil$ divides $\ell_*$. We apply \cref{prop:key-area-estimate-p}, and compute $\Xi \leq \sqrt{L}e^{3\sqrt{\log L}}$. Finally, the cost of moving from $\xi^{(p)}_{\ell_0, h}$ to $\xi^{(p)}_{\ell_*, h}$ with a $|F|$ prefactor in the exponent is $\frac{|F|}{\ell_0L}e^{c\beta(\log L)^{\frac{p-1}p}} \leq \sqrt{L}e^{3\sqrt{\log L}}$.

The $p > 2$ case is essentially the same. We can take $\ell_0 \approx L^{1/3}e^{3c\sqrt{\beta\log L}}$ in the mesoscopic case and $\ell_0 \approx L^{1/2}e^{-\sqrt{\log L}}$ in the macroscopic case, where $\approx$ has the same meaning as above and the constant $c$ is from \cref{prop:key-area-estimate-p}. Then as above, one can verify that the assumptions of \cref{prop:key-area-estimate-p} are met, that $\Xi$ satisfies the desired bounds (either $o(1)$ or $\sqrt{L}e^{(5+o(1))c\sqrt{\beta\log L}}$), and that the cost of moving from $\xi^{(p)}_{\ell_0, h}$ to $\xi^{(p)}_{\ell_*, h}$ is the same order as $\Xi$.
\end{proof}

It remains to prove the upper bound of \cref{eq:uprho-bounds-p}. Recall this only applies for $1 < p < 2$. We will need the following conditional large deviation estimate, the proof of which we relegate to \cref{sec:missing-LD-p} as the proof is very different from the $p = 2$ case. One should think of the following as a replacement for \cref{prop:pi-y-h-phi^*-rigid} that is weaker, but sufficient for our purposes.
\begin{claim}\label{clm:pi(h|hh)-upper-p}
    Let $1 < p < 2$. Fix a neighbor of the origin $o'$. There exists $\delta'_p > 0$ and $h_0$ such that for all $h \geq h_0$ and any $y \notin \{o, o'\}$,
    \[\hatpi^{(p)}_\infty(\phi_y = h \mid \phi_o = \phi_{o'} = h) \leq e^{-\beta\delta'_p h^p}\,.\]
\end{claim}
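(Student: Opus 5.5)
The approach is to compare the large deviation cost of raising the three sites $\{o,o',y\}$ to height $h$ with that of raising the pair $\{o,o'\}$, using the $p$-capacity characterization of large deviation rates for $1<p<2$ (the analogue of \cref{lem:LD-capacity} announced in the introduction). For a finite set $A$, that result gives
\[
\hatpi^{(p)}_\infty(\phi\restriction_A = h) = \exp\big(-(1+o(1))\beta\,\mathrm{cap}_p(A)\,h^p\big),
\]
where $\mathrm{cap}_p(A)=\inf\{\sum_{x\sim y}|f_x-f_y|^p : f\restriction_A=1,\ f\text{ finitely supported}\}$. For $p<2$ the $p$-capacity of a finite set in $\Z^2$ is strictly positive. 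The conditional probability is then a ratio of two such quantities:
\[
\hatpi^{(p)}_\infty(\phi_y=h\mid \phi_o=\phi_{o'}=h)=\exp\Big(-(1+o(1))\beta\big(\mathrm{cap}_p(\{o,o',y\})-\mathrm{cap}_p(\{o,o'\})\big)h^p\Big).
\]
It therefore suffices to show that
\[
\inf_{y\notin\{o,o'\}}\Big(\mathrm{cap}_p(\{o,o',y\})-\mathrm{cap}_p(\{o,o'\})\Big)\geq 2\delta'_p>0,
\]
and then absorb the $o(1)$ terms by taking $h\ge h_0$.

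The key steps, in order, are as follows.
\begin{enumerate}
\item Establish strict monotonicity for each fixed $y$. Let $f$ be the minimizer for $\{o,o'\}$. It exists and is unique by strict convexity of $t\mapsto|t|^p$ for $p>1$. By the strong maximum principle for the discrete $p$-Laplacian, $f_y<1$ for $y\notin\{o,o'\}$. Hence $f$ is not admissible for $\{o,o',y\}$, and uniqueness gives a strict inequality of capacities.
\item Obtain uniformity as $|y|\to\infty$. Since $p<2$, the $p$-capacity is, to leading order, additive over far-apart pieces. More precisely, a function that is $1$ on $A$ and on a distant point $y$ has energy at least $\mathrm{cap}_p(A)+\mathrm{cap}_p(\{y\})-o(1)$, because the minimizers decay polynomially (like $|x|^{-(2-p)/(p-1)}$) and cross terms vanish in the limit. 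This gives
\[
\liminf_{|y|\to\infty}\big(\mathrm{cap}_p(\{o,o',y\})-\mathrm{cap}_p(\{o,o'\})\big)\ge \mathrm{cap}_p(\{o\})>0.
\]
Combined with step 1 on the finitely many $y$ inside a large ball, this yields $\delta'_p$.
\item Make the large deviation asymptotics rigorous. I would prove a lower bound on $\hatpi^{(p)}_\infty(\phi_o=\phi_{o'}=h)$ by tilting toward $\lfloor h f\rfloor$; the rounding costs only $O(h^{p-1})=o(h^p)$ per unit of energy. I would prove an upper bound on the triple event by a Peierls/energy argument showing that any configuration with $\phi\restriction_A=h$ has energy at least $(\mathrm{cap}_p(A)-o(1))h^p$, with entropy $e^{o(h^p)}$. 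This is exactly the mechanism behind the unconditional rate $\hatpi^{(p)}_\infty(\phi_o\ge h)$ from \cite[Thm.~7.1]{ChenLubetzky25}, extended to sets $A$.
\end{enumerate}

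I expect the main obstacle to be step 3, specifically the upper bound. The entropy of level-line configurations must be shown to be subexponential in $h^p$ uniformly, with errors uniform in $y$. This is what lets us take the infimum over infinitely many $y$ at once. For $|y|\ge h^{C}$ I would instead bypass the capacity comparison with a decorrelation argument: couple to independent fields via \cref{eq:couple-pi-to-inf-vol}, which gives conditional probability at most $\hatpi^{(p)}_\infty(\phi_y=h)+e^{-c\beta|y|}\le e^{-c\beta h^p}$. The capacity analysis is then only needed in the range $|y|\le h^C$, where a polynomial union bound is affordable.
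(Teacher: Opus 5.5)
Your proposal is correct and follows essentially the same route as the paper. The paper uses \cref{lem:LD-capacity} to reduce the claim to a uniform positive gap $\sE^{(p)}(\phi^*_{o,o',y})-\sE^{(p)}(\phi^*_{o,o'})$. It obtains that gap from strict monotonicity for each fixed $y$ together with asymptotic additivity of the $p$-capacity (\cref{eq:cap-asymp-additivity}) as $|y|\to\infty$.

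Two small remarks. Your maximum-principle and uniqueness argument for strict monotonicity fills in a step the paper only asserts. Your decorrelation bypass for $|y|\ge h^C$ is not needed, because \cref{lem:LD-capacity} is stated uniformly over all sets with a fixed number of points, and only the upper bound for the triple set needs that uniformity. As written, the bypass would also require an exploration argument rather than \cref{eq:couple-pi-to-inf-vol}, since you are conditioning on a rare event.
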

Then, define $\bP^{(p)}_{\ell,h} := \hatpi^{(p)}_\infty(\cdot \mid \sF_{\ell,h})$, where  $\sF_{\ell,h}:= \left\{\phi_x \geq -h,\,\forall x\in Q_{\ell}\right\}$. This is motivated by the fact that we can then write $\bar \xi_{\ell,h} = -\frac{1}{\ell^2}\log\bP_{\ell,h}(\phi_x > -h,\,\forall x\in Q_\ell)$.
\begin{corollary}[Cf.\ \cref{cor:pi(Fc|_h_h)-lower}]
\label{cor:pi(Fc|_h_h)-lower-p}
Fix $1 < p < 2$. Let $h = H+1-n$ for $n$ fixed, and $\ell \leq \sqrt{L} e^{-\sqrt{\log L}}$. If $o'$ is a neighbor of the origin $o$, then for the constant $\delta_p$ from \cref{clm:pi(h|h)-lower-p}, we have
\[ \bP_{\ell,h}\left(\phi_{o'} = -h\mid \phi_o = -h\right) \geq 1 -L^{-1/2+\delta_p+o(1)}\,.\]
\end{corollary}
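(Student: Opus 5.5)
The plan mirrors the proof of \cref{cor:pi(Fc|_h_h)-lower}. We work under $\hatpi^{(p)}_\infty$ with $h=H+1-n$ and use the reflection symmetry $\phi\mapsto-\phi$, so that the event $\{\phi_o=-h\}$ becomes an upward spike of height $h$. We decompose exactly as there:
\begin{align*}
\bP^{(p)}_{\ell,h}(\phi_{o'}=-h\mid\phi_o=-h)
&\geq \hatpi^{(p)}_\infty\big(\phi_{o'}=-h,\ \sF_{\ell,h}\mid \phi_o=-h\big)\\
&= \hatpi^{(p)}_\infty\big(\sF_{\ell,h}\mid\phi_o=\phi_{o'}=-h\big)\,\hatpi^{(p)}_\infty\big(\phi_{o'}=-h\mid\phi_o=-h\big)\,.
\end{align*}
This reduces the statement to two inputs: a ``floor is harmless'' estimate $\hatpi^{(p)}_\infty(\sF_{\ell,h}\mid\phi_o=\phi_{o'}=-h)=1-o(1)$, and a bound on the unconstrained two-point conditional probability.

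\textbf{Step 1: the floor event.} We bound $\hatpi^{(p)}_\infty(\sF^c_{\ell,h}\mid\phi_o=\phi_{o'}=-h)$ by a union bound over $x\in Q_\ell$, split by distance from the origin.
\begin{itemize}
\item[(a)] For $|x|>\log L$, the decorrelation estimate \cref{eq:couple-pi-to-inf-vol} gives a contribution of at most $\ell^2\big(\hatpi^{(p)}_\infty(\phi_o<-h)+L^{-10}\big)$. Since $\hatpi^{(p)}_\infty(\phi_o<-h)\le L^{-1}e^{c\beta(\log L)^{(p-1)/p}}$, the hypothesis $\ell\le\sqrt L e^{-\sqrt{\log L}}$ makes this $e^{-(2-o(1))\sqrt{\log L}}$.
\item[(b)] For the $O(\log^2L)$ sites with $|x|\le\log L$, a Peierls map lowering the $>h$ component through $\{o,o'\}$ lets us replace the conditioning $\{=h\}$ by $\{\ge h\}$ at a cost of a factor $1+o(1)$. \Cref{clm:pi(h|hh)-upper-p} then bounds each term by $e^{-\beta\delta'_p h^p}$, which is polynomially small in $L$ because $h^p\asymp\frac1\beta\log L$. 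This absorbs the $\log^2L$ factor.
\end{itemize}
This step is routine given \cref{clm:pi(h|hh)-upper-p}.

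\textbf{Step 2: the two-point factor.} We insert \cref{clm:pi(h|h)-lower-p}. Passing from $\{\phi_o\ge h\}$ to $\{\phi_o=h\}$ uses $\hatpi^{(p)}_\infty(\phi_o=h)=(1-o(1))\hatpi^{(p)}_\infty(\phi_o\ge h)$, by the large-deviation ratio in \cite[Thm.~7.1]{ChenLubetzky25}. Passing from $\{\phi_{o'}\ge h\}$ to $\{\phi_{o'}=h\}$ uses the same Peierls lowering as in Step~1(b).

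\textbf{Main obstacle: matching the stated form.} Steps 1--2 give a lower bound of the form $(1-o(1))\,L^{-1/2+\delta_p+o(1)}$. To reach the form $1-L^{-1/2+\delta_p+o(1)}$ written in the statement, I would rewrite this, through \cref{eq:bar-xi-diff-xi}, as the quantity actually consumed downstream. That quantity is the factor $1-\tfrac12\bP_{\ell,h}(\phi_{o'}=-h\mid\phi_o=-h)$ entering the domino argument of \cref{lem:xi-refined-upper}, which is at most $1-L^{-1/2+\delta_p+o(1)}$.

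I will first check carefully which of the two forms the subsequent application, namely the upper bound in \cref{eq:uprho-bounds-p}, requires. That application needs exactly the margin $L^{-1/2+\delta_p+o(1)}$ below $1$. If the near-one form is genuinely intended as a statement about the conditional probability itself, I would try to prove it via the $p$-capacity rigidity of the spike shape. However, I expect this to fail: the conditional profile at $o'$ sits strictly below $h$, so the conditional probability cannot be close to $1$.
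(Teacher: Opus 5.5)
Your Steps 1--2 are exactly the paper's argument. It is the proof of \cref{cor:pi(Fc|_h_h)-lower} rerun with two substitutions: \cref{clm:pi(h|hh)-upper-p} makes $\hatpi^{(p)}_\infty(\sF_{\ell,h}\mid\phi_o=\phi_{o'}=-h)=1-o(1)$, and \cref{clm:pi(h|h)-lower-p} supplies the two-point factor. What this argument yields is
\[
\bP_{\ell,h}(\phi_{o'}=-h\mid\phi_o=-h)\geq L^{-1/2+\delta_p+o(1)}\,.
\]
This is precisely the input \cref{lem:xi-bar-upper-p} consumes through the domino factor $1-\tfrac12\bP_{\ell,h}(\phi_{o'}=-h\mid\phi_o=-h)$. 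The ``$1-$'' in the displayed statement is a typo carried over from that lemma.

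You are right that the literal statement is false. Since $\hatpi^{(p)}_\infty(\sF_{\ell,h}\mid\phi_o=-h)=1-o(1)$, the conditional probability is at most $(1+o(1))\,\hatpi^{(p)}_\infty(\phi_{o'}=-h\mid\phi_o=-h)$. By \cref{lem:LD-capacity} and the strict inequality $\sE^{(p)}(\phi^*_{o,o'})>\sE^{(p)}(\phi^*_o)$, this is polynomially small in $L$. So drop the idea of ``rewriting'' your bound into the near-one form, which conflates the conditional probability with the downstream factor, and simply state the corrected inequality.
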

\begin{proof}
    The proof remains unchanged from the $p = 2$ case, using \cref{clm:pi(h|h)-lower-p} and \cref{clm:pi(h|hh)-upper-p} as inputs instead of \cref{clm:pi(h|h)-lower,prop:pi-y-h-phi^*-rigid}.
\end{proof}

\begin{lemma}[Cf.\ \cref{lem:xi-bar-upper}]\label{lem:xi-bar-upper-p}
Fix $1 < p < 2$, and let $\delta_p$ be from \cref{clm:pi(h|h)-lower}. For $h=H+1-n$ with fixed $n\geq 0$, $\beta$ sufficiently large, and $\ell \geq L^{\frac14}$, we have
\begin{equation*} \frac{\bar\xi_{\ell,h}}{\hatpi_\infty(\phi_o = -h)} \leq 1 -L^{-1/2+\delta_p+o(1)}\,.\end{equation*}
\end{lemma}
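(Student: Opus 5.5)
The plan is to follow the proof of \cref{lem:xi-bar-upper} verbatim, substituting the $p$-dependent inputs. The first step is a domino decomposition combined with FKG under $\bP^{(p)}_{\ell,h}$; this measure is a $|\nabla\phi|^p$ model with a floor at $-h$ on $Q_\ell$, and it still has FKG since the interaction is convex. Tile $Q_\ell$ (up to $O(\ell)$ boundary sites, handled as singletons) by dominoes $\{x,x'\}$ and write
\[
\bP^{(p)}_{\ell,h}(\phi_x>-h,\,\forall x\in Q_\ell)\geq \Big(1-2\bP^{(p)}_{\ell,h}(\phi_o=-h)+\bP^{(p)}_{\ell,h}(\phi_o=\phi_{o'}=-h)\Big)^{\lceil \ell^2/2\rceil}.
\]
To be careful, translation invariance is not exact inside $Q_\ell$, so one should take the minimum over dominoes. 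Alternatively, one can restrict to dominoes at distance at least $\log L$ from $\partial Q_\ell$ and couple them to the bulk quantity; the excluded layer costs $O(\ell\log L)$ sites, a relative error $O(\log L/\ell)=L^{-1/4+o(1)}$. That error is negligible against $L^{-1/2+\delta_p}$ only if $\delta_p>1/4$, which need not hold. So instead I would use the uniform version of the inputs: the bound of \cref{cor:pi(Fc|_h_h)-lower-p} holds uniformly over positions in $Q_\ell$, since the argument only uses $\hatpi_\infty$ conditional estimates plus a union bound.

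The second step sets $q:=\bP^{(p)}_{\ell,h}(\phi_o=-h)\big(1-\tfrac12\bP^{(p)}_{\ell,h}(\phi_{o'}=-h\mid\phi_o=-h)\big)=L^{-1+o(1)}$. Using $1-t\geq e^{-t-t^2}$, this gives
\[
\bar\xi^{(p)}_{\ell,h}\leq(1+\ell^{-2})(1+2q)\,\bP^{(p)}_{\ell,h}(\phi_o=-h)\Big(1-\tfrac12\bP^{(p)}_{\ell,h}(\phi_{o'}=-h\mid\phi_o=-h)\Big).
\]
The factors $(1+\ell^{-2})(1+2q)$ are $1+O(L^{-1/2})$, which is dominated by the gain $L^{-1/2+\delta_p}$ as long as $\delta_p>0$. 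This is the only quantitative constraint, and it is why the hypothesis is $\ell\geq L^{1/4}$: it gives $\ell^{-2}\leq L^{-1/2}$.

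The third step inserts the conditional lower bound. I expect this to be the main obstacle. \cref{cor:pi(Fc|_h_h)-lower-p} as stated reads ``$\geq 1-L^{-1/2+\delta_p+o(1)}$'', but the intended content, by analogy with \cref{cor:pi(Fc|_h_h)-lower}, is
\[
\bP^{(p)}_{\ell,h}(\phi_{o'}=-h\mid\phi_o=-h)\geq L^{-1/2+\delta_p+o(1)}.
\]
I would derive this directly by copying that proof:
\[
\bP^{(p)}_{\ell,h}(\phi_{o'}=-h\mid\phi_o=-h)\geq \hatpi^{(p)}_\infty(\sF_{\ell,h}\mid \phi_o=\phi_{o'}=-h)\;\hatpi^{(p)}_\infty(\phi_{o'}=-h\mid\phi_o=-h).
\]
The second factor is at least $L^{-1/2+\delta_p+o(1)}$ by \cref{clm:pi(h|h)-lower-p}, after passing from $\{\phi\geq h\}$ to $\{\phi=h\}$ by the Peierls map lowering the excess component, at a cost of $1-o(1)$.

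For the first factor, I would show $\hatpi^{(p)}_\infty(\sF_{\ell,h}^c\mid\phi_o=\phi_{o'}=-h)=o(1)$ via a union bound.
\begin{enumerate}[(i)]
\item For far sites, the bound is $\ell^2\hatpi_\infty(\phi_o<-h)+L^{-10}$. This is $o(1)$ only for $\ell\leq\sqrt L e^{-\sqrt{\log L}}$, which exceeds $\hatpi_\infty(\phi_o<-h)^{-1/2}\leq \sqrt L\,e^{-c\beta(\log L)^{(p-1)/p}/2}$.
\item For near sites, the bound is $O(\log^2L)\,e^{-\beta\delta'_ph^p}$, by \cref{clm:pi(h|hh)-upper-p} combined with the same Peierls step. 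Since $h^p\asymp\log L/\beta$, this is $L^{-c}$.
\end{enumerate}
The hypothesis $\ell\geq L^{1/4}$ must be reconciled with the upper restriction on $\ell$ in the corollary. For $\ell$ larger than $\sqrt L e^{-\sqrt{\log L}}$, I would use \cref{obs:xi-bar-monotone}: choose $m\mid\ell$ with $m\in[L^{1/4},\sqrt Le^{-\sqrt{\log L}}]$. The $\ell_*$ and $\ell_0$ used later are powers of two, so divisibility is available, and then $\bar\xi_{\ell,h}\leq\bar\xi_{m,h}$.

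The last step converts the normalization from $\bP^{(p)}_{\ell,h}(\phi_o=-h)$ to $\hatpi^{(p)}_\infty(\phi_o=-h)$ exactly as in \cref{lem:xi-bar-upper}, using FKG:
\[
\bP^{(p)}_{\ell,h}(\phi_o\leq-h)\leq\hatpi^{(p)}_\infty(\phi_o=-h)\big/\big(1-\hatpi^{(p)}_\infty(\phi_o<-h)\big)=(1+L^{-1+o(1)})\,\hatpi^{(p)}_\infty(\phi_o=-h).
\]
Combining the three steps then yields $\bar\xi^{(p)}_{\ell,h}/\hatpi^{(p)}_\infty(\phi_o=-h)\leq 1-\tfrac12L^{-1/2+\delta_p+o(1)}+O(L^{-1/2})$, which is the claim.
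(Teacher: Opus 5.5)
Your proposal is correct and is essentially the paper's proof. The paper says only that the $p=2$ argument of \cref{lem:xi-bar-upper} carries over, and you do exactly that: domino tiling plus FKG under $\bP_{\ell,h}$, the conditional lower bound of \cref{cor:pi(Fc|_h_h)-lower-p}, then the FKG comparison $\bP_{\ell,h}(\phi_o\le -h)\le (1+L^{-1+o(1)})\hatpi_\infty(\phi_o=-h)$. You are right that the corollary's displayed bound is a typo for $\geq L^{-1/2+\delta_p+o(1)}$. Your extra care about position-uniformity and the range of $\ell$ goes beyond the paper. Its proof tacitly needs the corollary's restriction $\ell\le \sqrt L e^{-\sqrt{\log L}}$, and the lemma is only ever applied at such an $\ell_0$. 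Two small points. Your divisor trick covers only those larger $\ell$ that have a divisor in the corollary's range. The boundary-layer alternative you dismissed actually works: by the uniform FKG upper bound on single-site probabilities, boundary sites only forfeit their share of the gain. That costs a relative $O(\log L/\ell)$ fraction of the gain, not an additive error of that size.
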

\begin{proof}
    The proof remains unchanged from the $p = 2$ case.
\end{proof}

\begin{proof}[Proof of \cref{prop:uprho-bound-p}, upper bound of \cref{eq:uprho-bounds-p}]
    Plug in \cref{lem:xi-bar-upper-p} into \cref{eq:rho_n_via_xibar_ell0-p}.
\end{proof}

\subsection{Missing large deviation estimates for $p > 1$}\label{sec:missing-LD-p}
Finally, we conclude with the proofs of \cref{clm:pi(h|h)-lower-p} and \cref{clm:pi(h|hh)-upper-p}. We will start with \cref{clm:pi(h|h)-lower-p} for $p > 2$, which seeks to prove a lower bound on $\hatpi^{(p)}_\infty(\phi_{o'} \geq h \mid \phi_o\geq h)$. We recall from \cite[Thm.~7.1, Eq.~(7.4)]{ChenLubetzky25} that, for some absolute constant $c_0>0$ and every $h\geq 1$, 
\begin{equation}\label{eq:CL25-pi(h)-ratio-LB}\frac{\hatpi(\phi_o = h )}{\hatpi(\phi_o = h-1)} > \exp(- c_0 \beta h )\,,\end{equation}
and from \cite[Thm.~7.1, Eq.~(7.4)]{ChenLubetzky25}, for another constant $c_1>0$ and every $h\geq 1$,
\begin{equation}\label{eq:CL25-pi(h|h)-UB}\hatpi^{(p)}_\infty(\phi_y = h \mid \phi_x = h) \leq  \exp(- c_1 \beta h^{p/(p-1)})\,.\end{equation}
\begin{proof}[Proof of \cref{clm:pi(h|h)-lower-p}, $p > 2$]
Let $M := (c_0/2)^{1/(p-1)}$ for the constant $c_0$ from \cref{eq:CL25-pi(h)-ratio-LB}, and define
\[ q_M:= \hatpi^{(p)}_\infty \big( \phi_{o'} \geq h - M h^{1/(p-1)} \mid \phi_o \geq h\big)\,.\]
We will argue that
\begin{equation}
    \label{eq:qm-lower-bound}
    q_M > e^{-2 c_0\beta h}\,.
\end{equation}
To see this, assume the opposite, and let $\cS = \left\{ \phi \,:\; \max_{z\sim o} \phi_z \leq h - M^{1/(p-1)}\right\}$. By a union bound,
we have $\hatpi_\infty^{(p)}\left(\cS^c \mid \phi_o \geq h\right) \leq 4 q_M$. Now consider the bijection that decreases $\phi_o$ by $1$. For every $\phi \in \cS \cap \{\phi_o\geq h+1\}$, 
\[ \sum_{z\sim o} (|\phi_o-\phi_z|^p-|\phi_o-1-\phi_z|^p)\geq \sum_{z\sim o} p |\phi_o-1-\phi_z|^{p-1} \geq 4 p M^{p-1} h = 2c_0 h\,,\] using the definition of $M$ in the last step. Thus, $\hatpi_\infty^{(p)}(\cS,\, \phi_o\geq h+1 \mid \phi_o\geq h) \leq e^{-2c_0\beta h}$, and we can conclude that
\begin{align*}
     \hatpi_\infty^{(p)}(\phi_o\geq h+1 \mid \phi_o \geq h) &\leq \hatpi_\infty^{(p)}(\cS,\, \phi_o\geq h+1 \mid \phi_o \geq h) 
     + \hatpi_\infty^{(p)}(\cS^c\mid \phi_o \geq h) \\ &\leq e^{-2c_0\beta h}+ 4 q_M \\ &\leq 6 e^{-2c_0\beta h} \,.\end{align*}
For large $h$, this would contradict the fact that $\hatpi_\infty^{(p)}(\phi_o\geq h+1)/\hatpi_\infty^{(p)}(\phi_o \geq h) \geq (1-\epsilon_\beta)e^{-c_0\beta h}$ via \cref{eq:CL25-pi(h)-ratio-LB} (as well as that $\hatpi_\infty(\phi_o=h) \geq (1-\epsilon_\beta)\hatpi_\infty(\phi_o\geq h)$ by a routine Peierls map). This establishes \cref{eq:qm-lower-bound}. 

To derive the sought bound from \cref{eq:qm-lower-bound}, we write 
\begin{align*}
    \hatpi_\infty^{(p)}(\phi_{o'} \geq h \mid \phi_o \geq h) &= q_M  \prod_{k=h - M h^{1/(p-1)}}^{h-1} \hatpi_\infty^{(p)}(\phi_{o'} \geq k+1 \mid \phi_o \geq h,\,\phi_{o'} \geq k) \\ 
    &\geq q_M \exp\left(-c_0 \beta h \cdot M h^{1/(p-1)} \right) \\
    & \geq \exp\left(- (c_0 M \beta - o(1)) h^{p/(p-1)} \right) \,,
\end{align*}
where the inequality in the second line used FKG to drop the conditioning on $\phi_o\geq h$, followed by an application of \cref{eq:CL25-pi(h)-ratio-LB} to each of the terms in the product (where for each of these $M h^{1/(p-1)}$ terms we further increase $k$ to $h$). Note that, of the two terms in the second line, $q_M \geq e^{-2c_0\beta h}$ is negligible compared to the term $\exp(-c \beta h^{p/(p-1)})$.
\end{proof}

Next, to set up for the proofs of \cref{clm:pi(h|h)-lower-p} for $1 < p < 2$ and \cref{clm:pi(h|hh)-upper-p}, we begin by recalling some preliminaries about $p$-harmonic functions from nonlinear potential analysis. Define the $p$-Laplacian as $(\Delta_p \phi)_x = \frac14 \sum_{y\sim x} |(\nabla \phi)_{xy}|^{p-1}(\nabla \phi)_{xy}$ for $(\nabla \phi)_{xy} = \phi_y-\phi_x$. Define the energy $\sE^{(p)}(f):= \sum_e |\nabla f|^p$, and the $p$-capacity for $A \subset \Z^2$ by
\[ \operatorname{Cap}_p(A):=\inf\{\sE^{(p)}(f):\; f\to 0 \text{ at }\infty,\, f\geq 1 \text{ on }A\}\,.\]  
Now fix a finite set $A$. For $1 < p < 2$, the graph $\Z^2$ is $p$-hyperbolic (see, e.g. \cite[pp.~176--178]{soardi2006potential}), meaning $\operatorname{Cap}_p(\{A\}) > 0$. As a consequence, there is a unique $p$-harmonic function with boundary conditions 1 on $A$ and 0 at infinity, call this $\phi^*_A$. Moreover, we have $\operatorname{Cap}_p(A) = \sE^{(p)}(\phi^*_A)$, and for any $\epsilon > 0$, there exists a radius $r$ and an approximation $\varphi^*_A$ of $\phi^*_A$ such that $\varphi^*_A$ is supported on $U = \bigcup_{x \in A}B_r(x)$ and $\sE(\varphi^*_A) \leq \sE(\phi^*_A) + \frac{\epsilon}{2}$. 

By looking at the maximum of the $p$-harmonic functions, an elementary fact is that
\begin{equation}\label{eq:cap-subadditivity}\operatorname{Cap}_p(A \cup B) \leq \operatorname{Cap}_p(A) + \operatorname{Cap}_p(B)\,.
\end{equation}
Moreover, the $p$-capacity for $1 < p < 2$ is asymptotically additive, i.e.,  \begin{equation}\label{eq:cap-asymp-additivity}\operatorname{Cap}_p(A\cup (B+x))=\operatorname{Cap}_p(A)+\operatorname{Cap}_p(B)+o(1) \quad\mbox{ as }\quad |x|\to\infty
\end{equation}
(this follows from quasi-additivity of the $p$-capacity combined with decay estimates for $p$-Green functions; see, e.g., \cite{HKM93,Mazya11}
for the general framework, and \cite{HolopainenSoardi1997} for the graph setting). 

It is known (\cite[Thm.~5.1, Eq.~(5.1)]{LMS16}) that for every $1<p<2$ there exists some $c_p^*>0$ such that
\begin{equation}\label{eq:pi(h)-for-1<p<2}
\hatpi^{(p)}_\infty(\phi_o \geq h) = e^{-(c_p^* \beta +o(1))h^p}\,,\end{equation}
where the $o(1)$-term goes to $0$ as $h\to\infty$. In fact, the proof shows that $c_p^*$ is precisely $\sE^{(p)}(\phi^*_o)$. The next lemma extends this one-point large deviation result to a general finite set.

\begin{lemma}\label{lem:LD-capacity}
    Let $1 < p < 2$. For any constants $k, \epsilon > 0$, there exists constants $h_0$ such that for all $h \geq h_0$, the following holds. Let $A \subset \Z^2$ have $k$ vertices. Then, 
    \[\exp(-\beta(\sE^{(p)}(\phi^*_A)+\epsilon)h^p) \leq \hatpi^{(p)}_\infty(\phi\restriction_A = h) \leq \exp(-\beta(\sE^{(p)}(\phi^*_A)-\epsilon)h^p)\,.\]
    The same statement holds for $\hatpi^{(p)}_\infty(\phi\restriction_A \geq h)$.
\end{lemma}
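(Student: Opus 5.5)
We follow the route of \cite[Thm.~5.1]{LMS16} for a single site, replacing $\phi^*_o$ by $\phi^*_A$ throughout. Since $A$ is fixed with $k$ vertices, it suffices to prove both bounds for the event $\{\phi\restriction_A\geq h\}$. Indeed, $\{\phi\restriction_A=h\}\subset\{\phi\restriction_A\geq h\}$. In the other direction, a Peierls map lowers by $1$ the connected components of $\{\phi>h\}$ meeting $A$, repeated at most $k$ times. Together with a one-step bound of the form $\hatpi(\phi\restriction_A\geq h+1)\leq e^{-c\beta h^{p-1}}\hatpi(\phi\restriction_A\geq h)$, this shows the two events differ by at most a factor $e^{o(h^p)}$, which the $\epsilon h^p$ slack absorbs. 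That one-step bound is obtained by the same argument as the upper bound below, applied with $h$ and $h+1$.

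\emph{Lower bound.} Fix $\epsilon$, and take $r$ and the compactly supported approximation $\varphi^*_A$ on $U=\bigcup_{x\in A}B_r(x)$ with $\sE^{(p)}(\varphi^*_A)\leq \sE^{(p)}(\phi^*_A)+\epsilon/2$. Then consider the target profile $\psi=\lceil h\varphi^*_A\rceil$. Spreading the profile by a factor $\lambda$ (i.e., using $\varphi^*_A(\cdot/\lambda)$ on the blown-up set) is not an option, because $A$ is a fixed set of lattice points. Instead, we use that on the lattice the rounding changes each gradient by at most $1$. Since $|\nabla(h\varphi)|$ is of order $h$ and $|a+1|^p-|a|^p=O(|a|^{p-1}+1)$, the energy of $\psi$ is $h^p\sE^{(p)}(\varphi^*_A)+O(|U|h^{p-1})$. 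We then compare measures. Under $\hatpi_\infty$, condition on $\phi$ being flat ($\phi\equiv0$) on $\partial_{\sfv}U$, which costs probability $e^{-O(|U|)}$. Inside $U$, the configuration $\psi$ has weight at least $e^{-\beta \sE^{(p)}(\psi\restriction_U)}/Z_U$, with $\log Z_U=O(|U|)$. This gives $\hatpi(\phi\restriction_A\geq h)\geq e^{-\beta(\sE^{(p)}(\phi^*_A)+\epsilon)h^p}$ for large $h$, because $|U|$ depends only on $(k,\epsilon)$. One subtlety here: $A$ may be spread out, so $U$ may be disconnected with $r$ large. This is still fine, since $r=r(A,\epsilon)$ is fixed before $h\to\infty$. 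We need $h_0$ to depend only on $k,\epsilon$, which we obtain by invoking \eqref{eq:cap-asymp-additivity}: when the points of $A$ are far apart, we may split $A$ into clusters of bounded diameter, reducing to finitely many shapes up to translation.

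\emph{Upper bound.} This is the main obstacle. We would follow \cite[\S5]{LMS16}. First, a Peierls/level-line argument localizes the event. With probability $1-e^{-\omega(h^p)}$ given the event, the set of sites at height $\geq \delta h$ is contained in $\bigcup_{x\in A}B_{R}(x)$ for some $R=R(\delta)$. The reason is that each nested level line costs $\beta$ per bond, and $h$ nested loops of total length $\gtrsim h\cdot R$ would exceed the budget $Ch^p$ once $R\gg h^{p-1}$. A radius of order $h^{p-1}$ is not bounded, however, so this naive localization must be replaced. We would instead use the estimate behind \eqref{eq:pi(h)-for-1<p<2}: the probability is bounded by summing, over integer configurations $f$ with $f\restriction_A\geq h$, the weight $e^{-\beta\sE^{(p)}(f)}$ relative to the flat configuration. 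Decomposing $f$ into its disagreement polymers via the cluster expansion of \cref{prop:CE-law-without-area} gives
\[\hatpi(\phi\restriction_A\geq h)\le e^{o(h^p)}\exp\Big(-\beta\inf\{\sE^{(p)}(f): f\restriction_A\geq h\}\Big)=e^{o(h^p)}e^{-\beta h^p\operatorname{Cap}_p(A)}.\]
The entropy factor $e^{o(h^p)}$ is the delicate part. Counting configurations with energy at most $Ch^p$ gives $e^{O(\#\{\text{nonzero gradients}\})}$. For $p<2$, with gradient cost $|a|^p$ superlinear in $|a|$, the number of nonzero gradients in a near-minimizer can be as large as order $h^p$. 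We would control this as in LMS16: truncate small gradients by discarding a region where $|\nabla f|<\eta h$, and apply the Peierls bound there. On the remaining set, the gradients are at least $\eta h$, so the number of such bonds is at most $Ch^p/(\eta h)^p=O(\eta^{-p})$, which is bounded. The capacity identity $\operatorname{Cap}_p(A)=\sE^{(p)}(\phi^*_A)$ then finishes the argument, after letting $\eta\to0$ slowly with $h$.
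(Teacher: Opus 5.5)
Your lower bound is essentially the paper's: a compactly supported approximation $\varphi^*_A$, a rounding cost of $O(|U|h^{p-1})$, and an $e^{-O(|U|)}$ price for pinning $\phi$ to $0$ on or around $U$. Your profile already has $\psi\restriction_A=h$ exactly, so the reduction through a one-step bound is unnecessary. That is just as well, because a factor $e^{-c\beta h^{p-1}}$ cannot be read off from two-sided estimates whose slack $\epsilon h^p$ dwarfs $(h+1)^p-h^p$.

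The genuine gap is in the upper bound: the substitute you propose for localization does not work. First, \cref{prop:CE-law-without-area} is about Dobrushin boundary conditions with a single boundary polymer. Even setting that aside, its interaction term is of order $\epsilon_\beta|\gamma|$, which is $o(h^p)$ only if you already know $|\gamma|=o(h^p)$, that is, only after localizing. Second, ``discarding the region where $|\nabla f|<\eta h$'' is not an operation on height functions. The gradients are not free variables, so dropping some of them changes $f$ on $A$, and there is no map of controlled multiplicity for a Peierls bound to act on. The small-gradient bonds are exactly the ones that can number $\Theta(h^p)$ and carry the entropy in question. They can only be charged for energy in excess of $h^p\operatorname{Cap}_p(A)$, and making that precise is the localization you set aside.

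The missing observation is that the radius $R\asymp h^{p-1}$ you rejected as unbounded is perfectly adequate. The set $V=\bigcup_{x\in A}B_R(x)$ has $|V|=O(kh^{2p-2})$, and $2p-2<p$ for $p<2$, so even the crude count $(2h^2+1)^{|V|}=\exp(O(h^{2p-2}\log h))$ is already $e^{o(h^p)}$. The paper's proof runs as follows.
\begin{enumerate}
\item Take $R=Ch^{p-1}$ and let $\cE$ be the event that no component of $\{\phi\ge1\}$ of diameter exceeding $R/2$ meets $V$.
\item On $\cE$, revealing these components from $\partial V$ exposes circuits of sites at height $\le 0$. These enclose groups $A_j\subset A$ in disjoint regions $V_j\subset V$.
\item Since $\{\phi\restriction_A\ge h\}$ is increasing, one may raise those sites to $0$. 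The domain Markov property then bounds the probability by $\prod_j\hatpi^{(p)}_{V_j}(\phi\restriction_{A_j}\ge h)$.
\item In $V_j$ with zero boundary conditions the partition function is at least $1$. Every configuration with $\phi\restriction_{A_j}\ge h$ has energy at least $h^p\operatorname{Cap}_p(A_j)$. Hence, after discarding $\max_{V_j}|\phi|\ge h^2$, each factor is at most $e^{-\beta(\operatorname{Cap}_p(A_j)-o(1))h^p}$.
\item Subadditivity of capacity gives $\sum_j\operatorname{Cap}_p(A_j)\ge\operatorname{Cap}_p(A)$.
\end{enumerate}
Finally, $\cE$ need not have conditional probability $1-e^{-\omega(h^p)}$, as you demanded. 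It suffices that $\hatpi^{(p)}_\infty(\cE^c\mid\phi\restriction_A=h)\le e^{-c\beta h^{p-1}}$. This follows from a Peierls map on the outermost $1$ level lines $\Gamma_i$ around the points of $A$: lower their interiors by $1$ and reset $A$ to $h$. Once neighbors of $A$ below $-h$ are excluded, this costs $O(kh^{p-1})$ at $\partial A$ and gains $\beta$ per bond of $\bigcup_i\Gamma_i$, which forces $|\Gamma_i|\le Ch^{p-1}$. A further Peierls bound then rules out large $\{\phi\ge1\}$ components outside the $\Gamma_i$.
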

\begin{proof}
    As usual, an easy Peierls argument shows that $\hatpi^{(p)}_\infty(\phi\restriction_A = h \mid \phi\restriction_A \geq h) \geq 1-\epsilon_\beta$, so it suffices to prove the statement for $\hatpi^{(p)}_\infty(\phi\restriction_A = h)$. Call the $k$ points of $A$ by $\{x_i\}_{i = 1}^k$. Let $R = Ch^{p-1}$ for a constant $C$ to be chosen later. Let $V = \bigcup_{i}B_R(x_i)$. Let $\cE$ be the event that there is no connected component of sites with height $\geq 1$ with diameter (in $\Z^2$) larger than $R/2$ that intersects $V$. First we prove that
    \begin{equation}\label{eq:UB-by-capacity}
        \hatpi^{(p)}_\infty(\phi\restriction_A = h, \cE) \leq \exp(-\beta(\sE^{(p)}(\phi^*_A)-o(1))h^p)\,.
    \end{equation}
    For every $v \in \partial V$, reveal its connected component of sites with height $\geq 1$. On the event $\cE$, none of these component reach $\{x_i\}$. Thus, this reveals circuits of $\leq 0$ sites around the $\{x_i\}$, where different circuits have disjoint interiors. Group together the points sharing the same circuit into $j \leq k$ groups $A_j$. Note that no information about heights inside the circuit has been revealed. Hence after the revealing, we can forget the event $\cE$, increase the event $\phi\restriction_A = h$ to $\phi\restriction_A \geq h$, use monotonicity to raise the $\leq 0$ sites to height 0, and use Domain Markov to obtain
    \[\hatpi^{(p)}_\infty(\phi\restriction_A \geq h, \cE) \leq \max_{\{V_j\}}\prod_j \hatpi^{(p)}_{V_j}(\phi\restriction_{A_j} \geq h)\,,\]
    where the maximum is over all collections of disjoint subsets $V_j\subset V$ such that each $A_j \subset V_j$. To upper bound the terms $\hatpi^{(p)}_{V_j}(\phi\restriction_{A_j} \geq h)$, observe that each configuration attaining $\phi\restriction_{A_j} \geq h$ has probability at most $e^{-\beta\sE^{(p)}(\phi^*_{A_j})h^p}$. Hence we can write
    \[\hatpi^{(p)}_{V_j}(\phi\restriction_{A_j} \geq h) \leq \hatpi^{(p)}_{V_j}(\max_{x\in A_j}|\phi_x| \geq h^2) + e^{-\beta\sE^{(p)}(\phi^*_{A_j})h^p}(2h^2+1)^{|V_j|} \leq e^{\beta(\sE^{(p)}(\phi^*_{A_j})-o(1))h^p}\,.\]
    The bound \cref{eq:UB-by-capacity} now follows by \cref{eq:cap-subadditivity}.

    Now let $\Gamma_i$ be the outermost 1 level line containing $x_i$. To conclude the desired upper bound of the lemma, it suffices to show that for some $C' > 0$, we have
    \begin{equation}
        \hatpi^{(p)}_\infty(\cE^c\mid \phi\restriction_A = h) \leq \hatpi^{(p)}_\infty(\cE^c \mid \max_i |\Gamma_i| \leq R,\, \phi\restriction_A = h) + \hatpi^{(p)}_\infty(\max_i |\Gamma_i| > R \mid \phi\restriction_A = h) \leq e^{-\beta C'h^{p-1}}\,. 
    \end{equation}
    The first inequality is trivial. An upper bound on $\hatpi^{(p)}_\infty(\max_i |\Gamma_i| > R \mid \phi\restriction_A = h)$ follows via essentially the same proof of \cite[Lem.~5.3]{LMS16}. Firstly, for any neighbor $y$ of $A$, we can bound $\hatpi^{(p)}_\infty(\phi_y \leq -h \mid \phi\restriction_A = h) \leq e^{-c\beta h^p}$ by a Peierls map on the down-loops surrounding $x$. We can afford this as an additive error. Otherwise, if all the neighbors of $A$ have height $\geq -h$, then consider the Peierls map $T$ which lowers the height of all vertices in $\bigcup_i \mathsf{Int}(\Gamma_i)$ by 1, and then raises the height of $\phi\restriction_A$ back up to $h$. The condition on the neighbors of $A$ implies that we have an energy difference of \[\hatpi^{(p)}_\infty(T(\phi)) \geq \hatpi^{(p)}_\infty(\phi)e^{\beta(|\bigcup_i|\Gamma_i| - |\partial A|\beta p(2h)^{p-1})}\,.\]
    Hence, the standard Peierls argument enumerating over $\Gamma_i$ shows that for a sufficiently large constant $C >0$ depending on $|\partial A|$, we have $\hatpi^{(p)}_\infty(\max_i|\Gamma_i| \geq Ch^{p-1} \mid \phi\restriction_A = h) \leq e^{C'\beta h^{p-1}}$ for some other constant $C'$. Finally, an upper bound of the same form on $\hatpi^{(p)}_\infty(\cE^c \mid \max_i |\Gamma_i| \leq R,\, \phi\restriction_A = h)$ follows by observing that the large $\geq 1$ component realizing $\cE^c$ must be in the exterior of all the $\Gamma_i$, so revealing the $\Gamma_i$ reveals $\leq 0$ boundary conditions and $\cE^c$ can be ruled out by a Peierls argument.

    We turn now to showing the lower bound of the lemma. For the choice of $\epsilon$ in the lemma, there exists a radius $r$ and an approximation $\varphi^*_A$ of $\phi^*_A$ such that $\varphi^*_A$ is supported on $U = \bigcup_{x \in A} B_r(x)$ and $\sE(\varphi^*_A) \leq \sE(\phi^*_A) + \frac\epsilon2$. The maximum value attained by $h\varphi^*_A$ is $h$, so we have an integer rounding cost of $\sE(\lfloor h\varphi^*_A\rfloor) \leq \sE(h\varphi^*_A\rfloor) + h^{p-1}r^2$. At the same time, forcing all heights in $U$ to be $0$ under $\hatpi^{(p)}_\infty$ has a probability of at least $(1-\epsilon_\beta)^{|U|}$ by absolute value FKG (valid for $1\leq p\leq 2$). The weight of a specific configuration $\phi$ in $U$, relative to the all-$0$ configuration, is simply $\exp(-\beta E(\phi))$. Hence, in total we have that $\hatpi^{(p)}_\infty(\phi\restriction_A = h) \geq \exp(-\beta(\sE(\phi^*_A)+\frac\epsilon2)h^p -\beta h^{p-1}|U| -\epsilon_\beta |U|)$, where the extra error terms can be absorbed into another factor of $\beta\frac\epsilon2 h^p$ for sufficiently large $h$ since $|U| \leq kr^2$.
\end{proof}

\begin{proof}[Proof of \cref{clm:pi(h|h)-lower-p}, $1 < p < 2$]
Consider the $p$-harmonic function $\phi^*_o$ giving the one point large deviation.
The energy $\sE^{(p)}(\phi^*_o)$ is the sum of the energy over horizontal bonds $\sE^{(p)}_{\sf h}(\phi^*_o)$ and vertical bonds $\sE^{(p)}_{\sf v}(\phi^*_o)$.
W.l.o.g, assume that $\sE^{(p)}_{\sf v}(\phi^*_o) \leq \sE^{(p)}_{\sf h}(\phi^*_o)$. Define $\varphi$ via block-copying $\phi^*_o$ on $2\Z \times \Z$, that is, \[ \varphi_{2k,l} =\varphi_{2k+1,l}=(\phi^*_o)_{k,l}\qquad \mbox{for all $k,l\in\Z$}\,.\]
Then
\[\sE^{(p)}(\varphi) = \sE^{(p)}_{\sf h}(\phi^*_o) + 2\sE^{(p)}_{\sf v}(\phi^*_o) \leq \tfrac32 \sE^{(p)}(\phi^*_o)\,.\]

Since $\varphi$ takes values 1 on $o, o'$ and 0 at infinity, then $\sE^{(p)}(\phi^*_{o, o'}) \leq \sE^{(p)}(\varphi)$. We next verify that $\varphi \neq \phi^*_{o, o'}$, so that there exists some $\delta_p>0$ such that
\begin{equation}\label{eq:doubling-energy-diff}\sE^{(p)}(\phi^*_{o, o'}) \leq \sE^{(p)}(\varphi) - \delta_p \leq \tfrac32 \sE^{(p)}(\phi^*_o) - \delta_p\,.
\end{equation}
We need to verify that the function $\varphi$ is not $p$-harmonic. One elementary way to show this is to take $(k,l)\neq o$, let $x^-=(k-1,l)$, $x=(k,l)$, $x^+=(k+1,l)$, and observe that if $(\Delta_p\varphi)_{2k,l}=0$ then 
$|(\nabla\phi^*_o)_{xx^-}|^{p-2}(\nabla\phi^*_o)_{xx^-}=-\xi$, where $\xi$ is the contribution of the vertical bonds to $(\Delta_p \phi^*_o)_x$, and similarly, if
$(\Delta_p\varphi)_{2k+1,l}=0$ then
$|(\nabla\phi^*_o)_{xx^+}|^{p-2}(\nabla\phi^*_o)_{xx^+}=-\xi$. 
Thus, $(\Delta_p\phi^*_o)_x = -\xi$, and since $\phi^*_o$ is $p$-harmonic, we get $(\nabla\phi^*_o)_{x^-x} = (\nabla\phi^*_o)_{x x^+} = 0$. Iterating this argument (as well as in the vertical axes), while recalling that $\phi^*_o$ is $0$ at infinity, we find $\phi^*_o = \delta_{o}$, a contradiction.

The proof now concludes by combining \cref{eq:doubling-energy-diff,lem:LD-capacity}. 
\end{proof}

\begin{proof}[Proof of \cref{clm:pi(h|hh)-upper-p}]
    The statement is immediately true by \cref{lem:LD-capacity} if $\delta_p$ can depend on $y$, in particular with $\delta_p(y) := \sE^{(p)}(\phi^*_{o, o', y}) - \sE^{(p)}(\phi^*_{o, o'}) - o(1)$. So it remains to show that $\sE^{(p)}(\phi^*_{o, o', y}) - \sE^{(p)}(\phi^*_{o, o'})$ is uniformly bounded away from 0, whence we can take $h$ large enough so that the $o(1)$ is negligible. This in turn follows from \cref{eq:cap-asymp-additivity} combined with the observation that for any $y \notin \{o, o'\}$, we always have $\sE^{(p)}(\phi^*_{o, o', y}) > \sE^{(p)}(\phi^*_{o, o'})$.
\end{proof}

\subsection{Extension to \texorpdfstring{$p = 1$}{p=1}}\label{sec:SOS-extension}
Finally, we discuss the \SOS model. Consider the summary at the beginning of this section. The crucial input needed for the proofs of the main theorems is an analog of \cref{thm:key-area}. But for $p = 1$ this was already proven, with the mesoscopic range in \cite[Prop.~A.1]{CLMST16} and the macroscopic range in \cite[Prop.~2.12]{CLMST16}. (For $p = 1$, there is no need to define $\upxi$ or $\uprho$; one may work directly in terms of $\hatpi^{(1)}_\infty(\phi_o < -h)$.) Hence, the results of \cref{thm:main-thm-limit-shape,thm:main-thm-crit-window} also follow. However, \cref{thm:FS-no-exceptional} does not follow because the quantities $\hatpi^{(p)}_\infty(\phi_o = h)$ and $\hatpi^{(p)}_\infty(\phi_o=h-1)$ are now comparable up to a factor of $C(\beta)$, instead of being on different scales, the latter of which is a crucial part of the proof of \cite{ChenLubetzky25} which \cref{sec:limit-law} is modifying. (In fact the limit law for $p = 1$ is not expected to be Ferrari--Spohn, see, e.g., \cite[\S1]{ChenLubetzky25} for more details). 

\subsection*{Acknowledgments}
This research was supported by the NSF grant \textsc{dms}-2451083.

	\bibliographystyle{abbrv}
	\bibliography{dg_shape_critical_ref}

\end{document}